\documentclass[11pt]{article}
\usepackage{latexsym, amsmath, amssymb, amsthm, a4, epsfig}
\usepackage{mathtools}
\usepackage{amsfonts}
\usepackage{graphicx}
\usepackage{color}
\usepackage{tikz}
\usepackage{tikz-cd}
\usepackage{ascmac}
\usepackage{comment}
\usepackage{url}
\usepackage{bbm}
\usepackage{here, bm, multicol, tcolorbox, subfig}
\usepackage{enumitem}

\usepackage{aliascnt} 

\usepackage{hyperref}

\usetikzlibrary{arrows,cd,calc}
\theoremstyle{definition}

\newtheorem{theorem}{Theorem}[section]

\newaliascnt{corollary}{theorem}     
\newtheorem{corollary}[corollary]{Corollary} 
\aliascntresetthe{corollary}         

\newaliascnt{lem}{theorem}
\newtheorem{lem}[lem]{Lemma}
\aliascntresetthe{lem}

\newaliascnt{proposition}{theorem}
\newtheorem{proposition}[proposition]{Proposition}
\aliascntresetthe{proposition}

\newaliascnt{definition}{theorem}
\newtheorem{definition}[definition]{Definition}
\aliascntresetthe{definition}

\newaliascnt{notation}{theorem}
\newtheorem{notation}[notation]{Notation}
\aliascntresetthe{notation}

\newaliascnt{note}{theorem}

\aliascntresetthe{note}

\newaliascnt{example}{theorem}
\newtheorem{example}[example]{Example}
\aliascntresetthe{example}

\newaliascnt{remark}{theorem}
\newtheorem{remark}[remark]{Remark}
\aliascntresetthe{remark}

\newtheorem*{claim*}{Claim}

\newaliascnt{prob}{theorem}

\aliascntresetthe{prob}

\renewcommand{\proofname}{\bf Proof}

\newenvironment{proofbreak}[1][\proofname]{%
  \begin{proof}[#1]\mbox{}\\\ignorespaces
}{%
  \end{proof}
}

\renewcommand{\refname}{References}

\renewcommand{\phi}{\varphi}

\newcommand{\Cbb}{\mathbb{C}}
\newcommand{\Dbb}{\mathbb{D}}
\newcommand{\Ebb}{\mathbb{E}}

\newcommand{\Kbb}{\mathbb{K}}

\newcommand{\Nbb}{\mathbb{N}}

\newcommand{\Rbb}{\mathbb{R}}

\newcommand{\Tbb}{\mathbb{T}}

\newcommand{\Zbb}{\mathbb{Z}}

\newcommand{\Gcal}{\mathcal{G}}

\newcommand{\Ical}{\mathcal{I}}

\newcommand{\Mcal}{\mathcal{M}}

\newcommand{\Ocal}{\mathcal{O}}
\newcommand{\Pcal}{\mathcal{P}}

\newcommand{\Rcal}{\mathcal{R}}

\newcommand{\Tcal}{\mathcal{T}}
\newcommand{\Ucal}{\mathcal{U}}

\newcommand{\Zcal}{\mathcal{Z}}

\def\Bl{{\bf l}}
\def\Bm{{\bf m}}
\def\Bn{{\bf n}}

\def\BB{{\bf B}}

\newcommand{\Ga}{\alpha}
\newcommand{\Gb}{\beta}
\newcommand{\Gd}{\delta}
\newcommand{\Ge}{\epsilon}

\newcommand{\Gg}{\gamma}
\newcommand{\Gc}{\chi}
\newcommand{\Gi}{\iota}
\newcommand{\Gk}{\kappa}

\newcommand{\Gl}{\lambda}
\newcommand{\Gn}{\eta}
\newcommand{\Gm}{\mu}
\newcommand{\Gv}{\nu}

\newcommand{\Gs}{\sigma}

\newcommand{\Go}{\omega}
\newcommand{\Gx}{\xi}

\newcommand{\GD}{\Delta}

\newcommand{\GG}{\Gamma}

\newcommand{\GO}{\Omega}

\newcommand{\Fs}[0]{\mathfrak{s}}

\newcommand{\beq}{\begin{equation}}
\newcommand{\eeq}{\end{equation}}

\renewcommand{\ker}{\operatorname{Ker}}
\newcommand{\im}{\operatorname{Im}}
\newcommand{\act}{\curvearrowright}
\newcommand{\End}[1]{\mathrm{End}_{G}(#1)}
\newcommand{\aut}[1]{\mathrm{Aut}_{G}(#1)}
\newcommand{\id}[1]{\mathrm{id}_{#1}}
\newcommand{\oinf}{\mathcal{O}_{\infty}}
\newcommand{\otw}{\mathcal{O}_{2}}
\newcommand{\bbm}[1]{\mathbbm{#1}}
\newcommand{\vin}{\rotatebox[origin=c]{90}{$\in$}}
\newcommand{\autx}[1]{\mathrm{Aut}_{C(X),G}(#1)}
\newcommand{\cn}{C_{\nu}}
\newcommand{\cna}{C_{\nu_{A}}}

\newcommand{\homg}[2]{\mathrm{Hom}_{G}(#1,#2)_{u}}
\newcommand{\ad}{\operatorname{Ad}}
\newcommand{\tmat}[4]{\left(\begin{smallmatrix}#1&#2\\#3&#4\\\end{smallmatrix}\right)}
\newcommand{\pmat}[4]{\begin{pmatrix}#1&#2\\#3&#4\\ \end{pmatrix}}
\newcommand{\ip}[2]{\langle #1, #2 \rangle}
\newcommand{\cstar}[0]{\ensuremath{\mathrm{C}^*}}
\renewcommand{\asymp}{\sim_{\mathrm{asymp}}}
\newcommand{\cc}{\cong_{\mathrm{cc}}}
\newcommand{\sasymp}{\sim_{\substack{\mathrm{strong}\\ \mathrm{asymp}}}}

\newcommand{\one}{\mathbf{1}}
\newcommand{\isa}{\ensuremath{\text{isometrically shift-absorbing }}}
\newcommand{\ssa}{\ensuremath{\text{strongly self-absorbing }}}
\newcommand{\Aut}[2]{\mathrm{Aut}_{#1}(#2)}
\newcommand{\dst}{D\otimes\mathbb{K}}
\newcommand{\dstf}{(D\otimes\mathbb{K})^{\gamma\otimes\mathrm{id}_{\mathbb{K}}}}
\newcommand{\onee}{\mathbf{1}_{D}\otimes e}
\newcommand{\oned}{\mathbf{1}_{D}}
\newcommand{\dstn}{(D\otimes\mathbb{K})^{\otimes n}}
\newcommand{\dstm}{(D\otimes\mathbb{K})^{\otimes m}}
\newcommand{\ggd}{\mathcal{G}^{G}_{D}}
\newcommand{\hotimes}{\hat{\otimes}}
\newcommand{\on}{\mathcal{O}_{n}}

\newcommand{\ptx}{(X,x_{0})}
\newcommand{\pty}{(Y,y_{0})}
\newcommand{\coker}{\operatorname{Coker}}
\renewcommand{\dim}{\operatorname{dim}}
\newcommand{\Hom}{\mathrm{Hom}}
\newcommand{\Ext}{\mathrm{Ext}^{1}}
\newcommand{\cu}{C_{u}}
\newcommand{\zzp}{\mathbb{Z}[\zeta_{p}]}

\DeclareMathOperator{\Ker}{Ker}

\numberwithin{equation}{section}
\numberwithin{figure}{section}

\allowdisplaybreaks

\begin{document}

\title{   The homotopy groups of the equivariant automorphism group of Kirchberg algebras with compact group actions and equivariant Dadarlat-Pennig theory     }

\author{Hiroro Kamikawa}
\date{}
\maketitle
\begin{abstract}
In the first half of this paper, we describe the homotopy groups of the equivariant automorphism group of Kirchberg algebras with isometrically shift-absorbing actions of compact groups in terms of equivariant KK-theory. This provides an equivariant version of Dadarlat's result. In the second half, we present a unified treatment of the equivariant Dadarlat-Pennig theory for strongly self-absorbing actions.

\end{abstract}
\tableofcontents
\section*{Introduction}
\addcontentsline{toc}{section}{Introduction}

The homotopy groups of automorphism groups play a crucial role in classifying locally trivial bundles of \cstar-algebras via homotopy theory. If we denote by $\mathrm{Aut}(A)$ the automorphism group of a \cstar-algebra $A$ equipped with the point-norm topology, there is a bijection between the set of isomorphism classes of locally trivial continuous fields over a compact Hausdorff space $X$ and the set $[X,B\mathrm{Aut}(A)]$ of homotopy classes of maps from $X$ to the classifying space $B\mathrm{Aut}(A)$ for principal $\mathrm{Aut}(A)$-bundles. For the unreduced suspension $SX$ of $X$, there is a bijection $[SX,B\mathrm{Aut}(A)]\cong [X,\mathrm{Aut}(A)]/\pi_{0}(\mathrm{Aut}(A))$. Consequently, the problem of computing these homotopy sets and homotopy groups naturally arises. 

The homotopy groups of the automorphism group have been studied for Kirchberg algebras \cite{Cu3,D1,MS}, simple AF algebras \cite{Th2,Nistor}, Cuntz-Toeplitz algebras \cite{IS,Sogabe}, and strongly self-absorbing \cstar-algebras \cite{DP}. In the equivariant setting, to the best of the author's knowledge, only the case of UHF-algebras equipped with product actions has only been studied in \cite{BP,EP}.

The notion of strongly self-absorbing \cstar-algebras was introduced by Toms and Winter \cite{TW} to establish a \cstar-algebraic analogue of the McDuff-type absorption result. A unital separable \cstar-algebra $D$ is said to be strongly self-absorbing if the first factor embedding $D\ni x\mapsto x\otimes \one_{D}\in D\otimes D$ is approximately unitarily equivalent to an isomorphism. This class contains \cstar-algebras that are cornerstones of the Kirchberg-Phillips classification of simple purely infinite \cstar-algebras: the Cuntz algebras $\Ocal_{2}$ and $\oinf$.

For a strongly self-absorbing \cstar-algebra $D$, Dadarlat and Pennig provided a theory of locally trivial bundles with fibers $D\otimes \Kbb$. By exploiting the properties of strongly self-absorbing \cstar-algebras, they obtained some important properties of the associated automorphism groups: $\mathrm{Aut}(D)$ is contractible in the point-norm topology, and $\mathrm{Aut}(D\otimes \Kbb)$ is well-pointed and has the homotopy type of a CW-complex.

Most notably, $\mathrm{Aut}(D\otimes \Kbb)$ admits the structure of an infinite loop space. This means there exists an $\GO$-spectrum—a sequence of spaces $(Y_{k})_{k\in \Zbb_{\geq 0}}$ such that $Y_{0}\simeq \mathrm{Aut}(D\otimes \Kbb)$ and each $Y_{k}$ is homotopy equivalent to the loop space $\GO Y_{k+1}$ of $Y_{k+1}$—which naturally gives rise to a generalized cohomology theory $E^{k}(X)=[X,Y_{k}]$. There exist two deloopings of $\mathrm{Aut}(D\otimes \Kbb)$. One is the space $Y_{1}$ in the $\GO$-spectrum, which is the classifying space $B_{\otimes}\mathrm{Aut}(D\otimes \Kbb)$ with respect to the multiplication induced by the tensor product. The other is the classifying space $B\mathrm{Aut}(D\otimes \Kbb)$ with respect to the group operation induced by the usual composition of automorphisms. In the classification of bundles, the set $[X,B\mathrm{Aut}(D\otimes \Kbb)]$ naturally arises. However, since it has been shown that $B_{\otimes}\mathrm{Aut}(D\otimes\Kbb)\simeq B\mathrm{Aut}(D\otimes \Kbb)$, this set can be regarded as the first group in a generalized cohomology theory, allowing one to use the Atiyah-Hirzebruch spectral sequence to compute $[X,\mathrm{Aut}(D\otimes\Kbb)]$.

Let $G$ be a locally compact group. Strongly self-absorbing actions were introduced by Szab\'o \cite{SZ1,SZ2} as a $G$-equivariant analogue of strongly self-absorbing $C^*$-algebras. In the classification result of Gabe and Szab\'o, which is an equivariant version of the Kirchberg-Phillips theorem, $\mathcal{O}_{\infty}$-absorption also plays a crucial role. An equivariant analogue of the Dadarlat-Pennig theory for such actions was developed in \cite{EP,BP} in the context of infinite tensor product actions of certain compact groups on UHF algebras.

The purpose of this article is twofold. The first is to generalize Dadarlat's computation of homotopy groups \cite{D1} to the case of isometrically shift-absorbing actions of compact groups on Kirchberg algebras. The proof is essentially a combination of the arguments of Dadarlat \cite{D1} and Gabe-Szab\'o \cite{GS2}. However, this work is indispensable for the equivariant generalization of reciprocal algebras introduced by Sogabe \cite{So2}. Two unital UCT Kirchberg algebras $A$ and $B$ with finitely generated $K$-groups are said to be reciprocal if $A \sim_{KK} D(C_{\nu_B})$ and $B \sim_{KK} D(C_{\nu_A})$, where $D(C)$ denotes the Spanier-Whitehead $K$-dual of $C$. Sogabe proved that for any two unital UCT Kirchberg algebras $A$ and $B$ with finitely generated $K$-groups, $\pi_n(\mathrm{Aut}(A)) \cong \pi_n(\mathrm{Aut}(B))$ for all $n \ge 1$ if and only if $A$ and $B$ are either isomorphic or reciprocal. An equivariant version of reciprocality has not yet been formulated. To establish such an equivariant formulation, results concerning homotopy groups are required.

Our second purpose is to provide a unified treatment of the equivariant Dadarlat-Pennig theory, including the results of Evans and Pennig \cite{EP}, by using the results of Szab\'o \cite{SZ1}. While our proof is a straightforward synthesis of the techniques found in Szab\'o \cite{SZ1}, Dadarlat-Pennig \cite{DP}, and Evans-Pennig \cite{EP}, it is expected that the framework organized in this paper will be useful for establishing a general theory of the equivariant Dadarlat-Pennig theory in the future.

In the first half of this paper, we generalize Dadarlat's results \cite{D1} on Kirchberg algebras to the setting of compact group actions.

Recall that a Kirchberg algebra is a separable, nuclear, simple, and purely infinite \cstar-algebra. These algebras were classified by Kirchberg and Phillips \cite{Ki,Ph} in terms of $KK$-theory. More precisely, given two stable Kirchberg algebras $A$ and $B$, any invertible element in $KK(A,B)$ lifts to an isomorphism between $A$ and $B$. Let us now recall Dadarlat's results based on the Kirchberg-Phillips theorem. Any continuous map $\phi:X\to\mathrm{End}(A\otimes \Kbb)$ can be viewed as an element of $\mathrm{Hom}(A\otimes\Kbb,C(X)\otimes A\otimes\Kbb)$. One can associate the $KK$-class $KK(\phi)\in KK(A,C(X)\otimes A)$ to such $\phi$. Utilizing this correspondence, Dadarlat computed the homotopy sets via the map $[X,\mathrm{Aut}(A\otimes \Kbb)]\to KK(A,C(X)\otimes A)$.

In the equivariant setting, Gabe and Szab\'o recently established an equivariant version of the Kirchberg-Phillips theorem for Kirchberg algebras equipped with amenable and isometrically shift-absorbing actions of locally compact groups \cite{GS2}. In order to deal with the case where the second variable is the tensor product of $C(X)$ and a Kirchberg algebra, we extend their result.

Let $(A,G,\Ga)$ be a $G$-\cstar-algebra. We denote by $\End{A}$ and $\aut{A}$ the sets of all equivariant $*$-endomorphisms and equivariant $*$-automorphisms of $A$, respectively (which we assume to be unital if $A$ is unital). The remaining notation is described in \autoref{subsubsec:unital}.
\begin{theorem}
Let $G$ be a compact group, let $A$ be a Kirchberg algebra and let $\Ga:G\act A$ be an isometrically shift-absorbing action. Let $X$ be a compact connected metrizable space. Then
\begin{enumerate}
\item If $A$ is unital, then
\[[X,\mathrm{End}_{G}(A)]\ni[\phi]\mapsto [(C_{\nu}\phi,C_{\nu}j_{A})]\in KK^{G}(\cna,SC(X)\otimes A)\]
is a bijection.
\item If $A$ is stable, then 
\[[X,\mathrm{End}_{G}(A)]\ni[\phi]\mapsto KK^{G}(\phi)\in KK^{G}(A,C(X)\otimes A)\]
is a bijection.
\end{enumerate}
\end{theorem}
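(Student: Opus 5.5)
We treat $C(X)$-valued maps as equivariant $*$-homomorphisms: a continuous map $\phi:X\to\mathrm{End}_{G}(A)$ (point-norm topology) is the same as an equivariant $*$-homomorphism $\hat\phi:A\to C(X)\otimes A$, unital if $A$ is unital, with $G$ acting trivially on $C(X)$. A homotopy $X\times[0,1]\to\mathrm{End}_{G}(A)$ is then the same as an equivariant homotopy of such homomorphisms. So both statements say that homotopy classes of equivariant homomorphisms $A\to C(X)\otimes A$ are classified by the indicated $KK^{G}$-groups. This is an equivariant Kirchberg--Phillips type existence-and-uniqueness statement with the nonsimple, non-Kirchberg codomain $C(X)\otimes A$. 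We therefore follow Dadarlat's strategy in \cite{D1}, but with the Gabe--Szab\'o machinery \cite{GS2} replacing Kirchberg's.

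\textbf{Stable case.} Take $B=C(X)\otimes A$ with the action $\id{C(X)}\otimes\Ga$. The plan has two parts.

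\emph{Uniqueness.} Suppose two equivariant homomorphisms $\phi,\psi:A\to B$ have the same $KK^{G}$-class. Since $\Ga$ is isometrically shift-absorbing and $G$ is compact (so $\Ga$ is amenable), every equivariant homomorphism $A\to B$ that is fiberwise injective should be "$G$-$\oinf$-stable and $G$-full". Here fiberwise injectivity is automatic, because $A$ is simple and each evaluation $ev_x\circ\phi$ is an endomorphism of $A$. We then invoke the equivariant uniqueness theorem of \cite{GS2}, extended to codomains of the form $C(X)\otimes A$; this extension is the version announced before the theorem. It should give that $\phi$ and $\psi$ are asymptotically unitarily equivalent via a continuous path of unitaries in $\Mcal(B)^{G}$. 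Since $B$ is stable, $\Mcal(B)^{G}$ contains a copy of $\oinf$-type isometries, so its unitary group is connected (in fact contractible, by a Cuntz/Kuiper-type argument). Hence asymptotic unitary equivalence can be upgraded to a homotopy $\phi\simeq\psi$.

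\emph{Existence.} Given $x\in KK^{G}(A,B)$, we first realize $x$ as a Cuntz pair via Kasparov's equivariant stabilization: using $\Ga\otimes\id{\Kbb}$ absorbing $L^{2}(G)$, we get $x=[\phi_{+}]-[\phi_{-}]$. We then use $\oinf$-absorption and $G$-fullness to show that the $KK^{G}$-classes of equivariant homomorphisms form a group inside $KK^{G}(A,B)$ that exhausts it. Concretely, we add $\phi_{-}$ to a homomorphism realizing $-[\phi_{-}]$, obtained from the equivariant existence theorem of \cite{GS2}, which produces homomorphisms from $KK^{G}$-elements when the codomain is $\oinf$-stable and strongly stable. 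Connectedness of $X$ is needed here so that the relevant fullness and unitization arguments are uniform across $X$.

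\textbf{Unital case.} We use Dadarlat's mapping-cone device, fitting the unital endomorphisms into the stable case. For unital $\phi$, consider the mapping cone $C_{\nu}$ of the unit inclusion $\nu:\Cbb\to A$. Since $\phi$ is unital, $C_{\nu}\phi$ is defined, and the pair $(C_{\nu}\phi,C_{\nu}j_{A})$ defines a class in $KK^{G}(\cna,SC(X)\otimes A)$. The map is bijective by the following steps.
\begin{enumerate}
\item Compare $[X,\mathrm{End}_{G}(A)]$ with $[X,\mathrm{End}_{G}(A\otimes\Kbb)]$ restricted to maps sending $\one\otimes e$ to a projection equivalent to $\one\otimes e$.
\item Identify the latter fibration-wise: its fiber over the corner data is unital maps, and the base is the space of projections equivalent to $\one\otimes e$.
\item Apply the stable case together with the Puppe sequence for $\Cbb\to A\to\cna$.
\end{enumerate}

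\textbf{Main obstacle.} The real work is extending the Gabe--Szab\'o uniqueness and existence theorems to codomains $C(X)\otimes A$, which are not simple, while keeping full control of the equivariant unitaries. Our first attempt will be to verify the abstract hypotheses of \cite{GS2} (equivariant $\oinf$-stability, and fullness of $\phi$ in the sense of ideals of $C(X)\otimes A$) directly from simplicity of $A$, and then check that their proofs go through without simplicity of the codomain.
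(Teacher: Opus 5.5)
Your stable-case uniqueness step is the paper's. \autoref{thm:unique-stable}, the Gabe--Szab\'o uniqueness theorem extended to the codomain $C(X)\otimes A$ (with fullness coming from connectedness of $X$), is turned into a homotopy. Two details are needed there. First, the unitaries it produces lie in $\Ucal(\one+C(X)\otimes A)$ and are only asymptotically invariant, so one cuts off an initial segment and averages, taking the polar part of $\int_G\Gb_g(u_t)\,dg$. Second, the resulting invariant $U_0\neq\one$ is joined to $\one$ by the strictly continuous path $S_1^{(t)}U_0S_1^{(t)*}+S_2^{(t)}S_2^{(t)*}$, built from isometry paths supplied by strong stability; this is all that point-norm continuity requires.

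Fiberwise injectivity is not automatic. For stable $A$ the zero endomorphism lies in $\End{A}$. It is alone in its path component, since $\|\phi_t(a)\|\in\{0,\|a\|\}$ along any path, yet it has class $0$ just like suitable embeddings. So it must be excluded, and the uniqueness theorem only applies to embeddings.

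\textbf{Gap in stable existence.} \autoref{thm:ex-stable}, like \cite[Theorem 5.5]{GS2}, does not produce equivariant $*$-homomorphisms. It produces an anchored proper cocycle embedding $(\phi,\bbm{u})$ with $KK^{G}(\phi,\bbm{u})=z$. Likewise, the components of a Cuntz pair are cocycle representations into $\Mcal(B)$, so $z$ is not thereby a difference $[\phi_+]-[\phi_-]$ of classes of equivariant homomorphisms into $B$. Your claim that such classes exhaust $KK^{G}(A,C(X)\otimes A)$ is exactly the surjectivity to be proved. Realizing $-[\phi_-]$ by a homomorphism is circular, since the same tool would realize $z$ directly.

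The missing idea is how to kill the cocycle. The paper stabilizes by $(\Kbb(L^2(G)),\ad\rho)$. The unitary $W(b\otimes\Gx)(g)=\bbm{u}_g b\Gx(g)$ on $L^2(G,C(X)\otimes A)$ shows that $\bbm{u}\otimes\one$ is a coboundary. This yields a genuinely equivariant $\psi:A\otimes\Kbb(L^2(G))\to C(X)\otimes A\otimes\Kbb(L^2(G))$ of the right class. Then \cite[Corollary 6.4]{GS2} lifts the $KK^G$-equivalence to an equivariant isomorphism $\theta:A\to A\otimes\Kbb(L^2(G))$, and $(\id{C(X)}\otimes\theta^{-1})\circ\psi\circ\theta$ realizes $z$.

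\textbf{Unital case.} This case inherits the gap, and your outline departs from the paper without supplying the decisive step. The paper never passes through $A\otimes\Kbb$. With $B=C(X)\otimes A$ and $\Gb=\id{C(X)}\otimes\Ga$, it runs Dadarlat's orbit--stabilizer comparison:
\begin{itemize}
\item The group $\Ucal(B^{\Gb})/\Ucal_0(B^{\Gb})\cong K_1(B^{\Gb})$ (using that $A^{\Ga}$ is Kirchberg) acts on $[A,B]_{G,u}$ by conjugation. The group $KK^{G}(\Cbb,SB)$ acts on $KK^{G}(\cna,SB)$ through $\pi^*$, and the map is equivariant along $\partial$.
\item Orbits on both sides correspond to $\ker\nu_A^*\cong\ker(S\nu_A)^*$. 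This uses \autoref{prop:T-inj} (unital uniqueness \autoref{thm:unique-unital} plus averaging) and existence with prescribed unit, \autoref{cor:compact-ex}.
\item \autoref{cor:compact-ex} again removes a cocycle: applying uniqueness to $\Gl\mapsto\Gl p$ shows the cocycle is an asymptotic coboundary, hence a coboundary since $G$ is compact.
\item Stabilizers on both sides are identified with the image of $\nu_A^*:KK^{G}(A,SB)\to KK^{G}(\Cbb,SB)$ (\autoref{prop:stab-htpyset}).
\end{itemize}

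Your fibration route might be made to work, but as stated two things are missing. The fiber of $\Phi\mapsto\Phi(\one\otimes e)$ consists of endomorphisms of $A\otimes\Kbb$ fixing $\one\otimes e$. That space is homotopy equivalent to $\End{A}$ only after a contractibility argument for the off-corner partial isometries $\Phi(\one\otimes e_{j1})$. Moreover, a fibration only gives an exact sequence of homotopy sets with an action of $[X,\Omega\,\mathrm{Proj}]$. To obtain a bijection onto $KK^{G}(\cna,SC(X)\otimes A)$ you would still have to match orbits and stabilizers with the mapping-cone sequence, which is exactly the work described above.
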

To obtain results for equivariant automorphism groups, we introduce an equivariant version of $KK$-continuity in \cite{D1}.
\begin{definition}
Let $A$ be a separable C$^{\ast}$-algebra and let $X$ be a compact metrizable space with trivial $G$-action. We say that the pair $(A,X)$ is $KK^{G}$-continuous if for any point $x\in X$ there exists a base of closed neighborhoods $(V_{n})_{n}$ of $x$ such that $V_{n}\supset V_{m}$ for $n<m$ and the natural map $\varinjlim KK^{G}(A,C(V_{n})\otimes A)\to KK^{G}(A,A)$ is injective.
\end{definition}
If $X$ is locally contractible, then for any $G$-\cstar-algebra $A$, $(A,X)$ is $KK^{G}$-continuous pair. In particular, we can always work with finite CW-complexes. Let $\mathrm{Aut}_{G}(A)^{0}$ denote the path component of $\id{A}$.
\begin{theorem}
Let $G$ be a compact group and let $X$ be a path connected compact metrizable space and $x_{0}\in X$. Let $A$ be a Kirchberg algebra and let $\Ga:G\act A$ be an isometrically shift-absorbing action. Suppose that $(A,X)$ is a $KK^{G}$-continuous pair.
\begin{enumerate}
\item If $A$ is unital, 
\[\Gc_{X}:[X,\mathrm{Aut}_{G}(A)^{0}]\to KK^{G}(\cna,SC\ptx\otimes A)\]
is bijective.\\
\item If $A$ is stable,
\[\overline{\Gc}_{X}:[X,\mathrm{Aut}_{G}(A)^{0}]\to KK^{G}(A,C\ptx\otimes A)\]
is bijective.
\end{enumerate}
Moreover, if $\ptx$ is a co-$H$-space, they are group isomorphisms. In particular, we have 
\[\pi_{n}(\mathrm{Aut}_{G}(A))\cong\begin{cases}
KK^{G}(\cna, S^{n+1}A) & A\text{ is unital},\\
KK^{G}(A,S^{n}A) & A\text{ is stable}.
\end{cases}\]
for $n\geq 1$. For $n=0$, we have
\[\pi_{0}(\mathrm{Aut}_{G}(A))\cong \begin{cases}
(KK^{G}(\cna,SA)^{-1},\:\circ\:)& A\text{ is unital},\\
KK^{G}(A,A)^{-1}& A\text{ is stable}.
\end{cases}\]
Here, the group structure on $KK^{G}(A,A)^{-1}$ comes from the multiplication on $KK^{G}(A,A)$. The group structure on $KK^{G}(\cna,SA)$ is given by
\[x\circ y=x+y+y\otimes KK^{G}(\iota_{A})\otimes x\]
where $\Gi_{A}$ is the inclusion $SA\to \cna$.
\end{theorem}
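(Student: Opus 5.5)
We follow Dadarlat's strategy in \cite{D1}, using the previous theorem (the bijection for $[X,\mathrm{End}_{G}(A)]$) as the main input. We treat the stable case first; the unital case is identical once $KK^{G}(A,C(X)\otimes A)$ is replaced by $KK^{G}(\cna,SC(X)\otimes A)$, with the classes $KK^{G}(\phi)$ replaced by $[(C_{\nu}\phi,C_{\nu}j_{A})]$.

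\emph{Step 1: definition of the map.} For $\phi\colon X\to \mathrm{Aut}_{G}(A)^{0}$, set $\overline{\Gc}_{X}[\phi]=KK^{G}(\phi)-KK^{G}(\phi_{x_{0}})$. Since $\phi_{x_{0}}$ is homotopic to $\id{A}$, this element lies in $\ker(\mathrm{ev}_{x_{0}})=KK^{G}(A,C\ptx\otimes A)$, where $C\ptx$ denotes the functions vanishing at $x_{0}$. A cleaner normalization is to multiply pointwise by $\phi_{x_{0}}^{-1}$ and use the pointed homotopy set. Path connectedness of $X$ and of $\mathrm{Aut}_{G}(A)^{0}$ lets us pass freely between free and pointed homotopy classes, because the action of $\pi_{1}$ on $KK$-classes is trivial.

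\emph{Step 2: surjectivity.} Take $y\in KK^{G}(A,C\ptx\otimes A)$ and set $z=KK^{G}(\id{})+y\in KK^{G}(A,C(X)\otimes A)$. By the previous theorem, $z=KK^{G}(\phi)$ for some $\phi\colon X\to\mathrm{End}_{G}(A)$. Each fiber $\phi_{x}$ then has a $KK^{G}$-class that is invertible near $x_{0}$, and in fact everywhere, since $X$ is connected and invertibility in $KK^{G}(A,C(X)\otimes A)$ can be checked via $z\cdot(\text{inverse})$. Here is where $KK^{G}$-continuity is used: the inverse $KK^{G}(A,C(X)\otimes A)$-element $z^{-1}$ (with the composition product over $C(X)$) is realized by some $\psi$ with $\phi\psi\simeq\psi\phi\simeq\mathrm{id}$ as maps $X\to \mathrm{End}_{G}(A)$, again by the previous theorem applied fiberwise over $C(X)$.

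We then upgrade $\phi$ to a family of automorphisms. By the equivariant Kirchberg--Phillips theorem of Gabe--Szab\'o, every $\phi_{x}$ is approximately unitarily equivalent to an automorphism. Following Dadarlat, we use the fact that $\mathrm{End}_{G}(A)^{\ast}$ (the endomorphisms with invertible class) has $\mathrm{Aut}_{G}(A)$ as a homotopy-equivalent subspace: the inclusion $\mathrm{Aut}_{G}(A)\to\mathrm{End}_{G}(A)^{\ast}$ induces bijections on $[X,-]$. This is proved via an equivariant Elliott intertwining, with $KK^{G}$-continuity used to control the local lifts near each point.

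\emph{Step 3: injectivity, the main obstacle.} If $KK^{G}(\phi)=KK^{G}(\psi)$, the previous theorem gives a homotopy through endomorphisms, and we must push it into automorphisms. Our plan is to show that $[X,\mathrm{Aut}_{G}(A)]\to[X,\mathrm{End}_{G}(A)]$ is injective by an approximate-intertwining argument in which $X\times[0,1]$ is treated in pieces. $KK^{G}$-continuity is needed here to make the local uniqueness (asymptotic unitary equivalence from Gabe--Szab\'o, extended to $C(V_{n})\otimes A$ in the preceding section) compatible on overlaps. We expect this to be the technical heart of the proof.

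\emph{Step 4: group structure.} When $\ptx$ is a co-$H$-space, the group structure on $[X,\mathrm{Aut}_{G}(A)^{0}]$ coming from the co-$H$ structure agrees with pointwise composition by Eckmann--Hilton. Additivity then follows from the formula $KK^{G}(\phi\psi)=KK^{G}(\psi)\otimes_{C(X)}KK^{G}(\phi)$, whose reduced part is $x+y+x y$; under the co-$H$ structure the cross term vanishes. Taking $X=S^{n}$ gives $\pi_{n}$, since $C(S^{n},\mathrm{pt})=S^{n}\Cbb$. For $\pi_{0}$ we take $X$ to be a point and use Step 2 to identify $\pi_{0}$ with the invertibles. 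In the unital case, composing $C_{\nu}$-classes yields exactly the product $x\circ y=x+y+y\otimes KK^{G}(\iota_{A})\otimes x$, as one checks via the mapping cone exact sequence $SA\to\cna\to C_{0}(0,1]$.
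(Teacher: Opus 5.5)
Your overall skeleton matches the paper's: the End-bijection from the previous theorem, normalization at $x_{0}$, reduction to comparing $\mathrm{Aut}_{G}(A)$ with $\mathrm{End}_{G}(A)^{*}$, then additivity. Your Step 4, via vanishing of reduced cup products on a co-$H$-space, is a workable alternative to the paper's naturality argument through $X\vee X$.

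The gap is that the core of the theorem is only asserted in Step 2 and deferred in Step 3. That core is the bijection $[X,\mathrm{Aut}_{G}(A)]\to[X,\mathrm{End}_{G}(A)^{*}]$ (Corollary~\ref{cor:auto-end}), and the mechanism you propose for it is not the one that works.

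\textbf{Missing step 1: invertibility in the ring.} You need that a class $z\in K^{G}_{A}(X)=KK^{G}(A,C(X)\otimes A)$ with every $z_{x}$ invertible is invertible in the ring $K^{G}_{A}(X)$. Your justification, that invertibility ``can be checked via $z\cdot(\text{inverse})$'', is circular. The paper proves this in Proposition~\ref{prop:invertibility}:
\begin{enumerate}
\item $KK^{G}$-continuity makes $x\mapsto z_{x}$ locally constant.
\item Hence $\rho\colon K^{G}_{A}(X)\to\check{H}^{0}(X,KK^{G}(A,A))$ is a split surjection of rings.
\item $\ker\rho$ is a nil ideal. Every $\sigma\in\ker\rho$ lifts, again by $KK^{G}$-continuity and compactness, to $K^{G}_{A}(X,Y_{k})$ for finitely many closed sets $Y_{1},\dots,Y_{m}$ covering $X$. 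So $\sigma^{m}$ factors through $K^{G}_{A}(X,Y_{1}\cup\dots\cup Y_{m})=0$.
\end{enumerate}
In your situation $z=1+y$ with $y\in\ker\rho$, so $y$ is nilpotent and $z$ is invertible. $KK^{G}$-continuity enters only through this criterion, not through ``controlling local lifts''.

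\textbf{Missing step 2: a global construction, not patching.} No gluing of local uniqueness statements over pieces of $X$ or $X\times[0,1]$ is needed. Pointwise facts such as ``each $\phi_{x}$ is approximately unitarily equivalent to an automorphism'' cannot yield a continuous family anyway. Theorems~\ref{thm:ex-stable}--\ref{thm:unique-unital} are already global over $C(X)\otimes A$, and the paper uses them directly in Proposition~\ref{prop:im-inv}:
\begin{enumerate}
\item Realize $\sigma$ and $\sigma^{-1}$ by $\Phi,\Psi\colon A\to C(X)\otimes A$ and extend them $C(X)$-linearly to $\tilde{\Phi},\tilde{\Psi}$.
\item By uniqueness, $\tilde{\Phi}\circ\tilde{\Psi}$ and $\tilde{\Psi}\circ\tilde{\Phi}$ are asymptotically unitarily equivalent to $\mathrm{id}_{C(X)\otimes A}$.
\item The intertwining Lemma~\ref{lem:intertwining} gives an equivariant automorphism of $C(X)\otimes A$ asymptotically unitarily equivalent to $\tilde{\Phi}$. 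It is automatically $C(X)$-linear, because $C(X)\otimes\mathbf{1}$ is central in the unitization, so it is a continuous map $X\to\mathrm{Aut}_{G}(A)$.
\item Averaging the implementing unitaries over $G$ turns this into a homotopy through automorphism-valued maps, except at its endpoint $\tilde{\Phi}$.
\end{enumerate}
This gives surjectivity. Injectivity comes from running the same construction over $X\times[0,1]$, which is again $KK^{G}$-continuous (cf.\ Remark~\ref{rem:inv}). Note that your Step 2 appeals to exactly the bijection that Step 3 leaves open, so as written not even surjectivity is established.

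\textbf{The unital case.} It is not a verbatim copy of the stable one. The lift to automorphisms is carried out in $KK^{G}(A,C(X)\otimes A)$ under the constraint $[\mathbf{1}_{A}]\otimes\sigma=[\mathbf{1}_{C(X)\otimes A}]$, using the unital uniqueness theorem. The $C_{\nu_{A}}$-picture enters only through the bijection for $\mathrm{End}_{G}(A)$.
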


Using the Universal Coefficient Theorem established in \cite{MN2,Ko}, we compute $\pi_{n}(\mathrm{Aut}_{G}(A))$ for $n\geq 1$, where $G=\mathbb{Z}/p\mathbb{Z}$ and $A$ is either $\mathcal{O}_{2}$ or $\mathcal{O}_{\infty}$ equipped with an outer $G$-action belonging to the equivariant bootstrap class.

In the second half of this paper, following the approach of \cite{EP}, we provide an equivariant version of the Dadarlat-Pennig theory for strongly self-absorbing actions with certain conditions of compact groups on unital separable \cstar-algebras.
\begin{theorem}
Let $G$ be a compact group and let $D$ be a unital separable \cstar-algebra. Let $\Gg:G\act D$ be a strongly self-absorbing action such that $D^{\Gg}$ is $K_{1}$-injective and $\dstf$ has the cancellation property. Then $\Aut{G}{\dst}$ is an infinite loop space. Let us denote the associated cohomology theory by $E^{*}_{D,G}$. Then we have
\[E^{0}_{D,G}(X)=[X,\Aut{G}{\dst}]\text{ and }E^{1}_{D,G}=[X,B\Aut{G}{\dst}].\]
We equip isomorphism classes of $G$-equivariant locally trivial fiber bundles with fibers $(\dst,\Gg\otimes\id{\Kbb})$ over a finite CW-complex $X$ with trivial action, with a group multiplication given by the fiberwise tensor product. Then this group is isomorphic to $E^{1}_{D,G}(X)$.\\
If $D$ is a unital Kirchberg algebra and $\Gg$ is an isometrically shift-absorbing and strongly self-absorbing action, then the coefficients are given by  
\[\check{E}^{k}_{D,G}\cong\begin{cases}
0 & k>0,\\
K_{0}^{G}(D)^{\times}& k=0,\\
K_{-k}^{G}(D)& k<0.
\end{cases}\]
\end{theorem}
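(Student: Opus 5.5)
We follow the strategy of Dadarlat--Pennig \cite{DP}, made equivariant as in Evans--Pennig \cite{EP}, with Szab\'o's results \cite{SZ1} on strongly self-absorbing actions replacing the non-equivariant input. The proof has four steps: build a commutative monoid, show it is group-like, identify the two deloopings, and compute the coefficients.

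\textbf{Step 1 (the monoid).} Fix a $G$-equivariant isomorphism $(D\otimes D,\Gg\otimes\Gg)\cong(D,\Gg)$ that is approximately unitarily equivalent to the first-factor embedding through $\Gg$-fixed unitaries; this exists by \cite{SZ1}. For $n\geq 1$ let
\[
X_n=\{\phi\in\End{\dstn}:\phi(\onee^{\otimes n})\ \text{is Murray--von Neumann equivalent to}\ \onee^{\otimes n}\ \text{in}\ \dstf^{\otimes n}\}.
\]
Tensor product of endomorphisms gives maps $X_n\times X_m\to X_{n+m}$. We then apply the Segal/Boardman--Vogt machinery exactly as in \cite{DP} to get a commutative $\mathcal{I}$-monoid, or a $\Gamma$-space.

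The key homotopy-theoretic input is the equivariant analogue of \cite{DP}: the inclusion $\aut{\dst}\hookrightarrow X_1$ is a homotopy equivalence, and the stabilization maps $X_n\to X_{n+1}$ are equivalences. I plan to prove both as follows.

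\emph{Contractibility.} $\aut{D}$ is contractible. This follows from strong self-absorption of $\Gg$ together with $K_1$-injectivity of $D^\Gg$, which lets one connect fixed unitaries to $1$ by paths.

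\emph{Projection-fixing endomorphisms.} Using cancellation in $\dstf$, every $\phi\in X_1$ is homotopic to one fixing $\onee$. The space of such endomorphisms fibres over $\End{D}\simeq\aut{D}\simeq *$, with fibre built from equivariant unitaries.

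\textbf{Step 2 (group-like, infinite loop space).} $\pi_0$ is a group because $\pi_0(X_1)=\pi_0(\aut{\dst})$. Segal's theorem then makes $X_1\simeq\aut{\dst}$ an infinite loop space, yielding $E^*_{D,G}$ with $E^0_{D,G}(X)=[X,\aut{\dst}]$.

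\textbf{Step 3 (two deloopings agree).} The composition delooping $B\aut{\dst}$ and the tensor delooping $B_\otimes$ agree by the Eckmann--Hilton argument of \cite{DP}. The reason is that composition and tensor product are homotopic as $H$-space structures, the homotopy coming from the fixed-unitary paths of Step 1. Hence $E^1_{D,G}(X)=[X,B\aut{\dst}]$.

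\emph{Bundles.} By standard principal-bundle theory, equivariant locally trivial bundles with fibre $(\dst,\Gg\otimes\id{\Kbb})$ over $X$ with trivial action correspond to principal $\aut{\dst}$-bundles. Under this correspondence the fibrewise tensor product corresponds to the $\otimes$-multiplication on $B_\otimes$.

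\textbf{Step 4 (coefficients).} For $k\leq 0$ we have $\check E^{k}_{D,G}=\pi_{-k}(\aut{\dst})$.

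\emph{Case $k<0$.} By the stable case of the preceding theorem, $\pi_{-k}(\aut{\dst})\cong KK^G(D,S^{-k}D)$. Strong self-absorption gives $KK^G(D,S^{n}D)\cong K^G_n(D)$, since $D$ is $KK^G$-equivalent to $\mathbb{C}$ in the appropriate $D$-linear sense. Concretely, $KK^G(D,D\otimes B)\cong KK^G(\mathbb{C},D\otimes B)$ via the equivariant unital embedding and $D\otimes D\cong D$.

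\emph{Case $k=0$.} Here $\pi_0\cong KK^G(D,D)^{-1}\cong K_0^G(D)^\times$, where the ring structure on $K_0^G(D)$ comes from $D\otimes D\cong D$.

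\emph{Case $k>0$.} $\check E^{k}_{D,G}=\pi_0$ of the $k$-fold delooping of a connected-in-positive-degrees object. By Step 3, $B\aut{\dst}$ has $\pi_1=\pi_0(\aut{\dst})$ and is otherwise shifted, so these groups vanish for $k>0$.

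\textbf{Main obstacle.} The hardest part is Step 1, namely proving $\aut{D}\simeq *$ and the equivalence $X_1\simeq\aut{\dst}$ equivariantly. The non-equivariant arguments use $K_1$-injectivity and cancellation, and here one must run them inside the fixed-point algebras. This is precisely why the hypotheses on $D^\Gg$ and $\dstf$ are imposed. I would first try to transport \cite[Sect.~2]{DP} verbatim, using Szab\'o's equivariant unitary homotopies.
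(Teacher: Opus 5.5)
\textbf{Step 1: your model $X_\bullet$ only sees the identity component.} With your definition, $X_1$ does not contain $\aut{\dst}$.

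Take $\phi\in X_1$. Then $\phi(\onee)\sim\onee$ in the stable algebra $\dstf\cong D^{\Gg}\otimes\Kbb$. So the two projections are conjugate by a unitary of $\Mcal(\dstf)$, whose unitary group is contractible. Hence $\phi$ is joined inside $X_1$ to an endomorphism fixing $\onee$.

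Moreover $\mathrm{End}_{G,\onee}(\dst)$ is contractible. The homotopy $t\mapsto\Psi_t^{-1}\circ(\phi\otimes\id{\dst})\circ\Psi_t$ behind \autoref{lem:contra} (with $\Psi$ as in \eqref{def:psi}) never uses that $\phi$ is invertible. So $X_1$ is path-connected, and $X_1\simeq\mathrm{Proj}_{\onee}(\dstf)\simeq\aut{\dst}^{0}$.

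Consequently there is no inclusion $\aut{\dst}\hookrightarrow X_1$. Automorphisms with $[\phi(\onee)]_0\neq[\onee]_0$ are excluded, and they exist as soon as $\pi_0(\aut{\dst})\neq0$. For example, for $(\oinf,\Gg^{\infty})$ from \autoref{ex:ssaisa-triv} one has $\pi_0(\aut{\oinf\otimes\Kbb})\cong R(G)^{\times}\ni-1$.

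So the claim $\pi_0(X_1)=\pi_0(\aut{\dst})$ is false, and the machine applied to $X_\bullet$ deloops only $\aut{\dst}^{0}$. This gives $\check{E}^{0}=0$, contradicting your Step 4. It also gives $B(\aut{\dst}^{0})$ in place of $B\aut{\dst}$, which misses bundles. For instance $[S^1,B(\aut{\dst}^{0})]=0$, while mapping tori of automorphisms give $[S^1,B\aut{\dst}]\cong\pi_0(\aut{\dst})$.

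The paper avoids this by using the $\Ical$-group $\ggd(\Bn)=\aut{\dstn}$ itself. If you insist on endomorphisms, the condition must be $\phi((\onee)^{\otimes n})\sim\alpha((\onee)^{\otimes n})$ for some $\alpha\in\aut{\dstn}$, and the comparison of $\pi_0$ has to be redone.

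\textbf{Step 3 is also under-justified.} Homotopic $H$-space multiplications do not by themselves identify the two deloopings. The paper obtains both $\ggd(\mathbf{1})\simeq(\ggd)_{h\Ical}$ and $B_{\otimes}\aut{\dst}\simeq B\aut{\dst}$ from Lemma 3.5 and Theorem 3.6 of \cite{DP2} on stable EH-$\Ical$-groups with compatible inverses. The real work is checking their hypotheses:
\begin{itemize}
\item each level is a well-pointed topological group (\autoref{prop:well-pointed}); your endomorphism monoids are not groups;
\item the strict interchange law holds;
\item $\phi\mapsto\phi\otimes\id{\dst}$ is a homotopy equivalence, shown by conjugating along a path from an isomorphism $\Phi$ to $l$;
\item inverses are compatible;
\item each level has CW homotopy type (\autoref{prop:aut-proj}).
\end{itemize}
Compatible inverses are deduced from injectivity of $\Theta$ on $\pi_0$ (\autoref{lem:onee-inj}, \autoref{lem:ehigrp}). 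That is where the paper uses cancellation in $\dstf$. In your outline, cancellation is only invoked to move $\phi(\onee)$ back to $\onee$, where the Murray--von Neumann condition already suffices.

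\textbf{Step 4 is fine for $\aut{\dst}$ itself.} For $k<0$ you go through the stable case of the Dadarlat-type theorem and $KK^G(D,D\otimes B)\cong KK^G(\Cbb,D\otimes B)$. This is a legitimate alternative to the paper's route $\pi_n(\aut{\dst})\cong\pi_{n-1}(\Ucal(D^{\Gg}))\cong K_n^G(D)$ through the projection fibration. Your route uses that $\Gg\otimes\id{\Kbb}$ is isometrically shift-absorbing, which \autoref{lem:pi-zero-auto} also needs.
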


This paper is organized as follows. In Section 1, we first review the basics of Kirchberg algebras, equivariant $K$-theory, and equivariant $KK$-theory. Next, we recall the property of being isometrically shift-absorbing, which we assume for the actions. Finally, we extend the result of Gabe and Szab\'o to the case where the second variable is the tensor product of a Kirchberg algebra and the algebra of continuous functions $C(X)$.

In Section 2, we deal with an equivariant version of Dadarlat's result. Namely, for an isometrically shift-absorbing action of a compact group $G$ on a Kirchberg algebra $A$, we provide $KK$-theoretic descriptions of the homotopy sets and homotopy groups of the set of equivariant $*$-homomorphisms $\mathrm{End}_G(A)$ and the equivariant automorphism group $\mathrm{Aut}_G(A)$. First, using the result extended at the end of the previous section, we describe $[X,\mathrm{End}_G(A)]$ in terms of equivariant $KK$-theory. We then introduce the concept of being $KK^G$-continuous for a pair consisting of a Kirchberg algebra $A$ and a topological space $X$, and based on this concept, we describe $[X,\mathrm{Aut}_G(A)]$. Furthermore, we review some facts concerning homotopy sets and give descriptions of the homotopy sets for pointed spaces and homotopy groups.

In Section 3, we extend the equivariant version of the Dadarlat-Pennig theory for strongly self-absorbing actions with certain conditions of a compact group $G$ on a unital separable \cstar-algebra $D$. First, we examine the topological properties of $\mathrm{Aut}_{G}(D)$ and $\mathrm{Aut}_{G}(D\otimes\Kbb)$. Next, to investigate the infinite loop space structure, we review the theory of commutative $\Ical$-monoids and show that $\mathrm{Aut}_{G}(D\otimes \Kbb)$ admits an infinite loop space structure. Finally, using this fact, we provide a classification result for locally trivial $(D\otimes \Kbb, \Gg, \otimes \id{\Kbb})$-bundles over finite CW-complexes.

This paper is based on the author's master's thesis.

\subsection*{Acknowledgments}
I would like to express my deepest gratitude to my supervisor, Professor Masaki Izumi, for his invaluable guidance and continuous support throughout my master's studies. I am also deeply grateful to him for carefully reading this thesis and providing many helpful corrections and suggestions. I would like to thank Professor Yosuke Kubota and Professor Taro Sogabe for their helpful comments and valuable advice on this research. I would also like to thank Professor Ralf Meyer for teaching me about the splitting of the exact sequence in the Universal Coefficient Theorem. Finally, I am grateful to my friend and colleague, Keiya Ohara, for our helpful daily discussions and encouragement.

\section{Preliminaries}
This section is organized as follows. In Section 2.1, we establish our notation for \cstar-algebras and recall some facts regarding pure infiniteness and proper infiniteness. Section 2.2 provides a brief review of basic definitions and properties of K-theory, equivariant K-theory, and equivariant KK-theory. In Section 2.3, we recall the notion of isometrically shift-absorbing actions, introduced by Gabe and Szab\'o \cite{GS2}, which play a central role in our classification results. Finally, in Section 2.4, we prove a Gabe-Szab\'o type theorem (cf. \cite{GS2}, Theorems 5.5--5.8), extending the result for $KK^G(A,B)$ to the case of $KK^G(A, B \otimes C(X))$.

\subsection{$C^{*}$-algebras}

\begin{notation}\label{not1}
For the general theory of \cstar-algebras, we refer the reader to \cite{Ped}. Throughout this paper, capital letters $A, B, C$ will denote C$^{*}$-algebras. The multiplier algebra of $A$ is denoted as $\Mcal(A)$. The Unitization of $A$ is denoted by $\tilde{A}$. (We add a new unit even if $A$ is unital.) For a $*$-homomorphism $\phi:A\to B$, $\tilde{\phi}$ denotes the unital $*$-homomorphism from $\tilde{A}$ to $\tilde{B}$ which is an extension of $\phi$. When $A$ is unital, we let $\Ucal(A)$ denote the set of all unitaries of $A$ and $\Ucal_{0}(A)$ denote the path-component of the unit.  We write $\Ucal(\one+A)$ for the set of all unitaries in $\tilde{A}$ whose scalar part is $\one$. For a separable infinite dimensional Hilbert space $H$, we write $\Kbb$ for a C$^{*}$-algebra of compact operators $\Kbb(H)$. A C$^{*}$-algebra $A$ is called stable if $A\cong A\otimes \Kbb$. We write $A\otimes B$ for the minimal tensor product. A simple C$^{*}$-algebra $A$ is called purely infinite if $A$ is not isomorphic to $\Cbb$ and for any two non-zero positive elements $a,b\in A$, there is $c\in A$ such that $a=cbc^{*}$. A simple purely infinite nuclear separable C$^{*}$-algebra is called a Kirchberg algebra.
\end{notation}
\begin{example}(The Cuntz algebra)\\
The Cuntz algebra $\Ocal_{n}$, where $2\leq n<\infty$, is the universal unital C$^{*}$-algebra generated by isometries $S_{1},\:S_{2},\:\cdots,\:S_{n}$ satisfying $\sum_{i=1}^{n}S_{i}S_{i}^{*}=1$. The Cuntz algebra $\oinf$ is the universal unital C$^{*}$-algebra generated by an infinite sequence of isometries $S_{1},\:S_{2},\cdots$ with mutually orthogonal range projections $S_{i}S_{i}^{*}$. It is known that the Cuntz algebras $\Ocal_{n}$ and $\oinf$ are Kirchberg algebras (\cite{Cu}).
\end{example}
We recall a few results on Kirchberg algebras that we will use later
\begin{theorem}[{\cite[Theorem 1.2]{Zh}}]
  Every separable, purely infinite, simple C$^{*}$-algebra is either unital or stable.
\end{theorem}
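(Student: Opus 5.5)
The plan is to reduce to the dichotomy between the existence and absence of a nonzero projection, and in the projectionless direction to show stability via the Hjelmborg--Rørdam criterion. Let $A$ be separable, simple and purely infinite. Suppose first that $A$ is not unital. For a separable $C^*$-algebra, stability is equivalent (Hjelmborg--Rørdam) to the condition that for every positive $a\in A$ and every $\varepsilon>0$ there is a positive $b\in A$ with $\|ab\|<\varepsilon$ and $a\precsim b$ (Cuntz subequivalence); equivalently, one can find $b\perp a$ approximately with $a$ dominated by $b$. I will verify this condition.

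The first step is to use simplicity and non-unitality to produce room orthogonal to $a$. Given $a\ge 0$ of norm one, set $a_\varepsilon=f_\varepsilon(a)$, where $f_\varepsilon$ vanishes on $[0,\varepsilon]$ and equals $t$ near $1$; then $\|a-a_\varepsilon\|\le\varepsilon$. Since $A$ is non-unital, $g_\varepsilon(a)$ (with $g_\varepsilon=1$ on $[\varepsilon,1]$) is not a unit. Hence there is a nonzero positive $c$ with $c\,g_\varepsilon(a)$ small, and after functional calculus we get $c\perp a_\varepsilon$ exactly. Concretely, I would take an approximate unit element $e$ of $A$ and work in the hereditary subalgebra $\overline{(1-g_\varepsilon(a))A(1-g_\varepsilon(a))}$ inside $\tilde A$, intersected with $A$. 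This subalgebra is nonzero because $g_\varepsilon(a)$ is not a unit for $A$: otherwise $g_\varepsilon(a)x=x$ for all $x$ in $A$, forcing $A$ to be unital.

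The second step uses pure infiniteness. The definition in Notation~\ref{not1} gives $a=cbc^*$ for any nonzero positive $a,b$, so every nonzero positive element dominates every other in the Cuntz sense. In particular $a_\varepsilon\precsim c$ for the $c$ constructed above. Taking $b=c$, we have $\|a_\varepsilon b\|=0$, and hence $\|ab\|\le\varepsilon\|b\|$ with $a_\varepsilon\precsim b$. A standard $\varepsilon$-adjustment of the Hjelmborg--Rørdam criterion, replacing $a$ by $a_\varepsilon$, then yields stability of $A$.

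The main obstacle is the first step: producing a nonzero element exactly orthogonal to a cut-down of $a$ using only non-unitality. If the hereditary-subalgebra argument is delicate, I would instead proceed as follows. Pure infiniteness and simplicity give that $A$ has real rank zero (Zhang), so $\overline{aAa}$ contains a projection $p$ approximating the support of $a$. Since $A$ is non-unital, $p\ne 1$, and $(1-p)A(1-p)\cap A$ is nonzero by simplicity. This provides a $b$ orthogonal to $p$, and pure infiniteness supplies the domination $p\precsim b$. Finally, if $A$ is unital there is nothing to prove, so the dichotomy follows.
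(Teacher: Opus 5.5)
There is a genuine gap, and it is in the stability criterion, not in the step you flag. The Hjelmborg--R{\o}rdam theorem says that a $\sigma$-unital $A$ is stable iff for every $a\in A_+$ and $\varepsilon>0$ there is $b\in A_+$ with $\|ab\|\le\varepsilon$ and $a\sim b$. Here $a\sim b$ means $a=v^*v$ and $b=vv^*$ for some $v\in A$, a relation that is sensitive to norms.

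The version you state, with Cuntz subequivalence $a\precsim b$, is false. Since $b\precsim\lambda b$ for every $\lambda>0$, the approximate orthogonality can always be met by shrinking $b$. In $\mathcal{O}_2$, take $a=1$ and any nonzero $b\ge0$ with $\|b\|<\varepsilon$: then $1\precsim b$ by pure infiniteness, so the condition holds, yet $\mathcal{O}_2$ is not stable. Your concluding ``standard $\varepsilon$-adjustment'' therefore has no valid criterion to rest on. In any case, what you actually establish ($a_\varepsilon\precsim b$ and $a_\varepsilon b=0$) concerns $a_\varepsilon$, not $a$.

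The repair is short and uses what you already have. Your first step is correct in its concrete form. If $g=g_\varepsilon(a)$ is not a unit for $A$, then some $c=(1-g)x(1-g)$ with $x\in A_+$ is nonzero, lies in $A$, and satisfies $c\,a_\varepsilon=0$, because $(1-g)a_\varepsilon=0$.

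Now use the \emph{exact} form of pure infiniteness from Notation~\ref{not1}: $a=ycy^*$ for some $y\in A$. Put $v=c^{1/2}y^*$, so that $v^*v=a$. Since $a_\varepsilon c^{1/2}=0$, we get $av=(a-a_\varepsilon)v$. Hence $b:=vv^*$ satisfies $a\sim b$ and $\|ab\|\le\|a-a_\varepsilon\|\,\|v\|^2\le\varepsilon$ when $\|a\|=1$. This is precisely the Hjelmborg--R{\o}rdam condition, and separability gives $\sigma$-unitality, so $A$ is stable.

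With this patch your argument is a complete proof. The paper gives no proof of its own; it only cites Zhang, whose argument goes through real rank zero and comparison of projections in an approximate unit. Your route needs neither.

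You should drop the fallback paragraph. It invokes Zhang's real rank zero theorem, which comes from the same source as the result being proved. Also, the fact that $(1-p)A(1-p)\neq 0$ follows from non-unitality, not from simplicity.
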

\begin{theorem}[{\cite[Theorem 2.1.6]{Ph}}]\label{absorption}
Let $A$ be a simple, separable, and nuclear C$^{*}$-algebra. Then $A$ is isomorphic to $A\otimes \oinf$ if and only if $A$ is purely infinite. Thus any Kirchberg algebra absorbs $\oinf$ tensorially.
\end{theorem}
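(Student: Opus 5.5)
The plan is to prove the two implications separately. The forward direction is the easy one. The reverse direction (pure infiniteness implies $\oinf$-absorption) is Kirchberg's absorption theorem and carries all the difficulty. Throughout I use the dichotomy of \cite[Theorem 1.2]{Zh}: $A$ is either unital or stable. This lets me treat the unital case first and then pass to the stable case via the central sequence algebra $F(A)=(A_{\infty}\cap A')/\mathrm{Ann}(A)$ in place of $A_{\infty}\cap A'$.

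\emph{($\Rightarrow$)} Suppose $A\cong A\otimes\oinf$. It suffices to show that $A\otimes\oinf$ is simple and purely infinite.
\begin{enumerate}
\item Simplicity follows because $\oinf$ is simple and nuclear, so the minimal tensor product of two simple algebras is simple.
\item For pure infiniteness, I would use the criterion that every nonzero hereditary subalgebra contains an infinite projection. Given a nonzero positive $x\in A\otimes\oinf$, Kirchberg's slice lemma produces a nonzero elementary tensor $a\otimes e$ with $e$ a projection in $\oinf$, lying in the hereditary subalgebra generated by $x$ up to Cuntz subequivalence.
\item Since $e\oinf e$ contains two isometries with orthogonal ranges, one can build an infinite projection $p\otimes e$ below $x$, or more directly a sequence witnessing $b\precsim a$ for all nonzero positive $a,b$. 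That sequence gives the required $c$ with $a=cbc^{*}$.
\end{enumerate}
Finally, $A\otimes\oinf\not\cong\Cbb$, which completes this direction.

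\emph{($\Leftarrow$)} By the standard intertwining argument of Rørdam and Toms--Winter, it suffices to produce a unital $*$-homomorphism $\oinf\to A_{\infty}\cap A'$, or $\oinf\to F(A)$ in the stable case. This works because $\oinf$ is strongly self-absorbing, and a separable algebra whose central sequence algebra admits a unital copy of a strongly self-absorbing $D$ absorbs $D$. Moreover, any unital properly infinite \cstar-algebra contains a sequence of isometries with mutually orthogonal ranges, hence a unital copy of $\oinf$. So the task reduces to showing that $A_{\infty}\cap A'$ (resp.\ $F(A)$) is properly infinite, i.e.\ that its unit is properly infinite.

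To get proper infiniteness I would use nuclearity in the form of Kirchberg's result: for $A$ nuclear, simple and purely infinite, the two embeddings $a\mapsto a\otimes\one$ and $a\mapsto a\otimes\one\otimes\one$ are approximately unitarily equivalent to each other in suitable matrix/corner pictures. Equivalently, $\id{A}$ is approximately unitarily equivalent to $a\mapsto s_{1}as_{1}^{*}+s_{2}as_{2}^{*}$ for isometries $s_{i}$ coming from pure infiniteness. The unitaries implementing this equivalence along a sequence then yield two isometries $t_{1},t_{2}$ in $A_{\infty}$ that asymptotically commute with $A$ and have orthogonal ranges. Concretely, I would take $t_{i}=\lim_{n} u_{n}^{*}s_{i}$ with a suitable reindexing, and these witness $\one\oplus\one\precsim\one$ in $A_{\infty}\cap A'$.

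The main obstacle is this last step: showing that $\id{A}$ is approximately unitarily equivalent to the ``doubling'' map $a\mapsto s_{1}as_{1}^{*}+s_{2}as_{2}^{*}$. This requires the uniqueness half of Kirchberg's machinery: a nuclear, full, and $\oinf$-stable-type absorbing map into a purely infinite simple algebra is approximately unitarily equivalent to any other such map with the same ideal data. That machinery is where nuclearity enters, through the Choi--Effros lifting and Voiculescu-type absorption. I would first try to prove it via Kirchberg's theorem that nuclear unital maps $A\to B$ into purely infinite simple $B$ are approximately dominated by, and hence absorbed by, $*$-homomorphisms, applied to both maps. Since the statement is cited from \cite{Ph}, in the paper itself one simply invokes it.
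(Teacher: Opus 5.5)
Since the paper only cites \cite{Ph}, your outline has to stand on its own. The forward direction is fine. So is the reduction of $(\Leftarrow)$ to finding two isometries with orthogonal ranges in $A_{\infty}\cap A'$. The gap is exactly the step you flag as the main obstacle: the claim that $\mathrm{id}_{A}$ is approximately unitarily equivalent to the doubling map $\rho(a)=s_{1}as_{1}^{*}+s_{2}as_{2}^{*}$ is false in general, so no uniqueness theorem can supply it. Each $\mathrm{Ad}(s_{i})$ induces the identity on $K_{0}(A)$, so $K_{0}(\rho)=2\,K_{0}(\mathrm{id}_{A})$. Approximately unitarily equivalent maps induce the same map on $K_{0}$, so your claim forces $K_{0}(A)=0$. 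In the unital case, evaluating at $a=1$ even forces $s_{1}s_{1}^{*}+s_{2}s_{2}^{*}=1$, hence $[1_{A}]=2[1_{A}]$. For $A=\oinf$, where $[1]$ generates $K_{0}(\oinf)\cong\Zbb$, such isometries do not exist at all. So $\mathrm{id}_{A}\oplus\mathrm{id}_{A}\approx_{u}\mathrm{id}_{A}$ describes the $\otw$-type situation ($KL(\mathrm{id}_{A})=0$), not the $\oinf$ one. The uniqueness machinery you invoke fails for three further reasons. It needs equal $KK$/$KL$-classes, not just ``ideal data''. $\mathrm{id}_{A}$ is not an absorbing map. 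And applying uniqueness results for $\oinf$-stable maps to $\mathrm{id}_{A}$ presupposes the absorption you are trying to prove.

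What you need is approximate \emph{domination}, which has no $K$-theoretic obstruction. That is the correct half of your last paragraph; do not upgrade it to absorption. Let $s_{1},s_{2}\in A$ be isometries with orthogonal ranges and set $p=s_{1}s_{1}^{*}+s_{2}s_{2}^{*}$, which need not equal $1$. The algebra $A$ is nuclear, and $\mathrm{id}_{A}$ is an injective unital $*$-homomorphism into the simple purely infinite algebra $A$. So Kirchberg's theorem (the result from \cite{KR2} used in the proof of \autoref{lem:comm-range}) gives $v_{n}\in A$ with $v_{n}^{*}av_{n}\to\rho(a)$ for all $a$. In $A_{\infty}$, $V=(v_{n})$ satisfies $V^{*}aV=\rho(a)$ and $V^{*}V=p$. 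Multiplicativity of $\rho$ then gives $(aV-V\rho(a))^{*}(aV-V\rho(a))=\rho(a^{*}a)-2\rho(a^{*})\rho(a)+\rho(a^{*})p\rho(a)=0$. Hence $T_{i}=Vs_{i}$ (not $u_{n}^{*}s_{i}$) satisfy $T_{i}^{*}T_{j}=s_{i}^{*}ps_{j}=\delta_{ij}$ and $aT_{i}=V\rho(a)s_{i}=Vs_{i}a=T_{i}a$. They are therefore isometries with orthogonal ranges in $A_{\infty}\cap A'$, and the rest of your argument goes through. In the stable case you can bypass $F(A)$: pick a nonzero projection $q\in A$. Then $A\cong qAq\otimes\Kbb$ by Brown's theorem, and $qAq$ is a unital Kirchberg algebra.
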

We define proper infiniteness of elements in a C$^{*}$-algebra which plays an important role in $K$-theory defined in the next section.
\begin{definition}
A non-zero positive element $a$ in a C$^{*}$-algebra $A$ is called \textit{properly infinite} if there exists a sequence $(x_{k})_{k=1}^{\infty}$ in $M_{2}(A)$ with $x_{k}^{*}\mathrm{diag}(a,0)x_{k}\to\mathrm{diag}(a,a)$.
\end{definition}
Pure infiniteness of a C$^{*}$-algebra is useful for determining proper infiniteness of elements. We now recall the general definition of purely infinite C$^{*}$-algebras (which generalizes the condition given in \autoref{not1} for simple algebras).
\begin{definition}[{\cite[Definition 4.1]{KR}}]
A C$^{*}$-algebra is said to be \textit{purely infinite} if there are no characters on $A$ and if for every pair of positive elements $a,b$ in $A$, such that $a$ lies in the closed two-sided ideal generated by $b$, there exists a sequence $(r_{j})_{j=1}^{\infty}$ in $A$ with $r_{j}^{*}br_{j}\to a$.
\end{definition}
\begin{theorem}[{\cite[Theorem 4.16]{KR}}]
A C$^{*}$-algeba $A$ is purely infinite if and only if every non-zero positive element in $A$ is properly infinite.
\end{theorem}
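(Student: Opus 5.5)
The statement is Kirchberg–Rørdam's characterization: $A$ is purely infinite if and only if every non-zero positive element of $A$ is properly infinite. I would prove the two implications separately, working throughout with the Cuntz subequivalence relation. Write $a\precsim b$ if there are $r_j\in A$ with $r_j^*br_j\to a$. The basic tools are:
\begin{itemize}
\item transitivity of $\precsim$;
\item the fact that $a\precsim b$ whenever $a\in\overline{AbA}$ and $b$ is properly infinite;
\item the standard $\varepsilon$-cut-down estimates $(a-\varepsilon)_+=d^*bd$, valid whenever $\|a-r^*br\|<\varepsilon$.
\end{itemize}

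For ($\Rightarrow$), let $a\neq0$ be positive and set $b=\mathrm{diag}(a,a)\in M_2(A)$. Then $b$ lies in the ideal of $M_2(A)$ generated by $\mathrm{diag}(a,0)$. Pure infiniteness passes to $M_2(A)$ (Kirchberg–Rørdam show this), so we get $x_k$ with $x_k^*\mathrm{diag}(a,0)x_k\to\mathrm{diag}(a,a)$. That is exactly proper infiniteness of $a$. If one prefers to avoid the matrix stability step, the alternative is to work inside the hereditary subalgebra $\overline{aAa}$. It has no characters, since a character there would extend to a character of a quotient, and $A$ has none. From this one produces two orthogonal positive elements $e_1,e_2\in\overline{aAa}$ that are both full in the ideal generated by $a$. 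Pure infiniteness then gives $a\precsim e_1$ and $a\precsim e_2$, hence $a\oplus a\precsim e_1+e_2\precsim a$.

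For ($\Leftarrow$), suppose every nonzero positive element is properly infinite. First, $A$ has no characters. If $\chi$ were a character and $a\ge0$ with $\chi(a)=1$, apply $\chi\otimes\mathrm{id}_{M_2}$ to $x_k^*\mathrm{diag}(a,0)x_k\to\mathrm{diag}(a,a)$. This gives a rank-$\le1$ matrix converging to $\mathrm{diag}(1,1)$, which is impossible. Next, take $a\in\overline{AbA}$. Then for every $\varepsilon>0$ we can write $(a-\varepsilon)_+\approx\sum_{i=1}^n y_i^*by_i$. By proper infiniteness, $b\oplus b\precsim b$, and iterating gives $b^{\oplus n}\precsim b$. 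Hence
\[\sum_i y_i^*by_i \precsim b^{\oplus n}\precsim b,\]
where the first step uses $\sum y_i^*by_i=Y^*(b\otimes 1_n)Y$ for the column $Y=(y_i)$. Letting $\varepsilon\to0$ then gives $a\precsim b$, i.e.\ $r_j^*br_j\to a$.

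I expect the main obstacle to be the ($\Rightarrow$) direction. It needs either that pure infiniteness passes to $M_2(A)$, or the construction of two orthogonal full elements in $\overline{aAa}$. Both require a genuine argument from Kirchberg–Rørdam rather than routine estimates. The ($\Leftarrow$) direction reduces to bookkeeping with Cuntz comparison.
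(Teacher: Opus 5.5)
The paper gives no proof here; it only quotes Kirchberg--R{\o}rdam, so your argument has to stand on its own. Your ($\Leftarrow$) direction is correct and standard. The gap is ($\Rightarrow$), which carries the entire content of the theorem and which you do not actually prove.

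\textbf{Route 1 is circular.} In Kirchberg--R{\o}rdam, the passage of pure infiniteness to $M_n(A)$ is obtained after, and by means of, this characterization. Checking the comparison property in $M_2(A)$ directly already contains the statement $a\oplus a\precsim a\oplus 0$ that you want.

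\textbf{Route 2 fails at two points.} First, the reason you give for $\overline{aAa}$ having no characters is false. A character of a hereditary subalgebra need not come from a character of a quotient: $\mathbb{K}$ has no characters, but its rank-one corner $\cong\mathbb{C}$ does. The claim is true here only because of the comparison property. A character of $\overline{aAa}$ yields an irreducible $\pi$ of $A$ with $\dim\ge 2$ and $\mathrm{rank}\,\pi(a)=1$. Then $\pi(\overline{AaA})\supseteq\mathbb{K}(H)$ contains some rank-two $\pi(b)$, and $b\precsim a$ is impossible by lower semicontinuity of rank.

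Second, and more seriously, ``from this one produces two orthogonal full elements'' does not follow from the absence of characters. Take $E$ the quaternionic line bundle over $S^4$. The section algebra of $\mathrm{End}(E)$ has no characters. Yet two orthogonal, nowhere vanishing positive sections would both have rank one everywhere, hence split off a line subbundle of $E$ and force $c_2(E)=0$. So the halving must be extracted from the comparison property, and your proposal has no argument for it.

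\textbf{A way to close the gap.} Let $B=\overline{aAa}$, take $x\in B$ with $x^2=0$, and fix $\eta>0$. Choose $g\in C_0((0,\infty))$ with $0\le g\le 1$ and $g=1$ on $[\eta,\infty)$, and set $u=(1-g(xx^*))\,a\,(1-g(xx^*))\in B$.
\begin{itemize}
\item $u\perp (xx^*-\eta)_+$.
\item The ideal generated by $u$ contains $a$. Modulo that ideal, $\bar a^{1/n}(1-\bar g)=0$. Since $g(xx^*)x^*x=0$ and $x^*x\in B$, this gives $\overline{x^*x}=\lim (1-\bar g)\bar a^{1/n}\,\overline{x^*x}\,\bar a^{1/n}(1-\bar g)=0$. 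Hence $\bar x=0$, $\bar g=g(0)=0$, and $\bar a=\bar u=0$.
\item The comparison property gives $a\precsim u$. Therefore $a\oplus (x^*x-\eta)_+\precsim u\oplus (xx^*-\eta)_+\sim u+(xx^*-\eta)_+\precsim a$.
\end{itemize}
Now consider the closed ideals $J$ of $A$ such that $a\oplus y\precsim a$ for all $y\in J_+$. They are closed under finite sums: for $y\in(J_1+J_2)_+$ one has $y\le y_1+y_2$ with $y_i\in(J_i)_+$, so $a\oplus y\precsim a\oplus y_1\oplus y_2\precsim a\oplus y_2\precsim a$. They are also closed under closures of increasing unions. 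So they include the ideal generated by all $x^*x$ with $x\in B$, $x^2=0$.

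That ideal contains $B\ni a$. Otherwise the quotient of $B$ by the ideal these elements generate in $B$ is nonzero. Its characters would be characters of $B$, which the rank argument excludes, so it is noncommutative and contains a nonzero square-zero element. Square-zero elements lift to square-zero elements of $B$, a contradiction. Hence $a\oplus a\precsim a$.
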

It is known that $\oinf\otimes B$ is purely infinite for every C$^{*}$-algebra $B$ (\cite[Proposition 4.5]{KR}). Thus, combining this with \autoref{absorption}, we see that every non-zero positive element in $A\otimes B$ is properly infinite for every Kirchberg algebra $A$ and C$^{*}$-algebra $B$.
\begin{definition}
An element in a \cstar-algebra $B$ is full if it is not contained in any proper ideal of $B$. A $*$-homomorphism $\phi:A\to B$ is full if $\phi(a)$ is full in $B$ for any nonzero element $a\in A$
\end{definition}
\begin{lem}\label{lem:full-elem}
Let $A$ be a Kirchberg algebra and $X$ be a compact connected Hausdorff space. Then any nonzero projection in $C(X)\otimes A$ is a full element.
\end{lem}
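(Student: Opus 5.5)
Identify $C(X)\otimes A$ with $C(X,A)$, the continuous $A$-valued functions on $X$. Let $p\in C(X,A)$ be a nonzero projection and let $I$ be the closed two-sided ideal of $C(X,A)$ generated by $p$. The plan is to show that $I=C(X,A)$. This will follow from two facts. First, for every $x\in X$ the projection $p(x)$ is nonzero. Second, ideals in $C(X)\otimes A$ with $A$ simple are determined by their fibers over closed subsets of $X$.

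\emph{Step 1 (nonvanishing).} The map $x\mapsto p(x)$ is norm continuous into the projections of $A$. Since $\|p(x)\|\in\{0,1\}$ and $x\mapsto\|p(x)\|$ is continuous, the set $\{x: p(x)\neq 0\}$ is both open and closed. Because $p\neq 0$ this set is nonempty, and since $X$ is connected it equals $X$. So $p(x)$ is a nonzero projection of $A$ for every $x$.

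\emph{Step 2 (local fullness).} Fix $x_0\in X$. Since $A$ is simple and $p(x_0)\neq0$, the ideal generated by $p(x_0)$ is all of $A$. Fix $a\in A$ and $\varepsilon>0$. There are finitely many $b_i,c_i\in A$ with $\|a-\sum_i b_ip(x_0)c_i\|<\varepsilon$. Regard $b_i$ and $c_i$ as constant functions and form $f=\sum_i b_ipc_i\in I$. By continuity of $p$, we have $\|f(x)-a\|<2\varepsilon$ on a neighbourhood $U_{x_0}$ of $x_0$.

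\emph{Step 3 (gluing).} Let $g\in C(X,A)$. Cover the compact space $X$ by finitely many such neighbourhoods $U_1,\dots,U_m$, chosen uniformly enough to approximate $g$. Concretely, the sets $\{x:\|g(x)-g(x_k)\|<\varepsilon\}$ are open, so for each $x_k$ we may shrink $U_k$ so that $f_k\in I$ satisfies $\|f_k(x)-g(x)\|<3\varepsilon$ on $U_k$. Take a partition of unity $(h_k)$ subordinate to $(U_k)$. Then $\sum_k h_kf_k\in I$, because $I$ is an ideal and $h_k\otimes 1$ acts as a central multiplier. Moreover
\[
\Bigl\|g-\sum_k h_kf_k\Bigr\|\le\sup_x\sum_k h_k(x)\|g(x)-f_k(x)\|<3\varepsilon .
\]
Since $I$ is closed, $g\in I$. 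Hence $I=C(X,A)$ and $p$ is full.

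The only delicate point is Step 3. It requires that multiplying by $C(X)$ keeps us inside $I$. This holds because $C(X)\otimes 1$ sits in the center of the multiplier algebra $\Mcal(C(X)\otimes A)$, which works also when $A$ is stable and hence nonunital. Unitality of $A$ and pure infiniteness are not needed; only simplicity of $A$ and connectedness of $X$ are used.
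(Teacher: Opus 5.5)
Your proof is correct and follows essentially the same route as the paper's: every fibre $p(x)$ is nonzero by connectedness, $p(x)$ is full in $A$ near each point, and the local pieces are glued with a partition of unity subordinate to a finite cover. The difference lies in the local step. The paper uses pure infiniteness to write $a=b_np_{x_n}b_n^*$ exactly with $\|b_n\|\le 2$, so a single cover depending only on $p$ captures every $1\otimes a$ with $a\in A_+$. You instead use only simplicity, with finite sums $\sum_i b_ip(x_0)c_i$, and choose the neighbourhoods after the coefficients; this handles arbitrary $g$ directly and shows that pure infiniteness is not needed. One ordering point: shrink each neighbourhood to $\{x:\|g(x)-g(y)\|<\varepsilon\}$ before extracting the finite subcover, not after, or the shrunk sets may fail to cover $X$.
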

\begin{proof}
Let $p$ be a nonzero projection in $C(X)\otimes A$. Since $X$ is compact and connected, $\|p_{x}\|=1$ for every $x\in X$. For any $\Ge>0$, there exists an open cover $(U_{n})_{n=1}^{N}$ such that $\sup_{x,y\in U_{n}}\|p_{x}-p_{y}\|<\Ge$. Let $(f_{n})_{n=1}^{N}$ be a partition of unity subordinate to $(U_{n})_{n=1}^{N}$. For each $n$, fix a point $x_{n}\in U_{n}$. Let $a\in A_{+}$ be any element with norm $1$. Since $A$ is simple and purely infinite, for each $n$, there exists $b_{n}\in A_{+}$ such that $b_{n}p_{x_{n}}b_{n}^{*}=a$ and $\|b_{n}\|\leq 2$. Define $h_{n}\in C(X)\otimes A$ to be $h_{n}(x)=f_{n}(x)p_{x}$. Then $h_{n}$ is in the ideal generated by $p$.
\begin{align*}
&\|1\otimes a-\sum_{n=1}^{N}(1\otimes b_{n})h_{n}(1\otimes b_{n}^{*})\|\\
&=\|\sum_{n=1}^{N}(1\otimes b_{n})(f_{n}\otimes p_{x_{n}})(1\otimes b_{n}^{*})-\sum_{n=1}^{N}(1\otimes b_{n})h_{n}(1\otimes b_{n}^{*})\|\\
&=\sup_{x\in X}\|\sum_{n=1}^{N}f_{n}(x)b_{n}(p_{x_{n}}-p_{x})b_{n}^{*}\|\\
&=\sup_{x\in X}\sum_{n=1}^{N}f_{n}(x)\|b_{n}\|\|p_{x_{n}}-p_{x}\|\|b_{n}^{*}\|\\
&<4\Ge
\end{align*}
Since $\Ge$ is arbitrary, $1\otimes a$ belongs to the ideal generated by $p$. Thus $p$ is a full element.
\end{proof}
\begin{lem}\label{lem:full-hom}
Let $A,B$ be Kirchberg algebras and $X$ be a compact connected Hausdorff space. Then any nonzero $*$-homomorphism $\phi:A\to B\otimes C(X)$ is full.
\end{lem}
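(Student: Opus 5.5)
The plan is to reduce fullness of $\phi(a)$ for an arbitrary nonzero $a\in A$ to fullness of a nonzero projection, and then invoke \autoref{lem:full-elem} (with the roles of the algebras adjusted: $B\otimes C(X)\cong C(X)\otimes B$, and $B$ is a Kirchberg algebra).

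First, observe that $\ker\phi$ is a closed two-sided ideal of $A$. Since $A$ is simple and $\phi\neq 0$, the kernel is zero and $\phi$ is injective. Now fix a nonzero $a\in A$. It suffices to show that the ideal $I$ of $C(X)\otimes B$ generated by $\phi(a)$ is the whole algebra. Since $I$ also contains $\phi(a^{*}a)$, which is nonzero, we may assume $a\geq 0$ and $\|a\|=1$.

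Since $A$ is simple and purely infinite, the hereditary subalgebra $\overline{aAa}$ contains a nonzero projection $q$. In fact, $A$ has real rank zero (Zhang), and every nonzero hereditary subalgebra of a purely infinite simple algebra contains an infinite projection. Alternatively, the defining property of pure infiniteness in \autoref{not1} gives $c$ with $cac^{*}=e$ for any chosen positive $e$, so the comparison can be arranged directly. Then $q\in\overline{aAa}$ lies in the ideal of $A$ generated by $a$. Concretely, $q=\lim_n x_n a y_n$ with $x_n,y_n\in A$, so $\phi(q)=\lim_n\phi(x_n)\phi(a)\phi(y_n)$ lies in $I$, because $I$ is closed.

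Since $\phi$ is injective, $\phi(q)$ is a nonzero projection in $C(X)\otimes B$. By \autoref{lem:full-elem}, applied to the Kirchberg algebra $B$ and the compact connected space $X$, $\phi(q)$ is full, and hence $I=C(X)\otimes B$. This shows $\phi(a)$ is full, so $\phi$ is full.

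The only mildly delicate step is producing the nonzero projection in $\overline{aAa}$. If we want to avoid the real rank zero citation, we can argue directly from the definition of pure infiniteness. For any nonzero projection $p\in A$ (which exists since $A$ is purely infinite simple), there is $c$ with $c\,a\,c^{*}=p$. Then $p$ lies in the ideal generated by $a$, and the rest of the argument goes through unchanged with $q:=p$.
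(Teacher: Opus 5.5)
Your proof is correct and follows essentially the same route as the paper. The paper also gets injectivity from simplicity, writes a nonzero projection in the ideal generated by $a$ directly as $p=xay$, and applies \autoref{lem:full-elem} to $\phi(p)$. Your reduction to $a\geq 0$ and the use of $cac^{*}=p$ simply spell out why such a projection exists under the comparison definition of pure infiniteness.
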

\begin{proof}
Let $\phi:A\to B\otimes C(X)$ be a nonzero $*$-homomorphism. Since $A$ is simple, $\phi$ is injective. Let $a \in A$ be a nonzero element. Since $A$ is simple purely infinite, there exist $x,y\in A$ such that $p \coloneqq xay$ is a nonzero projection. By \autoref{lem:full-elem}, $\phi(p)$ is full in $B\otimes C(X)$. Since $\phi(p) = \phi(x)\phi(a)\phi(y)$, the ideal generated by $\phi(a)$ contains the ideal generated by $\phi(p)$. Thus, $\phi(a)$ is full in $B\otimes C(X)$, which implies that $\phi$ is full.
\end{proof}

\subsection{Equivariant $K$-theory and $KK$-theory}

In this section, we introduce equivariant $K$-theory and $KK$-theory. First, however, we recall the definition of ordinary $K$-theory for C*-algebras. For the basics of $K$-theory and its standard notation, we refer the reader to Blackadar's book \cite{Bl}.

Let $A$ be a C*-algebra.
Two projections $p, q$ in $A$ are unitarily equivalent, denoted by $p \sim_u q$, if there exists a unitary $u \in \mathcal{U}(\tilde{A})$ such that $q = upu^*$.
They are Murray-von Neumann equivalent, denoted by $p \sim q$, if there exists $v \in A$ such that $p = v^*v$ and $q = vv^*$.
Two unitaries $u, v \in \tilde{A}$ are homotopic, denoted by $u \sim_h v$, if there exists a continuous path in $\mathcal{U}(\tilde{A})$ from $u$ to $v$.
\begin{lem}
Let $A$ be a C$^{*}$-algebra and $p,q$ be projections in $A\otimes \Kbb$. Then $p\sim_{u}q$ if and only if $p\sim q$
\end{lem}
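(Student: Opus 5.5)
Murray--von Neumann equivalence follows from unitary equivalence by a direct computation, so the work lies in the converse. There the plan is to use the stability of $A\otimes\Kbb$: it gives room to enlarge a partial isometry into a unitary, built from the partial isometry together with a second partial isometry implementing the equivalence of the complements. Throughout write $B=A\otimes\Kbb$, and view $\tilde B$ inside $\Mcal(B)$ whenever convenient.

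For ($\Rightarrow$), suppose $q=upu^*$ with $u\in\Ucal(\tilde B)$. Put $v=up\in B$; this lies in $B$ because $B$ is an ideal in $\tilde B$. Then $v^*v=pu^*up=p$ and $vv^*=upu^*=q$, so $p\sim q$.

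For ($\Leftarrow$), let $v\in B$ satisfy $v^*v=p$ and $vv^*=q$. The first idea is the standard trick of doubling the algebra. In $M_2(\tilde B)$ consider
\[
w=\begin{pmatrix} v & 1-q\\ 1-p & v^*\end{pmatrix},
\]
which is a self-adjoint unitary, up to the usual sign conventions one checks directly. It satisfies $w\,\mathrm{diag}(p,0)\,w^*=\mathrm{diag}(q,0)$, and its scalar part is $\begin{pmatrix}0&1\\1&0\end{pmatrix}$. Multiplying by that constant permutation unitary gives a unitary in $M_2(\tilde B)$ with scalar part $1$. This shows $\mathrm{diag}(p,0)\sim_u\mathrm{diag}(q,0)$ in $M_2(B)$.

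To descend back to $B$, I use $B\cong M_2(B)$ compatibly with corners. Choose isometries $s_1,s_2\in\Mcal(\Kbb)$ with $s_1s_1^*+s_2s_2^*=1$. The map $x\mapsto\sum_{i,j}(1\otimes s_i)x_{ij}(1\otimes s_j^*)$ is an isomorphism $M_2(B)\to B$. The remaining difficulty is that it sends $\mathrm{diag}(p,0)$ to $(1\otimes s_1)p(1\otimes s_1^*)$ rather than to $p$.

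The main obstacle is therefore this corner-moving step: showing $p\sim_u (1\otimes s_1)p(1\otimes s_1)^*$ inside $\tilde B$. The plan is as follows. Since $p\in B$, approximate $p$ by an element supported in $A\otimes M_n$; by functional calculus a nearby projection there is unitarily equivalent to $p$ via a unitary close to $1$, hence one of the form $1+B$. So I may assume $p\in A\otimes M_n$, and likewise $q$.

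Next choose $s_1$ so that it acts as the identity on the first $n$ basis vectors and shifts the rest into the remaining coordinates. This is possible because $H$ is infinite dimensional: take $s_1e_k=e_k$ for $k\le n$ and $s_1e_{n+j}=e_{n+2j}$, and let $s_2$ fill the other odd-indexed slots beyond $n$. With this choice $(1\otimes s_1)p(1\otimes s_1^*)=p$ exactly, and similarly for $q$.

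The conjugating unitary then transports to a unitary of $\tilde B$ with scalar part $1$, which gives $p\sim_u q$.
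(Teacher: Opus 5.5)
Your architecture matches the paper's. You perturb $p,q$ by unitaries in $1+B$ to projections $p_0,q_0\in A\otimes M_n$, then use the free coordinates of $\Kbb$ to run the $2\times 2$ rotation trick. The paper does the last step directly inside $M_{2n}(A)\subset A\otimes\Kbb$, where $\mathrm{diag}(p_0,0_n)$ literally is $p_0$, so it needs no isomorphism $M_2(B)\cong B$. Your choice of $s_1$ fixing the first $n$ basis vectors accomplishes the same thing and is fine.

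The step that fails as written is the correction of the scalar part. Multiplying $w$ by the constant flip $\sigma=\begin{pmatrix}0&1\\1&0\end{pmatrix}$ does not preserve $w\,\mathrm{diag}(p,0)\,w^*=\mathrm{diag}(q,0)$:
\begin{itemize}
\item The unitary $\sigma w$ sends $\mathrm{diag}(p,0)$ to $\sigma\,\mathrm{diag}(q,0)\,\sigma=\mathrm{diag}(0,q)$.
\item The unitary $w\sigma=\begin{pmatrix}1-q&v\\ v^*&1-p\end{pmatrix}$ sends $\mathrm{diag}(0,p)$, not $\mathrm{diag}(p,0)$, to $\mathrm{diag}(q,0)$. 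Incidentally, $w\sigma$ is the self-adjoint unitary; $w$ itself is not self-adjoint.
\end{itemize}

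This matters for your final sentence. If you transport $w$ itself under $x\mapsto\sum_{i,j}(1\otimes s_i)x_{ij}(1\otimes s_j^*)$, the image has scalar part $1\otimes(s_1s_2^*+s_2s_1^*)$. That lies in $\Mcal(B)$ but not in $\tilde B$. If you transport $\sigma w$ instead, you land on $(1\otimes s_2)q_0(1\otimes s_2)^*$ rather than $q_0$.

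The standard repair is to multiply instead by $z=\begin{pmatrix}q&1-q\\1-q&q\end{pmatrix}$. This is a unitary in $M_2(\tilde B)$ with scalar part $\sigma$, and it satisfies $z\,\mathrm{diag}(q,0)\,z^*=\mathrm{diag}(q,0)$. Hence $zw\in 1_2+M_2(B)$ and $zw\,\mathrm{diag}(p,0)\,(zw)^*=\mathrm{diag}(q,0)$. With this replacement, applied to $p_0,q_0$, the rest of your argument goes through.
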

\begin{proof}
It is clear that unitary equivalence implies Murray-von Neumann equivalence. Suppose that $p\sim q$.
Note that $A\otimes \Kbb$ is regarded as an inductive limit of $M_{n}(A)\cong A\otimes M_{n}$ with connecting map defined by $
a\mapsto\mathrm{diag}(a,0)
$. So there exists a natural number $n$ and projections $p_{0},q_{0}$ in $M_{n}(A)$ such that $\|p-p_{0}\|<\frac{1}{4}$ and $\|q-q_{0}\|<\frac{1}{4}$. Then $p$ and $p_{0}$ (resp. $q$ and $q_{0}$) are homotopic in $\Pcal_{n}(A)$. Thus they are unitarily equivalent. Since $p\sim q$ and $p_{0}\sim q_{0}$, $\mathrm{diag}(p_{0},0_{n})$ and $\mathrm{diag}(q_{0},0_{n})$ are unitarily equivalent. Hence $p\sim_{u} q$.
\end{proof}
\begin{definition}\label{def:K0}
Let $A$ be a unital C*-algebra. For a positive integer $n$, let $\mathcal{P}_n(A)$ denote the set of all projections in $M_n(A)$. We set $\mathcal{P}_\infty(A) = \bigcup_{n=1}^\infty \mathcal{P}_n(A)$ (viewing the sets $\mathcal{P}_n(A)$ as being pairwise disjoint). We define an equivalence relation $\sim_0$ on $\mathcal{P}_\infty(A)$ as follows: For $p \in \mathcal{P}_n(A)$ and $q \in \mathcal{P}_m(A)$, we write $p \sim_0 q$ if there exists $v \in M_{m,n}(A)$ such that $p = v^*v$ and $q = vv^*$. The group $K_0(A)$ is defined to be the Grothendieck group of the commutative monoid $\mathcal{P}_\infty(A)/{\sim_0}$. We denote the equivalence class of a projection $p \in \mathcal{P}_\infty(A)$ in $K_0(A)$ by $[p]_0$. The group operation is determined by $[p]_0 + [q]_0 = [\mathrm{diag}(p, q)]_0$. If $A$ is non-unital, $K_0(A)$ is defined as the kernel of the map $K_0(\tilde{A}) \to K_0(\mathbb{C})$ induced by the quotient homomorphism $\tilde{A} \to \mathbb{C}$.
\end{definition}

\begin{definition}\label{def:K1}
Suppose first that $A$ is unital. Let $\mathcal{U}_n(A)$ denote the set of all unitaries in $M_n(A)$ and set $\mathcal{U}_\infty(A) = \bigcup_{n=1}^\infty \mathcal{U}_n(A)$. We define an equivalence relation $\sim_1$ on $\mathcal{U}_\infty(A)$ as follows: For $u \in \mathcal{U}_n(A)$ and $v \in \mathcal{U}_m(A)$, we write $u \sim_1 v$ if there exists a natural number $k \ge \max\{m, n\}$ such that $u \oplus 1_{k-n} \sim_h v \oplus 1_{k-m}$ in $\mathcal{U}_k(A)$. The group $K_1(A)$ is defined as the quotient group $\mathcal{U}_\infty(A)/{\sim_1}$, denoted by $\{[u]_1 \mid u \in \mathcal{U}_\infty(A)\}$. If $A$ is non-unital, we define $K_1(A)$ to be $K_1(\tilde{A})$. (Note that $K_1(\mathbb{C}) = \{0\}$, so this is consistent with the standard definition using the kernel).
\end{definition}

\begin{proposition}
Let $A,B$ be C$^{*}$-algebras.
\begin{enumerate}
\item (Functoriality) For any $*$-homomorphism $\phi:A\to B$, there exists an induced group homomorphism $K_{*}(\phi):K_{*}(A)\to K_{*}(B)$ given by $[x]_{*}\mapsto[\phi(x)]_{*}$ for $*=0,1$.
\item (Homotopy invariance) If $A$ and $B$ are homotopic then $K_{*}(A)\cong K_{*}(B)$ for $*=0,1$.
\item (Stability) $K_{*}(A)\cong K_{*}(A\otimes \Kbb)$ for $*=0,1$.
\item (Suspension isomorphism) Let $SA=C_{0}(0,1)\otimes A$ be the suspension of $A$. Then $K_{*}(SA)\cong K_{1-*}(A)$ for $*=0,1$.
\item (The six-term exact sequence) For every short exact sequence of C$^{*}$-algebras
\[0\to I\xrightarrow{\phi}A\xrightarrow{\psi}B\to 0,\]
the associated six-term sequence 
\begin{center}
\begin{tikzpicture}[auto]
\node(1) at (-3,1) {$K_{0}(I)$};
\node(2) at (0,1) {$K_{0}(A)$};
\node(3) at (3,1) {$K_{0}(B)$};
\node(4) at (3,-1) {$K_{1}(I)$};
\node(5) at (0,-1) {$K_{1}(A)$};
\node(6) at (-3,-1) {$K_{1}(B)$};
\draw[->] (1) to node {$K_{0}(\phi)$} (2);
\draw[->] (2) to node {$K_{0}(\psi)$} (3);
\draw[->] (3) to node {$\Gd$} (4);
\draw[->] (4) to node {$K_{1}(\phi)$} (5);
\draw[->] (5) to node {$K_{1}(\psi)$} (6);
\draw[->] (6) to node {$\Gd$} (1);
\end{tikzpicture}
\end{center}
is exact. 
\end{enumerate}
\end{proposition}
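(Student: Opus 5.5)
The statement collects the standard properties of $K_0$ and $K_1$. I would not build the full theory from scratch. Instead I would prove each item from Definitions \ref{def:K0} and \ref{def:K1}, reducing the harder statements to the earlier ones, roughly as in the treatment in \cite{Bl}.

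For \emph{functoriality}, a $*$-homomorphism $\phi$ extends to the unital map $\tilde\phi:\tilde A\to\tilde B$, and applying it entrywise to matrices gives maps $M_n(\tilde A)\to M_n(\tilde B)$. These preserve projections, unitaries, the relations $\sim_0$ and $\sim_1$, and direct sums. So $[p]_0\mapsto[\tilde\phi(p)]_0$ defines a monoid homomorphism, which extends to the Grothendieck group by its universal property. It restricts to the kernels of the maps to $K_0(\Cbb)$, since $\tilde\phi$ commutes with the scalar-part quotients. For $K_1$ the construction is immediate.

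For \emph{homotopy invariance}, I would first show that homotopic $*$-homomorphisms $\phi_t$ induce the same map. The path $t\mapsto\phi_t(p)$ is a norm-continuous path of projections, and nearby projections are unitarily equivalent, which is the same fact used in the preceding lemma. So $\phi_0(p)\sim_0\phi_1(p)$. Likewise $t\mapsto\phi_t(u)$ is a homotopy of unitaries. Applying this to $\psi\circ\phi\simeq\id{}$ and $\phi\circ\psi\simeq\id{}$ then gives the isomorphism.

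For \emph{stability}, the corner embedding $a\mapsto a\otimes e_{11}$ induces the map. Surjectivity comes from approximating projections and unitaries in $(A\otimes\Kbb)^\sim$ by elements of $M_n(\tilde A)$, exactly as in the lemma above. Injectivity follows because $M_m(M_n(A))\cong M_{mn}(A)$ identifies the equivalence relations.

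For the \emph{six-term exact sequence}, I would proceed in four steps:
\begin{enumerate}
\item Prove half-exactness at the middle terms directly: for $K_1$ by lifting unitaries, and for $K_0$ using standard picture arguments.
\item Construct the index map $\Gd:K_1(B)\to K_0(I)$. Lift $u\oplus u^*$ to a unitary $w\in M_{2n}(\tilde A)$, which is possible because it lies in the connected component of the unit. Then set $\Gd[u]_1=[w\,\mathrm{diag}(1,0)w^*]_0-[\mathrm{diag}(1,0)]_0$, and check exactness at the two adjacent terms.
\item Prove the \emph{suspension isomorphism} $\theta:K_1(A)\to K_0(SA)$ as the index map of $0\to SA\to CA\to A\to0$. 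Here the cone $CA$ is contractible, so its $K$-groups vanish by homotopy invariance, and exactness makes $\theta$ an isomorphism.
\item Show Bott periodicity $K_0(A)\cong K_1(SA)$ and define the exponential map $\partial:K_0(B)\to K_1(I)$ as the composite $K_0(B)\to K_1(SB)\to K_0(SI)\cong K_1(I)$, using the index map of the suspended sequence. Exactness at the remaining positions then follows from exactness of the suspended index sequence.
\end{enumerate}

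The main obstacle is Bott periodicity in step 4: the isomorphism $\beta:K_0(A)\to K_1(SA)$, $[p]\mapsto[e^{2\pi i t}p+(1-p)]$. I would prove it by the standard Toeplitz-extension argument, or cite \cite{Bl}, since everything else is elementary bookkeeping with matrices.
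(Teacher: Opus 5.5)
Your outline is the standard textbook route: functoriality and homotopy invariance from the definitions, the index map, $K_0(SA)\cong K_1(A)$ via the cone sequence, and Bott periodicity to define the exponential map. This is consistent with the paper, which gives no proof of this proposition and only records it as standard background with a reference to \cite{Bl}; deferring Bott periodicity to \cite{Bl} is in that same spirit. The one thin spot is injectivity in the stability step: besides $M_m(M_n(A))\cong M_{mn}(A)$, it needs the finite-stage approximation of partial isometries that you already use for surjectivity.
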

For later use, We now describe the index map $\delta : K_1(B) \to K_0(I)$. For any element $x \in K_1(B)$, take a unitary $u \in \mathcal{U}_n(\tilde{B})$ such that $x = [u]_1$.Since $\mathrm{diag}(u, u^*)$ is homotopic to $one_{2n}$ in $\mathcal{U}_{2n}(\tilde{B})$, there exists a unitary $v \in \mathcal{U}_{2n}(\tilde{A})$ such that$$\tilde{\psi}(v) = \begin{pmatrix} u & 0 \\ 0 & u^* \end{pmatrix}.$$Furthermore, let $p \in \mathcal{P}_{2n}(\tilde{I})$ be the projection satisfying$$\tilde{\varphi}(p) = v \begin{pmatrix} \one_n & 0 \\ 0 & 0 \end{pmatrix} v^* \quad \text{and} \quad s(p) = \begin{pmatrix} \one_n & 0 \\ 0 & 0 \end{pmatrix}.$$The index map is defined by $\delta(x) = [p]_0 - [s(p)]_0$. This construction is known to be well-defined.\\
Under suitable conditions, every element in the K-groups can be represented by a projection or a unitary in $A$ (without passing to matrix algebras). The following result for $K_{0}$-group is due to Cuntz \cite{Cu2}.
\begin{proposition}
Let $A$ be a C$^{*}$-algebra which contains a full properly infinite projection. For any $x\in K_{0}(A)$, there exists a full properly infinite projection $p\in A$ such that $x=[p]_{0}$. If $p,q\in A$ are two full properly infinite projections such that $[p]_{0}=[q]_{0}$, then $p\sim q$. Moreover, if we also assume that $A$ is unital and that both $\one-p$ and $\one-q$ are full and properly infinite, then $upu^{*}=q$ for some unitary $u\in \Ucal(A)$.
\end{proposition}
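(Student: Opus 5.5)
We argue in three steps, corresponding to the three assertions of the statement. Throughout, ``properly infinite'' for a projection $p$ means that there are isometries $s_1,s_2\in pAp$ with $s_1s_1^*+s_2s_2^*\le p$, equivalently $p\oplus p\precsim p$.

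\textbf{Step 1: every class in $K_0(A)$ is represented by a full properly infinite projection.} Fix a full properly infinite projection $e\in A$.

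First, any full projection $r$ satisfies $e\precsim r\otimes 1_m$ for some $m$. Indeed, $e$ lies in the ideal generated by $r$, so $e=\sum_{i=1}^m x_i r y_i$. This gives $e\precsim r\otimes 1_m$, and proper infiniteness lets us improve estimates of this kind.

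Next we show that any class of the form $[e]_0+y$ is represented by a projection $q\le e$, where $q$ and $e-q$ are both full and properly infinite. Let $x\in K_0(A)$ and write $x=[p]_0-[q']_0$ with $p,q'\in\mathcal P_n(\tilde A)$ projections; after the usual reduction we may take $p,q'\in M_n(A)$ or handle the unit via fullness. Since $e$ is full and properly infinite, $e\otimes 1_n\precsim e$. Hence every projection in $M_n(A)$ is subequivalent to $e$, so we can move $p$ and $q'$ under $e$. Using the isometries in $eAe$, we realize an inverse of $[q']_0$ as well. The standard trick: if $q'\precsim e'$ with $e'\le e$ properly infinite and full, then $[e'-q'']_0=[e']_0-[q']_0$, where $q''\le e'$ is equivalent to $q'$. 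Choosing $e'$ with $[e']_0=0$ is possible: take $e'=e-s_1s_1^*-s_2s_2^*$ adjusted so that $[e']_0=[e]_0-2[e]_0$. Then we recombine. Finally, adding a small properly infinite full corner (a projection $f$ with $[f]_0=0$, full and properly infinite, obtained as above) orthogonally makes the resulting projection full and properly infinite without changing its class.

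\textbf{Step 2: equal classes give Murray--von Neumann equivalence.} Suppose $p,q$ are full and properly infinite with $[p]_0=[q]_0$. Then there is $k$ with $p\oplus 1_k\sim q\oplus 1_k$ in matrices over $\tilde A$. We reduce to stabilizing by a projection inside $A$ instead of by units of $\tilde A$; this works because $A$-valued corrections can be absorbed using fullness. This yields $p\oplus r\sim q\oplus r$ with $r\in M_k(A)$. Since $q$ is full, $r\precsim q\otimes 1_m\precsim q$ by proper infiniteness, and the same holds for $p$. Write $q\sim q_1+q_2$ with $q_1\sim q$ and $q_2\sim q$ orthogonal subprojections. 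Then $q\oplus r\precsim q$, and a Cuntz-type cancellation argument gives $p\sim q$. Concretely, $p\sim p\oplus p\oplus r$ (absorption), and then $p\oplus p\oplus r\sim p\oplus q\oplus r\sim q\oplus q\oplus r\sim q$, where the middle relations use $p\oplus r\sim q\oplus r$ together with the absorption $p\sim p\oplus r'$ for any $r'\precsim p$. This step is the main obstacle, because the absorption identity $p\sim p\oplus r$ for $r\precsim p$ fails in general. It holds only in the form $p\oplus r\precsim p$. So we would prove the precise lemma: if $p$ is properly infinite, $r\precsim p$ and $p\precsim p\ominus$something, then we get equivalence. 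I would first try to pass to $K_0$-injectivity on the properly infinite full part, that is, to Cuntz's result that the Murray--von Neumann semigroup of full properly infinite projections is a group. This is done by showing that it has an identity element $[e_0]$ with $[e_0]_0=0$, and that for each element $a$ there is $b$ with $a+b=[e_0]$. Cancellation then follows from the group structure.

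\textbf{Step 3: the unital case.} Assume additionally that $1-p$ and $1-q$ are full and properly infinite. Then $[1-p]_0=[1]_0-[p]_0=[1-q]_0$, so Step 2 gives $1-p\sim 1-q$ via a partial isometry $w$, and $p\sim q$ via a partial isometry $v$. Set $u=v+w$. This is a unitary in $A$ with $upu^*=q$.
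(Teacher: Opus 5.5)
The paper gives no proof of this proposition; it quotes the result from Cuntz. Judged against the standard argument, your proposal has a genuine gap, and it sits in Step 2, which is where the real content is.

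\textbf{Step 2 is not proved.} Your chain $p\sim p\oplus p\oplus r\sim\cdots\sim q$ relies on an absorption $p\sim p\oplus r'$ for $r'\precsim p$. As you note yourself, this is false: in $\mathcal{O}_\infty$ one has $1\oplus 1\not\sim 1$, since the $K_0$-classes are $2$ and $1$. Your fallback is to ``pass to Cuntz's result that the Murray--von Neumann semigroup of full properly infinite projections is a group.'' That is essentially the statement you are asked to prove. Your sketch of it also omits its only nontrivial point: why a single class $[e_0]$ is neutral for \emph{every} element. The natural candidate $e_0=p-s_1s_1^*$ only satisfies $[p]+[e_0]=[p]$ for that particular $p$.

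Step 1 has a related slip. The projection $e-s_1s_1^*-s_2s_2^*$ has class $-[e]_0$, not $0$. The class-zero projection you want is $e-s_1s_1^*$, which dominates $s_2s_2^*\sim e$ and is therefore full and properly infinite.

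\textbf{The missing idea is divisibility.} Let $S$ be the set of Murray--von Neumann classes of full properly infinite projections in $A$, with orthogonal addition. This is well defined for two reasons: every projection over $A$ is $\precsim$ any full properly infinite projection, and a properly infinite $q$ contains orthogonal $q_1,q_2\le q$ with $q_1\sim q_2\sim q$.

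Now take $[p],[q]\in S$, choose $p'\sim p$ with $p'\le q_1$, and set $r=q-p'$.
\begin{itemize}
\item Since $q_2\le r\le q$, the projection $r$ is full.
\item Since $r\oplus r\precsim q\oplus q\precsim q\sim q_2\le r$, the projection $r$ is properly infinite.
\item Hence $[p]+[r]=[q]$ in $S$.
\end{itemize}
An abelian semigroup in which every element divides every other is a group. Fix $x$ and $e$ with $x+e=x$. Any $y=x+z$ then satisfies $y+e=y$, so $e$ is neutral. Writing $e=y+w$ gives inverses.

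Once $S$ is a group, your cancellation idea goes through. Given $p\oplus r\sim q\oplus r$ with $r$ a projection over $A$, replace $r$ by $r\oplus f$ with $f$ full and properly infinite. Realise everything by orthogonal copies inside $A$, and cancel in $S$.

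You also need to justify that the stabilizing projection can be taken over $A$ rather than $1_k$ over $\tilde A$. This is automatic when $A$ is unital, since $K_0(A)$ is then the Grothendieck group of $\mathcal{P}_\infty(A)/{\sim_0}$. In the non-unital case, ``absorbed using fullness'' is not an argument; one way is to reduce to the unital full corner $eAe$.

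Step 3 is correct as written, given Step 2.
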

\begin{proposition}[{\cite[Lemma 2.1.7]{Ph}}]\label{prop:K1-oinf}
Let $A$ be any unital C$^{*}$-algebra. Then the canonical map $\Ucal(A\otimes\oinf)/\Ucal_{0}(A\otimes\oinf)\to K_{1}(A\otimes \oinf)$ is an isomorphism.
\end{proposition}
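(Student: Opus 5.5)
Here $B=A\otimes\oinf$ is unital. The plan is to show that the canonical map $\Ucal(B)/\Ucal_0(B)\to K_1(B)$, $[u]\mapsto[u]_1$, is both surjective and injective. The key input is the copy of $\oinf$ sitting in the second tensor factor. It supplies isometries $s_1,s_2,\dots$ in $1\otimes\oinf\subset B$ with mutually orthogonal ranges, and these commute with $A\otimes 1$.

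\textbf{Surjectivity.} Take a class represented by $u\in\Ucal_n(B)$. I want a single unitary in $B$ with the same $K_1$-class.
\begin{enumerate}
\item Since $\oinf\cong\oinf\otimes\oinf$, I may replace $B$ by $B\otimes\oinf$. This gives isometries $t_1,\dots,t_n$ in a tensor factor that commutes with the matrix entries.
\item Set $p=\sum_i t_it_i^*$, a proper projection, and $w=\sum_{i,j}t_iu_{ij}t_j^*+(1-p)$. This $w$ is a unitary in $B$.
\item Show $[w]_1=[u]_1$. The map $x\mapsto\sum t_ix_{ij}t_j^*$ is the corner embedding $M_n(B)\to pBp$ implemented by the row isometry $V=(t_1,\dots,t_n)$, with $V^*V=1_n$ and $VV^*=p$. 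The corner inclusion induces the identity on $K$-theory. Since $[1-p]_0=0$ is not guaranteed, I would instead check the equality by a rotation/homotopy argument inside $M_{n+1}(B)$, conjugating by the unitary
\[
\begin{pmatrix} V & 1-p\\ 0 & 0\end{pmatrix}
\]
suitably completed.
\end{enumerate}
An alternative route is to note that $B$ is properly infinite and use Cuntz's result that $K_1(B)\cong\Ucal(B)/\Ucal_0(B)$ for properly infinite unital $B$ (\cite{Cu2}). That is essentially this computation.

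\textbf{Injectivity.} Suppose $u\in\Ucal(B)$ and $u\oplus 1_k\sim_h 1_{k+1}$ in $\Ucal_{k+1}(B)$. Apply the same compression $x\mapsto\sum t_ix_{ij}t_j^*+(1-p)$, now with $k+1$ isometries. The homotopy maps to a path of unitaries in $B$ from $t_1ut_1^*+(1-t_1t_1^*)$ to $1$.
\begin{enumerate}
\item It then remains to show that $u\sim_h t_1ut_1^*+(1-t_1t_1^*)$ in $\Ucal(B)$.
\item Here I use the fact that $t_1$ can be chosen in the $\oinf$ factor and that all unitaries of $\oinf$ are homotopic to $1$ (because $K_1(\oinf)=0$ and $\oinf$ is properly infinite). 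This is a standard Cuntz-type argument: the isometry $t_1$ is connected to $1$ through a path of isometries in a suitable sense.
\item More concretely, use $\oinf\cong\oinf\otimes\oinf$ together with the fact that the first-factor embedding $\oinf\to\oinf\otimes\oinf$ is homotopic to an isomorphism. This shows that $a\mapsto a\otimes1$ and $a\mapsto t_1at_1^*+\dots$ are suitably homotopic on unitaries.
\end{enumerate}

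\textbf{Main obstacle.} The hardest step is this last homotopy $u\sim_h t_1ut_1^*+(1-t_1t_1^*)$. My first approach would be Phillips' trick: write $1-t_1t_1^*$ as the range of further isometries and perform a $2\times2$ rotation homotopy. This reduces the claim to $u\oplus 1\sim_h 1\oplus u$ within a properly infinite corner, which is the standard rotation homotopy.
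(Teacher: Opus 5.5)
The paper does not prove this statement; it only quotes \cite[Lemma 2.1.7]{Ph}, so your argument has to stand on its own.

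\textbf{What is fine.} Surjectivity is correct: it is Cuntz's compression argument. The extra $\oinf$ factor is not even needed there, since $x\mapsto\sum_{i,j}t_ix_{ij}t_j^*$ is a $*$-homomorphism $M_n(B)\to pBp$ for any isometries with orthogonal ranges. The unitary $\left(\begin{smallmatrix} V & 1-p\\ 0 & V^{*}\end{smallmatrix}\right)\in M_{n+1}(B)$ then conjugates $u\oplus 1_n$ to $w\oplus 1_n$. The compression step in your injectivity argument is also correct.

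\textbf{The gap.} It is exactly the step you flag, $u\sim_h t_1ut_1^*+(1-t_1t_1^*)$, and none of the justifications you offer works.
\begin{itemize}
\item Cuntz's identification $\Ucal(B)/\Ucal_0(B)\cong K_1(B)$ holds for purely infinite \emph{simple} $B$. For a general properly infinite unital algebra only surjectivity is known; whether every properly infinite unital \cstar-algebra is $K_1$-injective is a well-known open question. So your ``alternative route'' fails for non-simple $A\otimes\oinf$, and any proof must use the tensor factor essentially.
\item A proper isometry cannot be joined to $1$ by a norm-continuous path of isometries: an isometry at distance less than $1$ from a unitary is invertible.
\item $\Ucal(\oinf)$ is connected, but by Cuntz's theorem for purely infinite simple algebras. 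It does not follow from ``$K_1=0$ and properly infinite'', which is again $K_1$-injectivity.
\item A rotation homotopy only moves $u$ between mutually equivalent orthogonal summands. It never removes $u$ from the corner $q=1-t_1t_1^*$. That corner is also not a sum of ranges of isometries of $\oinf$: $[q]_0=0$ in $K_0(\oinf)\cong\Zbb$, whereas $j\geq 1$ such ranges have class $j$.
\end{itemize}

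\textbf{How to close it.} Two ingredients are needed.

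First, reduce precisely to a unitary commuting with a unital copy of $\oinf$. Since $\oinf$ is strongly self-absorbing, take an isomorphism $\phi:\oinf\to\oinf\otimes\oinf$ and unitaries $w_n$ with $\ad{w_n}\circ\phi\to\id{\oinf}\otimes 1$ pointwise. Then $(\id{A}\otimes\phi)(u)$ is close to $(1\otimes w_n^*)(u\otimes 1)(1\otimes w_n)$. Because $\Ucal(\oinf\otimes\oinf)$ is connected, $u\sim_h 1$ if and only if $u\otimes 1\sim_h 1$ in $B\otimes\oinf$. Now $t_i=1\otimes s_i$ commute with $u\otimes 1$, and your compression gives $u\otimes e+1\otimes q\sim_h 1$, where $e=s_1s_1^*$ and $q=1-e$.

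Second, use a swindle that exploits $[q]_0=0$.
\begin{itemize}
\item Put $x_q=u\otimes q+1\otimes(1-q)$ and split $q=q_1+q_2$ with $q_1=s_2qs_2^*$ and $q_2=q-q_1\geq s_3s_3^*$.
\item Each of $q,q_1,q_2$ is nonzero of class $0$, and each of $1-q,1-q_1,1-q_2$ is nonzero of class $1$. By Cuntz's comparison in $\oinf$ there are unitaries $W_i\in\Ucal(\oinf)=\Ucal_0(\oinf)$ with $W_iqW_i^*=q_i$.
\item Hence $x_{q_i}=(1\otimes W_i)x_q(1\otimes W_i)^*\sim_h x_q$.
\item So $x_q=x_{q_1}x_{q_2}\sim_h x_q^2$, which gives $x_q\sim_h 1$.
\end{itemize}
Therefore $u\otimes 1=(u\otimes e+1\otimes q)\,x_q\sim_h 1$, and by the first step $u\sim_h 1$.
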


To introduce the equivariant $K$-theory and $KK$-theory, first we define group actions on C$^{*}$-algebras.
\begin{definition}
Let $G$ be a topological group and $A$ be a C$^{*}$-algebra. We equip $\mathrm{Aut}(A)$ with the point-norm topology. A group action of $G$ on $A$ is a continuous group homomorphism $\Ga:G\to\mathrm{Aut}(A)$. We may denote $\id{A}$ either for the identity map on $A$ or the trivial $G$-action on $A$. We call a C$^{*}$-algebra with a $G$-action a $G$-\textit{C$^{*}$-algebra.}
\end{definition}
We assume that $G$ is compact and second countable in what follows.
\begin{definition}[Equivariant $K_{0}$-group]
Let $G$ be a compact group and $A$ be a unital C$^{*}$-algebra and $\Ga$ be an action of $G$ on $A$. Let $\Pcal_{G}(A)$ be the set of all $G$-invariant projections in $\BB(V)\otimes A$, where $V$ is a finite dimensional Hilbert space with a unitary representation of $G$. We define an equivalence relation $\sim_{0}$ on $\Pcal_{G}(A)$ as follows: For $p\in \BB(V)\otimes A$ and $q\in \BB(W)\otimes A$, we write $p\sim_{0}q$ if there are $G$-invariant elements $u\in \BB(V,W)\otimes A$ and $v\in \BB(W,V)\otimes A$ such that $vu=p$ and $uv=q$. The group $K_{0}^{G}(A)$ is defined to be the Grothendieck group of the commutative monoid $\Pcal_{G}(A)/\sim_{0}$. We denote the equivalence class of a projection $p\in\Pcal_{G}(A)$ in $K_{0}^{G}(A)$ by $[p]_{0}$. The group operation is determined by $[p]_{0}+[q]_{0}=[\mathrm{diag}(p,q)]$. If $A$ is nonunital, $K_{0}^{G}(A)$ is defined as the kernel of the map $K_{0}^{G}(\tilde{A})\to K_{0}^{G}(\Cbb)$ induced by the quotient homomorphism $(\tilde{A},\Ga)\to(\Cbb,\id{} )$.
\end{definition}
Before defining the equivariant $K_{1}$-group, we see that the equivariant $K_{0}$-group has an additional structure.
\begin{definition}\label{def:rep-ring}
  The representation ring $R(G)$ of $G$ is the ring whose elements are formal differences of equivalence classes of finite-dimensional representations of $G$, with direct sum and tensor product as the ring operations. The trivial one-dimensional representation is the multiplicative identity.
\end{definition}
Note that $R(G)$ is identified with $K_{0}^{G}(\Cbb)$.\\
For every C$^{*}$-algebra $A$, $K_{0}^{G}(A)$ is not only a group but also a $R(G)$-module. The module structure is given as follows:\\
If $p\in \BB(V)\otimes A$ is a $G$-invariant projection, and $W$ is a finite dimensional representation space of $G$, then $[W]\cdot[p]_{0}$ is represented by $\one\otimes p\in \BB(W)\otimes\BB(V)\otimes A$.\\
$K_{1}^{G}(A)$ is defined to be $K_{0}^{G}(SA)$, where $SA$ is the suspension of $A$ with $G$-action $\id{}\otimes \Ga$. Alternatively, we use the picture in terms of invariant unitaries given by Phillips.\\
Let $G$ be a compact group and $A$ be a $G$-C$^{*}$-algebra. First, we assume that $A$ is unital.\\
Let $R_{+}(G)$ be the set of unitary equivalence classes of finite dimensional representation of $G$. Then it is a directed set for the ordering $V\leq W$ if $V$ is equivalent to a direct summand of $W$.
For $V\in R_{+}(G)$, we let $U^{G}(V,A)$ be the group of $G$-invariant unitary elements in $\BB(V)\otimes A$, and we let $U_{0}^{G}(V,A)$ be the connected component of the identity in $U^{G}(V,A)$. Let $\overline{U}^{G}(V,A)=U^{G}(V,A)/U_{0}^{G}(V,A)$. Let $V,W\in R_{+}(G)$ with $V\leq W$. Then $W\cong V\oplus V_{0}$ for some $V_{0}\in R_{+}(G)$. We define an embedding $U^{G}(V,A)\to U^{G}(W,A)$ by $u\mapsto u\oplus \one_{\BB(V_{0})\otimes A}$. This map induces a group homomorphism $i_{V,W}:\overline{U}^{G}(V,A)\to \overline{U}^{G}(W,A)$. This $i_{V,W}$ is well-defined and if $V,W,X\in R_{+}(G)$ with $V\leq W\leq X$, then $i_{W,X}\circ i_{V,W}=i_{V,X}$.
\begin{theorem}[{\cite[Theorem 2.8.8]{Ph2}}]\label{thm:eq-K1}
Let $A$ be a $G$-C$^{*}$-algebra. Then there is a natural isomorphism $\displaystyle K_{1}^{G}(A)\cong \varinjlim_{V\in R_{+}(G)}\overline{U}^{G}(V,A)$.
\end{theorem}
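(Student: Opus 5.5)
The plan is to reduce the statement to the Green--Julg isomorphism and the non-equivariant picture of $K_{1}$ of a crossed product by unitaries, and then to match the two descriptions. Recall that $K_{1}^{G}(A)=K_{0}^{G}(SA)$, and that for compact $G$ the Green--Julg theorem identifies $K_{*}^{G}(A)\cong K_{*}(A\rtimes G)$ naturally. The obvious alternative is to work in the $C_{0}(0,1)$ picture and pass between invariant projections over $SA$ and invariant unitaries over $A$, which avoids crossed products. I will take the first route and use the second as a cross-check for the key identification in the third step.

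The first step treats the unital case and constructs a comparison map. For $V\in R_{+}(G)$, the fixed point algebra $(\BB(V)\otimes A)^{G}$ is a corner of $A\rtimes G\otimes\Kbb$. Concretely, $(\BB(V)\otimes A)^{G}\cong p_{V}(\Kbb(L^{2}G)\otimes A)^{G}p_{V}$ with $(\Kbb(L^{2}G)\otimes A)^{G}\cong A\rtimes G$, using an embedding of $V$ into a multiple of $L^{2}(G)$ by Peter--Weyl. Hence $U^{G}(V,A)$ is the unitary group of the unital algebra $(\BB(V)\otimes A)^{G}$. This gives maps $\overline{U}^{G}(V,A)\to K_{1}\bigl((\BB(V)\otimes A)^{G}\bigr)\to K_{1}(A\rtimes G)\cong K_{1}^{G}(A)$. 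These maps are compatible with the maps $i_{V,W}$ because adding $\one_{\BB(V_{0})\otimes A}$ is the corner inclusion padded by a unit. Therefore they induce a homomorphism from $\varinjlim\overline{U}^{G}(V,A)$ to $K_{1}^{G}(A)$.

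The second step shows that this map is bijective. Surjectivity holds because every class in $K_{1}$ of $(\Kbb(H_{G})\otimes A)^{G}$, where $H_{G}=\bigoplus^{\infty}L^{2}G$, is represented by a unitary in $M_{n}$ of a finite corner. The union $\bigcup_{V}(\BB(V)\otimes A)^{G}$ is dense, and $M_{n}\bigl((\BB(V)\otimes A)^{G}\bigr)=(\BB(\Cbb^{n}\otimes V)\otimes A)^{G}$. Injectivity uses the same mechanism. If $[u]_{1}=0$, then $u\oplus\one$ is homotopic to $\one$ in some larger unitary group. By density and the standard fact that a unitary close to a given unitary is connected to it by a short path, that homotopy can be pushed into some $U^{G}(W,A)$ with $W\geq V$. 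Hence $i_{V,W}([u])=0$. The direct limit over $R_{+}(G)$ is exactly what absorbs the matrix stabilization.

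The third step, which I expect to be the main obstacle, is naturality together with the compatibility of this unitary description with the definition $K_{1}^{G}(A)=K_{0}^{G}(SA)$. This requires an equivariant Bott/suspension identification: $[u]\mapsto$ the invariant projection over $\widetilde{SA}$ given by the usual loop $t\mapsto v_{t}\,\mathrm{diag}(\one,0)\,v_{t}^{*}$. Here $v_{t}$ is a path from $\one$ to $\mathrm{diag}(u,u^{*})$, and it can be chosen $G$-invariant by using the rotation trick inside $(\BB(V\oplus V)\otimes A)^{G}$. I would check this against the non-equivariant statement through Green--Julg, which commutes with suspension. For nonunital $A$, I pass to $\tilde{A}$ with the extended action and use the split exact sequence $0\to A\to\tilde{A}\to\Cbb\to0$. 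Here $K_{1}^{G}(\Cbb)=K_{1}(C^{*}(G))=0$, since $C^{*}(G)$ is a direct sum of matrix algebras, so $K_{1}^{G}(A)\cong K_{1}^{G}(\tilde{A})$, consistent with the convention in \autoref{def:K1}.
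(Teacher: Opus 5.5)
The paper does not prove this statement. It quotes Phillips and then uses the unitary picture as a black box, for example in its description of the index map and of $\partial$ via invariant lifts of $\mathrm{diag}(u,u^{*})$. Judged on its own, your outline is a correct proof, and it takes a reductive route. The identification $(\Kbb(L^{2}G)\otimes A)^{G}\cong A\rtimes G$ makes each $(\BB(V)\otimes A)^{G}$ a unital corner of $(A\rtimes G)\otimes\Kbb$, and these corners have dense union. The identity $M_{n}((\BB(V)\otimes A)^{G})=(\BB(\Cbb^{n}\otimes V)\otimes A)^{G}$ makes the matrix limit cofinal in the limit over $R_{+}(G)$. Continuity of $K_{1}$ together with Green--Julg then finishes. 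Compared with proving the equivariant analogue of $K_{1}\cong K_{0}\circ S$ directly from the definition $K_{1}^{G}=K_{0}^{G}\circ S$, this avoids redoing any homotopy theory of invariant unitaries. The price is that it leans on Green--Julg and the crossed-product identification.

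You misplace the difficulty in Step 3, which is not needed for the statement as posed. The chain $K_{0}^{G}(SA)\cong K_{0}(S(A\rtimes G))\cong K_{1}(A\rtimes G)$ is already a composite of natural isomorphisms. Naturality of your comparison map is also immediate: for a non-unital equivariant $\phi$ between unital algebras, use $u\mapsto(\mathrm{id}\otimes\phi)(u)+\one-\one\otimes\phi(\one)$. Step 3 only serves to identify your isomorphism with the explicit Bott-type map that the paper tacitly uses later, and your invariant rotation path inside $(\BB(V\oplus V)\otimes A)^{G}$ does accomplish that.

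What you do gloss over is bookkeeping of two kinds.
\begin{itemize}
\item The maps $i_{V,W}$ and your corner maps must be independent of the chosen splitting $W\cong V\oplus V_{0}$ and of the chosen embeddings $V\hookrightarrow H_{G}$. This holds because $\Ucal(\BB(W)^{G})$ is a finite product of unitary groups, hence connected. It also uses that two corner embeddings differing by conjugation by an invariant partial isometry induce the same map on $K_{1}$.
\item The paper defines $U^{G}(V,A)$ only for unital $A$, so a convention must be fixed in the non-unital case. Using $\tilde{A}$, or unitaries with scalar part $\one$, gives the same $\pi_{0}$, again by connectedness of $\Ucal(\BB(V)^{G})$. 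Your split-exactness argument with $K_{1}^{G}(\Cbb)=K_{1}(C^{*}(G))=0$ then completes the reduction.
\end{itemize}
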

\begin{theorem}[{\cite[Theorem 2.8.3]{Ph2}}]
Let $G$ be a compact group. 
\begin{enumerate}
\item (Functoriality) For any equivariant $*$-homomorphism $\phi:A\to B$, there exists an induced $R(G)$-module map $K_{*}(\phi):K_{*}^{G}(A)\to K_{*}^{G}(B)$ given by $[x]_{*}\mapsto [\phi\otimes\id{}(x)]_{*}$ for $*=0,1$.
\item (Homotopy invariance) If $\phi,\psi:A\to B$ are homotopic equivariant $*$-homomorphisms, then $K_{*}^{G}(\phi)$ and $K_{*}^{G}(\psi)$ are equal.
\item (Stability) $K_{*}^{G}(\Kbb(H)\otimes A)\cong K_{*}^{G}(A)$ if the action of $G$ on $\Kbb(H)$ is given by a unitary representation of $G$ on the Hilbert space $H$.
\item (Six-term exact sequence) For every short exact sequence of $G$-C$^{*}$-algebras
\[0\rightarrow I\xrightarrow{\phi}A\xrightarrow{\psi} B\rightarrow 0,\]
the associated six-term sequence
\begin{center}
\begin{tikzpicture}[auto]
\node(1) at (-3,1) {$  K_{0}^{G} (I)  $};
\node(2) at (0,1) {$    K_{0}^{G} (A) $};
\node(3) at (3,1) {$  K_{0}^{G} (B)   $};
\node(4) at (3,-1) {$   K_{1}^{G}(I)   $};
\node(5) at (0,-1) {$  K_{1}^{G} (A)   $};
\node(6) at (-3,-1) {$    K_{1}^{G}(B)  $};
\draw[->] (1) to node {$  K_{0}^{G} (\phi)   $} (2);
\draw[->] (2) to node {$  K_{0}^{G} (\psi)   $} (3);
\draw[->] (3) to node {$ \Gd    $} (4);
\draw[->] (4) to node {$   K_{1}^{G}(\phi)   $} (5);
\draw[->] (5) to node {$   K_{1}^{G} (\psi)  $} (6);
\draw[->] (6) to node {$ \Gd    $} (1);
\end{tikzpicture}
\end{center}
\end{enumerate}
\end{theorem}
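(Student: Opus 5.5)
I would not reprove each property from the definitions. Instead I would reduce everything to ordinary $K$-theory of crossed products via the Green--Julg theorem, $K_{*}^{G}(A)\cong K_{*}(A\rtimes G)$ for compact $G$, and check by hand only the parts that must be verified at the level of invariant projections and unitaries. First I would treat functoriality and the $R(G)$-module structure directly. For a $G$-invariant projection $p\in\BB(V)\otimes A$, the element $(\id{\BB(V)}\otimes\phi)(p)$ is again a $G$-invariant projection. A $G$-invariant Murray--von Neumann equivalence $u,v$ is carried to one, and direct sums are preserved, so $\phi$ induces a monoid map on $\Pcal_{G}(A)/\sim_{0}$ and hence a group homomorphism. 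The non-unital case follows by applying this to $\tilde{\phi}$ and restricting to kernels. Tensoring with $\one_{\BB(W)}$ commutes with $\id{}\otimes\phi$, which gives $R(G)$-linearity. The same argument on the unitary picture of \autoref{thm:eq-K1} handles $K_{1}^{G}$.

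For homotopy invariance I would work directly as well. A homotopy $\phi_{t}$ gives a norm-continuous path of $G$-invariant projections $(\id{}\otimes\phi_{t})(p)$. Nearby projections $p,q$ with $\|p-q\|<1$ are conjugate by the unitary obtained by polar decomposition of $pq+(\one-p)(\one-q)$. This unitary is $G$-invariant because it is built functionally from invariant elements. A compactness argument along $[0,1]$ then shows that the classes agree. For $K_{1}^{G}$, unitaries along the path stay in a single component of $U^{G}(V,A)$.

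For stability and exactness I would use the Julg isomorphism. The map sends a $G$-invariant projection $p\in\BB(V)\otimes A$ to the corresponding projection in $\Kbb(L^{2}(G))\otimes A)^{G}\subseteq \Mcal((A\otimes\Kbb(L^{2}G))\rtimes G)$. I would verify that this identification is natural in equivariant $*$-homomorphisms. Exactness then follows because crossed products by a compact (hence amenable) group preserve short exact sequences. Applying the ordinary six-term sequence to $0\to I\rtimes G\to A\rtimes G\to B\rtimes G\to 0$ and transporting it through the natural isomorphism gives the equivariant six-term sequence. The boundary maps are defined as the transported ones, or equivalently by the same index-map recipe recalled above, applied to invariant lifts obtained by averaging over $G$.

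For stability, the action $\ad(u)\otimes\Ga$ on $\Kbb(H)\otimes A$ is exterior equivalent to $\id{}\otimes\Ga$ after tensoring with a representation containing all irreducibles. Hence
\[(\Kbb(H)\otimes A)\rtimes G\cong \Kbb(H)\otimes(A\rtimes G)\]
up to Morita equivalence, and ordinary stability finishes the argument. I expect this step to be the main obstacle. When $H$ does not contain the trivial representation, or does not contain every irreducible, the corner embedding $A\to\Kbb(H)\otimes A$ is not immediately equivariant. I would first replace $H$ by $H\oplus L^{2}(G)^{\infty}$, which is absorbing, and show that $H\hookrightarrow H\oplus L^{2}(G)^{\infty}$ induces a $KK^{G}$-equivalence via a rotation homotopy. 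That reduces stability to the absorbing case, where exterior equivalence is explicit.
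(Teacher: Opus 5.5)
\paragraph{Comparison with the paper.}
The paper gives no proof of this statement. It is cited from Phillips' monograph (Theorem 2.8.3 of \cite{Ph2}), so there is no internal argument to compare against. Your route is a standard and valid one. You check functoriality, $R(G)$-linearity and homotopy invariance by hand on invariant projections and unitaries. You then import stability and the six-term sequence from ordinary $K$-theory, via the natural Green--Julg isomorphism $K^G_*(A)\cong K_*(A\rtimes G)$ and exactness of $-\rtimes G$ for compact $G$. What this buys over the paper's bare citation is that it makes explicit where the weight lies: on Julg's theorem and its naturality. For non-unital maps, naturality uses that $A\rtimes G$ is an ideal in $\tilde A\rtimes G$ with split quotient $C^*(G)$.

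\paragraph{Small corrections.}
\begin{itemize}
\item $(\mathbb{K}(L^2(G))\otimes A)^{G}$ is isomorphic to $A\rtimes G$ itself, not merely contained in a multiplier algebra.
\item To place $p\in\mathbf{B}(V)\otimes A$ there, you first need an equivariant isometry $V\hookrightarrow L^2(G)\otimes\mathbb{C}^n$, which Peter--Weyl provides.
\item The version of Julg's theorem you invoke must be proved from the module picture. If you derived it from saturation of $\alpha\otimes\mathrm{Ad}\,\rho$ plus $K_*(A^G)\cong K^G_*(A)$, both of which appear in this paper, you would already be using stability.
\item Your claim that the transported boundary map agrees with the index-map recipe is unproved. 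The statement as formulated does not need it.
\end{itemize}

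\paragraph{The stability step.}
Where you go astray is the ``main obstacle'' in the stability step, which does not exist. Let $u$ be any unitary representation of $G$ on $H$. Then $g\mapsto u_g\otimes 1$ is a homomorphism into the $(\mathrm{id}\otimes\alpha)$-fixed unitaries of $\mathcal{M}(\mathbb{K}(H)\otimes A)$, hence a cocycle. Moreover $\mathrm{Ad}\,u_g\otimes\alpha_g=\mathrm{Ad}(u_g\otimes 1)\circ(\mathrm{id}\otimes\alpha_g)$. So the two actions are exterior equivalent with no hypothesis on $H$, and $(\mathbb{K}(H)\otimes A)\rtimes_{\mathrm{Ad}\,u\otimes\alpha}G\cong\mathbb{K}(H)\otimes(A\rtimes_\alpha G)$ is an honest isomorphism, not only a Morita equivalence. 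Green--Julg and ordinary stability then finish the argument for every $H\neq 0$.

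Whether $H$ contains the trivial representation matters only if you insist that the isomorphism be induced by a corner embedding, which the statement does not require. The condition ``contains every irreducible'' plays no role even there.

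Delete the proposed repair rather than carry it out. A rotation argument compares $*$-homomorphisms. To get one from $\mathbb{K}(H\oplus L^2(G)^{\infty})\otimes A$ back into an amplification of $\mathbb{K}(H)\otimes A$, you need an equivariant isometry $L^2(G)\hookrightarrow H\otimes\ell^2$. That exists only when $H$ contains every irreducible, so it fails in exactly the case you were worried about. The corner embedding $\mathbb{K}(H)\otimes A\to\mathbb{K}(H\oplus L^2(G)^{\infty})\otimes A$ is a $KK^G$-equivalence, but its inverse is the Morita bimodule, not something produced by rotations.
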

The index map is described in more or less the same way as in the non-equivariant case.
For any element $x\in K^{G}_{1}(A/I)$, there exists a $G$-invariant unitary $u\in (A/I)\otimes \BB(V)$ with $x=[u]_{1}$ for some finite-dimensional representation space $V$ of $G$. The unitary $\mathrm{diag}(u,u^{*})$ is homotopic to $\mathbf{1}$ in the unitary group of the fixed point algebra. Since $G$ is compact, there exists a $G$-invariant unitary $v\in M_{2}(A\otimes \BB(V))$ which is a lift of $\mathrm{diag}(u,u^{*})$. Then $\Gd(x)$ is given by $[v\mathrm{diag}(\mathbf{1}_{A\otimes \BB(V)},0)v^{*}]_{0}-[\mathrm{diag}(\mathbf{1}_{A\otimes \BB(V)},0)]_{0}$.
\vskip\baselineskip
We now describe the concept of saturation of an action introduced by Rieffel, which links equivariant $K$-theory to ordinary $K$-theory.
\begin{definition}
Let $G$ be a compact group with a Haar measure $\Gm$ and $A$ be a \cstar-algebra with an action $\Ga$ of $G$. We write $C_{c}(G,A)$ for the set of all continuous functions from $G$ to $A$ with compact support. Define a multiplication and an involution on $C_{c}(G,A)$ by
\[(fg)(s)=\int_{G}f(t)\Ga_{t}(g(t^{-1}s))d\Gm(t),\quad f^{*}(s)=\Ga_{s}(f(s^{-1})^{*})\]
for $f,g\in C_{c}(G,A)$. Then $L^{1}(G,A)$ is the completion of $C_{c}(G,A)$ with respect to the norm $\|f\|\coloneqq \int_{G}\|f(s)\|d\Gm(s)$. Let $\pi:A\to \BB(H)$ be a faithful representation and $\Gl:G\to \Ucal(L^{2}(G))$ be the left regular representation. These representations induce a representation $\tilde{\pi}\times \Gl$ of $L^{1}(G,A)$ on $L^{2}(G,H)$ defined by 
\[((\tilde{\pi}\times \Gl)(f)\Gx)(s)=\int_{G}\pi(\Ga_{s^{-1}}(f(t)))\Gx(t^{-1}s)d\Gm(t)\]
for $f\in C_{c}(G,A)$ and $\Gx\in L^{2}(G,H)$. The norm closure of the image of $L^{1}(G,A)$ under $\tilde{\pi}\times \Gl$ is a \cstar-algebra called the \textit{reduced crossed product} and denoted by $A\rtimes_{\Ga,r}G$.
\end{definition}
\begin{definition}\label{def:hilb-mod}
Let $B$ be a C$^{*}$-algebra. A pre-Hilbert module over $B$ is a right $B$-module $E$ equipped with a $B$-valued inner product, a function $\ip{\cdot}{\cdot}:E\times E\to B$, with the following properties:
\begin{align*}
&(1)\:\ip{\cdot}{\cdot}\text{ is sesquilinear. (conjugate-linear in the first variable)}\\
&(2)\:\ip{x}{yb}=\ip{x}{y}b\text{ for all }x,y\in E,b\in B\\
&(3)\:\ip{y}{x}=\ip{x}{y}^{*}\text{ for all }x,y\in E\\
&(4)\:\ip{x}{x}\geq 0\text{ holds for all }x\in E,\text{ and if }\ip{x}{x}=0,\text{ then }x=0.
\end{align*}
For $x\in E$, put $\|x\|=\|\ip{x}{x}\|^{\frac{1}{2}}$. Then this is a norm on $E$. If $E$ is complete with respect to this norm, $E$ is called a Hilbert module over $B$.
\end{definition}
\begin{definition}
Let $G$ be a compact group, and let $A$ be a C$^{*}$-algebra with an action $\Ga$ of $G$. We equip $A$ with the structure of a pre-Hilbert $A\rtimes_{\Ga,r}G$-module and define a left action of $A^{G}$ and an $A^{G}$-valued inner product on $A$ as follows.\\
For $a\in A^{G}$, $x,y\in A$ and $f\in L^{1}(G,A)$, we make the following definitions:
\begin{align*}
&ax \text{ is the usual product of $a$ and $x$ as elements of $A$,}\\
&xf=\int_{G}\Ga_{g}^{-1}(xf(g))dg,\\
&\langle x,y  \rangle_{A^{G}}=\int_{G}\Ga_{g}(xy^{*})dg,\\
&\langle  x,y \rangle_{A\rtimes_{\Ga,r}G}\in C_{c}(G,A)\subset A\rtimes_{\Ga,r}G\text{ is defined by }\ip{x}{y}_{A\rtimes_{\Ga,r}G}(g)=x^{*}\Ga_{g}(y)\text{ for all }g\in G.
\end{align*}
For $x\in A$, put $\|x\|=\|\ip{x}{x}_{A\rtimes_{\Ga,r}G}\|_{A\rtimes_{\Ga,r}G}^{\frac{1}{2}}$. Let $E$ be the completion of $A$ with respect to this norm. Note that the left action of $A^{G}$ and the $A^{G}$-valued inner product extend to $E$. The action $\Ga$ is said to be \textit{saturated} if $E$ is an equivalence bimodule in the sense of Rieffel \cite{Ri}. In other words, $\mathrm{span}\{\ip{x}{y}_{A^{G}}\:|\:x,y\in E\}$ and $\mathrm{span}\{\ip{x}{y}_{A\rtimes_{\Ga,r}G}\:|\:x,y\in E\}$ are dense in $A^{G}$ and $A\rtimes_{\Ga,r}G$, respectively. Since the former condition is already satisfied in our setting, this amounts to the latter.
\end{definition}
\begin{proposition}[{\cite[Proposition 7.1.8]{Ph2}}]
Let $G$ be a second countable compact group and $A$ be a separable C$^{*}$-algebra with $G$-action $\Ga$. If $\Ga$ is saturated, then the natural map $K_{*}(A^{G})\to K_{*}^{G}(A)$ is an isomorphism.
\end{proposition}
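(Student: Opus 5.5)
The plan is to factor the natural map $K_{*}(A^{G})\to K_{*}^{G}(A)$ through the crossed product and use two standard facts. The first is the Green--Julg theorem, $K_{*}^{G}(A)\cong K_{*}(A\rtimes_{\Ga,r}G)$ for compact $G$. The second is that a full corner inclusion induces an isomorphism on $K$-theory. Saturation is exactly the fullness hypothesis needed for the second fact.

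\textbf{Step 1: the unital case.} Suppose first that $A$ is unital. Regard $\one_{A}$ as the constant function in $C(G,A)\subset A\rtimes_{\Ga,r}G$ and call it $e$. The convolution formula gives
\[(e\ast e)(s)=\int_{G}\Ga_{t}(\one)\,dt=\one \quad\text{and}\quad e^{*}=e,\]
so $e$ is a projection. A direct computation with the product and involution shows that
\[e\,(A\rtimes_{\Ga,r}G)\,e=\{\text{constant functions with values in }A^{G}\},\]
and that $a\mapsto$ (constant function $a$) is a $*$-isomorphism $\iota:A^{G}\to e(A\rtimes_{\Ga,r}G)e$.

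\textbf{Step 2: fullness of the corner.} The module $E$ is the closure of $A\cong (A\rtimes_{\Ga,r}G)e$. Under this identification, the inner products $\ip{x}{y}_{A\rtimes_{\Ga,r}G}$ become the products $(xe)^{*}(ye)$. Hence the span of these inner products is dense in $(A\rtimes_{\Ga,r}G)\,e\,(A\rtimes_{\Ga,r}G)$. Saturation therefore says precisely that $e$ is a full projection.

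\textbf{Step 3: passing to $K$-theory.} Everything is separable, so by Brown's theorem the full corner inclusion $\iota$ induces an isomorphism $K_{*}(A^{G})\cong K_{*}(A\rtimes_{\Ga,r}G)$. Composing with the inverse of the Julg isomorphism gives an isomorphism $K_{*}(A^{G})\to K_{*}^{G}(A)$.

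\textbf{Step 4: identifying the composite with the natural map.} This is the main point to verify. The natural map sends a projection $p\in M_{n}(A^{G})$ to the same $p$, regarded as a $G$-invariant projection in $\BB(\Cbb^{n})\otimes A$ with trivial representation. Julg's map sends an invariant projection $p\in\BB(V)\otimes A$ to the constant function $p$ in $(\BB(V)\otimes A)\rtimes G$, which is Morita equivalent to $A\rtimes G$. For trivial $V$ this is exactly $\iota(p)$. So on $K_{0}$ the two maps agree.

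For $K_{1}$, apply the same argument to $C(\mathbb{T})\otimes A$ and to the suspension $SA$ with the action $\id{}\otimes\Ga$. By naturality of all maps this handles $K_{1}$. Alternatively, one can compare directly with the unitary picture of \autoref{thm:eq-K1}, since $\iota$ sends $u\in\Ucal(A^{G})$ to the unitary $u$ of the corner.

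\textbf{Step 5: the nonunital case.} If $A$ is nonunital, work with $\tilde{A}$, whose fixed point algebra is $\widetilde{A^{G}}$, and the split exact sequences
\[0\to A\to\tilde{A}\to\Cbb\to 0,\qquad 0\to A^{G}\to\widetilde{A^{G}}\to\Cbb\to 0.\]
One must check that saturation of $A$ implies saturation of $\tilde{A}$. This holds because the ideal generated by $e$ contains $A\rtimes G$, and $\Cbb\rtimes G$ is generated by the trivial representation's projection modulo the other isotypic parts. If that fails, an alternative is to avoid the unit and use directly the Morita equivalence $A^{G}\sim_{M}A\rtimes G$ provided by the imprimitivity bimodule $E$. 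Its induced $K$-map is again computed to be the constant-function map on projections.

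The hardest step is Step 4: matching the Morita and Julg isomorphisms with the concrete natural map, especially on $K_{1}$ and in the nonunital case. The rest follows by naturality and the five lemma.
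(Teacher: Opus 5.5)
The paper does not prove this proposition; it is quoted from Phillips. Your route (Julg's isomorphism $K_*^G(A)\cong K_*(A\rtimes G)$, the corner $A^G\cong e(A\rtimes G)e$, and Brown's theorem for full corners) is the standard one. It is also how the paper itself uses saturation: in the proof of \autoref{prop:isa-sat}, saturation is reduced to fullness of $e_\alpha=\int_G\lambda_g\,dg$. For unital $A$, Steps 1, 3 and 4 are fine.

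Step 2, however, puts $e$ on the wrong side. The product $(xe)^*(ye)=ex^*ye$ is the constant function $\int_G\alpha_t(x^*y)\,dt$, which lies in the corner; it is essentially the $A^G$-valued inner product. The correct identification is $E\cong\overline{e(A\rtimes G)}$ via $x\mapsto ex$. Under it, $\langle x,y\rangle_{A\rtimes G}=(ex)^*(ey)=x^*ey$ is the function $g\mapsto x^*\alpha_g(y)$. Since $(A\rtimes G)e=\overline{Ae}$, the closed span of these is the ideal generated by $e$, so your conclusion ``saturated iff $e$ is full'' stands.

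The genuine gap is Step 5. The claim that $\tilde\alpha$ is saturated is false as soon as $G\neq\{e\}$. The equivariant quotient $\tilde A\to\mathbb{C}$ induces a surjection $\tilde A\rtimes G\to C^*(G)\cong\bigoplus_{\pi\in\hat G}\mathcal{B}(H_\pi)$ sending $e_{\tilde\alpha}$ to $z(\mathbf{1})$. The ideal generated by $z(\mathbf{1})$ is only the one-dimensional trivial summand, so $e_{\tilde\alpha}$ is not full. The five-lemma strategy cannot be rescued either. By naturality with respect to the split extension, the comparison map for $\tilde A$ is the direct sum of the map for $A$ and the map $K_0(\mathbb{C})=\mathbb{Z}\to R(G)=K_0^G(\mathbb{C})$. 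The latter is not surjective, so the map for $\tilde A$ is never an isomorphism. Your fallback is the right idea, but the identification with the natural map is exactly the work that remains, because $K_0(A^G)$ is no longer described by projections in $A^G$.

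A clean repair is to replace $e$ by the multiplier projection $e_\alpha\in\mathcal{M}(A\rtimes G)$. Then $e_\alpha y$ is the function $g\mapsto\alpha_g(y)$ and $\langle x,y\rangle_{A\rtimes G}=x^*e_\alpha y$. The corner $e_\alpha(A\rtimes G)e_\alpha=A^Ge_\alpha\cong A^G$ is a hereditary subalgebra that is full exactly when $\alpha$ is saturated. By Brown's theorem (everything is $\sigma$-unital), the corner embedding $\iota_A\colon A^G\to A\rtimes G$ is therefore a $K$-isomorphism.

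Next, use Step 4 for the unital algebra $\tilde A$ purely as an identification; it needs no saturation. Writing $N$ for the natural map and $J$ for Julg's isomorphism, this gives $J_{\tilde A}\circ N_{\tilde A}=(\iota_{\tilde A})_*$. Now combine four facts:
\begin{enumerate}
\item $N_A$ is the restriction of $N_{\tilde A}$ to the kernels of the augmentations.
\item $\iota_{\tilde A}$ restricts to $\iota_A$, since both send $a\in A^G$ to the constant function $a$.
\item $J$ is natural.
\item $K_0(A\rtimes G)\to K_0(\tilde A\rtimes G)$ is injective, because the extension $0\to A\rtimes G\to\tilde A\rtimes G\to C^*(G)\to0$ splits.
\end{enumerate}
Together these give $J_A\circ N_A=(\iota_A)_*$, so $N_A$ is an isomorphism.

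Finally, $K_1^G(A)$ is by definition $K_0^G(SA)$ with $SA$ nonunital, so your $K_1$ argument through $SA$ depends on this repaired case. You should also record that $\mathrm{id}\otimes\alpha$ is saturated on $SA$ (and on $C(\mathbb{T})\otimes A$). For instance, $(C_0(0,1)\otimes A)\rtimes G\cong C_0(0,1)\otimes(A\rtimes G)$ carries $e_{\mathrm{id}\otimes\alpha}$ to $1\otimes e_\alpha$.
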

Finally, we recall the definition of equivariant $KK$-theory due to Kasparov \cite{Ka}. We begin with the original definition, known as the module picture, and then introduce the Cuntz-Thomsen picture. 
\begin{definition}
A grading on a C$^{*}$-algebra $B$ is a decomposition of $B$ into a direct sum of two self-adjoint closed subspaces $B^{(0)}$ and $B^{(1)}$ such that $B^{(m)}B^{(n)} \subset B^{(m+n)}$ (addition mod $2$). In this paper, we restrict our attention to trivially graded \cstar-algebras, i.e., $B^{(1)}=0$.

A Hilbert $B$-module $E$ is said to be \textit{graded} if there exists a decomposition into a direct sum of closed subspaces $E^{(0)}$ and $E^{(1)}$ satisfying $E^{(m)}B^{(n)}\subset E^{(m+n)}$ and $\ip{E^{(m)}}{E^{(n)}} \subset B^{(m+n)}$. We denote by $\BB(E)$ the set of all adjointable maps on $E$. That is, a module homomorphism $T:E\to E$ belongs to $\BB(E)$ if there exists a map $T^{*}:E\to E$ such that $\ip{Tx}{y}=\ip{x}{T^{*}y}$ for all $x,y\in E$.
For $x,y\in E$, let $\theta_{x,y}$ be the operator on $E$ defined by $\theta_{x,y}(z)=x\ip{y}{z}$. Then $\theta_{x,y}$ is adjointable with adjoint $\theta_{y,x}$. Let $\Kbb(E)$ be the norm closure of $\mathrm{span}\{\theta_{x,y}\:|\: x,y\in E\}$. It is well known that $\Kbb(E)$ is a closed ideal in $\BB(E)$. A grading on $E$ induces a grading on $\BB(E)$ (and hence on $\Kbb(E)$) as follows. An operator $T \in \BB(E)$ is called \textit{even} (resp. \textit{odd}) if $T E^{(i)} \subset E^{(i)}$ (resp. $T E^{(i)} \subset E^{(1-i)}$) for $i=0,1$. We denote the subspace of even operators by $\BB(E)^{(0)}$ and that of odd operators by $\BB(E)^{(1)}$.

Let $\Gb$ be an action of a compact group $G$ on $B$.
A continuous action of $G$ on $E$ is a group homomorphism from $G$ into the group of invertible bounded linear transformations on $E$ such that the map $g\mapsto g\cdot x$ is continuous for all $x\in E$, and each transformation preserves the grading and satisfies $g\cdot (xb)=(g\cdot x)\Gb_{g}(b)$ and $\ip{g\cdot x}{g\cdot y}=\Gb_{g}(\ip{x}{y})$ for all $g\in G$, $x,y\in E$, and $b\in B$. A Hilbert $B$-module equipped with such a continuous action of $G$ is called a Hilbert $(B,\Gb)$-module. A $G$-action on $E$ induces a $G$-action on $\BB(E)$ as follows. For $T\in \BB(E)$ and $x\in E$, $(g\cdot T)(x)=g\cdot(T(g^{-1}\cdot x))$.
\end{definition}
\begin{definition}
Let $E_{1},E_{2}$ be Hilbert $(B,G,\Gb)$-modules. $T\in \BB(E_{1},E_{2})$ is $G$-continuous if $g\mapsto g\cdot T$ is norm continuous.
\end{definition}
\begin{definition}
Let $A$ and $B$ be $G$-C$^{*}$-algebras. The set of Kasparov $A$-$B$-bimodules $\Ebb_{G}(A,B)$ for $(A,B)$, is the set of triples $(E,\phi, F)$, where $E$ is a countably generated Hilbert $B$-module with a continuous action of $G$, $\phi:A\to\BB(E)$ is an equivariant graded $*$-homomorphism, and $F$ is a $G$-continuous operator in $\BB(E)$ such that $FE^{(m)}\subset E^{(1-m)}$ and $[F,\phi(a)],(F^{2}-1)\phi(a),(F-F^{*})\phi(a)$ and $(g\cdot F-F)\phi(a)$ are all in $\Kbb(E)$ for all $a\in A$ and $g\in G$. We say that a kasparove $A$-$B$ bimodule ($E,\phi,F)$ is degenerate if  $[F,\phi(a)],(F^{2}-1)\phi(a),(F-F^{*})\phi(a)$ and $(g\cdot F-F)\phi(a)$ are $0$ for all $a\in A$ and $g\in G$. Let $\Dbb_{G}(A,B)$ be the set of degenerate Kasparov $A$-$B$-bimodules.
\end{definition}
\begin{definition}
Two Kasparov $A$-$B$-bimodules $(E_{0},\phi_{0},F_{0})$ and $(E_{1},\phi_{1},F_{1})$ are unitarily equivalent if there is an even unitary $U$ in $\BB(E_{0},E_{1})$ that intertwines the representation of $A$ and the groups action and satisfies $F_{0}=U^{*}F_{1}U$.
\end{definition}
If $(B,\Gb$) is a $G$-C$^{*}$-algebra, then we denote by $B[0,1]$ the algebra $C([0,1],B)$ with point-wise $G$-action $\Gb[0,1]\coloneqq \Gb\otimes\id{C[0,1]}$. Let $(E,\phi, F)$ be a Kasparov $A-B[0,1]$ modules. For each $t\in [0,1]$, let $\mathrm{ev}_{t}:B[0,1]\to B$ be evaluation at $t$. Then we obtain a Kasparov $A-B$-bimodule $(E_{t},\phi_{t},F_{t})$ by setting $E_{t}=E\otimes_{\mathrm{ev}_{t}}B$, $\phi_{t}=\phi\otimes 1$ and $F_{t}=F\otimes 1$. 
\begin{definition}
Two Kasparov $A$-$B$-bimodules $(E_{0},\phi_{0},F_{0})$ and $(E_{1},\phi_{1},F_{1})$ are homotopic if there exists a Kasparov $A$-$B[0,1]$-bimodule $(E,\phi,F)$ such that $(E_{0},\phi_{0},F_{0})$ is unitarily equivalent to the module obtained by the evaluation at $0$ and $(E_{1},\phi_{1},F_{1})$ is unitarily equivalent to the module obtaines by the evaluation at $1$. If it is the case, we write $(E_{0},\phi_{0},F_{0})\sim_{h}(E_{1},\phi_{1},F_{1})$. Note that every element in $\Dbb_{G}(A,B)$ is homotopic to $(0,0,0)$.
\end{definition}
\begin{definition}
For $G$-C$^{*}$-algebras $A$ and $B$, $KK^{G}(A,B)$ is the quotient of $\Ebb_{G}(A,B)$ by $\sim_{h}$.
\end{definition}
\begin{remark}
Let $A,B$ be $G$-\cstar-algebras and let $\phi:A\to B$ be an equivariant $*$-homomorphism. Then $(B,\phi,0)$ is a Kasparov triple and We denote the equivalence class of $(B,\phi,0)$ in $KK^{G}(A,B)$ by $KK^{G}(\phi)$.
\end{remark}
\begin{proposition}[{\cite[Section 2]{Ka}}]
$KK^{G}(A,B)$ is an abelian group, where addition is defined by direct sum of Kasparov bimodules. $KK^{G}$ is a bifunctor  from pairs of $G$-C$^{*}$-algebras to abelian groups, contravariant in the first variable and covariant in the second. 
\end{proposition}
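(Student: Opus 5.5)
Since this proposition is quoted from \cite[Section 2]{Ka}, the plan is to follow Kasparov's standard argument and check that each step is compatible with the $G$-action. The group structure is set up first. Addition is given by direct sum: $(E_{0},\phi_{0},F_{0})\oplus(E_{1},\phi_{1},F_{1})=(E_{0}\oplus E_{1},\phi_{0}\oplus\phi_{1},F_{0}\oplus F_{1})$, with the diagonal $G$-action and grading. The Kasparov conditions ($[F,\phi(a)]$, $(F^{2}-1)\phi(a)$, $(F-F^{*})\phi(a)$, $(g\cdot F-F)\phi(a)$ lying in $\Kbb(E)$) hold componentwise, and $G$-continuity is inherited from the summands. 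The operation respects homotopy: if $(E,\phi,F)$ and $(E',\phi',F')$ are Kasparov $A$-$B[0,1]$-bimodules, their direct sum is again one, and evaluation at $t$ commutes with direct sums. Associativity and commutativity hold up to unitary equivalence, via the obvious even, equivariant flip unitaries, so the quotient is a commutative monoid. Degenerate modules are homotopic to $(0,0,0)$ (the path $E\otimes C_{0}([0,1))$ works), so $(0,0,0)$ is the neutral element.

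The main step is the existence of inverses. For $(E,\phi,F)$, define the candidate inverse $(E^{op},\phi^{op},-F)$, where $E^{op}$ is $E$ with the grading reversed and $\phi^{op}(a)=\phi(a^{(0)})-\phi(a^{(1)})$ (this is just $\phi$ in our trivially graded setting). On $E\oplus E^{op}$ I would use the rotation homotopy
\[F_{t}=\begin{pmatrix}\cos(\pi t/2)F & \sin(\pi t/2)\\ \sin(\pi t/2) & -\cos(\pi t/2)F\end{pmatrix},\quad t\in[0,1].\]
This is odd for the new grading and $G$-continuous, since the off-diagonal scalar parts are $G$-invariant. The Kasparov conditions hold modulo compacts uniformly in $t$, so $F_{t}$ defines a Kasparov $A$-$B[0,1]$-bimodule. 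At $t=1$ the module is degenerate: $F_{1}$ is the flip, which commutes exactly with $\phi\oplus\phi^{op}$ in the trivially graded case, is self-adjoint, squares to $1$, and is $G$-invariant. So the sum is homotopic to $0$.

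The obstacle I expect is that homotopy, defined via $B[0,1]$-modules, must be shown to be an equivalence relation, in particular transitive. This uses gluing two $B[0,1]$-modules along a fiber, i.e.\ a pullback $E\oplus_{E_{1}}E'$ over $B[0,2]$. One must check that the glued operator is still $G$-continuous and that the compactness conditions survive the fibered product. I would handle this with the standard fact that compact operators on a $C[0,1]$-module are detected fiberwise together with continuity, and then verify equivariance directly.

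Functoriality comes last. For an equivariant $*$-homomorphism $f:A'\to A$, the contravariant map is $(E,\phi,F)\mapsto(E,\phi\circ f,F)$. For $h:B\to B'$, the covariant map is $(E,\phi,F)\mapsto(E\otimes_{h}B',\phi\otimes1,F\otimes1)$. The second needs $F\otimes 1$ to map compacts to compacts modulo the conditions, which holds because $\Kbb(E)\otimes1\subset\Kbb(E\otimes_{h}B')$ when $h$ is nondegenerate; in general one reduces to this case using the ideal generated by $h(B)$. Both maps preserve homotopies, since they apply to $B[0,1]$-modules, and preserve direct sums, so they are group homomorphisms. Compatibility with composition is clear.
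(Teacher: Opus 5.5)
Your proposal is correct. The paper gives no proof of its own and simply cites Kasparov, and your sketch is that standard argument: direct sums, contracting degenerate modules via $C_0([0,1),E)$, the rotation homotopy on $E\oplus E^{op}$ for inverses, gluing for transitivity, and pullback/push-forward for functoriality. One simplification: $\Kbb(E)\otimes 1\subseteq \Kbb(E\otimes_h B')$ holds for every $*$-homomorphism $h$, because $\theta_{\xi b_1 b_2^*,\eta}\otimes 1=\theta_{\xi\otimes h(b_1),\,\eta\otimes h(b_2)}$, so no nondegeneracy reduction is needed; passing to the ideal generated by $h(B)$ would not make $h$ nondegenerate anyway (e.g.\ $\Cbb\to M_2$, $1\mapsto e_{11}$).
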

\begin{proposition}[{\cite[Proposition 2.11]{Ka}}]
Suppose that $(A,\Ga),(B,\Gb),(C,\Gg)$ are separable $G$-C$^{*}$-algebras. Then there is a bilinear pairing, called the Kasparov product,
\[\otimes :KK^{G}(A,B)\times KK^{G}(B,C)\to KK^{G}(A,C).\]
Moreover, the Kasparov product is associative. The elements $1_{A}\coloneqq[A,\id{A},0]\in KK^{G}(A,A)$ and $1_{B}\coloneqq[B,\id{B},0]\in KK^{G}(B,B)$ act as identities from the left and right on $KK^{G}(A,B)$. For any $G$-\cstar-algebras $A,B,C$ and an equivariant $*$-homomorphism $\phi:A\to B$, we write $\phi_{*}$ for the induced map $KK^{G}(C,A) \to KK^{G}(C,B)$ given by $x\mapsto x\otimes KK^{G}(\phi)$, and $\phi^{*}$ for the induced map $KK^{G}(B,C) \to KK^{G}(A,C)$ given by $y\mapsto KK^{G}(\phi)\otimes y$.
\end{proposition}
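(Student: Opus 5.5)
This is Kasparov's Proposition 2.11, cited from \cite{Ka}, so I will not build the product from scratch. I would reduce it to the standard construction through connections and Kasparov's technical theorem, carried out equivariantly. The plan is to construct the product on representatives, show it is well defined up to homotopy, and then check bilinearity, associativity and the unit property.

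\textbf{Construction.} Given $(E_1,\phi_1,F_1)\in\Ebb_G(A,B)$ and $(E_2,\phi_2,F_2)\in\Ebb_G(B,C)$, I form the internal tensor product $E=E_1\otimes_{\phi_2}E_2$. This is a countably generated Hilbert $C$-module. It carries the diagonal $G$-action and the left action $\phi_1\otimes 1$. I then introduce $F_2$-connections $G$ on $E$, meaning operators with $T_xF_2-(-1)^{\partial x}GT_x\in\Kbb(E_2,E)$, where $T_x\colon e\mapsto x\otimes e$. The product is any $F$ on $E$ that is an $F_2$-connection and satisfies the positivity condition $\phi_1(a)[F_1\otimes 1,F]\phi_1(a)^*\geq 0$ modulo compacts. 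To produce such an $F$, I would first fix an $F_2$-connection $\tilde F_2$. Since $G$ is compact, I can average it over the Haar measure to make it $G$-continuous and invariant modulo compacts. I then set
\[
F=M^{1/2}(F_1\otimes 1)+N^{1/2}\tilde F_2 .
\]
Here $M,N\geq 0$ with $M+N=1$ are supplied by the equivariant Kasparov technical theorem. Its inputs are the algebra generated by $\Kbb(E)$ and $\Kbb(E_1)\otimes 1$, the derivations $[\tilde F_2,\cdot\,]$, the operators $\phi_1(a)$, and the $G$-action. Checking that this $F$ satisfies the Kasparov conditions, including $(g\cdot F-F)\phi_1(a)\in\Kbb(E)$, is a routine algebraic computation.

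\textbf{Main obstacle.} The hardest part is well-definedness together with associativity. First I need to show that any two $F$ satisfying the connection and positivity conditions are operator homotopic, via the straight-line path, using Connes–Skandalis's lemma on positive commutators. Next I need to show that the construction respects homotopies in each variable: running the construction over $B[0,1]$ and $C[0,1]$ produces a homotopy. For associativity, I would verify that the product in either order satisfies the connection and positivity conditions for $E_1\otimes E_2\otimes E_3$, and then apply the uniqueness statement. Bilinearity follows because direct sums commute with the internal tensor product. The unit property follows by taking $F_1=0$ on $A$, or $F_2=0$ on $B$, and checking that $F$ is then itself a valid connection.

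Separability of $A,B,C$ is used exactly once, in the technical theorem, to obtain the countable approximate units. Compactness of $G$ makes averaging available and keeps all $G$-continuity requirements intact.
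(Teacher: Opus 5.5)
Your outline is correct and matches the paper: the paper gives no proof of its own and only cites Kasparov, and your route (internal tensor product, equivariant $F_2$-connections, Kasparov's equivariant technical theorem to build $F=M^{1/2}(F_1\otimes 1)+N^{1/2}\tilde F_2$, and Connes--Skandalis uniqueness via positive commutators) is the argument of that cited source. One step is too quick: for the left unit, $A\otimes_{\phi_2}E_2\cong\overline{\phi_2(A)E_2}$ coincides with $E_2$ only when $\phi_2$ is essential, so you first need the standard lemma that every class has a representative with essential left action.
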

\begin{definition}
Let $(A,\Ga)$ and $(B,\Gb)$ be $G$-\cstar-algebras and let $\phi:A\to B$ be an equivariant $*$-homomorphism. Its mapping cone $C_{\phi}$ is defined by 
\[C_{\phi}=\{(a,f)\in A\oplus C_{0}([0,1),B)\:|\:f(0)=\phi(a)\}.\]
The sequence 
\[SB\xrightarrow{i}C_{\phi}\xrightarrow{\pi}A\xrightarrow{\phi}B\] is called the \textit{mapping cone sequence}. Here, $i$ is the inclusion map $f\mapsto (0,f)$ and $\pi$ is projection onto $A$.
\end{definition}
\begin{proposition}[{\cite[Section 2.2]{MN}}]
For any mapping cone sequence 
\[SB\xrightarrow{i}C_{\phi}\xrightarrow{\pi}A\xrightarrow{\phi}B\]
and any $G$-C$^{*}$-algebra $(C,\Gg)$, we have the following exact sequence:
\[KK^{G}(B,C)\xrightarrow{\phi^{*}}KK^{G}(A,C)\xrightarrow{\pi^{*}}KK^{G}(C_{\phi},C)\xrightarrow{i^{*}}KK^{G}(SB,C)\xrightarrow{(S\phi)^{*}}KK^{G}(SA,C).\]
Here, $S\phi$ denotes the suspension of $\phi$.
\end{proposition}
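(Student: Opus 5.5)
The strategy is to derive the contravariant exact sequence from the Puppe (mapping cone) machinery in equivariant $KK$-theory. Equivariant $KK$-theory is a triangulated category, and the mapping cone sequence $SB\xrightarrow{i}C_{\phi}\xrightarrow{\pi}A\xrightarrow{\phi}B$ is, up to sign conventions, an exact triangle. Applying the cohomological functor $KK^{G}(-,C)$ to an exact triangle gives a long exact sequence, and the displayed five-term segment is part of it. Since the statement only needs this segment, I would verify exactness at the three interior spots, $KK^{G}(A,C)$, $KK^{G}(C_{\phi},C)$ and $KK^{G}(SB,C)$, using standard tools: homotopy invariance, the identification of $C_{\pi}$ with $SB$ up to $KK^{G}$-equivalence, and Bott periodicity/suspension. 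No excision or semisplitness hypothesis is needed, since mapping cones always behave well.

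The first step is that the three composites vanish. We have $\pi\circ i=0$ on the nose. For $\phi\circ\pi:C_{\phi}\to B$, the map $(a,f)\mapsto f(t)$ gives a homotopy from $\phi\circ\pi$ to $0$ through equivariant $*$-homomorphisms, because $f(0)=\phi(a)$ and $f(1)=0$. Similarly, $i\circ S\phi$ is null-homotopic via the cone contraction. By homotopy invariance of $KK^{G}$ in the first variable, all three composites are zero at the level of $KK^{G}$.

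Exactness at $KK^{G}(A,C)$ is the standard \emph{half-exactness at the middle} of a mapping cone. I would reduce it to the case where $\phi$ is surjective with an equivariant contractible kernel by factoring $\phi$ through the mapping cylinder $Z_{\phi}=\{(a,f)\in A\oplus C([0,1],B): f(0)=\phi(a)\}$. The map $A\to Z_{\phi}$ is a homotopy equivalence, $\mathrm{ev}_{1}:Z_{\phi}\to B$ is surjective, and its kernel is $C_{\phi}$. This turns the problem into exactness for the extension $0\to C_{\phi}\to Z_{\phi}\to B\to 0$. That extension has an equivariant completely positive contractive splitting: take $b\mapsto(0,t\mapsto\chi(t)s(b))$, or more simply, when $\phi$ is arbitrary, use the constant-path-type section. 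Because $G$ is compact, averaging makes the section equivariant. Kasparov's six-term exact sequence for equivariantly semisplit extensions then gives exactness in the middle.

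Exactness at $KK^{G}(C_{\phi},C)$ and at $KK^{G}(SB,C)$ is handled by rotating the triangle. The cone of $\pi:C_{\phi}\to A$ is $KK^{G}$-equivalent to $SB$ via the natural map $SB\to C_{\pi}$, which is a homotopy equivalence after the usual reparametrization. So exactness at $KK^{G}(C_{\phi},C)$ is the previous step applied to $\pi$. Applying the same step to $i$, and identifying $C_{i}\simeq SA$ with connecting map $S\phi$ up to sign, gives exactness at $KK^{G}(SB,C)$.

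The main obstacle is bookkeeping: checking that the equivariant homotopy equivalences $SB\simeq C_{\pi}$ and $SA\simeq C_{i}$ are compatible with the stated maps, including the sign and the orientation of the suspension coordinate. I would handle this by writing explicit equivariant homotopies that are pointwise in $[0,1]$. These are automatically $G$-equivariant, since the group acts trivially on the interval coordinates.
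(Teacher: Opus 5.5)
Your proposal is correct, but it takes a different route from the paper. The paper gives no argument of its own: it cites Meyer--Nest, where the sequence is the long exact sequence of the cohomological functor $KK^{G}(-,C)$ on the triangulated category $KK^{G}$, whose exact triangles are the mapping cone triangles. That is exactly the remark in your first paragraph.

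The rest of your proposal unpacks that citation directly, in two stages:
\begin{itemize}
\item Half-exactness at $KK^{G}(A,C)$ comes from the cylinder extension $0\to C_{\phi}\to Z_{\phi}\xrightarrow{\mathrm{ev}_{1}}B\to 0$ and Kasparov's exact sequence for equivariantly semisplit extensions. You transport it along the homotopy equivalence $\kappa\colon A\to Z_{\phi}$, using $\mathrm{ev}_{1}\circ\kappa=\phi$ and $p\circ\iota=\pi$, where $p\colon Z_{\phi}\to A$ is the projection and $\iota\colon C_{\phi}\to Z_{\phi}$ the inclusion.
\item The remaining two exactness statements come from rotating the triangle.
\end{itemize}
This is essentially how the triangulated structure is verified in the first place. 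Your route is self-contained modulo Kasparov's theorem; the citation gives the whole long exact sequence at once.

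The bookkeeping you flag does close up. Identify $C_{\pi}$ with $\{(f,h)\in C_{0}([0,1),B)\oplus C_{0}([0,1),A) : f(0)=\phi(h(0))\}$, and write $\pi_{\pi}\colon C_{\pi}\to C_{\phi}$ and $i_{\pi}\colon SA\to C_{\pi}$ for the maps of the cone sequence of $\pi$. Then:
\begin{itemize}
\item The map $j\colon SB\to C_{\pi}$, $j(g)=(g,0)$, satisfies $\pi_{\pi}\circ j=i$.
\item $j$ has homotopy inverse $r$ given by $r(f,h)(t)=\phi(h(1-2t))$ for $t\le\frac{1}{2}$ and $r(f,h)(t)=f(2t-1)$ for $t\ge\frac{1}{2}$.
\item $r\circ i_{\pi}$ is homotopic to $S\phi$ followed by the flip of $(0,1)$.
\end{itemize}
So exactness at $KK^{G}(SB,C)$ already follows from exactness at the cone for $\pi$, up to a harmless sign, and you need not compute $C_{i}$ at all. (The equivalence $C_{i}\simeq SA$, compatible with $S\phi$, is nevertheless also true.)

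One small slip: a ``constant-path-type'' section of $\mathrm{ev}_{1}$ is not available for general $\phi$, since a constant path at $b$ must start in $\phi(A)$. Your first suggestion, $b\mapsto(0,t\mapsto tb)$, is already a c.p.c.\ equivariant splitting, so no averaging is needed.
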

Let $B$ be a unital $G$-C$^{*}$-algebra and $p\in\BB(V)\otimes B$ be a $G$-invariant projection for some finite representation space $V$ of $G$. Let $\phi_{p}:\Cbb\to \BB(V\otimes_{\Cbb} B)\cong \Kbb(B^{\dim V})$ be an equivariant $*$-homomorphism defined by $\Gl\mapsto\Gl p$. Then $(B^{\dim V},\phi_{p},0)$ is a Kasparov $\Cbb$-$B$-bimodule, and hence $[B^{\dim V},\phi_{p},0]\in KK^{G}(\Cbb,B)$. This construction induces a homomorphism from $K^{G}_{0}(B)$ into $KK^{G}(\Cbb,B)$. In fact, it is known that this gives an isomorphism of abelian groups. If $B$ is non-unital, we obtain the same isomorphism by using the split exact sequeznce for the unitization of $B$ and the six-term exact sequence for $KK^{G}$.

We now introduce the Cuntz-Thomsen picture of $KK^{G}$. The strict topology on $\Mcal(A)$ is the topology given by the seminorms $x\mapsto \|ax\|$ and $x\mapsto\|xa\|$ for $a\in A$, i.e. $x_{i}\to x$ strictly if and only if $ax_{i}\to ax$ and $x_{i}a\to xa$ in norm for all $a\in A$.
\begin{definition}
Let $(A,\Ga)$ be a $G$-C$^{*}$-algebra. A $1$-cocycel with respect to $\Ga$ is a strictly continuous map $\bbm{u}:G\to \Ucal(\Mcal(A))$ that satisfies the cocycle identity $\bbm{u}_{gh}=\bbm{u}_{g}\Ga_{g}(\bbm{u}_{h})$ for all $g,h\in G$. 
\end{definition}
\begin{definition}
Let $\Ga:G\act A$ and $\Gb:G\act B$ be two actions on C$^{*}$-algebras.
\begin{enumerate}
\item A cocycle representation $(\phi,\bbm{u}):(A,\Ga)\to (\Mcal(B),\Gb)$ consists of a $*$-homomorphism $\phi:A\to \Mcal(B)$ and a $\Gb$-cocycle $\bbm{u}:G\to\Ucal(\Mcal(B))$ satisfying $\ad{\bbm{u}_{g}}\circ \Gb_{g}\circ\phi=\phi\circ\Ga_{g}$ for all $g\in G$.
\item If additionally $\phi(A)\subset B$, then $(\phi,\bbm{u})$ is called a \textit{cocycle morphism}, and we denote $(\phi,\bbm{u}):(A,\Ga)\to(B,\Gb)$.
\item If the range of $\phi$ is contained in $B$ and $\bbm{u}$ takes values in $\Ucal(1+B)$, then $(\phi,\bbm{u})$ is called a \textit{proper cocycle morphism}.
\item A cocycle morphism $(\phi,\bbm{u})$ is called a \textit{cocycle conjugacy} if $\phi$ is an isomorphism. Furthermore, wa say that $(A,\Ga)$ and $(B,\Gb)$ are cocycle conjugate, denoted by $(A,\Ga)\cc(B,\Gb)$, if there exists a \textit{cocycle conjugacy} $(\phi,\bm{u}):(A,\Ga)\to(B,\Gb)$. Moreover, if we can take the cocycle to be $\mathbf{1}$, we say that they are \textit{conjugate} and denote it by $(A,\Ga)\cong(B,\Gb)$.
\end{enumerate}
\end{definition}
\begin{definition}
Let $(B,\Gb)$ be a $G$-C$^{*}$-algebra. $\Gb$ is \textit{strongly stable} if $(B,\Gb)$ is conjugate to $(B\otimes \Kbb,\Gb\otimes \id{\Kbb})$.
\end{definition}
If $\Gb$ is strongly stable, then there exists an embedding $(\Kbb,\id{\Kbb})\to (\Mcal(B),\Gb)$. Thus there is a sequence of isometries $r_{n}\in \Mcal(B)^{\Gb}$ such that $1=\sum_{n=1}^{\infty}r_{n}r_{n}^{*}$ holds in the strict topology.
\begin{definition}
Let $\Ga:G\act A$ and $\Gb:G\act B$ be two actions on C$^{*}$-algebras such that $A$ is separable and $B$ is $\Gs$-unital. An $(\Ga,\Gb)$-\textit{Cuntz pair} is a pair of cocycle representations
\[(\phi,\bbm{u}),(\psi,\bbm{v}):(A,\Ga)\to(\Mcal(B\otimes\Kbb),\Gb\otimes \id{\Kbb})\]
such that $\phi(a)-\psi(a)$ and $\bbm{u}_{g}-\bbm{v}_{g}$ are in $B\otimes \Kbb$ for all $a\in A$ and $g\in G$. If $\Gb$ is strongly stable we also allow $(B,\Gb)$ in place of $(B\otimes\Kbb,\Gb\otimes\id{\Kbb})$ appearing in the definition.
\end{definition}
\begin{definition}
Let $\Gb:G\act B$ be an action. Suppose that there exists a unital inclusion $\otw\subset \Mcal(B)^{\Gb}$. For two isometries $t_{1},t_{2}\in \Mcal(B)^{\Gb}$ with $t_{1}t_{1}^{*}+t_{2}t_{2}^{*}=1$, we may consider an equivariant $*$-homomorphism
\[\Mcal(B)\oplus \Mcal(B)\rightarrow \Mcal(B),\; b_{1}\oplus b_{2}\mapsto b_{1}\oplus_{t_{1}t_{2}}b_{2}\coloneqq t_{1}b_{1}t_{1}^{*}+t_{2}b_{2}t_{2}^{*}.\]
This $*$-homomorphism does not depend on the choice of $t_{1}$ and $t_{2}$ up to unitary equivalence with a unitary in $\Mcal(B)^{\Gb}$. Let $\Ga:G\act A$ be another action on a C$^{*}$-algebra, and $(\phi,\bbm{u}),(\psi,\bbm{v}):(A,\Ga)\to(\Mcal(B),\Gb)$ be cocycle representations. We define the \textit{Cuntz sum}
\[(\phi,\bbm{u})\oplus_{t_{1},t_{2}}(\psi,\bbm{v})=(\phi\oplus_{t_{1},t_{2}}\psi,\bbm{u}\oplus_{t_{1},t_{2}}\bbm{v}):(A,\Ga)\to(\Mcal(B),\Gb).\]
Since it does not depend on the choice of isometries, up to unitary equivalence, we will omit $t_{1}$ and $t_{2}$ if it is clear from context.
\end{definition}
\begin{definition}
Let $A$ be a separable C$^{*}$-algebra and $B$ be a $\Gs$-unital C$^{*}$-algebra. For two actions $\Ga:G\act A$ and $\Gb:G\act B$, let $\Ebb^{G}(\Ga,\Gb)$ denote the set of all $(\Ga,\Gb)$-Cuntz pairs.

Two elements $((\phi^{0}\bbm{u}^{0}),(\psi^{0},\bbm{v}^{0}))$ and $((\phi^{1}\bbm{u}^{1}),(\psi^{1},\bbm{v}^{1}))$ in $\Ebb^{G}(\Ga,\Gb)$ are homotopic, denoted by $((\phi^{0}\bbm{u}^{0}),(\psi^{0},\bbm{v}^{0}))\sim_{h}((\phi^{1}\bbm{u}^{1}),(\psi^{1},\bbm{v}^{1}))$ if there exists an $(\Ga,\Gb\otimes\id{C[0,1]})$-Cuntz pair that restricts to $((\phi^{0}\bbm{u}^{0}),(\psi^{0},\bbm{v}^{0}))$ at $t=0$ and restricts to $((\phi^{1}\bbm{u}^{1}),(\psi^{1},\bbm{v}^{1}))$ at $t=1$. An $(\Ga,\Gb)$-Cuntz pair with $\phi=\psi=0$ is called a cocycle pair and is denoted by $(\bbm{u},\bbm{v})$. We define $\Ebb_{0}^{G}(\Ga,\Gb)$ to be the set of all anchored $(\Ga,\Gb)$-Cuntz pairs: an element $((\phi,\bbm{u}),(\psi,\bbm{v}))\in \Ebb^{G}(\Ga,\Gb)$ is said to be anchored if $((0,\bbm{u}),(0,\bbm{v}))\sim_{h}((0,\mathbf{1}),(0,\mathbf{1}))$, that is, $(\bbm{u},\bbm{v})\sim_{h}(\mathbf{1},\mathbf{1})$ in our notation.

For any unital inclusion $\otw\subset \Mcal(B\otimes \Kbb)^{\Gb\otimes\id{\Kbb}}$ with isometries $t_{1},t_{2}$ that satisfy the Cuntz relation, we can define the Cuntz addition for two $(\Ga,\Gb)$-Cuntz pairs by
\begin{align*}&((\phi^{0}\bbm{u}^{0}),(\psi^{0},\bbm{v}^{0}))\oplus_{t_{1},t_{2}}((\phi^{1}\bbm{u}^{1}),(\psi^{1},\bbm{v}^{1}))\\
  &=((\phi^{0}\bbm{u}^{0})\oplus_{t_{1},t_{2}}(\phi^{1}\bbm{u}^{1}),(\psi^{0},\bbm{v}^{0})\oplus_{t_{1},t_{2}}(\psi^{1},\bbm{v}^{1})).
\end{align*}
This Cuntz pair is independent of the choice of $t_{1},t_{2}$ up to homotopy.
\end{definition}
The quotient $\Ebb^{G}(\Ga,\Gb)/\sim_{h}$ is an abelian group with Cuntz addition. The homotopy classes of cocycle pairs form a subgroup, denoted by $H_{\Gb}$. It was proved by Thomsen (\cite[Theorem 3.5]{Th}) that the group quotient of $\Ebb^{G}(\Ga,\Gb)/\sim_{h}$ modulo $H_{\Gb}$ is naturally isomorphic to $KK^{G}(\Ga,\Gb)$. For an $(\Ga,\Gb)$-Cuntz pair $((\phi,\bbm{u}),(\psi,\bbm{v}))$, we denote its equivalence class in $KK^{G}(\Ga,\Gb)$ by $[(\phi,\bbm{u}),(\psi,\bbm{v})]$. Under this identification, the inclusion map $\Ebb^{G}_{0}(\Ga,\Gb)\subset \Ebb^{G}(\Ga,\Gb)$ induces a natural isomorphism of abelian groups $\Ebb^{G}_{0}(\Ga,\Gb)/\sim_{h}\cong KK^{G}(\Ga,\Gb)$.
\begin{remark}\label{rem:kk-elem}
Let $\Gb:G\act B$ be an action and $\bbm{u}:G\to \Ucal(\Mcal(B))$ be a $\Gb$-cocycle. We denote by $B^{\bbm{u}}$  the Hilbert $(B,\Gb)$-module that is equal to $B$ as a non-equivariant Hilbert $B$-module with group action defined by $g\cdot b=\bbm{u}_{g}\Gb_{g}(b)$ for all $g\in G$ and $b\in B$. For any cocycle morphism $(\phi,\bbm{u}):(A,\Ga)\to(B,\Gb)$, we have a Kasparov triple $(B^{\bbm{u}},\phi,0)$ and denote its equivalence class in $KK^{G}(\Ga,\Gb)$ by $KK^{G}(\phi,\bbm{u})$. 
\end{remark}
\begin{proposition}[{\cite[Proposition 1.12]{GS2}}]\label{prop:kk-elem}
Let $(\phi,\bbm{u}):(A,\Ga)\to(B,\Gb)$ be a cocycle morphism. Then $KK^{G}(\phi,\bbm{u})$ is represented by the Kasparov triple $(\overline{\phi(A)B^{\bbm{u}}},\phi,0)$.
\end{proposition}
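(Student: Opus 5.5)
The plan is to build an explicit homotopy of equivariant Kasparov triples from $(B^{\bbm{u}},\phi,0)$ to $(E_{0},\phi,0)$, where $E_{0}\coloneqq\overline{\phi(A)B^{\bbm{u}}}$. The homotopy is a Hilbert $B[0,1]$-module that equals $B^{\bbm{u}}$ at $t=0$ and the submodule $E_{0}$ at $t=1$. No unitary equivalence is needed, only a mapping-cylinder-type module.

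\textbf{Step 1: $E_{0}$ is a $G$-invariant Hilbert submodule and a valid Kasparov triple.} $E_{0}$ is a closed right $B$-submodule of $B^{\bbm{u}}$, so it is a Hilbert $B$-module with the restricted inner product. For $G$-invariance, take $a\in A$ and $b\in B$. Using the covariance relation $\ad{\bbm{u}_{g}}\circ\Gb_{g}\circ\phi=\phi\circ\Ga_{g}$, we get
\[
g\cdot(\phi(a)b)=\bbm{u}_{g}\Gb_{g}(\phi(a))\Gb_{g}(b)=\phi(\Ga_{g}(a))\,\bbm{u}_{g}\Gb_{g}(b)\in E_{0}.
\]
The left action $\phi$ maps $E_{0}$ into itself.

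Since $F=0$, the only Kasparov condition is $\phi(a)|_{E_{0}}\in\Kbb(E_{0})$. By Cohen factorization write $a=a_{1}a_{2}^{*}$. Then $\phi(a_{1}),\phi(a_{2})\in\phi(A)B\subset E_{0}$; here $\phi(A)\subset\phi(A)B$ holds because $\phi(a_{i})=\phi(a_{i}')\phi(a_{i}'')$ by Cohen factorization again. For $\xi\in E_{0}$,
\[
\phi(a)\xi=\phi(a_{1})\phi(a_{2})^{*}\xi=\theta_{\phi(a_{1}),\phi(a_{2})}(\xi),
\]
with the rank-one operator taken inside $E_{0}$. So $\phi(a)$ acts as a compact operator on $E_{0}$. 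The same identity with $E_{0}$ replaced by $B^{\bbm{u}}$ recovers the usual fact that $(B^{\bbm{u}},\phi,0)$ is a Kasparov triple.

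\textbf{Step 2: the homotopy module.} Set
\[
\mathcal{E}=\{f\in C([0,1],B^{\bbm{u}})\::\: f(1)\in E_{0}\},
\]
a closed submodule of the Hilbert $B[0,1]$-module $C([0,1],B^{\bbm{u}})$. Let $G$ act pointwise via $g\cdot b=\bbm{u}_{g}\Gb_{g}(b)$, and let $A$ act by the constant representation $\phi$. Step 1 shows both actions preserve the condition $f(1)\in E_{0}$. Continuity of the $G$-action on $\mathcal{E}$ follows from continuity on $B^{\bbm{u}}$ and compactness of $[0,1]$.

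For compactness, the factorization $\phi(a)=\theta_{\phi(a_{1}),\phi(a_{2})}$ applies with $\phi(a_{1}),\phi(a_{2})$ regarded as constant functions. These constant functions lie in $\mathcal{E}$ because their values lie in $E_{0}$. Hence $\phi(a)\in\Kbb(\mathcal{E})$, and $(\mathcal{E},\phi,0)$ is an equivariant Kasparov $A$-$B[0,1]$-bimodule.

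\textbf{Step 3: identify the endpoints.} Evaluation at $t=0$ gives $\mathcal{E}\otimes_{\mathrm{ev}_{0}}B\cong B^{\bbm{u}}$ via $f\otimes b\mapsto f(0)b$. Evaluation at $t=1$ gives $\mathcal{E}\otimes_{\mathrm{ev}_{1}}B\cong E_{0}$. Both isomorphisms are $G$-equivariant and intertwine $\phi$, so they are unitary equivalences of triples. Therefore $KK^{G}(\phi,\bbm{u})=[B^{\bbm{u}},\phi,0]=[E_{0},\phi,0]$.

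I expect the main obstacle to be the endpoint identification in Step 3. For $t=0$, one must check that the map $f\mapsto f(0)$ is onto $B^{\bbm{u}}$ and that the induced isometry is a surjective isomorphism of the interior tensor product. Surjectivity is handled with functions such as $t\mapsto(1-t)b$. For $t=1$, the image of $\mathcal{E}$ is exactly $E_{0}$ (use constant functions with values in $E_{0}$), and inner products are preserved by construction.
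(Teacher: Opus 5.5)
Your proposal is correct and is essentially the paper's proof. The paper uses the same mapping-cylinder module $\{f\in C([0,1],B) : f(0)\in\overline{\phi(A)B}\}$, with pointwise $G$-action $\bbm{u}_{g}\Gb_{g}$ and $A$ acting by pointwise multiplication by $\phi$; the only differences are that you put the endpoint condition at $t=1$ rather than $t=0$, and that you write out checks the paper leaves implicit (invariance of $E_{0}$, compactness via factorization, and identification of both endpoints).
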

\begin{proof}
It suffices to show that $(B^{\bbm{u}},\phi,0)$ and $(\overline{\phi(A)B^{\bbm{u}}},\phi,0)$ are homotopic. Let $D=\{f\in C([0,1],B)\:|\: f(0)\in \overline{\phi(A)B}\}$ be the Hilbert  $(B[0,1],\Gb[0,1])$-module with $G$-action given by $(g\cdot f)(t)=\bbm{u}_{g}\Gb_{g}(f(t))$. Let $\Phi:A\to \Kbb(D)$ be the representation given by $(\Phi(a)f)(t)=\phi(a)f(t)$. Then, $(D,\Phi,0)$ is the desired homotopy.
\end{proof}
\begin{proposition}[{\cite[Corollary 1.13]{GS2}}]
Let $(\phi,\bbm{u}):(A,\Ga)\to(B,\Gb)$ and $(\psi,\bbm{v}):(B,\Gb)\to(C,\Gg)$ be proper cocycle morphisms. Then
\[KK^{G}(\phi,\bbm{u})\otimes KK^{G}(\psi,\bbm{v})=KK^{G}((\psi,\bbm{v})\circ (\phi,\bbm{u}))\]
where $(\psi,\bbm{v})\circ (\phi,\bbm{u})=(\psi\circ\phi,\tilde{\psi}(\bbm{u})\bbm{v})$.
\end{proposition}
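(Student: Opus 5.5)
We compute the Kasparov product directly from representing triples with zero operator, identify the internal tensor product module by an explicit equivariant unitary, and then invoke \autoref{prop:kk-elem} for the composite. By \autoref{rem:kk-elem} and \autoref{prop:kk-elem}, $KK^{G}(\phi,\bbm{u})$ is represented by $(B^{\bbm{u}},\phi,0)$ and $KK^{G}(\psi,\bbm{v})$ by $(\overline{\psi(B)C^{\bbm{v}}},\psi,0)$. In the second triple the left action $\psi$ is nondegenerate, which is what allows the internal tensor product below to be identified with a submodule of $C$.

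\textbf{Step 1: the product is the internal tensor product with zero operator.} Since $\phi(A)\subset B$ and $B=\Kbb(B_{B})$, the operators $\phi(a)$ act as compact operators on $B^{\bbm{u}}$. Likewise $\psi(b)$ is left multiplication by an element of $C$, so it is compact on $\overline{\psi(B)C^{\bbm{v}}}$. Consequently the zero operator on $E\coloneqq B^{\bbm{u}}\otimes_{\psi}\overline{\psi(B)C^{\bbm{v}}}$ is a $0$-connection, because $T_{x}\circ 0-0\circ T_{x}=0$ for every $x$. The positivity condition is vacuous, and $\phi(a)\otimes 1$ is compact on $E$ because $\phi(a)$ is compact and $\psi(B)$ acts compactly. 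By Kasparov's characterization of the product, the class $KK^{G}(\phi,\bbm{u})\otimes KK^{G}(\psi,\bbm{v})$ is therefore represented by $(E,\phi\otimes 1,0)$. Here $G$ acts on $E$ diagonally by
\[
g\cdot(b\otimes c)=\bbm{u}_{g}\Gb_{g}(b)\otimes\bbm{v}_{g}\Gg_{g}(c).
\]

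\textbf{Step 2: identify the module with a submodule of $C$.} Define $W:E\to\overline{\psi(B)C}$ by $b\otimes c\mapsto\psi(b)c$. The computation
\[
\ip{\psi(b)c}{\psi(b')c'}=c^{*}\psi(b^{*}b')c'
\]
shows that $W$ preserves inner products, and $W$ is onto by the definition of the target. Hence $W$ is a unitary of Hilbert $C$-modules, and it intertwines $\phi\otimes 1$ with $\psi\circ\phi$.

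For equivariance we use properness. Since $\bbm{u}_{g}\in\Ucal(\one+B)$, the element $\tilde\psi(\bbm{u}_{g})\in\Ucal(\one+C)$ makes sense. The cocycle morphism relation $\ad{\bbm{v}_{g}}\circ\Gg_{g}\circ\psi=\psi\circ\Gb_{g}$ then gives
\[
W(g\cdot(b\otimes c))=\psi(\bbm{u}_{g}\Gb_{g}(b))\,\bbm{v}_{g}\Gg_{g}(c)=\tilde\psi(\bbm{u}_{g})\,\bbm{v}_{g}\,\Gg_{g}(\psi(b)c).
\]
This is exactly the action of $G$ on $C^{\tilde\psi(\bbm{u})\bbm{v}}$, so $W$ is an equivariant unitary equivalence.

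\textbf{Step 3: conclude for the composite.} A routine check using the cocycle identities for $\bbm{u}$ and $\bbm{v}$, together with the equivariance relation above, shows that $\tilde\psi(\bbm{u})\bbm{v}$ is a $\Gg$-cocycle. Hence $(\psi\circ\phi,\tilde\psi(\bbm{u})\bbm{v})$ is a proper cocycle morphism. By Steps 1 and 2, the product is represented by $(\overline{\psi(B)C^{\tilde\psi(\bbm{u})\bbm{v}}},\psi\circ\phi,0)$. Since $\psi(\phi(A))C\subset\psi(B)C$, this triple and $\bigl(C^{\tilde\psi(\bbm{u})\bbm{v}},\psi\circ\phi,0\bigr)$ both represent the same class as $\bigl(\overline{\psi\phi(A)C^{\tilde\psi(\bbm{u})\bbm{v}}},\psi\circ\phi,0\bigr)$. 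This follows by the homotopy argument in the proof of \autoref{prop:kk-elem}, applied to each of the two larger submodules. Therefore the product equals $KK^{G}((\psi,\bbm{v})\circ(\phi,\bbm{u}))$.

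The main point requiring care is Step 1: one must justify that the zero operator is a legitimate representative of the product in the equivariant setting. This comes down to the compactness of $\phi(A)$ and $\psi(B)$ acting on the respective modules, and to the $G$-continuity of the diagonal action on $E$.
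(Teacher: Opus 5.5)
Your proposal is correct and follows essentially the same route as the paper. Both represent the product by the internal tensor product with zero operator, identify it via $b\otimes c\mapsto\psi(b)c$, check equivariance with the cocycle-morphism relation, and finish with \autoref{prop:kk-elem}. The one difference is that the paper takes $\overline{\phi(A)B^{\bbm{u}}}$ rather than $B^{\bbm{u}}$ as the first factor, so the tensor product lands directly on $\overline{\psi\phi(A)C^{\tilde{\psi}(\bbm{u})\bbm{v}}}$ and your extra homotopy from $\overline{\psi(B)C}$ down to $\overline{\psi\phi(A)C}$ is not needed.
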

\begin{proof}
$KK^{G}(\phi,\bbm{u})\otimes KK^{G}(\psi,\bbm{v})$ is represented by the Kasparov triple $(E,\Gk,0)$, where $E=\overline{\phi(A)B^{\bbm{u}}}\otimes_{\psi}\overline{\psi(B)C^{\bbm{v}}}$ and $\Gk=\phi\otimes \mathbf{1}$. We have $E\cong \overline{(\psi\circ\phi)(A)C^{\tilde{\psi}(\bbm{u})\bbm{v}}}$ via $b\otimes c\mapsto \psi(b)c$ as Hilbert $C$-modules. Under this identification, $\Gk$ coressponds to $\psi\circ\phi$. We claim that this isomorphism is equivariant. The $G$-action on $E$ is given by $g\cdot(b\otimes c)=\bbm{u}_{g}\Gb_{g}(b)\otimes\bbm{v}_{g}\Gg_{g}(c)$. This element is mapped to
\[\psi(\bbm{u}_{g}\Gb_{g}(b))\bbm{v}_{g}\Gg_{g}(c)=\tilde{\psi}(\bbm{u}_{g}\psi(\Gb_{g}(b))\bbm{v}_{g}\Gg_{g}(c))=\tilde{\psi}(\bbm{u}_{g})\bbm{v}_{g}\Gg_{g}(\psi(b)c)=g\cdot (\psi(b)c).\]
Since $KK^{G}((\psi,\bbm{v})\circ (\phi,\bbm{u}))$ is represented by $(\overline{(\psi\circ\phi)(A)C^{\tilde{\psi}(\bbm{u})\bbm{v}}},\psi\circ\phi,0)$, $KK^{G}(\phi,\bbm{u})\otimes KK^{G}(\psi,\bbm{v})=KK^{G}((\psi,\bbm{v})\circ (\phi,\bbm{u}))$ holds.
\end{proof}

\subsection{Isometrically shift-absorbing actions}\label{sec:isa}
First, we recall the definition of isometrically shift-absorbing actions. The definition of isometrically shift-absorbing actions requires some preparation.

Let $H$ be an infinite-dimensional separable Hilbert space. The Cuntz algebra $\oinf$ is isomorphic to $\Ocal_{H}$, the universal unital C$^{*}$-algebra generated by the range of a linear map $\Fs:H\to \Ocal_{H}$ subject to the relation $\Fs(\Gx)^{*}\Fs(\eta)=\ip{\Gx}{\eta}\mathbf{1}$ for all $\Gx,\eta\in H$. For every unitary $U$ on $H$, there is a unique automorphism on $\Ocal_{H}$ such that $\Fs(\Gx)$ is sent to $\Fs(U\Gx)$ for all $\Gx\in H$. A unitary representation $G\to\Ucal(H)$ induces a $G$-action on $\Ocal_{H}$ via the correspondence described above. The action defined in this way is called a quasi-free action.

Let $H_{F}=\bigoplus_{n=0}^{\infty}H^{\otimes n}$ be the Fock space and consider the linear map $\Fs_{F}:H\to \BB(H_{F})$ given by $\Fs_{F}(\Gx)(x)=\Gx\otimes x$ for all $\Gx\in H,$ $x\in H^{\otimes n}$ and $n\geq 0$. Then $\Fs_{F}$ induces the Fock representation $\pi:\Ocal_{H}\to\BB(H_{F})$, which is irreducible. 
\begin{proposition}[{\cite[Proposition 3.2]{GS2}}]
Every automorphism on $\Ocal_{H}$ induced by a unitary $U\in \Ucal(H)\setminus \{1\}$ is outer.
\end{proposition}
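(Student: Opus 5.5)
The plan is to argue by contradiction, combining the irreducibility of the Fock representation $\pi$ with an asymptotic compression argument along a weakly null sequence in $H$. Write $\sigma_{U}$ for the automorphism of $\Ocal_{H}$ with $\sigma_{U}(\Fs(\Gx))=\Fs(U\Gx)$. Suppose $U\neq 1$ and $\sigma_{U}=\ad{w}$ for some unitary $w\in\Ocal_{H}$. Then
\[w\Fs(\Gx)=\Fs(U\Gx)w\quad\text{for all }\Gx\in H.\]
We will show that $w$ must be a scalar, so that $\sigma_{U}=\id{}$, i.e. $\Fs(U\Gx)=\Fs(\Gx)$ for all $\Gx$, and hence $U=1$ (since $\Fs$ is isometric: $\Fs(\Gx)^{*}\Fs(\Gx)=\|\Gx\|^{2}\one$).

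\textbf{Step 1: $\ad{w}$ is spatially implemented, so $\pi(w)$ is a multiple of $\Gamma(U)$.} Let $\Gamma(U)=\bigoplus_{n\geq 0}U^{\otimes n}$ be the second quantization of $U$ on $H_{F}$. A direct check on elementary tensors gives
\[\Gamma(U)\,\Fs_{F}(\Gx)\,\Gamma(U)^{*}=\Fs_{F}(U\Gx),\]
so $\Gamma(U)$ implements $\sigma_{U}$ in the Fock representation. Consequently $\pi(w)^{*}\Gamma(U)$ commutes with $\pi(\Ocal_{H})$. Since $\pi$ is irreducible, $\pi(w)=\Gl\Gamma(U)$ for some $\Gl\in\mathbb{T}$. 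Let $\Omega$ be the vacuum vector and $\Go(x)=\ip{\pi(x)\Omega}{\Omega}$ the vacuum state. Because $\Gamma(U)\Omega=\Omega$, we get $\Go(w)=\Gl\neq 0$.

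\textbf{Step 2: compressions along a weakly null sequence converge to the vacuum state.} Fix an orthonormal sequence $(\Gx_{n})_{n}$ in $H$. I claim that $\Fs(\Gx_{n})^{*}x\Fs(\Gx_{n})\to\Go(x)\one$ in norm for every $x\in\Ocal_{H}$. Each map $x\mapsto\Fs(\Gx_{n})^{*}x\Fs(\Gx_{n})$ is contractive, and $\Go$ is a state, so it suffices to check the claim on the dense span of words $\Fs(\zeta_{1})\cdots\Fs(\zeta_{k})\Fs(\eta_{1})^{*}\cdots\Fs(\eta_{l})^{*}$.
\begin{itemize}
\item If $k\geq 1$, the compression contains the factor $\Fs(\Gx_{n})^{*}\Fs(\zeta_{1})=\ip{\Gx_{n}}{\zeta_{1}}\one$, which tends to $0$.
\item If $l\geq 1$, it contains $\Fs(\eta_{l})^{*}\Fs(\Gx_{n})=\ip{\eta_{l}}{\Gx_{n}}\one$, which tends to $0$.
\item For the word $\one$ the compression is exactly $\one$.
\end{itemize}
These limits agree with $\Go$ on each word: $\Go(\one)=1$, and $\Go$ vanishes on every word with $k+l\geq 1$, because $\Fs_{F}(\eta)^{*}\Omega=0$ and creation operators map $\Omega$ into $\bigoplus_{n\geq1}H^{\otimes n}$.

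\textbf{Step 3: conclude.} Applying Step 2 to $x=w$ and using the intertwining relation,
\[\Fs(\Gx_{n})^{*}w\Fs(\Gx_{n})=\Fs(\Gx_{n})^{*}\Fs(U\Gx_{n})w=\ip{\Gx_{n}}{U\Gx_{n}}\,w.\]
Hence $\ip{\Gx_{n}}{U\Gx_{n}}w\to\Go(w)\one$. Pass to a subsequence along which the bounded scalars $\ip{\Gx_{n}}{U\Gx_{n}}$ converge to some $c$. Then $cw=\Go(w)\one=\Gl\one$. Since $\Gl\neq 0$, we get $c\neq0$ and $w\in\Cbb\one$, which yields the contradiction described in the first paragraph.

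The main obstacle is ensuring the limit coefficient $c$ is nonzero. A priori $\ip{\Gx_{n}}{U\Gx_{n}}$ could tend to $0$, in which case the compression argument alone only gives $\Go(w)=0$. This is exactly why Step 1 is needed: irreducibility of $\pi$ forces $\Go(w)\neq0$. The remaining verifications — the implementation by $\Gamma(U)$ and the density argument in Step 2 — are routine.
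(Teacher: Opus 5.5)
Your proof is correct. The paper gives no argument of its own for this statement; it is quoted from \cite{GS2}. So there is nothing in the text to compare against, and your proof stands as a complete self-contained argument. The individual steps all check out:
\begin{itemize}
\item $\Gamma(U)$ implements $\sigma_U$ in the Fock representation, so irreducibility gives $\pi(w)=\lambda\Gamma(U)$ with $|\lambda|=1$, and hence $\omega(w)=\lambda$.
\item The compressions $x\mapsto\mathfrak{s}(\xi_n)^*x\,\mathfrak{s}(\xi_n)$ are uniformly contractive and converge to $\omega(x)1$ on the dense span of normal-form words, hence everywhere.
\item The intertwining relation turns the compression of $w$ into $\langle\xi_n,U\xi_n\rangle w$, which gives the contradiction.
\end{itemize}

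One remark on the point you identify as the main obstacle: you can dispense with Step 1 by choosing the orthonormal sequence adapted to $U$.
\begin{itemize}
\item Since $H$ is infinite-dimensional, $U$ has a point $z$ in its essential spectrum. As $U$ is normal, there is an orthonormal sequence $(\xi_n)$ with $\|(U-z)\xi_n\|\to 0$, so $\langle\xi_n,U\xi_n\rangle\to z$ with $|z|=1$.
\item Your density argument in Step 2 already shows, without any representation, that $\mathfrak{s}(\xi_n)^*x\,\mathfrak{s}(\xi_n)$ converges in norm to an element of $\mathbb{C}1$ for every $x$. The sequence is Cauchy on words, the maps are uniformly contractive, and $\mathbb{C}1$ is closed.
\item Hence $zw\in\mathbb{C}1$, and $w$ is a scalar directly.
\end{itemize}
This version needs neither the Fock representation nor its irreducibility. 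Yours instead trades that small piece of spectral theory for the irreducibility of $\pi$, which the paper already records, and in exchange works with an arbitrary orthonormal sequence.
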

\begin{definition}\label{def:qf}
Let $G$ be a second-countable, compact group with a normalized Haar measure $\Gm$. We denote $H_{G}=L^{2}(G,\Gm)$ and $H^{\infty}_{G}=\ell^{2}(\Nbb)\otimes H_{G}$ which are separable Hilbert spaces. The latter is infinite-dimensional. The left regular representation $\Gl:G\to\Ucal(H_{G})$ is given by $\Gl_{g}(\Gx)(h)=\Gx(g^{-1}h)$ for all $g,h\in G$ and $\Gx\in H_{G}$. Then its infinite direct sum is denoted by $\Gl^{\infty}:G\to \Ucal(H^{\infty}_{G})$. In this subsection, we denote by $\Gg:G\act \Ocal_{H^{G}_{\infty}}\cong\oinf$ the quasi-free action determined by $\Gg_{g}\circ \Fs=\Fs\circ \Gl_{g}^{\infty}$ for all $g\in G$.
\end{definition}
\begin{definition}
Let $\Gb:G\act B$ be an action on a C$^{*}$-algebra. We denote by $\ell^{\infty}_{\Gb}(\Nbb,B)$ the C$^{*}$-algebra of every bounded sequence $(b_{n})_{n}$ such that the map $G\ni g\mapsto (\Gb_{g}(b_{n}))_{n}\in\ell^{\infty}(\Nbb,B)$ is continuous. The quotient $\ell^{\infty}_{\Gb}(\Nbb,B)/c_{0}(\Nbb,B)$ is denoted by $B_{\infty,\Gb}$ and the action induced by $\Gb$ is denoted by $\Gb_{\infty}$. We note that $B$ canonically embeds into $B_{\infty,\Gb}$ as constant sequences. The ($\Gb$-continuous) central sequence algebra is
\[F_{\infty,\Gb}(B)=(B_{\infty,\Gb}\cap B')/(B_{\infty,\Gb}\cap B^{\perp}),\]
where $B'$ is the commutant of B and $B_{\infty,\Gb}\cap B^{\perp}=\{x\in B_{\infty,\Gb}\:|\: xb=bx=0\text{ for all }b\in B\}$. The $G$-action on $F_{\infty,\Gb}(B)$ induced by $\Gb_{\infty}$ is denoted by $\overline{\Gb}_{\infty}$. If $B$ is $\Gs$-unital, then $F_{\infty,\Gb}(B)$ is unital, and the unit is represented by any sequential approximate unit of $B$.
\end{definition}
\begin{definition}[{\cite[Definition 3.7]{GS2}}]\label{def:isa}
Let $\Gb:G\act B$ be an action on a separable \cstar-algeba. $\Gb$ is \textit{isometrically shift-absorbing} if there is a linear equivariant map
\[\Fs:(H_{G},\Gl)\to(F_{\infty,\Gb}(B),\tilde{\Gb}_{\infty})\]
satisfying the identity $\Fs(\Gx)^{*}\Fs(\Gn)=\ip{\Gx}{\Gn}\mathbf{1}$ for all $\Gx,\Gn\in H_{G}$.
\end{definition}
\begin{proposition}[{\cite[Proposition 3.8]{GS2}}]\label{prop:isa}
Let $G$ be a second-countable, compact group with more than one element. Let $\Gb:G\act B$ an action on a separable \cstar-algebra, and let $\Gg:G\act \oinf$ be the action in \autoref{def:qf}. Then, the followings are equivalent:\\
(i) $\Gb$ is isometrically shift-absorbing\\
(ii) There exists a unital equivariant $*$-homomorphism from $(\oinf,\Gg)$ to $(F_{\infty,\Gb}(B),\tilde{\Gb}_{\infty})$.
\end{proposition}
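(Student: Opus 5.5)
The two implications go through different routes, and I plan to handle them separately.

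\emph{(ii)$\Rightarrow$(i).} Let $\Theta:(\oinf,\Gg)\to(F_{\infty,\Gb}(B),\tilde{\Gb}_{\infty})$ be a unital equivariant $*$-homomorphism, and identify $\oinf$ with $\Ocal_{H^{\infty}_{G}}$ as in \autoref{def:qf}. Choose the embedding $H_{G}\to H^{\infty}_{G}=\ell^{2}(\Nbb)\otimes H_{G}$, $\Gx\mapsto\Gd_{1}\otimes\Gx$. This is an isometry and it intertwines $\Gl$ with $\Gl^{\infty}$. Then I set $\Fs(\Gx)\coloneqq\Theta(\Fs_{\oinf}(\Gd_{1}\otimes\Gx))$. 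Linearity and equivariance are clear. The relation $\Fs(\Gx)^{*}\Fs(\Gn)=\ip{\Gx}{\Gn}\mathbf{1}$ follows from the defining relation of $\Ocal_{H}$ together with the unitality of $\Theta$. This gives (i) directly from \autoref{def:isa}.

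\emph{(i)$\Rightarrow$(ii).} I would argue in three steps.

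\textbf{Step 1: pass to a sequence.} Let $\Fs:H_{G}\to F_{\infty,\Gb}(B)$ be as in \autoref{def:isa}. By the universal property of $\Ocal_{H_{G}}$ (valid for any Hilbert space $H$), $\Fs$ extends to a unital equivariant $*$-homomorphism $\Ocal_{H_{G}}\to F_{\infty,\Gb}(B)$, where $\Ocal_{H_{G}}$ carries the quasi-free action of $\Gl$. However, we need $\Ocal_{H^{\infty}_{G}}$, i.e. infinitely many orthogonal copies of $\Fs$. The plan is to apply a standard reindexing (Kirchberg's $\Ge$-test / diagonal sequence) argument in $F_{\infty,\Gb}(B)$. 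For a separable $\tilde{\Gb}_{\infty}$-invariant subalgebra $C\subset F_{\infty,\Gb}(B)$, we produce another copy $\Fs'$ of $\Fs$ that commutes with $C$ up to reindexing. For this we use that $F_{\infty,\Gb}(B)$ is itself a central sequence algebra, which is where the $\Ge$-test enters. We also need $\Fs'$ to have range orthogonal to that of $\Fs$, i.e. $\Fs(\Gx)^{*}\Fs'(\Gn)=0$.

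\textbf{Step 2: make the copies orthogonal.} Orthogonality of ranges comes from using the isometries themselves: given a copy $\Fs$ and a second commuting copy $\Fs''$, put $\Fs'(\Gn)\coloneqq\Fs''(\Gn)\cdot w$ or a similar combination. A cleaner route is to first obtain from $\Fs$ a unital equivariant copy of $\Ocal_{H_{G}\oplus H_{G}}$. Iterating Step 1 countably many times and taking a diagonal sequence, again via the $\Ge$-test with equivariance controlled by the compactness of $G$ and norm-continuity in $\ell^{\infty}_{\Gb}$, then yields $\Fs^{\infty}:H^{\infty}_{G}\to F_{\infty,\Gb}(B)$. This map is equivariant for $\Gl^{\infty}$ and satisfies the $\Ocal_{H}$ relations.

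\textbf{Step 3: conclude.} The universal property of $\Ocal_{H^{\infty}_{G}}\cong\oinf$ then gives the desired unital equivariant map from $(\oinf,\Gg)$.

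\textbf{Simplification.} Since $G$ is compact, $\Gl$ already absorbs itself: $H_{G}\otimes\ell^{2}\cong H_{G}^{\infty}$, and $\Gl\otimes\id{}\cong\Gl^{\infty}$ by Fell absorption. So it suffices to realize $\ell^{2}(\Nbb)\otimes H_{G}$ equivariantly inside the span of products of two commuting copies. Concretely, take $\Fs_{1}$ equivariant, and $\Fs_{2}$ a copy commuting with $\Fs_{1}$ and with trivial equivariance on an auxiliary $\oinf$-factor. Such an $\Fs_{2}$ is obtained by averaging over $G$ applied to $\Fs(\Gx_{0})$ for the constant function $\Gx_{0}=1$, which is $\Gl$-invariant. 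Then define
\[
\Fs^{\infty}(\Gd_{n}\otimes\Gx)\coloneqq\Fs_{2}(\Gx_{0})^{n}\,\Fs_{1}(\Gx)'\,\cdots,
\]
made precise so that the ranges are orthogonal. Here $\Fs(\Gx_{0})$ is a $G$-invariant isometry, so its powers give orthogonal shifts.

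\textbf{Main obstacle.} I expect the hardest step to be the orthogonality/commutation reindexing in Step 1. The natural tool is the invariant isometry $s=\Fs(\Gx_{0})$: the elements $s^{n}\Fs(\Gx)\,(1-ss^{*})$-type combinations, or more simply $\Fs^{\infty}(\Gd_{n}\otimes\Gx)=s^{n}t\,\Fs(\Gx)$ with $t$ an invariant isometry satisfying $s^{*}t=0$, give orthogonal equivariant copies without any reindexing. So the real issue is producing an invariant isometry $t$ with range orthogonal to $s$, and I would try to get it from a second vector orthogonal to $\Gx_{0}$ that still yields invariance. If that fails, the fallback is the $\Ge$-test to obtain a commuting copy.
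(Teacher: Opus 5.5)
Your proof of (ii)$\Rightarrow$(i) is correct. The gap is in (i)$\Rightarrow$(ii). Near the end you observe, correctly, that it would suffice to have two $\tilde{\Gb}_{\infty}$-invariant isometries $s,t\in F_{\infty,\Gb}(B)$ with $s^{*}t=0$. Then $\delta_{n}\otimes\Gx\mapsto s^{n-1}t\,\Fs(\Gx)$ ($n\geq 1$) is $\Gl^{\infty}$-equivariant and satisfies the $\Ocal_{H^{\infty}_{G}}$ relations. But you never construct $t$, and neither of your proposed routes can.

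\begin{itemize}
\item Inside $\Fs(H_{G})$ there is only one invariant direction. The $\Gl$-fixed vectors are the constants, and for $\Gn\perp\Gx_{0}$ the $G$-average of $\Fs(\Gn)$ is $\Fs\bigl(\int_{G}\Gl_{g}\Gn\,dg\bigr)=0$.
\item The reindexing/$\Ge$-test route of Steps 1--2 only produces commuting copies. Commuting isometries can never have orthogonal ranges: if $ss'=s's$ and $s^{*}s'=0$, then $s'=s^{*}ss'=s^{*}s's=0$.
\end{itemize}

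A telling symptom is that your argument never uses that $G$ has more than one element, and that hypothesis is indispensable. For trivial $G$, condition (i) always holds with $\Fs(z)=z\mathbf{1}$. But (ii) fails for $B=\Cbb$, whose central sequence algebra $\ell^{\infty}/c_{0}$ is commutative.

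The missing idea is to multiply $\Fs$ with itself. Define $\Fs^{(2)}(\Gx\otimes\Gn)\coloneqq\Fs(\Gx)\Fs(\Gn)$. Because $\Fs(\Gx)^{*}\Fs(\Gx')$ is a scalar, this satisfies $\Fs^{(2)}(\zeta)^{*}\Fs^{(2)}(\zeta')=\ip{\zeta}{\zeta'}\mathbf{1}$ on the algebraic tensor product, with no commutation needed. So it extends to an isometric, $(\Gl\otimes\Gl)$-equivariant linear map on $H_{G}\otimes H_{G}=L^{2}(G\times G)$.

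The $(\Gl\otimes\Gl)$-fixed vectors are exactly the functions $\zeta_{h}(x,y)=h(x^{-1}y)$ with $h\in L^{2}(G)$, and $h\mapsto\zeta_{h}$ is isometric. Equivalently, by Fell absorption $\Gl\otimes\Gl\cong\Gl\otimes 1_{H_{G}}$, which has a $\dim H_{G}$-dimensional space of fixed vectors. In your ``Simplification'' you invoke Fell absorption only for the tautology that $\Gl$ tensored with the identity on $\ell^{2}$ equals $\Gl^{\infty}$.

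Since $|G|\geq 2$, you can pick $h_{1}=1$ and a unit vector $h_{2}\perp 1$. Then $s=\Fs^{(2)}(\zeta_{h_{1}})=\Fs(\Gx_{0})^{2}$ and $t=\Fs^{(2)}(\zeta_{h_{2}})$ are invariant isometries with $s^{*}t=\ip{h_{1}}{h_{2}}\mathbf{1}=0$. Now set $\Fs^{\infty}(\delta_{n}\otimes\Gx)\coloneqq s^{n-1}t\,\Fs(\Gx)$ and apply the universal property of $\Ocal_{H^{\infty}_{G}}$; no reindexing is needed. If $G$ is infinite, $\Gl\otimes\Gl\cong\Gl\otimes 1_{H_{G}}$ already gives $\Gl^{\infty}$ directly.

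For the record, the paper gives no proof of this statement; it only quotes it from Gabe--Szab\'o.
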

\begin{proposition}[{\cite[Proposition 3.9]{GS2}}]\label{prop:oinf-ab}
Let $G$ be a second countable, compact group with more than one element. Let $\Gb:G\act B$ be an action on a separable \cstar-algebra. If $\Gb$ is isometrically shift absorbing, then $\Gb$ is equivariantly $\oinf$-absorbing, i.e., $\Gb\simeq_{cc}\Gb\otimes\id{\oinf}$.
\end{proposition}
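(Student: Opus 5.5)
The plan is to derive equivariant $\oinf$-absorption from the unital equivariant embedding of $(\oinf,\Gg)$ into the central sequence algebra supplied by \autoref{prop:isa}, by means of Szab\'o's equivariant McDuff-type criterion. That criterion (the equivariant version of Toms--Winter's absorption theorem, see \cite{SZ1}) says the following. Suppose $\delta:G\act D$ is a semi-strongly self-absorbing action on a unital separable \cstar-algebra and $\Gb:G\act B$ is an action on a separable \cstar-algebra. Then $\Gb\cc\Gb\otimes\delta$ if and only if there is a unital equivariant $*$-homomorphism $(D,\delta)\to(F_{\infty,\Gb}(B),\tilde{\Gb}_{\infty})$.

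First, I will apply \autoref{prop:isa}. Since $G$ is compact, second countable, and nontrivial, and $\Gb$ is isometrically shift-absorbing, it gives a unital equivariant $*$-homomorphism $\Phi:(\oinf,\Gg)\to(F_{\infty,\Gb}(B),\tilde{\Gb}_{\infty})$, where $\Gg$ is the quasi-free action of \autoref{def:qf}.

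Second, I will produce a unital $*$-homomorphism $\oinf\to F_{\infty,\Gb}(B)^{\tilde{\Gb}_{\infty}}$ with trivial action. It suffices to find a unital embedding $\oinf\to\Ocal_{H^{\infty}_{G}}^{\Gg}$ and compose it with $\Phi$. For this, note that $\Gl^{\infty}$ contains the trivial representation as a subrepresentation, because $H_{G}=L^{2}(G)$ contains the constant functions. Taking an infinite sequence of orthonormal $G$-invariant vectors $\xi_{n}$ (for instance, the constants in each copy of $H_{G}$), the isometries $\Fs(\xi_{n})$ are $\Gg$-fixed and have mutually orthogonal ranges. They therefore generate a unital copy of $\oinf$ inside the fixed point algebra. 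This yields a unital equivariant map $(\oinf,\id{})\to(F_{\infty,\Gb}(B),\tilde{\Gb}_{\infty})$.

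Third, $(\oinf,\id{})$ is strongly self-absorbing, since $\oinf$ is strongly self-absorbing by Toms--Winter and the trivial action is then strongly self-absorbing as an action. Szab\'o's criterion applied with $D=\oinf$ and $\delta=\id{}$ then gives $\Gb\cc\Gb\otimes\id{\oinf}$.

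The main obstacle is the third step: one must make sure the version of Szab\'o's theorem available applies to possibly non-unital $B$, using $F_{\infty,\Gb}(B)$ rather than the plain central sequence algebra, and that its conclusion is cocycle conjugacy and not conjugacy. I expect that the form in \cite{SZ1} (Theorem 3.7 there) covers exactly this setting. If it did not, I would fall back on a direct reindexation and intertwining argument, building approximately central equivariant embeddings of $\oinf$ and performing an Elliott two-sided intertwining with cocycle corrections.
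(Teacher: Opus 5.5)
Your argument is correct and gives a complete proof in the compact setting of the statement. The paper does not prove this proposition itself; it quotes it from \cite{GS2}. So there is no internal proof to compare against.

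The ingredients you use are the ones the paper relies on elsewhere.
\begin{itemize}
\item \autoref{prop:isa} gives a unital equivariant copy of $(\oinf,\Gg)$ in $F_{\infty,\Gb}(B)$.
\item Szab\'o's McDuff-type criterion \cite[Theorem 3.7]{SZ1} turns a unital equivariant embedding into the central sequence algebra into tensorial absorption. The paper invokes it in the same way in the proof of \autoref{prop:isa-sat}.
\end{itemize}

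The obstacle you flag in your last paragraph is not a real one. That theorem is stated for arbitrary separable, not necessarily unital, $B$, and is formulated with Kirchberg's algebra $F_{\infty,\Gb}(B)$. It applies to any semi-strongly self-absorbing action, in particular to $\id{\oinf}$, since $\oinf$ is strongly self-absorbing. Its conclusion is cocycle conjugacy, in fact strong cocycle conjugacy, so $\Gb\cc\Gb\otimes\id{\oinf}$ follows directly. No fallback intertwining argument is needed.

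Compactness is used at exactly one point: $L^{2}(G)$ contains $\Gl$-invariant unit vectors, namely the constants. This yields infinitely many $\Gg$-fixed isometries $\Fs(\delta_{n}\otimes 1)$ with mutually orthogonal ranges, and hence a unital copy of $\oinf$ inside $F_{\infty,\Gb}(B)^{\tilde{\Gb}_{\infty}}$. You could even skip $\Ocal_{H^{\infty}_{G}}$ and work directly with the images of these isometries in $F_{\infty,\Gb}(B)$. For non-compact $G$ the regular representation has no invariant vectors, so this shortcut does not extend beyond the compact case; for the statement as given, it is the natural route.
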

\begin{theorem}[{\cite[Theorem 4.8]{GS2}}]\label{thm:otw-embed}
  Let $G$ be a compact group and let $A$ be a separable exact \cstar-algebra with an action $\Ga:G\act A$. Suppose that $\Gb:G\act B$ is an isometrically shift-absorbing action on a Kirchberg algebra with $\Gb\simeq_{cc}\Gb\otimes \id{\otw}$. Then there exists a proper cocycle embedding from $(A,\Ga)$ to $(B,\Gb)$.
\end{theorem}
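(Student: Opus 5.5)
The statement is \cite[Theorem 4.8]{GS2}. I would prove it by an equivariant version of Kirchberg's $\otw$-embedding theorem: first produce an equivariant embedding into a sequence algebra of $B$, then upgrade it to a genuine proper cocycle embedding by an Elliott-type intertwining.

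\textbf{Step 1: a model action for $\Ga$.} Consider $C(G,A)$ with the translation action $\sigma_h(f)(g)=f(h^{-1}g)$. The map
\[ a\mapsto\bigl(g\mapsto\Ga_{g}(a)\bigr) \]
is an injective equivariant $*$-homomorphism $(A,\Ga)\to(C(G,A),\sigma)$. Moreover $C(G,A)\subset\Kbb(H_G)\otimes A$ equivariantly, where $\Kbb(H_G)$ carries the action $\ad\Gl$ and $A$ the trivial action. So it suffices to embed $(\Kbb(H_G)\otimes A,\ad\Gl\otimes\id{A})$.

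\textbf{Step 2: an equivariant embedding into $F_{\infty,\Gb}(B)$.} Non-equivariantly, Kirchberg's embedding theorem gives an injective $*$-homomorphism $\phi\colon A\to\otw$. Since $\Gb$ absorbs $\id{\otw}$ and $\id{\oinf}$ up to cocycle conjugacy (\autoref{prop:oinf-ab}), I may assume $\Gb$ has the form $\Gb\otimes\id{\otw}$ and place $\phi(A)$ inside the fixed-point tensor factor, i.e.\ in the trivially acted $\otw$ factor of the sequence algebra. Let $\Fs$ be the equivariant family of isometries from \autoref{def:isa}. Then
\[ \Psi(e_{\xi,\eta}\otimes a)=\Fs(\xi)\,\phi(a)\,\Fs(\eta)^{*} \]
defines an equivariant injective $*$-homomorphism $(\Kbb(H_G)\otimes A,\ad\Gl\otimes\id{A})\to(F_{\infty,\Gb}(B),\tilde\Gb_\infty)$. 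Here $\Fs(\xi)$ commutes with $\phi(A)$ after reindexing, and equivariance follows from $\tilde\Gb_\infty(\Fs(\xi))=\Fs(\Gl\xi)$. Composing with Step 1 and lifting via a standard reindexing argument gives a sequence of maps $A\to B$ that are approximately multiplicative, approximately injective, and approximately equivariant. Using these together with the unit of $F_{\infty,\Gb}(B)$, I obtain an equivariant injective $*$-homomorphism $(A,\Ga)\to(B_{\infty,\Gb},\Gb_\infty)$. Exactness of $A$ is used so that the relevant maps are nuclear and liftable.

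\textbf{Step 3: from the sequence algebra to $B$ (main obstacle).} I would prove a uniqueness statement: any two such full, equivariant, $\otw$-absorbing embeddings $(A,\Ga)\to(B_{\infty,\Gb},\Gb_\infty)$ are approximately unitarily equivalent via unitaries that are approximately $\Gb$-fixed, i.e.\ they correspond to approximate coboundaries. This should follow because both maps absorb $\Psi$ in the sense that $\theta\oplus\Psi\approx\theta$; the verification uses the isometries $\Fs$ and Cuntz sums in $\otw\subset F_{\infty,\Gb}(B)^{\tilde\Gb_\infty}$, together with the fact that nonzero maps are full (cf.\ \autoref{lem:full-hom}). Given this, an Elliott-type intertwining along a dense sequence in $A$ and a compact exhaustion of $G$ assembles the approximately equivariant maps into a single injective $*$-homomorphism $\psi\colon A\to B$ and a cocycle $\bbm u_g=\lim_n w_n\Gb_g(w_n)^*$. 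These satisfy $\ad\bbm u_g\circ\Gb_g\circ\psi=\psi\circ\Ga_g$, and $\bbm u$ takes values in $\Ucal(\one+B)$, so $(\psi,\bbm u)$ is a proper cocycle embedding.

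I expect the hardest part to be controlling convergence of the cocycles uniformly in $g\in G$, while keeping the correcting unitaries in $\Ucal(\one+B)$ so that the resulting cocycle morphism is proper. I would first try to handle this with the absorption trick in Step 3, keeping the correcting unitaries connected to $\one$ via $\otw$-absorption.
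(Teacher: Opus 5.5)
The paper gives no proof of this statement; it is imported from \cite{GS2}. Your argument therefore has to stand on its own, and it breaks down at Step~1 as soon as $G$ is infinite.

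The inclusion $C(G,A)\subset\Kbb(H_G)\otimes A$ is false. $C(G)$ acts on $H_G=L^2(G)$ by multiplication operators. The Haar measure of an infinite compact group has no atoms, so no nonzero multiplication operator is compact, and $C(G,A)$ only sits in $\Mcal(\Kbb(H_G)\otimes A)$. A cleverer embedding will not help either: already for $A=\Cbb$ there is no injective $*$-homomorphism $C(G)\to\Kbb(H_G)$ at all, since the unit would go to a finite-rank projection and $C(G)$ would embed in a matrix algebra.

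Consequently your formula $\Psi(e_{\xi,\eta}\otimes a)=\Fs(\xi)\phi(a)\Fs(\eta)^{*}$ is defined only on $\Kbb(H_G)\otimes A$, which does not contain the image of $(A,\Ga)$. Extending it to $C(G)\otimes A$ would require sums $\sum_i\Fs(\xi_i)(\,\cdot\,)\Fs(\xi_i)^{*}$ over an orthonormal basis. These do not converge in norm in $F_{\infty,\Gb}(B)$: for the unit, the partial sums are increasing projections whose successive differences have norm one. So Steps~1--2 never produce an injective map out of $(A,\Ga)$ that is equivariant, even approximately, and producing one is exactly the nontrivial content of the theorem.

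The isometries $\Fs(\xi)$ by themselves only ever model $(\Kbb(H_G),\ad\Gl)$. A unital equivariant copy of $(C(G),\Gl)$ in $F_{\infty,\Gb}(B)$ would be the Rokhlin property, which isometrically shift-absorbing actions need not have. For $\Zbb/2\Zbb$ acting on $\oinf$ as in \autoref{ex:ssaisa-triv}, Rokhlin projections would force $[\one]$ to be divisible by $2$ in $K_0(\oinf)\cong\Zbb$. So a genuinely new input is needed at this point, presumably one that uses the hypothesis $\Gb\simeq_{cc}\Gb\otimes\id{\otw}$ in an essential way. Your Steps~1--2 do work for finite $G$, where $\Kbb(H_G)=\BB(H_G)$, once the embedding is corrected to $a\mapsto(g\mapsto\Ga_{g^{-1}}(a))$ so that it intertwines $\Ga$ with left translation.

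Step~3 is also only asserted. The absorption and uniqueness results available here, \autoref{prop:absorbing} and the stable uniqueness theorem of \cite{GS1}, concern cocycle representations of separable algebras into $\Mcal(B)$, formulated with infinite repeats and multiplier unitaries. \autoref{lem:full-hom} is about maps into $B\otimes C(X)$. None of these gives approximate unitary equivalence of two maps into the non-separable algebra $B_{\infty,\Gb}$.

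Moreover, ``both maps absorb $\Psi$'' is not enough. You need mutual absorption, $\theta_1\oplus\theta_2\approx\theta_1$ and $\theta_2\oplus\theta_1\approx\theta_2$, together with the $\otw$-flip. Proving this in the sequence algebra amounts to a genuine equivariant Kirchberg-type absorption theorem.

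On the other hand, the part you expect to be hardest is harmless for compact $G$. Approximately invariant unitaries can be averaged to exactly invariant ones, as in the proof of \autoref{prop:T-inj}. So once the uniqueness statement is available, the one-sided intertwining of $\Theta$ with its shift can be carried out with $\Gb$-invariant unitaries in $\Ucal(\one+B)$. It then yields an honestly equivariant embedding, and the ``compact exhaustion of $G$'' is unnecessary.
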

For a compact group $G$, let us denote by $\hat{G}$ the set of equivalence classes of irreducible representations of $G$. For $\pi\in \hat{G}$, we denote $\Gc_{\pi}(g)=\mathrm{Tr}(\pi(g))$ and $d(\pi)=\mathrm{dim}(\pi)$. We choose and fix an orthonormal basis $\{\Gx(\pi)_{i}\}_{i=1}^{d(\pi)}$ of the representation space $H_{\pi}$ of $\pi$, and identify $\pi(g)$ with its matrix representation $(\pi(g)_{ij})_{i,j}$. Let $A$ be a unital \cstar-algebra with a $G$-action $\Ga$. Let $\{\Gl_{g}\}_{g\in G}$ be the implementing unitary representation of $G$ in $\Mcal(A\rtimes_{\Ga}G)$. For $(\pi, H_{\pi})\in \hat{G}$, set
\[E(\pi)_{ij}=\mathrm{dim}(\pi)\int_{G}\overline{\pi(g)}_{ij}\Gl_{g} dg,\]
where $dg$ is the normalized Haar measure. By the Peter-Weyl theorem, we have $C^{*}(L^{1}(G))\cong \bigoplus_{\pi\in \hat{G}}\BB(H_{\pi})$. Then $\{E(\pi)_{ij}\}$ forms a system of matrix units of $\BB(H_{\pi})$. For $\pi\in\hat{G}$,
\[z(\pi)\coloneqq\sum_{i=1}^{d(\pi)}E(\pi)_{ii}=d(\pi)\int_{G}\overline{\Gc_{\pi}(g)}\Gl_{g}dg\]
is a central projection of $C^{*}(L^{1}(G))$ and $\sum_{\pi\in\hat{G}}z(\pi)=1$ in the strict topology in $\Mcal(A\rtimes_{\Ga}G)$. Let $e_{\Ga}=z(\mathbf{1})=\int_{G}\Gl_{g}dg$. Since $A\rtimes_{\Ga}G=C^{*}(A\cup\{\int_{G}f(g)\Gl_{g}dg\:|\:f\in L^{1}(G)\})$, we have $e_{\Ga}(A\rtimes_{\Ga}G)e_{\Ga}=A^{\Ga}e_{\Ga}$.
\begin{proposition}\label{prop:isa-sat}
Let $G$ be a compact group and let $A$ be a unital Kirchberg algebra. Let $\Ga:G\act A$ be an isometrically shift-absorbing action. Then $\Ga$ is saturated.
\end{proposition}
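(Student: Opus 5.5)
To prove that $\Ga$ is saturated, it suffices, by the reformulation in the definition of saturation, to show that the span of the inner products $\ip{x}{y}_{A\rtimes_{\Ga,r}G}$ is dense in $A\rtimes_{\Ga}G$. (Since $G$ is compact, and hence amenable, the full and reduced crossed products coincide.) The span of these inner products is the closed ideal $J$ generated by the projection $e_{\Ga}$. Indeed, the inner product is $x^{*}e_{\Ga}y$ when $A$ is viewed inside $\Mcal(A\rtimes_{\Ga}G)$, and $A(A\rtimes_{\Ga}G)$ is dense. The statement is therefore equivalent to fullness of $e_{\Ga}$ in $A\rtimes_{\Ga}G$. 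The standard criterion via the Peter--Weyl decomposition $\sum_{\pi}z(\pi)=1$ reduces this further. It is enough to show that for every $\pi\in\hat G$ and every $i$, the projection $E(\pi)_{ii}$ (equivalently $z(\pi)$) lies in $J$. Then $J$ contains $z(\pi)(A\rtimes G)$ for all $\pi$, and these are dense because $\sum_\pi z(\pi)=1$ strictly.

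To show $E(\pi)_{ii}\in J$, I will produce elements of $A$ that transport the trivial spectral subspace onto the $\pi$-spectral subspace. Concretely, I need $v_{1},\dots,v_{d(\pi)}\in A$ transforming like the $i$-th row of $\pi$, that is, $\Ga_{g}(v_{j})=\sum_{k}\pi(g)_{jk}v_{k}$ or the conjugate convention, with $\sum_{j}v_{j}^{*}v_{j}=\one$ (or $v_i^*v_i=\one$ up to matching). A computation with $\Gl_{g}a\Gl_{g}^{*}=\Ga_{g}(a)$ then shows that $v_{j}^{*}e_{\Ga}v_{k}$ gives matrix units $E(\pi)_{jk}$, up to the chosen convention. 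More precisely, $\sum_j v_j^*e_\Ga v_j=z(\pi)$ or $\bar\pi$, which is harmless since we run over all $\pi$. This exhibits $z(\pi)\in J$.

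The existence of such $v_{j}$ comes from isometric shift-absorption. By \autoref{prop:isa}, there is a unital equivariant embedding $(\oinf,\Gg)\to(F_{\infty,\Ga}(A),\tilde\Ga_\infty)$, and since $A$ is unital, $F_{\infty,\Ga}(A)=A_{\infty,\Ga}\cap A'$. The quasi-free action $\Gg$ comes from $\Gl^{\infty}$, and $\Gl$ contains every irreducible $\pi$ with multiplicity $d(\pi)$. So inside $\oinf$ there are isometries $s_{1},\dots,s_{d(\pi)}$, namely $\Fs$ of an orthonormal family in $H_G$ spanning a copy of $H_\pi$, with orthogonal ranges, transforming under $\Gg$ exactly according to $\pi$. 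Set $w=\sum_j s_j s_j^*$ structure as needed. Their images in $A_{\infty,\Ga}$ are represented by sequences, and the exact relations $s_j^*s_k=\delta_{jk}$ and the exact covariance hold in the limit.

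The main obstacle is lifting these relations from the sequence algebra to honest elements of $A$. I would handle it in three steps. First, pick representing sequences $(s_{j}^{(n)})_n$. Second, average over $G$ using the projection onto the $\pi$-isotypic component, $x\mapsto d(\pi)\int_G \overline{\pi(g)_{kj}}\Ga_g(x)\,dg$ summed appropriately. This makes the covariance exact while perturbing only by an $o(1)$ term, since it holds asymptotically and the averaging is contractive. Third, since $\sum_j (s_j^{(n)})^*s_j^{(n)}$ is $G$-invariant and close to $d(\pi)\one$, replace the row by $(s^{(n)}_j)(\sum_k s^{(n)*}_k s^{(n)}_k/d(\pi))^{-1/2}$; this preserves covariance because the correcting factor is invariant. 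Taking $n$ large then gives the required $v_j\in A$ with $v_i^* e_\Ga v_i$-type relations exact. Alternatively, it suffices to get $\sum_j v_j^*e_\Ga v_j$ invertible in the corner $z(\pi)(A\rtimes G)z(\pi)$ up to norm $<1$ error, since ideals are closed and an element within distance $<1$ of a unit-like projection generates it. This avoids exact correction altogether, and I would use this cleaner route if the exact polar correction becomes messy.
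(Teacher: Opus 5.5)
Your argument is correct and follows the paper's outline: saturation reduces to fullness of $e_{\Ga}$, which reduces to showing that each $z(\pi)$ lies in the ideal generated by $e_{\Ga}$, witnessed by $\pi$-covariant isometries. Where you differ is in how you obtain those isometries, and your mechanism is more elementary.

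The paper gets them exactly inside $A$ as $\phi(\one\otimes\Fs_{\pi}(\xi(\pi)_i))$. For this it uses the equivariant isomorphism $(A\otimes\oinf,\Ga\otimes\Gg^{\infty})\cong(A,\Ga)$, which comes from Szab\'o's equivariant McDuff-type absorption theorem together with \cite[Corollary 6.8]{GS2}.

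You instead lift $\Fs(\xi_1),\dots,\Fs(\xi_{d(\pi)})$ from $F_{\infty,\Ga}(A)=A_{\infty,\Ga}\cap A'$ to elements of $A$ that are only approximately orthonormal and approximately covariant. You then finish with the fact that a projection within distance $<1$ of a closed ideal belongs to it. This needs only the definition of isometric shift-absorption and unitality of $A$, so it also works without the Kirchberg hypothesis. You do not even need \autoref{prop:isa}: \autoref{def:isa} already gives $\Fs$ on $H_G$, which contains every $\pi$.

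The trade-off is that the paper's route gives exact relations and a one-line computation, but at the cost of a deep absorption theorem. In your approximate version the averaging and polar steps are unnecessary. By dominated convergence, $\int_G\|\Ga_g(v_j)-\sum_k\pi(g)_{kj}v_k\|\,dg\to 0$, so $d(\pi)\sum_j v_j^*e_{\Ga}v_j$ is eventually at distance $<1$ from $z(\overline{\pi})$.

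There is one slip in your ``exact'' variant. Rescaling by $(\sum_k v_k^*v_k/d(\pi))^{-1/2}$ only forces $\sum_j v_j^*v_j=d(\pi)\one$. It does not give $v_j^*v_k=\delta_{jk}\one$, so $\sum_j v_j^*e_{\Ga}v_j$ is still not exactly $z(\overline{\pi})/d(\pi)$.

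If you want exactness, correct with a matrix instead. For the row $V=(v_1,\dots,v_{d(\pi)})$, use $V^*V=(v_j^*v_k)_{j,k}\in M_{d(\pi)}(A)$. Since $\Ga_g(V)=V\pi(g)$, you get $\Ga_g(V^*V)=\pi(g)^*V^*V\pi(g)$. Hence the row $V(V^*V)^{-1/2}$ is exactly covariant and exactly orthonormal. Your fallback route makes this moot.
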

\begin{proof}
By \cite[Corollary 2.6]{Br}, it suffices to show that $e_{\Ga}$ is a full projection.

By \cite[Corollary 6.8]{GS2} and \cite[Theorem 3.7 and Proposition 4.8]{SZ1}, we have an equivariant $*$-isomorphism $\phi:(A\otimes \oinf,\Ga\otimes \Gg^{\infty})\to(A,\Ga)$. Let $\Fs_{\pi}:(H_{\pi},\pi)\to (\oinf,\Gg^{\infty})$ be an equivariant linear map satisfying $\Fs(\Gx)^{*}\Fs(\Gn)=\ip{\Gx}{\Gn}\mathbf{1}$ for all $\Gx,\Gn\in H_{\pi}$. Set $S(\pi)_{i}=\phi(\mathbf{1}_{A}\otimes \Fs_{\pi}(\Gx(\pi)_{i}))$ for $i=1,\cdots,d(\pi)$. Then we have $S(\pi)_{i}^{*}S(\pi)_{j}=\Gd_{ij}\mathbf{1}_{A}$ and 
\begin{align*}
\Ga_{g}(S(\pi)_{i})=\phi(\mathbf{1}_{A}\otimes \Fs_{\pi}(\pi(g)\Gx(\pi)_{i}))=\phi(\mathbf{1}_{A}\otimes \Fs_{\pi}(\sum_{j=1}^{d(\pi)}\pi(g)_{ji}\Gx(\pi)_{j}))=\sum_{j=1}^{d(\pi)}\pi(g)_{ji}S(\pi)_{j}.
\end{align*}
Then it follows that
\begin{align*}
\sum_{i=1}^{d(\pi)}S(\pi)_{i}^{*}e_{\Ga}S(\pi)_{i}&=\int_{G}S(\pi)_{i}^{*}\Ga_{g}(S(\pi)_{i})\Gl_{g}dg\\
&=\sum_{i=1}^{d(\pi)}\int_{G}\pi(g)_{ii}\Gl_{g}dg\\
&=\frac{1}{d(\pi)}z(\overline{\pi}).
\end{align*}
This implies that the ideal generated by $e_{\Ga}$ contains $z(\pi)$ for all $\pi\in\hat{G}$. Since $\sum_{\pi\in\hat{G}}z(\pi)=\mathbf{1}$ in the strict topology in $\Mcal(A\rtimes_{\Ga}G)$, we can conclude that $e_{\Ga}$ is a full projection.
\end{proof}

\subsection{Gabe-Szab\'o type theorems}
We now turn to the preparation for Gabe-Szab\'o type theorems. We begin by recalling some notation concerning cocycle representations.
\begin{definition}
Let $(\phi,\bbm{u}),(\psi,\bbm{v}):(A,\Ga)\to(\Mcal(B),\Gb)$ be two cocycle representations. We say that $(\phi,\bbm{u})$ and $(\psi,\bbm{v})$ are asymptotically unitarily equivalent, denoted by $(\phi,\bbm{u})\asymp(\psi,\bbm{v})$, if there exists a norm-continuous path $u:[0,\infty)\to\Ucal(\Mcal(B))$ such that
\begin{align*}
&\text{(i)}\;\psi(a)=\lim_{t\to\infty}u_{t}\phi(a)u_{t}^{*}\text{ for all }a\in A,\\
&\text{(ii)}\;\psi(a)-u_{t}\phi(a)u_{t}^{*}\in B\text{ for all }a\in A\text{ and }t\geq 0,\\
&\text{(iii)}\;\lim_{t\to\infty}\max_{g\in G}\|\bbm{v}_{g}-u_{t}\bbm{u}_{g}\Gb_{g}(u_{t})^{*}\|=0\\
&\text{(iv)}\;\bbm{v}_{g}-u_{t}\bbm{u}_{g}\Gb_{g}(u_{t})^{*}\in B\text{ for all }t\geq 0\text{ and }g\in G.
\end{align*}
If it is possible to choose $u$ to have its range in $\Ucal(\mathbf{1}+B)$, then $(\phi,\bbm{u})$ and $(\psi,\bbm{v})$ are called \textit{properly asymptotically unitarily equivalent}. If it is additionally possible to arrange $u_{0}=\mathbf{1}$, then we say that $(\phi,\bbm{u})$ and $(\psi,\bbm{v})$ are \textit{strongly asymptotically unitariliy equivalent}.
\end{definition}
\begin{definition}
Suppose that there exists a unital inclusion $\otw\subset\Mcal(B)^{\Gb}$. Let $(\phi,\bbm{u}),(\psi,\bbm{v}):(A,\Ga)\to(\Mcal(B),\Gb)$ be two cocycle representations. We say that $(\phi,\bm{u})$ \textit{absorbs} $(\psi,\bbm{v})$ if $(\phi\oplus\psi,\bbm{u}\oplus\bbm{v})\asymp(\phi,\bbm{u})$. A cocycle representation is called \textit{absorbing} if it absorbs every cocycle representation.
\end{definition}
\begin{definition}
Suppose that $\Gb$ is strongly stable. Let $r_{n}\in \Mcal(B)^{\Gb}$ be any sequence of isometries such that $\sum_{n=1}^{\infty}r_{n}r_{n}^{*}=\mathbf{1}$ in the strict topology. Then we have a $\Gb$-equivariant $*$-homomorphism
\[\ell^{\infty}(\Nbb,\Mcal(B))\to\Mcal(B),\quad (b_{n})_{n}\mapsto\sum_{n=1}^{\infty}r_{n}b_{n}r_{n}^{*}\]
which does not depend on the choice of $r_{n}$ up to unitary equivalence. For any sequence of cocycle representations $(\phi^{(n)},\bbm{u}^{(n)}):(A,\Ga)\to(\Mcal(B),\Gb)$, we define the countable sum
\[(\Phi,\bbm{U})=\bigoplus_{n=1}^{\infty}(\phi^{(n)},\bbm{u}^{(n)}):(A,\Ga)\to(\Mcal{B},\Gb)\]
via the pointwise strict limits
\[\Phi(a)=\sum_{n=1}^{\infty}r_{n}\phi^{(n)}(a)r_{n}^{*},\quad \bbm{U}_{g}=\sum_{n=1}^{\infty}r_{n}\bbm{u}_{g}^{(n)}r_{n}^{*}.\]
This cocycle representation does not depend on the choice of $(r_{n})$ up to unitary equivalence via a unitary in $\Mcal(B)^{\Gb}$. In the special case that $(\phi^{(n)},\bbm{u}^{(n)})=(\phi,\bbm{u})$ for all $n$, we denote the resulting countable sum by $(\phi^{\infty},\bbm{u}^{\infty})$ and call it the infinite repeat of $(\phi,\bbm{u})$.
\end{definition}
\begin{proposition}[{\cite[Corollary 3.17]{GS2}}]\label{prop:absorbing}
Let $\Ga:G\act A$ and $\Gb:G\act B$ be two actions on separable \cstar algebras, and assume that $\Gb$ is isometrically shift-absorbing and strongly stable. Suppose that $A$ or $B$ is nuclear. Let $(\phi,\bbm{u}):(A,\Ga)\to(B,\Gb)$ be a cocycle morphism such that $\phi$ is full. Then the infinite repeat $(\phi^{\infty},\bbm{u}^{\infty}):(A,\Ga)\to(\Mcal(B),\Gb)$ is an absorbing cocycle representation.
\end{proposition}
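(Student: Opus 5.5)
The plan follows the non-equivariant route to absorption (Kasparov, Elliott--Kucerovsky, Kirchberg), made equivariant by the isometric shift from \autoref{def:isa}. It has three steps, carried out in this order.

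\textbf{Step 1 (weak containment).} Fix an arbitrary cocycle representation $(\psi,\bbm{v}):(A,\Ga)\to(\Mcal(B),\Gb)$. I will show that $(\psi,\bbm{v})$ is approximately dominated by $(\phi,\bbm{u})$. Concretely, for every finite set $F\subset A$, compact piece of $G$, finite set of test elements $b\in B$ and $\Ge>0$, I want an element $s\in\Mcal(B)$ such that
\begin{itemize}
\item $s^{*}\phi(a)s\approx\psi(a)$ and $\bbm{v}_{g}\approx s^{*}\bbm{u}_{g}\Gb_{g}(s)$ on the test elements, and
\item $s$ is approximately equivariant for the cocycles.
\end{itemize}
The non-equivariant part comes from fullness of $\phi$ together with the nuclearity of $A$ or $B$. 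The maps $a\mapsto b^{*}\psi(a)b$ are nuclear c.p.\ maps. Since $\phi$ is full, each non-zero $\phi(a)$ generates $B$ as an ideal. Since $B$ absorbs $\oinf$ (\autoref{prop:oinf-ab}), $B$ is purely infinite in the sense of Kirchberg--R{\o}rdam, and every positive element is properly infinite. Kirchberg's theorem on approximating nuclear c.p.\ maps by compressions of a full map then yields non-equivariant approximate compressions $s^{*}\phi(\cdot)s$.

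\textbf{Step 2 (making the compressions equivariant).} This is where isometric shift-absorption is used. Using $\Fs:(H_{G},\Gl)\to F_{\infty,\Gb}(B)$, I will average the non-equivariant compressing element $s$ against the left regular representation. Choose an orthonormal family in $H_{G}$ that approximately represents the Haar integral, and replace $s$ by $\int_{G}\Gb_{g}(s)\,\Fs(\Gd_{g})\,dg$, suitably discretised, with the $\Fs$-isometries realised in $B_{\infty,\Gb}\cap B'$. The relations $\Fs(\Gx)^{*}\Fs(\Gn)=\ip{\Gx}{\Gn}\one$ kill the cross terms. The equivariance $\Gb_{g}\circ\Fs=\Fs\circ\Gl_{g}$ turns approximate invariance up to the cocycles into exact invariance in the limit. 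This yields approximate domination in the cocycle sense. Strong stability of $\Gb$ is used here to place these elements inside $\Mcal(B)^{\Gb}$-compatible corners.

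\textbf{Step 3 (infinite repeat trick).} Approximate domination of $(\psi,\bbm{v})$ by $(\phi,\bbm{u})$ passes to $(\phi^{\infty},\bbm{u}^{\infty})$. I then run the equivariant Voiculescu/Kasparov argument (as in Gabe--Szab\'o's equivariant Kasparov absorption). First, build a sequence of approximate intertwiners that exhibits $(\psi\oplus\phi^{\infty},\bbm{v}\oplus\bbm{u}^{\infty})$ as approximately unitarily equivalent to $(\phi^{\infty},\bbm{u}^{\infty})$ modulo $B$, using the isometries $r_{n}$ to shuffle copies. Second, interpolate the resulting unitaries into a norm-continuous path satisfying conditions (i)--(iv) of asymptotic unitary equivalence. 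The interpolation uses that the unitary group of $\Mcal(B)^{\Gb}$ is path connected with good control for strongly stable $\Gb$, which follows from $\oinf$-absorption.

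\textbf{Main obstacle.} I expect Step 2 to be the hardest: producing genuinely equivariant, cocycle-compatible dominating elements with uniform control over $g\in G$. Here compactness of $G$ and continuity in $\ell^{\infty}_{\Gb}$ must be used carefully. I would first try to reduce to the case of trivial cocycles by the standard stabilisation trick (cocycle conjugacy to an honest equivariant map after tensoring with $\Kbb(H_{G}^{\infty})$), and then apply the averaging construction.
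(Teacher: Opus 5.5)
The paper gives no proof of this statement; it quotes it from \cite[Corollary 3.17]{GS2}. Your three steps have the same architecture as that proof. They combine Kirchberg's approximation of nuclear maps by compressions of a full map into an $\mathcal{O}_\infty$-stable algebra, the upgrade lemma \cite[Lemma 3.16]{GS2} (its hypothesis is visible in the proof of Lemma~\ref{lem:comm-range}), and the equivariant absorption theorem for weakly contained cocycle representations.

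Your Step~2, however, does not work as written, for two concrete reasons.

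First, $\int_G\beta_g(s)\,\mathfrak{s}(\delta_g)\,dg$ ignores the cocycles. We have $\beta_g(s)^*\phi(a)\beta_g(s)=\beta_g\bigl(s^*\beta_{g^{-1}}(\phi(a))s\bigr)$ and $\beta_{g^{-1}}\circ\phi=\mathrm{Ad}(\bbm{u}_{g^{-1}}^*)\circ\phi\circ\alpha_{g^{-1}}$. So neither the compression nor the intertwining comes out unless $\bbm{u}=\bbm{v}=1$. The correct average uses $t_g=\bbm{u}_g\beta_g(s)\bbm{v}_g^*$. The cocycle identities give $\bbm{u}_h\beta_h(t_g)=t_{hg}\bbm{v}_h$ and $t_g^*\phi(a)t_g=\bbm{v}_g\beta_g\bigl(s^*\phi(\alpha_{g^{-1}}(a))s\bigr)\bbm{v}_g^*$. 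Choose $s$ good on a finite net of the compact set $\{\alpha_g(a):g\in G,\,a\in F\}$. Then $S=\int_G t_g\,\mathfrak{s}(\delta_g)\,dg$ satisfies $S^*\phi(a)S\approx\psi(a)$ and $\bbm{u}_h\beta_h(S)=S\bbm{v}_h$ exactly. This can be made rigorous via the Hilbert-module isometry $L^2(G)\otimes B\to B_{\infty,\beta}$, $\xi\otimes b\mapsto\mathfrak{s}(\xi)b$. So no reduction to trivial cocycles is needed. The reduction you propose is also not free: it moves the target to $(B\otimes\mathbb{K}(L^2(G)),\beta\otimes\mathrm{Ad}\rho)$, and you would still have to transport absorption back to $(\mathcal{M}(B),\beta)$.

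Second, even with the twist, $S^*\bbm{u}_h\beta_h(S)=S^*S\,\bbm{v}_h$ with $S^*S=\int_G\bbm{v}_g\beta_g(s^*s)\bbm{v}_g^*\,dg$. So the cocycle half of the domination needs $s^*s\approx 1$ on the test elements. Kirchberg's theorem only gives $c\in B$ with $c^*\phi(\cdot)c\approx b^*\psi(\cdot)b$. The normalisation $\|c^*c-b^*b\|\le\varepsilon$ is an additional, indispensable input, and your Step~1 never supplies it. It is exactly the second hypothesis of \cite[Lemma 3.16]{GS2}; in Lemma~\ref{lem:comm-range} it is arranged by an isometry $s\in\mathcal{M}(B)$ from \cite[Lemma 7.4]{KR2} and $c=sb$. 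Also, the approximation theorem uses strong pure infiniteness, which $B\cong B\otimes\mathcal{O}_\infty$ gives, not mere pure infiniteness.

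In Step~3, path-connectedness of $\mathcal{U}(\mathcal{M}(B)^{\beta})$ does not by itself turn a sequence of approximate intertwiners into a path satisfying (i)--(iv) of asymptotic unitary equivalence. An arbitrary path between consecutive unitaries need not keep the conjugated representation close to $(\phi^\infty,\bbm{u}^\infty)$, nor keep the differences in $B$. The asymptotic statement is precisely the content of the equivariant absorption theorem of Gabe--Szab\'o \cite{GS1}: weak containment of $(\psi,\bbm{v})$ in $(\phi,\bbm{u})$ implies that $(\phi^\infty,\bbm{u}^\infty)$ absorbs $(\psi,\bbm{v})$. You should invoke it rather than sketch it. With that citation and the two corrections to Step~2, your outline becomes the proof in \cite{GS2}.
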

\begin{lem}[{\cite[Lemma 7,1]{Ga1}}]\label{lem:unitary-path}
There exists a continuous map $u:[0,\infty)\to \Ucal(1+\otw\otimes\Kbb)$ with $u_{0}=\one$ such that\\
(i)\;$u_{t}(1\otimes e_{1,1})u_{t}\to \one$ in the strict topology as $t\to\infty$,\\
(ii)\;for all $x\in\otw\otimes\Kbb$, one has that $u_{t}x$ converges in norm as $t\to\infty$
\end{lem}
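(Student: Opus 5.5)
The statement is a single fact about $\otw\otimes\Kbb$. I would prove it by an explicit \emph{infinite Hilbert-hotel} construction built from finitely supported unitaries. It uses only two properties of $\otw$: all nonzero projections in $\otw\otimes\Kbb$ and in $\otw$ are Murray--von Neumann equivalent, since $K_{0}(\otw)=0$ and every nonzero projection is full and properly infinite (the Cuntz proposition above); and $\Ucal(\one+\otw\otimes\Kbb)$, as well as every $\Ucal(\one+q(\otw\otimes\Kbb)q)$, is path connected, since $K_{1}(\otw)=0$ and \autoref{prop:K1-oinf} applies because $\otw\cong\otw\otimes\oinf$. Write $e_{n}=\one\otimes e_{n,n}$ and $q_{n}=e_{1}+\dots+e_{n}$. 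Then $q_{n}\nearrow\one$ strictly, so it suffices to force $u_{t}e_{1}u_{t}^{*}\geq q_{n(t)}$ approximately, with $n(t)\to\infty$.

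\textbf{Step 1 (choice of the limiting isometry and of the pieces).} Using $K_{0}(\otw)=0$, I will split $e_{1}=\sum_{k\geq 1}f_{k}$ strictly, with $f_{k}\in e_{1}(\otw\otimes\Kbb)e_{1}$ nonzero mutually orthogonal projections. For instance, take $f_{k}=s_{2}^{k-1}s_{1}s_{1}^{*}(s_{2}^{*})^{k-1}\otimes e_{1,1}$ for Cuntz isometries $s_{1},s_{2}$, which sum strictly to $\one_{\otw}\otimes e_{1,1}$. I then choose partial isometries $w_{k}\in\otw\otimes\Kbb$ with $w_{k}^{*}w_{k}=f_{k}$ and $w_{k}w_{k}^{*}=e_{k}$ for $k\geq 2$, and $w_{1}=f_{1}$. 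Define $W=\sum_{k}w_{k}$ strictly. This is a partial isometry in $\Mcal(\otw\otimes\Kbb)$ with $W^{*}W=e_{1}$ and $WW^{*}=\one$, so that $We_{1}W^{*}=\one$. The target is a path $u_{t}$ that on $q_{n}$-corners eventually behaves like a fixed isometry, while on $e_{1}$ it behaves more and more like $W$.

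\textbf{Step 2 (finite stages).} For each $n$, I will construct $v_{n}\in\Ucal(\one+q_{N_{n}}(\otw\otimes\Kbb)q_{N_{n}})$, where $N_{n}\ge n$ is large, with $v_{n}(f_{1}+\dots+f_{n})v_{n}^{*}=q_{n}$ exactly. This is possible because in the full corner all four projections $f_{1}+\dots+f_{n}$, $q_{n}$ and their complements inside $q_{N_{n}}$ are full and properly infinite with equal (zero) $K_{0}$-class, so the unital version of the Cuntz proposition produces the unitary. I will also arrange the compatibility $v_{n+1}(f_{1}+\dots+f_{n})=v_{n}(f_{1}+\dots+f_{n})$, by first fixing the already placed pieces and then solving the same problem in the orthogonal complement corner.

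\textbf{Step 3 (continuity and conditions (i)--(ii)).} Connectedness of the relevant unitary groups gives a path from $v_{n}$ to $v_{n+1}$ inside $\Ucal(\one+\otw\otimes\Kbb)$, and concatenating these paths over $[n,n+1]$ gives a continuous $u$ with $u_{0}=\one$.

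\emph{Main obstacle.} The real difficulty is that condition (ii) requires $u_{t}x$ to converge in norm for every $x$, and a careless connecting path destroys this. The first thing I would try is to choose the connecting path so that it is the identity on (or at least fixes the images of) $f_{1}+\dots+f_{n}$, moving only in the complementary corner. On the remaining $e_{1}$-part, for $x=(f_{1}+\dots+f_{n})y$, this already forces $u_{t}x$ to be eventually constant. For $x=e_{m}y$ with $m\ge 2$, I must additionally fix where $e_{m}$ is sent once and for all: choose $v_{n}e_{m}$ to be a fixed partial isometry onto a subprojection of the part of $e_{1}$ lying after $f_{1}+\dots+f_{n}$. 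If this is done with the tails $\sum_{k>n}f_{k}$ shrinking in the sense that $\|(\sum_{k>n}f_{k})y\|\to 0$ for each $y$, then each such $u_{t}x$ is Cauchy in norm, as (ii) requires. The same shrinking, together with $u_{t}e_{1}u_{t}^{*}\ge q_{n}$ for $t\ge n$ (which follows from Step 2 and the fact that the connecting paths fix the images of $f_{1}+\dots+f_{n}$), gives $u_{t}e_{1}u_{t}^{*}\to\one$ strictly, which is (i). Density of $\bigcup_{m}q_{m}(\otw\otimes\Kbb)$ together with $\|u_{t}\|=1$ then upgrades (ii) from these generators to all $x$.
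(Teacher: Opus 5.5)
\textbf{The main problem: you are proving the wrong, and in fact false, version of (i).} As printed, (i) is missing an adjoint, and you put it on the wrong side. The lemma is used in the form $u_t^*(\one\otimes e_{1,1})u_t\to\one$ strictly; see \eqref{eq:1} in the proof of \autoref{thm:ex-stable}, following \cite{Ga1}. Your version, $u_te_1u_t^*\to\one$ strictly, cannot hold together with (ii). If $u_te_1\to w$ in norm, then $u_te_1u_t^*=(u_te_1)(u_te_1)^*\to ww^*$ in norm. Then $ww^*$ would be a projection of $\otw\otimes\Kbb$ acting as a unit on $\otw\otimes\Kbb$, which is nonunital.

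\textbf{Where the proposal breaks.}
\begin{itemize}
\item In Step 1, $e_1$ lies in $\otw\otimes\Kbb$ and $e_1(\otw\otimes\Kbb)e_1\cong\otw$ is unital. So a strict decomposition $e_1=\sum_kf_k$ would converge in norm (multiply by $e_1$), which is impossible for infinitely many nonzero orthogonal projections. For your choice, $e_1-\sum_{k\le N}f_k=s_2^N(s_2^*)^N\otimes e_{1,1}$ has norm $1$.
\item The partial isometry $W$ cannot exist either: $W=We_1\in\otw\otimes\Kbb$, so $WW^*=\one$ would make $\otw\otimes\Kbb$ unital.
\item In Step 3, put $F_n=f_1+\dots+f_n$. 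The unitary $v_n$ carries $\one-F_n$ onto $\one-q_n\le\one-e_1$. Hence $v_ne_m$ for $m\ge 2$ cannot be a partial isometry into $e_1$, let alone one independent of $n$.
\item For your path, neither $u_te_1$ nor $u_te_m$ converges. For $t\ge n$ you have $u_te_1u_t^*\ge q_n$ and $u_te_mu_t^*\le\one-q_n$. A norm limit $w$ of $u_te_1$ would give $ww^*\ge q_n$ for all $n$. A norm limit of $u_te_m$ would give $ww^*=0$ while $w^*w=e_m$.
\end{itemize}

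\textbf{How to repair it.} The construction is salvageable; the paper itself gives no proof and only cites \cite{Ga1}.
\begin{itemize}
\item Discard the false parts of Step 1. You only need nonzero mutually orthogonal subprojections $f_k\le e_1$, for instance your $s_2^{k-1}s_1s_1^*(s_2^*)^{k-1}\otimes e_{1,1}$, with no strict sum and no $W$.
\item Keep Steps 2 and 3, taking the connecting paths of the form $v_nz$ where $z$ fixes $F_n$. This gives a continuous $u$ with $u_0=\one$, and for all $t\ge n$ it satisfies $u_tF_n=v_nF_n$ and $u_tF_nu_t^*=q_n$.
\item Pass to the adjoint path $\tilde u_t\coloneqq u_t^*$. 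Since $q_nu_t=u_tF_nu_t^*u_t=u_tF_n=v_nF_n$, you get $\tilde u_tq_n=(v_nF_n)^*$ for all $t\ge n$.
\item Condition (ii): $\tilde u_tx$ is eventually constant for $x\in q_n(\otw\otimes\Kbb)$. Density of $\bigcup_nq_n(\otw\otimes\Kbb)$ and $\|\tilde u_t\|=1$ then give (ii) for all $x$.
\item Condition (i): $\tilde u_t^*e_1\tilde u_t=u_te_1u_t^*\ge u_tF_nu_t^*=q_n$ for $t\ge n$, so $(\one-\tilde u_t^*e_1\tilde u_t)q_n=0$ eventually. Density and self-adjointness then give $\tilde u_t^*e_1\tilde u_t\to\one$ strictly, which is the correct (i).
\end{itemize}
Your Hilbert-hotel idea is right, but it has to run in the opposite direction. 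The path must asymptotically compress everything into the corner $e_1$, not spread $e_1$ over everything.
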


\begin{lem}\label{lem:commuting}
Let $(A,\Ga)$ and $(B,\Gb)$ be $G$-\cstar-algebras and let $X$ be a compact Hausdorff space. Suppose that $\Gb$ is strongly stable and conjugate to $\Gb\otimes\id{\oinf}$. Assume that there exists a proper cocycle embedding from $(A,\Ga)$ into $(B\otimes\otw,\Gb\otimes\id{\otw})$. Then there exists a proper cocycle embedding $(\theta,\bbm{y}):(A,\Ga)\to (B\otimes C(X),\Gb\otimes\id{C(X)})$ together with a unital embedding $\Gi_{0}:\otw\to \Mcal(B\otimes C(X))^{\Gb\otimes\id{C(X)}}$ whose range commutes with the ranges of $\theta$ and $\bbm{y}$.
\end{lem}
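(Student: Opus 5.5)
Starting from the given proper cocycle embedding $(\theta_{0},\bbm{y}^{0}):(A,\Ga)\to(B\otimes\otw,\Gb\otimes\id{\otw})$, I would build the required $(\theta,\bbm{y})$ by tensoring with an auxiliary copy of $\otw$ that commutes with everything, and then absorbing both $\otw$'s and $C(X)$ into $B$. The constant embedding $B\to B\otimes C(X)$, $b\mapsto b\otimes\one$, already gives a proper cocycle embedding once we have one into $B$. So the real work is twofold: push $(\theta_{0},\bbm{y}^{0})$ back into $B$, and find room for a commuting unital copy of $\otw$ in the fixed-point part of the multiplier algebra.

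\textbf{Step 1 (absorbing $\otw$ into $\oinf$).} Since $\otw\otimes\oinf\cong\otw$ and $\oinf\otimes\oinf\cong\oinf$, and $\Gb\cong\Gb\otimes\id{\oinf}$, I would use
\[B\cong B\otimes\oinf\cong B\otimes\oinf\otimes\oinf.\]
Fix a unital embedding $\otw\hookrightarrow\oinf$, which exists because $[\one]_{0}=0$ in $K_{0}(\oinf)$ fails but any unital copy of $\otw$ in a properly infinite algebra is available by Kirchberg's embedding. More concretely, $\otw$ embeds unitally into $\oinf$ since $\oinf$ contains isometries $S_{1},S_{2}$, and the projection $S_{1}S_{1}^{*}+S_{2}S_{2}^{*}$ can be conjugated to $\one$ in $\oinf$.

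\textbf{Step 2 (building $\theta$ and the commuting $\otw$).} Using Step 1, define $(\theta,\bbm{y})$ as the composite
\[(A,\Ga)\xrightarrow{(\theta_{0},\bbm{y}^{0})}(B\otimes\otw)\hookrightarrow B\otimes\oinf\ \xrightarrow{b\mapsto b\otimes\one_{\oinf}}\ B\otimes\oinf\otimes\oinf\cong B,\]
followed by $b\mapsto b\otimes\one_{C(X)}$. Take $\Gi_{0}$ to be the embedding of $\otw$ into the last tensor factor $\one\otimes\one\otimes\oinf\subset\Mcal(B\otimes\oinf\otimes\oinf)$, transported along the conjugacy and tensored with $\one_{C(X)}$. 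Its range is $\Gb$-invariant, because the action on the $\oinf$ factors is trivial. It commutes with $\theta(A)$ and with the $\bbm{y}_{g}$, since they live in different tensor factors. Properness is preserved because each map involved is a $*$-homomorphism, so unitaries of the form $\one+B$ go to unitaries of the form $\one+B$. Finally, $\theta$ is injective as a composite of injective maps.

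\textbf{Main obstacle.} The delicate point is the conjugacy $\Gb\cong\Gb\otimes\id{\oinf}$. Transporting along it, the last $\oinf$ factor sits inside $\Mcal(B)$ (via $\Mcal(B\otimes\oinf)$) but not inside $B$. It must be unital there, and $\Gb$-fixed. If the hypothesis only gave cocycle conjugacy, the fixed-point property would need a cocycle correction. Here it is an honest conjugacy, so it transports the trivial action on $\oinf$ to a fixed unital copy. I would check that $\one_{B}\otimes x$ defines a multiplier, which holds for the minimal tensor product, and that the maps $\Mcal(B\otimes\oinf)\to\Mcal(B)$ are the strictly continuous extensions of the isomorphisms. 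The strong stability hypothesis seems unnecessary for this construction, though it may be used to arrange that the fixed copy sits in the stable multiplier algebra if a later argument requires it.
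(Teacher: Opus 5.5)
There is a genuine gap in Step 1, and it breaks the key conclusion. No unital embedding $\otw\hookrightarrow\oinf$ exists: a unital $*$-homomorphism would carry $[\one_{\otw}]_{0}=0$ (since $K_{0}(\otw)=0$) to $[\one_{\oinf}]_{0}=1\in K_{0}(\oinf)\cong\Zbb$. Your concrete justification fails for the same reason. Since $[S_{1}S_{1}^{*}+S_{2}S_{2}^{*}]_{0}=2\neq 1=[\one]_{0}$, these projections are not unitarily equivalent in $\oinf$.

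The first use of this embedding, transporting $\theta_{0}$ from $B\otimes\otw$ into $B\otimes\oinf$, survives with a non-unital embedding; via unitizations the cocycle stays proper. The second use does not survive. Your $\Gi_{0}$ sits in the last $\oinf$-factor, so it can only be a non-unital copy of $\otw$. Unitality of $\Gi_{0}$ is the whole point of the lemma: it is what makes $\Gi(x\otimes e_{k,l})=r_{k}\Gi_{0}(x)r_{l}^{*}$ non-degenerate in the proof of \autoref{thm:ex-stable}. The ``main obstacle'' you single out (conjugacy versus cocycle conjugacy) is not where the difficulty lies.

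Your remark that strong stability is unnecessary is also wrong, and this points to the missing idea. Take $G$ trivial, $B=\oinf$ and $A=\Cbb$. Then $B\cong B\otimes\oinf$ and $\Cbb\hookrightarrow\oinf\otimes\otw$ unitally, yet $\Mcal(B)=\oinf$ contains no unital copy of $\otw$ at all. So the unital $\otw$ has to be found in the multiplier algebra of a stabilization. One way to do this:
\begin{enumerate}
\item Replace $\theta_{0}$ by $\theta_{0}\otimes\one_{\otw}\colon A\to B\otimes\otw\otimes\otw$, which commutes with $\one\otimes\one\otimes\otw$.
\item Pass to the corner $x\mapsto x\otimes e_{1,1}$ in $B\otimes\otw\otimes\otw\otimes\Kbb$. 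There $\one\otimes\one\otimes\otw\otimes\one$ still commutes with the range of the map and of the cocycle.
\item Map into $B\cong B\otimes\oinf\otimes\Kbb$ (this uses both conjugacy hypotheses) by $\id{B}\otimes\kappa$. Here $\kappa\colon\otw\otimes\otw\otimes\Kbb\to\oinf\otimes\Kbb$ is a non-degenerate embedding. Such a $\kappa$ comes from a unital copy of $\otw$ in $p\oinf p$, for a nonzero projection $p$ with $[p]_{0}=0$, combined with $p\oinf p\otimes\Kbb\cong\oinf\otimes\Kbb$.
\end{enumerate}
The strict extension of a non-degenerate equivariant map is unital and equivariant. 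So this produces the required unital, fixed, commuting copy of $\otw$ in $\Mcal(B)^{\Gb}$. The paper gets this point case directly from \cite[Corollary 5.3]{GS2}, and then, exactly as in your last step, composes with $b\mapsto b\otimes\one_{C(X)}$.
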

\begin{proof}
It follows from [{\cite[Corollary 5.3]{GS2}}] that there exists a proper cocycle embedding $(\theta',\bbm{y}'):(A,\Ga)\to(B,\Gb)$ together with a unital embedding $\iota_{0}':\otw\to \Mcal(B)^{\Gb}$ whose range commutes with the ranges of $\theta'$ and $\bbm{y}'$. The claim follows by composing $(\theta',\bbm{y}')$ with $\id{B}\otimes \mathbf{1}:(B,\Gb)\to(B\otimes C(X),\Gb\otimes \id{C(X)})$ and $\iota_{0}'$ with $\id{B}\otimes\mathbf{1}:\Mcal(B)^{\Gb}\hookrightarrow \Mcal(B)^{\Gb}\otimes C(X)\subset \Mcal(B\otimes C(X))^{\Gb\otimes \id{C(X)}}$.
\end{proof}
\begin{lem}\label{lem:comm-range}
Let $G$ be a compact group. Let $A$ be a separable nuclear \cstar-algebra and $B$ be a separable \cstar-algebra with $B\cong B\otimes \oinf\otimes\Kbb$. Let $\Ga$ be an action on $A$, and $\Gb$ be a strongly stable, isometrically shift-absorbing action on $B$. Let 
\[(\phi,\bbm{u}),(\theta,\bbm{y}):(A,\Ga)\to(B,\Gb)\]
be two proper cocycle morphisms. Suppose $\phi$ is full and there exist a unital embedding $\otw\to\Mcal(B)^{\Gb}$ commuting with the ranges of $\theta$ and $\bbm{y}$, and a unital embedding $\oinf\to\Mcal(B)^{\Gb}$ commuting with the ranges of $\phi$ and $\bbm{u}$. Then $(\phi,\bbm{u})$ and $(\phi\oplus\theta,\bbm{u}\oplus\bbm{y})$ are strongly asymptotically unitarily equivalent.
\end{lem}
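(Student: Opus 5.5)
The approach follows the proof of Gabe--Szab\'o's analogous lemma (\cite[Lemma 5.4/Proposition 5.5]{GS2}), which combines absorption with an $\otw$-rotation trick. The key observation is that $(\theta,\bbm{y})$ is ``$\otw$-stable'' in a way that lets it be rotated to zero, while the $\oinf$ commuting with $(\phi,\bbm{u})$ supplies room inside $\phi$.

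\textbf{Step 1: reduce to adding a multiplier-level copy of $\theta$.} Using the unital embedding $\oinf\to\Mcal(B)^{\Gb}$ commuting with $\phi$ and $\bbm{u}$, pick an equivariant isometry $s\in\Mcal(B)^{\Gb}$ in the image of $\oinf$ with $1-ss^{*}$ properly infinite and full. Then $(\phi,\bbm{u})$ is unitarily equivalent to $(s\phi s^{*}+(1-ss^{*})\phi,\ldots)$, and $\phi$ decomposes as $\phi\oplus\phi$ relative to two isometries of $\oinf$ commuting with $\phi$. By \autoref{prop:absorbing}, applied to the full map $\phi$ in the strongly stable, isometrically shift-absorbing setting, the infinite repeat $\phi^{\infty}$ absorbs $(\theta,\bbm{y})$.

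\textbf{Step 2: the $\otw$-rotation.} Because an $\otw$ commutes with $\theta$ and $\bbm{y}$, \autoref{lem:unitary-path} can be applied inside $\otw\otimes\Kbb\subset\Mcal(B)$, tensored appropriately, to obtain a path $u_{t}\in\Ucal(1+B)$ with $u_{0}=\one$. This path rotates $\theta\oplus\theta^{\infty}$ asymptotically onto $\theta^{\infty}$, with compact perturbations along the way. The cocycle conditions (iii) and (iv) then follow because $u_{t}$ is built from $G$-invariant elements commuting with $\bbm{y}$. The upshot is that $(\theta^{\infty},\bbm{y}^{\infty})\oplus(\theta,\bbm{y})\sasymp(\theta^{\infty},\bbm{y}^{\infty})$, and this equivalence is strong, meaning it holds in the proper sense with $u_{0}=\one$.

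\textbf{Step 3: combine and convert.} Write $\phi\cong\phi\oplus\phi'$ with $\phi'$ absorbing $\theta^{\infty}$, using Step 1 together with the $\oinf$-isometries. Then
\[\phi\oplus\theta\;\sim\;\phi\oplus\theta^{\infty}\oplus\theta\;\sim\;\phi\oplus\theta^{\infty}\;\sim\;\phi .\]
The final task is to upgrade this chain of asymptotic equivalences to a single strong asymptotic equivalence with unitaries in $\Ucal(1+B)$ starting at $\one$.

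\textbf{Main obstacle.} I expect the upgrade in Step 3 to be the main obstacle. Two issues arise. First, the absorption in \autoref{prop:absorbing} is only asymptotic unitary equivalence with unitaries in $\Mcal(B)$. Second, the path produced there need not begin at $\one$. For the first issue, I would use $B\cong B\otimes\oinf\otimes\Kbb$ to correct the unitaries to proper ones, via a $K_1$-argument in the spirit of Dadarlat--Eilers: $\Ucal(\Mcal(B)^{\Gb})$ is contractible or path connected in the strongly stable case, so the path can be conjugated into $\Ucal(1+B)$. For the second, I would concatenate with a path from $\one$ to the initial unitary, which is permissible because $\Ucal(1+B)$ is connected enough, after $\oinf$-stabilization, to lie in the image.
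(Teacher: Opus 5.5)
There is a genuine gap. It lies at the core of Steps 1 and 3, not in the properness issue you single out.

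\textbf{Why the chain fails.} Your chain $\phi\oplus\theta\sim\phi\oplus\theta^{\infty}\oplus\theta\sim\phi\oplus\theta^{\infty}\sim\phi$ needs $\phi$ (or a summand $\phi'$ of it with values in $B$) to absorb $\theta^{\infty}$. This is impossible. Conjugation by unitaries of $\Mcal(B)$ preserves the closed ideal $B$, so anything even approximately unitarily equivalent to $\phi$ takes values in $B$. But $\theta^{\infty}(a)\notin B$ whenever $\theta(a)\neq 0$: one has $\|\theta^{\infty}(a)(\one-\sum_{n<N}r_{n}r_{n}^{*})\|=\|\theta(a)\|$ for all $N$, while $\|x(\one-\sum_{n<N}r_{n}r_{n}^{*})\|\to 0$ for every $x\in B$.

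\textbf{Why the commuting $\oinf$ does not rescue it.} \autoref{prop:absorbing} gives absorption only for the infinite repeat $\phi^{\infty}$, and the commuting $\oinf$ does not let you pass back to $\phi$. Isometries $s_{1},s_{2}$ from $\oinf$ commuting with $\phi$ give $s_{1}\phi(a)s_{1}^{*}+s_{2}\phi(a)s_{2}^{*}=(s_{1}s_{1}^{*}+s_{2}s_{2}^{*})\phi(a)\neq\phi(a)$. A genuine decomposition $(\phi,\bbm{u})\cong(\phi,\bbm{u})\oplus(\phi,\bbm{u})$ would force $KK^{G}(\phi,\bbm{u})=0$. That is false in general, for instance for the maps $\phi'=\phi\otimes\one_{\oinf}$ to which the lemma is applied in \autoref{thm:unique-stable}.

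\textbf{Where the $\otw$ actually enters.} Your Step 2 does not really use the $\otw$, since $\theta\oplus_{r_{1},r_{\infty}}\theta^{\infty}=\theta^{\infty}$ holds on the nose. What the $\otw$ provides is $(\theta,\bbm{y})\oplus(\theta,\bbm{y})=(\theta,\bbm{y})$, with the Cuntz sum formed using the commuting $\otw$ isometries. Hence $KK^{G}(\theta,\bbm{y})=0$, which the conclusion of the lemma forces anyway.

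\textbf{The missing idea.} You need approximate domination of $\theta$ by $\phi$ \emph{inside} $B$, together with a criterion that converts it into strong absorption. The paper invokes \cite[Lemma 2.8 and Lemma 3.16]{GS2}; this is where the commuting $\oinf$, the commuting $\otw$ and isometric shift-absorption enter. These reduce the statement to a non-equivariant condition: for every finite $F\subset A$, $\epsilon>0$ and contraction $b\in B$ there is $c\in B$ with $\|b^{*}b-c^{*}c\|\leq\epsilon$ and $\max_{a\in F}\|b^{*}\theta(a)b-c^{*}\phi(a)c\|\leq\epsilon$.

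The paper then verifies this condition directly:
\begin{enumerate}
\item Fullness of $\phi$ and nuclearity give $\theta(a)\approx\sum_{i}c_{i}^{*}\phi(a)c_{i}$ by \cite[Proposition 3.12]{Ga1}.
\item Since $B\cong B\otimes\oinf\otimes\Kbb$, \cite[Theorem 7.21 and Lemma 7.4]{KR2} replace the finite sum by a single compression $s^{*}\phi(a)s\approx\theta(a)$ with an isometry $s\in\Mcal(B)$.
\item The element $c=sb$ then satisfies $c^{*}c=b^{*}b$ exactly.
\end{enumerate}
No infinite repeats appear. Properness and $u_{0}=\one$ are delivered by the cited criterion, not by a separate correction of multiplier unitaries. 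Your proposed correction via contractibility of $\Ucal(\Mcal(B)^{\Gb})$ could not repair the failure of condition (ii) described above in any case.
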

\begin{proof}
By \cite[Lemma 2.8 and Lemma 3.16]{GS2}, it suffices to show that for every finite subset $F\subset A$, every $\Ge$, and every contraction $b\in B$, there exists $c\in B$ such that
\[\|b^{*}b-c^{*}c\|\leq \Ge,\qquad\max_{a\in F}\|b^{*}\theta(a)b-c^{*}\phi(a)c\|\leq \Ge.\]
Fix $F$, $\Ge$ and $b$.
By \cite[Proposition 3.12]{Ga1}, there exist $c_{1},\cdots ,c_{n}\in B$ such that 
\[\max_{a\in F}\|\theta(a)-\sum_{i=1}^{n}c_{i}^{*}\phi(a)c_{i}\|\leq \Ge.\]
Let $\theta':\phi(A)\to B$ be a $*$-homomorphism defined by $\theta'(\phi(a))=\theta(a)$. This is a well-defined map because $\phi$ is injective. Set $F'=\phi(F)$. Then, by \cite[Theorem 7.21]{KR2}  there exists $c\in B$ such that 
\[\max_{x\in F'}\|\theta'(x)-c^{*}xc\|\leq \Ge.\]
By \cite[Lemma 7.4]{KR2}, there exists an isometry $s\in \Mcal(B)$ such that $\max_{x\in F'}\|\theta'(x)-s^{*}xs\|\leq \frac{\Ge}{\|b\|^{2}+1}$. We set $c=sb$. Then we obtain
\begin{align*}
  \max_{a\in F}\|b^{*}\theta(a)b-c^{*}\phi(a)c\|&=\max_{x\in F'}\|b^{*}\theta'(x)b-c^{*}xc\|\leq \max_{x\in F'}\|b\|^{2}\|\theta'(x)-s^{*}xs\|<\Ge\\
  \|b^{*}b-c^{*}c\|&=\|b^{*}b-b^{*}b\|=0
\end{align*}
\end{proof}
\begin{remark}
Suppose that $\Gb$ is strongly stable. For a proper cocycle morphism $(\phi,\bbm{u}):(A,\Ga)\to(B,\Gb)$, we denote the homotopy class of the Cuntz pair $((\phi,\bbm{u}),(0,\one))$ by $[(\phi,\bbm{U})]_{h}$. $(\phi,\bbm{u})$ is anchored if $(\bbm{u},\mathbf{1})\sim_{h}(\mathbf{1},\mathbf{1})$.
\end{remark}
\subsubsection*{Existence theorem}
The proof of the following theorem is almost identical to that in \cite{GS2}.
\begin{theorem}[cf. {\cite[Theorem 5.5]{GS2}}]\label{thm:ex-stable}
>Let $G$ be a compact group. Let $A$ be a separable exact C$^{\ast}$-algebra and let $\Ga:G\act A$ be an action. Let $B$ be a Kirchberg algebra and $\Gb:G\act B$ a strongly stable and isometrically shift-absorbing action. Let $X$ be a compact metrizable space. Then\\
(i) For every $z\in\Ebb^{G}(\Ga,\Gb\otimes \id{C(X)})/\sim_{h}$, there exists a proper cocycle embedding $(\phi,\bbm{u}):(A,\Ga)\to (B\otimes C(X),\Gb\otimes \id{C(X)})$ such that $[(\phi,\bbm{u})]_{h}=z$.\\
(ii) For every $z\in KK^{G}(\Ga,\Gb\otimes\id{C(X)})$, there exists an anchored proper cocycle embedding $(\phi,\bbm{u}):(A,\Ga)\to(B\otimes C(X),\Gb\otimes\id{C(X)})$ with $KK^{G}(\phi,\bbm{u})=z$.
\end{theorem}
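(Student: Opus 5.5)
We follow the proof of \cite[Theorem 5.5]{GS2}, with $B\otimes C(X)$ as target; the main input is \autoref{lem:commuting}. First we fix the auxiliary data. Since $B$ is a Kirchberg algebra with a strongly stable, isometrically shift-absorbing action, \autoref{prop:oinf-ab} gives $\Gb\cc\Gb\otimes\id{\oinf}$. Likewise $\Gb\otimes\id{\otw}$ is isometrically shift-absorbing, and it absorbs $\id{\otw}$ because $\otw\otimes\otw\cong\otw$. So \autoref{thm:otw-embed} gives a proper cocycle embedding $(A,\Ga)\to(B\otimes\otw,\Gb\otimes\id{\otw})$. Then \autoref{lem:commuting} produces a proper cocycle embedding $(\theta,\bbm{y}):(A,\Ga)\to(B\otimes C(X),\Gb\otimes\id{C(X)})$ and a unital copy of $\otw$ in the fixed-point multiplier algebra that commutes with the ranges of $\theta$ and $\bbm{y}$. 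Since $\Gb\otimes\id{C(X)}$ is again strongly stable, Cuntz pairs into $B\otimes C(X)$ make sense without extra stabilization.

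For (i), take $z=[(\phi_{0},\bbm{u}_{0}),(\psi_{0},\bbm{v}_{0})]_{h}$ with the two cocycle representations landing in $\Mcal(B\otimes C(X))$. Adding $\theta$ to the first component and $\theta^{\infty}$ to both components changes nothing in the class, because the commuting $\otw$ makes $(\theta,\bbm{y})$ null-homotopic as a Cuntz pair against $0$. So we may replace the pair by $((\phi_{0}\oplus\theta\oplus\theta^{\infty},\dots),(\psi_{0}\oplus\theta^{\infty},\dots))$. Next we need an absorption statement, and this is the step I expect to be the main obstacle. \autoref{prop:absorbing} requires a full cocycle morphism, and $\theta$ is full only when $X$ is connected (\autoref{lem:full-hom}). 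For general $X$ I would argue via the Cuntz-pair version in \cite[Sect.~5]{GS2}: there, absorption is needed only for $\theta^{\infty}$ with respect to cocycle representations that differ by compacts. I would check that $\theta$, coming from $B$ tensored with the unit of $C(X)$, is full in $B\otimes C(X)$. It is, because $\theta'(a)\otimes\one$ generates $B\otimes C(X)$ as an ideal, $B$ being simple. With this, \autoref{prop:absorbing} applies to $(\theta^{\infty},\bbm{y}^{\infty})$. The second component $(\psi_{0}\oplus\theta^{\infty})$ is then asymptotically unitarily equivalent to $(\theta^{\infty},\bbm{y}^{\infty})$, and conjugating by the implementing unitaries (which can be chosen in $\one+B\otimes C(X)\otimes\Kbb$ by the properness arguments of \cite{GS2}) turns the pair into one whose second entry is $\theta^{\infty}$. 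At that point the difference of the two entries lies in $B\otimes C(X)$.

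We then use \autoref{lem:unitary-path} inside the commuting $\otw\otimes\Kbb$ to compress the infinite repeats back to a single corner. This yields a homotopy to a pair of the form $((\phi,\bbm{u}),(0,\one))$, where $(\phi,\bbm{u})$ is a proper cocycle morphism into $B\otimes C(X)$. Finally, replacing $\phi$ by $\phi\oplus\theta$, which does not change the class, makes it injective, i.e.\ a proper cocycle embedding. For (ii), we apply (i) to a representative in $\Ebb_{0}^{G}$, using the isomorphism $\Ebb^{G}_{0}/\sim_{h}\cong KK^{G}$. The homotopies constructed above only conjugate and add anchored pieces, since $\bbm{y}$ and the Cuntz-sum cocycles are homotopic to $\one$ by the same $\otw$-rotation trick. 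So the resulting $(\phi,\bbm{u})$ is anchored and satisfies $KK^{G}(\phi,\bbm{u})=z$.
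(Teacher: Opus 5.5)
Your proposal is correct and follows the paper's proof. Both build $(\theta,\bbm{y})$ with a commuting unital copy of $\otw$ via \autoref{lem:commuting}, and both use absorption of $(\theta^{\infty},\bbm{y}^{\infty})$ to represent $z$ by a pair whose second entry is $\theta^{\infty}$; the paper simply cites \cite[Corollary 3.17]{GS1} for the part you re-derive by adding $\theta^{\infty}$ and conjugating, and properness of the implementing unitaries is not needed there. The remaining steps also agree: compress with \autoref{lem:unitary-path}, use $[(\theta,\bbm{y})]_{h}=0$, and add $\theta$ to obtain an embedding.

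Your fullness argument, via $\theta=(\id{B}\otimes\one)\circ\theta'$ with $\theta'$ injective and $B$ simple, is actually better than the paper's appeal to \autoref{lem:full-hom}. That lemma requires $X$ connected and $A$ Kirchberg, and neither is assumed here.

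The one step you skip is that \autoref{lem:commuting} as stated needs $\Gb$ conjugate, not merely cocycle conjugate, to $\Gb\otimes\id{\oinf}$. The paper arranges this by first passing to $B\otimes\oinf$ via \cite[Theorem 5.6]{SZ3}.
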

\begin{proof}
  Because (ii) is a special case of (i), we only prove (i)

  We know from \autoref{prop:oinf-ab} that $\Gb\simeq_{cc}\Gb\otimes\id{\oinf}$. By \cite[Theorem 5.6]{SZ3}, it follows that $\id{B}\otimes 1:(B,\Gb)\to(B\otimes\oinf,\Gb\otimes\id{\oinf})$ is strongly asymptotically unitarily equivalent to a proper cocycle conjugacy, which is anchored since the first factor embedding is trivially anchored. So we may replace $(B,\Gb)$ with $(B\otimes \oinf,\Gb\otimes\id{\oinf})$ and assume that $\Gb$ is conjugate to $\Gb\otimes\id{\oinf}$. Apply \autoref{lem:commuting} and \autoref{thm:otw-embed} with $\Gb\otimes\id{\otw}$ in place of $\Gb$. This allows us to find a proper cocycle embedding $(\theta,\bbm{y}):(A,\Ga)\to(B\otimes C(X),\Gb\otimes\id{C(X)})$ and a unital embedding $\Gi_{0}:\otw\to \Mcal(B\otimes C(X))^{\Gb\otimes\id{C(X)}}$ whose range commutes with $\theta$ and $\bbm{y}$.

Since $\beta$ is strongly stable, let us choose a sequence of isometries $r_{n}\in \Mcal(B\otimes C(X))^{\Gb\otimes\id{C(X)}}$ such that $\sum_{n=1}^{\infty}r_{n}r_{n}^{*}=\one$ in the strict topology. Let $\{e_{k,l}\:|\: k,l\geq 1\}$ be a set of matrix units generating $\Kbb$. We then consider the non-degenerate embedding
\[\Gi:\otw\otimes\Kbb\to\Mcal(B\otimes C(X))^{\Gb\otimes\id{C(X)}},\quad \Gi(x\otimes e_{k,l})=r_{k}\Gi_{0}(x)r_{l}^{*}\]
Using \autoref{lem:unitary-path}, we may find a continuous unitary path $u:[0,\infty)\to \Ucal(1+\Gi(\otw\otimes\Kbb))\subset \Ucal(\Mcal(B\otimes C(X))^{\Gb\otimes\id{C(X)}})$ with $u_{0}=\one$ such that
\begin{align}\one=\lim_{t\to\infty}u_{t}^{*}(1\otimes e_{1,1})u_{t}=\lim_{t\to\infty}u_{t}^{*}r_{1}r_{1}^{*}u_{t}\label{eq:1}
\end{align}
holds in the strict topology as $t\to\infty$, and moreover $u_{t}x$ converges in norm as $t\to\infty$ for all $x\in B\otimes C(X)$.

By \autoref{lem:full-hom}, $\theta$ is full. So by \autoref{prop:absorbing}, it follows that $(\theta^{\infty},\bbm{y}^{\infty})$ is an absorbing cocycle representation, where we note that the infinite repeat is meant to be formed via the sequence $r_{n}$. Thus the range of $\Gi$ commutes pointwise with the ranges of $\theta^{\infty}$ and $y^{\infty}$. It follows from \cite[Corollary 3.17]{GS1} that we can find some cocycle representation $(\psi,\bbm{v}):(A,\Ga)\to (\Mcal(B\otimes C(X)),\Gb\otimes \id{C(X)})$ that forms an $(\Ga,\Gb\otimes\id{C(X)})$-Cuntz pair together with $(\theta^{\infty},\bbm{y}^{\infty})$ such that one has $z=[(\psi,\bbm{v}),(\theta^{\infty},\bbm{y}^{\infty})]_{h}$.

By the properties of the unitary path $u$ constructed above, it follows that for all $a\in A$ 
\[u_{t}\psi(a)u_{t}^{*}=u_{t}(\psi(a)-\theta^{\infty}(a))u_{t}^{*}+\theta^{\infty}(a)\]
converges in norm as $t\to\infty$ because $\psi(a)-\theta^{\infty}(a)$ belongs to $B\otimes C(X)$. Let $\phi'=\lim_{t\to\infty}\ad{u_{t}}\circ\psi$ be the $*$-homomorphism arising as the point-norm limit. Then, for every $a\in A$
\begin{align*}
\|(1-r_{1}r_{1}^{*})(\phi'(a)-\theta^{\infty}(a))\|&=\lim_{t\to\infty}\|(\mathbf{1}-r_{1}r_{1}^{*})(u_{t}\psi(a)u_{t}^{*}-\theta^{\infty}(a))\|\\
&=\lim_{t\to\infty}\|(\mathbf{1}-u_{t}^{*}r_{1}r_{1}^{*}u_{t})(\psi(a)-\theta^{\infty}(a))\|\\
&\stackrel{\eqref{eq:1}}{=}0
\end{align*}
Let us consider the isometry $r_{\infty}=\sum_{n=1}^{\infty}r_{n+1}r_{n}^{*}\in\Mcal(B\otimes C(X))^{\Gb\otimes\id{C(X)}}$,which has the property that $r_{1}r_{1}^{*}+r_{\infty}r_{\infty}^{*}=\mathbf{1}$. Since all partial isomoetries of the form $r_{k}r_{l}^{*}$ commute with the range of $\theta^{\infty}$, we can conclude that the projection $r_{1}r_{1}^{*}$ commutes with the range of $\phi'$. Hence it follows that for all $a\in A$,
\begin{align*}
\phi'(a)&=r_{1}r_{1}^{*}\phi'(a)+(\mathbf{1}-r_{1}r_{1}^{*})\phi'(a)\\
&=r_{1}r_{1}^{*}\phi'(a)+(\mathbf{1}-r_{1}r_{1}^{*})\theta^{\infty}(a)\\
&=\phi(a)\oplus_{r_{1},r_{\infty}}\theta^{\infty}(a)
\end{align*}
where $\phi:A\to\Mcal(B\otimes C(X))$ is the $*$-homomorphism defined by $\phi(a)=r_{1}^{*}\phi'(a)r_{1}$. Appealing to the properties of the unitary path $u$, it follows that for all $g\in G$
\[u_{t}\bbm{v}_{g}u_{t}^{*}=u_{t}(\bbm{v}_{g}-\bbm{y}^{\infty}_{g})u_{t}^{*}+\bbm{y}^{\infty}_{g}\]
converges in norm as $t\to\infty$ because $\bbm{v}_{g}-\bbm{y}^{\infty}_{g}$ belongs to $B$ for all $g\in G$. Since the assignment $g\mapsto\bbm{v}_{g}-\bbm{y}^{\infty}_{g}$ is norm-continuous by \cite{SZ3}, this convergence is uniform over $G$. Let $\bbm{u}'_{\bullet}=\lim_{t\to\infty}u_{t}\bbm{v}_{\bullet}u_{t}^{*}$ be the $\Gb\otimes\id{C(X)}$-cocylce arising as the pointwise limit in norm. We observe for every $g\in G$ that
\begin{align*}
\|(\mathbf{1}-r_{1}r_{1}^{*})(\bbm{u}'_{g}-\bbm{y}^{\infty}_{g})\|&=\lim_{t}\|(\mathbf{1}-r_{1}r_{1}^{*})(u_{t}\bbm{v}_{g}u_{t}^{*}-\bbm{y}^{\infty}_{g})\|\\
&=\lim_{t\to\infty}\|(\mathbf{1}-u_{t}^{*}r_{1}r_{1}^{*}u_{t})(\bbm{v}_{g}-\bbm{y}^{\infty}_{g})\|\\
&\stackrel{\eqref{eq:1}}{=}0
\end{align*}
Since all partial isometries of the form $r_{k}r_{l}^{*}$ commute with the range of $\bbm{y}^{\infty}$, we can conclude that the projection $r_{1}r_{1}^{*}$ commutes with the range of $\bbm{u}'$. Hence it follows for all $g\in G$ that
\begin{align*}
\bbm{u}'_{g}&=r_{1}r_{1}^{*}\bbm{u}'_{g}+(\mathbf{1}-r_{1}r_{1}^{*})\bbm{u}'_{g}\\
&=r_{1}r_{1}^{*}\bbm{u}'_{g}+(\mathbf{1}-r_{1}r_{1}^{*})\bbm{y}^{\infty}_{g}\\
&=\bbm{u}_{g}\oplus_{r_{1},r_{\infty}}\bbm{y}^{\infty}_{g}
\end{align*}
where $\bbm{u}:G\to\Ucal(\Mcal(B))$ is the $\Gb\otimes \id{C(X)}$-cocycle defined as $\bbm{u}_{g}=r_{1}^{*}\bbm{u}'_{g}r_{1}$. In conclusion, we have constructed a cocycle representation $(\phi,\bbm{u}):(A,\Ga)\to(\Mcal(B\otimes C(X),\Gb\otimes \id{C(X)})$ such that the two $(\Ga,\Gb\otimes\id{C(X)})$-Cuntz pairs $((\phi,\bbm{u})\oplus_{r_{1},r_{\infty}}(\theta^{\infty},\bbm{y}^{\infty}),(\theta^{\infty},\bbm{y}^{\infty}))$ and $((\psi,\bbm{v}),(\theta^{\infty},\bbm{y}^{\infty}))$ are homotopic. Note that our choice of isometries to define the Cuntz sum leads to the equation $(\theta^{\infty},\bbm{y}^{\infty})=(\theta,\bbm{y})\oplus_{r_{1},r_{\infty}}(\theta^{\infty},\bbm{y}^{\infty})$. Thus $(\phi,\bbm{u})$ and $(\theta,\bbm{y})$ necessarily form an $(\Ga,\Gb\otimes\id{C(X)})$-Cuntz pair representing the class $z$. Since $(\theta,\bbm{y})$ is a proper cocycle morphism, so is $(\phi,\bbm{u})$. By construction, we have $[(\theta,\bbm{y})]_{h}=0$. Thus we can conclude that
\[z=[(\phi,\bbm{u}),(\theta,\bbm{y})]_{h}=[(\phi,\bbm{u})]_{h}-[(\theta,\bbm{y})]_{h}=[(\phi,\bbm{u})]_{h}\]
If $\phi$ is not an embedding, we may replace $(\phi,\bbm{u})$ by $(\phi,\bbm{u})\oplus(\theta,\bbm{y})$. Because $\phi\oplus\theta$ is full and the homotopy class does not change, the proof is completed.
\end{proof}

\begin{theorem}[cf. {\cite[Theorem 5.6]{GS2}}]\label{thm:ex-unital}
Suppose that $G$ is compact. Let $A,\;B$ be unital Kirchberg algebras and $\Ga:G\act A,\;\Gb:G\act B$ be isometrically shift-absorbing actions. Let $X$ be a compact connected metrizable space. \\
Then for every $x\in KK^{G}(\Ga,\Gb\otimes\id{C(X)})$ such that $[\one_{A^{G}}]\otimes x=[p]\in K^{G}_{0}(B\otimes C(X))=K_{0}((B\otimes C(X))^{\Gb\otimes \id{C(X)}})$ for some full properly infinite projection $p\in (B\otimes C(X))^{\Gb\otimes\id{C(X)}}$, there exists a cocycle embedding $(\psi,\bbm{v}):(A,\Ga)\to(B\otimes C(X),\Gb\otimes\id{C(X)})$ such that $\phi(\one_{A})=p$ and $KK^{G}(\psi,\bbm{v})=x$.
\end{theorem}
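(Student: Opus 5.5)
The plan follows the proof of \cite[Theorem 5.6]{GS2}, with \autoref{thm:ex-stable} used for the existence step and the unit then corrected by conjugation. Fix a rank-one projection $e\in\Kbb$ (with trivial action) and consider $(B\otimes\Kbb\otimes C(X),\Gb\otimes\id{\Kbb}\otimes\id{C(X)})$. The action $\Gb\otimes\id{\Kbb}$ is strongly stable and still isometrically shift-absorbing, and $B\otimes\Kbb$ is a Kirchberg algebra. Under the corner embedding $b\mapsto b\otimes e$, which is a $KK^{G}$-equivalence, regard $x$ as an element of $KK^{G}(\Ga,\Gb\otimes\id{\Kbb}\otimes\id{C(X)})$. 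By \autoref{thm:ex-stable}(ii) there is an anchored proper cocycle embedding
\[(\psi_{0},\bbm{v}_{0}):(A,\Ga)\to(B\otimes\Kbb\otimes C(X),\Gb\otimes\id{\Kbb}\otimes\id{C(X)})\]
with $KK^{G}(\psi_{0},\bbm{v}_{0})=x$. Put $q=\psi_{0}(\one_{A})$. Because $\bbm{v}_{0}$ is a cocycle for $\psi_{0}$, the projection $q$ is invariant under $\ad{\bbm{v}_{0,g}}\circ(\Gb\otimes\id{})_{g}$ rather than under $\Gb\otimes\id{}$ itself. The first task is to check that $q\bbm{v}_{0}q$ defines a cocycle on the corner $q(\cdots)q$, and to identify the resulting class in $K_{0}^{G}$ with $[\one_{A^{G}}]\otimes x=[p]$. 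This should come from \autoref{prop:kk-elem}: the module $\overline{\psi_{0}(A)B^{\bbm{v}_{0}}}$ is the equivariant module $qB^{\bbm{v}_{0}}$, and composing with $\Cbb\to A^{G}\to A$ gives exactly the class of this equivariant projection module.

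Next, realise the equivariant module $qB^{\bbm{v}_{0}}$ as an honest invariant projection. By \autoref{prop:isa-sat}, the action $\Gb$ is saturated, and so is its tensor product with $\id{C(X)}$ by the same argument. Hence $K_{0}^{G}(B\otimes C(X))\cong K_{0}((B\otimes C(X))^{\Gb\otimes\id{}})$. I also expect to pass from the module $qB^{\bbm{v}_{0}}$ to a $(\Gb\otimes\id{})$-invariant projection $q'$ in $(B\otimes\Kbb\otimes C(X))^{G}$ together with a partial isometry $w$ satisfying $w^{*}w=q$, $ww^{*}=q'$ and $\bbm{v}_{0,g}(\Gb\otimes\id{})_{g}(w^{*})\cdot$(appropriately)$=w^{*}$; that is, $w$ intertwines the twisted action with the genuine one. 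Such a $w$ exists because the two equivariant modules are isomorphic, this being the content of $[q']=[qB^{\bbm{v}_{0}}]$.

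The third step is the unit correction. Both $q'$ and $p\otimes e$ are projections in the fixed point algebra of $B\otimes\Kbb\otimes C(X)$ with the same $K_{0}$-class. They are properly infinite: $p$ is properly infinite by hypothesis, and nonzero projections in $\oinf$-absorbing algebras are properly infinite. They are also full, by \autoref{lem:full-elem} applied to the fixed point algebra, which is itself a Kirchberg-type algebra tensored with $C(X)$ via saturation. By Cuntz's proposition recalled above they are therefore Murray--von Neumann equivalent via some $s$ in the fixed point algebra. Set $\psi=(e\text{-corner identification})\circ\ad{(sw)}\circ\psi_{0}$, with cocycle $\bbm{v}_{g}=sw\,\bbm{v}_{0,g}\,(\Gb\otimes\id{})_{g}(sw)^{*}$ plus $\one-p$. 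This is a cocycle embedding into $B\otimes C(X)$ with $\psi(\one_{A})=p$. Its $KK^{G}$-class is unchanged, since conjugating by a partial isometry that intertwines the actions is a unitary equivalence of Kasparov modules.

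I expect the main obstacle to be the second step. One must produce the intertwining partial isometry $w$, or equivalently show that the twisted corner module is isomorphic to the module of a genuinely invariant full properly infinite projection, and this has to be done over $C(X)$ rather than over a Kirchberg algebra. The approach I would try first is to use anchoredness of $\bbm{v}_{0}$ together with the absorption \autoref{lem:comm-range} to perturb $(\psi_{0},\bbm{v}_{0})$, replacing $\bbm{v}_{0}$ by a cocycle that is trivial on a large invariant corner. Connectedness of $X$ is needed here so that \autoref{lem:full-elem} applies.
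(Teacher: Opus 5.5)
Your Step~2 is a genuine gap, and it is the crux. You justify $w$ by saying it exists ``because the two equivariant modules are isomorphic, this being the content of $[q']=[qB^{\mathbbm{v}_0}]$''. That conflates equality in the Grothendieck group $K_0^G$ with isomorphism of equivariant Hilbert modules. Upgrading one to the other needs a cancellation argument for equivariant modules, for instance full properly infinite projections in the crossed product via \autoref{prop:isa-sat}, and you do not supply one.

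Consider also what such a $w$ would give. Write $\tilde\beta$ for the action on $B\otimes C(X)\otimes\Kbb$. The intertwining condition amounts to $\tilde\beta_g(w)=w\mathbbm{v}_{0,g}$, which makes $\ad(sw)\circ\psi_0$ genuinely equivariant. So you would in effect be proving the stronger \autoref{cor:compact-ex}, which the paper only obtains afterwards from the uniqueness theorem.

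Your fallback does not address this. \autoref{lem:comm-range} only adds a summand up to strong asymptotic unitary equivalence; it does not trivialize $\mathbbm{v}_0$ on any corner. Your ``first task'' also fails as stated: $q\mathbbm{v}_0q$ is not a cocycle on $q(\cdot)q$, precisely because $q$ is not $\tilde\beta$-invariant.

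The missing idea is that none of this is needed, because the conclusion allows a nontrivial cocycle. Forget the action. The image of the hypothesis under $K_0^G\to K_0$ gives $[\psi_0(\one_A)]=[p\otimes e]$ in $K_0(B\otimes C(X))$. Both are full properly infinite projections of $B\otimes C(X)\otimes\Kbb$, by \autoref{lem:full-elem} and $\oinf$-absorption. Hence $U\psi_0(\one_A)U^*=p\otimes e$ for some $U\in\Ucal(\one+B\otimes C(X)\otimes\Kbb)$, with no invariance required of $U$.

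The pair $(\ad U\circ\psi_0,\,U\mathbbm{v}_{0,\bullet}\tilde\beta_\bullet(U)^*)$ is again a cocycle morphism. It sends $\one_A$ to the $\tilde\beta$-invariant projection $p\otimes e$, so its cocycle commutes with $p\otimes e$ and decomposes as $\mathbbm{v}\otimes e+\mathbbm{v}'$. Conjugation by $U^*$ is an isomorphism of Kasparov modules, and by \autoref{prop:kk-elem} only the corner part of the cocycle matters. Compressing therefore gives $(\psi,\mathbbm{v}+(\one-p))$ with $\psi(\one_A)=p$ and class $x$.

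This is the paper's argument. With it, your saturation step and the Murray--von Neumann equivalence in the fixed-point algebra become unnecessary.
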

\begin{proof}
Let us consider the  invertible element $\Gk\in KK^{G}(\Gb\otimes \id{C(X)},\Gb\otimes\id{C(X)\otimes\Kbb})$ given by the canonical inclusion $B\otimes C(X)\ni x\mapsto x\otimes e_{1,1}\in B\otimes C(X)\otimes\Kbb$. By the existence theorem, there is a proper cocycle embedding $(\phi,\bbm{u}):(A,\Ga)\to(B\otimes C(X)\otimes\Kbb,\Gb\otimes\id{C(X)\otimes\Kbb})$ such that $KK^{G}(\phi,\bbm{u})=x\otimes \Gk$. By the assumptions, $\phi(\one_{A})$ and $p\otimes e_{1,1}$ are full properly infinite projections and represent the same element in $K_{0}(B\otimes C(X))$. Thus they are unitarily equivalent, that is, there is a unitary $U\in \Ucal(\one+B\otimes C(X)\otimes \Kbb)$ such that $U\phi(\one_{A})U^{*}=p\otimes e_{1,1}$. For all $g\in G$,
\begin{align*}
p\otimes e_{1,1}&=\ad{U}\circ\phi(\mathbf{1}_{A})\\
&=\ad{U}\circ\phi(\Ga_{g}(\mathbf{1}_{A}))\\
&=\ad{U}\circ \ad{\bbm{u}_{g}}\circ(\Gb_{g}\otimes \id{C(X)\otimes\Kbb})\circ\phi(\mathbf{1}_{A})\\
&=\ad({U}\bbm{u}_{g}(\Gb_{g}\otimes \id{C(X)\otimes \Kbb})(U)^{*})(\Gb_{g}\otimes\id{C(X)\otimes\Kbb})(U\phi(\mathbf{1}_{A})U^{*})\\
&=\ad({U}\bbm{u}_{g}(\Gb_{g}\otimes \id{C(X)\otimes \Kbb})(U)^{*}))(p\otimes e_{1,1})
\end{align*}
holds because $p\otimes e_{1,1}$ and $\mathbf{1}_{A}$ are $G$-invariant. This implies that  $\mathrm{Ad}(U)\circ (\phi,\bbm{u})=(\ad{U}\circ\phi,U\bbm{u}_{\bullet}\Gb_{\bullet}(U)^{*})$ is of the form $(\psi\otimes e_{1,1},\bbm{v}\otimes e_{1,1}+\bbm{v}')$ for a unital cocycle embedding $(\phi,\bbm{v}):(A,\Ga)\to(p(B\otimes C(X))p,\Gb\otimes \id{C(X)})$ and a $\Gb\otimes\id{C(X)\otimes \Kbb}$-cocycle $\bbm{v}'$ which takes values in $(\mathbf{1}-p\otimes e_{1,1})+(\mathbf{1}-p\otimes e_{1,1})(B\otimes C(X)\otimes \Kbb)(\mathbf{1}-p\otimes e_{1,1})$. By \autoref{rem:kk-elem} and \autoref{prop:kk-elem}, $(\psi\otimes e_{1,1},\bbm{v}\otimes e_{1,1}+\bbm{v}')$ and $(\psi\otimes e_{1,1},\bbm{v}\otimes e_{1,1}-(\mathbf{1}-e_{1,1}))$ represent the same element in $KK^{G}(\Ga,\Gb\otimes\id{C(X)\otimes\Kbb})$. Thus $KK^{G}(\ad{U}\circ (\phi,\bbm{u}))=KK^{G}(\psi,\bbm{v}+(\mathbf{1}-p))\otimes \Gk$ holds. We claim that $KK^{G}(\ad{U}\circ (\phi,\bbm{u}))$ is equal to $KK^{G}(\phi,\bbm{u})$. These elements are represented by Kasparov triples $((B\otimes C(X)\otimes \Kbb)^{\ad{U}\circ\bbm{u}},\ad{U}\circ\phi,0)$ and $((B\otimes C(X)\otimes\Kbb)^{\bbm{u}},\phi,0)$ respectively. Let us consider $U^{*}:(B\otimes C(X)\otimes \Kbb)^{\ad{U}\circ \bbm{u}}\to (B\otimes C(X)\otimes\Kbb)^{\bbm{u}}$. This is a $B$-module map which preserves the $B$-valued inner product. For all $g\in G$, $a\in A$ and $x\in B\otimes C(X)\otimes \Kbb$,
\begin{align*}
&U^{*}(g\cdot b)=U^{*}(U\bbm{u}_{g}(\Gb_{g}\otimes\id{C(X)\otimes \Kbb}))(\Gb_{g}\id{C(X)\otimes \Kbb})(b)=\bbm{u}(\Gb_{g}\otimes\id{C(X)\otimes\Kbb})(U^{*}b),\\
&U^{*}(\ad{U}\circ\phi(a)b)=\phi(a)U^{*}b.
\end{align*}
Thus $U^{*}$ is an isomorphism of Kasparov triples, and we get $KK^{\ad{U}\circ(\phi,\bbm{u})}=KK^{G}(\phi,\bbm{u})$. By using what we showed above, we conclude that 
\[KK^{G}(\psi,\bbm{v}+(\mathbf{1}-p))=KK^{G}(\ad{U}\circ(\phi,\bbm{u}))\otimes \Gk^{-1}=KK^{G}(\phi,\bbm{u})\otimes \Gk^{-1}=x.\]
\end{proof}
We can take the cocycle to be $\mathbf{1}$, but we will discuss this after the uniqueness theorem.
\subsubsection*{Uniqueness theorem}

\begin{theorem}[cf. {\cite[Theorem 5.7]{GS2}}]\label{thm:unique-stable}
  Let $A$ and $B$ be a Kirchberg algebra and $X$ be a compact connected metrizable space. Let $\Ga:G\act A$ be an action, and $\Gb:G\act B$ be a strongly stable and isometrically shift-absorbing action. Let $(\phi,\bbm{u}),\;(\psi,\bbm{v}):(A,\Ga)\to (B\otimes C(X),\Gb\otimes\id{C(X)})$ be two proper cocycle embeddings that form an anchored Cuntz pair. Then $KK^{G}(\phi,\bbm{u})=KK^{G}(\psi,\bbm{v})$ if and only if $(\phi,\bbm{u})$ and $(\psi,\bbm{v})$ are strongly asymptotically unitarily equivalent.
\end{theorem}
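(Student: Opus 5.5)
Our plan follows the proof of \cite[Theorem 5.7]{GS2}, with $(B\otimes C(X),\Gb\otimes\id{C(X)})$ in place of $(B,\Gb)$. The implication from strong asymptotic unitary equivalence to equality of $KK^{G}$-classes is the easy direction. If $u:[0,\infty)\to\Ucal(\one+B\otimes C(X))$ implements the equivalence with $u_{0}=\one$, reparametrize to $[0,1]$. Then $t\mapsto(\ad{u_{t}}\circ\phi,\,u_{t}\bbm{u}_{\bullet}\Gb_{\bullet}(u_{t})^{*})$, completed at $t=1$ by $(\psi,\bbm{v})$, gives a Cuntz pair over $B\otimes C(X)[0,1]$ together with the constant path at $(\psi,\bbm{v})$. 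Conditions (ii) and (iv) of the definition make this a Cuntz pair, and (i) and (iii) give continuity at $t=1$. Hence $[(\phi,\bbm{u}),(\psi,\bbm{v})]=0$, which yields $KK^{G}(\phi,\bbm{u})=KK^{G}(\psi,\bbm{v})$.

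For the converse, first reduce to the situation of \autoref{lem:comm-range}. As in the proof of \autoref{thm:ex-stable}, use \autoref{prop:oinf-ab} and \cite[Theorem 5.6]{SZ3} to replace $\Gb$ by $\Gb\otimes\id{\oinf}$, so that $\Gb$ is conjugate to $\Gb\otimes\id{\oinf}$. Then \autoref{thm:otw-embed} and \autoref{lem:commuting} give a proper cocycle embedding $(\theta,\bbm{y})$ into $B\otimes C(X)$ and a unital copy of $\otw$ in the fixed-point multiplier algebra commuting with the ranges of $\theta$ and $\bbm{y}$. Also arrange that $\phi$ and $\psi$ commute with a unital copy of $\oinf$, by passing to the tensor factor $\oinf$. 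Since $A$ is simple and $X$ is connected, \autoref{lem:full-hom} shows that $\phi$ and $\psi$ are full. Then \autoref{lem:comm-range} gives $(\phi,\bbm{u})\sasymp(\phi\oplus\theta,\bbm{u}\oplus\bbm{y})$ and likewise for $\psi$. Because $\theta$ is full, $(\theta^{\infty},\bbm{y}^{\infty})$ is absorbing by \autoref{prop:absorbing}. Adding copies of $\theta$ does not change the $KK^{G}$-classes or the anchoring.

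The main step is to show that the anchored Cuntz pair $((\phi,\bbm{u}),(\psi,\bbm{v}))$, which has zero class, becomes strongly asymptotically unitarily equivalent after stabilizing. Since the pair is anchored and $\Ebb^{G}_{0}/\sim_{h}\cong KK^{G}$, vanishing of the class means the pair is homotopic through Cuntz pairs to a degenerate one. We would then use the Dadarlat--Eilers style argument of \cite[Section 4]{GS2}, relying on the absorption of $(\theta^{\infty},\bbm{y}^{\infty})$ and on \cite[Lemma 2.8, Lemma 3.16]{GS2}, to get
\[(\phi\oplus\theta^{\infty},\bbm{u}\oplus\bbm{y}^{\infty})\sasymp(\psi\oplus\theta^{\infty},\bbm{v}\oplus\bbm{y}^{\infty}).\]
We expect this to be the main obstacle. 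It requires checking that each ingredient of Gabe--Szab\'o's uniqueness argument uses only separability, strong stability, the commuting $\otw$, and absorption of $(\theta^{\infty},\bbm{y}^{\infty})$, and never simplicity of the codomain, since $B\otimes C(X)$ is not simple. The point where simplicity might enter is fullness, and that is handled by \autoref{lem:full-hom}.

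Finally we pass from $\theta^{\infty}$ to a single $\theta$. Compress by the isometry $r_{1}$ and use the infinite-repeat identity $\theta^{\infty}=\theta\oplus_{r_{1},r_{\infty}}\theta^{\infty}$, with the unitary path from \autoref{lem:unitary-path} as in \autoref{thm:ex-stable}, to absorb $\theta^{\infty}$ back into $\theta$. This gives $(\phi\oplus\theta,\bbm{u}\oplus\bbm{y})\sasymp(\psi\oplus\theta,\bbm{v}\oplus\bbm{y})$. Composing with the two equivalences from \autoref{lem:comm-range} then gives $(\phi,\bbm{u})\sasymp(\psi,\bbm{v})$, using that strong asymptotic unitary equivalence is transitive.
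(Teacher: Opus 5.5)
Your proposal is correct and follows the paper's proof essentially step for step. The shared steps are: the $\oinf$-reduction via \cite[Theorem 5.6]{SZ3}, \autoref{lem:commuting} and \autoref{lem:comm-range}, stable uniqueness against the absorbing $(\theta^{\infty},\bbm{y}^{\infty})$, and the unitary-path trick to return from $\theta^{\infty}$ to $\theta$. For that last step the paper takes the path in $\Ucal(\one+r_{\infty}\iota(\otw\otimes\Kbb)r_{\infty}^{*})$ and reparametrizes it, so that the conjugated implementing unitaries asymptotically commute with $r_{1}r_{1}^{*}+r_{2}r_{2}^{*}$ before compressing. The step you flag as the main obstacle is dispatched in the paper simply by citing the equivariant stable uniqueness theorem \cite[Theorem 5.4]{GS1} as a black box, which needs only an anchored Cuntz pair with vanishing class and an absorbing cocycle representation, not simplicity of $B\otimes C(X)$; the non-simple codomain is handled through fullness via \autoref{lem:full-hom}, which feeds both \autoref{prop:absorbing} and \autoref{lem:comm-range}.
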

\begin{proof}
"if" part is clear.

By \autoref{prop:oinf-ab}, we have $\Gb\cong_{cc}\Gb\otimes\id{\oinf}$. It follows from \cite[Theorem 5.6]{SZ3} that there exists a proper cocycle conjugacy $(\Gk,\bbm{x}):(B\otimes C(X),\Gb\otimes\id{C(X)})\to(B\otimes C(X)\otimes\oinf,\Gb\otimes\id{C(X)}\otimes\id{\oinf})$ that is strongly asymptotically unitarily equivalent to $\id{B\otimes C(X)}\otimes \mathbf{1}_{\oinf}$. Thus the proper cocycle morphism $(\Gk,\bbm{x})^{-1}\circ(\id{B\otimes C(X)}\otimes \mathbf{1}_{\oinf},\mathbf{1})$ is strongly asymptotically inner, that is, it is strongly asymptotically unitarily equivalent to $(\id{B\otimes C(X)\otimes \oinf},\mathbf{1})$. Hence it suffices to show that the two proper cocycle morphisms
\[(\phi\otimes 1_{\oinf},\bbm{u}\otimes 1_{\oinf}),(\psi\otimes 1_{\oinf},\bbm{v}\otimes 1_{\oinf}):(A,\Ga)\to(B\otimes C(X)\otimes\oinf,\Gb\otimes \id{C(X)}\otimes\id{\oinf})\]
are strongly asymptotically unitarily equivalent. To simplify the notation, we set $C \coloneqq B\otimes \oinf$ and $\Gg \coloneqq \Gb\otimes\id{\oinf}$. We also write $\phi',\; \psi',\; \bbm{u}',$ and $\bbm{v}'$ for the amplifications $\phi\otimes \mathbf{1}_{\oinf},\; \psi\otimes\mathbf{1}_{\oinf},\; \bbm{u}\otimes\mathbf{1}_{\oinf},$ and $\bbm{v}\otimes\mathbf{1}_{\oinf}$, respectively. 

We have a unital inclusion $\oinf\hookrightarrow\Mcal(C\otimes C(X))^{\Gg\otimes\id{C(X)}}$ that commutes pointwise with the ranges of the maps $\phi',\;\psi',\bbm{u}',$ and $\bbm{v}'$. By \autoref{lem:commuting}, there exists a proper cocycle embedding $(\theta,\bbm{y}):(A,\Ga)\to(C\otimes C(X),\Gg\otimes \id{C(X)})$ and a unital inclusion $\Gi_{0}:\otw\to \Mcal(C\otimes C(X))^{\Gg\otimes \id{C(X)}}$ whose range commutes with the ranges of $\theta$ and $\bbm{y}$.

By \autoref{lem:comm-range}, it follows that $(\phi',\bbm{u}')$ is strongly asymptotically unitarily equivalent to $(\phi'\oplus\theta,\bbm{u}'\oplus\bbm{y})$ and that $(\psi',\bbm{v}')$ is strongly asymptotically unitarily equivalent to $(\psi'\oplus\theta,\bbm{v}'\oplus\bbm{y})$. In particular it suffices to show that the two proper cocycle morphisms
\[(\phi'\oplus\theta,\bbm{u}'\oplus\bbm{y}),(\psi'\oplus\theta,\bbm{v}'\oplus\bbm{y}):(A,\Ga)\to(C\otimes C(X),\Gb\otimes \id{C(X)})\]
are strongly asymptotically unitarily equivalent.

We take a sequence of isometries $r_{n}\in\Mcal(C\otimes C(X))^{\Gg\otimes\id{C(X)}}$ such that $\sum_{n=1}^{\infty}r_{n}r_{n}^{*}=\mathbf{1}$ holds in the strict topology. We use this sequence when we take infinite repeats. Let $r_{\infty}=\sum_{n=1}^{\infty}r_{n+1}r_{n}^{*}$. This isometry satisfies the equation $r_{1}r_{1}^{*}+r_{\infty}r_{\infty}^{*}=\mathbf{1}$. We use these isometries when we take Cuntz sum. Let $\{e_{k,l}\:|\:k,l\geq 1\}$ be a set of matrix units generating $\Kbb$.

By \autoref{prop:absorbing}, the infinite repeat $(\theta^{\infty},\bbm{y}^{\infty})$ is an absorbing cocycle representation. We define a non-degenerate embedding to be
\[\Gi:\otw\otimes\Kbb\to\Mcal(C\otimes C(X))^{\Gg\otimes\id{\Kbb}},\quad\Gi(a\otimes e_{k,l})=r_{k}\Gi_{0}(a)r_{l}.\]
By \autoref{lem:unitary-path}, We can take a continuous path $u:[0,\infty)\to\Ucal(\mathbf{1}+r_{\infty}\Gi(\otw\otimes \Kbb)r_{\infty}^{*})\subset \Ucal(\Mcal(C\otimes C(X))^{\Gg\otimes\id{C(X)}})$ such that 
\[r_{\infty}r_{\infty}^{*}=\lim_{t\to\infty}u_{t}^{*}r_{\infty}\Gi(\mathbf{1}\otimes e_{1,1})r_{\infty}^{*}u_{t}=\lim_{t\to\infty}u_{t}^{*}r_{2}r_{2}^{*}u_{t}\]
in the strict topology. Since the range of $\Gi$ commutes with the ranges of $\theta^{\infty}$ and $\bbm{y}^{\infty}$, the range of $u$ commutes with the ranges of $0\oplus_{r_{1}.r_{\infty}}\theta^{\infty}$ and $0\oplus_{r_{1},r_{\infty}}\bbm{y}^{\infty}$. Because $u_{t}r_{1}=r_{1}u_{t}=r_{1}$ holds for all $t\in [0,\infty)$, $\lim_{t\to\infty}u_{t}^{*}(r_{1}r_{1}^{*}+r_{2}r_{2}^{*})u_{t}=\mathbf{1}$ holds in the strict topology. Since $(\phi,\bbm{u})$ and $(\psi,\bbm{v})$ form an anchored Cuntz pair and $KK^{G}(\phi,\bbm{u})=KK^{G}(\psi,\bbm{v})$ holds,  $(\phi',\bbm{u}')$ and $(\psi,\bbm{v}')$ forms an anchored Cuntz pair and 
\[KK^{G}(\phi',\bbm{u})=KK^{G}(\phi,\bbm{u})\otimes KK^{G}(\Gk,\bbm{x})=KK^{G}(\psi,\bbm{v})\otimes KK^{G}(\Gk,\bbm{x})=KK^{G}(\psi',\bbm{v}')\] holds. Then we have
\[0=KK^{G}(\phi',\bbm{u}')-KK^{G}(\psi',\bbm{v}')=[(\phi',\bbm{u}'),(\psi',\bbm{v}')]_{h}\in KK^{G}(\Ga,\Gg\otimes \id{C(X)})\]
By the stable uniqueness theorem \cite[Theorem 5.4]{GS1}, it follows that $(\phi',\bbm{u}')\oplus(\theta^{\infty},\bbm{y}^{\infty})$ and $(\psi',\bbm{v}')\oplus(\theta^{\infty},\bbm{y}^{\infty})$ are strongly asymptotically unitarily equivalent, that is, there exists a norm-continuous path $w:[0,\infty)\to\Ucal(\mathbf{1}+C\otimes C(X))$ such that
\begin{align*}
&\bullet\quad u_{0}=\mathbf{1},\\
&\bullet\quad \lim_{t\to\infty}w_{t}(\phi'(a)\oplus\theta^{\infty}(a))w_{t}^{*}=\psi'(a)\oplus\theta^{\infty}(a)\quad\text{for all }a\in A,\\
&\bullet\quad\lim_{t\to\infty}\max_{g\in G}\|w_{t}(\bbm{u'}_{g}\oplus\bbm{y}^{\infty}_{g})(\Gg_{g}\otimes\id{C(X)})(w_{t})^{*}-\bbm{v}'_{g}\oplus\bbm{y}^{\infty}_{g}\|=0.
\end{align*}
To simplify notation, we write $p$ instead of the projection $r_{1}r_{1}^{*}+r_{2}r_{2}^{*}$. By cutting off an initial segment of $u$ and reparameterizing $u$, we can ensure that $u:[0,\infty)\to \Ucal(\mathbf{1}+r_{\infty}\Gi(\otw\otimes\Kbb)r_{\infty}^{*})$ additionally satisfies 
\[\lim_{t\to\infty}\|(u_{t}^{*}pu_{t}-\mathbf{1})(w_{t}-\mathbf{1})\|=\lim_{t\to\infty}\|(p-\mathbf{1})(u_{t}w_{t}u_{t}^{*}-\mathbf{1})\|=0\]
and
\[\|(u_{t}^{*}pu_{t}-\mathbf{1})(w_{t}-\mathbf{1})\|<\frac{1}{4}\quad \text{for all }t\in[0,\infty)\]
Let $z':[0,\infty)\to\mathbf{1}+p(C\otimes C(X))p$ be defined by $z'_{t}=pu_{t}w_{t}u_{t}^{*}p+(\mathbf{1}-p)$. This is a norm-continuous path satisfying $z'_{0}=\one$ and $\|u_{t}w_{t}u_{t}^{*}-z'_{t}\|<1$ for all $t\geq 0$. Since $u_{t}w_{t}u_{t}^{*}$ is invertible for all $t$, so is $z'_{t}$. Thus the unitary part of its polar decomposition defines a continuous path $z:[0,\infty)\to\Ucal(\mathbf{1}+p(C\otimes C(X))p)$. It follows from
\begin{align*}
\lim_{t\to\infty}\|u_{t}w_{t}u_{t}^{*}-z'_{t}\|&=\lim_{t\to\infty}\|u_{t}w_{t}u_{t}^{*}-pu_{t}w_{t}u_{t}^{*}p-(\mathbf{1}-p)\|\\
&=\lim_{t\to\infty}\|(\mathbf{1}-p)(u_{t}w_{t}u_{t}^{*}-\mathbf{1})+p(u_{t}w_{t}u_{t}^{*}-1)(\mathbf{1}-p)\|\\
&\leq \lim_{t\to\infty}2\|(\mathbf{1}-p)(u_{t}w_{t}u_{t}^{*}-\mathbf{1})\|=0
\end{align*}
that $\lim_{t\to\infty}\|u_{t}w_{t}u_{t}^{*}-z_{t}\|=0$. Then for all $a\in A$, we have
\begin{align*}
\lim_{t\to\infty}z_{t}(\phi'(a)\oplus\theta^{\infty}(a))z_{t}^{*}&=\lim_{t\to\infty}u_{t}w_{t}u_{t}(\phi'(a)\oplus\theta^{\infty}(a))u_{t}w_{t}^{*}u_{t}^{*}\\
&=\lim_{t\to\infty}u_{t}w_{t}(\phi'(a)\oplus\theta^{\infty}(a))w_{t}^{*}u_{t}^{*}\\
&=\lim_{t\to\infty}u_{t}(\psi'(a)\oplus\theta^{\infty}(a))u_{t}^{*}\\
&=\phi'(a)\oplus\theta^{\infty}(a)
\end{align*}
and
\begin{align*}
&\lim_{t\to\infty}\max_{g\in G}\|z_{t}(\bbm{u}'_{g}\oplus\bbm{y}^{\infty}_{g})(\Gg_{g}\otimes\id{C(X)})(z_{t})^{*}-\bbm{v}'_{g}\oplus\bbm{y}^{\infty}_{g}\|\\
&=\lim_{t\to\infty}\max_{g\in G}\|u_{t}w_{t}u_{t}^{*}(\bbm{u}'_{g}\oplus\bbm{y}^{\infty}_{g})u_{t}(\Gg_{g}\otimes\id{C(X)})(w_{t})^{*}u_{t}^{*}-\bbm{v}'_{g}\oplus\bbm{y}^{\infty}_{g}\|\\
&=\lim_{t\to\infty}\max_{g\in G}\|u_{t}w_{t}(\bbm{u}'_{g}\oplus\bbm{y}^{\infty}_{g})(\Gg_{g}\otimes\id{C(X)})(w_{t})^{*}u_{t}^{*}-\bbm{v}'_{g}\oplus\bbm{y}^{\infty}_{g}\|\\
&=\lim_{t\to\infty}\max_{g\in G}\|u_{t}(\bbm{v}'_{g}\oplus\bbm{y}^{\infty}_{g})u_{t}^{*}-\bbm{v}'_{g}\oplus\bbm{y}^{\infty}_{g}\|\\
&=\|\bbm{v}'_{g}\oplus\bbm{y}^{\infty}_{g}-\bbm{v}'_{g}\oplus\bbm{y}^{\infty}_{g}\|=0.
\end{align*}
By compressing by $p$, it follows that
\begin{align*}
&\lim_{t\to\infty}z_{t}(r_{1}\phi'(a)r_{1}^{*}+r_{2}\theta^{\infty}r_{2}^{*})z_{t}^{*}=r_{1}\psi'(a)r_{1}+r_{2}\theta(a)r_{2}\\
&\lim_{t\to\infty}\max_{g\in K}\|z_{t}(r_{1}\bbm{u}'_{g}r_{1}^{*}+r_{2}\bbm{y}_{g}r_{2}^{*})z_{t}-(r_{1}\bbm{v}'_{g}r_{1}^{*}+r_{2}\bbm{y}_{g}r_{2}^{*})\|=0
\end{align*}
Let $S$ be the isometry defined by $S=r_{1}r_{1}^{*}+r_{2}r_{\infty}^{*}\in\Mcal(C\otimes C(X))^{\Gg\otimes\id{C(X)}}$ and define a unitary path $v:[0,\infty)\to\Ucal(\mathbf{1}+C\otimes C(X))$ by $v_{t}=S^{*}z_{t}S$. Note that $SS^{*}=p$, $Sr_{1}=r_{1}$, $Sr_{\infty}=r_{2}$ and $v_{0}=1$ holds. By further conjugating by $S$, we obtain
\begin{align*}
&\lim_{t\to\infty}v_{t}(\phi'(a)\oplus\theta(a))v_{t}^{*}=\lim_{t\to\infty}\psi'(a)\oplus\theta(a)\\
&=\lim_{t\to\infty}\max_{g\in K}\|v_{t}(\bbm{u}'_{g}\oplus\bbm{y}_{g})v_{t}-\bbm{v}'_{g}\oplus\bbm{y}_{g}\|=0
\end{align*}
for all $a\in A$. Therefore $(\phi'\oplus\theta,\bbm{u}'\oplus\bbm{y})$ and $(\psi'\oplus\theta,\bbm{v}'\oplus\bbm{y})$ are strongly asymptotically unitarily equivalent. Hence, we obtain
\begin{align*}
(\phi,\bbm{u})&\sim_{\substack{\mathrm{strong}\\ \mathrm{asymp}}}(\Gk,\bbm{x})^{-1}\circ (\id{B}\otimes \mathbf{1}_{C(X)\otimes \oinf},\mathbf{1})\circ (\phi,\bbm{u})\\
&\sasymp(\Gk,\bbm{x})^{-1}\circ(\phi',\bbm{u}')\\
&\sasymp(\Gk,\bbm{x})^{-1}\circ (\phi'\oplus\theta,\bbm{u}'\oplus\bbm{y})\\
&\sasymp(\Gk,\bbm{x})^{-1}\circ (\psi'\oplus\theta,\bbm{v}'\oplus\bbm{y})\\
&\sasymp(\Gk,\bbm{x}^{-1})\circ (\psi',\bbm{v}')\\
&\sasymp(\Gk,\bbm{x}^{-1})\circ (\id{B}\otimes\mathbf{1}_{C(X)\otimes \oinf},\mathbf{1})\circ (\psi,\bbm{v})\\
&\sasymp(\psi,\bbm{v})
\end{align*}
and the proof is complete.
\end{proof}

\begin{theorem}[cf. {\cite[Theorem 5.8]{GS2}}]\label{thm:unique-unital}
Let $G$ be a compact group. Let $A$ and $B$ be unital Kirchberg algebras and $X$ be a compact connected metrizable space. Let $\Ga:G\act A$ be an action and $\Gb:G\act B$ be an isometrically shift-absorbing action. Let $(\phi,\bbm{u})$ and $(\psi,\bbm{v})$ be two proper cocycle embeddings from $(A,\Ga)$ to $(B\otimes C(X),\Gb\otimes\id{C(X)})$ such that $\phi(1_{A})=\psi(1_{A})=p\in (B\otimes C(X))^{\Gb\otimes \id{C(X)}}$, and suppose that the cocycles $\bbm{u}$ and $\bbm{v}$ take values in $\Ucal(\mathbf{1}+p(B\otimes C(X))p)$. Then $KK^{G}(\phi,\bbm{u})=KK^{G}(\psi,\bbm{v})$ if and only if $(\phi,\bbm{u})$ and $(\psi,\bbm{v})$ are asymptotically unitarily equivalent.
\end{theorem}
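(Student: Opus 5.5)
The plan is to reduce to the stable uniqueness theorem, \autoref{thm:unique-stable}, by stabilizing, in the same way as in \cite[Theorem 5.8]{GS2}. The ``if'' direction should be routine. Asymptotically unitarily equivalent cocycle embeddings give Kasparov triples of the form $((B\otimes C(X))^{\bbm{u}},\phi,0)$ that are homotopic through a path of unitary conjugations, with the endpoint reached in the limit $t\to\infty$. As in the computation of $KK^{G}(\ad{U}\circ(\phi,\bbm{u}))=KK^{G}(\phi,\bbm{u})$ in the proof of \autoref{thm:ex-unital}, each unitary conjugation does not change the class, and the limit is handled by a standard homotopy reparametrized over $[0,1]$.

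For the ``only if'' direction, first compose both cocycle embeddings with the corner embedding $B\otimes C(X)\to B\otimes C(X)\otimes\Kbb$, $x\mapsto x\otimes e_{1,1}$. This gives proper cocycle embeddings $(\phi\otimes e_{1,1},\bbm{u}\otimes e_{1,1}+(\one-\one\otimes e_{1,1}))$ and similarly for $\psi$. Their $KK^{G}$-classes agree, since the corner map is the invertible element $\Gk$. The target action $\Gb\otimes\id{C(X)\otimes\Kbb}$ is strongly stable and isometrically shift-absorbing, so \autoref{thm:unique-stable} applies once we know that the two stabilized maps form an anchored Cuntz pair. They form a Cuntz pair because both cocycles take values in $\Ucal(\one+p(B\otimes C(X))p)$, so their difference lies in $B\otimes C(X)\otimes\Kbb$.

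\textbf{Main obstacle: anchoredness.} We must show that $(\bbm{u}\otimes e_{1,1}+\cdots,\bbm{v}\otimes e_{1,1}+\cdots)\sim_h(\one,\one)$. I would argue as follows. Each cocycle pair $(\bbm{u},\one)$ lives in the corner $p(B\otimes C(X))p\otimes e_{1,1}$, and that corner sits inside the stable algebra with a large complement $\one-p\otimes e_{1,1}$. Since this complement absorbs $\oinf$, a rotation trick inside $M_2$ of the multiplier algebra, along a path of $G$-invariant unitaries, should connect $(\bbm{u}\oplus\one,\one\oplus\bbm{u})$ to the trivial pair. Hence the class of $(\bbm{u},\bbm{v})$ in $H_{\Gb}$ vanishes. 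If this proves delicate, the fallback is to absorb a possible nontrivial cocycle class by adding a suitable cocycle pair beforehand; this is possible because the statement only asks for asymptotic (not strong, not proper) equivalence.

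\autoref{thm:unique-stable} then provides a path $w_t\in\Ucal(\one+B\otimes C(X)\otimes\Kbb)$ implementing a strong asymptotic unitary equivalence between the stabilized maps. The last step is to compress back to the corner. Set $q=p\otimes e_{1,1}$. Because $w_t\phi(a)w_t^*\to\psi(a)$ for $a=\one_A$, we get $\|w_tq-qw_t\|\to0$. After cutting off an initial segment, $qw_tq$ is close to a unitary in $q(\cdot)q$, and polar decomposition gives continuous unitaries $z_t\in\Ucal(p(B\otimes C(X))p)$. Set $U_t=z_t+(\one-p)$. This path of unitaries in $B\otimes C(X)$ implements the equivalence, exactly as in the compression argument in the proof of \autoref{thm:unique-stable}. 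The cocycle condition (iii) follows from the same compression, because $p$ is $G$-invariant and $\bbm{u},\bbm{v}$ commute with $\one-p$.
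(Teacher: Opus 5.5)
Your skeleton matches the paper's proof: stabilize by $\iota(x)=x\otimes e_{1,1}$, apply \autoref{thm:unique-stable}, then compress the unitary path by $q=p\otimes e_{1,1}$ using approximate commutation and polar decomposition. The gap is exactly the step you flagged, anchoredness, and your primary argument does not close it.

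The rotation trick carries no information. For any proper cocycle, $(\bbm{u}\oplus\one,\one\oplus\bbm{u})$ represents $[(\bbm{u},\one)]_h+[(\one,\bbm{u})]_h=0$ in $\Ebb^{G}/\sim_h$. What must vanish is $[(\bbm{u}',\bbm{v}')]_h=[(\bbm{u}',\one)]_h-[(\bbm{v}',\one)]_h$. Moreover, your reasoning never uses $KK^{G}(\phi,\bbm{u})=KK^{G}(\psi,\bbm{v})$. Being supported in a corner with a large $\oinf$-absorbing complement does not force triviality in $H_{\Gb}$. Take a nontrivial character $\sigma$ of $G$ and set $\bbm{x}_g=\sigma(g)(\one\otimes e_{1,1})+(\one-\one\otimes e_{1,1})$. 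With $\nu:\Cbb\to\Mcal(B\otimes\Kbb)$ unital, the Cuntz pair $((\nu,\bbm{x}),(\nu,\one))$ has $KK^{G}$-class $([\sigma]-1)[\one_B]$. This class depends only on the homotopy class of $(\bbm{x},\one)$. It is nonzero, e.g., for the action of \autoref{ex:ssaisa-triv}, where $K_0^{G}(\oinf)\cong R(G)$ with $[\one]\mapsto 1$.

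Your fallback is the right idea and is exactly the paper's proof, but it has to be carried out concretely.

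1. By \autoref{thm:ex-stable}(i) applied with $A=0$, there is a proper cocycle $\bbm{x}$ with $[(\bbm{x},\one)]_h=[(\bbm{u}',\bbm{v}')]_h$.
2. Choose isometries $r_1,r_2\in\Mcal(\Kbb)$ with $r_1r_1^*+r_2r_2^*=\one$ and $r_1e_{1,1}=e_{1,1}$. This gives $e_{1,1}r_1=e_{1,1}$ and $e_{1,1}r_2=0$. Replace $\bbm{v}'$ by $\bbm{v}'\oplus_{r_1,r_2}\bbm{x}$.
3. This leaves $\iota\circ\psi$ unchanged, and $(\iota\circ\psi,\bbm{v}'\oplus_{r_1,r_2}\bbm{x})$ is still a proper cocycle embedding.
4. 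Its $KK^{G}$-class is unchanged. By \autoref{prop:kk-elem} the class is computed on $q(B\otimes C(X)\otimes\Kbb)$, where both cocycles act identically.
5. Since $\bbm{u}'=\bbm{u}'\oplus_{r_1,r_2}\one$, you get $[(\bbm{u}',\bbm{v}'\oplus\bbm{x})]_h=[(\bbm{u}',\bbm{v}')]_h-[(\bbm{x},\one)]_h=0$. So the new pair is anchored and \autoref{thm:unique-stable} applies.
6. Your compression step then goes through because $q(\bbm{v}'\oplus_{r_1,r_2}\bbm{x})q=q\bbm{v}'q$.

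The added cocycle is invisible in the corner. That is why only non-strong, non-proper asymptotic unitary equivalence survives.
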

\begin{proof}
"if" part is clear.

Let us consider the canonical inclusion $\Gi:B\otimes C(X)\to B\otimes C(X)\otimes \Kbb$ via $\Gi(x)=x\otimes e_{1,1}$. Consider the $\Gb\otimes\id{C(X)\otimes\Kbb}$-cocycles $\bbm{u}'_{g}=\Gi(\bbm{u})+(\mathbf{1}-e_{1,1})$ and $\bbm{v}'_{g}=\Gi(\bbm{v})+(\mathbf{1}-e_{1,1})$. Applying \autoref{thm:ex-stable} to $A=0$, we obtain a norm-continuous $\Gb\otimes\id{C(X)\otimes\Kbb}$-cocycle $\bbm{x}:G\to \Ucal(1+B\otimes C(X)\otimes \Kbb)$ such that $[(\bbm{u}',\bbm{v}')]_{h}=[(\bbm{x},\one)]_{h}$.

Take isometries $r_{1},r_{2}\in \Mcal(\Kbb)\subset\Mcal(B\otimes C(X)\otimes \Kbb)^{\Gb\otimes\id{C(X)\otimes\Kbb}}$ satisfying $r_{1}r_{1}^{*}+r_{2}r_{2}^{*}=\mathbf{1}$ and $r_{1}e_{1,1}=e_{1,1}$. Then, we obtain $\bbm{u}'=\bbm{u}'\oplus_{r_{1},r_{2}}\one$ and
\[KK^{G}(\Gi\circ \phi,\bbm{u}')=KK^{G}(\Gi\circ\psi,\bbm{v}'\oplus_{r_{1},r_{2}}\bbm{x})\]
in $KK^{G}(\Ga,\Gb\otimes\id{C(X)}\otimes\id{\Kbb})$. By construction, we have
\[[(\bbm{u}',\bbm{v}\oplus\bbm{x})]_{h}=[(\bbm{u}'\oplus_{r_{1},r_{2}}\one,\bbm{v}'\oplus_{r_{1},r_{2}}\bbm{x})]_{h}=[(\bbm{u}',\bbm{v}')]_{h}-[(\bbm{x},\one)]_{h}=0\]
Then by \autoref{thm:unique-stable}, $(\Gi\circ\phi,\bbm{u}')$ and $(\Gi\circ \psi,\bbm{v}'\oplus_{r_{1},r_{2}}\bbm{x})$ are strongly asymptotically unitarily equivalent, and hence there exists a norm-continuous path $v:[0,\infty)\to\Ucal(\mathbf{1}+B\otimes C(X)\otimes \Kbb)$ such that
\begin{align*}
&\Gi\circ \psi(a)=\lim_{t\to\infty}v_{t}(\Gi\circ\phi)(a)v_{t}^{*}\text{ for all }a\in A\\
&\Gi\circ \psi(a)-v_{t}(\Gi\circ\phi)(a)v_{t}^{*}\in B\otimes C(X)\otimes \Kbb\text{ for all }a\in A\text{ and }t\geq 0\\
&\lim_{t\to\infty}\max_{g\in G}\|\bbm{v}'_{g}\oplus_{r_{1},r_{2}}\bbm{x}_{g}-v_{t}\bbm{u}'_{g}(\Gb_{g}\otimes\id{C(X)\otimes\Kbb})(v_{t})^{*}\|=0\\
&\bbm{v}'_{g}\oplus_{r_{1},r_{2}}\bbm{x}_{g}-v_{t}\bbm{u}'_{g}(\Gb_{g}\otimes\id{C(X)\otimes\Kbb})(v_{t})^{*}\in B\otimes C(X)\otimes \Kbb\text{ for all }g\in G\text{ and }t\geq 0
\end{align*}
By the first equality, as $t\to\infty$, $v_{t}$ approximately commutes with $p\otimes e_{1,1}$. Thus, after cutting away an initial segment of  $v$ if necessary, we have a norm-continuous path of unitaries $u:[0,\infty)\to \Ucal(B\otimes C(X))$ defined by
\[u_{t}=\Gi^{-1}((p\otimes e_{1,1})v_{t}(p\otimes e_{1,1})\left|(p\otimes e_{1,1})v_{t}(p\otimes e_{1,1})\right|^{-1})+(\mathbf{1}-p)  \]
Here we identify $\Gi$ with the isomorphism from $p(B\otimes C(X))p$ to $(p\otimes e_{1,1})(B\otimes C(X)\otimes\Kbb)(p\otimes e_{1,1})$.
We show that $u$ implements the asymptotic unitary equivalence between $(\phi,\bbm{u})$ and $(\psi,\bbm{v})$.
For all $a\in A$, 
\begin{align*}
&\lim_{t\to\infty}\|u_{t}\phi(a)u_{t}^{*}-\psi(a)\|\\
&=\lim_{t\to\infty}\|\Gi^{-1}((p\otimes e_{1,1})v_{t}(p\otimes e_{1,1})|(p\otimes e_{1,1})v_{t}(p\otimes e_{1,1})|^{-1}(\Gi\circ \phi(a))\\
&\qquad\qquad\times|(p\otimes e_{1,1})v_{t}(p\otimes e_{1,1})|^{-1}(p\otimes e_{1,1})v_{t}^{*}(p\otimes e_{1,1})-(\Gi\circ \psi)(a))\|\\
&=\lim_{t\to\infty}\|\Gi^{-1}((p\otimes e_{1,1})(v_{t}(\Gi\circ\phi)(a)v_{t}-(\Gi\circ\psi)(a))(p\otimes e_{1,1}))\|\\
&=0
\end{align*}
and
\begin{align*}
&\lim_{t\to\infty}\max_{g\in G}\|u_{t}\bbm{u}'_{g}(\Gb_{g}\otimes\id{C(X)\otimes\Kbb})(u_{t})^{*}-\bbm{v}_{g}\|\\
&=\lim_{t\to\infty}\max_{g\in G}\|\Gi^{-1}((p\otimes e_{1,1})v_{t}(p\otimes e_{1,1})|(p\otimes e_{1,1})v_{t}(p\otimes e_{1,1})|^{-1}(p\otimes e_{1,1})\bbm{u}'_{g}(p\otimes e_{1,1})\\
&\qquad\qquad\qquad\times |(p\otimes e_{1,1})v_{t}(p\otimes e_{1,1})|^{-1}(p\otimes e_{1,1})v_{t}^{*}(p\otimes e_{1,1})-(p\otimes e_{1,1})\bbm{v}'(p\otimes e_{1,1}))\|\\
&=\lim_{t\to\infty}\max_{g\in G}\|\Gi^{-1}((p\otimes e_{1,1})v_{t}\bbm{u}'_{g}(\Gb_{g}\otimes\id{C(X)\otimes \Kbb})(v_{t})^{*}(p\otimes e_{1,1})\\
&\qquad\qquad\qquad\qquad\qquad-(p\otimes e_{1,1})(\bbm{v}'_{g}\oplus_{r_{1},r_{2}}\bbm{x}_{g})(p\otimes e_{1,1}))\|\\
&=\lim_{t\to\infty}\max_{g\in G}\|(p\otimes e_{1,1})(v_{t}\bbm{u}'_{g}(\Gb_{g}\otimes\id{C(X)\otimes\Kbb}(v_{t})^{*}-\bbm{v}'_{g}\oplus_{r_{1},r_{2}}\bbm{x}_{g})(p\otimes e_{1,1})\|\\
&=0
\end{align*}
hold. The other conditions for asymptotic unitary equivalence are clear.
\end{proof}
\begin{corollary}\label{cor:compact-ex}
Suppose that $G$ is compact. Then for any $z\in KK^{G}(\Ga,\Gb\otimes \id{C(X)})$ with $[1_{A}]\otimes z=[p]$ in $K_{0}((B\otimes C(X))^{\Gb\otimes\id{C(X)}})$ for a nonzero projection $p\in B\otimes C(X)^{\Gb\otimes\id{C(X)}}$, there exists an equivariant $*$-homomorphism $\phi:A\to B\otimes C(X)$ such that $KK^{G}(\phi)=z$ and $\phi(1_{A})=p$. 
\end{corollary}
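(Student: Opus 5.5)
The plan is to start from the cocycle embedding supplied by \autoref{thm:ex-unital} and then remove its cocycle. The cocycle should be cohomologous to the trivial one, because the unit cocycle sits in the same $KK^{G}$-class and \autoref{thm:unique-unital} applies.

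\textbf{Step 1 (full properly infinite projection).} By \autoref{lem:full-elem} (note $X$ is connected), $p$ is full in $B\otimes C(X)$. Since $B$ is a Kirchberg algebra, every nonzero positive element of $B\otimes C(X)$ is properly infinite. So $p$ is properly infinite in $B\otimes C(X)$.

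We need proper infiniteness and fullness inside the fixed point algebra $(B\otimes C(X))^{\Gb\otimes\id{C(X)}}$. For this we use the equivariant $\oinf$-absorption of \autoref{prop:oinf-ab}, or better the isometries $S(\pi)_{i}$ from the proof of \autoref{prop:isa-sat} tensored with $C(X)$. For the trivial representation, two orthogonal $G$-invariant isometries with range in $1\otimes\oinf$ commute approximately with $p$. This gives proper infiniteness of $p$ in the fixed point algebra.

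Fullness in the fixed point algebra follows from saturation (\autoref{prop:isa-sat} applied fibrewise, or directly from the same averaging argument with the $S(\pi)_{i}$).

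\textbf{Step 2 (cocycle embedding).} Apply \autoref{thm:ex-unital} to $z$. This gives a cocycle embedding $(\psi,\bbm{v})$ with $\psi(1_{A})=p$, $KK^{G}(\psi,\bbm{v})=z$, and $\bbm{v}$ taking values in $\Ucal(\one+p(B\otimes C(X))p)$.

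\textbf{Step 3 (untwisting the cocycle).} The goal is a unitary $w\in\Ucal(p(B\otimes C(X))p)$ with $\bbm{v}_{g}=w^{*}\Gb_{g}(w)$ on $p$; then $\phi=\ad{w}\circ\psi$ is genuinely equivariant. Since the action is isometrically shift-absorbing, it is equivariantly $\oinf$-stable with the quasi-free $\Gg$. The plan is to use the Szab\'o/Gabe--Szab\'o fact that cocycles for such actions are approximately coboundaries, and to compare $(\psi,\bbm{v})$ with the cocycle embedding $(\psi,\one)$ twisted by an equivariant model.

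Concretely, I would first treat $A=\Cbb$ with the trivial action. Both $(\one\mapsto p,\bbm{v})$ and $(\one\mapsto p,\one)$ are proper cocycle embeddings. If their $KK^{G}$-classes agree, \autoref{thm:unique-unital} yields an asymptotic unitary equivalence implemented by $u_{t}$ with $u_{t}\Gb_{g}(u_{t})^{*}\to\bbm{v}_{g}$ uniformly in $g$.

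To get an exact coboundary from an approximate one, I would use a standard one-sided intertwining, or equivalently the fact that for compact $G$ a cocycle uniformly close to $\one$ is a coboundary by averaging: $w=\int_{G}\bbm{v}_{g}\Gb_{g}(\cdot)\,dg$ followed by polar decomposition.

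\textbf{Main obstacle.} The hard part is arranging that the $KK^{G}$-class condition holds, i.e.\ $[\bbm{v}]=[\one]$. The class of $(\psi,\bbm{v})$ restricted to the unit computes $[1_{A}]\otimes z=[p]$ in $K^{G}_{0}$, which is the same as the class of $(\one\mapsto p,\one)$, since $K^{G}_{0}$ is identified with $K_{0}$ of the fixed point algebra via invariant projections. So the restriction to $\Cbb$ of both embeddings has class $[p]$, and \autoref{thm:unique-unital} applies with $A=\Cbb$.

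The remaining care is to ensure the averaged element is invertible. This requires $\max_{g}\|u_{t}\Gb_{g}(u_{t})^{*}-\bbm{v}_{g}\|<1$ for large $t$, which holds by the uniform convergence. Then $w$, the unitary part of $\int_{G}\bbm{v}_{g}^{*}\Gb_{g}(u_{t})\,dg\,u_{t}^{*}$ (suitably arranged), conjugates $(\psi,\bbm{v})$ into an equivariant map $\phi$ with $\phi(1_{A})=p$. Conjugating by $w$ does not change the $KK^{G}$-class, as in the proof of \autoref{thm:ex-unital}.
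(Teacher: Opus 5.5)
Your argument is correct and is essentially the paper's. It uses the existence theorem, then the uniqueness theorem applied to $(\lambda\mapsto\lambda p,\bbm{v})$ and $(\lambda\mapsto\lambda p,\one)$, whose classes agree because $[1_{A}]\otimes z=[p]$. Compactness of $G$ then turns the asymptotic coboundary into an exact one.

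The one difference is that you keep the implementing unitary in the corner $p(B\otimes C(X))p$. This is legitimate, because $\bbm{v}$ and the unitaries built in the proof of \autoref{thm:unique-unital} both have the form $(\cdot)+(\one-p)$. It spares you the paper's last step of moving $u^{*}pu$ back to $p$ via Cuntz's theorem in the fixed-point algebra.

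In Step 1, the claim that the fixed isometries $S(\pi)_{i}$ approximately commute with $p$ is not right as stated. It is also unnecessary. We have $(B\otimes C(X))^{\Gb\otimes\id{C(X)}}=C(X)\otimes B^{\Gb}$, and $B^{\Gb}$ is a unital Kirchberg algebra. So \autoref{lem:full-elem} and pure infiniteness of $C(X)\otimes B^{\Gb}$ directly show that $p$ is full and properly infinite there.
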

\begin{proof}
By the existence theorem, there is a cocycle embedding $(\psi,\bbm{v}):(A,\Ga)\to (B\otimes C(X),\Gb\otimes\id{C(X)})$ such that $KK^{G}(\psi,\bbm{v})=z$ and $\psi(\one_{A})=p$. We will show that $\bbm{v}$ is a coboundary.
\[KK^{G}((\Gl\mapsto \Gl p),\bbm{v})=[\one_{A}]\otimes KK^{G}(\psi,\bbm{v})=[1_{A}]\otimes z=KK^{G}((\Gl\mapsto \Gl p),\one)\]
By the uniqueness theorem, there is a continuous map $u:[0,\infty)\to \Ucal(B\otimes C(X))$ such that
\[\lim_{t\to\infty}\max_{g\in G}\|u_{t}\bbm{v}_{g}\Gb_{g}(u_{t}^{*})-1\|=0\]
This implies $\bbm{v}$ is an asymptotic coboundary, and hence $\bbm{v}$ is a coboundary because $G$ is compact. Then there exists $u\in \Ucal(C(X)\otimes B)$ such that $\bbm{v}_{g}=u\Gb_{g}(u^{*})$. Then $KK^{G}(\psi,\bbm{v})=KK^{G}(\ad{u^{*}}\circ \psi,\one)$ holds. Since $\ad{u^{*}}\circ \psi$ is equivariant, $u^{*}pu$ is in $(B\otimes C(X))^{\Gb\otimes\id{C(X)}}$. If $p=\one$, then $u^{*}pu=p$. If $p\neq \one$, then $p,\: \one-p,\: u^{*}pu,\:\one-u^{*}pu $ are full properly infinite projections. Because $[p]=[\one_{A}]\otimes KK^{G}(\psi,\bbm{v})=[\one_{A}]\otimes KK^{G}(\ad{u^{*}}\circ\psi,\one)=[u^{*}pu]$ in $K_{0}((B\otimes C(X))^{\Gb\otimes \id{C(X)}})$, $p$ and $u^{*}pu$ are unitarily equivalent in $(B\otimes C(X))^{\Gb\otimes\id{C(X)}}$. Thus there exists an equivariant $*$-homomorphism $\phi:A\to C(X)\otimes B$ such that $KK^{G}(\phi)=z$ and $\phi(\one_{A})=p$. 
\end{proof}

\section{Homotopy groups}
\subsection{Equivariant version of Dadarlat's theorem}
In \cite{D1}, the following bijection was established.
\begin{theorem}[{\cite[Theorem 4.6, 6.2]{D1}}]
Let $A$ be a unital Kirchberg algebra and let $X$ be a compact metrizable space.
Then there are bijections
\[[X,\mathrm{End}(A)]\to KK(\cna,SC(X)\otimes A)\]
and
\[[X,\mathrm{End}(A\otimes \Kbb)]\to KK(A,C(X)\otimes A),\]
where $\cna$ is the mapping cone of the unital inclusion $\nu_{A}:\Cbb\to A$. If moreover $(A,X)$ is a $KK$-continuous pair (\cite[Definition 5.3]{D1}), then there exists a bijection
\[[X,\mathrm{Aut}(A\otimes \Kbb)]\to KK(A,C(X)\otimes A)^{*}.\]
Here, $KK(A,C(X)\otimes A)^{*}=\{z\in (A,C(X)\otimes A)\:|\: (\mathrm{ev}_{x})_{*}(z)\in KK(A,A)\text{ is invertible for all }x\in X\}.$
\end{theorem}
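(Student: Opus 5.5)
This is Dadarlat's non-equivariant theorem, quoted from \cite{D1}; I would prove it by specializing the equivariant existence and uniqueness results of the previous section to the trivial group $G=\{e\}$. There every action is trivially strongly stable and isometrically shift-absorbing in the required sense, and cocycles are trivial, so the Gabe--Szab\'o type theorems become the classical Kirchberg--Phillips existence and uniqueness statements for $KK(A,C(X)\otimes B)$. The plan has three steps: the stable case, the unital case, and then passage to automorphisms.

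\textbf{Stable case.} Send $[\phi]$ to $KK(\phi)\in KK(A,C(X)\otimes A)$; this is well defined by homotopy invariance of $KK$.
\begin{enumerate}
\item \emph{Surjectivity.} Given $z$, \autoref{thm:ex-stable}(ii) with $B=A\otimes\Kbb$ gives an embedding $\phi\colon A\otimes\Kbb\to C(X)\otimes A\otimes\Kbb$ with $KK(\phi)=z$. This is a point of $\mathrm{Hom}(A\otimes\Kbb,C(X)\otimes A\otimes\Kbb)=C(X,\mathrm{End}(A\otimes\Kbb))$.
\item \emph{Injectivity.} If $KK(\phi)=KK(\psi)$, then \autoref{thm:unique-stable} gives a path $u_t\in\Ucal(\one+C(X)\otimes A\otimes\Kbb)$ with $u_0=\one$ and $\ad u_t\circ\phi\to\psi$. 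Reparametrising $[0,\infty]$ as $[0,1]$ gives a homotopy from $\phi$ to $\psi$, continuous at the endpoint. For this I must check that the endomorphisms are nonzero so the uniqueness theorem applies; the zero map is handled separately. I also need $X$ connected for \autoref{lem:full-hom}. For general $X$ I would split into finitely many components where possible, or use a Dadarlat-style reduction.
\end{enumerate}

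\textbf{Unital case.} For unital $\phi\colon A\to C(X)\otimes A$, the pair $(\phi,\,1\otimes\id{A}\circ\nu)$ induces a map of mapping cones $C_\nu\phi$ into the cone of $\nu\colon C(X)\to C(X)\otimes A$. Composing with the identification $C_{\nu}\otimes C(X)$-type $\to SC(X)\otimes A$ coming from the mapping cone exact sequence yields the class in $KK(\cna,SC(X)\otimes A)$. From the mapping cone sequence
\[KK(A,C(X)\otimes A)\to KK(\cna,\dots)\to\dots\]
and the Puppe sequence, classes in $KK(\cna,SC(X)\otimes A)$ correspond to classes $z\in KK(A,C(X)\otimes A)$ with $[1_A]\otimes z=[1]$, modulo the image of $K_1(C(X)\otimes A)$. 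That image is realised by $\Ucal(C(X)\otimes A)/\Ucal_0$ by \autoref{prop:K1-oinf}.
\begin{enumerate}
\item \emph{Existence.} \autoref{cor:compact-ex} gives a unital $\phi$ with $KK(\phi)=z$.
\item \emph{Uniqueness.} \autoref{thm:unique-unital} gives asymptotic (not strong) unitary equivalence $u_t\phi u_t^*\to\psi$. The obstruction to turning this into a homotopy is exactly the class $[u_0]\in K_1$. This is why the invariant uses $\cna$: this $K_1$ ambiguity is the part absorbed in the mapping cone.
\end{enumerate}

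\textbf{Main obstacle.} The hardest step is matching this $K_1$ ambiguity precisely with the kernel of the mapping cone map, including the $\mathrm{Ext}$-type part. I expect to follow \cite[Section 4]{D1} there.

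\textbf{Automorphisms.} For $[X,\mathrm{Aut}(A\otimes\Kbb)]$, fibrewise invertibility of $z$ gives $\phi_x$ that are automorphisms up to homotopy. $KK$-continuity lets me locally lift and patch to a map into $\mathrm{Aut}$, and injectivity follows as above.
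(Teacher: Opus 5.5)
\textbf{Assessment.} The paper only cites \cite{D1} for this statement, but its equivariant analogues follow Dadarlat's route, and your architecture is that same route: specialise the Gabe--Szab\'o type existence and uniqueness theorems to $G=\{e\}$, then follow \cite{D1}. (For $G=\{e\}$, isometric shift-absorption is vacuous, so what you actually use is $\oinf$-absorption, \autoref{absorption}.) Your stable $\mathrm{End}$ argument is fine. Since $u_0=\one$, the path $\ad u_t\circ\phi$ stays in $\mathrm{Aut}(A\otimes\Kbb)$ whenever $\phi$ does, so injectivity for $\mathrm{Aut}$ is fine too.

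\textbf{Unital case.} This is the first genuine gap. You defer its whole content to \cite{D1}, and the version you do state is not right. Write $B=C(X)\otimes A$. The obstruction is not $[u_0]\in K_1(B)$ but its image in $K_1(B)/\nu_A^{*}KK(A,SB)$. Indeed, $\ad v\circ\phi\simeq\phi$ holds exactly when $[v]$ lies in the image of $\nu_A^{*}\colon KK(A,SB)\to KK(\Cbb,SB)\cong K_1(B)$. This subgroup is in general nonzero, and it is precisely $\ker\pi^{*}$ in the Puppe sequence of $\cna$. Since $\chi([\ad v\circ\phi])=\chi([\phi])+\pi^{*}\partial([v])$, injectivity of $\chi$ forces you to prove $\ad v\circ\phi\simeq\phi$ for every such $v$.

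The missing idea is the loop-realisation lemma (the lemma preceding \autoref{prop:stab-htpyset}, cf.\ \cite[Prop.~3.2]{D1}):
\begin{itemize}
\item Represent $x\in KK(A,SB)$ by a loop $\Psi\colon A\to M_2(C[0,1]\otimes B)$ with $\Psi_0=\Psi_1=\mathrm{diag}(\phi_0,0)$ and $\phi_0\simeq\phi$. This uses existence over $C(\Tbb)\otimes B$ plus a conjugation of projections.
\item Write $\Psi_t(\one)=w_t\,\mathrm{diag}(\one,0)\,w_t^{*}$ with $w_1=\one$.
\item Then $\ad w_t^{*}\circ\Psi_t$ is a homotopy of unital maps from $\ad v_0^{*}\circ\phi_0$ to $\phi_0$, where $[v_0]=\partial^{-1}\nu_A^{*}(x)$.
\item The converse comes from conjugating $\mathrm{diag}(H_t,0)$ by a path from $\mathrm{diag}(v,v^{*})$ to $\one$.
\end{itemize}
With this and the translation formula, the rest is a diagram chase; there is no Ext-type issue. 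Also, your invariant is garbled. It should be the Cuntz pair $(C_\nu\phi,C_\nu j)$ of maps $\cna\to C_{\nu_B}$, with $j=\one_{C(X)}\otimes\id{A}$, whose difference lands in the ideal $SB$. ``The pair $(\phi,1\otimes\id{A}\circ\nu)$'' does not define it.

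\textbf{Surjectivity for $\mathrm{Aut}$.} This is the second gap, and ``locally lift and patch'' does not work. $KK$-continuity does make $z$ locally the class of a constant map, so locally $\phi$ is homotopic to a constant automorphism. But gluing such local maps into $\mathrm{Aut}(A\otimes\Kbb)$ needs mutually compatible homotopies in $\mathrm{Aut}$ over the overlaps. That is essentially the weak equivalence $\mathrm{Aut}\to\mathrm{End}^{*}$ you are trying to prove, and an arbitrary compact metric $X$ offers no skeletal induction. What is needed is a global argument:
\begin{itemize}
\item By \autoref{prop:invertibility}, a fibrewise invertible $z$ is invertible in the ring $KK(A,C(X)\otimes A)$, because $\hat z$ is locally constant and $\ker\rho$ is a nil ideal.
\item Realise $z$ and $z^{-1}$ by $\Phi$ and $\Psi$, and use uniqueness to see that $\tilde\Phi\circ\tilde\Psi$ and $\tilde\Psi\circ\tilde\Phi$ are asymptotically inner.
\item Run a $C(X)$-linear two-sided intertwining (\autoref{lem:intertwining}) to get a $C(X)$-linear automorphism homotopic to $\tilde\Phi$, as in \autoref{prop:im-inv}.
\end{itemize}

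\textbf{The zero map and disconnected $X$.} The zero endomorphism cannot be ``handled separately''. Injective endomorphisms are isometric, so $\{0\}$ is clopen in $\mathrm{End}(A\otimes\Kbb)$. Yet nonzero endomorphisms factoring through $\mathcal{O}_2\otimes\Kbb$ also have class $0$, so $\mathrm{End}$ must mean nonzero endomorphisms. With that reading every map is fibrewise injective, hence full for any compact $X$: the proof of \autoref{lem:full-elem} only uses $p_x\neq 0$ for all $x$. So you need no connectedness hypothesis. Splitting into finitely many components would not work anyway, e.g.\ for the Cantor set.
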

In this section, we consider the equivariant analogue of this result.
\subsubsection{The homotopy set $[X,\mathrm{End}_{G}(A)]$}
\begin{notation}
For $G$-\cstar-algebra $(A,G,\Ga)$, we will denote by $\End{A}$ (resp. $\aut{A}$) the set of all equivariant (unital if $A$ is unital) $*$-endomorphisms (resp. $*$-automorphisms) on $A$ and equip it with the point-norm topology.

For topological spaces $X$ and $Y$, we will write $[X,Y]$ for the homotopy equivalence classes of continuous maps from $X$ to $Y$ and denote $[\phi]$ by the equivalence class of $\phi:X\to Y$.

For $\phi:X\to \End{A}$, we use the same letter $\phi$ for the $*$-homomorphism given by $A\ni a\mapsto (x\mapsto \phi_{x}(a))\in C(X)\otimes A$.
\end{notation}
\subsubsection*{Stable case}
\begin{theorem}\label{thm:stable-end}
Let $G$ be a compact group and $X$ a compact connected metrizable space. Let $A$ be a stable Kirchberg algebra and $\Ga:G\act A$ an isometrically shift-absorbing action. Then, the map from $[X,\End{A}]$ to $KK^{G}(A,C(X)\otimes A)$ given by $[\phi]\to KK^{G}(\phi,\mathbf{1})$ is a bijection.
\end{theorem}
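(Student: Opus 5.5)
The plan is to prove well-definedness, surjectivity and injectivity separately, using the existence theorem (\autoref{thm:ex-stable}) and the uniqueness theorem (\autoref{thm:unique-stable}) with $B=A$ and $\Gb=\Ga$.

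\textbf{Well-definedness.} First check that $\Ga$ is strongly stable. Since $A$ is stable and $\Ga$ is isometrically shift-absorbing, I expect $\Ga\otimes\id{\Kbb}$ to be conjugate to $\Ga$; this should follow from the results of \cite{GS2}. A homotopy $X\times[0,1]\to\End{A}$ is an equivariant $*$-homomorphism $A\to C(X)\otimes A[0,1]$. It therefore gives a Kasparov triple over $C(X)\otimes A[0,1]$ which evaluates to the two endpoints, so the assignment $[\phi]\mapsto KK^{G}(\phi,\one)$ is well defined.

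\textbf{Surjectivity.} Given $z\in KK^{G}(A,C(X)\otimes A)$, \autoref{thm:ex-stable}(ii) gives an anchored proper cocycle embedding $(\phi,\bbm{u})$ with $KK^{G}(\phi,\bbm{u})=z$. The task is to make the cocycle trivial.
\begin{enumerate}
\item Since $(\phi,\bbm{u})$ is anchored, $(\bbm{u},\one)\sim_h(\one,\one)$.
\item The uniqueness theorem (\autoref{thm:unique-stable}), applied with $A=0$ to the cocycle pair, should then make $\bbm{u}$ strongly asymptotically unitarily equivalent to $\one$. This requires the cocycle version of uniqueness, which the proof of \autoref{thm:unique-stable} allows.
\item So there is a path $u_t$ with $\max_g\|u_t\bbm{u}_g\Gb_g(u_t)^*-\one\|\to0$. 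As in \autoref{cor:compact-ex}, compactness of $G$ upgrades this approximate coboundary to an exact one: $\bbm{u}_g=w\Gb_g(w^*)$.
\item Then $\ad{w^*}\circ\phi$ is equivariant with trivial cocycle. Its $KK^G$-class is unchanged, by the unitary-isomorphism-of-triples argument used in \autoref{thm:ex-unital}.
\item Finally, $\ad{w^*}\circ\phi:A\to C(X)\otimes A$ gives a continuous map $X\to\End{A}$ whose class maps to $z$.
\end{enumerate}

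\textbf{Injectivity.} Suppose $\phi,\psi:X\to\End{A}$ satisfy $KK^{G}(\phi)=KK^{G}(\psi)$.
\begin{enumerate}
\item Both are nonzero, since an element of $\End{A}$ is injective on the simple algebra $A$. By \autoref{lem:full-hom} they are full embeddings $A\to C(X)\otimes A$.
\item With trivial cocycles $(\phi,\one),(\psi,\one)$, they form an anchored Cuntz pair. The Cuntz-pair condition only requires differences in $A\otimes C(X)$, which holds because the ranges lie in $C(X)\otimes A$.
\item \autoref{thm:unique-stable} gives a path $w:[0,\infty)\to\Ucal(\one+C(X)\otimes A)$ with $w_0=\one$, $\ad{w_t}\circ\phi\to\psi$ pointwise, and $\max_g\|w_t\Gb_g(w_t)^*-\one\|\to0$.
\item Pointwise, $t\mapsto\ad{(w_t)_x}\circ\phi_x$ is a homotopy in $\mathrm{End}(A)$. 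It is not equivariant, because $w_t$ is only asymptotically invariant.
\item To fix this, average over $G$: for large $t$ the element $\int_G\Gb_g(w_t)\,dg$ is close to $w_t$. Taking its unitary part by polar decomposition, after discarding an initial segment and handling $t\in[0,T]$ separately, produces $G$-invariant unitaries.
\end{enumerate}

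The main obstacle is making this path equivariant from time $0$, and connecting $\phi$ to $\ad{w_T}\circ\phi$ equivariantly. I would handle it differently from averaging on $[0,T]$. Stability of $A$ should make $\Ucal(\one+C(X)\otimes A)^{G}$, or at least its multiplier version, connected enough that an invariant unitary near $w_T$ can be connected to $\one$ through invariant unitaries; the invariant unitaries of $\Mcal(C(X)\otimes A)^G$ should be contractible, in the spirit of Kuiper's theorem, via the $\oinf\otimes\Kbb$ structure. Then $\phi\simeq\ad{w_T'}\circ\phi$, and for $t\ge T$ the averaged path $w'_t$ connects this to $\psi$. Reparametrizing $[T,\infty]$ to a compact interval gives the required homotopy in $\End{A}$.
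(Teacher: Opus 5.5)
Your injectivity argument is the paper's. The paper applies \autoref{thm:unique-stable}, discards an initial segment and averages so that all the unitaries are invariant, and then connects back to the starting map. It makes your heuristic last step concrete without any Kuiper-type contractibility. Strong stability of $\id{C(X)}\otimes\Ga$ (\cite[Proposition 6.3]{SZ2}, \cite[Proposition 1.9]{SZ1}) gives strictly continuous paths of invariant isometries $S_1^{(t)}$ on $[0,1)$ and $S_2^{(t)}$ on $(0,1]$ with $S_1^{(0)}=\one$, $S_2^{(1)}=\one$ and $S_1^{(t)}S_1^{(t)*}+S_2^{(t)}S_2^{(t)*}=\one$. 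Conjugating $\phi$ by $S_1^{(t)}w'_TS_1^{(t)*}+S_2^{(t)}S_2^{(t)*}$ then gives a point-norm continuous path of equivariant endomorphisms from $\ad{w'_T}\circ\phi$ to $\phi$. So only strict path-connectedness of the invariant unitaries of $\Mcal(C(X)\otimes A)$ is needed. You should not aim for connectedness inside $\Ucal(\one+C(X)\otimes A)^{G}=\Ucal(\one+C(X)\otimes A^{G})$, which in general carries nontrivial $K_1$-obstructions.

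Your surjectivity argument takes a genuinely different route. The paper never trivializes $\bbm{u}$ itself. It passes to $A\otimes\Kbb(L^2(G))$ with $\ad\rho$, where $\bbm{u}\otimes\one$ is an explicit coboundary via $W(b\otimes\xi)(g)=\bbm{u}_g b\,\xi(g)$. It then invokes the classification result \cite[Corollary 6.4]{GS2} to obtain an equivariant isomorphism $\theta\colon A\to A\otimes\Kbb(L^2(G))$ realizing the canonical $KK^G$-equivalence, and conjugates back by $\id{C(X)}\otimes\theta^{-1}$.

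You instead show that the anchored cocycle is already a coboundary, using uniqueness for the cocycle pair $((0,\bbm{u}),(0,\one))$ followed by the averaging argument of \autoref{cor:compact-ex}. Your steps 3--5 are correct: one gets $\bbm{u}_g=y\Gb_g(y)^*$ with $y\in\Ucal(\one+C(X)\otimes A)$, and $y^*$ is an isomorphism of Kasparov triples exactly as in \autoref{thm:ex-unital}. This buys independence from the classification theorem and from the stabilization trick, and it mirrors the unital case.

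The price is step 2. \autoref{thm:unique-stable} is stated for Kirchberg $A$, and you are using it for $A=0$. You must check that this degenerate case goes through:
\begin{itemize}
\item the fullness hypotheses in \autoref{lem:comm-range} and \autoref{prop:absorbing} become vacuous;
\item the approximation step in \autoref{lem:comm-range} is trivial with $c=b$;
\item the cited absorption and stable uniqueness results of \cite{GS1,GS2} hold for the zero algebra with $\theta=0$.
\end{itemize}
This looks plausible, but it is the one load-bearing claim in your surjectivity proof, and ``the proof allows it'' is not yet an argument.
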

\begin{proofbreak}
  To simplify notation, we write $KK^{G}(\phi)$ instead of $KK^{G}(\phi,\mathbf{1})$.

We begin by checking that the map is well-defined. If $[\phi]=[\psi]$, then $\phi$ and $\psi$ are homotopic. Thus there exists a homotopy between two Kasparov triples $(C(X)\otimes A,\phi,0)$ and $(C(X)\otimes A,\psi,0)$, and we obtain $KK^{G}(\phi)=KK^{G}(\psi)$.

Next, we prove injectivity of the map. Suppose $KK^{G}(\phi)=KK^{G}(\psi)$ for $[\phi],[\psi]\in [X,\End{A}]$. By \autoref{thm:unique-stable}, there is a path of unitaries $U:[0,\infty)\to \Ucal(\one+C(X)\otimes A)$ such that
\[\phi(a)=\lim_{t\to\infty}U_{t}\psi(a)U_{t}^{*}\text{  and  } \lim_{t\to\infty}\max_{g\in G}\|U_{t}-(\id{C(X)}\otimes \Ga_{g})(U_{t})\|=0.\]Since $G$ is compact, by cutting initial segments if necessary, we may assume that $U_{t}$ is $\id{C(X)}\otimes \Ga$-invariant for all $t$. By \cite[Proposition 6.3]{SZ2}, $\id{C(X)}\otimes \Ga$ is strongly stable. Hence, by \cite[Proposition 1.9]{SZ1}, we can take strictly continuous maps of isometries $S_{1}:[0,1)\to M(C(X)\otimes A)^{\id{C(X)}\otimes \Ga}$ and  $S_{2}:(0,1]\to M(C(X)\otimes A)^{\id{C(X)}\otimes \Ga}$ satisfying $S_{1}^{(0)}=\one$, $S_{2}^{(1)}=\one,$ and $S_{1
}^{(t)}S_{1}^{(t)*}+S_{2}^{(t)}S_{2}^{(t)*}=\one$ for all $t\in (0,1)$. Define $H:[0,1]\times C(X)\otimes A\to C(X)\otimes A$ by
\begin{align*}
H(t,a)=\begin{cases}
U_{0}\phi(a)U_{0}^{*} & t=0\\
\ad{(S_{1}^{(t)}U_{0}S_{1}^{(t)*}+S_{2}^{(t)}S_{2}^{(t)*})}\phi(a) & 0<t<1\\
\phi(a) & t=1
\end{cases}
\end{align*}
We verify the continuity by considering the cases $t=0$, $t=1$, and $0 < t < 1$ separately. The following claim is clear.
\begin{claim*}
Let $B$ be a C$^{\ast}$-algebra. Let $(z_{i})_{i}$ be a bounded net in $M(B)$ converging strictly to $z\in M(B)$ and $(b_{n})_{n}$ be a bounded net in $B$ converging to $b\in B$ in norm. Then $z_{i}b_{n}$ converges to $zb$ in norm.
\end{claim*}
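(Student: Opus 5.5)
The plan is a direct $\varepsilon/2$ estimate that splits the error into a norm-convergent part and a strictly-convergent part, regarding $(z_{i}b_{n})$ as a net indexed by the product directed set of pairs $(i,n)$. Set $M=\sup_{i}\|z_{i}\|<\infty$, which is finite by the boundedness hypothesis. For every pair $(i,n)$ I will use the triangle inequality in the form
\[
\|z_{i}b_{n}-zb\|\leq \|z_{i}(b_{n}-b)\|+\|(z_{i}-z)b\|\leq M\|b_{n}-b\|+\|z_{i}b-zb\|.
\]

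The first term is handled by norm convergence of $(b_{n})$ together with the uniform bound $M$. Given $\varepsilon>0$, I choose $n_{0}$ with $\|b_{n}-b\|<\varepsilon/(2M+1)$ for all $n\geq n_{0}$.

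The second term is handled by the definition of the strict topology on $\Mcal(B)$. Since $z_{i}\to z$ strictly and $b\in B$, the seminorm $x\mapsto\|xb\|$ gives $\|z_{i}b-zb\|\to 0$. So I choose $i_{0}$ with $\|z_{i}b-zb\|<\varepsilon/2$ for all $i\geq i_{0}$.

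For $(i,n)\geq(i_{0},n_{0})$ both bounds hold at once, so $\|z_{i}b_{n}-zb\|<\varepsilon$, which proves the claim. There is no real obstacle here. The only point to note is that the boundedness of $(z_{i})$ is genuinely needed, because strict convergence alone does not control $\|z_{i}(b_{n}-b)\|$ uniformly in $i$. Only right multiplication by elements of $B$ is used, so the seminorms $x\mapsto\|bx\|$ are not needed. The boundedness of $(b_{n})$ is not used at all; it is merely harmless.
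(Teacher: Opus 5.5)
Your proof is correct: the estimate $\|z_ib_n-zb\|\leq M\|b_n-b\|+\|(z_i-z)b\|$ over the product directed set is the routine argument the paper has in mind. The paper calls the claim clear and gives no written proof, so there is nothing further to compare.
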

First, we consider the case $t=0$. Let $t_{n}$ be a sequence converging to $0$ and let $b_{n}\in C(X)\otimes A$ be a sequence converging to $b\in C(X)\otimes A$. Since $S_{1}^{(t_{n})}S_{1}^{(t_{n})*}+S_{2}^{(t_{n})}S_{2}^{((t_{n}))*}=\one$ and $S_{1}^{(t_{n})}\to \one$ strictly as $t\to 0$, $S_{2}^{(t_{n})}S_{2}^{(t_{n})*}$ are bounded for all $n$ and converges strictly to $0$ as $t\to 0$. Thus
\begin{align*}
\|H(0,b)-H(t_{n},b_{n})\|&=\|U_{0}\phi(b)U_{0}^{*}-\mathrm{Ad}(S_{1}^{(t_{n})}U_{0}S_{1}^{(t_{n})*}+S_{2}^{(t_{n})}S_{2}^{(t_{n})*})(\phi(b_{i}))\|\to 0\quad(t\to 0)
\end{align*}
Next, we turn to the case $t=1$. It suffices to show that $S_{1}^{(t)}U_{0}S_{1}^{(t)*}$ converges strictly to $0$ as $t\to 1$. (The proof is the same as in the case $t=0$.) Since $S_{1}^{(t)}S_{1}^{(t)*}$ converges strictly to $0$  as $t\to 1$, for any $b\in C(X)\otimes A$ it holds that
\begin{align*}
\|S_{1}^{(t)}U_{0}S_{1}^{(t)*}b\|\leq \|S_{1}^{(t)*}b\|=\|S_{1}^{(t)*}S_{1}^{(t)}S_{1}^{(t)*}b\|\leq \|S_{1}^{(t)}S_{1}^{(t)*}b\|\to 0\quad(t\to 1)
\end{align*}
For the remaining case $t \in (0,1)$, the continuity follows from the claim. Thus $\phi$ and $\psi$ are homotopic, and this implies injectivity of the map.

It remains to show surjectivity of the map. Let $z\in KK^{G}(A,C(X)\otimes A)$ be any element. By \autoref{thm:ex-stable}, there exists an anchored proper cocycle embedding $(\phi,\bbm{u})$ such that $KK^{G}(\phi,\bbm{u})=z$. Consider the following commutative diagram in $KK^{G}$
\begin{center}
\begin{tikzpicture}[auto]
\node(1) at (-3,1.5) {$A$};
\node(2) at (7,1.5) {$C(X)\otimes A$};
\node (3) at (-3,-1.5) {$A\otimes \Kbb(L^{2}(G))$};
\node(4) at (7,-1.5) {$C(X)\otimes A\otimes \Kbb(L^{2}(G))$};
\draw[->] (1) to node {$z=KK^{G}(\phi,\bbm{u})$} (2);
\draw[->] (3) to node {$KK^{G}(\phi\otimes \id{\Kbb(L^{2}(G))},\bbm{u}\otimes 1)$} (4);
\draw[->] (1) to node {$KK^{G}(\id{A})\hat{\otimes} z'$} (3);
\draw[->] (2) to node {$KK^{G}(\id{C(X)\otimes A})\hat{\otimes}z'$} (4);
\end{tikzpicture}
\end{center}
where $z'$ is the $KK^{G}$-equivalence given by a minimal invariant projection in $(\Kbb(L^{2}(G)),\ad{\rho})$. Define a unitary $W\in \Mcal(C(X)\otimes A\otimes \Kbb(L^{2}(G)))=\BB(L^{2}(G,C(X)\otimes A))$ by
\[W(b\otimes \Gx)(g)=\bbm{u}_{g}b\Gx(g)\text{ for all } b\in C(X)\otimes A,\; \Gx\in C(G)\subset L^{2}(G).\] 
Then we have $W(\id{C(X)}\otimes \Ga_{g}\otimes \ad{\rho_{g}})(W^{*})=\bbm{u}_{g}\otimes \one$. This implies that $\bbm{u}\otimes \one$ is a coboundary. Thus there exists an equivariant $*$-homomorphism $\psi:A\otimes \Kbb(L^{2}(G))\to C(X)\otimes A\otimes \Kbb(L^{2}(G))$ such that $KK^{G}(\psi,\mathbf{1})=KK^{G}(\phi\otimes\id{\Kbb(L^{2}(G))},\bbm{u}\otimes \mathbf{1})$. Since $KK^{G}(\id{A})\hat{\otimes}z'$ is $KK^{G}$-equivalence, it follows from \cite[Corollary 6.4]{GS2} that there exists an equivariant $*$-isomorphism $\theta:A\to A\otimes \Kbb(L^{2}(G))$ such that $KK^{G}(\theta)=KK^{G}(\id{A})\hat{\otimes}z'$. Hence we obtain $KK^{G}((\id{C(X)}\otimes \theta^{-1})\circ\psi\circ \theta)=z$ and surjectivity follows.
\end{proofbreak}
\begin{remark}
We define an addition on $[X, \mathrm{End}_{G}(A)]$ using the pointwise Cuntz sum and a multiplication using pointwise composition. On the other hand, we define a multiplication on $KK^{G}(A, C(X)\otimes A)$ by the composition of the Kasparov product with the restriction to the diagonal map:
\[KK^{G}(A,C(X)\otimes A)\times KK^{G}(A,C(X)\otimes A)\to KK^{G}(A,C(X\times X)\otimes A)\to KK^{G}(A,C(X)\otimes A).\]
The map described in the preceding theorem is, in fact, a ring isomorphism with respect to these structures.
\end{remark}

\subsubsection*{Unital case}\label{subsubsec:unital}
Throughout this subsection, let $G$ be a compact group, $A$ be a unital Kirchberg algebra, and $X$ be a compact connected metrizable space. Let $\Ga: G \act A$ be an isometrically shift-absorbing action. 
To simplify notation, we set $B \coloneqq C(X)\otimes A$ and $\Gb \coloneqq \id{C(X)}\otimes \Ga$.
We denote by $\homg{A}{B}$ the set of all unital equivariant $*$-homomorphisms from $A$ to $B$, and by $[A,B]_{G,u}$ the set of their homotopy classes. Let $j_{A}: A \to B$ denote the canonical inclusion defined by $j_{A}(a) = \one_{C(X)}\otimes a$. 

Let $\nu_{A}: \Cbb \hookrightarrow A$ and $\nu_{B}: \Cbb \hookrightarrow B$ be the unital inclusions. We denote the mapping cones of $\nu_{A}$ and $\nu_{B}$ by $C_{\nu_{A}}$ and $C_{\nu_{B}}$, respectively. Recall that we have the following mapping cone sequence:
\begin{align}
  SA \xlongrightarrow{i} C_{\nu_{A}} \xlongrightarrow{\pi} \Cbb \xlongrightarrow{\nu_{A}} A.\label{eq:mapping-cone-seq}
\end{align}
Let $C_{\nu}j: C_{\nu_{A}} \to C_{\nu_{B}}$ be the map induced by $j_{A}$, defined by $C_{\nu}j = \id{C[0,1]} \otimes j_{A}|_{C_{\nu_{A}}}$. We now define a map $\Gc: [A,B]_{G,u} \to KK^{G}(C_{\nu_{A}}, SB)$ by$$ \Gc([\phi]) = [(C_{\nu}\phi, C_{\nu}j)], $$where $C_{\nu}\phi \coloneqq \id{C[0,1]} \otimes \phi|_{C_{\nu_{A}}}: C_{\nu_{A}} \to C_{\nu_{B}}$. In our discussion, we consider the following actions.

The group $\Ucal(B^{\Gb})/U_{0}(B^{\Gb})$ acts on $[A,B]_{G,u}$ by conjugation:\[ [u] \cdot [\phi] = [\mathrm{Ad}u \circ \phi].\] 
Also, the group $KK^{G}(\Cbb, SB)$ acts on $KK^{G}(C_{\nu_{A}}, SB)$ via the pullback of the projection $\pi: C_{\nu_{A}} \to \Cbb$:
\[g \cdot x = x + \pi^{*}(g). \]
\begin{remark}
Let $A$ be a unital Kirchberg algebra and let $\Ga$ be an isometrically shift-absorbing action of a compact group $G$ on $A$. Then, by \cite[Remark 4.6]{Mu}, the fixed point algebra $A^{\Ga}$ is also a unital Kirchberg algebra. In particular, we have $A^{\Ga}\cong A^{\Ga}\otimes \Ocal_{\infty}$. Since $(C(X)\otimes A)^{\id{C(X)}\otimes \Ga}=C(X)\otimes A^{\Ga}$, it follows that $B^{\Gb}\cong B^{\Gb}\otimes \Ocal_{\infty}$. By \autoref{prop:K1-oinf}, the natural map $\Ucal(B^{\Gb})/U_{0}(B^{\Gb})\to K_{1}(B^{\Gb})$ is an isomorphism.
\end{remark}
Consider the composite map
\[\partial:\Ucal(B^{\Gb})/\Ucal_{0}(B^{\Gb})\to K_{1}(B^{\Gb})\to K^{G}_{0}(SB)\to KK^{G}(\Cbb,SB).\]
For any $u\in \Ucal(B^{\Gb})$, there exists $\Go:[0,1]\to \Ucal(M_{2}(B^{\Gb}))$ such that $\Go_{0}=\left(\begin{smallmatrix} u & 0 \\ 0 & u^{*} \\ \end{smallmatrix}\right)$ and $\Go_{1}=\left(\begin{smallmatrix} \one & 0 \\ 0 & \one \\ \end{smallmatrix}\right)$. Let $p=\Go \left(\begin{smallmatrix} \one &0  \\ 0 & \one \\ \end{smallmatrix}\right)\Go^{*}$ and $p_{0}=\left(\begin{smallmatrix} \one & 0 \\ 0 &\one  \\ \end{smallmatrix}\right)$ be projections in $M_{2}(C[0,1]\otimes B^{\Gb})$. Define $\Gn, \Gn_{0}:\Cbb\to M_{2}(C[0,1]\otimes B^{\Gb})$ by $\Gn(\Gl)=\Gl p$ and $\Gn_{0}(\Gl)=\Gl p_{0}$. Then the pair $(\Gn,\Gn_{0})$ defines an element in $\mathbb{E}^{G}((\Cbb,\mathrm{id}),(SB,\id{S}\otimes \Gb))$. Through the map $\partial$, we identify $\Ucal(B^{\Gb})/\Ucal_{0}(B^{\Gb})$ with $KK^{G}(\Cbb,SB)$.

\begin{proposition}[{\cite[Proposition 3.8]{D1}}]
For any $v\in \Ucal(B^{\Gb})$ and $\phi\in\homg{A}{B}$, we have
\[\Gc([\mathrm{Ad}v\circ \phi])=\Gc([\phi])+\pi^{*}\partial([v]).\]
\end{proposition}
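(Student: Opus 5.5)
The plan is to copy Dadarlat's argument and check that each step is compatible with the $G$-actions. All the unitaries and paths involved will lie in the fixed point algebra $B^{\Gb}$, where $G$ acts trivially.

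Write $\phi' = \mathrm{Ad}v\circ\phi$. Then
\[C_{\nu}\phi' = \mathrm{Ad}(\one_{C[0,1]}\otimes v)\circ C_{\nu}\phi,\]
where conjugation takes place in the unitization of $C_{\nu_{B}}$. Since $v$ is $\Gb$-invariant, this conjugation is equivariant. In the Cuntz-pair picture we therefore have to compare two classes:
\[x'=[(\mathrm{Ad}V\circ C_{\nu}\phi,\, C_{\nu}j)] \quad\text{and}\quad x=[(C_{\nu}\phi,\, C_{\nu}j)],\]
with $V=\one\otimes v$. We cannot simply conjugate both entries, because $V$ is not in the multiplier algebra in a way that sends $SB$-compatible pairs to themselves. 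The unit of $C_{\nu_{A}}$ is not seen by $SB$, and $V$ does not vanish at the endpoint $t=1$.

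The first step is to use additivity of Cuntz pairs. We split
\[x' = [(\mathrm{Ad}V\circ C_{\nu}\phi,\, \mathrm{Ad}V\circ C_{\nu}j)] + [(\mathrm{Ad}V\circ C_{\nu}j,\, C_{\nu}j)].\]
To make the decomposition legitimate, we first stabilize by $M_{2}$ and work with $\mathrm{diag}(v,v^{*})$. This matrix is connected to $\one$ in $\Ucal(M_{2}(B^{\Gb}))$ through the path $\Go$ fixed before the proposition. Replacing $V$ by the unitary $W$ obtained from $\Go$, which is invariant and satisfies $W_{1}=\one$, the first summand becomes a genuine conjugation of a Cuntz pair by a unitary in $\Mcal(SB)$ extended to the unitization. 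Conjugation by a path of invariant unitaries connected to the identity (through $\Go_{s}$ reparametrized) gives a homotopy of Cuntz pairs, so the first summand equals $x$.

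The main obstacle is the second summand, $[(\mathrm{Ad}W\circ C_{\nu}j,\, C_{\nu}j)]$. We must show it equals $\pi^{*}\partial([v])$. The key observation is that $C_{\nu}j$ restricted to the ideal $SA$ is unchanged up to the conjugation coming from $v$. Concretely, on $C_{\nu_{A}}$ both maps factor through the element determined by the value at $\Cbb$, namely the function $f(t)\one$ with $f(0)$ scalar.

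We therefore plan to homotope the $SA$-part to agree: use that $\mathrm{Ad}W$ acts on $S(j_{A}(A))$ and contract it using an invariant path (this is where $\Go$ is used again). After this homotopy the pair factors through $\pi:C_{\nu_{A}}\to\Cbb$ as the pair $(\Gl\mapsto\Gl p,\ \Gl\mapsto\Gl p_{0})$. By definition of $\partial$, this pair represents $\partial([v])$, so the second summand is $\pi^{*}\partial([v])$.

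Finally, equivariance holds throughout, because every unitary used ($v$, $\Go_{s}$, $W$) lies in $M_{2}(C[0,1]\otimes B^{\Gb})$ and all cocycles are trivial. So Dadarlat's homotopies are automatically homotopies in $\Ebb^{G}$.
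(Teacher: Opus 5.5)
\textbf{There is a genuine gap.} Your overall skeleton matches the paper's. You split $\Gc([\mathrm{Ad}\,v\circ\phi])$ as $[(\mathrm{Ad}\,V\circ C_{\nu}\phi,\mathrm{Ad}\,V\circ C_{\nu}j)]+[(\mathrm{Ad}\,V\circ C_{\nu}j,C_{\nu}j)]$, identify the first summand with $\Gc([\phi])$, and then need the second to equal $\pi^{*}\partial([v])$. That last identification is the entire content of the proposition, and your proposal does not establish it.

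First, your set-up is wrong in one place, and it hides a missing step. Nothing prevents you from conjugating both entries by $V=\one\otimes v$. It is a $G$-invariant unitary in $C([0,1],B^{\Gb})\subset\Mcal(SB)$, and $V(C_{\nu}\phi-C_{\nu}j)V^{*}\in SB$, so unitary equivalence gives the first summand $=x$ directly. Also, $(\mathrm{Ad}\,V\circ C_{\nu}j,C_{\nu}j)$ is already a Cuntz pair: $v\lambda v^{*}=\lambda$ at $t=0$, and both entries vanish at $t=1$. So the stabilization detour is unnecessary.

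The real issue is that you then replace the constant conjugation $V$ by the path $W=\Go$ in the \emph{second} summand. Your justification, conjugation by a path of unitaries connected to $\one$, only works when both entries are conjugated. Here only one is, so you need a separate homotopy:
\[s\mapsto\bigl(\mathrm{Ad}(\Go_{s\,\cdot})\circ\tmat{J}{0}{0}{0},\ \tmat{J}{0}{0}{0}\bigr),\qquad J=C_{\nu}j,\quad \Go_{s\,\cdot}(t)=\Go_{st}.\]
This stays a Cuntz pair because $\Go_{0}=\tmat{v}{0}{0}{v^{*}}$ fixes $\tmat{\lambda}{0}{0}{0}$ and $J(f)(1)=0$.

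Second, your ``key observation'' is false: $C_{\nu}j$ does not factor through $\pi$ on $C_{\nu_{A}}$, since it is injective on $SA$. The mechanism you suggest, contracting with $\Go$, also fails. Deforming the conjugating path toward $\one$ moves its value at $t=0$ out of the stabilizer of $\tmat{\lambda}{0}{0}{0}$, so the difference leaves $M_{2}(SB)$. If it did stay a Cuntz pair, you would end at $(J,J)$, i.e.\ at $0$.

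The missing idea is to hold $\Go$ fixed and collapse the cone variable instead:
\[s\mapsto\bigl(\mathrm{Ad}\,\Go\circ\tmat{J(f(s\,\cdot))}{0}{0}{0},\ \tmat{J(f(s\,\cdot))}{0}{0}{0}\bigr).\]
This remains a Cuntz pair even though $f(s\,\cdot)$ no longer vanishes at $t=1$, because $\Go_{1}=\one$. At $s=1$ it is your $W$-pair. At $s=0$, since $f(0)=\lambda\one$, it is $(\lambda\mapsto\lambda\,\Go\tmat{\one}{0}{0}{0}\Go^{*},\ \lambda\mapsto\lambda\tmat{\one}{0}{0}{0})\circ\pi=\pi^{*}\partial([v])$. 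These two homotopies are exactly the paper's $\Gd_{s}$ and $\Gg_{s}$. Your remark that equivariance is automatic because all unitaries lie in $M_{2}(C[0,1]\otimes B^{\Gb})$ is correct.
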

\begin{proof}
\begin{align*}
\Gc([\mathrm{Ad}v\circ \phi])&=[\ad{v}\circ \cna\phi,\cna  j]\\
&=[\ad{v}\circ \cna\phi,\ad{v}\circ \cna j]+[\ad{v}\circ \cna j,\cna j]\\
&=[\cna\phi,\cna j]+[\ad{v}\circ \cna j,\cna j]\\
&=\Gc([\phi])+[\ad{v}\circ \cna j,\cna j]
\end{align*}
So it suffices to show that $\displaystyle [\ad{v}\circ \cna j,\cna j]=\pi^{*}\partial ([v])$. Let $\Go:[0,1]\to \Ucal(M_{2}(B^{\Gb}))$ be a continuous path of unitaries such that $\Go_{0}=\tmat{v}{0}{0}{v^{*}}$ and $\Go_{1}=\tmat{\one}{0}{0}{\one}$.. We define two homotopies of Cuntz pairs $\Gg=(\Gg_{s})$ and $\Gd=(\Gd_{s})$ in $\Ebb^{G}(\cna,SB)$ as follows.
\begin{align*}
&s\mapsto \Gg_{s}(f)=\left[\Go\pmat{J(f(s-))}{0}{0}{0}\Go^{*},\pmat{J(f(s-))}{0}{0}{0}\right]\\
&s\mapsto \Gd_{s}(f)=\left[\Go_{(s-)}\pmat{J(f)}{0}{0}{0}\Go^{*}_{(s-)}, \pmat{J(f)}{0}{0}{0}\right]
\end{align*}
Since all $*$-homomorphisms on the right-hand side are equivariant, we may take the trivial cocycle $1$. Then $\Gg_{1}=\Gd_{1}$ and 
\begin{align*}
\Gg_{0}(f)&=\left[\Go\pmat{J(f)}{0}{0}{0}\Go^{*}, \pmat{J(f)}{0}{0}{0}\right]=\pi^{*}\partial([v])(f),\\
\Gd_{0}(f)&=\left[\pmat{vJ(f)v^{*}}{0}{0}{0},\pmat{J(f)}{0}{0}{0}\right]=\left[\ad{v}\circ J(f),J(f)\right]
\end{align*}
Thus $\pi^{*}\partial([v])=[\Gg_{0}]=[\Gd_{0}]=[\ad{v}\circ J,J]$.
\end{proof}
We will show that $\Gc$ preseves the orbits and stabilizers of the actions. First, we consider the orbits.\\
We define $T:[A,B]_{G,u}\to KK^{G}(A)$ by $T([\phi])=KK^{G*}(\phi)-KK^{G}(j)$ and consider the following sequence
\begin{align}
  [A,B]_{G,u}\xlongrightarrow{T}KK^{G}(A,B)\xlongrightarrow{\nu_{A}^{*}}KK^{G}(\Cbb,B).\label{eq:2}
\end{align}
We will show that we can identify the orbit space of $\Ucal(B^{\Gb})/\Ucal_{0}(B^{\Gb})\act [A,B]_{G,u}$ with $\ker \nu_{A}^{*}$.
\begin{proposition}[cf. {\cite[Lemma 3.1]{D1}}]\label{prop:T-inj}
For $\phi,\psi\in \homg{A}{B}$, the following conditions are equivalent.
\begin{enumerate}
\item $T([\phi])=T([\psi])$,
\item there exists a unitary $v\in \Ucal(B^{\Gb})$ such that $[\ad{v}\circ \phi]=[\psi]$.
\end{enumerate}
\end{proposition}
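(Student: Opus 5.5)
The implication (2)$\Rightarrow$(1) is direct. If $[\ad{v}\circ\phi]=[\psi]$, then $KK^{G}(\psi)=KK^{G}(\ad{v}\circ\phi)$ by homotopy invariance of $KK^{G}$. Since $v\in\Ucal(B^{\Gb})$ is $\Gb$-invariant, the map $\ad{v}\circ\phi$ is equivariant with trivial cocycle. Moreover, $v$ implements a unitary isomorphism of Kasparov triples $(B,\ad{v}\circ\phi,0)\cong(B,\phi,0)$, exactly as in the argument with $U^{*}$ in the proof of \autoref{thm:ex-unital}. Hence $KK^{G}(\ad{v}\circ\phi)=KK^{G}(\phi)$, so $T([\phi])=T([\psi])$.

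For (1)$\Rightarrow$(2) I would proceed as follows. Condition (1) says $KK^{G}(\phi)=KK^{G}(\psi)$. Both $\phi$ and $\psi$ are unital equivariant, so they are cocycle embeddings $(A,\Ga)\to(B,\Gb)$ with trivial cocycle and $\phi(\one)=\psi(\one)=\one=p$. They are injective by simplicity of $A$, and full by \autoref{lem:full-hom}. Thus \autoref{thm:unique-unital} applies with $p=\one$, with the trivial cocycles taking values in $\Ucal(\one)$. It yields a norm-continuous path $u:[0,\infty)\to\Ucal(B)$ with $u_{t}\phi(a)u_{t}^{*}\to\psi(a)$ for all $a\in A$ and $\max_{g}\|u_{t}-\Gb_{g}(u_{t})\|\to0$.

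Next I make the path invariant. Since $G$ is compact, averaging $\int_{G}\Gb_{g}(u_{t})\,dg$ is within small distance of $u_{t}$ for large $t$. After cutting off an initial segment, the unitary parts of the polar decompositions of these averages give a continuous path $w_{t}\in\Ucal(B^{\Gb})$. This path still satisfies $w_{t}\phi(a)w_{t}^{*}\to\psi(a)$.

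Set $v:=w_{t_{0}}$ for large $t_{0}$. The path $t\mapsto \ad{w_{t}}\circ\phi$ for $t\in[t_{0},\infty]$, with value $\psi$ at $\infty$, is a continuous path in $\homg{A}{B}$ from $\ad{v}\circ\phi$ to $\psi$. Continuity at $\infty$ in the point-norm topology holds by the convergence, and each map is unital and equivariant since $w_{t}$ is invariant. Therefore $[\ad{v}\circ\phi]=[\psi]$.

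The main obstacle is verifying that \autoref{thm:unique-unital} applies, since that theorem was stated for Kirchberg algebras $B$ whereas here the target is $C(X)\otimes A$. Its formulation, however, is for $B\otimes C(X)$ with $B=A$, so it fits directly. The remaining check is the averaging step: when $\|u_{t}-\Gb_{g}(u_{t})\|<\varepsilon$ uniformly in $g$, the average is within $\varepsilon$ of $u_{t}$, hence invertible, and its unitary part is $\Gb$-invariant and depends continuously on $t$.
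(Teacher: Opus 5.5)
Your proposal is correct and follows essentially the same route as the paper. You obtain asymptotic unitary equivalence from \autoref{thm:unique-unital} with $p=\one$ and trivial cocycles, average and polar-decompose to get a path of invariant unitaries, and use that path as the homotopy from $\ad{v}\circ\phi$ to $\psi$. For the converse, you use that conjugation by an invariant unitary does not change the $KK^{G}$-class.
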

\begin{proof}
First, suppose $T([\phi])=T([\psi])$. Then we have $KK^{G}(\phi)=KK^{G}(\psi)$ and this implies that $\phi$ and $\psi$ are asymptotically unitarily equivalent. Hence there exists a continuous path $u:[0,\infty)\to \Ucal(B)$ such that
\[\lim_{t\to\infty}\ad{u_{t}}\circ \phi(a)=\psi(a),\quad\lim_{t\to\infty}\max_{g\in G}\|u_{t}-\Gb_{g}(u_{t})\|=0\]
for all $a\in A$. By cutting off an initial segments of $u$, we may assume $\max_{g\in G}\|u_{t}-\Gb_{g}(u_{t})\|<1$ for all $t$. Then $\int_{G}\Gb_{g}(u_{t})dg$ is invertible and the unitary part of its polar decomposition  defines a continuous path $v:[0,\infty)\to\Ucal(B^{\Gb})$ satisfying $\lim_{t\to\infty}\ad{v_{t}}\circ \phi(a)=\psi(a)$. Thus $v_{0}$ satisfies the second condition.

Suppose that there exists a unitary $v\in \Ucal(B^{\Gb})$ such that $[\ad{v}\circ \phi]=[\psi]$. It follows that $KK^{G}(\ad{v}\circ\phi)$ is equal to $KK^{G}(\psi)$. Since we have $KK^{G}(\phi)=KK^{G}(\ad{v}\circ\phi)$, it follows that 
\[T([\phi])=KK^{G}(\phi)-KK^{G}(j)=KK^{G}(\psi)-KK^{G}(j)=T([\psi]).\]
\end{proof}
\begin{proposition}[cf. {\cite[Lemma 3.3]{D1}}]
In the sequence \eqref{eq:2}, $\im{T}=\ker{\nu_{A}^{*}}$ holds.
\end{proposition}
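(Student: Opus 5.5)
Both inclusions will be proved, with the substantial work on the side $\ker\nu_{A}^{*}\subseteq\im T$, via \autoref{cor:compact-ex}. Here $B=C(X)\otimes A$, $\Gb=\id{C(X)}\otimes\Ga$, and $j=j_{A}$.

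\emph{Inclusion $\im T\subseteq\ker\nu_{A}^{*}$.} For $\phi\in\homg{A}{B}$ we have
\[\nu_{A}^{*}T([\phi])=KK^{G}(\phi\circ\nu_{A})-KK^{G}(j\circ\nu_{A})=KK^{G}(\nu_{B})-KK^{G}(\nu_{B})=0,\]
since both $\phi$ and $j$ are unital, so $\phi\circ\nu_{A}=j\circ\nu_{A}=\nu_{B}$.

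\emph{Inclusion $\ker\nu_{A}^{*}\subseteq\im T$.}
\begin{enumerate}
\item Take $y\in KK^{G}(A,B)$ with $\nu_{A}^{*}(y)=0$ and set $z=y+KK^{G}(j)$.
\item Compute $[\one_{A}]\otimes z$. Under the identification $KK^{G}(\Cbb,B)\cong K_{0}^{G}(B)=K_{0}(B^{\Gb})$, the class $[\one_{A}]$ corresponds to $KK^{G}(\nu_{A})$. Hence
\[[\one_{A}]\otimes z=\nu_{A}^{*}(y)+\nu_{A}^{*}KK^{G}(j)=KK^{G}(\nu_{B})=[\one_{B}].\]
\item Apply \autoref{cor:compact-ex} with $p=\one_{B}$, which is a nonzero invariant projection in $B$. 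It gives an equivariant $*$-homomorphism $\phi:A\to B$ with $\phi(\one_{A})=\one_{B}$ and $KK^{G}(\phi)=z$.
\item Then $\phi\in\homg{A}{B}$ and $T([\phi])=z-KK^{G}(j)=y$.
\end{enumerate}

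The main thing to check is that the hypotheses of \autoref{cor:compact-ex} are met. That corollary rests on \autoref{thm:ex-unital} and \autoref{thm:unique-unital} applied with $B$ replaced by $A$ and the isometrically shift-absorbing action $\Ga$. It requires:
\begin{itemize}
\item $X$ compact and connected;
\item $\one_{B}$ to be a full, properly infinite projection in $(C(X)\otimes A)^{\Gb}=C(X)\otimes A^{\Ga}$.
\end{itemize}
The projection $\one_{B}$ is trivially full. It is properly infinite because $A^{\Ga}$ is a unital Kirchberg algebra (by the Remark before the definition of $\partial$), so $\one_{A^{\Ga}}$ is properly infinite, and therefore so is $\one\otimes\one_{A^{\Ga}}$.

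One further point needs care. In the unital setting, \autoref{cor:compact-ex} has the case distinction $p=\one$ versus $p\neq\one$, and here we are in the case $p=\one$. In that case no extra conjugation is needed: a unitary $u$ with $u^{*}\one u=\one$ already gives a unital map, which is exactly what the corollary's proof provides.
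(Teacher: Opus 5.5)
Your proposal is correct and follows the paper's proof essentially verbatim. The first inclusion is the same unitality computation $\phi\circ\nu_{A}=j\circ\nu_{A}=\nu_{B}$. For the second, the paper likewise applies \autoref{cor:compact-ex} with $p=\one_{B}$ to $y+KK^{G}(j)$. Your additional check that $\one_{B}$ is full and properly infinite in $C(X)\otimes A^{\Ga}$ only makes explicit a hypothesis that the paper leaves implicit.
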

\begin{proof}
First, we show the inclusion $\im{T}\subset \ker{\nu_{A}^{*}}$. Let $\phi\in\homg{A}{B}$. Then we have
\begin{align*}
\nu_{A}^{*}(T([\phi]))&=\nu_{A}^{*}(KK^{G}(\phi)-KK^{G}(j))\\
&=KK^{G}(\phi\circ \nu_{A})-KK^{G}(j\circ\nu_{A})\\
&=KK^{G}(\nu_{B})-KK^{G}(\nu_{B})=0.
\end{align*}
We turn to the inclusion $\ker{\nu_{A}^{*}}\subset \im{T}$. Let $x$ be any element in $\ker{\nu_{A}^{*}}$. Then $x+KK^{G}(j)$ induces a map from $K_{0}^{G}(A)$ to $K_{0}^{G}(B)$ which maps $[\one_{A}]$ to $[\one_{B}]$. By \autoref{cor:compact-ex}, there exists a unital equivariant $*$-homomorphism $\phi:A\to B$ such that $KK^{G}(\phi)=x+KK^{G}(j)$. Hence we have $x=T([\phi])$.
\end{proof}
Next we consider the orbit of $KK^{G}(\Cbb,SB)\act KK^{G}(\cna,SB)$. Consider the following exact sequence induced by the mapping cone sequence \eqref{eq:mapping-cone-seq}
\[KK^{G}(\Cbb,SB)\xlongrightarrow{\pi^{*}}KK^{G}(\cna,SB)\xlongrightarrow{i^{*}}KK^{G}(SA,SB)\xlongrightarrow{(S\nu_{A})^{*}}KK^{G}(S\Cbb,SB).\]
It follows from this exact sequence that the orbit space of $KK^{G}(\Cbb,SB)\act KK^{G}(\cna,SB)$ is equal to $\im{i^{*}}=\ker{(S\nu_{A})^{*}}$.
\vskip\baselineskip
Next we turn to the stabilizers.

\begin{lem}[cf. {\cite[Proposition 3.2]{D1}}]
For any $\phi\in\homg{A}{B}$ and any $x\in KK^{G}(A,SB)$, there exists an equivariant $*$-homomorphism $\Psi=(\Psi_{t}):A\to M_{2}(C[0,1]\otimes B)$ such that $\Psi_{0}=\Psi_{1}$, $[((\Psi,1),(\Psi_{0},1))]=x\in KK^{G}(A,SB)$ and $\Psi_{0}=\tmat{\phi_{0}}{0}{0}{0}:A\to M_{2}(B)$ where $\phi_{0}\in\homg{A}{B}$ with $[\phi_{0}]=[\phi]$ in $[A,B]_{G,u}$.
\end{lem}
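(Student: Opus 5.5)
We follow Dadarlat's strategy, using the equivariant existence theorem for the stable algebra $SB\otimes\Kbb$ (suitably unitized) together with the $\oinf$/$\otw$-absorption of $\Gb$.

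\textbf{Step 1: realize $x$ by an equivariant loop into a corner.} Since $\Gb=\id{C(X)}\otimes\Ga$ is isometrically shift-absorbing, \cite[Proposition 6.3]{SZ2} shows that $\Gb\otimes\id{\Kbb}$ is strongly stable and isometrically shift-absorbing on $B\otimes\Kbb$. Apply \autoref{thm:ex-stable} with $X\times[0,1]$ in place of $X$. Precisely, take the class $x\otimes\Gk$, where $\Gk\in KK^{G}(SB,SB\otimes\Kbb)$ is the corner inclusion, viewed in $KK^{G}(A,C_{0}(0,1)\otimes B\otimes\Kbb)$. Represent it by a Cuntz pair of the form $((\Theta,\bbm{w}),(\Theta_{0},\bbm{w}_{0}))$, with $\Theta:A\to C[0,1]\otimes B\otimes\Kbb$ a proper cocycle embedding, $\Theta_{0}$ constant, and $\Theta|_{\{0,1\}}=\Theta_{0}$.

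This representation comes from the split exact sequence $0\to SB\to C(\Tbb)\otimes B\to B\to0$: elements of $KK^{G}(A,SB)$ are differences of loops with the same base point. Concretely, use the existence theorem for $C(\Tbb)\otimes B$, with $\Tbb$ compact connected. Apply it to the class $(\mathrm{const})_{*}KK^{G}(\phi)+\iota_{*}x$, where we first amplify $\phi$ by $e_{1,1}$. Then use the uniqueness theorem \autoref{thm:unique-stable} at the base point: the evaluation of the resulting loop at $1\in\Tbb$ has the same class as $\phi\otimes e_{1,1}$, so the two are strongly asymptotically unitarily equivalent. After conjugating by a unitary path, the base point becomes a map $\phi_{0}\otimes e_{1,1}$ homotopic to $\phi\otimes e_{1,1}$.

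\textbf{Step 2: trivialize cocycles and compress to $M_{2}(B)$.} Next we make everything genuinely equivariant. Since $G$ is compact, the averaging argument of \autoref{cor:compact-ex} and \autoref{prop:T-inj} applies. The base-point cocycle is a coboundary, because the equivariant base point has the trivial cocycle class. The loop's cocycle is $KK$-trivial on the unit, so it is asymptotically a coboundary and hence a coboundary. Conjugating by the implementing invariant unitaries gives an equivariant $\Psi:A\to C(\Tbb)\otimes B\otimes\Kbb$.

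The projection $\Psi_{t}(\one_{A})$ is a loop of invariant projections in $C(\Tbb)\otimes B\otimes\Kbb$ whose $K_{0}^{G}$-class equals that of the constant $\one\otimes e_{1,1}$, because $\nu_{A}^{*}$ applied to the loop part is in the image of $\iota_{*}$ on $\nu_A^*x$. If $\nu_A^*x\neq0$, we correct this first by Cuntz-adding a map with unit $e_{22}$-type. Using Cuntz's proposition on full properly infinite projections in $(C(\Tbb)\otimes B)^{\Gb}\otimes\Kbb$, conjugate by an invariant unitary loop so that $\Psi_{t}(\one_{A})\le e_{11}+e_{22}$. This lands $\Psi$ in $M_{2}(C[0,1]\otimes B)$ with $\Psi_{0}=\Psi_{1}=\tmat{\phi_{0}}{0}{0}{0}$.

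\textbf{Step 3: check the class.} Finally, verify $[((\Psi,1),(\Psi_{0},1))]=x$. Conjugations by invariant unitaries that are constant on the base point do not change the Cuntz-pair class. The difference with the constant loop recovers $\iota_{*}x$ by the split exactness noted in Step 1.

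The main obstacle is the unit bookkeeping in Step 2. We must arrange simultaneously that $\Psi$ takes values in $2\times2$ matrices with base point exactly $\mathrm{diag}(\phi_{0},0)$, and that the unit loop is conjugated into a corner without moving the base point. Here we will lean on the fact that $(C(\Tbb)\otimes B)^{\Gb}$ is $\oinf$-absorbing, so $K_{1}$ is realized by unitaries (\autoref{prop:K1-oinf}). This allows the correcting unitary loop to be chosen with trivial base point value.
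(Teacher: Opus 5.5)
There is a genuine gap, and it sits exactly at what you call the main obstacle: the unit in Step 2. The $K_0^G$-class of your unit loop is forced by $x$:
$[\Psi(\one_A)]=[\one_A]\otimes\bigl(\mathrm{const}_*KK^G(\phi)+\iota_*x\bigr)=[\one\otimes e_{1,1}]+\iota_*(\nu_A^*x)$.
So your claim that it equals the class of the constant projection is false whenever $\nu_A^*x\neq0$. That is precisely the case the lemma is needed for, since in \autoref{prop:stab-htpyset} the unitary class to be realized is $Q(x)=\partial^{-1}\nu_A^*x$.

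Your proposed correction, Cuntz-adding a map with unit of $e_{22}$-type, cannot work. Adding any loop $\Xi$ (with its base point) shifts the pair class by $y=[((\Xi,\one),(\Xi_0,\one))]$ and the non-constant part of the unit class by $\iota_*\nu_A^*y$. So the unit cannot be made ``constant'' without changing $x$. What actually has to be shown is that the unit loop, which carries $\iota_*\nu_A^*x$, can be moved into $M_2(B)$ by an invariant unitary loop acting trivially on the base point. Cuntz's proposition alone gives a conjugating unitary whose base-point value is uncontrolled, and you do not supply the correction.

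The earlier steps have the same exactness problem:
\begin{enumerate}
\item The cocycle trivialization of \autoref{cor:compact-ex} requires the unit to already be a $\Gb$-invariant projection of class $[\one_A]\otimes z$. The unit of a cocycle embedding from \autoref{thm:ex-stable} is only invariant for the perturbed action $\ad\bbm{w}_g\circ\Gb_g$.
\item Strong asymptotic unitary equivalence at the base point gives only approximate agreement with $\phi\otimes e_{1,1}$. It does not give an exact base point $\phi_0\otimes e_{1,1}$ with $\phi_0$ unital and homotopic to $\phi$ through unital equivariant maps.
\end{enumerate}

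The missing idea is to keep $\phi$ out of the loop. The paper realizes only $x$, viewed in $KK^G(A,C(\Tbb)\otimes B)$, by an honest equivariant loop $\gamma$ via \autoref{cor:compact-ex}. It arranges $KK^G(\gamma_0)=0$ and $\one-\gamma_0(\one_A)$ full and properly infinite. Then $\mathrm{diag}(\phi,\gamma_0)$ and $\mathrm{diag}(\phi,0)$ have the same $KK^G$-class, so their units are conjugate by one constant $\omega\in\Ucal(M_2(B^{\Gb}))$. The map $\psi=\ad\omega\circ\mathrm{diag}(\phi,\gamma_0)$ is unital into the corner with $KK^G(\psi)=KK^G(\phi)$, and \autoref{prop:T-inj} gives $u_0\in\Ucal(B^{\Gb})$ with $[\ad u_0\circ\psi]=[\phi]$.

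With $u=\mathrm{diag}(u_0,\one)$, set $\Psi_t=\ad(u\omega)\circ\mathrm{diag}(\phi,\gamma_t)$. This lands in $M_2(C[0,1]\otimes B)$ and has base point exactly $\mathrm{diag}(\ad u_0\circ\psi,0)$. Its unit loop $u\omega\,\mathrm{diag}(\one,\gamma_t(\one_A))\,\omega^*u^*$ carries $\nu_A^*x$ inside $M_2$ automatically. Its class is $KK^G(\gamma)-KK^G(\gamma_0)=x$, because the conjugating unitary is constant.

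Your route might be repairable: choose a target loop $Q\le e_{11}+e_{22}$ of the right class based at $e_{11}$, then multiply the Cuntz unitary by a constant block-diagonal unitary to kill its base-point value. As written, however, the decisive step is missing and partly incorrect.
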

\begin{proof}
Fix an element $x$ in $KK^{G}(A,SB)$. Using the split exact sequence
\[0\rightarrow KK^{G}(A,SB)\rightarrow KK^{G}(A,C(\Tbb)\otimes B)\rightarrow KK^{G}(A,B)\rightarrow 0\]
We can regard $x$ as an element of $KK^{G}(A,C(\Tbb)\otimes B)$. By \autoref{cor:compact-ex}, there exists an equivariant $*$-homomorphism $\Gg=(\Gg_{t}):A\to C[0,1]\otimes B$ such that $\Gg_{0}=\Gg_{1}$ and $KK^{G}(\Gg,1)=x$. We can arrange $\Gg_{0}(\one_{A})$ so that $\one_{B}-\Gg_{0}(1_{A})\in B^{\Gb}$ is full and properly infinite. Since $x\in\ker{(KK^{G}(A,C(\Tbb)\otimes B)\to KK^{G}(A,B))}$, we obtain $KK^{G}(\Gg_{0},1)=0$. The $*$-homomorphisms
\[\theta\coloneqq \pmat{\phi}{0}{0}{\Gg_{0}},\: \theta'\coloneqq \pmat{\phi}{0}{0}{0}:A\to M_{2}(B)\]
are equivariant and $KK^{G}(\theta,\one)=KK^{G}(\theta',\one)$ in $KK^{G}(A,B)$. Therefore $\left[\tmat{\one}{0}{0}{\Gg_{0}(\one)}\right]=\left[\tmat{\one}{0}{0}{0}\right]$ in $K_{0}(B^{\Gb})$. Because these projections and their orthogonal complements are full and properly infinite, $\tmat{\one}{0}{0}{\Gg_{0}(\one)}$ and $\tmat{\one}{0}{0}{0}$ are unitarily equivalent in $M_{2}(B^{\Gb})$. Thus there exists $\Go\in \Ucal(M_{2}(B^{\Gb}))$ such that $\Go\tmat{\one}{0}{0}{\Gg_{0}(\one)}\Go^{*}=\tmat{\one}{0}{0}{0}$. Then $\ad{\Go}\circ\theta(\one_{A})=\tmat{\one}{0}{0}{0}$ and hence we may regard $\ad{\Go}\circ\theta$ as a $*$-homomorphism into $B$, which we denoted by $\psi$. Then $\psi$ is unital and equivariant and 
\[KK^{G}(\psi,\one)=KK^{G}(\theta,\one)=KK^{G}(\phi,\one)+KK^{G}(\Gg_{0},\one)=KK^{G}(\phi,\one).\]
Then there exists $u_{0}\in \Ucal(B^{\Gb})$ such that $[\ad{u_{0}}\circ\psi]=[\phi]$. Let us set $\phi_{0}=\ad{u_{0}}\circ \psi$ and $u=\tmat{u_{0}}{0}{0}{\one}\in \Ucal(M_{2}(B^{\Gb}))$. We define $\Psi=(\Psi_{t})_{t\in[0,1]}:A\to M_{2}(C[0,1]\otimes B)$ by $\Psi_{t}=\ad{u\Go}\circ\tmat{\phi}{0}{0}{\Gg_{t}}$. Then $\Psi_{0}=\Psi_{1}$ and hence $(\Psi,\Psi_{0})\in \mathbb{E}^{G}(A,SB)$. Moreover we have
\[\Psi_{0}=\pmat{u_{0}}{0}{0}{\one}\pmat{\psi}{0}{0}{0}\pmat{u_{0}}{0}{0}{\one}^{*}=\pmat{\ad{u_{0}}\circ\psi}{0}{0}{0}=\pmat{\phi_{0}}{0}{0}{0}.\]
We conclude the proof by observing that 
\begin{align*}
\left[\left((\Psi,\mathbf{1}),(\Psi_{0},\mathbf{1})\right)\right]
&=\left[\left(\pmat{\phi}{0}{0}{\Gg},\pmat{\mathbf{1}}{0}{0}{\mathbf{1}}\right),\left(\pmat{\phi}{0}{0}{\Gg_{0}},\pmat{\mathbf{1}}{0}{0}{\mathbf{1}}\right)\right]\\
&=KK^{G}(\Gg)-KK^{G}(\Gg_{0})=KK^{G}(\Gg)=x.
\end{align*}
\end{proof}

\begin{proposition}[cf. {\cite[Proposition 3.4]{D1}}]\label{prop:stab-htpyset}
The stabilizer group of $[\phi]\in[A,B]_{G,u}$ can be identified with
\[\im{\left(Q\coloneqq \partial^{-1}\circ \nu_{A}^{*}:KK^{G}(A,SB)\to K_{1}(B^{\Gb})\right)}.\]
That is, for $v\in \Ucal(B^{\Gb})$ and $\phi\in\homg{A}{B}$, $[\ad{v}\circ\phi]=[\phi]$ in $[A,B]_{G,u}$ if and only if $[v]\in \im{Q}$.
\end{proposition}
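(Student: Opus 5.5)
We follow Dadarlat's argument for \cite[Proposition 3.4]{D1} and prove the two implications separately.

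\textbf{If $[v]\in\im{Q}$, then $[\ad{v}\circ\phi]=[\phi]$.} Suppose $[v]=Q(x)$ for some $x\in KK^{G}(A,SB)$. Apply the preceding lemma to $\phi$ and $x$. This gives an equivariant loop $\Psi=(\Psi_{t}):A\to M_{2}(C[0,1]\otimes B)$ with the following properties: $\Psi_{0}=\Psi_{1}=\tmat{\phi_{0}}{0}{0}{0}$, $[\phi_{0}]=[\phi]$, and $[(\Psi,\Psi_{0})]=x$.

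The projection path $p_{t}=\Psi_{t}(\one_{A})$ consists of $G$-invariant projections in $M_{2}(B^{\Gb})$ and is a loop based at $\tmat{\one}{0}{0}{0}$. Since $B^{\Gb}$ is a unital Kirchberg algebra tensored with $C(X)$, the homotopy lifting property for projections lets us choose a continuous path $w_{t}\in\Ucal(M_{2}(B^{\Gb}))$ with $w_{0}=\one$ and $w_{t}p_{0}w_{t}^{*}=p_{t}$. Then $w_{1}$ commutes with $p_{0}$, so its corner $w_{1}'=p_{0}w_{1}p_{0}$ is a unitary in $B^{\Gb}$.

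The path $\ad{w_{t}^{*}}\circ\Psi_{t}$ takes values in the corner $p_{0}M_{2}(B)p_{0}\cong B$. It is unital and equivariant, and it connects $\phi_{0}$ to $\ad{w_{1}'^{*}}\circ\phi_{0}$. Hence $[\ad{w_{1}'^{*}}\circ\phi_{0}]=[\phi_{0}]$.

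It remains to identify $[w_{1}']$ with $\pm Q(x)$. The element $\nu_{A}^{*}(x)=[(\Psi\circ\nu_{A},\Psi_{0}\circ\nu_{A})]$ is represented by the projection loop $(p_{t})$, which is exactly the form appearing in the definition of $\partial$ (with $\Go_{t}=w_{t}^{*}$). Thus $\partial([w_{1}'])=\pm\nu_{A}^{*}(x)$, and the sign can be absorbed by replacing $x$ with $-x$. Since $\phi_{0}\sim\phi$ and conjugation by a fixed unitary respects homotopy, this shows $[\ad{v}\circ\phi]=[\phi]$.

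\textbf{If $[\ad{v}\circ\phi]=[\phi]$, then $[v]\in\im{Q}$.} Let $H_{t}$ be a homotopy of unital equivariant maps from $\phi$ to $\ad{v}\circ\phi$. Choose a path $\Go$ in $\Ucal(M_{2}(B^{\Gb}))$ from $\tmat{v}{0}{0}{v^{*}}$ to $\one$. Concatenating $\tmat{H_{t}}{0}{0}{0}$ with $\ad{\Go_{s}}\circ\tmat{\phi}{0}{0}{0}$ gives a closed loop based at $\tmat{\phi}{0}{0}{0}$. Let $x\in KK^{G}(A,SB)$ be the class of the Cuntz pair formed by this loop and the constant map $\tmat{\phi}{0}{0}{0}$.

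On the first piece, restricting along $\nu_{A}$ gives the constant projection $\tmat{\one}{0}{0}{0}$, so that piece contributes trivially. On the second piece the restriction is $\Go\tmat{\one}{0}{0}{0}\Go^{*}$, which by the description of $\partial$ represents $\partial([v])$. Therefore $\nu_{A}^{*}(x)=\partial([v])$, i.e.\ $[v]=Q(x)$. This uses that $\partial$ is injective, which holds because it is the composite of the isomorphism $\Ucal/\Ucal_{0}\cong K_{1}(B^{\Gb})$ (by the preceding remark) with the saturation isomorphism of \autoref{prop:isa-sat}.

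\textbf{Main obstacle.} The delicate step is in the first implication: the lifting of the invariant projection loop to a unitary path in the fixed point algebra, and the bookkeeping that matches $[w_{1}']$ with $\partial^{-1}\nu_{A}^{*}(x)$ up to sign. We plan to handle this with the explicit description of $\partial$ via $(\Gn,\Gn_{0})$ given above, and by using a rotation homotopy in $M_{2}$ to check that $\tmat{w_{1}'}{0}{0}{w_{1}'^{*}}$ and the loop are compatible.
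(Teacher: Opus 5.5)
Your proposal is correct and follows the paper's proof essentially step for step. For $[v]\in\im{Q}\Rightarrow[\ad{v}\circ\phi]=[\phi]$, the paper likewise takes the loop $\Psi$ from the preceding lemma, lifts $p_t=\Psi_t(\one)$ to a unitary path, and conjugates back into the corner; for the converse it builds a loop from $H$ and a path from $\tmat{v}{0}{0}{v^{*}}$ to $\one$, conjugating simultaneously as $\ad{w_t}\circ\tmat{H_t}{0}{0}{0}$ rather than concatenating. The only other difference is that the paper normalizes the lift by $w_1=\one$, so that $w_0=\tmat{v_0}{0}{0}{v_0'}$ gives $Q(x)=[v_0]$ directly; with your normalization $w_0=\one$ the sign is in fact determined, $[w_1'^{*}]=Q(x)$, so your sign-absorption step is harmless but unnecessary.
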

\begin{proof}
First suppose that $[\ad{v}\circ\phi]=[\phi]$ for $\phi\in\homg{A}{B}$ and $v\in \Ucal(B^{\Gb})$. Then there exists a unital equivariant $*$-homomorphism $H:A\to C[0,1]\otimes B$ such that $H_{0}=\phi$ and $H_{1}=\ad{v}\circ\phi$. Let $w:[0,1]\to \Ucal(M_{2}(B^{\Gb}))$ be a continuous map such that $w_{0}=\tmat{v}{0}{0}{v^{*}}$ and $w_{1}=\tmat{\one}{0}{0}{\one}$. Define $H':A\to M_{2}(C[0,1]\otimes B)\subset M(SB\otimes \Kbb)$ by $H'_{t}=\ad{w_{t}}\circ\tmat{H_{t}}{0}{0}{0}$. Then, $H'_{t}:A\to M_{2}(B)$ is equivariant and 
\[H'_{0}=\ad{w_{0}}\circ\tmat{H_{0}}{0}{0}{0}=\tmat{v}{0}{0}{v^{*}}\tmat{\phi}{0}{0}{0}\tmat{v^{*}}{0}{0}{v}=\tmat{\ad{v}\circ \phi}{0}{0}{0}=H'_{1}.\]
Hence $\displaystyle \left((H',\one),(H'_{0},\one)\right)\in \mathbb{E}^{G}((A,\Ga),(SB,S\Gb))$. Then we have 
\begin{align*}
\nu_{A}^{*}[((H',\mathbf{1}),(H'_{0},\mathbf{1}))]&=[((H'\circ \nu_{A},\mathbf{1}),(H'_{0}\circ\nu_{A},\mathbf{1}))]\\
&=\left[\left((\Gl\mapsto\Gl w\pmat{\one}{0}{0}{0}w^{*},\pmat{\mathbf{1}}{0}{0}{\mathbf{1}}),(\Gl\mapsto \Gl\pmat{\one}{0}{0}{0},\pmat{\mathbf{1}}{0}{0}{\mathbf{1}})\right)\right]\\
&=\partial([v])
\end{align*}
It follows that $[v]$ is in $\im{Q}$.

Suppose that $[v]=Q(x)$ for some $x\in KK^{G}(A,SB)$. By the preceding lemma, there exists an equivariant $*$-homomorphism $\Psi=(\Psi_{t})_{t\in[0,1]}:A\to M_{2}(C[0,1]\otimes B)$ such that $\Psi_{0}=\Psi_{1}$, $x=[((\Psi,\one),(\Psi_{0},\one))]$ and $\Psi_{0}=\tmat{\phi_{0}}{0}{0}{0}:A\to M_{2}(B)$ for some $\phi_{0}\in\homg{A}{B}$ with $[\phi_{0}]=[\phi]$ in $[A,B]_{G,u}$. If we set $p=(p_{t})_{t\in [0,1]}=\Psi(\one)=(\Psi_{t}(\one))$, then $\nu_{A}^{*}\left(\left[(\Psi,\one),(\Psi_{0},\one)\right]\right)=\left[(\Gl\mapsto \Gl p,1),(\Gl\mapsto\Gl p_{0},1)\right]$ and $p_{0}=p_{1}=\tmat{\one}{0}{0}{0}$.

Let $w:[0,1]\to \Ucal(M_{2}(B^{\Gb}))$ be a continuous path such that $w_{1}=\tmat{\one}{0}{0}{\one}$ and $p_{t}=w_{t}p_{0}w_{t}^{*}$ for all $t\in[0,1]$. As $p_{0}=w_{0}p_{0}w_{0}^{*}$, $w_{0}=\tmat{v_{0}}{0}{0}{v_{0}'}$ for some $v_{0},\ v_{0}'\in \Ucal(B^{\Gb})$. By the desciption of $\partial$, we have $Q(x)=\partial^{-1}\circ\nu_{A}^{*}\left((\Psi,\Psi_{0})\right)=[v_{0}]$. Hence $v_{0}$ and $v$ are homotopic in $\Ucal(B^{\Gb})$.

Define an equivariant $*$-homomorphism $H=(H_{t})_{t\in[0,1]}:A\to M_{2}(C[0,1]\otimes B)$ by $H_{t}=\ad{w_{t}^{*}}\circ\Psi_{t}$. Then $H_{t}(1)=w_{t}^{*}p_{t}w_{t}=\tmat{\one}{0}{0}{0}=p_{0}$ for all $t\in[0,1]$. Hence We may regard $H=(H_{t})$ as a $*$-homomorphism into $C[0,1]\otimes B$, which we denote by $h=(h_{t})$. Then $h_{t}$ is unital and equivariant. For all $t\in [0,1]$ we have 
\begin{align*}
H_{0}&=\ad{w_{0}^{*}}\circ\Psi_{0}=\ad{\pmat{v_{0}}{0}{0}{v_{0}'}^{*}}\circ\pmat{\phi_{0}}{0}{0}{0}=\pmat{\ad{v_{0}^{*}}\circ\phi_{0}}{0}{0}{0}\\
H_{1}&=\ad{w_{1}^{*}}\circ\Psi_{\one}=\ad{\pmat{\one}{0}{0}{\one}}\circ\pmat{\phi_{0}}{0}{0}{0}=\pmat{\phi_{0}}{0}{0}{0}
\end{align*}
This implies $h_{0}=\ad{v_{0}^{*}}\circ\phi_{0}$ and $h_{1}=\phi_{0}$. 
In conclusion $[\phi]=[\phi_{0}]=[\ad{v_{0}^{*}}\circ\phi_{0}]=[\ad{v^{*}}\circ\phi]$ and hence $[\ad{v}\circ\phi]=[\phi]$.
\end{proof}

\begin{proposition}
The stabilizer group of $\Gc([\phi])$ is $\im{\left(\nu_{A}^{*}:KK^{G}(A,SB)\to KK^{G}(\Cbb,SB)\right)}$ 
\end{proposition}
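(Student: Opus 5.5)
The action here is $g\cdot x = x+\pi^{*}(g)$ of the abelian group $KK^{G}(\Cbb,SB)$ on $KK^{G}(\cna,SB)$. It is a translation action, so every point has the same stabilizer, namely $\ker\pi^{*}$. The task is therefore to identify $\ker\pi^{*}$, and the mapping cone exact sequence does this directly.

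\emph{Step 1.} The mapping cone sequence \eqref{eq:mapping-cone-seq}, $SA\xrightarrow{i}\cna\xrightarrow{\pi}\Cbb\xrightarrow{\nu_{A}}A$, together with the exact sequence of \cite[Section 2.2]{MN} applied with second variable $SB$, gives the exact segment
\[KK^{G}(A,SB)\xrightarrow{\nu_{A}^{*}}KK^{G}(\Cbb,SB)\xrightarrow{\pi^{*}}KK^{G}(\cna,SB).\]
For this I use the first two maps of the stated five-term sequence, $KK^{G}(B',C)\xrightarrow{\phi^{*}}KK^{G}(A',C)\xrightarrow{\pi^{*}}KK^{G}(C_{\phi},C)$, with $\phi=\nu_{A}$, $A'=\Cbb$, $B'=A$, $C=SB$. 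Exactness then gives $\ker\pi^{*}=\im\nu_{A}^{*}$.

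\emph{Step 2.} Let $g\in KK^{G}(\Cbb,SB)$. Then $g$ fixes $\Gc([\phi])$ exactly when $\Gc([\phi])+\pi^{*}(g)=\Gc([\phi])$. Since $KK^{G}(\cna,SB)$ is a group, this holds exactly when $\pi^{*}(g)=0$, that is, when $g\in\ker\pi^{*}=\im\nu_{A}^{*}$. In particular the stabilizer does not depend on $\phi$.

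\emph{Step 3 (bookkeeping).} Under the identification $\partial:\Ucal(B^{\Gb})/\Ucal_{0}(B^{\Gb})\cong KK^{G}(\Cbb,SB)$, this stabilizer corresponds to $\partial^{-1}(\im\nu_{A}^{*})=\im Q$. This is exactly the stabilizer of $[\phi]$ found in \autoref{prop:stab-htpyset}. Together with Dadarlat's formula $\Gc([\ad{v}\circ\phi])=\Gc([\phi])+\pi^{*}\partial([v])$, this shows that $\Gc$ is equivariant and matches stabilizers.

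The main obstacle is checking that the sequence from \cite{MN} really applies with the target $SB$ and the equivariant mapping cone of the unital inclusion. This only needs the contravariant half-exactness for mapping cones, which holds for arbitrary separable $G$-algebras, so I expect no real difficulty. The rest is formal.
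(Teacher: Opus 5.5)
Your proof is correct and is the same as the paper's: the action is by translation, so the stabilizer of $\Gc([\phi])$ is $\ker\pi^{*}$, and this equals $\im\nu_{A}^{*}$ by exactness of $KK^{G}(A,SB)\xrightarrow{\nu_{A}^{*}}KK^{G}(\Cbb,SB)\xrightarrow{\pi^{*}}KK^{G}(\cna,SB)$, which comes from the mapping cone sequence. Your Step 3, matching this stabilizer with $\im Q$ from \autoref{prop:stab-htpyset} via $\partial$, is not needed for this proposition; the paper only uses that comparison later, in the proof of \autoref{thm:unital-end-htpy}.
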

\begin{proof}
This follows from the following exact sequence.
\[KK^{G}(A,SB)\xlongrightarrow{\nu_{A}^{*}}KK^{G}(\Cbb,SB)\xlongrightarrow{\pi^{*}}KK^{G}(\cna,SB).\]
\end{proof}
\begin{theorem}[cf. {\cite[Corollary 3.10]{D1}}]\label{thm:unital-end-htpy}
Let $G$ be a compact group and let $X$ be a compact connected metrizable space. Let $A$ be a unital Kirchberg algeba and $\Ga:G\act A$ be an isometrically shift-absorbing action. Then the map $\Gc:[A,C(X)\otimes A]_{G,u}\to KK^{G}(\cna,SB)$ is a bijection.
\end{theorem}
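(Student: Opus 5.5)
The plan is to follow Dadarlat's orbit–stabilizer argument, assembling the pieces established above. Two actions are in play: $\Ucal(B^{\Gb})/\Ucal_{0}(B^{\Gb})$ acts on $[A,B]_{G,u}$ by conjugation, and $KK^{G}(\Cbb,SB)$ acts on $KK^{G}(\cna,SB)$ through $\pi^{*}$. Through the isomorphism $\partial$ (using $B^{\Gb}\cong B^{\Gb}\otimes\oinf$ and \autoref{prop:K1-oinf}), these two acting groups are identified. The proposition computing $\Gc([\mathrm{Ad}v\circ\phi])=\Gc([\phi])+\pi^{*}\partial([v])$ shows that $\Gc$ is equivariant with respect to this identification. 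It therefore suffices to prove two things: $\Gc$ induces a bijection on orbit spaces, and for each $[\phi]$ it maps the stabilizer of $[\phi]$ onto the stabilizer of $\Gc([\phi])$. An equivariant map with these two properties is a bijection, by the standard orbit–stabilizer argument.

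For the orbit spaces, \autoref{prop:T-inj} together with $\im T=\ker\nu_{A}^{*}$ identifies the orbit space of $[A,B]_{G,u}$ with $\ker(\nu_{A}^{*}:KK^{G}(A,B)\to KK^{G}(\Cbb,B))$, via $T([\phi])=KK^{G}(\phi)-KK^{G}(j)$. On the other side, the mapping cone exact sequence identifies the orbit space of $KK^{G}(\cna,SB)$ with $\im i^{*}=\ker(S\nu_{A})^{*}\subset KK^{G}(SA,SB)$. I would then check that $i^{*}\circ\Gc$ corresponds to $T$ under the Bott/suspension isomorphism $KK^{G}(A,B)\cong KK^{G}(SA,SB)$. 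Restricting the Cuntz pair $(C_{\nu}\phi,C_{\nu}j)$ to $SA$ gives $(S\phi,Sj)$, whose class is $S(KK^{G}(\phi)-KK^{G}(j))$. This isomorphism is natural, so it carries $\ker\nu_{A}^{*}$ onto $\ker(S\nu_{A})^{*}$. Hence the induced map on orbit spaces is the composite of the bijection $T$ with the suspension isomorphism, and is therefore bijective.

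For the stabilizers, \autoref{prop:stab-htpyset} shows that the stabilizer of $[\phi]$ is $\im(\partial^{-1}\circ\nu_{A}^{*})$, where $\nu_{A}^{*}:KK^{G}(A,SB)\to KK^{G}(\Cbb,SB)$. The subsequent proposition shows that the stabilizer of $\Gc([\phi])$ is $\im(\nu_{A}^{*})$. These match exactly under $\partial$, so $\Gc$ restricts to bijections between corresponding stabilizers.

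The formal argument then runs as follows. For injectivity, suppose $\Gc([\phi])=\Gc([\psi])$. Then the orbits coincide, so $[\psi]=[\mathrm{Ad}v\circ\phi]$ for some $v$. Equivariance gives $\pi^{*}\partial([v])=0$, which places $\partial([v])$ in $\im\nu_{A}^{*}$, so $[v]$ lies in the stabilizer of $[\phi]$ and $[\psi]=[\phi]$. For surjectivity, the orbit bijection produces $[\phi]$ with $\Gc([\phi])$ in the orbit of the target, and equivariance together with the surjectivity of $\partial$ adjusts $[\phi]$ by a unitary to hit the target exactly.

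The main obstacle I expect is the compatibility check $i^{*}\Gc([\phi])=\pm S(T[\phi])$, including sign conventions and the identification of Cuntz pairs with differences of $KK$-classes in the equivariant setting. I would handle it by writing $i^{*}$ explicitly as restriction of the Cuntz pair to $SA\subset\cna$.
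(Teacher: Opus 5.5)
Your proposal is correct and is essentially the paper's own proof. The paper sets up the same ladder diagram, with vertical maps $\partial$, $\Gc$ and the suspension isomorphisms $S$. It then runs exactly your injectivity argument: $T$-injectivity, the formula $\Gc([\ad{v}\circ\phi])=\Gc([\phi])+\pi^{*}\partial([v])$, exactness at $KK^{G}(\Cbb,SB)$, and the stabilizer proposition. Its surjectivity argument is also yours: lift $i^{*}(z)$ through $S\circ T$, then correct by a unitary using $\ker i^{*}=\im\pi^{*}$. The compatibility $i^{*}\circ\Gc=S\circ T$ that you flag as the main obstacle is simply asserted in the paper as commutativity of the diagram. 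Your restriction-to-$SA$ computation is the right way to verify it, and a sign discrepancy would not affect bijectivity anyway.
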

\begin{proof}
Note that we have the following commutative diagram.
\begin{center}
\resizebox{\textwidth}{!}{
\begin{tikzpicture}[font=\small]
\node (1) at (-7,1) {$KK^{G}(A,SB)$};
\node (2) at (-3.8,1) {$K_{1}^{G}(B)$};
\node (3) at (-0.5,1) {$[A,B]_{G,u}$};
\node (4) at (3.5,1) {$KK^{G}(A,B)$};
\node (5) at (7.5,1) {$KK^{G}(\Cbb,B)$};
\node (6) at (-7,-1) {$KK^{G}(A,SB)$};
\node (7) at (-3.8,-1) {$KK^{G}(\Cbb,SB)$};
\node (8) at (-0.5,-1) {$KK^{G}(\cna,SB)$};
\node (9) at (3.5,-1) {$KK^{G}(SA,SB)$};
\node (10) at (7.5,-1) {$KK^{G}(S\Cbb,SB)$};
\draw[->] (1) to node[above] {$Q$} (2);
\draw[->] (2) to node[above] {$I$} (3);
\draw[->] (3) to node[above] {$T$} (4);
\draw[->] (4) to node[above] {$\nu_{A}^{*}$} (5);
\draw[->] (6) to node[above] {$\nu_{A}^{*}$} (7);
\draw[->] (7) to node[above] {$\pi^{*}$} (8);
\draw[->] (8) to node[above] {$i^{*}$} (9);
\draw[->] (9) to node[above] {$(S\nu_{A})^{*}$} (10);
\draw[double, -] (1) to (6); 
\draw[->] (2) to node[right] {$\partial$} (7);
\draw[->] (3) to node[right] {$\Gc$} (8);
\draw[->] (4) to node[right] {$S$} (9);
\draw[->] (5) to node[right] {$S$} (10);
\end{tikzpicture}
}
\end{center}
Here $I$ is defined by $I([v])=[\ad{v}\circ j]$.

Suppose that $\Gc([\phi])=\Gc([\psi])$. Since $S:KK^{G}(A,B)\to KK^{G}(SA,SB)$ is bijection, $i^{*}\Gc([\phi])=i^{*}\Gc([\psi])$ implies that $T([\phi])=T([\psi])$. By \autoref{prop:T-inj}, there exists a unitary $\Ucal(B^{\Gb})$ such that $[\ad{v}\circ\phi]=[\psi]$. Then we have $\Gc([\phi])=\Gc([\psi])=\Gc([\ad{v}\circ \phi])=\Gc([\phi])+\pi^{*}\partial ([v])$. This implies $[v]$ is in the image of $Q$. Thus it follows from \autoref{prop:stab-htpyset} that $[\ad{v}\circ\phi]=[\phi]$, and hence we have $[\phi]=[\psi]$.

Let $z\in KK^{G}(\cna.SB)$ be any element. Then $i^{*}(z)$ is in the kernel of $(S\nu_{A})^{*}$. Since $S:KK^{G}(A,B)\to KK^{G}(SA,SB)$ induces a bijection $\im{T}=\ker{\nu_{A}^{*}}\to \ker{(S\nu_{A})^{*}}$, there exists $[\phi]\in [A,B]_{G,u}$ such that $ST([\phi])=i^{*}(z)$. Thus $\Gc([\phi])$ is in the orbit of $z$ and there exists a unitary $v\in \Ucal(B^{\Gb})$ such that $\pi^{*}\circ\partial ([v])=z-\Gc([\phi])$. Then $\Gc([\ad{v}\circ \phi])=z$ and this finishes the proof.
\end{proof}

\begin{remark}
Here we note some facts about homotopy sets. We refer to \cite{Wh-htpy} for the followings. Let $(X,x_{0})$ and $(Y,y_{0})$ be pointed spaces. We denote by $[X,Y]$ the homotopy classes of continuous maps from $X$ to $Y$. Also, we denote by $[(X,x_{0}),(Y,y_{0})]$ the homotopy classes of base-point preserving continuous maps from $X$ to $Y$. We suppose that the base point $x_{0}$ is non-degenerate, that is, $\{x_{0}\}\to X$ is a cofibration. Then there is a natural action of $\pi_{1}(Y,y_{0})$ on $[(X,x_{0}),(Y,y_{0})]$. If $Y$ is path connected, then the orbit space of this action is identified with $[X,Y]$. We will note the relation between $[(X,x_{0}),(Y,y_{0})]$, and $[X,Y]$, and their monoidal structure.

A pointed space $(Y,y_{0})$ is an \textit{$H$-space} if there is a continuous map $m:Y\times Y\to Y$ such that the $y\mapsto m(y,y_{0})$ and $y\mapsto m(y_{0},y)$ are homotopic to $\id{Y}$. It is known that the action of $\pi_{1}(Y,y_{0})$ on $[(X,x_{0}),\pty]$ is trivial for every $H$-space $(Y,y_{0})$. Thus if $Y$ is path connected, the natural map $[(X,x_{0}),(Y,y_{0})]\to [X,Y]$ is bijective. Moreover, if $m$ is homotopy associative, then $[(X,x_{0}),(Y,y_{0})]$ has a monoid structure. If $X$ is a CW-complex and $Y$ is path connected, then $[(X,x_{0}),(Y,y_{0})]\cong [X,Y]$ is a group.

Let $X\vee X$ be the wedge product. We regard $X\vee X\subset X\times X$ so that every element of $X\vee X$ is of the form $(x,x_{0})$ or $(x_{0},x')$ for $x,x'\in X$. Let $q_{1}=p_{1}|_{X\vee X}:X\vee X\to X$ and $q_{2}=p_{2}|_{X\vee X}:X\vee X\to X$, where $p_{1},p_{2}:X\times X\to X$ are the projections. A pointed space $(X,x_{0})$ is a \textit{co-$H$-space} if there exists a continuous map $\Gm:X\to X\vee X$ such that $q_{1}\circ\Gm$ and $q_{2}\circ\Gm$ are homotopic to $\id{X}$. If $(X,x_{0})$ is a co-$H$-space and $(Y,y_{0})$ is an $H$-space, then there are two multiplications on $[(X,x_{0}),(Y,y_{0})]$ induced by $m$ and $\Gm$. It is known that they coincide. Moreover, the multiplication is commutative and associative.
\end{remark}

Define a map
\[\Gc_{X}:[(X,x_{0}),(\mathrm{End}_{G}(A),\id{A})]\to KK^{G}(\cna, SC(X,x_{0})\otimes A)\]
in the similar way to $\Gc$ as in \autoref{thm:unital-end-htpy}. Then the following diagram commutes.
\begin{center}
\begin{tikzpicture}[auto]
\node (1) at (-3,1) {$[(X,x_{0}),(\mathrm{End}_{G}(A),\id{A})]$};
\node(2) at (3,1) {$KK^{G}(\cna,SC(X,x_{0})\otimes A)$};
\node (3) at (-3,-1) {$[A,C(X)\otimes A]_{G,u}$};
\node (4) at (3,-1) {$KK^{G}(\cna,SC(X)\otimes A)$};
\draw[->] (1) to node {$\Gc_{X}$} (2);
\draw[->] (1) to (3);
\draw[->] (2) to (4);
\draw[->] (3) to node {$\Gc$} (4);
\end{tikzpicture}
\end{center}
If $(X,x_{0})$ is a co-$H$-space, for $\phi,\psi:(X,x_{0})\to(\mathrm{End}_{G}(A),\id{A})$, $[\phi]\ast[\psi]=[(\phi\vee\psi)\circ\Gm]$ is the multiplication on $[(X,x_{0}),(\mathrm{End}_{G}(A),\id{A})]$ induced by $\Gm$.
\begin{proposition}[cf. {\cite[Proposition 4.4]{D1}}]\label{prop:unital-multi}
If $(X,x_{0})$ is a co-$H$-space, then $\Gc_{X}([\phi]\ast[\psi])=\Gc_{X}([\phi])+\Gc_{X}([\psi])$.
\end{proposition}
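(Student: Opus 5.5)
The plan is to follow Dadarlat's argument for \cite[Proposition 4.4]{D1}. The key observation is that $\Gc_{X}$ is natural in the pointed space, and that the map $\phi\mapsto (C_{\nu}\phi,C_{\nu}j)$ is compatible with wedge sums. Write $\Gm:X\to X\vee X$ for the co-$H$ structure and $q_{1},q_{2}:X\vee X\to X$ for the two folds. Any pointed map $\Phi:(X\vee X,*)\to(\End{A},\id{A})$ gives a unital equivariant $*$-homomorphism $A\to C(X\vee X)\otimes A$, so we can define $\Gc_{X\vee X}([\Phi])\in KK^{G}(\cna,SC(X\vee X,*)\otimes A)$ exactly as for $X$. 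For any pointed map $f:Y\to Z$, precomposition $\Phi\mapsto\Phi\circ f$ corresponds to postcomposition with $f^{*}:C(Z,z_{0})\to C(Y,y_{0})$. Hence the relation $\Gc_{Y}([\Phi\circ f])=(\id{S}\otimes f^{*}\otimes\id{A})_{*}\Gc_{Z}([\Phi])$ holds: both sides are represented by the Cuntz pair $(C_{\nu}(f^{*}\Phi),C_{\nu}(f^{*}j))$.

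Applying this with $f=\Gm$ and $\Phi=\phi\vee\psi$ gives $\Gc_{X}([\phi]\ast[\psi])=\Gm^{*}_{*}\Gc_{X\vee X}([\phi\vee\psi])$. Next, use $C(X\vee X,*)\cong C(X,x_{0})\oplus C(X,x_{0})$, which holds as equivariant algebras since the action is trivial on these factors. Under this isomorphism, $KK^{G}(\cna,SC(X\vee X,*)\otimes A)\cong KK^{G}(\cna,SC(X,x_{0})\otimes A)^{\oplus 2}$ via $((q_{1}^{*})^{-1}\text{-restrictions})$, i.e. via the two restriction maps $\iota_{k}^{*}$ along the inclusions $\iota_{k}:X\to X\vee X$. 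By the naturality above, the components of $\Gc_{X\vee X}([\phi\vee\psi])$ are $\Gc_{X}([(\phi\vee\psi)\circ\iota_{1}])=\Gc_{X}([\phi])$ and $\Gc_{X}([\psi])$.

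The pullback $\Gm^{*}:C(X,x_{0})\oplus C(X,x_{0})\to C(X,x_{0})$ then has to be identified on $KK^{G}$. One has $\Gm^{*}=\Gm^{*}\circ(\iota_{1}q_{1}^{*}\ldots)$; more cleanly, $\Gm^{*}(f_{1},f_{2})$ is the function equal to $f_{k}\circ q_{k}\circ\Gm$ on the two halves. As an element of $KK^{G}$, a homomorphism out of a direct sum is the sum of its restrictions to the summands. So $\Gm^{*}_{*}(a,b)=(q_{1}\Gm)^{*}_{*}a+(q_{2}\Gm)^{*}_{*}b$, and since $q_{k}\circ\Gm\simeq\id{X}$ as pointed maps, homotopy invariance of $KK^{G}$ gives $\Gm^{*}_{*}(a,b)=a+b$. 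This yields the claim.

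I expect the main obstacle to be the additivity step: checking that the Cuntz pair $(C_{\nu}\Phi,C_{\nu}j)$ with values in the direct sum really decomposes in $KK^{G}$ as the sum of the two component pairs. The second components $C_{\nu}j$ do not literally split, because $j$ is the unital constant inclusion and not zero on either summand. I would handle this by working in the reduced algebra $SC(X\vee X,*)\otimes A$: the relevant difference lives there because $\Phi$ and $j$ agree at the base point. Then I would write $C(X\vee X)\otimes A$ as the pullback $C(X)\otimes A\times_{A}C(X)\otimes A$, and use split exactness of $KK^{G}$ for the corresponding split extension of the reduced algebras to identify the class componentwise.
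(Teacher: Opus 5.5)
Your proposal is correct and follows the paper's proof essentially step for step. Both arguments use naturality of $\Gc$ under pointed maps, then the splitting $C(X\vee X,x_0)\cong C(X,x_0)\oplus C(X,x_0)$ to get $\Gc_{X\vee X}([\phi\vee\psi])=q_1^*\Gc_X([\phi])+q_2^*\Gc_X([\psi])$, and finally $q_k\circ\mu\simeq\mathrm{id}_X$. The obstacle you anticipate does not actually arise: since $\mathcal{M}(B_1\oplus B_2)=\mathcal{M}(B_1)\oplus\mathcal{M}(B_2)$ and the restrictions $\iota_k^*$ are surjective, your second paragraph already identifies the class componentwise, so no pullback or split-exactness argument is needed.
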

\begin{proof}
For a pointed map $f:(X,x_{0})\to (Y,y_{0})$, let $f^{*}:[\pty,(\mathrm{End}_{G}(A),\id{A})]\to[\ptx,(\mathrm{End}_{G}(A),\id{A})]$ and $f^{*}:KK^{G}(\cna,SC\pty\otimes A)\to KK^{G}(\cna,SC\ptx\otimes A)$ be induced maps. Then clearly $f^{*}\circ\Gc_{X}=\Gc_{X}\circ f^{*}$ holds. Thus for the co-multiplication $\Gm:\ptx\to (X\vee X,x_{0})$, we have the following commutative diagram.
\begin{center}
\begin{tikzpicture}[auto]
\node (1) at (-4,1) {$[(X\vee X,x_{0}),(\mathrm{End}_{G}(A),\id{A})]$};
\node (2) at (4,1) {$[\ptx,(\mathrm{End}_{G}(A),\id{A})]$};
\node (3) at (-4,-1) {$KK^{G}(\cna,SC(X\vee X,x_{0})\otimes A)$};
\node (4) at (4,-1) {$KK^{G}(\cna,SC\ptx\otimes A)$};
\draw[->] (1) to node {$\Gm^{*}$} (2);
\draw[->] (1) to node {$\Gc_{X\vee X}$} (3);
\draw[->] (2) to node {$\Gc_{X}$} (4);
\draw[->] (3) to node {$\Gm^{*}$} (4);
\end{tikzpicture}
\end{center}
Let $i_{1},i_{2}:X\to X\vee X$ be the maps defined by $i_{1}(x)=(x,x_{0})$ and $i_{2}(x)=(x_{0},x)$. Then we have $(\phi\vee\psi)\circ i_{1}=\phi$ and $(\phi\vee\psi)\circ i_{2}=\psi$. By combining with the fact that $C(X\vee X,x_{0})\cong C(X,x_{0})\oplus C\ptx$ and $q_{k}\circ i_{k}=\id{}$, we have $\Gc_{X\vee X}([\phi\vee\psi])=q_{1}^{*}\circ \Gc_{X}([\phi])+q_{2}\circ \Gc_{X}([\psi])$. Thus it follow that
\begin{align*}
\Gc_{X}([\phi]\ast[\psi])&=\Gc_{X}([(\phi\vee\psi)\circ \Gm])=\Gm^{*}\circ \Gc_{X\vee X}([\phi\vee\psi])\\
&=\Gm^{*}\circ q_{1}^{*}\circ\Gc_{X}([\phi])*+\Gm^{*}\circ q_{2}^{*}\circ\Gc_{X}([\psi])=\Gc_{X}([\phi])+\Gc_{X}([\psi])
\end{align*}
\end{proof}
For the stable case, let $\overline{\Gc}_{X}:[\ptx,(\mathrm{End}_{G}(A),\id{A})]\to KK^{G}(A,C(X,x_{0})\otimes A)$ be the map defined by $\overline{\Gc}_{X}([\phi])=[(\phi,\one),(j_{A},\one)]$, where $j_{A}:A\ni a\mapsto \one_{C(X)}\otimes a\in C(X)\otimes A$. In the similar way, we have 
\begin{proposition}[cf. {\cite[Proposition 4.5]{D1}}]\label{prop:stable-multi}
If $\ptx$ is a co-$H$-space, $\overline{\Gc}_{X}([\phi]\ast[\psi])=\overline{\Gc}_{X}([\phi])+\overline{\Gc}_{X}([\psi])$.
\end{proposition}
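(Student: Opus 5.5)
We mimic the proof of \autoref{prop:unital-multi}, replacing $\Gc_{X}$ by $\overline{\Gc}_{X}$. The plan has two steps. First we establish naturality of $\overline{\Gc}$ in the pointed space. Then we establish additivity on wedges, and pull back along the comultiplication $\Gm$.

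\textbf{Naturality.} For a pointed map $f:\ptx\to\pty$, let $f^{*}:C(Y,y_{0})\otimes A\to C(X,x_{0})\otimes A$ and $f^{*}:C(Y)\otimes A\to C(X)\otimes A$ denote the induced maps. Composition gives $f^{*}\circ\phi=\phi\circ f$ as $*$-homomorphisms $A\to C(X)\otimes A$, and $f^{*}\circ j_{A}=j_{A}$. The pair $(\phi,j_{A})$ is a Cuntz pair with values differing in $C(X,x_{0})\otimes A$, since $\phi_{x_{0}}=\id{A}$. Both entries carry the trivial cocycle, and functoriality of Cuntz pairs under the equivariant map $f^{*}$ gives
\[f^{*}\overline{\Gc}_{Y}([\phi])=[(\phi\circ f,\one),(j_{A},\one)]=\overline{\Gc}_{X}(f^{*}[\phi]).\]
Well-definedness on pointed homotopy classes follows by running the same construction over $X\times[0,1]$ with basepoint $\{x_{0}\}\times[0,1]$.

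\textbf{Additivity on the wedge.} Use the splitting $C(X\vee X,x_{0})\cong C\ptx\oplus C\ptx$, realized via $(i_{1}^{*},i_{2}^{*})$ with inverse $(q_{1}^{*},q_{2}^{*})$. This splitting gives
\[KK^{G}(A,C(X\vee X,x_{0})\otimes A)\cong KK^{G}(A,C\ptx\otimes A)^{\oplus 2}.\]
Under it, $\overline{\Gc}_{X\vee X}([\phi\vee\psi])$ corresponds to $\bigl(i_{1}^{*}\overline{\Gc}_{X\vee X}([\phi\vee\psi]),\,i_{2}^{*}\overline{\Gc}_{X\vee X}([\phi\vee\psi])\bigr)$. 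By naturality, this pair equals $(\overline{\Gc}_{X}([\phi]),\overline{\Gc}_{X}([\psi]))$, because $(\phi\vee\psi)\circ i_{k}$ recovers $\phi$ and $\psi$. Hence
\[\overline{\Gc}_{X\vee X}([\phi\vee\psi])=q_{1}^{*}\overline{\Gc}_{X}([\phi])+q_{2}^{*}\overline{\Gc}_{X}([\psi]).\]
Applying $\Gm^{*}$ and using $q_{k}\circ\Gm\simeq\id{X}$ (pointed homotopies) together with homotopy invariance of $KK^{G}$, we get
\[\overline{\Gc}_{X}([\phi]\ast[\psi])=\Gm^{*}\overline{\Gc}_{X\vee X}([\phi\vee\psi])=\overline{\Gc}_{X}([\phi])+\overline{\Gc}_{X}([\psi]).\]

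The main point requiring care is the splitting identification. We must check that the isomorphism $KK^{G}(A,(I_{1}\oplus I_{2})\otimes A)\cong KK^{G}(A,I_{1}\otimes A)\oplus KK^{G}(A,I_{2}\otimes A)$ is compatible with the Cuntz pair picture, i.e.\ that the relative class $[(\phi\vee\psi),(j_{A})]$ is determined by its two restrictions. This holds because $KK^{G}$ is additive in the second variable for finite direct sums, and restriction of Cuntz pairs to each summand is given by the projection maps. The remaining steps are formal.
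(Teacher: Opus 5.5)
Your proposal is correct and follows essentially the same route as the paper, which proves this ``in the similar way'' to \autoref{prop:unital-multi}: naturality of $\overline{\Gc}$ under pointed maps, then the splitting $C(X\vee X,x_{0})\cong C\ptx\oplus C\ptx$ to get $\overline{\Gc}_{X\vee X}([\phi\vee\psi])=q_{1}^{*}\overline{\Gc}_{X}([\phi])+q_{2}^{*}\overline{\Gc}_{X}([\psi])$, and finally pulling back along the comultiplication $\Gm$. Your added remarks, that $q_{k}\circ\Gm\simeq\id{X}$ must be a pointed homotopy and that $KK^{G}$ is additive in the second variable, spell out points the paper treats as clear.
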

Let $\mathrm{End}_{G}(A)^{0}$ be the path component of $\id{A}$.
\begin{theorem}[cf. {\cite[Theorem 4.6]{D1}}]\label{thm:end-grp-str}
Let $G$ be a compact group and let $X$ be a compact path connected metrizable space and $x_{0}\in X$. Let $A$ be a Kirchberg algebra and let $\Ga:G\act A$ be an isometrically shift-absorbing action.\\
(i) If $A$ is stable, $\Gc_{X}:[X,\mathrm{End}_{G}(A)^{0}]\to KK^{G}(\cna,SC\ptx\otimes A)$ is a bijection.\\
(ii) If $A$ is unital, $\overline{\Gc}_{X}:[X,\mathrm{End}_{G}(A)^{0}]\to KK^{G}(A,C\ptx\otimes A)$ is a bijection.\\
If $\ptx$ is a co-$H$-space, these maps are group isomorphisms. 
\end{theorem}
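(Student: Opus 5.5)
The plan is to deduce both parts from the unpointed bijections already established: \autoref{thm:stable-end} for stable $A$ and \autoref{thm:unital-end-htpy} for unital $A$. I would pass to pointed homotopy classes, restrict to the path component of $\id{A}$, and check that the pointed invariants $\Gc_{X}$ and $\overline{\Gc}_{X}$ are exactly the "reduced parts" of the unpointed ones. Multiplicativity for co-$H$-spaces then follows from \autoref{prop:unital-multi} and \autoref{prop:stable-multi}.

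\textbf{Caveat on the pairing in (i)/(ii).} As worded, (i) pairs stable $A$ with $\Gc_{X}$ and the target $KK^{G}(\cna,SC\ptx\otimes A)$. Here $\cna$ must be read via the unital inclusion $\nu_{A}:\Cbb\to\Mcal(A)$, since $A$ is non-unital. Likewise (ii) pairs unital $A$ with $\overline{\Gc}_{X}$. For these readings the reduction steps below must be run with the invariant named in each item. The expected obstacle, flagged below, is that the mapping-cone exact sequence
\[KK^{G}(\Cbb,SC\ptx\otimes A)\xrightarrow{\pi^{*}}KK^{G}(\cna,SC\ptx\otimes A)\xrightarrow{i^{*}}KK^{G}(A,C\ptx\otimes A)\xrightarrow{(S\nu_{A})^{*}}KK^{G}(S\Cbb,SC\ptx\otimes A)\]
is needed to compare the two targets. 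In the unital case, with $\cna$ built from $\nu_{A}:\Cbb\to A$, the outer terms are $K_{1}^{G}$- and $K_{0}^{G}$-groups of $C\ptx\otimes A$, which need not vanish. So for (ii) as worded one must check that $\overline{\Gc}_{X}$ is indeed a bijection, rather than the composite $i^{*}\circ\Gc_{X}$. I would first test this on $X=S^{1}$ before committing to it.

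\textbf{Step 1: pointed versus unpointed classes.} $\mathrm{End}_{G}(A)^{0}$ is a path-connected $H$-space under composition, with unit $\id{A}$. Compact metrizable path-connected $X$ may not be a CW-complex, so I would use the $H$-space action argument from the preceding remark, assuming $x_{0}$ non-degenerate (or replacing $X$ by a whiskered space). This identifies $[\ptx,(\mathrm{End}_{G}(A)^{0},\id{A})]$ with $[X,\mathrm{End}_{G}(A)^{0}]$.

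\textbf{Step 2: splitting off the base point.} Use the split extension $0\to C\ptx\otimes A\to C(X)\otimes A\xrightarrow{\mathrm{ev}_{x_{0}}}A\to0$, split by $j_{A}$. This gives
\[KK^{G}(D,C(X)\otimes A)\cong KK^{G}(D,C\ptx\otimes A)\oplus KK^{G}(D,A)\]
for $D=A$, and similarly after suspension for $D=\cna$. For a pointed map $\phi$, the pair $(\phi,j_{A})$ is a Cuntz pair landing in $C\ptx\otimes A$, since $\phi_{x_{0}}=\id{A}$. Its class is the $C\ptx$-component of $KK^{G}(\phi)$, the other component being $KK^{G}(\id{A})$. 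The same holds for $\Gc$, with $C_{\nu}j$ in place of $j_{A}$.

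\textbf{Step 3: injectivity and surjectivity.} \emph{Injectivity:} if two pointed maps have equal reduced invariants, they have equal full invariants. By the relevant earlier bijection they are freely homotopic, and by Step 1 they are pointed homotopic.

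\emph{Surjectivity:} given $z$ in the reduced group, form $z+$ (the class of $j_{A}$) in the full group, and realize it by some $\phi:X\to\mathrm{End}_{G}(A)$ using the earlier bijection. Evaluating at $x_{0}$ shows $\phi_{x_{0}}$ has the same invariant as $\id{A}$. By injectivity of that bijection for $X=\mathrm{pt}$, $\phi_{x_{0}}$ lies in the path component of $\id{A}$. Since $X$ is path connected, $\phi$ maps into $\mathrm{End}_{G}(A)^{0}$. Translating by a path from $\phi_{x_{0}}$ to $\id{A}$, using the homotopy extension property at $x_{0}$, makes $\phi$ pointed without changing its class.

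\textbf{Main obstacle.} The delicate step is the unital surjectivity and well-definedness, where the cone invariant involves $C_{\nu}j$ and the unit. I would handle it via the commutative ladder in the proof of \autoref{thm:unital-end-htpy}, applied naturally in $X$ and at $X=\{x_{0}\}$. This is also where the stable/unital pairing of the statement must be checked against the exact sequence above.
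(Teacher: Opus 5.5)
\textbf{Gap: the pairing is left unresolved, and Step 3 fails under the literal reading.} Your Steps 1--3 have the right mechanism, but the proposal stops short of a proof. You leave open which invariant goes with which hypothesis, and you plan to run the steps ``with the invariant named in each item.'' Under that reading Step 3 fails, because the ``relevant earlier bijection'' does not exist.

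For stable $A$ there is no unital inclusion $\Cbb\to A$. Building $\cna$ from $\Cbb\to\Mcal(A)$ instead gives a non-separable cone that nothing in the paper addresses. For unital $A$, the map $[\phi]\mapsto KK^{G}(\phi)$ on $[A,C(X)\otimes A]_{G,u}$ is not a bijection. By \autoref{prop:T-inj} it only detects classes up to conjugation by invariant unitaries, and its image is $KK^{G}(j_{A})+\ker\nu_{A}^{*}$.

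Your suspicion about (ii) is justified: read literally, it is false. Take trivial $G$, $X=S^{1}$, and a unital UCT Kirchberg algebra with $(K_{0}(A),[1_{A}],K_{1}(A))=(\Zbb,0,0)$. Then $K_{*}(\cna)=(\Zbb,\Zbb)$, so $[S^{1},\mathrm{End}(A)^{0}]=\pi_{1}(\mathrm{End}(A))\cong KK(\cna,S^{2}A)\cong\Zbb$. On the other hand, $KK(A,C(S^{1},x_{0})\otimes A)=KK(A,SA)=0$ by the UCT.

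\textbf{The missing observation.} In the statement as printed, ``stable'' and ``unital'' are interchanged between (i) and (ii). The paper's proof of (i) uses the bijection $\Gc$ of \autoref{thm:unital-end-htpy}. Its proof of (ii) uses $[\phi]\mapsto KK^{G}(\phi)$ from \autoref{thm:stable-end}. \autoref{cor:htpy-grps} states the result with unital $\leftrightarrow\Gc_{X}$ and stable $\leftrightarrow\overline{\Gc}_{X}$.

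With this pairing, your Steps 1--3 are exactly the paper's argument. It is a commutative ladder with top row $[X,\mathrm{End}_{G}(A)^{0}]\to[A,C(X)\otimes A]_{G,u}\xrightarrow{(\mathrm{ev}_{x_{0}})_{*}}[A,A]_{G,u}$ and bottom row $KK^{G}(\cna,SC\ptx\otimes A)\to KK^{G}(\cna,SC(X)\otimes A)\to KK^{G}(\cna,SA)$. Its middle and right vertical maps are bijections. The stable case uses the analogous ladder with $KK^{G}(A,-)$, with the reduced part shifted by $KK^{G}(j_{A})$.

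Your ``main obstacle'' then disappears. In the unital case no shift is needed, because $\Gc([\id{A}])=[(C_{\nu}\id{A},C_{\nu}\id{A})]=0$. Hence $\Gc$ of a pointed map lies automatically in $\ker(\mathrm{ev}_{x_{0}})_{*}=KK^{G}(\cna,SC\ptx\otimes A)$, by split exactness. All that is used is bijectivity of $\Gc$ for $X$ and for a point, together with naturality under $\mathrm{ev}_{x_{0}}$.
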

\begin{proof}
(i) : Since $\mathrm{End}_{G}(A)^{0}$ is path connected, we can identify  $[\ptx,(\mathrm{End}_{G}(A)^{0},\id{A})]$ with $[X,\mathrm{End}_{G}(A)^{0}]$. Note that
\[[\ptx,(\mathrm{End}_{G}(A)^{0},\id{A})]\cong \{[\phi]\in [A,C(X)\otimes A]_{G,u}\:|\: [\phi_{x}]=[\id{A}]\in [A,A]_{G}\}\]
as topological spaces. Then we have the following commutative diagram.
\begin{center}
\resizebox{\textwidth}{!}{
\begin{tikzpicture}[font=\small]
\node (1) at (-6,1) {$[X,\mathrm{End}_{G}(A)^{0}]$};
\node(2) at  (0,1) {$[A,C(X)\otimes A]_{G,u}$};
\node (3) at (6,1) {$[A,A]_{G,u}$};
\node (4) at (-6,-1) {$KK^{G}(\cna,SC\ptx\otimes A)$};
\node (5) at (0,-1) {$KK^{G}(\cna,SC(X)\otimes A)$};
\node (6) at (6,-1) {$KK^{G}(\cna,SA)$};
\draw[->] (1) to (2);
\draw[->] (2) to node[above] {$(\mathrm{ev}_{x_{0}})_{*}$} (3);
\draw[->] (1) to node[right] {$\Gc_{X}$} (4);
\draw[->] (2) to node[right] {$ \Gc$} (5);
\draw[->] (3) to node[right] {$\Gc$} (6);
\draw[->] (4) to (5);
\draw[->] (5) to node[above] {$(\mathrm{ev}_{x_{0}})_{*}$} (6);
\end{tikzpicture}}
\end{center}
Since the middle and right vertical maps are bijective, it follows that $\Gc_{X}$ is also bijective.\\
(ii) : Consider the following diagram.
\begin{center}
  \resizebox{\textwidth}{!}{
\begin{tikzpicture}[font=\small]
\node (1) at (-6,1) {$[X,\mathrm{End}_{G}(A)^{0}]$};
\node(2) at  (0,1) {$[A,C(X)\otimes A]_{G,u}$};
\node (3) at (6,1) {$[A,A]_{G,u}$};
\node (4) at (-6,-1) {$KK^{G}(j_{A})+KK^{G}(A,C\ptx\otimes A)$};
\node (5) at (0,-1) {$KK^{G}(A,C(X)\otimes A)$};
\node (6) at (6,-1) {$KK^{G}(A,A)$};
\draw[->] (1) to (2);
\draw[->] (2) to node[above] {$(\mathrm{ev}_{x_{0}})_{*}$} (3);
\draw[->] (1) to node[right] {$KK^{G}(j_{A})+\overline{\Gc}_{X}$} (4);
\draw[->] (2) to node[right] {$ \Gk$} (5);
\draw[->] (3) to node[right] {$\Gk$} (6);
\draw[->] (4) to (5);
\draw[->] (5) to node[above] {$(\mathrm{ev}_{x_{0}})_{*}$} (6);
\end{tikzpicture}}
\end{center}
In the same way to the unital case, it follows that $KK^{G}(j_{A})+\overline{\Gc}_{X}$ is bijective, and hence $\overline{\Gc}_{X}$ is bijective.

If $\ptx$ is a co-$H$-space, it follows from \autoref{prop:unital-multi} and \autoref{prop:stable-multi} that $\Gc_{X}$ and $\overline{\Gc}_{X}$ are group isomorphisms. 
\end{proof}
\begin{remark}
Since $\mathrm{End}_{G}(A)^{0}$ is an $H$-space, the group structure induced by the co-multiplication $\Gm:X\to X\vee X$ and the structure induced by pointwise composition of endomorphisms coinside.
\end{remark}

\subsubsection{The homotopy set $[X,\mathrm{Aut}_{G}(A)]$}
We define \[\End{A}^{*}=\{\phi\in\End{A} \mid KK^{G}(\phi) \text{ is invertible in } KK^{G}(A,A)\}.\]

\begin{definition}[cf. {\cite[Definition 5.3]{D1}}] Let $A$ be a separable C$^{\ast}$-algebra and let $X$ be a compact metrizable space with trivial $G$-action. We say that the pair $(A,X)$ is $KK^{G}$-continuous if for any point $x\in X$ there exists a base of closed neighborhoods $(V_{n})_{n}$ of $x$ such that $V_{n}\supset V_{m}$ for $n<m$ and the natural map $\varinjlim KK^{G}(A,C(V_{n})\otimes A)\to KK^{G}(A,A)$ is injective.
\end{definition}

\begin{remark}
For every $A$ and $G$-action, if $X$ is locally contractible, then $(A,X)$ is a $KK^{G}$-continuous pair. If $(A,X)$ is a $KK^{G}$-continuous pair and $Y$ is a locally contractible space, then $(A,X\times Y)$ is also a $KK^{G}$-continuous pair.
\end{remark}
We use the following notation.
\begin{align*}
K^{G}_{A}(X)&=KK^{G}(A,C(X)\otimes A)\\
K^{G}_{A}(X,Y)&=KK^{G}(A,C(X,Y)\otimes A)\quad\text{for }Y\subset X:\text{closed}
\end{align*}
where $C(X,Y)=C_{0}(X\setminus Y)$. The composition of the Kasparov product
\[K_{A}^{G}(X)\times K^{G}_{A}(X)\to K^{G}_{A}(X)\]
with the restriction to the diagonal map
\[K^{G}_{A}(X\times X)\to K^{G}_{A}(X)\]
defines a cup product on $K^{G}_{A}(X)$ which makes $K^{G}_{A}(X)$ into a ring.
The multiplicative unit $\Gi_{A}$ of $K^{G}_{A}(X)$ is given by the $KK^{G}$-class of the $*$-homomorphism
\[j_{A}:A\ni a\mapsto 1_{C(X)}\otimes a\in C(X)\otimes A\]
Similarly, one has a cup product 
\[K^{G}_{A}(X,Y)\times K^{G}_{A}(X,Y')\to K^{G}_{A}(X,Y\cup Y')\]
which is compatible with the cup product on $K^{G}_{A}(X)$. The multiplication on $K^{G}_{A}(X)$ is described as follows.
\begin{center}
\begin{tikzpicture}[auto]
\node (1) at (-4,1) {$K^{G}_{A}(X)\otimes K^{G}_{A}(X)$};
\node (2) at (0,1) {$K^{G}_{A}(X\times X)$};
\node (3) at (4,1) {$K^{G}_{A}(X)$};
\node (4) at (-4,0.5) {$\vin$};
\node (5) at (0,0.5) {$\vin$};
\node (6) at (4,0.5) {$\vin$};
\node (7) at (-4,0) {$([\Phi],[\Psi])$};
\node (8) at (0,0) {$[(\id{C(X)}\otimes \Psi)\circ \Phi]$};
\node (9) at (4,0) {$\tilde{\Psi}\circ\Phi $};
\draw[->] (1) to (2);
\draw[->] (2) to (3);
\draw[|->] (7) to (8);
\draw[->] (8) to (9);
\end{tikzpicture}
\end{center}
where $\tilde{\Psi}:C(X)\otimes A\to C(X)\otimes A$ is the $C(X)$-linear extension of $\Psi$.
For $\Gs\in KK^{G}(A,C(X)\otimes A)$, we write $\Gs_{x}$ for the element $(ev_{x})_{*}(\Gs)\in KK^{G}(A,A)$.
\begin{proposition}[cf. {\cite[Proposition 5.6]{D1}}]\label{prop:invertibility}
If the pair $(A,X)$ is $KK^{G}$-continuous, then $\Gs$ is invertible in the ring $K^{G}_{A}(X)$ if and only if $\Gs_{x}\in KK^{G}(A,A)$ is invertible for all $x\in X$.
\end{proposition}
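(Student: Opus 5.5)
The forward direction is immediate: each $(\mathrm{ev}_{x})_{*}:K^{G}_{A}(X)\to KK^{G}(A,A)$ is a unital ring homomorphism. Indeed, restricting the cup product to a point agrees with the Kasparov product in $KK^{G}(A,A)$, and $(\mathrm{ev}_{x})_{*}(\Gi_{A})=1_{A}$. So an inverse of $\Gs$ restricts to an inverse of $\Gs_{x}$. The substance is the converse, which I would prove by Dadarlat's local-to-global argument, using the $KK^{G}$-continuity hypothesis together with Mayer--Vietoris-type exact sequences for the functor $K^{G}_{A}(-)$.

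\textbf{Step 1 (local inverses).} Fix $x\in X$ and let $\Gs_{x}$ have inverse $\tau\in KK^{G}(A,A)$. Take the base $(V_{n})$ from the definition of $KK^{G}$-continuity. I would show that the restriction $\Gs|_{V_{n}}$ is invertible in $K^{G}_{A}(V_{n})$ for large $n$. Set $\tau'=\tau\otimes KK^{G}(j_{A})\in K^{G}_{A}(V_{n})$, the constant extension. Then $(\Gs\cdot\tau')|_{x}=1$, so $\Gs\cdot\tau'-\Gi_{A}$ lies in the kernel of $K^{G}_{A}(V_{n})\to KK^{G}(A,A)$. Evaluation at $x$ is split by the constant extension, so $\varinjlim_{n} K^{G}_{A}(V_{n})\to KK^{G}(A,A)$ is surjective, and by hypothesis it is injective. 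Hence this difference vanishes in the direct limit, i.e.\ $\Gs\cdot\tau'=\Gi_{A}$ on $V_{m}$ for some $m\ge n$. The same argument on the other side gives $\tau'\cdot\Gs=\Gi_{A}$. Thus $\Gs|_{V_{m}}$ has a two-sided inverse. By compactness, $X$ is covered by finitely many such closed neighborhoods, say $W_{1},\dots,W_{k}$, with $\Gs|_{W_{i}}$ invertible; one may take interiors to cover.

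\textbf{Step 2 (gluing).} By induction on $k$, it suffices to show that if $\Gs|_{Y}$ and $\Gs|_{Z}$ are invertible for closed $Y,Z$, then so is $\Gs|_{Y\cup Z}$. I would argue via the ideal structure rather than literally gluing inverses. The restriction maps $K^{G}_{A}(Y\cup Z)\to K^{G}_{A}(Y)$ have kernel-related terms $K^{G}_{A}(Y\cup Z,Y)$, coming from the split-free six-term sequence of $KK^{G}(A,-)$ applied to $0\to C_{0}((Y\cup Z)\setminus Y)\otimes A\to C(Y\cup Z)\otimes A\to C(Y)\otimes A\to 0$. Here I would need to assume exactness in the middle; for semisplit extensions it holds, and tensoring with the nuclear $C(X)$ gives equivariant completely positive splittings (averaging over compact $G$). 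Choose $\rho\in K^{G}_{A}(Y\cup Z)$ restricting to inverses. Lifting $\Gs|_{Y}^{-1}$ is possible on a slightly larger neighborhood, so I would first enlarge to closed neighborhoods via Step 1. Then $a=\Gi_{A}-\Gs\rho$ restricts to $0$ on $Y$, hence comes from $K^{G}_{A}(Y\cup Z,Y)$. Similarly, using a second lift, $b=\Gi_{A}-\rho'\Gs$ vanishes on $Z$, and products behave as $K^{G}_{A}(X,Y)\times K^{G}_{A}(X,Z)\to K^{G}_{A}(X,Y\cup Z)=0$. To get the products of $a$ with itself to vanish, I would produce a single element nilpotent in a suitable sense: $c=\Gi_{A}-\Gs\rho$ with $\rho$ chosen to invert on both pieces up to elements of $K^{G}_{A}(X,Y)$ and $K^{G}_{A}(X,Z)$. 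Then $(\Gi_{A}-\Gs\rho)$ factors as a product whose square lies in $K^{G}_{A}(X,Y\cup Z)=0$. Consequently $\Gs\rho$ is unipotent, hence invertible with inverse $\Gi_{A}+c$, giving a right inverse of $\Gs$; symmetrically one obtains a left inverse. I expect this to be the cleanest route, mirroring Dadarlat's Proposition 5.6: the key algebraic fact is $x\in\ker(\text{res}_{Y})$ and $y\in\ker(\text{res}_{Z})$ imply $xy=0$ on $Y\cup Z$, obtained via compatibility of the relative cup product stated above.

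\textbf{Main obstacle.} The hardest step is justifying that an element vanishing on restriction to $Y$ comes from the relative group $K^{G}_{A}(X,Y)$, so that the relative cup product applies. This requires half-exactness of $KK^{G}(A,-)$ for $C_{0}(X\setminus Y)\otimes A\to C(X)\otimes A\to C(Y)\otimes A$. I would establish it by noting that the extension $0\to C_{0}(X\setminus Y)\to C(X)\to C(Y)\to0$ is semisplit (commutative, nuclear). Tensoring with $A$, with the trivial action on the function algebras, the splitting is automatically equivariant, so Kasparov's six-term sequence for equivariantly semisplit extensions applies.
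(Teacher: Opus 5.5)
Your forward direction and Step 1 are correct. Step 1 is in fact a clean version of the paper's first step: it shows that $y\mapsto\sigma_y$ is constant on $V_m$, and that the local inverse there is the restriction of the global constant class $\sigma_x^{-1}\otimes KK^{G}(j_A)\in K^{G}_{A}(X)$. You have also correctly identified the key input for the rest of the argument: half-exactness for the equivariantly semisplit extension $C_0(X\setminus Y)\otimes A\to C(X)\otimes A\to C(Y)\otimes A$, together with the relative cup product.

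\textbf{The gap is in Step 2.} Your mechanism needs a single class $\rho$ such that $c=\iota_A-\sigma\rho$ restricts to $0$ on both $Y$ and $Z$. Then $c$ lies in the images of $K^{G}_{A}(X,Y)$ and $K^{G}_{A}(X,Z)$, and $c^2$ comes from $K^{G}_{A}(X,Y\cup Z)=0$. You never construct such a $\rho$.
\begin{itemize}
\item ``Choose $\rho$ restricting to inverses'' is only asserted.
\item Enlarging to neighborhoods does not give a class that is correct on both pieces at once.
\item Inside your induction, there is no a priori reason for the inverse on an already glued piece to extend to $Y\cup Z$.
\item The fallback with $a=\iota_A-\sigma\rho$ and $b=\iota_A-\rho'\sigma$ fails. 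The relation $ab=0$ mixes a right-sided and a left-sided factor in a non-commutative ring, so it yields no equation of the form $\sigma r=\iota_A$.
\end{itemize}
As written, the sentence ``$(\iota_A-\sigma\rho)$ factors as a product whose square lies in $K^{G}_{A}(X,Y\cup Z)$'' has no justification.

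\textbf{The fix is to drop the induction and use Step 1's global classes directly.} Step 1 gives closed $W_1,\dots,W_k$ covering $X$ and constant classes $\rho_i=\sigma_{x_i}^{-1}\otimes KK^{G}(j_A)$. Each $a_i=\iota_A-\sigma\rho_i$ restricts to $0$ on $W_i$, so by half-exactness it comes from $K^{G}_{A}(X,W_i)$. Hence $a_1a_2\cdots a_k$ comes from $K^{G}_{A}(X,W_1\cup\dots\cup W_k)=0$. Expand $\prod_{i}(\iota_A-\sigma\rho_i)=0$: every term other than $\iota_A$ begins with $\sigma$, so $\sigma r=\iota_A$ for some $r$. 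The same argument with $\iota_A-\rho_i\sigma$, where every term ends with $\sigma$, gives a left inverse.

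This is essentially the paper's argument. The paper uses local constancy of $x\mapsto\sigma_x$, hence constancy on connected $X$, to take all $\rho_i$ equal to one class $\tau'$. It then shows $\iota_A-\sigma\tau'$ lies in the kernel of the multiplicatively split map $K^{G}_{A}(X)\to\check{H}^{0}(X,KK^{G}(A,A))$. That kernel is nil by exactly this finite-cover relative-cup-product argument.

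Alternatively, your pairwise gluing could be rescued by gluing the two inverses with a Mayer--Vietoris diagram chase. That would use the six-term sequences and excision for the common ideal $C_0(Z\setminus Y)$. But that is an additional argument you would have to supply.
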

\begin{proof}
For each $x\in X$, the following sequence is split exact.
\[0\longrightarrow C(X,x)\otimes A\longrightarrow C(X)\otimes A\xlongrightarrow{ev_{x}} A\rightarrow 0\]
Thus we have a split exact sequence
\[0\longrightarrow K^{G}_{A}(X,x)\longrightarrow K^{G}_{A}(X)\longrightarrow K^{G}_{A}(x)=KK^{G}(A,A)\longrightarrow 0\]
For $\Gs\in K^{G}_{A}(X)$, let $\hat{\Gs}:X\to KK^{G}(A,A)$ be the map $\hat{\Gs}(x)=\Gs_{x}$. We claim that $\hat{\Gs}$ is locally constant. Assume that $\hat{\Gs}$ is not locally constant. Then there exists $x\in X$
such that the restriction of $\hat{\Gs}$ on any neighborhood of $x$ is not constant. Let $(V_{n})$ be a base of closed neighborhood of $x$ as in the definition on $KK^{G}$-continuous. Since $\hat{\Gs}$ is not locally constant, we can take $x_{n}\in V_{n}$ such that $\Gs_{x}\neq \Gs_{x_{n}}$ for all $n$. By replacing $\Gs$ by $\Gs-\Gs_{x}$, we may assume that $\Gs_{x}=0$. For every $n$, the element in $K_{A}^{G}(V_{n})$ corresponding to $\Gs$ is nonzero, and hence the element in $\varinjlim K_{A}^{G}(V_{n})$ corresponging to $\Gs$ is nonzero, but the image of this element under the map $\varinjlim K_{A}^{G}(V_{n})\to K_{A}^{G}(x)$ is zero. This contradicts to the fact that $\varinjlim K_{A}^{G}(V_{n})\to K_{A}^{G}(x)$ is injective. Thus the map $\hat{\Gs}$ is locally constant, and hence constant. Therefore we have a split exact sequence of rings
\[0\longrightarrow \ker{\rho}\longrightarrow K^{G}_{A}(X)\xlongrightarrow{\rho} \check{H}^{0}(X,KK^{G}(A,A))\longrightarrow 0 \]
where $\rho(\Gs)=\hat{\Gs}$ and $\check{H}^{0}(X,KK^{G}(A,A))=\{f:X\to KK^{G}(A,A)\:|\: \text{locally constant}\}$.

We will show that $\Ker{\rho}$ is a nil ideal, that is, for all $\Gs\in \Ker{\rho}$, there exists $m\geq 1$ such that $\Gs^{m}=0$. Fix $\Gs\in \Ker{\rho}=\bigcap_{x\in X}K^{G}_{A}(X,x)\subset K^{G}_{A}(X)$. For every $x\in X$, let $(V_{n})$ be a base of closed neighborhood of $x$ as in the definition of $KK^{G}$-continuous. Since the map $\varinjlim K^{G}_{A}(V_{n})\to K^{G}_{A}(x)$ is injective, it follows from the following commutative diagram that the natural map $\varinjlim K^{G}_{A}(X,V_{n})\to K^{G}_{A}(X,x)$ is surjective for every $x\in X$.
\begin{center}
\begin{tikzpicture}[auto]
\node (1) at (-3,1) {$\varinjlim K^{G}_{A}(X,V_{n})$};
\node (2) at (0,1) {$K^{G}_{A}(X)$};
\node (3) at (3,1) {$\varinjlim K^{G}_{A}(V_{n})$};
\node (4) at (-5,-1) {$0$};
\node (5) at (-3,-1) {$K^{G}_{A}(X,x)$};
\node (6) at (0,-1) {$K^{G}_{A}(X)$};
\node (7) at (3,-1) {$K^{G}_{A}(x)$};
\node (8) at (5,-1) {$0$};
\draw[->] (1) to (2);
\draw[->] (2) to (3);
\draw[->] (4) to (5);
\draw[->] (5) to (6);
\draw[->] (6) to (7);
\draw[->] (7) to (8);
\draw[->] (1) to (5);
\draw[double, double distance=2pt, -] (2) to (6);
\draw[->] (3) to (7);
\end{tikzpicture}
\end{center}
Since $X$ is compact, there exist closed sets $Y_{1},\cdots Y_{m}$ in $X$ and $\Gs_{k}\in K^{G}_{A}(X,Y_{k})$ for $k=1,\cdots m$ such that each $\Gs_{k}$ maps to $\Gs$ under $K^{G}_{A}(X,Y_{k})\to K^{G}_{A}(X)$ and $\bigcup_{k=1}^{m}Y_{k}=X$. It follows that $\Gs^{m}$ is equal to the image of $\Gs_{1}\cdots\Gs_{m}\in K^{G}_{A}(X,Y_{1}\cup\cdots\cup Y_{m})=0$. Since $\Ker{\rho}$ is a nil ideal and $\rho$ admits a multiplicative splitting, $\Gs\in K^{G}_{A}(X)$ is invertible if and only if $\rho(\Gs)=\hat{\Gs}$ is invertible.
\end{proof}

We first describe the intertwining argument, which will be used to prove \autoref{prop:im-inv}. The following lemma is a special case of \cite[Theorem 4.5]{SZ3} for the case of compact groups.
\begin{lem}[cf. {\cite[Theorem 4.5]{SZ3}}]\label{lem:intertwining}
Let $G$ be a compact group. Let $\Ga:G\act A$ and $\Gb:G\act B$ be actions on separable \cstar-algebras. Suppose that $\phi:A\to B$ and $\psi:B\to A$ are equivariant $*$-homomorphisms such that $\psi\circ\phi$ and $\phi\circ\psi$ are properly asymptotically unitarily equivalent to $\id{A}$ and $\id{B}$ respectively. Then there exist mutually inverse equivariant $*$-isomorphisms $\phi_{0}:A\to B$ and $\psi:B\to A$ such that $\phi_{0}$ and $\psi_{0}$ are properly asymptotically unitarily equivalent to $\phi$ and $\psi$ respectively.
\end{lem}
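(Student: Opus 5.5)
We follow the classical Elliott two-sided approximate intertwining, in the asymptotic (continuous-path) form, keeping track of the group action only through the fact that all implementing unitaries are asymptotically invariant. Since $G$ is compact, we first reduce to honestly invariant unitaries. By hypothesis there are norm-continuous paths $u:[0,\infty)\to\Ucal(\one+A)$ and $w:[0,\infty)\to\Ucal(\one+B)$ with $\ad{u_t}\circ\psi\circ\phi\to\id{A}$ and $\ad{w_t}\circ\phi\circ\psi\to\id{B}$ pointwise, and $\max_g\|u_t-\tilde\Ga_g(u_t)\|\to0$, $\max_g\|w_t-\tilde\Gb_g(w_t)\|\to0$. (The cocycles are trivial, so condition (iii) of asymptotic unitary equivalence says exactly this.) After cutting off initial segments we may assume these maxima are $<1$. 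We then replace $u_t$ by the unitary part of $\int_G\tilde\Ga_g(u_t)\,dg$, exactly as in the proof of \autoref{prop:T-inj}. This produces continuous paths of $G$-invariant unitaries in $\Ucal(\one+A)^{\Ga}$ and $\Ucal(\one+B)^{\Gb}$ with the same limiting behaviour, since the polar-decomposition correction tends to $0$ in norm.

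Next we run the intertwining. Fix dense sequences in $A$ and $B$, finite sets $F_n\nearrow$ and $E_n\nearrow$, and tolerances $\Ge_n=2^{-n}$. Inductively we choose times $t_1<s_1<t_2<\dots$ and define $\phi_n=\ad{W_n}\circ\phi$ and $\psi_n=\ad{U_n}\circ\psi$, where $U_n$ and $W_n$ are invariant unitaries built as products of the $u_{t_k}$, $w_{s_k}$ transported by $\phi,\psi$ (via the unitizations $\tilde\phi,\tilde\psi$, which preserve invariance since $\phi,\psi$ are equivariant). The times are chosen so that $\|\psi_n\phi_n(a)-a\|<\Ge_n$ on $F_n$ and $\|\phi_{n+1}\psi_n(b)-b\|<\Ge_n$ on $E_n$. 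We also require that successive $\phi_n$ differ by less than $\Ge_n$ on a growing finite set, and likewise for the $\psi_n$, and that on the relevant finite sets consecutive corrections come from paths that stay close. Each of these conditions is an instance of the standard Elliott bookkeeping, available because we may push $t$ as far as we like. The limits $\phi_0=\lim\phi_n$ and $\psi_0=\lim\psi_n$ then exist pointwise, are equivariant because each $\phi_n$ and $\psi_n$ is, and are mutually inverse.

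To obtain proper asymptotic unitary equivalence rather than mere approximate equivalence, we connect $W_n$ to $W_{n+1}$ by the path $\tau\mapsto W_n\cdot(\text{segment of }w\text{ or }\tilde\phi(u))$ restricted to $[t_n,t_{n+1}]$. Concatenating these gives a continuous path $V:[0,\infty)\to\Ucal(\one+B)^{\Gb}$ with $\ad{V_\tau}\circ\phi\to\phi_0$. Here one must choose $t_{n+1}$ large enough that along the whole segment the conjugates stay within $\Ge_n$ of $\phi_0$ on $F_n$, not just at the endpoints. The same construction works for $\psi$. Since all unitaries are invariant, conditions (iii)--(iv) hold trivially with trivial cocycles, and (ii) holds because the unitaries lie in $\one+B$.

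The main obstacle is the bookkeeping in the intertwining: we need uniform control along whole path segments, not just at sampled times, in order to get an \emph{asymptotic} rather than an approximate equivalence. We handle this using the uniform convergence on compact subsets of $A$ of $\ad{u_t}\circ\psi\circ\phi$ for $t$ large. If this becomes unwieldy, the fallback is to cite \cite[Theorem 4.5]{SZ3} directly, noting that for compact $G$ and trivial cocycles its hypotheses reduce to ours.
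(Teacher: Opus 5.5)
Your averaging step is correct and is the right use of compactness. The discrete Elliott intertwining built from the averaged paths does give mutually inverse equivariant isomorphisms $\phi_0=\lim\phi_n$, $\psi_0=\lim\psi_n$ that are \emph{approximately} unitarily equivalent to $\phi,\psi$ via invariant unitaries. The gap is the upgrade to \emph{asymptotic} equivalence, which is the whole content of the lemma.

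With your recursion $U_n=u_{t_n}\tilde{\psi}(W_n)^*$, $W_{n+1}=w_{s_n}\tilde{\phi}(U_n)^*$ one gets $W_{n+1}=w_{s_n}\tilde{\phi}\tilde{\psi}(W_n)\tilde{\phi}(u_{t_n})^*$. In general this is not $W_n$ times a point on a segment of $w$ or of $\tilde{\phi}(u)$, so the proposed path does not join $W_n$ to $W_{n+1}$. Worse, there is no control along $\tau\mapsto W_nw_\tau$. The unitaries $w_\tau$ only satisfy $\mathrm{Ad}(w_\tau)\circ\phi\circ\psi\to\mathrm{id}_B$, so they asymptotically fix $\phi\psi(B)$, not $\phi(A)$. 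Likewise $\mathrm{Ad}(\tilde{\phi}(u_\sigma))$ is only known on $\phi\psi\phi(A)$. Taking $t_{n+1}$ larger merely lengthens a segment whose starting point is already fixed.

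What is actually needed is a path from $\mathbf{1}$ to $W_n^*W_{n+1}$ inside the approximate commutant of $\phi(F_n)$, with $F_n$ growing. This is precisely the difference between approximate and asymptotic unitary equivalence, not Elliott bookkeeping. The obvious recursive fix would run $\tau\mapsto w_\tau\tilde{\phi}\tilde{\psi}(Z_\tau)\tilde{\phi}(u_{\sigma(\tau)})^*$ with $Z_\tau$ along the previous connecting path. That only reproduces, up to small errors, the maps $\mathrm{Ad}(Z_\tau)\circ\phi$ of the previous piece. So the control never improves beyond the first piece, and $\mathrm{Ad}(V_\tau)\circ\phi$ need not converge. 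As written, your main line does not prove the lemma.

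Your fallback is the paper's route: it simply invokes \cite[Theorem 4.5]{SZ3}. But it is incomplete as stated. That theorem produces mutually inverse \emph{cocycle} conjugacies $(\Phi,\mathbbm{U})$, $(\Psi,\mathbbm{V})$, properly asymptotically unitarily equivalent to $(\phi,\mathbf{1})$ and $(\psi,\mathbf{1})$, not equivariant isomorphisms. Feeding it invariant paths does not change this, so compactness has to be used once more:
\begin{itemize}
\item Let $z_t\in\mathcal{U}(\mathbf{1}+B)$ implement $(\Phi,\mathbbm{U})\simeq(\phi,\mathbf{1})$. Then $\max_g\|\mathbbm{U}_g\beta_g(z_t^*)-z_t^*\|\to 0$.
\item For large $t$, let $y$ be the unitary part of $\int_G\mathbbm{U}_g\beta_g(z_t^*)\,dg$. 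Then $y\in\mathcal{U}(\mathbf{1}+B)$ and $\mathbbm{U}_g=y\beta_g(y)^*$, exactly the asymptotic-coboundary-is-a-coboundary step used in \autoref{cor:compact-ex}.
\item Put $\phi_0=\mathrm{Ad}(y^*)\circ\Phi$. This is equivariant, and its inverse is $\psi_0=\mathrm{Ad}(\tilde{\Psi}(y))\circ\Psi$; note $\mathbbm{V}_g=\tilde{\Psi}(U_g)^*=\tilde{\Psi}(y)^*\alpha_g(\tilde{\Psi}(y))$.
\item Let $x_t$ implement $(\Psi,\mathbbm{V})\simeq(\psi,\mathbf{1})$. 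The paths $z_ty$ and $x_t\tilde{\Psi}(y)^*$ then give the required proper asymptotic unitary equivalences with trivial cocycles.
\end{itemize}
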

Let us set $\mathrm{Aut}_{C(X),G}(C(X)\otimes A)=\{\phi\in \mathrm{Aut}_{G}\:|\: \phi\text{ is }C(X)\text{-linear}\}$. For a $C(X)$-linear equivariant $*$-homomorphism $\phi:C(X)\otimes A\to C(X)\otimes A$, if $\psi:C(X)\otimes A\to C(X)\otimes A$ is an euivariant $*$-homomorphism that is properly asymptotically unitarily equivalent to $\phi$, then $\psi$ is also $C(X)$-linear since $C(X)\otimes \mathbf{1}$ is in the center of the unitization of $C(X)\otimes A$.
\begin{proposition}[cf. {\cite[Proposition 5.7]{D1}}]\label{prop:im-inv}
Let $A$ be a Kirchberg algebra and $\Ga$ be an isometrically shift-absorbing action of a compact group $G$. Let $X$ be a compact connected metrizable space. Suppose that $(A,X)$ is $KK^{G}$-continuous. Let $\Gs\in KK^{G}(A,C(X)\otimes A)$ such that $\Gs_{x}\in KK^{G}(A,A)^{-1}$ for all $x\in X$ and $\Gs\otimes [\mathbf{1}_{A}]=[\mathbf{1}_{C(X)\otimes A}]$ in $K_{0}^{G}(C(X)\otimes A)$ if $A$ is unital. Then there exists $\Phi_{0}\in \autx{C(X)\otimes A}$ such that $KK^{G}(\Phi_{0}|_{A})=\Gs$
\end{proposition}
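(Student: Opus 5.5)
We follow Dadarlat's argument for \cite[Proposition 5.7]{D1}: realize $\Gs$ and its inverse by equivariant $C(X)$-linear endomorphisms of $C(X)\otimes A$, and then apply the intertwining \autoref{lem:intertwining}.

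First, by \autoref{prop:invertibility}, the hypothesis that each $\Gs_{x}$ is invertible makes $\Gs$ invertible in the ring $K^{G}_{A}(X)$. Let $\tau$ be its inverse, so $\Gs\cdot\tau=\tau\cdot\Gs=KK^{G}(j_{A})$. In the unital case we also need $\tau\otimes[\one_{A}]=[\one_{C(X)\otimes A}]$. We get this by evaluating at points and using that the cup product is compatible with the unit class: $\tau$ is the inverse of $\Gs$, and $\Gs$ preserves the class of the unit.

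Next we realize both classes by maps. In the stable case, \autoref{thm:stable-end} gives equivariant $\phi,\psi:A\to C(X)\otimes A$ with $KK^{G}(\phi)=\Gs$ and $KK^{G}(\psi)=\tau$. In the unital case, \autoref{cor:compact-ex} with $p=\one$ gives unital equivariant maps with the same properties. Let $\Phi=\tilde\phi$ and $\Psi=\tilde\psi$ be their $C(X)$-linear extensions to $C(X)\otimes A$. By the description of the cup product, $\tilde\psi\circ\phi$ represents $\Gs\cdot\tau=KK^{G}(j_{A})$, so $KK^{G}(\Psi\circ\Phi|_{A})=KK^{G}(j_{A})$, and similarly $KK^{G}(\Phi\circ\Psi|_{A})=KK^{G}(j_{A})$.

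We then apply the uniqueness theorems. In the stable case we use \autoref{thm:unique-stable} for the pair $\Psi\circ\Phi|_{A}$ and $j_{A}$; both are genuine equivariant maps with trivial cocycles, so they form an anchored Cuntz pair. In the unital case we use \autoref{thm:unique-unital}. Either way, $\Psi\circ\Phi|_{A}$ is (properly) asymptotically unitarily equivalent to $j_{A}$ via unitaries in $\Ucal(\one+C(X)\otimes A)$. By averaging over $G$, as in the proof of \autoref{prop:T-inj}, we may take these unitaries $G$-invariant for large $t$. Because conjugation by a unitary of $C(X)\otimes A$ commutes with the $C(X)$-linear extension, $\Psi\circ\Phi$ is properly asymptotically unitarily equivalent to $\id{C(X)\otimes A}$. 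The same argument applies to $\Phi\circ\Psi$.

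Finally, \autoref{lem:intertwining} yields mutually inverse equivariant isomorphisms $\Phi_{0}$ and $\Psi_{0}$ that are properly asymptotically unitarily equivalent to $\Phi$ and $\Psi$. As remarked before the proposition, this makes $\Phi_{0}$ $C(X)$-linear, so $\Phi_{0}\in\autx{C(X)\otimes A}$. Since asymptotic unitary equivalence preserves $KK^{G}$-classes, $KK^{G}(\Phi_{0}|_{A})=KK^{G}(\phi)=\Gs$.

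The main obstacle is the passage from $A$ to $C(X)\otimes A$: the uniqueness theorems only compare maps defined on $A$, so one has to check that the unitary path obtained there implements a proper asymptotic equivalence of the extended maps on all of $C(X)\otimes A$, uniformly on $C(X)$-linear combinations. In the unital case one must also track the unit condition so that $\tau$ is realized by a unital map.
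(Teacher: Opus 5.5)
Your proposal is correct and is essentially the paper's own proof. Both arguments invert $\Gs$ in $K^{G}_{A}(X)$ via \autoref{prop:invertibility}, realize $\Gs$ and its inverse by (unital) equivariant maps, use the uniqueness theorems to make both composites of the $C(X)$-linear extensions asymptotically inner, and then apply \autoref{lem:intertwining}. The differences are cosmetic: you average over $G$ before intertwining and read off $KK^{G}(\Phi_{0}|_{A})=\Gs$ from invariance of $KK^{G}$ under proper asymptotic unitary equivalence, whereas the paper averages afterwards to exhibit an explicit homotopy $\Phi_{0}\simeq\tilde\Phi$.

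For the unit condition on the inverse $\tau$, point evaluations are neither needed nor sufficient on their own. The computation $[\one_{A}]\otimes\tau=[\one_{C(X)\otimes A}]\otimes\tilde\tau=([\one_{A}]\otimes\Gs)\otimes\tilde\tau=[\one_{A}]\otimes(\Gs\cdot\tau)=[\one_{A}]\otimes KK^{G}(j_{A})=[\one_{C(X)\otimes A}]$, with $\tilde\tau$ the $C(X)$-linear extension of $\tau$, settles it directly.
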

\begin{proof}
First, we deal with the stable case.

By the preceding proposition, $\Gs\in K^{G}_{A}(X)$ is invertible. Thus there exists $\Gs'\in K^{G}_{A}(X)$ such that $\Gs\Gs'=\Gs'\Gs=1$ in $K^{G}_{A}$. By \autoref{thm:stable-end}, there are equivariant embeddings $\Phi,\Psi:A\to C(X)\otimes A$ such that $KK^{G}(\Phi)=\Gs,\;KK^{G}(\Psi)=\Gs'$. Then $KK^{G}(\tilde{\Phi}\circ\Psi)=KK^{G}(\tilde{\Psi}\circ\Phi)=KK^{G}(j_{A})$ holds where $j_{A}=\one_{C(X)}\otimes \id{A}:A\to C(X)\otimes A$. By the uniqueness theorem, $\tilde{\Phi}\circ \Psi$ and $\tilde{\Psi}\circ\Phi$ are strongly asymptotically unitarily equivalent to $j_{A}$. Therefore $\tilde{\Phi}\circ\tilde{\Psi}$ and $\tilde{\Psi}\circ\tilde{\Phi}$ are strongly asymptotically unitarily equivalent to $\tilde{j_{A}}=\id{C(X)\otimes A}$. By \autoref{lem:intertwining}, there exists $\Phi_{0}'\in\autx{C(X)\otimes A}$ such that $\tilde{\Phi}$ is asymptotically unitarily equivalent to $\Phi_{0}'$. Thus there exists a norm-continuous path $u:[0,\infty)\to\Ucal(\mathbf{1}+C(X)\otimes A)$ such that
\begin{align*}
&\tilde{\Phi}(a)=\lim_{t\to\infty}u_{t}\Phi_{0}'(a)u_{t}^{*}\text{ for all }a\in A,\\
&\lim_{t\to\infty}\max_{g\in G}\|u_{t}-(\id{C(X)}\otimes\Ga)(u_{t})\|=0.
\end{align*}
By cutting off an initial segment of $u$, if necessary, we may assume that $\|u_{t}-(\id{C(X)}\otimes \Ga)(u_{t})\|<1$ for all $t$.
By averaging unitaries, we obtain a norm-continuous path $v:[0,\infty)\to\Ucal(\mathbf{1}+(C(X)\otimes A)^{\id{A}\otimes \Ga})$ such that
\[\tilde{\Phi}(a)=\lim_{t\to\infty}v_{t}\Phi'_{0}(a)v_{t}^{*}\text{ for all }a\in A.\]
Set $\Phi_{0}=\ad{u_{0}}\circ \Phi_{0}'$. Then $\Phi_{0}$ is in $\autx{C(X)\otimes A}$ and $\tilde{\Phi}$ and $\Phi_{0}$ are homotopic in $\autx{C(X)\otimes A}$. Therefore $\tilde{\Phi}|_{A}=\Phi$ and $\Phi_{0}|_{A}$ are homotopic in $\mathrm{Hom}_{G}(A,C(X)\otimes A)$, and hence $\Gs=KK^{G}(\Phi)=KK^{G}(\Phi_{0}|_{A})$.\\
We now turn to the unital case. The proof is similar.

By the preceding proposition, there exists $\Gs'\in K^{G}_{A}(X)$ that is the inverse of $\Gs$. Since $\Gs$ maps $[\mathbf{1}_{A}]\in K_{0}^{G}(A)$ to $[\mathbf{1}_{C(X)\otimes A}]\in K_{0}^{G}(A)$, so does $\Gs'$. Thus, there exist unital equivariant embeddings $\Phi,\Psi:A\to C(X)\otimes A$ such that $KK^{G}(\Phi)=\Gs$ and $KK^{G}(\Psi)=\Gs'$. Then $KK^{G}(\tilde{\Phi}\circ\Psi)=KK^{G}(\tilde{\Psi}\circ\Phi)=KK^{G}(j_{A})$ holds and this implies $\tilde{\Phi}\circ\Psi$ and $\tilde{\Psi}\circ\Phi$ are asymptotically unitarily equivalent to $j_{A}$. Therefore $\tilde{\Phi}\circ\tilde{\Psi}$ and $\tilde{\Psi}\circ\tilde{\Phi}$ are asymptotically unitarily equivalent to $\id{C(X)\otimes A}$. By \autoref{lem:intertwining}, there exist unital equivariant $C(X)$-linear automorphisms $\Phi_{0}',\Psi_{0}':C(X)\otimes A\to C(X)\otimes A$ such that $\Phi_{0}'$ and $\Psi_{0}'$ are asymptotically unitarily equivalent to $\tilde{\Phi}$ and $\tilde{\Psi}$ respectively. By averaging unitaries, we obtain a unital equivariant $C(X)$-linear automorphism $\Phi_{0}:C(X)\otimes A\to C(X)\otimes A$ such that $\Phi_{0}$ and $\tilde{\Phi}$ are homotopic. Therefore $\Phi$ and $\Phi_{0}|_{A}$ are homotopic in $\homg{A}{C(X)\otimes A}$ and hence we obtain $\Gs=KK^{G}(\Phi_{0})$.
\end{proof}

\begin{corollary}[cf. {\cite[Proposition 5.8]{D1}}]\label{cor:auto-end}
Under the same assumptions as in the preceding proposition, the natural map $[X,\mathrm{Aut}_{G}(A)]\ni[\phi]\mapsto[\phi]\in [X,\End{A}^{*}]$ is a bijection, that is, $\Aut{G}{A}\to\mathrm{End}_{G}(A)^{*}$ is a weak homotopy equivalence.
\end{corollary}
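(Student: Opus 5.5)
We have to show that the map $[X,\mathrm{Aut}_{G}(A)]\to[X,\End{A}^{*}]$, $[\phi]\mapsto[\phi]$, is bijective. The strategy is to translate both directions into statements about $C(X)$-linear automorphisms of $C(X)\otimes A$. We then use \autoref{prop:im-inv} for surjectivity and the uniqueness theorems (\autoref{thm:unique-stable}, \autoref{thm:unique-unital}) together with the homotopy construction from \autoref{thm:stable-end} for injectivity.

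Throughout, a continuous map $\phi:X\to\mathrm{Aut}_{G}(A)$ is identified with an element of $\autx{C(X)\otimes A}$. Its $C(X)$-linear extension $\tilde\phi$ is an automorphism because pointwise inverses vary continuously in the point-norm topology; this is standard, since we may check the inverse on a dense set using compactness of $X$.

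\textbf{Surjectivity.} Let $\phi:X\to\End{A}^{*}$ be continuous. Put $\Gs=KK^{G}(\phi)\in K^{G}_{A}(X)$. Then each $\Gs_{x}=KK^{G}(\phi_{x})$ is invertible. In the unital case $\phi$ is unital, so $\Gs\otimes[\one_{A}]=[\one]$. By \autoref{prop:im-inv} there is $\Phi_{0}\in\autx{C(X)\otimes A}$ with $KK^{G}(\Phi_{0}|_{A})=\Gs$. The restriction $\Phi_{0}|_{A}$ corresponds to a map $X\to\mathrm{Aut}_{G}(A)$, and $\phi$ and $\Phi_{0}|_{A}$ have the same $KK^{G}$-class. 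We conclude they are homotopic as maps $X\to\End{A}$. In the stable case this is the injectivity part of \autoref{thm:stable-end}. In the unital case both maps are unital with equal $KK^{G}$-class, so \autoref{prop:T-inj} gives $v\in\Ucal(B^{\Gb})$ with $[\ad v\circ\Phi_{0}|_{A}]=[\phi]$. Since $\ad v\circ\Phi_{0}$ is still a $C(X)$-linear automorphism, we may replace $\Phi_{0}$ by it. Any homotopy between maps into $\End{A}$ whose endpoints lie in $\End{A}^{*}$ stays in $\End{A}^{*}$, because $KK^{G}$-classes are homotopy invariant. Hence $[\phi]$ lies in the image.

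\textbf{Injectivity.} This is the main obstacle, since a homotopy through endomorphisms must be upgraded to one through automorphisms. Suppose $\phi,\psi:X\to\mathrm{Aut}_{G}(A)$ are homotopic in $\End{A}^{*}$. Then $KK^{G}(\phi)=KK^{G}(\psi)$. Applying the uniqueness theorem (the strong version in the stable case, and in the unital case the averaging argument of \autoref{prop:T-inj}), we obtain a continuous path of invariant unitaries $v_{t}$ in $\Ucal(\one+C(X)\otimes A)$, respectively $\Ucal((C(X)\otimes A)^{\Gb})$, with $\ad v_{t}\circ\psi\to\phi$ pointwise.

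We then pass to automorphisms. Since $\ad v_{t}\circ\tilde\psi$ and $\tilde\phi$ are both automorphisms, the path $t\mapsto\ad v_{t}\circ\tilde\psi$, for $t\in[0,\infty]$ with value $\tilde\phi$ at $\infty$, is continuous in $\autx{C(X)\otimes A}$. To check this we also need convergence of the inverses. We have $(\ad v_{t}\circ\tilde\psi)^{-1}=\tilde\psi^{-1}\circ\ad v_{t}^{*}$, and $\tilde\psi^{-1}\circ\ad v_{t}^{*}\to\tilde\phi^{-1}$ pointwise follows from $\ad v_{t}\circ\tilde\psi\to\tilde\phi$ by a standard $\varepsilon/3$ argument on $\tilde\phi(a)$. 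This gives a homotopy from $\ad v_{0}\circ\psi$ to $\phi$ inside $\mathrm{Aut}_{G}(A)$.

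It remains to connect $\psi$ to $\ad v_{0}\circ\psi$ through automorphisms. In the stable case we use the isometry-path homotopy from the proof of \autoref{thm:stable-end}: $\ad(S_{1}^{(t)}v_{0}S_{1}^{(t)*}+S_{2}^{(t)}S_{2}^{(t)*})\circ\psi$. This consists of automorphisms, since each conjugating element is an invariant unitary. In the unital case the unitaries $v_{t}$ are invariant, and the same limiting homotopy applies once we note that the stabilizer statement \autoref{prop:stab-htpyset} can be realized through inner perturbations. If that realization is problematic, the fallback is to show that $[v_{0}]$ may be taken trivial by absorbing it into the path: choose the path $v$ with $v_{0}=\one$ when the uniqueness theorem yields strong asymptotic equivalence.
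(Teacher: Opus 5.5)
Your surjectivity argument is correct, and the correction by $\ad v$ from \autoref{prop:T-inj} is genuinely needed in the unital case. Your stable-case injectivity, via \autoref{thm:unique-stable} and the isometry homotopy of \autoref{thm:stable-end}, also works; it is a different and more direct route than the paper's, which passes to the cylinder $X\times[0,1]$.

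The gap is injectivity in the unital case. Applying \autoref{thm:unique-unital} to $\phi,\psi$ over $X$ and averaging gives invariant $v_t$ with $\ad v_t\circ\psi\to\phi$. But the class of $v_0$ in $K_{1}(C(X)\otimes A^{\Ga})$ need not vanish, and joining $\psi$ to $\ad v_0\circ\psi$ through automorphism-valued maps is the original problem again. Neither of your remedies settles this.

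\begin{itemize}
\item The homotopy in the proof of \autoref{prop:stab-htpyset} consists of corners of $\ad(w_t^{*}u\Go)\circ\mathrm{diag}(\phi,\Gg_t)$. These are block-diagonal, hence not surjective once $\Gg_t\neq 0$, so that homotopy leaves $\Aut{G}{A}$.
\item The fallback $v_0=\one$ is unavailable. \autoref{thm:unique-unital} gives only asymptotic, not strong asymptotic, unitary equivalence, and at that point you use nothing but $KK^{G}(\phi)=KK^{G}(\psi)$. If that equality alone always produced $v_0\in\Ucal_{0}(C(X)\otimes A^{\Ga})$, any two unital equivariant maps with equal class in $KK^{G}(A,C(X)\otimes A)$ would be homotopic. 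This fails in general: by \autoref{prop:T-inj} and \autoref{prop:stab-htpyset}, the fibres of $T$ are orbits of $\Ucal(C(X)\otimes A^{\Ga})/\Ucal_{0}(C(X)\otimes A^{\Ga})$ with stabilizer $\im Q$. That is exactly why \autoref{thm:unital-end-htpy} needs $KK^{G}(C_{\nu_{A}},SC(X)\otimes A)$ rather than $KK^{G}(A,C(X)\otimes A)$.
\end{itemize}

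The missing idea is to use the homotopy $H\colon X\times[0,1]\to\End{A}^{*}$ from $\phi$ to $\psi$ itself, not only the equality of classes.

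\begin{itemize}
\item The map $\mathrm{ev}_{1}\colon C(X\times[0,1])\otimes A\to C(X)\otimes A$ is an equivariant homotopy equivalence, and $\mathrm{ev}_{1}\circ H=\psi=\mathrm{ev}_{1}\circ(\psi\circ\mathrm{pr}_{X})$. Hence $KK^{G}(H)=KK^{G}(\psi\circ\mathrm{pr}_{X})$.
\item Apply \autoref{thm:unique-unital} over the compact connected space $X\times[0,1]$ and average. This gives invariant unitaries $u_t\in\Ucal(C(X\times[0,1])\otimes A^{\Ga})$ with $\ad u_t\circ(\psi\circ\mathrm{pr}_{X})\to H$.
\item Concatenate three paths of automorphism-valued maps:
\begin{enumerate}
\item $t\mapsto\ad u_t(\cdot,0)\circ\psi$, from $t=\infty$ (where it is $\phi$) down to $t=0$;
\item $s\mapsto\ad u_0(\cdot,s)\circ\psi$;
\item $t\mapsto\ad u_t(\cdot,1)\circ\psi$, from $t=0$ up to $t=\infty$ (where it is $\psi$).
\end{enumerate}
\end{itemize}

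The $K_1$-obstruction disappears because $u_0(\cdot,0)$ and $u_0(\cdot,1)$ are joined by $s\mapsto u_0(\cdot,s)$.

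The same point is what makes the paper's passage to $X\times[0,1]$ work. To get endpoints equal to $\phi$ and $\psi$, rather than merely homotopic to them in $\End{A}^{*}$, the realization of $H$ by automorphisms must be an asymptotic unitary equivalence over the whole cylinder (cf.\ \autoref{rem:inv}).
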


\begin{proof}
First we prove surjectivity. Let $[\phi]$ be an any element in $[X,\End{A}^{*}]$. Then $KK^{G}(\phi)\otimes (ev_{x})_{*}$ is invertible in $KK^{G}(A,A)$ for every $x\in X$. By the preceding proposition, there exists an equivariant $*$-homomorphism $\psi:A\to C(X)\otimes A$ such that $ev_{x}\circ \psi$ is an automorphism for every $x\in X$ and $\phi$ and $\psi$ are homotopic in $\mathrm{Hom}_{G}(A,C(X)\otimes A)$. Therefor there exists $\psi:X\to \mathrm{Aut}_{G}(A)$ such that $[\phi]=[\psi]$.

Next we show injectivity. Suppose that $[\phi]=[\psi]$ in $[X,\End{A}^{*}]$ for $[\phi],[\psi]\in [X,\mathrm{Aut}_{G}(A)]$. Then there is a homotopy $H:X\times [0,1]\to \End{A}^{*}$ such that $H_{0}=\phi$ and $H_{1}=\psi$. Since $(A,X\times [0,1])$ is also a $KK^{G}$-continuous pair, there exists $H':X\times [0,1]\to\mathrm{Aut}_{G}(A)$ such that $[H']=[H]$ in $[X\times[0,1],\End{A}^{*}]$. Thus we get a homotopy between $\phi$ and $\psi$ in $\mathrm{Map}(X,\mathrm{Aut}_{G}(A))$ and injectivity follows.
\end{proof}
\begin{corollary}\label{cor:stbl-htpy-set}
Let $G$ be a compact group and let $A$ be a stable Kirchberg algebra with isometrically shift-absorbing action $\Ga$. Let $X$ be a compact metrizable space. Suppose that $(A,X)$ is $KK^{G}$-continuous. Set
\begin{align*}
&KK^{G}(A,C(X)\otimes A)^{*}\\
&=\{x\in KK^{G}(A,C(X)\otimes A)\:|\: (\mathrm{ev}_{x})_{*}(x)\text{ is invertible in }KK^{G}(A,A)\text{ for all }x\in X\}.
\end{align*}
Then $[X,\mathrm{Aut}_{G}(A)]\ni[\phi]\mapsto KK^{G}(\phi)\in KK^{G}(A,C(X)\otimes A)^{*}$ is a group isomorphism.
\end{corollary}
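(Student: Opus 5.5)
The plan is to chain \autoref{cor:auto-end} with \autoref{thm:stable-end}, after reducing to connected $X$, and then to check separately that the resulting bijection respects the group structures.

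\emph{Reduction to connected $X$.} Since $(A,X)$ is $KK^{G}$-continuous, the proof of \autoref{prop:invertibility} shows that $x\mapsto\sigma_{x}$ is locally constant. The same argument will show that $X$ may be handled component by component: a compact metrizable $X$ in a $KK^{G}$-continuous pair has locally constant evaluation data, and both sides of the statement decompose as products over the (clopen) pieces on which things are constant. Concretely, I would first treat $X$ connected and then pass to a finite clopen decomposition. I expect that in the intended applications $X$ is a finite CW-complex, so it has finitely many components and both $[X,\cdot]$ and $KK^{G}(A,C(X)\otimes A)$ split as finite products. For connected $X$, note that \autoref{cor:auto-end} and \autoref{prop:im-inv} assume connectedness, through \autoref{lem:full-hom}.

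\emph{Bijection for connected $X$.} By \autoref{cor:auto-end}, the map $[X,\mathrm{Aut}_{G}(A)]\to[X,\End{A}^{*}]$ is a bijection. By \autoref{thm:stable-end}, $[X,\End{A}]\to KK^{G}(A,C(X)\otimes A)$, $[\phi]\mapsto KK^{G}(\phi)$, is a bijection. I would check that this bijection restricts to a bijection $[X,\End{A}^{*}]\to KK^{G}(A,C(X)\otimes A)^{*}$.
\begin{itemize}
\item Image lands in the right place: a map $\phi:X\to\End{A}$ lands in $\End{A}^{*}$ exactly when $(\mathrm{ev}_{x})_{*}KK^{G}(\phi)=KK^{G}(\phi_{x})$ is invertible for all $x$.
\item Injectivity: if two maps into $\End{A}^{*}$ are homotopic in $\End{A}$, the homotopy stays in $\End{A}^{*}$, since the $KK^{G}$-class is constant along paths. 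So injectivity on $[X,\End{A}^{*}]$ follows from injectivity in \autoref{thm:stable-end}.
\item Surjectivity: an element of $KK^{G}(A,C(X)\otimes A)^{*}$ lifts to some $\phi$ by \autoref{thm:stable-end}, and any such lift automatically takes values in $\End{A}^{*}$.
\end{itemize}
Composing gives the bijection $[\phi]\mapsto KK^{G}(\phi)$.

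\emph{Group structure.} On $[X,\mathrm{Aut}_{G}(A)]$ the operation is pointwise composition. On $KK^{G}(A,C(X)\otimes A)^{*}$ it is the cup product described before \autoref{prop:invertibility}. By \autoref{prop:invertibility}, $KK^{G}(A,C(X)\otimes A)^{*}$ is exactly the unit group of the ring $K^{G}_{A}(X)$. For $\phi,\psi:X\to\mathrm{Aut}_{G}(A)$, the pointwise composite $x\mapsto\psi_{x}\circ\phi_{x}$ corresponds, as a homomorphism $A\to C(X)\otimes A$, to $\tilde{\psi}\circ\phi$, where $\tilde{\psi}$ is the $C(X)$-linear extension. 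Its class is $[\Phi]\cdot[\Psi]$ by the displayed description of the multiplication: first $(\id{C(X)}\otimes\psi)\circ\phi$ into $C(X\times X)\otimes A$, then restriction to the diagonal. Hence the map is multiplicative, with unit $[j_{A}]$ corresponding to the constant map at $\id{A}$, and so it is a group isomorphism. This matches the ring-isomorphism remark following \autoref{thm:stable-end}.

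\emph{Main obstacle.} The delicate point is the connectedness hypothesis. If $X$ is disconnected, the fullness argument in the existence and uniqueness theorems fails, so I would genuinely need the componentwise decomposition. Verifying that $[X,\cdot]$ and $KK^{G}(A,C(X)\otimes A)$ both split compatibly over a finite clopen partition should be routine once the number of components is finite.
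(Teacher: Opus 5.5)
Your argument is the one the paper relies on. The paper gives no separate proof of this corollary; it is obtained exactly by composing the bijection of \autoref{cor:auto-end} with the bijection of \autoref{thm:stable-end} restricted to $\End{A}^{*}$. Your check that $\End{A}^{*}$ is a union of path components of $\End{A}$ is what justifies that restriction. Then \autoref{prop:invertibility} identifies $KK^{G}(A,C(X)\otimes A)^{*}$ with the unit group of $K^{G}_{A}(X)$, and the cup-product description (the ring-isomorphism remark after \autoref{thm:stable-end}) gives multiplicativity.

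The one point to tighten is connectedness. The paper passes over it silently, even though \autoref{thm:stable-end} and \autoref{prop:im-inv} are stated only for connected $X$.
\begin{itemize}
\item What your reduction covers: the finite clopen decomposition handles every $X$ with finitely many components, in particular the finite CW-complexes used later.
\item What it misses: spaces with infinitely many components, such as $\{0\}\cup\{1/n : n\geq 1\}$, which the statement as written does not rule out.
\item A uniform fix: as you note, connectedness enters only through \autoref{lem:full-hom}, so verify fullness directly. Because $A$ is simple, every closed ideal of $C(X)\otimes A$ has the form $C_{0}(U)\otimes A$. Hence any homomorphism $A\to C(X)\otimes A$ that is injective at every point of $X$ is full for arbitrary compact $X$.
\item The maps this applies to: those coming from maps into $\End{A}^{*}$, their composites $\tilde{\Psi}\circ\Phi$, and the constant embedding $\theta$ (and $\phi\oplus\theta$) in \autoref{thm:ex-stable} are all pointwise injective. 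So the arguments of \autoref{thm:ex-stable}, \autoref{thm:unique-stable} and \autoref{prop:im-inv} should apply without any connectedness assumption.
\end{itemize}
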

\begin{remark}\label{rem:inv}
Note that we can take a homotopy $H$ from $\Phi_{0}$ to $\tilde{\Psi}$ in the proof of \autoref{prop:im-inv} satisfying $H_{t}\in \mathrm{Aut}_{C(X),G}(C(X)\otimes A)$ for all $0\leq t<1$. Hence for $\phi:X\to \mathrm{Aut}_{G}(A)$ and $\psi:X\to\mathrm{End}_{G}(A)^{*}$ such that $[\phi]=[\psi]$ in $[X,\mathrm{End}_{G}(A)]$, there exists a homotopy $H:X\times [0,1]\to \mathrm{End}_{G}(A)^{*}$ such that $H_{0}=\phi$, $H_{1}=\psi$ and $H_{t}(x)\in \mathrm{Aut}_{G}(A)$ for all $x\in X$ and $0\leq t<1$.
\end{remark}
\begin{corollary}[cf. {\cite[Theorem 5.9]{D1}}]\label{cor:htpy-grps}
Let $G$ be a compact group and let $X$ be a path connected compact metrizable space and $x_{0}\in X$. Let $A$ be a Kirchberg algebra and let $\Ga:G\act A$ be an isometrically shift-absorbing action. Suppose that $(A,X)$ is a $KK^{G}$-continuous pair.\\
(i) : If $A$ is unital, 
\[\Gc_{X}:[X,\mathrm{Aut}_{G}(A)^{0}]\to KK^{G}(\cna,SC\ptx\otimes A)\]
is bijective.\\
(ii) : If $A$ is stable,
\[\overline{\Gc}_{X}:[X,\mathrm{Aut}_{G}(A)^{0}]\to KK^{G}(A,C\ptx\otimes A)\]
is bijective.
Moreover, if $\ptx$ is a co-$H$-space, they are group isomorphisms. In particular, we have 
\[\pi_{n}(\mathrm{Aut}_{G}(A))\cong\begin{cases}
KK^{G}(\cna, S^{n+1}A) & A\text{ is unital},\\
KK^{G}(A,S^{n}A) & A\text{ is stable}.
\end{cases}\]
for $n\geq 1$. For $n=0$, we have
\[\pi_{0}(\mathrm{Aut}_{G}(A))\cong \begin{cases}
(KK^{G}(\cna,SA)^{-1},\:\circ\:)& A\text{ is unital},\\
KK^{G}(A,A)^{-1}& A\text{ is stable}.
\end{cases}\]
Here, the group structure on $KK^{G}(A,A)^{-1}$ comes from the multiplication on $KK^{G}(A,A)$. The group structure on $KK^{G}(\cna,SA)$ is given by
\[x\circ y=x+y+y\otimes KK^{G}(\iota_{A})\otimes x\]
where $\Gi_{A}$ is the inclusion $SA\to \cna$.
\end{corollary}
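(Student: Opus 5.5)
The strategy is to reduce to \autoref{thm:end-grp-str} via \autoref{cor:auto-end}. Since $(A,X)$ is $KK^{G}$-continuous, the map $[X,\mathrm{Aut}_{G}(A)]\to[X,\mathrm{End}_{G}(A)^{*}]$ is a bijection. The same holds for the pointed versions, because $(A,X\times[0,1])$ is also $KK^{G}$-continuous and \autoref{rem:inv} lets us keep base points fixed.

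First I will identify the relevant path components. A map $X\to\mathrm{Aut}_{G}(A)^{0}$ corresponds to a map into $\mathrm{End}_{G}(A)^{*}$ landing in the path component of $\id{A}$. Conversely, by the local constancy argument in \autoref{prop:invertibility}, every map into $\mathrm{End}_{G}(A)^{0}$ already lands in $\mathrm{End}_{G}(A)^{*}$: the class $\Gs_{x}$ equals $KK^{G}(\id{A})$, which is invertible. Hence $\mathrm{End}_{G}(A)^{0}\cap\mathrm{End}_{G}(A)^{*}$ is the path component of $\id{A}$ in $\mathrm{End}_{G}(A)^{*}$, and the inclusion induces $[X,\mathrm{Aut}_{G}(A)^{0}]\cong[X,\mathrm{End}_{G}(A)^{0}]$. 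Composing with $\Gc_{X}$ (unital case) or $\overline{\Gc}_{X}$ (stable case) from \autoref{thm:end-grp-str} gives the claimed bijections. Group compatibility for a co-$H$-space $(X,x_{0})$ follows from \autoref{prop:unital-multi} and \autoref{prop:stable-multi}, since the co-$H$ multiplication is natural under the inclusion $\mathrm{Aut}_{G}(A)^{0}\subset\mathrm{End}_{G}(A)^{0}$.

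For $\pi_{n}$ with $n\geq1$, I take $X=S^{n}$, which is locally contractible and hence $KK^{G}$-continuous, and which is a co-$H$-space. Then $C(S^{n},x_{0})\cong S^{n}\Cbb$, so $SC(S^{n},x_{0})\otimes A\cong S^{n+1}A$. The groups $\pi_{n}(\mathrm{Aut}_{G}(A))$ and $\pi_{n}(\mathrm{Aut}_{G}(A)^{0})$ agree, and since $\mathrm{Aut}_{G}(A)$ is a topological group, pointed and free homotopy classes coincide.

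For $\pi_{0}$, I take $X$ to be a point. Then \autoref{cor:stbl-htpy-set} gives $\pi_{0}\cong KK^{G}(A,A)^{-1}$ in the stable case. For the unital case, \autoref{thm:unital-end-htpy} gives a bijection $[A,A]_{G,u}\to KK^{G}(\cna,SA)$, and automorphisms correspond, via the $T$/$S$ diagram in that proof, to the classes $x$ whose image $i^{*}x$ becomes invertible after adding $KK^{G}(j)$; I denote this set by $KK^{G}(\cna,SA)^{-1}$.

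The main obstacle is computing the transported composition law. Writing $\Gc([\phi])=[C_{\nu}\phi,C_{\nu}\id{}]$ as a difference class, I expand $\Gc([\phi\circ\psi])=[C_{\nu}\phi\circ C_{\nu}\psi,C_{\nu}\psi]+[C_{\nu}\psi,\id{}]$. The first term is $\Gc(\phi)$ pulled back along $C_{\nu}\psi$. Writing $KK^{G}(C_{\nu}\psi)=1+\Gc(\psi)\otimes(\text{desuspension})\otimes KK^{G}(\iota_{A})$ should yield $x\circ y=x+y+y\otimes KK^{G}(\iota_{A})\otimes x$. I would verify the exact form of this correction term, including the suspension and Bott identifications, in Cuntz-pair language following Dadarlat's \cite{D1} computation.
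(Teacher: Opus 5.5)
Your proposal is correct and follows the paper's proof for the bijections, the group structures, $\pi_{n}$ with $n\geq 1$, and the stable $\pi_{0}$ case. Like the paper, you reduce to \autoref{thm:end-grp-str}, \autoref{cor:auto-end} and \autoref{cor:stbl-htpy-set}. The pointed version of \autoref{cor:auto-end} that you attribute to \autoref{rem:inv} is unnecessary; that remark does not say anything about base points anyway. Pointed and free classes already agree for the path-connected $H$-spaces $\mathrm{Aut}_{G}(A)^{0}$ and $\mathrm{End}_{G}(A)^{0}$, which is how the paper handles this.

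The one real difference is the unital composition law. The paper splits $\Gc([\phi\circ\psi])=[(C_{\nu}(\phi\circ\psi),C_{\nu}\phi)]+\Gc([\phi])$ and views the first term as the push-forward of $\Gc([\psi])$ along $\id{S}\otimes\phi$. It then expands $KK^{G}(\id{S})\hotimes KK^{G}(\phi)$ as $KK^{G}(\id{SA})+KK^{G}(\id{S})\hotimes(KK^{G}(\phi)-KK^{G}(\id{A}))$. Finally it uses the identity $i^{*}\circ\Gc=S\circ T$ from the diagram in \autoref{thm:unital-end-htpy} to rewrite the last term as $KK^{G}(\iota_{A})\otimes\Gc([\phi])$.

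You split off $\Gc([\psi])$ instead and pull $\Gc([\phi])$ back along $C_{\nu}\psi$. This works, and the identity you need is simpler than you fear. Since $C_{\nu}\psi$ and $\id{\cna}$ agree modulo the ideal $\iota_{A}(SA)\subset\cna$, one has exactly $KK^{G}(C_{\nu}\psi)=KK^{G}(\id{\cna})+\Gc([\psi])\otimes KK^{G}(\iota_{A})$. No desuspension or Bott identification enters, so the ``(desuspension)'' factor in your formula should be dropped. Substituting gives $\Gc([\phi\circ\psi])=x+y+y\otimes KK^{G}(\iota_{A})\otimes x$ at once. Your route avoids the suspension bookkeeping; the paper's reuses the already-established relation $i^{*}\Gc=ST$.

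Your description of the unit set as $\{x : S^{-1}i^{*}x+KK^{G}(\id{A})\text{ invertible}\}$ agrees with the paper's group of units of $\circ$. This holds because $\Gc$ is a monoid isomorphism, and by \autoref{cor:auto-end} the set $[\mathrm{pt},\mathrm{End}_{G}(A)^{*}]$ is exactly the group of units of $[A,A]_{G,u}$.
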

\begin{proof}
The assertions other than $\pi_{0}$ follows from \autoref{thm:end-grp-str} and \autoref{cor:auto-end}. By \autoref{cor:stbl-htpy-set}, the stable case follows. For the unital case, since $[\mathrm{pt},\mathrm{Aut}_{G}(A)]=[\mathrm{pt},\mathrm{End}_{G}(A)^{*}]=[\mathrm{pt},\mathrm{End}_{G}]^{-1}$, it suffices to show that the bijection between $[\mathrm{pt},\mathrm{End}_{G}(A)]$ and $KK^{G}(\cna,SA)$ preserves the multiplicative strucures. Let $[\phi],[\psi]\in[\mathrm{pt},\mathrm{End}_{G}]$ be arbitrary elements. Then, the direct computation yields
\begin{align*}
[(C_{\nu}(\phi\circ\psi)&,C_{\nu} j)]\\
&=[(C_{\nu}(\phi\circ\psi),C_{\nu}\phi)]+[(C_{\nu}\phi,C_{\nu}j)]\\
&=[(C_{\nu}\psi,C_{\nu}j)]\otimes \{KK^{G}(\id{S})\hotimes KK^{G}(\phi)\}+[(C_{\nu}\phi,C_{\nu}j)]\\
&=[(C_{\nu}\psi,C{\nu}j)]\otimes \{KK^{G}(\id{S})\hotimes (KK^{G}(\phi)-KK^{G}(\id{C(X)\otimes A}))\}\\
&\qquad\qquad+[(C_{\nu}\phi,C_{\nu}j)]+[(C_{\nu}\psi,C_{\nu}j)]\\
&=[(C_{\nu}\psi,C_{\nu}j)]\otimes KK^{G}(\Gi_{A})\hotimes [(C_{\nu}\phi,C_{\nu}j)]+[(C_{\nu}\phi,C_{\nu}j)]+[(C_{\nu}\psi,C_{\nu}j)]\\
\end{align*}
\end{proof}
\begin{example}
Let $\pi:G\to \Ucal(\Cbb^{n})$ be a faithful representation of a compact group such that every irreducible representation of $G$ is equivalent to a subrepresentation of $\pi^{\otimes m}$ for some $m\geq 0$. Then we can construct a quasi-free action $\Ga^{\pi}:G\act \on$. In order to compute the homotopy group of $\mathrm{Aut}_{G}(\on)$, first we show the mapping cone of the unital inclusion $\Gv:\Cbb\to \on$ is $KK^{G}$-equivalent to $\Cbb$.

Let $0\to \Kbb\to E_{n}\xrightarrow{q}\on\to 0$ be the Toeplitz extension and consider the following commutative diagram.
\[
\begin{tikzcd}
S\on \arrow[r] \arrow[d, equal]
  & \Kbb \arrow[r]
  & E_n \arrow[r, "q"] \arrow[d, equal]
  & \on \arrow[d, equal] \\
S\on \arrow[r] \arrow[d, equal]
  & \mathrm{Cone}\, q \arrow[r]
  & E_n \arrow[r, "q"] \arrow[d,phantom, "\rotatebox{90}{$\cong$}"]
  & \on \arrow[d, equal] \\
S\on \arrow[r]
  & \mathrm{Cone}\, \nu \arrow[r]
  & \Cbb \arrow[r, "\nu"]
  & \on
\end{tikzcd}
\]
The Toeplitz extension has a completely positive, contractive equivariant cross section. Thus $\Kbb$ is $KK^{G}$-equivalent to $\mathrm{Cone}\ q$. (see \cite{MN}). Since $KK^{G}$ category is triangulated, $\mathrm{Cone}\ q$ and $\mathrm{Cone}\ \nu$ are $KK^{G}$-equivalent. Hence $\mathrm{Cone}\ \nu$ is $KK^{G}$-equivalent to $\Cbb$. By using this $KK^{G}$-equivalence, we have
\begin{align*}
\pi_{k}(\mathrm{Aut}_{G}(\on))\cong KK^{G}(\mathrm{Cone}\ \nu,S^{n+1}\on)\cong KK^{G}(\Cbb,S^{n+1}\on)\cong K^{G}_{n+1}(\on).
\end{align*} 
Using the Toeplitz extension and six-term exact sequence, we have 
\[0\to K^{G}_{1}(\on)\to R(G)\xrightarrow{1-[\pi]}R(G)\to K^{G}_{0}(\on)\to 0.\]
Hence we have 
\[\pi_{k}(\mathrm{Aut}_{G}(\on))\cong\begin{cases}
\coker (1-[\pi]) & k\text{ : odd},\\
\ker (1-[\pi]) & k\geq 2\text{ : even}.
\end{cases}\]
\end{example}

\begin{example}\label{ex:otw}
Let $p$ be a prime number and set $G=\mathbb{Z}/p\mathbb{Z}$. Let $\Gg:G\act \mathcal{O}_{2}$ be an outer action. Suppose that $(\otw,\Gg)$ belongs to the equivariant bootstrap class in $KK^{G}$ as in \cite{MN2}. Let $\nu:\Cbb\to\otw$ be the unital inclusion and let $C_{\nu}$ be its mapping cone. Since $\Cbb$ and $\otw$ belong to the equivariant bootstrap class, so does $\cn$. 

Recall that the representation ring $R(G)$ is isomorphic to $\Zbb[t]/(t^{p}-1)$. We also have an isomorphism $\zzp\cong R(G)/\Zbb N(t)$, where $\zeta_{p}$ is a primitive $p$-th root of unity and $N(t)=1+t+\cdots+t^{p-1}$ is the norm element. Write $R = \Zbb[\zeta_{p}, 1/p]$ and $\GG_{*}=K_{*}^{G}(\otw)$ for $*=0,1$. By \cite[Theorem 7.2]{Me}, $\GG_{0}$ and $\GG_{1}$ are not only $R(G)$-modules but also $\Zbb[\zeta_{p}]$-modules. Furthermore, \cite[Lemma 4.4]{Iz2} shows that they are uniquely $p$-divisible, and hence they are $R$-modules. Note that it follows from \cite[Theorem 5.2]{MN2} and \cite[Corollary 6.4]{GS2} that $(\Gamma_{0},\Gamma_{1},[1_{\otw}])$ is a complete invariant of an outer action on $\otw$ that belongs to the equivariant bootstrap class in $KK^{G}$ up to conjugacy. 

We will compute $KK^{G}_{*}(\cn,\otw)\cong KK^{G}_{*}(\cn\otimes M_{p^{\infty}},\otw)$ (\cite[Proposition 4.3]{MN2}). We will achieve this by applying the Universal Coefficient Theorem given in \cite[Theorem 5.2]{MN2}.
For a $G$-\cstar algebra $B$, let 
\begin{align*}
  &F_{*}^{\{e\}}(B)\coloneqq K_{*}(B),\\
  &F_{*}^{G}(B)\coloneqq \{x\in KK^{G}_{*}(C(G/H),B)\:|\: N(t)\cdot x=0\}.
\end{align*}
It is known that $F^{G}_{*}$ is a homological functor on the full subcategory of $KK^{G}$ consisting of objects that have uniquely $p$-divisible $K$-theory. Observe that $F_{*}^{\{e\}}(\otw)=0$ for $*=0,1$. Therefore, the computation reduces to the following exact sequence : 
\[\mathrm{Ext}^{1}_{R}(F^{G}_{*}(\cn\otimes M_{p^{\infty}}),F_{*-1}^{G}(\otw))\rightarrowtail KK_{*}^{G}(\cn,\otw)\twoheadrightarrow \mathrm{Hom}_{R}(F_{*}^{G}(\cn\otimes M_{p^{\infty}}),F_{*}^{G}(\otw)).\]
The even and odd parts of a $\Zbb/2\Zbb$-graded $R$-module of the form $F_{*}(A)$ can be considerd as individual $R$-modules. There exist uniquely $p$-divisible $C^*$-algebras in the bootstrap class that realize them, and the direct sum of these algebras is $KK^{G}$-equivalent to the original $C^*$-algebra. Thanks to this decomposition, the exact sequence splits unnaturally, as in the usual UCT.\footnote{The author thanks Ralf Meyer for explaining this argument.}

First, we compute $F^{G}_{*}(\cn\otimes M_{p^{\infty}})$. 
Because $F_{*}^{G}(\otw)=K_{*}^{G}(\otw)$ is uniquely $p$-divisible, $K_{*}^{G}(\otw\otimes M_{p^{\infty}})=\GG_{*}$ holds. Applying the functor $KK^{G}(\Cbb,-)$ to the mapping cone sequence $C_{\nu}\otimes M_{p^{\infty}}\to M_{p^{\infty}}\to\mathcal{O}_{2}\otimes M_{p^{\infty}}$ yields the following exact sequence:
\[0\to \GG_{1}\to K_{0}^{G}(\cn\otimes M_{p^{\infty}})\to \Zbb[1/p]\oplus\Zbb[\zeta_{p},1/p]\to \GG_{0}\to K_{1}^{G}(\cn\otimes M_{p^{\infty}})\to 0.\]
Since $N(t)$ acts as $0$ on any $\Zbb[\zeta_{p}]$-module, we have
\[0\to \GG_{1}\to F_{0}^{G}(\cn\otimes M_{p^{\infty}})\to R\to \GG_{0}\to F_{1}^{G}(\cn\otimes M_{p^{\infty}})\to 0.\]
Here, the map $R\to \GG_{0}$ is a $R$-module map sending $1$ to $[\one_{\otw}]$. Let $J$ denote the kernel of this map. Then we have $F_{1}^{G}(\cn\otimes M_{p^{\infty}})\cong\GG_{0}/\langle[\one_{\otw}]\rangle$, where $\langle[\one_{\otw}]\rangle$ is the $R$-module generated by $[\one_{\otw}]$. We also obtain the following exact sequence of $R$-modules.
\[0\to \GG_{1}\to F_{0}^{G}(\cn\otimes M_{p^{\infty}})\to J\to 0.\]
Since $R$ is a Dedekind domain, $J$ is a finitely generated projective $R$-module by \cite[Example 15.71.3]{Sp}. Consequently, this exact sequence splits, yielding $F_{0}^{G}(\cn\otimes M_{p^{\infty}})\cong \GG_{1}\oplus J$. Putting everything together, we obtain:
\begin{align*}
&F_{0}^{G}(\otw)=\Gamma_{0},\quad F_{1}^{G}(\otw)=\Gamma_{1},\\
&F_{0}^{G}(\cn\otimes M_{p^{\infty}})\cong \Gamma_{1}\oplus J,\quad F_{1}^{G}(\cn\otimes M_{p^{\infty}})\cong \Gamma_{0}/\langle[\one_{\otw}]\rangle.
\end{align*}
Next, we turn to the computation of $KK^{G}_{*}(\cn,\otw)$. Since $J$ is a finitely generated projective $R$-module, it follows from \cite[Lemma 10.78.9]{Sp} that
\[\mathrm{Ext}_{R}^{1}(J,M)=0,\quad \Hom_{R}(J,M)\cong \Hom_{R}(J,R)\otimes_{R}M\]
for any $R$-module $M$. Suppose that $J$ is nonzero. Then, since $J$ is an invertible fractional ideal of $R$, there is an isomorphism $\Hom_{R}(J,R)\cong J^{-1}$. Here, $J^{-1} \coloneqq \{x\in K\:|\: xJ\subset R\}$ denotes the inverse fractional ideal of $J$, where $K$ is the field of fractions of $R$. Since $J=\mathrm{Ann}_{\mathbb{Z}[\zeta_{p},1/p]}[\one_{\otw}]\coloneqq \{x\in \Zbb[\zeta_{p},1/p]\:|\: x[\one_{\otw}]=0\}$ is finitely generated, we have $J^{-1}\cong (\mathrm{Ann}_{\zzp}[\one_{\otw}])^{-1}\otimes_{\zzp}\mathbb{Z}[\zeta_{p},1/p]$ by \cite[Proposition 3.14]{AM2}. This implies $J^{-1}\otimes_{R}\GG_{*}\cong (\mathrm{Ann}_{\zzp}[\one_{\otw}])^{-1}\otimes_{\zzp}\GG_{*}$. By \autoref{cor:htpy-grps}, we obtain
\begin{align*}
\pi_{2n-1}(\mathrm{Aut}_{G}(\otw))&\cong KK^{G}_{0}(\cn,\otw)\\
&\cong \Hom_{R}(\Gamma_{1},\Gamma_{0})\oplus((\mathrm{Ann}_{\zzp}[\one_{\otw}])^{-1}\otimes_{\zzp}\Gamma_{0})\oplus \Hom_{R}(\Gamma_{0}/\langle[\one_{\otw}]\rangle,\Gamma_{1})\\
&\oplus\Ext_{R}(\Gamma_{1},\Gamma_{1})\oplus\Ext_{R}(\Gamma_{0}/\langle[\one_{\otw}]\rangle,\Gamma_{0}),\\
\pi_{2n}(\mathrm{Aut}_{G}(\otw))&\cong KK^{G}_{1}(\cn,\otw)\\
&\cong\Hom(\Gamma_{1},\Gamma_{1})\oplus((\mathrm{Ann}_{\zzp}[\one_{\otw}])^{-1}\otimes_{\zzp}\Gamma_{1})\oplus\Hom_{R}(\Gamma_{0}/\langle[\one_{\otw}]\rangle,\Gamma_{0})\\
&\oplus\Ext_{R}(\Gamma_{1},\Gamma_{0})\oplus \Ext_{R}(\Gamma_{0}/\langle[\one_{\otw}]\rangle,\Gamma_{1})
\end{align*}
for all $n\geq 1$. In the case $J=0$, the term $(\mathrm{Ann}_{\zzp}[\one_{\otw}])^{-1}\otimes_{\zzp}\Gamma_{*}$ vanishes.

\end{example}
\begin{example}\label{ex:oinf}
Let $p$ be a prime number and set $G=\Zbb/p\Zbb$. Let $\Gg:G\act \oinf$ be an outer action that belongs to the equivariant bootstrup class. Let $\cn$ be the mapping cone of the unital inclusion $\nu:\Cbb\to \oinf$. Write 
\[\widetilde{\Gamma_{0}}=K_{0}^{G}(\oinf),\quad \GG_{0}\coloneqq \widetilde{\GG_{0}}/\langle [\one_{\oinf}]  \rangle,\quad\Gamma_{1}=K^{G}_{1}(\oinf).\]
By considering the six-term exact sequence of equivariant $K$-theory, we have the following exact sequence : 
\begin{center}
\begin{tikzpicture}[auto]
\node(1) at (-5,1) {$   K_{0}^{G}(\cn)  $};
\node(2) at (0,1) {$   K_{0}^{G}(\Cbb)\cong R(G)  $};
\node(3) at (5,1) {$  K_{0}^{G}(\oinf)\cong \widetilde{\Gamma_{0}}   $};
\node(4) at (5,-1) {$  K_{1}^{G}(\cn)   $};
\node(5) at (0,-1) {$   K_{1}^{G}(\Cbb)=0  $};
\node(6) at (-5,-1) {$  K_{1}^{G}(\oinf)\cong \Gamma_{1}   $};
\draw[->] (1) to  (2);
\draw[->] (2) to node {$\nu_{*}$} (3);
\draw[->] (3) to  (4);
\draw[->] (4) to  (5);
\draw[->] (5) to  (6);
\draw[->] (6) to (1);
\end{tikzpicture}
\end{center}
The map $\nu_{*}$ sends $1\in R(G)$ to $[\one_{\oinf}]$. Hence we have
\[K_{0}^{G}(\cn)\cong \Gamma_{1},\quad K_{1}^{G}(\cn)\cong \GG_{0}.\]
Since $\nu:\Cbb\to \oinf$ induces $KK$-equivalence between $\Cbb$ and $\oinf$, we have $KK^{G}_{*}(C(G),\oinf)\cong K_{*}(\cn)=0$ for $*=0,1$. 
Since the multiplication map $N(t):K_{*}^{G}(\cn)\to K_{*}^{G}(\cn)$ factors through $K_{*}(\cn)=0$ by \cite[Lemma 11.15]{Ko}, $\GG_{0}$ and $\GG_{1}$ are $\zzp$-modules. Moreover, it follows from \cite[Corollary 5.3]{IO} that $\GG_{0}$ and $\GG_{1}$ are uniquely $p$-divisible, and hence they are $\mathbb{Z}[\zeta_{p},1/p]$-modules.

We will compute $KK^{G}_{*}(\cn,\oinf)$. To achieve this, we will employ K\"{o}hler's Universal Coefficient Theorem \cite{Ko}. 

Before stating the exact sequence, we briefly recall the necessary ingredients of the UCT. Let $\alpha$ denote the action of $G$ on $C(G)$ given by left translation, and let $u: \Cbb \to C(G)$ be the unital embedding. We denote by $\cu$ the mapping cone of $u$. The UCT is formulated over the $\Zbb/2\Zbb$-graded ring
\[\Rcal\coloneqq KK^{G}_{*}(\Cbb\oplus C(G)\oplus\cu,\Cbb\oplus C(G)\oplus\cu).\]
For any $G$-\cstar-algebra $B$, we define the $\Rcal$-module
\begin{align*}
  EK^{G}(B)&=KK^{G}_{*}(\Cbb\oplus C(G)\oplus\cu,B)\\
  &\cong K^{G}_{*}(B)\oplus K_{*}(B)\oplus KK^{G}_{*}(\cu,B).
\end{align*}
Here we use isomorphisms $KK^{G}_{*}(\Cbb,B)\cong K_{*}^{G}(B)$ and $KK^{G}_{*}(C(G),B)\cong K_{*}(B)$.
Applying the UCT to our setting yields the following natural short exact sequence :
\begin{align*}
  \Ext_{\Rcal}(EK^{G}(S\cn),EK^{G}(\oinf))\rightarrowtail KK_{*}^{G}(\cn,\oinf)\twoheadrightarrow \Hom_{\Rcal}(EK^{G}(\cn),EK^{G}(\oinf)).
\end{align*}
First, we compute $EK^{G}(\oinf)$. We know $KK^{G}_{0}(C(G),\oinf)\cong\Zbb$ and $KK^{G}_{1}(C(G),\oinf)=0$. Applying the functor $KK^{G}(-,\oinf)$ to the mapping cone sequence $\cu\to\Cbb\to C(G)$ yields the following exact sequence :
\begin{center}
\begin{tikzpicture}[auto]
\node(1) at (-5,1) {$   KK^{G}_{0}(\cu,\oinf)  $};
\node(2) at (0,1) {$   KK^{G}_{0}(\Cbb,\oinf)\cong\widetilde{\Gamma_{0}}  $};
\node(3) at (5,1) {$    KK^{G}_{0}(C(G),\oinf)\cong\mathbb{Z} $};
\node(4) at (5,-1) {$   KK^{G}_{1}(\cu,\oinf)  $};
\node(5) at (0,-1) {$   KK^{G}_{1}(\Cbb,\oinf)\cong\Gamma_{1}  $};
\node(6) at (-5,-1) {$   KK^{G}_{1}(C(G),\oinf)= 0  $};
\draw[->] (1) to  (6);
\draw[->] (2) to  (1);
\draw[->] (3) to node {$u^{*}$} (2);
\draw[->] (4) to  (3);
\draw[->] (5) to  (4);
\draw[->] (6) to  (5);
\end{tikzpicture}
\end{center}
Consider the map $u^{*}:KK^{G}_{0}(C(G),\oinf)\to KK^{G}_{0}(\Cbb,\oinf)$. Because $[(L^{2}(G,\oinf),m,0)]$ is a generator of $KK^{G}_{0}(C(G),\oinf)$, $u^{*}$ maps $1\in \Zbb$ to $N(t)[\one_{\oinf}]\in \widetilde{\Gamma_{0}}$. Hence $u^{*}$ is injective and we obtain 
\[KK^{G}_{0}(\cu,\oinf)\cong \widetilde{\Gamma_{0}}/\langle N(t)[\one_{\oinf}]\rangle,\quad KK^{G}_{1}(\cu,\oinf)\cong \Gamma_{1}.\]

Next, we compute $EK^{G}(\cn)$. We already have
\[K_{*}(\cn)=0\quad (\text{$*$}=0,1),\quad K_{0}^{G}(\cn)\cong \GG_{1},\quad K_{1}^{G}(\cn)\cong \GG_{0}.\]
Applying the functor $KK^{G}(-,\cn)$ to the mapping cone sequence $\cu\to\Cbb\to C(G)$ and using these results yields
\[KK^{G}_{0}(\cu,\cn)\cong \Gamma_{1},\quad KK^{G}_{0}(\cu,\cn)\cong \GG_{0}.\]
Summarizing the calculations so far, we have the following :
\begin{align*}
&EK^{G}(\oinf)\cong (\widetilde{\Gamma_{0}}\oplus\Zbb\oplus\widetilde{\Gamma_{0}}/\langle N(t)[\one_{\oinf}]\rangle,\;\Gamma_{1}\oplus 0 \oplus \Gamma_{1}),\\
&EK^{G}(\cn)\cong (\Gamma_{1}\oplus 0\oplus \Gamma_{1},\;\GG_{0}\oplus 0\oplus\GG_{0}).
\end{align*}
To further investigate these structures, we use \cite[Proposition 10.5]{Me}. Our calculation yields that 
\[0\to EK^{G}(\Cbb)\to EK^{G}(\oinf)\to EK^{G}(\cn)\to 0\]
is an exact sequence of exact $\Rcal$-modules. This extension appears in \cite[Theorem 9.1]{Me}, and $EK^{G}(\Cbb)$ is the exact $\Rcal$-module given in \cite[Example 9.2]{Me}. Note that the element $s\in KK^{G}_{0}(C(G),C(G))\cong \Zbb[s]/(s^{p}-1)$ acts as the identity on $KK^{G}_{0}(C(G),\Cbb)\cong KK^{G}_{0}(C(G),\oinf)$. Furthermore, the map $\Ga_{10}=[(L^{2}(G),m,0)]\otimes -:KK^{G}_{*}(\Cbb,\oinf)\to KK^{G}_{*}(C(G),\oinf)$ is surjective, because it sends $[\one_{\oinf}]\in K_{0}^{G}(\oinf)\cong KK^{G}_{0}(\Cbb,\oinf)$ to $[\one_{\oinf}]\in K_{0}(\oinf)\cong KK^{G}_{0}(C(G),\oinf)$. Consequently, it follows that both $EK^{G}(\oinf)$ and $EK^{G}(\cn)$ decompose into direct sums of their even and odd parts as $\Rcal$-modules. While it is unknown whether K\"{o}hler's UCT exact sequence splits in general, it does so in our case because the modules decompose into direct sums of their even and odd parts. 

Finally, we compute the $\mathrm{Hom}$-groups and the $\mathrm{Ext}$-groups to determine $KK^{G}_{*}(\cn,\oinf)$. To simplify notation, when we write $\begin{pmatrix} M_{0} \\ M_{1} \\ M_{2} \end{pmatrix}$, we mean the $\mathcal{R}$-module whose even part is given by this and whose odd part is $0$. Furthermore, we write 
\begin{align*}
EK^{G}_{0}=\begin{pmatrix}\widetilde{\GG_{0}}\\ \Zbb\\ \widetilde{\GG_{0}}/\langle N(t)[\one_{\oinf}]\rangle\end{pmatrix}, EK^{G}_{1}=\begin{pmatrix}\GG_{1}\\ 0\\ \GG_{1}\end{pmatrix}, EK^{G}_{i}(\cn)=\begin{pmatrix}\GG_{1-i}\\ 0\\ \GG_{1-i}\end{pmatrix}
\end{align*}
for $i=0,1$. It is clear that
\begin{align*}
\mathrm{Hom}_{\Rcal}(EK^{G}_{i}(\cn),EK^{G}_{0}(\oinf))&\cong\mathrm{Hom}_{\zzp}(\GG_{1-i},\widetilde{\GG_{0}}/\langle  N(t)[\one_{\oinf}] \rangle)\\
\mathrm{Hom}_{\Rcal}(EK^{G}_{i}(\cn),EK^{G}_{1}(\oinf))&\cong \mathrm{Hom}_{\zzp}(\GG_{1-i},\GG_{1})
\end{align*}
as groups for $i=0,1$.\\
We now compute $\Ext_{\Rcal}(EK^{G}_{i}(\cn),EK^{G}_{k}(\oinf))$. Since the argument is similar for other cases, we only provide the details for the case $i=1$. Let 
\[0\to P'\xrightarrow{\Gi'}\oplus_{i}\zzp\to \GG_{0}\]
be a projective resolution of $\GG_{0}$ over $\zzp$. Since $\GG_{0}$ is a $\zzp$-module, this projective resolution induces the following resolution over $\Rcal$.
\[0\to \widetilde{P}\coloneqq\begin{pmatrix}
P\\ \oplus_{i}\mathbb{Z}\\ P'\\
\end{pmatrix}\to \oplus_{i} EK^{G}(\Cbb)=\oplus_{i}\begin{pmatrix}
R(G)\\ \Zbb\\ \zzp\\
\end{pmatrix}\to EK^{G}_{1}=\begin{pmatrix}
\GG_{0}\\ 0 \\ \GG_{0}
\end{pmatrix},\]
where $P$ is the kernel of the map $\oplus_{i}R(G)\to\oplus_{i}\zzp\to M$. By \cite[Theorem 12.4]{Ko}, $\widetilde{P}$ and $\oplus_{i} EK^{G}_{0}(\Cbb)$ are projective $\Rcal$-modules. Focusing on the first components, it is obvious that $\oplus_{i}R(G)$ is a projective $R(G)$-module, and by \cite[Corollary 8.5]{Me}, $P$ is also a projective $R(G)$-module. Hence we obtain a projective resolution of $\GG_{0}$ over $R(G)$ : 
\[0\to P\to \oplus_{j}R(G)\to \GG_{0}.\]
Since the $\Rcal$-module structure implies 
\begin{align*}
\Hom_{\Rcal}(\widetilde{P},EK^{G}_{1}(\oinf))&\cong\Hom_{\zzp}(P',\GG_{1}),\\
\Hom_{\Rcal}(\oplus_{i}EK^{G}(\Cbb),EK^{G}_{1}(\oinf))&\cong\Hom_{\zzp}(\oplus_{i}\zzp,\GG_{1}),
\end{align*}
we obtain
\begin{align*}
\Ext_{\Rcal}(EK^{G}_{1}(\cn),EK^{G}_{1}(\oinf))\cong \Ext_{\zzp}(\GG_{0},\GG_{1}).
\end{align*}
\begin{claim*}
\[\Ext_{\Rcal}(EK^{G}_{1}(\cn),EK^{G}_{0}(\oinf))\cong \Ext_{\zzp}(\GG_{0},\widetilde{\GG_{0}}/\langle N(t)[\one_{\oinf}]  \rangle)\quad\text{as groups}.\]
\end{claim*}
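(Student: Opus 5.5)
The plan is to run the argument used above for $\Ext_{\Rcal}(EK^{G}_{1}(\cn),EK^{G}_{1}(\oinf))$ again, now with the even module $EK^{G}_{0}(\oinf)$ as the second variable. As before, I start from the projective resolution over $\Rcal$
\[0\to \widetilde{P}=\begin{pmatrix}P\\ \oplus_{i}\Zbb\\ P'\end{pmatrix}\longrightarrow \oplus_{i}EK^{G}(\Cbb)\longrightarrow EK^{G}_{1}(\cn)\to 0,\]
which comes from a projective resolution $0\to P'\to\oplus_{i}\zzp\to\GG_{0}$ over $\zzp$. Both terms are projective by \cite[Theorem 12.4]{Ko}. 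Applying $\Hom_{\Rcal}(-,EK^{G}_{0}(\oinf))$ then identifies the Ext group with the cokernel of the restriction map
\[\Hom_{\Rcal}(\oplus_{i}EK^{G}(\Cbb),EK^{G}_{0}(\oinf))\longrightarrow\Hom_{\Rcal}(\widetilde{P},EK^{G}_{0}(\oinf)).\]

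The first step is to compute the source. By the Yoneda lemma, $EK^{G}(\Cbb)$ is the free $\Rcal$-module generated by $1_{\Cbb}$ in the $\Cbb$-slot, so $\Hom_{\Rcal}(EK^{G}(\Cbb),M)$ is the $\Cbb$-slot of $M$. Hence the source is $\oplus_{i}\widetilde{\GG_{0}}$, and a homomorphism is determined by where the generators go. The second step is to compute $\Hom_{\Rcal}(\widetilde{P},EK^{G}_{0}(\oinf))$. For this I will use the structure maps of $\Rcal$ between the three slots: restriction $\Ga_{10}$ to the $C(G)$-slot, the map $u^{*}$, and the maps relating the $\Cbb$-slot and the $\cu$-slot. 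The aim is to show that an $\Rcal$-map out of $\widetilde{P}$ is determined by its $\cu$-slot component $P'\to\widetilde{\GG_{0}}/\langle N(t)[\one_{\oinf}]\rangle$, and that every $\zzp$-linear map of this form extends. This gives $\Hom_{\Rcal}(\widetilde{P},EK^{G}_{0}(\oinf))\cong\Hom_{\zzp}(P',\widetilde{\GG_{0}}/\langle N(t)[\one_{\oinf}]\rangle)$.

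The ingredients for the second step are as follows. On $P$, which is the kernel of $\oplus R(G)\to\GG_{0}$, the element $N(t)$ acts through the augmentation, because $N(t)$ annihilates $\zzp$-modules. The $C(G)$-slot $\oplus_{i}\Zbb$ of $\widetilde{P}$ is hit surjectively by $\Ga_{10}$. By the exact sequence computed above, the $\Cbb$-slot of $EK^{G}_{0}(\oinf)$ is an extension of the $\cu$-slot by $\Zbb N(t)[\one_{\oinf}]$. Together these should show that the $\Cbb$-slot component is forced by the $\cu$-slot component and the $\Zbb$-component.

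The third step is to identify the restriction map under these identifications. It should be the composite
\[\oplus_{i}\widetilde{\GG_{0}}\twoheadrightarrow\oplus_{i}\widetilde{\GG_{0}}/\langle N(t)[\one_{\oinf}]\rangle=\Hom_{\zzp}(\oplus_{i}\zzp,\widetilde{\GG_{0}}/\langle N(t)[\one_{\oinf}]\rangle)\xrightarrow{(\Gi')^{*}}\Hom_{\zzp}(P',\widetilde{\GG_{0}}/\langle N(t)[\one_{\oinf}]\rangle).\]
Since the first arrow is surjective, the cokernel of the composite equals the cokernel of $(\Gi')^{*}$. That cokernel is $\Ext_{\zzp}(\GG_{0},\widetilde{\GG_{0}}/\langle N(t)[\one_{\oinf}]\rangle)$, which is the claim.

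I expect the main obstacle to be the second step, where the $\Cbb$-slot and the $C(G)$-slot interact. In the odd case the $C(G)$-slot of the target was $0$ and the $\Cbb$- and $\cu$-slots were both $\GG_{1}$, so the identification was immediate. Here the $C(G)$-slot $\Zbb$ and the summand $\Zbb N(t)[\one_{\oinf}]$ of $\widetilde{\GG_{0}}$ can contribute extra homomorphisms, and I need to show they cancel exactly. What I will try first is to use the relation $u^{*}\circ\Ga_{10}=N(t)$ on the $\Cbb$-slot, together with the fact that $s$ acts trivially on the $C(G)$-slot. With these I will check that any hom which vanishes on the $\cu$-slot sends each generator to a multiple of $N(t)[\one_{\oinf}]$ determined by its $\Zbb$-component, and so comes from the source. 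These extra pieces then lie in the image of the restriction map and do not contribute to the Ext group.
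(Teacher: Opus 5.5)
Your architecture matches the paper's: resolve by $\widetilde{P}\to\oplus_{i}EK^{G}(\Cbb)$, compare the cokernel of the restriction map with $\Ext_{\zzp}$ through the $\cu$-slot, and absorb the $\Zbb N(t)[\one_{\oinf}]$-valued maps using the $C(G)$-slot. Your last paragraph is essentially the paper's injectivity step. However, step 2 is false as stated, and its existence half is missing the key input.

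\textbf{Step 2 is false.} An $\Rcal$-map out of $\widetilde{P}$ is \emph{not} determined by its $\cu$-slot component. Take the map $\oplus_{i}EK^{G}(\Cbb)\to EK^{G}_{0}(\oinf)$ sending the $j$-th generator to $N(t)[\one_{\oinf}]$. On the $\Cbb$-slot it is $x\mapsto\varepsilon(x_{j})N(t)[\one_{\oinf}]$, where $\varepsilon$ is the augmentation. Since $N(t)e_{j}\in P$ and $u^{*}$ is injective, its restriction to $\widetilde{P}$ sends $N(t)e_{j}$ to $pN(t)[\one_{\oinf}]\neq 0$. Yet its $\cu$-component vanishes.

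So the map $\Phi\colon\phi\mapsto\overline{\phi}$ from $\Hom_{\Rcal}(\widetilde{P},EK^{G}_{0}(\oinf))$ to $\Hom_{\zzp}(P',\widetilde{\GG_{0}}/\langle N(t)[\one_{\oinf}]\rangle)$ has nonzero kernel $\Hom_{R(G)}(P,\Zbb N(t)[\one_{\oinf}])$. Likewise, the composite in your step 3 is $\Phi\circ\Gi^{*}$, not $\Gi^{*}$. Your final paragraph describes exactly these kernel elements, which contradicts step 2.

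The paper instead parametrizes the Hom-group by the $\Cbb$-slot, as $M=\{\phi\in\Hom_{R(G)}(P,\widetilde{\GG_{0}})\mid \phi(\ker\pi_{P})\subset\ker\pi_{0}\}$. It then proves that $\Phi$ is surjective with $\ker\Phi\subset\im\Gi^{*}$. Together with lifting maps out of the free module $\oplus_{i}\zzp$, this yields $M/\im\Gi^{*}\cong\coker(\Gi')^{*}$.

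\textbf{Surjectivity of $\Phi$ is unsupported.} The claim that every $\zzp$-linear map extends does not follow from the ingredients you list; those only constrain a map that is already given. For $g\colon P'\to\widetilde{\GG_{0}}/\langle N(t)[\one_{\oinf}]\rangle$ you must lift $g\circ\pi_{P}$ along $\pi_{0}\colon\widetilde{\GG_{0}}\to\widetilde{\GG_{0}}/\langle N(t)[\one_{\oinf}]\rangle$. The kernel of $\pi_{0}$ is $\Zbb N(t)[\one_{\oinf}]\cong\Zbb$, which is nonzero, so there is a potential obstruction in $\mathrm{Ext}^{1}_{R(G)}(P,\Zbb N(t)[\one_{\oinf}])$.

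The paper removes this obstruction by using that $P$ is a projective $R(G)$-module (\cite[Corollary 8.5]{Me}). The lift $\tilde{g}\colon P\to\widetilde{\GG_{0}}$ then automatically satisfies $\tilde{g}(\ker\pi_{P})\subset\ker\pi_{0}$, so it lies in $M$ and $\Phi(\tilde{g})=g$. You need this projectivity, or an equivalent lifting argument, to close step 2.
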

\begin{proof}
Let $\Gi:P\to \oplus_{i}R(G)$ and $\Gi':P'\to \oplus_{i}\zzp$ be inclusion maps and let $\pi_{P}:P\to P'$, $\pi:\oplus_{i}R(G)\to \oplus_{i}\zzp$ and $\pi_{0}:\widetilde{\GG_{0}}\to\widetilde{\GG_{0}}/\langle N(t)[\one_{\oinf}]\rangle$ be the quotient maps. Since the $\Rcal$-module structure implies 
\begin{align*}
&\Hom_{\Rcal}(\widetilde{P},EK^{G}_{0}(\oinf))\cong\{\phi\in \Hom_{R(G)}(P,\widetilde{\GG_{0}})\:|\:  \phi(\ker\pi_{P})\subset \ker\pi_{0}\}\eqqcolon M,\\
&\Hom_{\Rcal}(\oplus_{i}EK^{G}(\Cbb),EK^{G}_{0}(\oinf))\cong\Hom_{R(G)}(\oplus_{i}R(G),\widetilde{\GG_{0}}),
\end{align*}
we have
\[\Ext_{\Rcal}(EK^{G}_{1}(\cn),EK^{G}_{0}(\oinf))\cong M/(\Gi)^{*}\left(\Hom_{R(G)}(\oplus_{i}R(G),\widetilde{\GG_{0}})\right).\]

We now show that the right-hand side is isomorphic to $\Ext_{\zzp}(\GG_{0},\widetilde{\GG_{0}}/\langle N(t)[\one_{\oinf}]\rangle)$. Let $\phi\in M$ be an arbitrary element. Because the kernerl of $\pi_{0}\circ \phi:P\to\widetilde{\GG_{0}}/\langle N(t)[\one_{\oinf}]\rangle$ contains $\ker\pi_{P}$, we obtain a $\zzp$-module map $\overline{\phi}:P'\to \widetilde{\GG_{0}}/\langle N(t)[\one_{\oinf}]\rangle)$ satisfying $\pi_{0}\circ \phi=\overline{\phi}\circ \pi_{P}$. Define $\Phi:\Hom_{R(G)}(P,\GG_{0})\to \Hom_{\zzp}(P',\widetilde{\GG_{0}}/\langle N(t)[\one_{\oinf}]\rangle)$ by $\Phi(\phi)=\overline{\phi}$. Then $\Phi$ is a group homomorphism and it is easy to see that if $\phi\in\im\Gi^{*}$, then $\overline{\phi}\in\im (\Gi')^{*}$. Thus, $\Phi$ induces a group homomorphism
\[\overline{\Phi}:M/\im\Gi^{*}\to \Hom_{\zzp}(P',\widetilde{\GG_{0}}/\langle N(t)[\one_{\oinf}]\rangle)/\im(\Gi')^{*}=\Ext_{\zzp}(\GG_{0},\widetilde{\GG_{0}}/\langle N(t)[\one_{\oinf}]\rangle).\]
Let $f\in M$ be any element satisfying $\overline{\Phi}([f])=[\overline{f}]=0$. Then there exists $g:\oplus_{i}\zzp\to\widetilde{\GG_{0}}/\langle N(t)[\one_{\oinf}]\rangle$ such that $g\circ\Gi'=\overline{f}$. Let $\one_{j}\in\oplus_{i}\zzp$ be the element with $1$ in the $j$-th component and $0$ elsewhere. By choosing a lift for each $g(\one_{j})$, we obtain $\tilde{g}:\oplus_{i}R(G)\to\GG_{0}$. Then we have $\Phi(f-\tilde{g}\circ \Gi)=0$. This implies that the image of $h\coloneqq f-\tilde{g}\circ\Gi$ is contained in $\Zbb N(t)[\one_{\oinf}]$. By the $\Rcal$-module structure of $\widetilde{P}$ and $\oplus_{i}EK^{G}(\Cbb)$, we have the following commutative diagram : 
\begin{center}
\begin{tikzpicture}[auto]
\node (1) at (-3,1) {$\Hom_{R(G)}(\oplus_{i}R(G),\Zbb N(t)[\one_{\oinf}])$};
\node (2) at (3,1) {$\Hom_{R(G)}(P,\Zbb N(t)[\one_{\oinf}])$};
\node (3) at (-3,-1) {$\Hom_{\Zbb}(\oplus_{i}\Zbb,\Zbb)$};
\node (4) at (3,-1) {$\Hom_{\Zbb}(\oplus_{i}\Zbb,\Zbb)$};
\draw[->] (1) to node {$\Gi^{*}$}  (2);
\draw[->] (3) to node {$\id{}$}  (4);
\draw[double, double distance=2pt, -] (1) to (3);
\draw[double, double distance=2pt, -] (2) to (4);
\end{tikzpicture}
\end{center}
Therefore $h$ is in the image of $\Gi^{*}$, so does $f=h+\tilde{g}\circ \Gi$. Hence $\overline{\Phi}$ is injective.

Let $g\in\Hom_{\zzp}(P',\widetilde{\GG_{0}}/\langle  N(t)[\one_{\oinf}] \rangle)$ be any element. Then $g\circ\pi_{P}:P\to \widetilde{\GG_{0}}/\langle  N(t)[\one_{\oinf}] \rangle$ is a $R(G)$-module map. Since $P$ is a projective $R(G)$-module, there exists a $R(G)$-module map $\tilde{g}:P\to \GG_{0}$ such that $\pi_{0}\circ \tilde{g}=g\circ\pi_{P}$. By construction, we have $\tilde{g}(\ker\pi_{P})\subset \ker\pi_{0}$ and $\Phi(\tilde{g})=g$. Hence $\overline{\Phi}$ is surjective, and we can conclude that $\overline{\Phi}$ is a group isomorphism.
\end{proof}
Then we have 
\begin{align*}
\pi_{2n-1}(\mathrm{Aut}_{G}(\oinf))&\cong KK^{G}_{0}(\cn,\oinf)\\ 
&\cong \Hom_{\zzp}(\GG_{1},\widetilde{\GG_{0}}/\langle  N(t)[\one_{\oinf}] \rangle)\oplus\Hom_{\zzp}(\GG_{0},\GG_{1})\\
&\quad \oplus\Ext_{\zzp}(\GG_{1},\GG_{1})\oplus\Ext_{\zzp}(\GG_{0},\widetilde{\GG_{0}}/\langle  N(t)[\one_{\oinf}] \rangle)
\end{align*}
and
\begin{align*}
\pi_{2n}(\mathrm{Aut}_{G}(\oinf))&\cong KK^{G}_{1}(\cn,\oinf)\\
&\cong \Hom_{\zzp}(\GG_{1},\GG_{1})\oplus\Hom_{\zzp}(\GG_{0},\widetilde{\GG_{0}}/\langle  N(t)[\one_{\oinf}] \rangle)\\
&\quad \oplus\Ext_{\zzp}(\GG_{1},\widetilde{\GG_{0}}/\langle  N(t)[\one_{\oinf}] \rangle)\oplus\Ext_{\zzp}(\GG_{0},\GG_{1})
\end{align*}
for all $n\geq 1$.
\end{example}

\section{Equivariant version of Dadarlat-Pennig theory for Kirchberg algebras}
In this section, we develop an equivariant analogue of the Dadarlat-Pennig theory. 
\begin{theorem}[{\cite[Theorem A]{DP}}]
  Let $X$ be acompact metrizable space and let $A$ be a strongly self-absrobing \cstar-algebra. The set $\mathcal{B}un_{X}(A\otimes \Kbb)$ of isomorphism classes of locally trivial fields over $X$ with fiber $A\otimes \Kbb$ becomes an abelian group under the operation of tensor product. Moreover, this group is isomorphic to $E_{A}^{1}(X)$, the first group of a generalized connective cohomology theory $E_{A}^{*}(X)$ defined by the infinite loop space $B\Aut{}{A\otimes \Kbb}$.
\end{theorem}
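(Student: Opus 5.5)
The plan follows the strategy of Dadarlat and Pennig, which is also the template for the equivariant version later in this section. First I would establish the topological input on the automorphism groups with the point-norm topology. I would show that $\mathrm{Aut}(A)$ is contractible, using that the first factor embedding $A\to A\otimes A$ is approximately (indeed strongly asymptotically) unitarily equivalent to an isomorphism, together with $K_{1}$-injectivity of $A$. From this I would deduce that $\mathrm{Aut}(A\otimes\Kbb)$ is homotopy equivalent to the open subspace of $\mathrm{End}(A\otimes\Kbb)$ of full projection classes, and that it has the homotopy type of a well-pointed CW-complex. I would also identify $\pi_{0}(\mathrm{Aut}(A\otimes\Kbb))\cong K_{0}(A)^{\times}_{+}$ and $\pi_{k}(\mathrm{Aut}(A\otimes\Kbb))\cong K_{k}(A)$ for $k\geq1$, via $\mathrm{Aut}(A\otimes\Kbb)\simeq \Ucal(\Mcal(A\otimes\Kbb))/\Ucal(A)$-type fibration arguments and Kuiper-type contractibility of $\Ucal(\Mcal(A\otimes\Kbb))$.

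Second, I would build the infinite loop structure. Choose isomorphisms $A^{\otimes n}\otimes\Kbb^{\otimes n}\cong A\otimes\Kbb$ and consider the functor $\mathbf{n}\mapsto \mathrm{Aut}((A\otimes\Kbb)^{\otimes n})$ on the category $\Ical$ of finite sets and injections. Here injections act by inserting unit-type factors, and the tensor product of automorphisms gives a commutative $\Ical$-monoid in the sense of Sagave--Schlichtkrull. Strong self-absorption guarantees that the structure maps $\mathrm{Aut}((A\otimes\Kbb)^{\otimes m})\to\mathrm{Aut}((A\otimes\Kbb)^{\otimes n})$ are homotopy equivalences. The homotopy colimit $\mathrm{Aut}(A\otimes\Kbb)_{h\Ical}$ is therefore an $E_\infty$-space weakly equivalent to $\mathrm{Aut}(A\otimes\Kbb)$. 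Since $\pi_{0}$ is a group, namely $K_{0}(A)^{\times}_{+}$, it is grouplike, and hence an infinite loop space, giving a connective spectrum and a cohomology theory $E^{*}_{A}$.

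Third, I would compare deloopings. Using the Eckmann--Hilton argument, composition and tensor product are homotopic as $H$-space structures on $\mathrm{Aut}(A\otimes\Kbb)$, up to the contractible choices of isomorphisms. I would then show $B_{\otimes}\mathrm{Aut}(A\otimes\Kbb)\simeq B\mathrm{Aut}(A\otimes\Kbb)$, so that $E^{1}_{A}(X)=[X,B\mathrm{Aut}(A\otimes\Kbb)]$. The latter classifies principal $\mathrm{Aut}(A\otimes\Kbb)$-bundles, and hence locally trivial fields with fiber $A\otimes\Kbb$, over compact metrizable $X$ (using the CW-homotopy type for the classifying-space bijection). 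Finally, I would check that the fiberwise tensor product, followed by the fiberwise choice of isomorphism $(A\otimes\Kbb)^{\otimes2}\cong A\otimes\Kbb$, corresponds under this bijection to the $E_\infty$ sum. This choice is well defined up to homotopy because the relevant space of isomorphisms is contractible by the first step. The result is an abelian group isomorphic to $E^{1}_{A}(X)$.

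I expect the main obstacle to be the first step: the contractibility of $\mathrm{Aut}(A)$ and the homotopy equivalences of the $\Ical$-monoid structure maps. Everything downstream is formal homotopy theory once these equivalences are in hand. For this step I would try first a path-lifting argument, taking the asymptotic unitary path implementing $a\mapsto a\otimes 1\simeq$ an isomorphism and using it to contract $\mathrm{Aut}(A)$ onto a point through $\mathrm{Aut}(A\otimes A)$.
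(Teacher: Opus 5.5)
Your outline is the Dadarlat--Pennig argument. The paper only quotes this statement from \cite{DP}, but its Section 3 runs the same steps equivariantly. First, contract $\mathrm{Aut}(A)$ along a path from $l\colon a\mapsto a\otimes 1$ to $r\colon a\mapsto 1\otimes a$ through isomorphisms; you need both embeddings, not only $l$. Next, use the same lemma for $A\otimes\Kbb$ and the projection $1\otimes e$ to get that $\mathrm{Aut}_{1\otimes e}(A\otimes\Kbb)$ is contractible. Your Step 1 omits this, but it is exactly what makes $\mathrm{Aut}(A\otimes\Kbb)^{0}\to\mathrm{Proj}_{1\otimes e}(A\otimes\Kbb)$ a homotopy equivalence. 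It requires a matching path for $\Kbb$ fixing $e\otimes e$, cf.\ \autoref{thm:contractible}. The remaining steps are the $\Ical$-group of tensor powers, stability, group-likeness, and $B_{\otimes}\simeq B$. Note also that $K_{1}$-injectivity of $A$ is not a hypothesis of the statement. It has to be imported from Winter's theorem that strongly self-absorbing algebras are $\mathcal{Z}$-stable, together with Jiang's results.

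You use one false claim twice. The space of isomorphisms $(A\otimes\Kbb)^{\otimes 2}\to A\otimes\Kbb$ is not contractible. It is homeomorphic to $\mathrm{Aut}(A\otimes\Kbb)$, whose $\pi_{0}$ is $K_{0}(A)^{\times}_{+}$ and whose $\pi_{2k}$ is $K_{0}(A)$. For $A=\Cbb$ it is $PU(H)\simeq K(\Zbb,2)$. What is contractible is $\mathrm{Aut}(A)$ and $\mathrm{Aut}_{1\otimes e}(A\otimes\Kbb)$.

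The conclusions you draw from the false claim still hold, for a different reason. Changing the identification by $\alpha$ changes the induced map of structure groups by the inner automorphism $c_{\alpha}$, and $Bc_{\alpha}$ is freely homotopic to the identity. More cleanly, the $\Ical$-framework needs no identification at all. The tensor product of bundles is induced by the homomorphism $\mu_{1,1}$ into $B\mathrm{Aut}((A\otimes\Kbb)^{\otimes 2})$, and this maps compatibly into the homotopy colimit.

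Separately, ``composition and tensor product are homotopic as $H$-space structures'' does not by itself give $B_{\otimes}\mathrm{Aut}(A\otimes\Kbb)\simeq B\mathrm{Aut}(A\otimes\Kbb)$. In general a delooping is not determined by the homotopy class of the multiplication. The input actually needed is the strict interchange law $(\phi_{1}\otimes\psi_{1})\circ(\phi_{2}\otimes\psi_{2})=(\phi_{1}\circ\phi_{2})\otimes(\psi_{1}\circ\psi_{2})$. This says the $\mu_{m,n}$ are continuous group homomorphisms, so $B$ can be applied levelwise. In other words, you need the EH-$\Ical$-group structure with well-pointed values and compatible inverses, which is what \autoref{lem:ehigrp} checks. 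Then \cite[Lemma~3.5, Theorem~3.6]{DP2} apply. With these repairs your proof is the standard one.
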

  Our goal is to extend this result to the equivariant setting. In doing so, our approach closely follows the methods developed by Dadarlat and Pennig in \cite{DP} and Evans and Pennig in \cite{EP}. 
\subsection{Strongly self-absorbing actions}
\begin{definition}[{\cite[Definition 3.1]{SZ1}}]
Let $D$ be a unital, separable \cstar-algebra. Let $\Gg$ be an action of a compact group $G$ on $D$. We say that $\Gg$ is a \textit{strongly self-absorbing action} if there exist an equivariant isomorphism $\phi:D\to D\otimes D$ and a sequence $(v_{n})$ in $\Ucal((D\otimes D)^{\Gg\otimes \Gg})$ such that $x\otimes \mathbf{1}_{D}=\lim_{n\to\infty}\ad{v_{n}}\circ \phi(x)$ for all $x\in D$.
\end{definition}
For $\Ge>0$, let 
\begin{align*}
D^{\Gg}_{\Ge}&=\{x\in D\:|\: \max_{g\in G}\|\Gg_{g}(x)-x\|\leq \Ge\},\qquad\Ucal(D^{\Gg}_{\Ge})=\Ucal(D)\cap D^{\Gg}_{\Ge}.
\end{align*}

\begin{proposition}\label{prop:ssa-htpy}
Let $G$ be a compact group and let $D$ be a unital, separable \cstar-algebra. Let $\Gg:G\act D$ be a strongly self-absorbing action such that $D^{\Gg}$ is $K^{1}$-injective. Then for any two unital equivariant $*$-homomorphisms $\phi_{1},\phi_{2}:D\to D$, there exists a continuous path of unitaries $w:[1,\infty)\to\Ucal(D^{\Gg})$ such that 
\[w_{1}=\mathbf{1}_{D},\quad \phi_{2}(x)=\lim_{t\to\infty}\ad{w_{t}}\circ\phi_{1}(x)\text{ for all }x\in D.\]
\end{proposition}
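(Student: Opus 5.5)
This is the equivariant analogue of Dadarlat--Winter's result that unital endomorphisms of a strongly self-absorbing algebra are strongly asymptotically unitarily equivalent. I would prove it by first establishing approximate unitary equivalence with invariant unitaries, and then upgrading this to a continuous path starting at $\mathbf{1}_{D}$, using $K_{1}$-injectivity of $D^{\Gg}$.

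The first step is to show that any two unital equivariant $*$-homomorphisms $\phi_{1},\phi_{2}:D\to D$ are approximately unitarily equivalent via unitaries in $\Ucal(D^{\Gg})$. I would invoke Szab\'o's theory \cite{SZ1}: for a strongly self-absorbing action, every unital equivariant endomorphism is approximately unitarily equivalent to $\id{D}$ via invariant unitaries. This is the equivariant version of the statement that all unital endomorphisms of an ssa algebra are approximately inner. Composing the two equivalences $\phi_{i}\approx\id{D}$ gives $\phi_{2}\approx_{u}\phi_{1}$. Concretely, for finite $F\subset D$ and $\Ge>0$, I get $u\in\Ucal(D^{\Gg})$ with $\|u\phi_{1}(x)u^{*}-\phi_{2}(x)\|<\Ge$ for $x\in F$.

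The second step follows the Dadarlat--Winter argument \cite{DP}, adapted to invariant unitaries, to get asymptotic equivalence. Fix an increasing sequence of finite sets $F_{n}$ exhausting a dense subset, and tolerances $\Ge_{n}\to 0$. Choose unitaries $u_{n}\in\Ucal(D^{\Gg})$ with $\ad{u_{n}}\circ\phi_{1}\approx_{\Ge_{n}}\phi_{2}$ on $F_{n}$. Then $u_{n+1}^{*}u_{n}$ approximately commutes with $\phi_{1}(F_{n})$.

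To connect $u_{n}$ to $u_{n+1}$ by a path that stays approximately commuting, I would use the central-sequence structure. Since $\Gg\cong\Gg\otimes\Gg$ equivariantly and $\phi_{1}(D)$ sits, up to approximate unitary equivalence, inside $D\otimes\mathbf{1}$, a unitary approximately commuting with $\phi_{1}(F_{n})$ can be conjugated (up to small error) into $\mathbf{1}\otimes D^{\Gg}$. To use $K_{1}$-triviality there, I would first multiply $u_{n+1}$ by a suitable invariant unitary correcting its $K_{1}$-class; the freedom for this comes from the tensor factor. Then $K_{1}$-injectivity of $D^{\Gg}$ implies that a unitary with trivial $K_{1}$-class lies in $\Ucal_{0}(D^{\Gg})$, so it can be connected to $\mathbf{1}$ by a path inside $\mathbf{1}\otimes\Ucal(D^{\Gg})$. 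Such a path commutes with $D\otimes\mathbf{1}$, hence almost commutes with $\phi_{1}(F_{n})$ along the whole path. Concatenating these paths on $[n,n+1]$ gives a continuous $w$ with $\ad{w_{t}}\circ\phi_{1}\to\phi_{2}$.

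Finally, I need $w_{1}=\mathbf{1}_{D}$. This requires $u_{1}\in\Ucal_{0}(D^{\Gg})$, which I would arrange by the same $K_{1}$-correction trick: replace $u_{1}$ by $u_{1}(\mathbf{1}\otimes z)$ for an invariant unitary $z$ with $[z]=-[u_{1}]$ in $K_{1}(D^{\Gg})$, realized in the commuting tensor factor. Then $K_{1}$-injectivity lets me prepend a path from $\mathbf{1}$ to $u_{1}$.

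I expect the main obstacle to be the connecting step: showing that $u_{n+1}^{*}u_{n}$ can be homotoped to $\mathbf{1}$ within invariant unitaries that almost commute with $\phi_{1}(F_{n})$. This needs an equivariant version of the Dadarlat--Winter lemma, that the unitary group of the relative commutant of an ssa action is connected in the asymptotic sense, with all unitaries kept $G$-invariant. Averaging over the compact group $G$ plus polar decomposition, as in \autoref{prop:T-inj}, handles the small invariance defects, but the $K_{1}$ bookkeeping has to be done in $D^{\Gg}$ rather than in $D$.
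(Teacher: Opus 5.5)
Your first step is fine: strong self-absorption gives approximate unitary equivalence of any two unital equivariant endomorphisms, and for compact $G$ averaging plus polar decomposition makes the implementing unitaries invariant. The connecting step, however, does not go through as described, and it carries the whole content of the proposition.

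It rests on the claim that a unitary approximately commuting with $\varphi_1(F_n)$ can be conjugated into $\mathbf{1}\otimes D^{\gamma}$ while $\varphi_1(F_n)$ lands near $D\otimes\mathbf{1}$. Conjugating the unitary alone into $\mathbf{1}\otimes D$ is always possible and useless. What you need is this \emph{simultaneous} splitting, and approximate commutation with a finite set does not give it.

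Because your $u_n$ are chosen independently for each $n$, $u_{n+1}^{*}u_n$ is an essentially arbitrary unitary of the approximate relative commutant. $K_1$-injectivity of $D^{\gamma}$, after any $K_1$-correction, only yields some path from $\mathbf{1}$ to it in $\mathcal{U}(D^{\gamma})$. Nothing controls the commutators with $\varphi_1(F_n)$ along that path, so $\operatorname{Ad}(w_t)\circ\varphi_1$ need not stay close to $\varphi_2$ on $[n,n+1]$. You flag this as the main obstacle, but the $K_1$-bookkeeping you propose does not address it. The problem is not the $K_1$-class of the connecting unitary; it is producing a path inside an approximate relative commutant.

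The missing ingredient is Szab\'o's unitary regularity, and the paper's proof is a reduction to it. By \cite[Theorem 3.15]{SZ1}, the equivariant Dadarlat--Winter theorem, the conclusion holds for any strongly self-absorbing action that is unitarily regular. This means: for every $\varepsilon>0$ there is $\delta>0$ such that $uvu^{*}v^{*}$ is homotopic to $\mathbf{1}$ within $\mathcal{U}(D^{\gamma}_{\varepsilon})$ for all $u,v\in\mathcal{U}(D^{\gamma}_{\delta})$. Here $\mathcal{U}(D^{\gamma}_{\varepsilon})$ denotes the unitaries moved by at most $\varepsilon$ by every $\gamma_g$.

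Roughly, in that intertwining the unitaries to be contracted are commutators of unitaries living in a tensor factor that commutes with the relevant finite set. Contracting them inside that factor therefore preserves the required approximate commutation. This replaces your splitting claim, and it is also why no $K_1$-correction is needed, since commutators have trivial $K_1$-class.

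The paper then checks unitary regularity directly:
\begin{itemize}
\item Average $u\in\mathcal{U}(D^{\gamma}_{\delta})$ over $G$ and take the polar part, giving $u_1\in\mathcal{U}(D^{\gamma})$ with $\|u-u_1\|\le 3\delta$.
\item Connect $u$ to $u_1$ inside $\mathcal{U}(D^{\gamma}_{\varepsilon/4})$ by functional calculus on $uu_1^{*}$, and do the same for $v$.
\item Apply $K_1$-injectivity of $D^{\gamma}$ to $u_1v_1u_1^{*}v_1^{*}$, whose $K_1$-class vanishes.
\end{itemize}
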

\begin{proof}
By \cite[Theorem 3.15]{SZ1}, it suffices to show that $\Gg$ is unitarily regular (\cite[Definition 2.17]{SZ1}), that is, for every $\Ge>0$, there exists $\Gd>0$ such that $uvu^{*}v^{*}$ is homotopic to $\oned$ in $\Ucal(D^{\Gg}_{\Ge})$ for every $u,v\in \Ucal(D^{\Gg}_{\Gd})$. 

Fix any $\Ge>0$. We may assume $\Gd<\frac{1}{4}$. For $u\in \Ucal(D^{\Gg}_{\Gd})$, let $x=\int_{G}\Gg_{g}(u)dg$. Since 
\[\|x-u\|\leq \int_{G}\|\Gg_{g}(u)-u\|dg<\Gd <1,\]
$x$ is invertible. Hence $u_{1}=x|x|^{-1}$ is a $G$-invariant unitary. 
Then we have the following inequalities.
\begin{align*}
  \|x\|&\leq \int_{G}\|\Gg_{g}(u)\|dg\leq 1,\\
\|x^{*}x-\oned\|&\leq \|x^{*}-u^{*}\|\|x\|+\|u^{*}\|\|x-u\|\leq 2\Gd,\\
\||x|^{-1}-\oned\|&=\sup\{|t^{-\frac{1}{2}}-1|\:|\: t\in \Gs(x^{*}x)\}\leq \sup\{|t^{-\frac{1}{2}}-1|\:|\: t\in[1-2\Gd,1]\}.
\end{align*}
Hence we have
\begin{align*}
\|u_{1}-u\|&=\|x|x|^{-1}-u\|\\
&\leq\|x-u\|+\|x\|\||x|^{-1}-\oned\|\\
&\leq \Gd+(1-2\Gd)^{-\frac{1}{2}}-1\\
&\leq \Gd+ 1+2\Gd-1=3\Gd.
\end{align*}
The last inequality follows from the fact that $(1-t)^{-\frac{1}{2}}\leq 1+t$ for $0\leq t\leq \frac{1}{2}$. Since $\|uu_{1}^{*}-\oned\|\leq 1$, $-1\notin\Gs(uu_{1}^{*})$. Define $\phi:\Gs(uu_{1}^{*})\to\Rbb$ by $\phi(e^{it})=t$ and define $f_{s}:\Gs(uu_{1}^{*})\to \Tbb$ by $f_{s}(z)=e^{is\phi(z)}$ for $0\leq s\leq 1$. Continuity of the continuous functional calculus implies that there exists $\Gd_{1}>0$ such that if $a,b\in D$ satisfies $\Gs(a),\ \Gs(b)\subset \Gs(uu_{1}^{*})$ and $\|a-b\|\leq \Gd_{1}$, then $\max_{0\leq s\leq 1}\|f_{s}(a)-f_{s}(b)\|<\frac{\Ge}{4}$. Since $\max_{g\in G}\|\Gg_{g}(uu_{1}^{*})-uu_{1}^{*}\|\leq 3\Gd$, by replacing $\Gd$ with $\min\{\Gd,\frac{\Gd_{1}}{3}\}$ if necessary, we have 
\[\max_{g\in G,\ s\in[0,1]}\|\Gg_{g}(f_{s}(uu_{1}^{*}))-f_{s}(uu_{1}^{*})\|<\frac{\Ge}{4}.\]
Hence the path $s\mapsto f_{s}(uu_{1}^{*})$ between $uu_{1}^{*}$ and $\oned$ takes values in $D^{\Gg}_{\frac{\Ge}{4}}$, and we can conclude that $u$ and $u_{1}$ are homotopic in $\Ucal(D^{\Gg}_{\frac{\Ge}{4}})$. By a similar construction, for $v\in \Ucal(D^{\Gg}_{\Gd})$, we have $v_{1}\in \Ucal(D^{\Gg})$ that is homotopic to $v$ in $\Ucal(D^{\Gg}_{\frac{\Ge}{4}})$. Then $uvu^{*}v^{*}$ is homotopic to $u_{1}v_{1}u_{1}^{*}v_{1}^{*}$ in $\Ucal(D^{\Gg}_{\Ge})$. Since $D^{\Gg}$ is $K_{1}$-injective and $[u_{1}v_{1}u_{1}^{*}v_{1}^{*}]_{1}=0$ in $K_{1}(D^{\Gg})$, we can conclude that $u_{1}v_{1}u_{1}^{*}v_{1}^{*}$ is homotopic to $\oned$ in $\Ucal(D^{\Gg})$. This finishes the proof.
\end{proof}
\begin{remark}
Let $\Gg$ be an \isa action of compact group $G$ on a Kirchberg algebra $D$. Then, it follow from \cite[Remark 4.6]{Mu} that $D^{\Gg}$ is a Kirchberg algebra. Hence $D^{\Gg}$ is $K_{1}$-injective.
\end{remark}
\begin{example}\label{ex:ssaisa-triv}
Let $G$ be a compact group. Consider the action $\Gg:G\act \oinf$ as in \autoref{def:qf}. Then its infinite tensor product $(\oinf,\Gg^{\infty})$ is isometrically shift-absorbing and strongly self-absorbing by \cite[Corollary 6.8]{SZ2}. In fact, it follows from \cite[Corollary 7.3]{Iz} that it is not necessary to take the infinite tensor product.
\end{example}
\begin{example}\label{ex:model-action}
Let $G$ be a compact group. Let $x=\sum_{i=1}^{N}d_{i}[\pi_{i}]\in R(G)$ ($d_{i}\in\Zbb\setminus\{0\}$) and set $d=\sum_{i=1}^{N}d_{i}\dim \pi_{i}$. Then we can take projections $p_{i}\in M_{d}\otimes \oinf$ such that $[p_{i}]_{0}=d_{i}\dim \pi_{i}$ in $K_{0}(M_{d}\otimes \oinf)$ and $\sum_{i=1}^{N}p_{i}=\one_{M_{d}\otimes\oinf}$.

For each $i$, the corner $p_{i}(M_{d}\otimes\oinf)p_{i}$ is isomorphic to $M_{|d_{i}|\dim \pi_{i}}\otimes \oinf$. Then there is a unitary representation $U^{(i)}:G\to\Ucal(p_{i}(M_{d}\otimes\oinf)p_{i})$ such that $(p_{i}(M_{d}\otimes\oinf)p_{i},\, \ad U^{(i)})$ is conjugate to $(M_{|d_{i}|\dim \pi_{i}}\otimes\oinf,\, \ad (\oplus_{|d_{i}|}\pi_{i})\otimes\id{\oinf})$. For each $g\in G$, define $U_{g}=\sum_{i=1}^{N}U^{(i)}_{g}$. Then $\ad U$ defines an inner action on $M_{d}\otimes\oinf$ of $G$ and its infinite tensor product action on $(M_{d}\otimes \oinf)^{\otimes\infty}\cong M_{d^{\infty}}\otimes\oinf$ is strongly self-absorbing by \cite[Proposition 6.3]{SZ1}.

We compute its equivariant $K$-groups. Since the action on $M_{d}\otimes\oinf$ constructed as above is inner, it is cocycle conjugate to the trivial action. Hence $(M_{d}\otimes\oinf,\:\ad U)$ is $KK^{G}$-equivalent to $(\Cbb,\:\id{\Cbb})$. This implies
\[K_{0}^{G}(M_{d}\otimes \oinf)\cong R(G),\qquad K_{1}^{G}(M_{d}\otimes\oinf)=0.\]
To compute $K_{0}^{G}(M_{d^{\infty}}\otimes\oinf)$, we determine the connecting map $K_{0}^{G}((M_{d}\otimes\oinf)^{\otimes n})\to K_{0}^{G}((M_{d}\otimes\oinf)^{\otimes n+1})$. Note that $K_{0}^{G}((M_{d}\otimes\oinf)^{\otimes n})\cong R(G)$ for every $n$. Let $[p]_{0}\in K_{0}^{G}((M_{d}\otimes\oinf)^{\otimes n})$. Then we have
\begin{align*}
[p\otimes \one_{M_{d}\otimes \oinf}]_{0}=\sum_{i=1}^{N}[p\otimes p_{i}]_{0}=\sum_{i=1}^{N}d_{i}[\pi_{i}][p]_{0}=x[p]_{0}.
\end{align*}
Thus the connecting map is multiplication by $x$, and $K_{0}^{G}(M_{d^{\infty}}\otimes\oinf)$ is isomorphic to $R(G)[x^{-1}]$. Since the action as in \autoref{ex:ssaisa-triv} is $KK^{G}$-equivalent to $(\Cbb,\id{})$, by tensoring this action, we have a strongly self-absorbing and isometrically shift-absorbing action on $M_{d^{\infty}}\otimes \oinf$ such that 
\[K_{0}^{G}(M_{d^{\infty}}\otimes\oinf)\cong R(G)[x^{-1}],\quad K_{1}^{G}(M_{d^{\infty}}\otimes\oinf)=0.\]
Let $S$ be any multiplicative subset of $R(G)$. For each $x\in S$, we can construct a strongly self-absorbing and isometrically shift-absorbing action $\Gg^{(x)}:G\act D^{(x)}$ such that $K_{0}^{G}(D^{(x)})=R(G)[x^{-1}]$ and $K_{1}^{G}(D^{(x)})=0$. Set $(D,\Gg)=(\otimes_{x\in S}D^{(x)},\otimes_{x\in S}\Gg^{(x)})$. Then we have a strongly self-absorbing and isometrically shift-absrobing action on a unital Kirchberg algebra $D$ such that
\[K_{0}^{G}(D)=R(G)[S^{-1}],\quad K_{1}^{G}(D)=0.\]
Note that $K^{G}_{0}(D)$ is a copmplete invariant of $(D,\Gg)$ constructed as above up to conjugacy \cite[Proposition 3.5]{IO}. 
\end{example}

\begin{example}
Let $G$ be a finite group and let $D$ be a strongly self-absorbing \cstar-algebra. Let $\Gg$ be the Bernoulli action of $G$ on $D^{\otimes|G|}$. Then $\Gg$ is a strongly self-absorbing action.
\begin{proof}
We denote the $k$-th copy of $D$ in $D^{\otimes|G|}$ by $D_{k}$. Let 
\[\phi_{0}: D_{1}^{\otimes 2}\otimes D_{2}^{\otimes 2}\otimes\cdots\otimes D_{|G|}^{\otimes 2}\to D^{\otimes |G|}\otimes D^{\otimes|G|}\]
 be the isomorphism defined by permuting the tensor factors. Then, $\phi_{0}$ is an equivariant isomorphism between $(D_{1}^{\otimes 2}\otimes D_{2}^{\otimes 2}\otimes\cdots\otimes D_{|G|}^{\otimes 2},\Gb)$ and $(D^{\otimes |G|}\otimes D^{\otimes|G|},\Gg\otimes\Gg)$, where $\Gb$ is the Bernoulli action. Since $D$ is a strongly self-absorbing \cstar-algebra, there exists an isomorphism $\phi:D\to D\otimes D$ and sequence of unitaries $(u_{n})\subset \Ucal(D\otimes D)$ such that
\[\lim_{n\to\infty}\ad{u_{n}}\circ \phi(d)=d\otimes \oned\qquad\forall\:d\in D.\]
Define an equivariant isomorphism $\psi:D^{\otimes|G|}\to D^{\otimes|G|}\otimes D^{\otimes |G|}$ and sequence of unitaries $(v_{n})\subset \Ucal(D^{\otimes|G|}\otimes D^{\otimes|G|})$ by 
\[\psi=\phi_{0}\circ \phi^{\otimes |G|}\quad\text{and}\quad v_{n}=\phi_{0}(u_{n}^{\otimes|G|}).\]
Then for every $g\in G$, we have
\begin{align*}
(\Gg\otimes\Gg)_{g}(v_{n})=(\Gg\otimes\Gg)_{g}(\phi_{0}(u_{n}^{\otimes |G|}))=\phi_{0}(\Gb_{g}(u_{n}^{\otimes |G|}))=\phi_{0}(u_{n}^{\otimes |G|})=v_{n}.
\end{align*}
And for every $z=(x_{1}\otimes \cdots x_{|G|})\in D^{\otimes |G|}$, we have
\begin{align*}
\|\ad{v_{n}}&\circ\psi(z)-z\otimes\oned\|\\
&=\|\phi_{0}(u_{n}^{\otimes |G|}(\phi(x_{1})\otimes\cdots\otimes \phi(x_{|G|}))(u_{n}^{\otimes|G|})^{*})-(x_{1}\otimes\cdots x_{|G|})\otimes \one_{D^{\otimes |G|}}\|\\
&=\|(u_{n}\phi(x_{1})u_{n}^{*})\otimes\cdots\otimes (u_{n}\phi(x_{|G|})u_{n}^{*})-((x_{1}\otimes\oned)\otimes\cdots \otimes (x_|G|\otimes\oned))\|\\
&\to 0\quad(n\to\infty).
\end{align*}
Hence $\Gg$ is a strongly self-absorbing action.
\end{proof}
It follows from \cite[Theorem 2.1]{Iz3} and \cite[Lemma 2.4]{CSESKJN} that the Bernoulli actions on $\oinf$ and the Jiang-Su algebra $\Zcal$ are $KK^{G}$-equivalent to the trivial action on $\Cbb$.
\end{example}

Let $G$ be a compact group and let $A$ be a \cstar-algebra with $G$-action $\Ga$. Define $\times :K_{0}^{G}(A)\times K_{0}^{G}(A)\to K_{0}^{G}(A\otimes A)$ as follows. Let $V,W$ be finite-dimensional representation spaces of $G$ and let $p\in A\otimes \BB(V)$ and $q\in A\otimes \BB(W)$ be any $G$-invariant projections. By abuse of notation, we use $p\otimes q$ to denote the image of $p\otimes q$ under the isomorphism $(A\otimes\BB(V))\otimes(A\otimes \BB(W))\cong (A\otimes A)\otimes \BB(V\otimes W)$. Then $[p]_{0}\times[q]_{0}$ is given by $[p\otimes q]_{0}$.
\begin{proposition}
Let $G$ be a compact group and let $D$ be a unital, separable \cstar-algebra with a \ssa action $\Gg$ such that $D^{\Gg}$ is $K_{1}$-injective. Then $K_{0}^{G}(D)$ is a commutative ring with multiplication given by the composition 
\[K_{0}^{G}(D)\times K_{0}^{G}(D)\xlongrightarrow{\times} K_{0}^{G}(D\otimes D)\xlongrightarrow{\psi_{*}}K_{0}^{G}(D)\]
where $\psi:D\otimes D\to D$ is an equivariant $*$-isomorphism.  (Multiplication is independent of the choice of $\psi$.)
\end{proposition}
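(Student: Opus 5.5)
The plan is to derive every claim from \autoref{prop:ssa-htpy}: any two unital equivariant $*$-homomorphisms $D\to D$ are asymptotically unitarily equivalent through invariant unitaries, hence homotopic, and so induce the same map on $K_{0}^{G}$.

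\textbf{Independence of $\psi$.} Let $\psi,\psi':D\otimes D\to D$ be equivariant isomorphisms. Then $\psi'\circ\psi^{-1}$ is a unital equivariant $*$-endomorphism of $D$. By \autoref{prop:ssa-htpy} it is connected to $\id{D}$ by the path $\ad{w_{t}}$ with $w_{t}\in\Ucal(D^{\Gg})$, in the sense of point-norm limits. Since $\ad{w}$ with $w$ invariant acts trivially on $K_{0}^{G}$ (conjugate $p\in D\otimes\BB(V)$ by $w\otimes\one$), and $K_{0}^{G}$ classes are stable under small perturbations of invariant projections, we get $(\psi'\circ\psi^{-1})_{*}=\id{}$. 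Hence $\psi'_{*}=\psi_{*}$.

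\textbf{Bilinearity and unit.} Bilinearity holds because $[p]\times[q]=[p\otimes q]$ is additive in each variable with respect to block sums. For the unit $[\oned]_{0}$, we have $[p]\cdot[\oned]=\psi_{*}[p\otimes\oned]=(\psi\circ(\id{D}\otimes\oned))_{*}[p]$. The map $\psi\circ(\id{}\otimes\oned)$ is a unital equivariant endomorphism of $D$, so by the same argument it induces the identity. The left unit is handled the same way.

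\textbf{Associativity.} Use independence of the isomorphism. The two bracketings are induced, on $[p\otimes q\otimes r]$, by $\psi\circ(\psi\otimes\id{})$ and by $\psi\circ(\id{}\otimes\psi)$, both isomorphisms $D^{\otimes3}\to D$. Composing one with the inverse of the other gives an equivariant automorphism of $D$, which acts trivially on $K_{0}^{G}$.

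\textbf{Commutativity.} Let $\Gs$ be the flip on $D\otimes D$, which is equivariant for $\Gg\otimes\Gg$. Then $[q]\times[p]=\Gs_{*}([p]\times[q])$, up to the canonical identification $\BB(V\otimes W)\cong\BB(W\otimes V)$, which is implemented by an invariant flip unitary and so does not change classes. Since $\psi\circ\Gs$ is another equivariant isomorphism $D\otimes D\to D$, independence gives $\psi_{*}\Gs_{*}=\psi_{*}$.

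\textbf{Main obstacle.} The main step to check is that an asymptotic unitary equivalence $\phi_{2}=\lim\ad{w_{t}}\circ\phi_{1}$ with invariant $w_{t}$ forces $(\phi_{1})_{*}=(\phi_{2})_{*}$ on $K_{0}^{G}$. For an invariant projection $p\in D\otimes\BB(V)$, the projections $(\ad{w_{t}}\circ\phi_{1}\otimes\id{})(p)$ are invariant and converge in norm to $(\phi_{2}\otimes\id{})(p)$. For large $t$ they are therefore close to it, and close invariant projections are equivalent via an invariant partial isometry obtained by functional calculus on invariant elements. Each such projection is also equivalent to $(\phi_{1}\otimes\id{})(p)$ via the invariant unitary $w_{t}\otimes\one$. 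Together these give $(\phi_{1})_{*}[p]=(\phi_{2})_{*}[p]$.
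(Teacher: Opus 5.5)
Your proposal is correct and takes essentially the same route as the paper. Every claim is reduced to \autoref{prop:ssa-htpy}, applied to $\psi'\circ\psi^{-1}$, to $\psi\circ(\mathrm{id}_D\otimes 1_D)$, to the two bracketings $\psi\circ(\psi\otimes\mathrm{id}_D)$ and $\psi\circ(\mathrm{id}_D\otimes\psi)$, and to $\psi\circ\sigma$, with bilinearity coming from block sums. The paper turns the asymptotic path into a point-norm homotopy and cites homotopy invariance of $K_0^G$, whereas you prove that invariance directly via invariant unitaries and small perturbations, and you check the left unit directly instead of deducing it from commutativity.
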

\begin{proof}
By using \autoref{prop:ssa-htpy}, it follows that any two equivariant $*$-isomorphisms $\phi_{1},\phi_{2}:D\otimes D\to D$ are homotopic. Thus the multiplication is independent of the choice of an equivariant $*$-isomorphism. 

Associativity follows from the fact that $\psi\circ (\psi\otimes \id{D})$ and $\psi\circ (\id{D}\otimes\psi)$ are homotopic. Since the tensor flip automorphism is homotopic to the identity, it is commutative. Since $\psi^{-1}$ and $\id{D}\otimes \oned$ are homotopic, we have
\[[p]_{0}\cdot[\oned]_{0}=[\psi\circ(\id{D}\otimes\oned)(p)]_{0}=[p]_{0}.\]
Commutativity and this implies $[\oned]_{0}$ is a unit. The distributive law follows from the fact that $[p\otimes\mathrm{diag}(q,r)]_{0}=[\mathrm{diag}(p\otimes q,p\otimes r)]$ for $p\in D\otimes \BB(V_{1}),q\in D\otimes \BB(V_{2})$ and $r\in D\otimes (V_{3})$.
\end{proof}
\begin{lem}
Let $G$ be a compact group and let $D$ be a unital, separable \cstar-algebra with a \ssa action $\Gg$ such that $D^{\Gg}$ is $K_{1}$-injective. Then the ring $KK^{G}(D,D)$ is commutative.
\end{lem}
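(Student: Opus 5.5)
The plan is to adapt the classical Eckmann–Hilton style argument used for strongly self-absorbing \cstar-algebras. The key structural input is \autoref{prop:ssa-htpy}: any two unital equivariant $*$-endomorphisms of $D$ are approximately (indeed asymptotically) unitarily equivalent via $G$-invariant unitaries. In particular, all such endomorphisms are homotopic and define the same class in $KK^{G}(D,D)$.

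Fix an equivariant isomorphism $\psi:D\otimes D\to D$. I will define a second product on $KK^{G}(D,D)$ by
\[x\ast y \coloneqq KK^{G}(\psi^{-1})\otimes (x\hotimes y)\otimes KK^{G}(\psi),\]
where $x\hotimes y\in KK^{G}(D\otimes D,D\otimes D)$ is the exterior Kasparov product. The proof then has three steps.

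\emph{Independence and unit.} By \autoref{prop:ssa-htpy}, any two choices of $\psi$ are homotopic, so $\ast$ does not depend on $\psi$. The unit $1_{D}=KK^{G}(\id{D})$ is a two-sided unit for $\ast$. Indeed, $\psi\circ(\id{D}\otimes\oned)$ is a unital equivariant endomorphism of $D$, so it is homotopic to $\id{D}$; this gives $1_{D}\ast y=y$ and $x\ast 1_{D}=x$.

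\emph{Interchange law.} Functoriality of the exterior product gives
\[(x\otimes x')\hotimes(y\otimes y')=(x\hotimes y)\otimes(x'\hotimes y').\]
Conjugating by $\psi$ then yields $(x\otimes x')\ast(y\otimes y')=(x\ast y)\otimes(x'\ast y')$, where $\otimes$ denotes the Kasparov product (the ring multiplication).

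\emph{Eckmann–Hilton.} With the interchange law and a common unit, the standard argument gives
\[x\otimes y=(x\ast 1)\otimes(1\ast y)=(x\otimes 1)\ast(1\otimes y)=x\ast y.\]
In the same way,
\[x\otimes y=(1\ast x)\otimes(y\ast 1)=(1\otimes y)\ast(x\otimes 1)=y\ast x.\]
Hence $x\otimes y=x\ast y=y\otimes x$. In particular the ring is commutative; as a byproduct, $\ast$ is commutative as well. This matches the use of the tensor flip being homotopic to $\id{}$ in the preceding proposition, but that fact is not even needed here.

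The main obstacle is making the interchange step rigorous in the equivariant setting. One has to check that the exterior product $\hotimes$ is compatible with the Kasparov composition product in $KK^{G}$, meaning the functoriality $(x\otimes x')\hotimes(y\otimes y')=(x\hotimes y)\otimes(x'\hotimes y')$. This is standard from Kasparov's equivariant theory \cite{Ka}: write $x\hotimes y=(x\hotimes 1_{D})\otimes(1_{D}\hotimes y)$ and use that $\tau_{D}$ commutes with composition.

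There is one further point. Here $D\otimes D$ carries the diagonal action $\Gg\otimes\Gg$, and $\psi$ must be equivariant for it. The existence of such an equivariant isomorphism is part of the definition of a strongly self-absorbing action. For the unit step, we also need $\psi\circ(\id{D}\otimes\oned)$ to be equivariant and unital, which is clear, so that \autoref{prop:ssa-htpy} applies to it together with $\id{D}$.
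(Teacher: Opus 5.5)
Your argument is correct and is essentially the paper's proof. The paper uses the same two inputs: $KK^{G}(\psi^{-1})=KK^{G}(\nu\otimes\id{D})=KK^{G}(\id{D}\otimes\nu)$ (with $\nu:\Cbb\to D$ the unital inclusion, from \autoref{prop:ssa-htpy}) and the interchange/commutation property of exterior Kasparov products; instead of naming $\ast$, it writes the chain $x\otimes y=x\otimes(y\ast 1)=\cdots=(y\ast 1)\otimes(1\ast x)=y\otimes x$ directly, which is your Eckmann--Hilton argument unrolled. One small slip: $1_{D}\ast y=y$ needs $\psi\circ(\oned\otimes\id{D})\sim_{h}\id{D}$ rather than $\psi\circ(\id{D}\otimes\oned)\sim_{h}\id{D}$, which follows from \autoref{prop:ssa-htpy} in exactly the same way.
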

\begin{proof}
Let $\Gv:\Cbb\to D$ be a unital inclusion and let $\phi:D\otimes D\to D$ be an equivariant $*$-isomorphism. We will denote by $\hotimes $ the exterior tensor product in $KK^{G}$. Since $\phi^{-1}\sim_{h}\Gv\otimes\id{D}\sim_{h}\id{D}\otimes \Gv$, we have
\[KK^{G}(\phi^{-1})=KK^{G}(\nu\otimes \id{D})=KK^{G}(\id{D}\otimes \nu).\]
For any $x\in KK^{G}(D,D)$, we have
\begin{align*}
KK^{G}(\nu\otimes\id{D})&\otimes \{x\hotimes KK^{G}(\id{D})\}\otimes KK^{G}(\phi)\\
&=KK^{G}(\id{D}\otimes \nu)\otimes \{x\hotimes KK^{G}(\id{D})\}\otimes KK^{G}(\phi)\\
&=\{x\hotimes KK^{G}(\id{\Cbb})\}\otimes KK^{G}(\id{D}\otimes \nu)\otimes KK^{G}(\phi)\\
&=x.
\end{align*}
Let $x,y\in KK^{G}(D,D)$ be arbitrary elements. Then we have
\begin{align*}
x\otimes y&=x\otimes KK^{G}(\nu\otimes \id{D})\otimes \{y\hotimes KK^{G}(\id{D})\}\otimes KK^{G}(\phi)\\
&=x\otimes \{(KK^{G}(\nu)\otimes y)\hotimes KK^{G}(\id{D})\}\otimes KK^{G}(\phi)\\
&=\{KK^{G}(\id{\Cbb})\hotimes x\}\otimes \{(KK^{G}(\nu)\otimes y)\hotimes KK^{G}(\id{D})\}\otimes KK^{G}(\phi)\\
&=\{(KK^{G}(\nu)\otimes y)\hotimes KK^{G}(\id{D})\}\otimes \{KK^{G}(\id{D})\hotimes x\}\otimes KK^{G}(\phi)\\
&=KK^{G}(\nu\otimes \id{D})\otimes \{y\hotimes KK^{G}(\id{D})\}\otimes KK^{G}(\phi)\\
&\qquad\otimes KK^{G}(\nu\otimes \id{D})\otimes \{KK^{G}(\id{D})\hotimes x\}\otimes KK^{G}(\phi)\\
&=y\otimes x.
\end{align*}
\end{proof}
\begin{proposition}[cf. {\cite[Theorem 3.4]{DW}}]
Let $G$ be a compact group and let $D$ be a unital, separable \cstar-algebra with a \ssa action $\Gg$ such that $D^{\Gg}$ is $K_{1}$-injective. Let $A$ be a separable $G$-\cstar-algebra. Let $\phi:D\otimes D\to D$ be an equivariant $*$-isomorphism. Then 
\begin{align*}
KK^{G}(\Gv)\otimes -:KK^{G}(D,D\otimes A)\ni x\mapsto KK^{G}(\nu)\otimes x\in KK^{G}(\Cbb,D\otimes A)
\end{align*}
and
\begin{align*}
\{KK^{G}&(\id{D})\hotimes -\}\otimes KK^{G}(\phi\otimes \id{A}):\\
&KK^{G}(\Cbb,D\otimes A)\ni y\mapsto\{KK^{G}(\id{D})\hotimes y\}\otimes KK^{G}(\phi\otimes \id{A})\in KK^{G}(D,D\otimes A)
\end{align*}
are mutually inverse group isomorphisms.
\end{proposition}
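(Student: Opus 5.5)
We verify the two composites separately, using only the homotopy facts already established: since $D^{\Gg}$ is $K_{1}$-injective, \autoref{prop:ssa-htpy} gives that any two unital equivariant $*$-homomorphisms $D\to D$ are homotopic, hence so are any two equivariant isomorphisms $D\otimes D\to D$. In particular $\phi^{-1}\sim_{h}\Gv\otimes\id{D}\sim_{h}\id{D}\otimes\Gv$, exactly as in the proof of commutativity of $KK^{G}(D,D)$. Write $\alpha(x)=KK^{G}(\Gv)\otimes x$ and $\beta(y)=\{KK^{G}(\id{D})\hotimes y\}\otimes KK^{G}(\phi\otimes\id{A})$. Both are clearly group homomorphisms by bilinearity of the Kasparov product and of $\hotimes$.

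First, $\alpha\circ\beta=\id{}$. For $y\in KK^{G}(\Cbb,D\otimes A)$ we compute
\[KK^{G}(\Gv)\otimes\{KK^{G}(\id{D})\hotimes y\}=y\hotimes KK^{G}(\Gv)\]
(up to the flip identifying $D\otimes D\otimes A$ orderings), which equals $y\otimes KK^{G}(\id{D}\otimes\Gv\otimes\id{A})$ suitably arranged; more precisely, $KK^{G}(\Gv)\otimes (KK^{G}(\id{D})\hotimes y)=KK^{G}(\Gv)\hotimes y=y\otimes KK^{G}(\Gv\otimes\id{D\otimes A})$ by the standard commutation of exterior products with an element of $KK^{G}(\Cbb,-)$. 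Then composing with $KK^{G}(\phi\otimes\id{A})$ gives $y\otimes KK^{G}((\phi\circ(\Gv\otimes\id{D}))\otimes\id{A})$, and $\phi\circ(\Gv\otimes\id{D})\sim_{h}\id{D}$ because it is a unital equivariant endomorphism of $D$. Hence $\alpha\beta(y)=y$.

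Second, $\beta\circ\alpha=\id{}$. For $x\in KK^{G}(D,D\otimes A)$, $\beta\alpha(x)=\{KK^{G}(\id{D})\hotimes(KK^{G}(\Gv)\otimes x)\}\otimes KK^{G}(\phi\otimes\id{A})=KK^{G}(\id{D}\otimes\Gv)\otimes\{KK^{G}(\id{D})\hotimes x\}\otimes KK^{G}(\phi\otimes\id{A})$. Replace $KK^{G}(\id{D}\otimes\Gv)$ by $KK^{G}(\Gv\otimes\id{D})$ (homotopic as above), then use functoriality of $\hotimes$ to slide: $KK^{G}(\Gv\otimes\id{D})\otimes\{KK^{G}(\id{D})\hotimes x\}=x\hotimes KK^{G}(\Gv)$ placed in the first tensor slot, i.e.\ $x\otimes KK^{G}(\Gv\otimes\id{D\otimes A})$ with the flip of the two $D$ factors inserted. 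Composing with $\phi\otimes\id{A}$ yields $x\otimes KK^{G}((\phi\circ\Gs\circ(\Gv\otimes\id{D}))\otimes\id{A})$ where $\Gs$ is the flip; again $\phi\circ\Gs\circ(\Gv\otimes\id{D})$ is a unital equivariant endomorphism of $D$, hence homotopic to $\id{D}$, giving $x$.

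The main obstacle is purely bookkeeping: keeping track of the order of tensor factors in the exterior products $\hotimes$ (which $D$ factor $x$ acts on, and where the flip enters) so that each composite really reduces to $-\otimes KK^{G}(\theta\otimes\id{A})$ for a unital equivariant $\theta:D\to D$. Once every composite is in that form, \autoref{prop:ssa-htpy} closes the argument uniformly, so I would organize the computation by first proving the sliding identity $KK^{G}(\Gv\otimes\id{D})\otimes(z\hotimes KK^{G}(\id{D}))$-type relations carefully (as in the preceding lemma on commutativity of $KK^{G}(D,D)$) and then applying them.
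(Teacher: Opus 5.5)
Your proposal is correct and is essentially the paper's proof: both composites are reduced, using $\phi^{-1}\sim_{h}\nu\otimes\id{D}\sim_{h}\id{D}\otimes\nu$ and the standard compatibility of exterior and Kasparov products, to $-\otimes KK^{G}(\theta\otimes\id{A})$ with $\theta$ a unital equivariant endomorphism of $D$, which is homotopic to $\id{D}$ by \autoref{prop:ssa-htpy}. The flip you insert in the second composite is unnecessary (one has $KK^{G}(\nu\otimes\id{D})\otimes\{KK^{G}(\id{D})\hotimes x\}=x\otimes KK^{G}(\nu\otimes\id{D\otimes A})$ directly, as in the paper), but it is harmless for exactly the reason you give.
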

\begin{proof}
Let $x\in KK^{G}(D,D\otimes A)$ be arbitrary element. Then we have
\begin{align*}
\{KK^{G}(\id{D})&\hotimes (KK^{G}(\nu)\otimes x)\}\otimes KK^{G}(\phi\otimes \id{A})\\
&=KK^{G}(\id{D}\otimes \nu)\otimes\{KK^{G}(\id{D})\hotimes x\}\otimes KK^{G}(\phi\otimes \id{A})\\
&=KK^{G}(\nu\otimes \id{D})\otimes \{KK^{G}(\id{D})\hotimes x\}\otimes KK^{G}(\phi\otimes \id{A})\\
&=x\otimes KK^{G}(\nu\otimes \id{D\otimes A})\otimes KK^{G}(\phi\otimes \id{A})\\
&=x.
\end{align*}
For any $y\in KK^{G}(\Cbb,D\otimes A)$, we have
\begin{align*}
KK^{G}(\nu)&\otimes \{KK^{G}(\id{D})\hotimes y\}\otimes KK^{G}(\phi\otimes \id{A})\\
&=KK^{G}(\nu\otimes\id{\Cbb})\otimes \{KK^{G}(\id{D})\hotimes y\}\otimes KK^{G}(\phi\otimes \id{A})\\
&=y\otimes KK^{G}(\nu\otimes \id{D\otimes A})\otimes KK^{G}(\phi\otimes \id{A})\\
&=y.
\end{align*}
Thus these maps are bijective. It is clear that they preserve additions.
\end{proof}

Let $G$ be a compact group and let $D$ be a unital separable \cstar-algebra with a strongly self-absorbing action $\Gg$ such that $D^{\Gg}$ is $K_{1}$-injective. Let $X$ be a compact metrizable space. Define multiplications on $KK^{G}(\Cbb,D\otimes C(X))$ and $KK^{G}(D,D\otimes C(X))$ as follows. Let $\GD:C(X)\otimes C(X)\to C(X)$ be the diagonal map and let $\Gs:D\otimes C(X)\otimes D\otimes C(X)\to D\otimes D\otimes C(X)\otimes C(X)$ be the map that swaps the second and third tensor factors. Let $\phi:D\otimes D\to D$ be an equivariant $*$-isomorphism and let $\Gv:\Cbb\to D$ be the unital inclusion. For $x,y\in KK^{G}(\Cbb,D\otimes C(X))$ and $\Gx,\Gn\in KK^{G}(D,D\otimes C(X))$, multiplications are given by
\begin{align*}
x\cdot y&=\{KK^{G}(\id{\Cbb})\hotimes x\}\otimes \{y\hotimes KK^{G}(\id{D\otimes C(X)})\}\otimes KK^{G}(\Gs)\otimes \{KK^{G}(\phi\otimes \GD)\},\\
\Gx\cdot\Gn&=\Gx\otimes\{\Gn\hotimes KK^{G}(\id{C(X)})\}\otimes \{KK^{G}(\id{D})\hotimes KK^{G}(\GD)\}.
\end{align*}
\begin{proposition}\label{prop:multi-str}
The map 
\[\{KK^{G}(\id{D})\hotimes -\}\otimes KK^{G}(\phi\otimes \id{C(X)}):KK^{G}(\Cbb,D\otimes C(X))\to KK^{G}(D,D\otimes C(X))\]
preserves the multiplications.
\end{proposition}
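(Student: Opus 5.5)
Write $\Theta(y)=\{KK^{G}(\id{D})\hotimes y\}\otimes KK^{G}(\phi\otimes\id{C(X)})$ for $y\in KK^{G}(\Cbb,D\otimes C(X))$. The plan is to reduce to the case of genuine equivariant $*$-homomorphisms and compare the two products at the level of maps. By the previous proposition (with $A=C(X)$), $\Theta$ is a bijection with inverse $KK^{G}(\nu)\otimes -$. It therefore suffices to show that
\[
KK^{G}(\nu)\otimes(\Theta(x)\cdot\Theta(y))=x\cdot y
\]
for all $x,y$. Indeed, applying $\Theta$ to both sides then gives $\Theta(x)\cdot\Theta(y)=\Theta(x\cdot y)$. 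So the plan is to compute $KK^{G}(\nu)\otimes\Theta(x)\otimes\{\Theta(y)\hotimes KK^{G}(\id{C(X)})\}\otimes\{KK^{G}(\id{D})\hotimes KK^{G}(\Delta)\}$ and massage it into the defining formula for $x\cdot y$.

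First, $KK^{G}(\nu)\otimes\Theta(x)=x$, as shown in the previous proposition. We are left with $x\otimes\{\Theta(y)\hotimes 1_{C(X)}\}\otimes\{1_{D}\hotimes KK^{G}(\Delta)\}$. Next expand
\[
\Theta(y)\hotimes 1_{C(X)}=\{1_{D}\hotimes y\hotimes 1_{C(X)}\}\otimes KK^{G}(\phi\otimes\id{C(X)\otimes C(X)}).
\]
Using functoriality of the exterior product, namely $x\otimes(1_{D}\hotimes z)=x\hotimes z$ up to the flip of tensor factors, we rewrite $x\otimes\{1_{D}\hotimes y\hotimes 1_{C(X)}\}$ as $\{1_{\Cbb}\hotimes x\}\otimes\{y\hotimes 1_{D\otimes C(X)}\}$ composed with the swap $\sigma$. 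This step uses commutativity of the exterior Kasparov product (associativity and graded commutativity of $\hotimes$ with trivial grading), and it needs care to check that the order of factors matches $D\otimes D\otimes C(X)\otimes C(X)$. The remaining factors $KK^{G}(\phi\otimes\id{})\otimes\{1_{D}\hotimes KK^{G}(\Delta)\}$ combine to $KK^{G}(\phi\otimes\Delta)$, since both are $KK$-classes of $*$-homomorphisms and composition of homomorphisms corresponds to the Kasparov product.

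The main obstacle is the factor-ordering bookkeeping. In $x\cdot y$ the element $x$ sits in the first $D\otimes C(X)$ slot and $y$ in the second. In the $\Theta$ side, $y$ enters via $1_{D}\hotimes y$ tensored after the image of $x$, so one must verify that the resulting permutation of $D\otimes C(X)\otimes D\otimes C(X)$ is exactly $\sigma$, rather than $\sigma$ composed with a flip of the two $D$'s. If it is off by the flip of the two $D$ factors, I will absorb that flip into $\phi$: $\phi\circ\mathrm{flip}$ is another equivariant isomorphism $D\otimes D\to D$, which is homotopic to $\phi$ by \autoref{prop:ssa-htpy}, since $K_{1}$-injectivity of $D^{\gamma}$ is assumed. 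Any discrepancy in the $C(X)$ factors is harmless, because $\Delta$ is symmetric. So the identity holds at the level of $KK^{G}$-classes.
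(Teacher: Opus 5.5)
Your proposal is correct and uses essentially the same $KK^{G}$-calculus as the paper: functoriality and commutativity of the exterior product, invertibility of $\{KK^{G}(\id{D})\hotimes -\}\otimes KK^{G}(\phi\otimes\id{C(X)})$ with inverse $KK^{G}(\nu)\otimes -$, and the homotopy between the flip on $D\otimes D$ and the identity. The paper instead starts from the image of $x\cdot y$ and inserts $KK^{G}(\phi\otimes\id{C(X)})\otimes KK^{G}(\nu\otimes\id{D\otimes C(X)})=KK^{G}(\id{D\otimes D\otimes C(X)})$ rather than cancelling with $KK^{G}(\nu)\otimes -$; also, with the paper's definition of $x\cdot y$ it is $y$, not $x$, that lands in the first $D\otimes C(X)$ slot, so the two sides differ by exactly the flip of the two $D$ factors (the $C(X)$ factors already agree), which your homotopy $\phi\circ\mathrm{flip}\sim_{h}\phi$ handles.
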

\begin{proof}
  Notice that we have 
  \[KK^{G}(\phi\otimes\id{C(X)})\otimes KK^{G}(\nu\otimes\id{D\otimes C(X)})=KK^{G}(\id{D\otimes D\otimes C(X)}).\]
Let $x,y\in KK^{G}(\Cbb,D\otimes C(X))$ be arbitrary elements. Then we have
\begin{align*}
&\{KK^{G}(\id{D})\hotimes(x\cdot y)\}\otimes KK^{G}(\phi\otimes \id{C(X)})\\
&=\{KK^{G}(\id{D})\hotimes (\{KK^{G}(\id{\mathbb{C}})\hotimes x\}\otimes \{y\hotimes KK^{G}(\id{D\otimes C(X)})\}\otimes KK^{G}(\Gs)\otimes KK^{G}(\phi\otimes \GD))\}\\
&\qquad \qquad \otimes KK^{G}(\phi\otimes\id{C(X)})\\
&=\{KK^{G}(\id{D})\hotimes x\}\otimes \{KK^{G}(\id{D})\hotimes(\{y\hotimes KK^{G}(\id{D\otimes C(X)})\}\otimes KK^{G}(\Gs)\otimes KK^{G}(\phi\otimes \GD))\}\\
&\qquad\qquad\otimes KK^{G}(\phi\otimes \id{C(X)})\\
&=\{KK^{G}(\id{D})\hotimes x\}\otimes KK^{G}(\phi\otimes \id{C(X)})\\
&\qquad\qquad \otimes KK^{G}(\nu\otimes\id{D\otimes C(X)})\\
&\qquad\qquad\otimes \{KK^{G}(\id{D})\hotimes(\{y\hotimes KK^{G}(\id{D\otimes C(X)})\}\otimes KK^{G}(\Gs)\otimes KK^{G}(\phi\otimes \GD))\}\\
&\qquad\qquad\otimes KK^{G}(\phi\otimes \id{C(X)})
\end{align*}
Here, we compute the middle two lines of the last equation.
\begin{align*}
&KK^{G}(\nu\otimes\id{D\otimes C(X)})\\
&\qquad\qquad\otimes \{KK^{G}(\id{D})\hotimes(\{y\hotimes KK^{G}(\id{D\otimes C(X)})\}\otimes KK^{G}(\Gs)\otimes KK^{G}(\phi\otimes \GD))\}\\
&=\{KK^{G}(\id{\Cbb})\hotimes y \hotimes KK^{G}(\id{D\otimes C(X)})\}\otimes \{KK^{G}(\nu)\hotimes KK^{G}(\id{D\otimes C(X)\otimes D\otimes C(X)})\}\\
&\qquad\qquad\otimes \{KK^{G}(\id{D})\hotimes KK^{G}(\Gs)\}\otimes \{KK^{G}(\id{D})\hotimes KK^{G}(\phi\otimes \GD)\}\\
&=\{KK^{G}(\id{\Cbb})\hotimes y \hotimes KK^{G}(\id{D\otimes C(X)})\}\otimes \{KK^{G}(\id{\Cbb})\hotimes KK^{G}(\Gs)\}\\
&\qquad\qquad \otimes \{KK^{G}(\nu)\hotimes KK^{G}(\id{D\otimes D\otimes C(X)\otimes C(X)})\}\otimes \{KK^{G}(\id{D})\hotimes KK^{G}(\phi\otimes\GD)\}\\
&=\{KK^{G}(\id{\Cbb})\hotimes y \hotimes KK^{G}(\id{D\otimes C(X)})\}\otimes \{KK^{G}(\id{\Cbb})\hotimes KK^{G}(\Gs)\}\\
&\qquad\qquad \{(KK^{G}(\nu\otimes \id{D\otimes D})\otimes KK^{G}(\id{D}\otimes \phi))\hotimes (KK^{G}(\id{C(X)\otimes C(X)})\otimes KK^{G}(\GD))\}\\
&=\{KK^{G}(\id{\Cbb})\hotimes y \hotimes KK^{G}(\id{D\otimes C(X)})\}\otimes \{KK^{G}(\id{\Cbb})\hotimes KK^{G}(\Gs)\}\\
&\qquad\qquad \otimes\{KK^{G}(\id{D\otimes D})\hotimes KK^{G}(\GD)\}.
\end{align*}
The last equality follows from $(\id{D}\otimes \phi)\circ (\nu\otimes \id{D\otimes D})\sim_{h}\id{D\otimes D}$. Let $\Gs_{D}:D\otimes D\to D\otimes D$ be the flip automorphism. Notice that we have $\Gs_{D}\sim_{h}\id{D\otimes D}$.
Then we obtain 
\begin{align*}
&\{KK^{G}(\id{D})\hotimes(x\cdot y)\}\otimes KK^{G}(\phi\otimes \id{C(X)})\\
&=\{KK^{G}(\id{D})\hotimes x\}\otimes KK^{G}(\phi\otimes \id{C(X)})\\
&\qquad\qquad\otimes \{KK^{G}(\id{\Cbb})\hotimes y \hotimes KK^{G}(\id{D\otimes C(X)})\}\otimes \{KK^{G}(\id{\Cbb})\hotimes KK^{G}(\Gs)\}\\
&\qquad\qquad \otimes\{KK^{G}(\id{D\otimes D})\hotimes KK^{G}(\GD)\}\otimes KK^{G}(\phi\otimes\id{C(X)})\\
&=\{KK^{G}(\id{D})\hotimes x\}\otimes KK^{G}(\phi\otimes \id{C(X)})\\
&\qquad\qquad \{KK^{G}(\id{D})\hotimes y\hotimes KK^{G}(\id{C(X)})\}\otimes KK^{G}(\Gs_{D}\otimes\id{C(X)\otimes C(X)})\\
&\qquad\qquad\otimes KK^{G}(\phi\otimes \id{C(X)\otimes C(X)})\otimes KK^{G}(\id{D}\otimes \GD)\\
&=\{KK^{G}(\id{D})\hotimes x\}\otimes KK^{G}(\phi\otimes \id{C(X)})\\
&\qquad\qquad\otimes \{(\{KK^{G}(\id{D})\hotimes y\}\otimes KK^{G}(\phi\otimes \id{C(X)}))\hotimes KK^{G}(\id{C(X)})\}\\
&\qquad\qquad\otimes KK^{G}(\id{D}\otimes \GD)
\end{align*}
Therefore this map preserves multiplications.
\end{proof}

\begin{corollary}\label{cor:ring-isom}
The map 
\[\{KK^{G}(\id{D})\hotimes -\}\otimes KK^{G}(\phi):KK^{G}(\Cbb,D)\to KK^{G}(D,D)\]
is a ring isomorphism.
\end{corollary}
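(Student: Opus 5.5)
This is the case $X=\mathrm{pt}$ of the preceding results, so $C(X)=\Cbb$. The preceding proposition (cf. \cite[Theorem 3.4]{DW}) with $A=\Cbb$ already says that
\[\Theta\coloneqq\{KK^{G}(\id{D})\hotimes -\}\otimes KK^{G}(\phi):KK^{G}(\Cbb,D)\to KK^{G}(D,D)\]
is a group isomorphism, with inverse $KK^{G}(\nu)\otimes -$. By \autoref{prop:multi-str}, again with $X$ a point, $\Theta$ intertwines the two multiplications defined just before that proposition. So the plan has two steps: identify those multiplications in this degenerate case with the ring structures on $KK^{G}(\Cbb,D)\cong K_{0}^{G}(D)$ and on $KK^{G}(D,D)$, and then check that units are preserved.

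First I would check the multiplications when $\GD:\Cbb\otimes\Cbb\to\Cbb$ is the identity. For $\Gx,\Gn\in KK^{G}(D,D)$ the formula collapses to $\Gx\cdot\Gn=\Gx\otimes\Gn$, which is the Kasparov product, i.e.\ the ring multiplication of $KK^{G}(D,D)$. For $x,y\in KK^{G}(\Cbb,D)$ the flip $\Gs$ is trivial, so $x\cdot y=(x\hotimes y)\otimes KK^{G}(\phi)$. Under $KK^{G}(\Cbb,D)\cong K_{0}^{G}(D)$ the exterior product corresponds to $[p]_{0}\times[q]_{0}=[p\otimes q]_{0}$, so $x\cdot y=\phi_{*}([p]_{0}\times[q]_{0})$. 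This is exactly the commutative ring structure on $K_{0}^{G}(D)$ from the earlier proposition (with $\psi=\phi$), and that proposition shows it does not depend on the choice of isomorphism. The one point to state carefully here is the compatibility of the exterior Kasparov product with the cup product $\times$ on projections. Both are represented by the tensor product of the Kasparov modules $(D^{\dim V},\phi_{p},0)$ and $(D^{\dim W},\phi_{q},0)$, and I will cite the standard identification for that.

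Second, units. The unit of $K_{0}^{G}(D)$ is $[\oned]_{0}=KK^{G}(\nu)$, and we need $\Theta(KK^{G}(\nu))=KK^{G}(\id{D})$. We have $\{KK^{G}(\id{D})\hotimes KK^{G}(\nu)\}\otimes KK^{G}(\phi)=KK^{G}(\phi\circ(\id{D}\otimes\nu))$. Since $\phi^{-1}\sim_{h}\id{D}\otimes\nu$ by \autoref{prop:ssa-htpy}, as already used in the commutativity lemma, we get $\phi\circ(\id{D}\otimes\nu)\sim_{h}\id{D}$, and so the image is $1_{D}$. Therefore $\Theta$ is a unital, multiplicative, bijective homomorphism, hence a ring isomorphism.

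The main obstacle is purely bookkeeping: making sure the abstract formula for $x\cdot y$ in the degenerate case really matches the ring structure on $K_{0}^{G}(D)$, i.e.\ the exterior product versus projection tensor product point above. If that matching proves awkward, I would avoid it altogether. I would simply \emph{define} the ring structure on $KK^{G}(\Cbb,D)$ by the formula, and get associativity and commutativity for free by transporting them from $KK^{G}(D,D)$ along the bijection $\Theta$, using the commutativity lemma for $KK^{G}(D,D)$.
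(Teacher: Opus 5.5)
Your proposal is correct and matches the paper's argument. The paper gives no separate proof; the corollary is the case $X=\mathrm{pt}$ of \autoref{prop:multi-str}, combined with the preceding bijectivity result for $A=\Cbb$. Two minor remarks: unwinding the definition actually gives $x\cdot y=(y\hotimes x)\otimes KK^{G}(\phi)$, which agrees with your formula by commutativity, and your unit check is automatic, since a multiplicative bijection between unital rings preserves units.
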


\subsection{The topology of $\Aut{G}{\dst}$}
In what follows, let $D$ be a unital separable \cstar-algebra, and let $\Gg$ be a strongly self-absorbing action on $D$ such that $D^{\Gg}$ is $K_{1}$-injective. 
\subsubsection{Contractibility of the automorphism group}
Let $A$ be a \cstar-algebra and let $\Ga:G\act A$ be an action of a compact group $G$. Let $e\in A^{\Ga}$ be a projection. Let 
\[\mathrm{End}_{G,e}(A)=\{\phi\in\mathrm{End}_{G}(A)\:|\: \phi(e)=e\},\quad\mathrm{Aut}_{G,e}(A)=\{\phi\in \mathrm{Aut}_{G}(A)\:|\: \phi(e)=e\}.\]
We define equivariant $*$-homomorphisms $l,r:A\to A\otimes A$ by $l(a)=a\otimes e$ and $r(a)=e\otimes a$.
\begin{lem}[cf. {\cite[Lemma 2.2]{DP}}]\label{lem:contra}
Suppose that there exists a continuous map $\Psi:[0,1]\to \mathrm{Hom}_{G}(A,A\otimes A)$ such that $\Psi_{0}=l$, $\Psi_{1}=r$, $\Psi_{t}(e)=e\otimes e$ and $\Psi_{t}$ is a $*$-isomorphism for all $t\in(0,1)$. Then $\mathrm{Aut}_{G,e}(A)$ is a contractible space.
\end{lem}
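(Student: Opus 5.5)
The plan is to follow the argument of Dadarlat--Pennig \cite[Lemma 2.2]{DP}, checking that every construction stays inside the equivariant world and fixes $e$. Here $\mathrm{Aut}_{G,e}(A)$ carries the point-norm topology. I will write down an explicit homotopy $H\colon[0,1]\times\mathrm{Aut}_{G,e}(A)\to\mathrm{Aut}_{G,e}(A)$ with $H_{0}(\alpha)=\alpha$ and $H_{1}(\alpha)=\id{A}$. The idea is to conjugate $\id{A}\otimes\alpha$ by the path $\Psi_{t}$. For $t\in(0,1)$ set
\[H_{t}(\alpha)=\Psi_{t}^{-1}\circ(\alpha\otimes\id{A})\circ\Psi_{t}.\]
At the endpoints we use the limiting values $H_{0}(\alpha)=\alpha$ and $H_{1}(\alpha)=\id{A}$. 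This requires reinterpreting the endpoints: since $\Psi_{0}=l$ and $\Psi_{1}=r$ are not isomorphisms, $\Psi_t^{-1}$ is only used on the range. So I would rather use the formula
\[H_{t}(\alpha)=\Psi_{t}^{-1}\circ(\id{A}\otimes\alpha)\circ\Psi_{t},\qquad t\in(0,1).\]
At $t=1$, $(\id{A}\otimes\alpha)\circ r=r\circ\alpha$, so we expect the limit $\alpha$. At $t=0$, $(\id{A}\otimes\alpha)\circ l=l$ because $\alpha(e)=e$, so we expect the limit $\id{A}$.

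First I check that each $H_{t}(\alpha)$ lies in $\mathrm{Aut}_{G,e}(A)$ for $t\in(0,1)$. It is a composition of equivariant isomorphisms, since $\Psi_t$ is equivariant and $\id{A}\otimes\alpha$ commutes with $\Ga\otimes\Ga$. It fixes $e$ because
\[(\id{A}\otimes\alpha)(\Psi_t(e))=e\otimes\alpha(e)=e\otimes e=\Psi_t(e).\]
Next I check joint continuity on $(0,1)\times\mathrm{Aut}_{G,e}(A)$. The inverse of a point-norm continuous path of isomorphisms is point-norm continuous, using $\|\Psi_t^{-1}(y)-\Psi_s^{-1}(y)\|=\|\Psi_s^{-1}(\Psi_s\Psi_t^{-1}y-y)\|$ and isometry. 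Moreover, $(\alpha,x)\mapsto(\id{A}\otimes\alpha)(x)$ is jointly continuous on $\mathrm{Aut}(A)\times(A\otimes A)$, by approximating $x$ by finite sums of elementary tensors and using that automorphisms are contractive.

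The main obstacle is continuity at $t=0$ and $t=1$, uniformly near a given $\alpha$. At $t\to1$, for $a\in A$ I write
\[H_t(\alpha)(a)-\alpha(a)=\Psi_t^{-1}\bigl((\id{A}\otimes\alpha)(\Psi_t(a))-\Psi_t(\alpha(a))\bigr).\]
As $t\to1$ and $\alpha'\to\alpha$, we have $\Psi_t(a)\to r(a)=e\otimes a$ in norm, so $(\id{A}\otimes\alpha')(\Psi_t(a))\to e\otimes\alpha(a)$. Likewise $\Psi_t(\alpha'(a))\to e\otimes\alpha(a)$, by joint continuity of $(t,b)\mapsto\Psi_t(b)$, which follows from contractivity. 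Since $\Psi_t^{-1}$ is isometric, the difference tends to $0$. The case $t\to0$ is identical, using $(\id{A}\otimes\alpha)(a\otimes e)=a\otimes e$. The point is that we never apply $\Psi_t^{-1}$ to a limit, only to a difference going to zero, so the lack of surjectivity at the endpoints causes no trouble. This gives a contraction of $\mathrm{Aut}_{G,e}(A)$ to $\id{A}$; indeed it is a deformation fixing $\id{A}$ throughout, since $H_t(\id{A})=\id{A}$.
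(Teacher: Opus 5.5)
Your argument is correct and is essentially the paper's proof, which just defers to Dadarlat--Pennig's Lemma 2.2: conjugate $\alpha\otimes\id{A}$ (or $\id{A}\otimes\alpha$) by $\Psi_t$ for $t\in(0,1)$, then get continuity at the endpoints by applying the isometry $\Psi_t^{-1}$ only to differences that tend to zero, exactly as you do. Your first formula already worked, giving $H_0(\alpha)=\alpha$ and $H_1(\alpha)=\id{A}$, so the switch to $\id{A}\otimes\alpha$ was unnecessary; it only reverses the direction of the homotopy, so the endpoint values you announced at the start should be swapped.
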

\begin{proof}
The proof is identical to that of \cite[Lemma 2.2]{DP}.
\end{proof}
\begin{theorem}[cf. {\cite[Theorem 2.3]{DP}}]\label{thm:aut-contractible}
Let $G$ be a compact group and let $D$ be a unital separable \cstar-algebra. Let $\Gg:G\act D$ be a strongly self-absorbing action such that $D^{\Gg}$ is $K_{1}$-injective. Then $\mathrm{Aut}_{G}(D)$ is contractible.
\end{theorem}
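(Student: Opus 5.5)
We apply \autoref{lem:contra} with $A=D$, $\Ga=\Gg$ and $e=\oned$. Since every unital automorphism fixes $\oned$, $\mathrm{Aut}_{G,\oned}(D)=\mathrm{Aut}_{G}(D)$. It therefore suffices to construct a continuous path $\Psi:[0,1]\to\mathrm{Hom}_{G}(D,D\otimes D)$ with $\Psi_{0}=l=\id{D}\otimes\oned$ and $\Psi_{1}=r=\oned\otimes\id{D}$, such that $\Psi_{t}$ is a unital equivariant $*$-isomorphism for every $t\in(0,1)$. Unitality gives $\Psi_{t}(\oned)=\oned\otimes\oned$ automatically.

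First fix an equivariant isomorphism $\phi:D\to D\otimes D$, which exists by the definition of a strongly self-absorbing action. Apply \autoref{prop:ssa-htpy} to the pair of unital equivariant $*$-homomorphisms $\phi^{-1}\circ l$ and $\id{D}$ from $D$ to $D$. This yields a continuous path $w:[1,\infty)\to\Ucal(D^{\Gg})$ with $w_{1}=\oned$ and $\lim_{s\to\infty}\ad{w_{s}}(x)=\phi^{-1}(x\otimes\oned)$ for all $x\in D$. Set $u_{s}=\phi(w_{s})\in\Ucal((D\otimes D)^{\Gg\otimes\Gg})$. Then $\ad{u_{s}}\circ\phi$ is an equivariant unital isomorphism for each $s$, it equals $\phi$ at $s=1$, and it converges pointwise in norm to $l$ as $s\to\infty$.

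In the same way, applying \autoref{prop:ssa-htpy} to $\phi^{-1}\circ r$ and $\id{D}$ gives invariant unitaries $u'_{s}$ with $\ad{u'_{s}}\circ\phi\to r$ pointwise and $u'_{1}=\oned$. Reparametrize $[1,\infty)$ onto $(0,1/2]$ for the first path, running from $t\to 0$ up to $t=1/2$, and onto $[1/2,1)$ for the second, running in reverse. Both paths pass through $\phi$ at $t=1/2$. Define $\Psi_{0}=l$ and $\Psi_{1}=r$. Since $\mathrm{Hom}_{G}(D,D\otimes D)$ carries the point-norm topology, $\Psi$ is continuous on $(0,1)$ by norm continuity of the unitary paths, and continuous at the endpoints by the pointwise limits.

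With $\Psi$ in hand, \autoref{lem:contra} gives the contractibility of $\mathrm{Aut}_{G}(D)$.

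The only real difficulty is having asymptotic unitary equivalence implemented by a continuous path of $G$-invariant unitaries, rather than only by a sequence. This is exactly what \autoref{prop:ssa-htpy} supplies, using the $K_{1}$-injectivity of $D^{\Gg}$ through unitary regularity and \cite[Theorem 3.15]{SZ1}. Everything else follows the nonequivariant argument of \cite[Theorem 2.3]{DP}, except for one check: the internal isomorphisms $\Psi_{t}$ must be equivariant. They are, because the unitaries $u_{s}$ and $u'_{s}$ are fixed by $\Gg\otimes\Gg$.
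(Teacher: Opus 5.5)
Your proposal is correct and follows the paper's proof. Both apply \autoref{lem:contra} with $e=\oned$, building the two halves of $\Psi$ (from $l$ to a fixed equivariant isomorphism $D\to D\otimes D$, and from it to $r$) out of continuous paths of $\Gg\otimes\Gg$-invariant unitaries supplied by \autoref{prop:ssa-htpy}. Your explicit transport of \autoref{prop:ssa-htpy} to $D\otimes D$ via $\phi^{-1}\circ l$ and $\phi^{-1}\circ r$, and your reparametrization sending $s\to\infty$ to $t\to 0$, are if anything more careful than the paper's displayed formula, where $\psi^{(l)}_{1-2t}$ should read $\psi^{(l)}_{2t}$.
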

\begin{proof}
Let $l,r:D\to D\otimes D$ be the equivariant $*$-homomorphisms defined by $l(d)=d\otimes\mathbf{1}_{D}$ and $r(d)=\mathbf{1}_{D}\otimes d$. Let $\psi:(D,\Gg)\to(D\otimes D,\Gg\otimes\Gg)$ be an equivariant $*$-isomorphsim. By \autoref{prop:ssa-htpy}, there exists a norm-continuous path $u:(0,1]\to\Ucal((D\otimes D)^{\Gg\otimes\Gg})$ with $u_{1}=\one_{D\otimes D}$ such that
\[l(d)=\lim_{t\to 0}\ad{u_{t}}\circ\psi(d)\quad\text{for all }d\in D.\]
Define $\psi^{(l)}:(0,1]\to \mathrm{Isom}_{G}(D, D\otimes D)$ by $\psi^{(l)}_{t}=\ad{u_{t}}\circ\psi$. Likewise, there exists a norm-continuous path $v:[0,1)\to\Ucal((D\otimes D)^{\Gg\otimes \Gg})$ with $v_{0}=\one_{D\otimes D}$ such that
\[r(d)=\lim_{t\to 1}\ad{v_{t}}\circ\psi(d)\quad\text{for all }d\in D.\]
Define $\psi^{(r)}:[0,1)\to \mathrm{Isom}_{G}(D,D\otimes D)$ by $\psi^{(r)}_{t}=\ad{v_{t}}\circ\psi$. Then We define $\Psi:[0,1]\to\mathrm{Hom}_{G}(D,D\otimes D)$ by 
\[\Psi_{t}=\begin{cases}
l & t=0,\\
\psi^{(l)}_{1-2t} & 0<t\leq \frac{1}{2},\\
\psi^{(r)}_{2t-1} & \frac{1}{2}<t<1,\\
r & t=1.
\end{cases}\]
This $\Psi$ satisfies the conditions as in \autoref{lem:contra} with $e=\one_{D}$. Therefore $\mathrm{Aut}_{G}(D)$ is contractible.
\end{proof}
Let $H=L^{2}(G)\otimes H_{0}$, where $H_{0}$ is a separable infinite-dimensional Hilbert space. In what follows, whenever we simply write $\mathbb{K}$, we assume that it is equipped with the trivial action. On the other hand, $\Kbb(H)$ means that $H$ is as defined above and the algebra is equipped with the $G$-action $\ad\rho\otimes \id{}$. Let $\Tbb=\{z\in \Cbb\:|\: |z|=1\}$
\begin{lem}[cf. {\cite[Theorem 2.4]{EP}}]\label{lem:rkone}
Let $G$ be a compact group. Let $e=e_{1}\otimes e_{0}\in \Kbb(H)$ be a rank-one projection, where $e_{1}$ is the projection onto the subspace spanned by constant functions, and $e_{0}$ is a rank-one projection in $\Kbb(H_{0})$. Then we have a homeomorphism 
\[\mathrm{Aut}_{G,e}(\Kbb(H))\cong \{W\in \Ucal(H)\;|\; U|_{eH}=\one|_{eH},\; (\rho_{g}\otimes\one{})(W)=W(\rho_{g}\otimes \one{})\text{ for all }g\in G\}\eqqcolon L\]
where $\mathrm{Aut}_{G,e}(\Kbb(H))$ is equipped with the point norm topology and $K$ is equipped with the strong topology. Moreover, $L$ is contractible.
\end{lem}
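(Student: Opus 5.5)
We need a homeomorphism between $\mathrm{Aut}_{G,e}(\Kbb(H))$ and $L$, and then a proof that $L$ is contractible.

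\textbf{The map.} Every automorphism of $\Kbb(H)$ is $\ad W$ for a unitary $W\in\Ucal(H)$, and $W$ is unique up to a scalar in $\Tbb$.
\begin{itemize}
\item If $\ad W$ fixes $e$, then $W$ leaves the line $eH$ invariant. It therefore acts on $eH$ by a scalar $\lambda\in\Tbb$.
\item Replacing $W$ by $\overline{\lambda}W$ gives a canonical representative with $W|_{eH}=\one|_{eH}$.
\item If $\ad W$ is also equivariant for $\ad(\rho\otimes\one)$, then $(\rho_g\otimes\one)^*W(\rho_g\otimes\one)$ implements the same automorphism. Hence it equals $\chi(g)W$ for some character $\chi:G\to\Tbb$.
\item The vector in $eH$ is a constant function tensored with a fixed vector. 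It is fixed by $\rho_g\otimes\one$, and also by the normalized $W$. Evaluating on it forces $\chi\equiv1$, so $W$ commutes with $\rho\otimes\one$.
\end{itemize}
This gives a bijection $\mathrm{Aut}_{G,e}(\Kbb(H))\to L$, $\ad W\mapsto W$, whose inverse is $W\mapsto\ad W$.

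\textbf{Continuity.} The inverse is continuous from the strong topology to the point-norm topology. This is the standard fact that $\Ucal(H)\to\mathrm{Aut}(\Kbb(H))$ is continuous for the strong topology, checked on rank-one operators $\theta_{\xi,\eta}$.

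For the forward direction, fix a unit vector $\xi_0\in eH$. Then $W\xi=W\theta_{\xi,\xi_0}\xi_0=\ad W(\theta_{\xi,\xi_0})\,W\xi_0=\ad W(\theta_{\xi,\xi_0})\,\xi_0$. So $W\xi$ depends continuously on $\ad W$ in the point-norm topology. This mirrors the argument in \cite{DP,EP}.

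\textbf{Contractibility.} Decompose $H=eH\oplus(1-e)H$; this splitting is $G$-invariant. Then $L$ is identified with the group of unitaries on $(1-e)H$ commuting with the restricted representation, taken in the strong topology.

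By Peter--Weyl, $L^2(G)\cong\bigoplus_\pi H_\pi\otimes\overline{H_\pi}$. Hence $(1-e)H$ is a representation in which every irreducible $\pi$ occurs with infinite multiplicity: multiplicity $\aleph_0$ even for the trivial representation, since $H_0$ is infinite dimensional. Its commutant is therefore $\prod_\pi\BB(K_\pi)$ with each $K_\pi$ separable and infinite dimensional. The unitary group becomes $\prod_\pi\Ucal(K_\pi)$ with the product strong topology.

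Each factor $\Ucal(K_\pi)$ is contractible in the strong topology (Dixmier--Douady; a contraction can be chosen uniformly, e.g.\ via a shift-type homotopy). Applying these contractions factorwise gives a contraction of the product, since the strong topology on the direct sum coincides with the product of strong topologies on uniformly bounded sets.

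\textbf{Main obstacle.} The technical points are checking that the contractions on the factors assemble continuously, and that the forward map in the homeomorphism is continuous. The paper defers the latter to \cite{EP}; I would follow Theorem 2.4 there.
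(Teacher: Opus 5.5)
Your proposal is correct and follows essentially the same route as the paper: normalize the implementing unitary by its scalar on $eH$, force the scalar $\chi(g)$ to be $1$ by evaluating on the $\rho$-invariant vector in $eH$, and then use Peter--Weyl to write $L$ as a product of unitary groups of infinite-dimensional separable Hilbert spaces, each contractible in the strong topology. No uniform choice of contractions is needed, since a product of contractions is automatically continuous for the product topology, and your formula $W\xi=\ad W(\theta_{\xi,\xi_0})\xi_0$ proves continuity of $\ad W\mapsto W$ more cleanly than the paper's brief remark.
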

\begin{proof}
First, note that the strict topology on $\Ucal(H)\subset \Mcal(\Kbb(H))$ is the same as the strong topology. Hence 
\[\ad:L\ni W\mapsto \ad W\in\mathrm{Aut}_{G,e}(\Kbb(H))\]
is a continuous map. We will construct the inverse map of this. Let $\phi\in \mathrm{Aut}_{G,e}(\Kbb(H))$ be arbitrary element. Then there exists a unitary $W\in \Ucal(H)$ such that $\phi=\ad W$. Since $\phi$ fixes $e$, we have $eW=We=eWe=\Gl_{W}e$ for some $\Gl_{W}\in \Tbb$. We now observe that $\overline{\Gl_{W}}W$ satisfies $\overline{\Gl_{W}}W|_{eH}=\one|_{eH}$ and commutes with $\rho_{g}\otimes \one$ for all $g\in G$. Indeed, for any $\Gx\in H$, it is easy to see that $\overline{\Gl_{W}}We\Gx=\overline{\Gl_{W}}\Gl_{W}e\Gx=e\Gx$. Furthermore, for any $T\in \Kbb(H)$ and $g\in G$, we have$$[(\rho_{g}\otimes\one)^{*}\Gl_{W}W^{*}(\rho_{g}\otimes\one)\overline{\Gl_{W}}W,T]=0.$$This implies that there exists $z_{g}\in \Tbb$ such that $(\rho_{g}\otimes\one)\overline{\Gl_{W}}W=z_{g}\overline{\Gl_{W}}W(\rho_{g}\otimes \one)$. Applying both sides of this equation to $e\Gx$ for an arbitrary $\Gx\in H$, we obtain$$e\Gx=(\rho_{g}\otimes\one)\overline{\Gl_{W}}We\Gx=z_{g}\overline{\Gl_{W}}W(\rho_{g}\otimes \one)e\Gx=z_{g}e\Gx.$$Thus, we conclude that $z_{g}=1$, which yields $(\rho_{g}\otimes\one)\overline{\Gl_{W}}W=\overline{\Gl_{W}}W(\rho_{g}\otimes \one)$. Since $\overline{\Gl_{W}}W$ is invariant when $W$ is multiplied by an element in $\{z\in \Cbb\:|\: |z|=1\}$. Hence the map
\[\mathrm{Aut}_{G,e}(\Kbb(H))\ni \ad W\mapsto \overline{\Gl_{W}}W\in L\]
is well-defined. For any $W\in \{V\in\Ucal(H)\:|\: VeV=e\}$ and $\Gx\in H$, we have $We\Gx=\Gl_{W}e\Gx$. Hence the above map is continuous. It is clear that this map is the inverse of $\ad:L\ni W\mapsto \ad W\in \mathrm{Aut}_{G,e}(\Kbb(H))$.

To see contrabtibility of $L$, we use the following decomposition : 
\begin{align*}
  L&\cong \Ucal(\Cbb e_{1}\otimes (1-e_{0})H_{0})\times \prod_{\pi\in \hat{G},\pi\neq \id{}}\Ucal(\BB(V_{\pi}^{\otimes\mathrm{dim}\pi}\otimes H_{0})\cap (\pi^{\otimes\mathrm{dim}\pi}(G)\otimes \one)')\\
  &\cong \Ucal(\Cbb e_{1}\otimes (1-e_{0})H_{0})\times \prod_{\pi\in \hat{G},\pi\neq \id{}}\Ucal(V_{\pi}\otimes H_{0}).
\end{align*}
Since each component is contrabtible, we can conclude that $L$ is contractible.
\end{proof}

\begin{lem}[cf. {\cite[Lemma 2.5]{EP}}]
Let $H$ and $e=e_{1}\otimes e_{0}$ be as above. Let $G$ acts on $\Kbb(H)\otimes \Kbb(H)$ diagonally. Then there exists a point norm continuous path
\[\gamma:[0,1]\to\mathrm{Hom}_{G}(\Kbb(H),\Kbb(H)\otimes\Kbb(H))\]
such that
\begin{itemize}[nosep]
\item $\Gg_{0}(T)=T\otimes e$ and $\Gg_{1}(T)=e\otimes T$,
\item $\Gg_{t}$ is a euqivariant isomorphism for all $t\in (0,1)$,
\item $\Gg_{t}(e)=e\otimes e$ for all $t\in [0,1]$.
\end{itemize}
\end{lem}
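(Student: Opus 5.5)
We follow the non-equivariant construction of \cite[Lemma 2.5]{EP}, taking care that every ingredient commutes with the representation $\rho\otimes\one$ (resp.\ $(\rho\otimes\one)\otimes(\rho\otimes\one)$ on $H\otimes H$). First we produce equivariant unitaries. Since $H\otimes H\cong L^{2}(G)\otimes L^{2}(G)\otimes H_{0}\otimes H_{0}$ with the diagonal representation $\rho\otimes\rho$ on the first two factors, Fell's absorption principle gives $\rho\otimes\rho\cong\rho\otimes\one_{L^{2}(G)}$. Using $L^{2}(G)\otimes H_{0}\otimes H_{0}\cong H$ with multiplicity space infinite, we obtain a $G$-equivariant unitary $V:H\to H\otimes H$. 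Then $\ad V$ is an equivariant isomorphism $\Kbb(H)\to\Kbb(H)\otimes\Kbb(H)$.

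Next we arrange $V(eH)=(e\otimes e)(H\otimes H)$. Both $eH$ and $(e\otimes e)(H\otimes H)$ are one-dimensional subspaces on which $G$ acts trivially: $e_{1}$ projects onto the constants, which are $\rho$-fixed, and $e_{1}\otimes e_{1}$ is fixed by $\rho\otimes\rho$. The trivial isotypic component of $H\otimes H$ is infinite-dimensional. So we may compose $V$ with an equivariant unitary of $H\otimes H$ which swaps the line $V(eH)$ and the line $(e\otimes e)(H\otimes H)$ inside the trivial isotypic component and is the identity elsewhere. This gives $\ad V(e)=e\otimes e$.

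The maps $l(T)=T\otimes e$ and $r(T)=e\otimes T$ are implemented by the equivariant isometries $S_{l}\xi=\xi\otimes\eta$ and $S_{r}\xi=\eta\otimes\xi$, where $\eta\in eH$ is a $G$-fixed unit vector. The aim is to connect $\ad V$ to $l$ and to $r$ along paths of equivariant isomorphisms that fix $e\mapsto e\otimes e$ and converge in point-norm at the ends. We do this by finding strongly continuous paths of equivariant unitaries $W_{t}$, $t\in(0,1/2]$, with $W_{t}V\to S_{l}$ strongly as $t\to 0$, and similarly paths ending at $S_{r}$, all fixing $\eta\otimes\eta$. Strong convergence of $W_{t}V$ to an isometry $S$ implies point-norm convergence of $\ad(W_{t}V)$ to $\ad S$ on $\Kbb(H)$. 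The unitaries are obtained as follows. Decompose $H\otimes H$ and $H$ into isotypic components $\pi\in\hat G$; in each, equivariant operators are $\one_{V_{\pi}}\otimes(\text{operators on an infinite-dimensional multiplicity space})$. In each isotypic component, $S_{l}V^{*}$ restricted appropriately is an isometry of infinite codimension, namely $S_{l}S_{l}^{*}\le\one$ with the complement $(\one-e)H\otimes H$ containing every $\pi$ with infinite multiplicity. The standard fact that the unitary group of $\BB(\ell^{2})$ in the strong topology contains paths $W_{t}$ with $W_{t}\to$ any given isometry strongly (Kuiper-type / \cite[Lemma 2.5]{EP} construction) can be applied in each isotypic component's multiplicity space simultaneously. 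This keeps $\eta\otimes\eta$ fixed by working on its orthogonal complement in the trivial component. Concatenating the path from $l$ to $\ad V$ with the one from $\ad V$ to $r$ yields $\gamma$.

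The main obstacle is this isometry-approximation step: producing a strongly continuous path of unitaries converging strongly to a non-surjective isometry, uniformly across the infinitely many isotypic components. It must also stay in the commutant of the representation and fix the distinguished vector. We would first try to reduce, via the isotypic decomposition, to a single countable direct sum of copies of the non-equivariant statement on $\ell^{2}$. There we use an explicit shift-type homotopy as in \cite{EP}, and check that the product of the componentwise paths is strongly continuous. Strong continuity is checked on finite sums of vectors, so uniformity is not needed.
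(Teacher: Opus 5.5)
Your proposal is correct, but it is organized differently from the paper's proof.

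\textbf{Your route.} You fix one equivariant unitary $V:H\to H\otimes H$ with $V(eH)=(e\otimes e)(H\otimes H)$. You then join $\ad V$ directly to $l=\ad S_{l}$ and to $r=\ad S_{r}$ by strongly continuous paths of equivariant unitaries, built separately on the multiplicity spaces of the isotypic decomposition. This reduction is sound:
\begin{itemize}
\item strong convergence of $\bigoplus_{\pi}\one_{V_{\pi}}\otimes W^{\pi}_{t}$ is checked vector by vector, and dominated convergence over the countable set $\hat{G}$ controls the sum;
\item strong convergence $U_{t}\to S$ does give point-norm convergence $\ad U_{t}\to \ad S$ on $\Kbb(H)$.
\end{itemize}

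\textbf{The paper's route.} The paper never builds equivariant strong paths by hand. It takes the non-equivariant Dadarlat--Pennig path into $\Kbb(L^{2}(G)\otimes H_{0}\otimes H_{0})$ and obtains equivariance for free by tensoring with $\id{\Kbb(L^{2}(G))}$, since the action lives only on that factor. It then conjugates by the explicit Fell unitary $(Wf)(x,y)=f(x,yx^{-1})$. This unitary satisfies $W(T\otimes e_{1})W^{*}=T\otimes e_{1}$, so the endpoint $T\mapsto T\otimes e$ is undisturbed. Doing the same with $(Vf)(x,y)=f(xy^{-1},y)$ on the $r$-side produces two different middle isomorphisms. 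These are glued using the contractibility of $\mathrm{Aut}_{G,e}(\Kbb(H))$ from the preceding lemma.

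\textbf{Comparison.} The paper replaces your componentwise construction (Wold decomposition plus shift-type approximations, with the fixed vector kept fixed) by a gluing step that depends on the previous lemma. Your argument is self-contained and needs only one middle point.

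\textbf{Two small fixes.}
\begin{itemize}
\item Normalize so that $V\eta=\eta\otimes\eta$ exactly, or absorb the phase into $S_{l}$. Otherwise $S_{l}V^{*}$ multiplies $\eta\otimes\eta$ by a nontrivial scalar, and so cannot be a strong limit of unitaries fixing that vector.
\item The complement of the range of $S_{l}$ is $H\otimes(\one-e)H$, not $(\one-e)H\otimes H$. This is harmless, and infinite codimension is not actually needed: every isometry on a separable infinite-dimensional space is the strong limit of a strongly continuous unitary path starting at $\one$.
\end{itemize}
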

\begin{proof}
Choose an isomorphism $\psi:\Kbb(H_{0})\to \Kbb(L^{2}(G)\otimes H_{0}\otimes H_{0})$ such that  $\psi(e_{0})=e_{1}\otimes e_{0}\otimes e_{0}$. By the proof of \cite[Theorem 2.5]{DP}, there is a continuous path
\[\Gg^{(1)}:[0,1]\to\mathrm{Hom}(\Kbb(H_{0}),\Kbb(L^{2}(G)\otimes H_{0}\otimes H_{0}))\]
such that
\begin{itemize}[nosep]
\item $\Gg^{(1)}_{0}(T)=e_{1}\otimes T\otimes e_{0}$, $\Gg^{(1)}_{1}=\psi$,
\item $\Gg^{(1)}_{t}$ is an isomorphism for all $t\in (0,1]$,
\item $\Gg^{(1)}_{t}(e_{0})=e_{1}\otimes e_{0}\otimes e_{0}$ for all $t\in [0,1]$.
\end{itemize}
By tensoring $(\Kbb(L^{2}(G),\ad\rho))$ and letting $\overline{\Gg^{(1)}}_{t}=\id{\Kbb(L^{2}(G))}\otimes \Gg^{(1)}_{t}$, we have a continuous path
\[\overline{\Gg^{(1)}}:[0,1]\to \mathrm{Hom_{G}}(\Kbb(H),\ad\rho\otimes\id{}),(\Kbb(L^{2}(G)\otimes L^{2}(G)\otimes H_{0}\otimes H_{0}),\ad\rho\otimes\id{\Kbb(L^{2}(G)\otimes H_{0}\otimes H_{0})})\]
such that
\begin{itemize}[nosep]
\item $\overline{\Gg^{(1)}}_{0}(T\otimes S)=T\otimes e_{1}\otimes S\otimes e_{0}$,
\item $\overline{\Gg^{(1)}}_{t}$ is an equivariant isomorphism for all $t\in (0,1]$,
\item $\overline{\Gg^{(1)}}_{t}(e_{1}\otimes e_{0})=e_{1}\otimes e_{1}\otimes e_{0}\otimes e_{0}$ for all $t\in [0,1]$. 
\end{itemize}
Define a unitary operator $W:L^{2}(G\times G)\to L^{2}(G\times G)$ by $(Wf)(x,y)=f(x,yx^{-1})$. Then $W(\rho_{g}\otimes \one)=(\rho_{g}\otimes\rho_{g})W$ holds. Hence $\ad W:(\Kbb(L^{2}(G\times G)),\ad\rho\otimes \id{})\to (\Kbb(L^{2}(G\otimes G)),\ad\rho\otimes\ad\rho)$ is an equivariant isomorphism.  We claim that $W(T\otimes e_{1})W^{*}=T\otimes e_{1}$ holds for all $T\in \Kbb(L^{2}(G))$. First, we observe $W(\one\otimes e_{1})=(\one\otimes e_{1})W=(\one\otimes e_{1})$. Let $\Gx\in L^{2}(G\times G)$ be any elements. Then
\[((\one\otimes e_{1})Wf)(x,y)=\int_{G}(Wf)(x,y)dy=\int_{G}f(x,yx^{-1})dy=\int_{G}f(x,y)dy=((\one\otimes e_{1})f)(x,y)\]
and
\[(W(\one\otimes e_{1})f)(x,y)=((\one\otimes e_{1})f)(x,yx^{-1})=\int_{G}f(x,y)dy=((\one\otimes e_{1})f)(x,y).\]
Therefore we have
\[W(T\otimes e_{1})W^{*}=W(T\otimes\one)(\one\otimes e_{1})W^{*}=W(T\otimes \one)(\one\otimes e_{1})=W(\one\otimes e_{1})(T\otimes \one)=T\otimes e_{1}.\]
By composing $\overline{\Gg^{(1)}}$, $\ad W$ and the tensor flip map, we obtain a continuous path 
\[\widetilde{\Gg^{(1)}}:[0,1]\to\mathrm{Hom}_{G}(\Kbb(H),\ad\rho\otimes\id{}),(\Kbb(H)\otimes \Kbb(H),\ad\rho\otimes\id{}\otimes\ad\rho\otimes\id{})\]
such that
\begin{itemize}[nosep]
\item $\widetilde{\Gg^{(1)}}_{0}(T)=T\otimes e$,
\item $\widetilde{\Gg^{(1)}}_{t}$ is an equivariant isomorphism for all $t\in (0,1]$,
\item $\widetilde{\Gg^{(1)}}_{t}(e)=e\otimes e$ for all $t\in [0,1]$.
\end{itemize}
Define a unitary operator $V:L^{2}(G\times G)\to L^{2}(G\times G)$ by $(Vf)(x,y)=f(xy^{-1},y)$. By a similar construction using $V$, we obtain a continuous path
\[\widetilde{\Gg^{(2)}}:[0,1]\to \mathrm{Hom}_{G}(\Kbb(H),\ad\rho\otimes\id{}),(\Kbb(H)\otimes \Kbb(H),\ad\rho\otimes\id{}\otimes\ad\rho\otimes\id{})\]
such that 
\begin{itemize}[nosep]
\item $\widetilde{\Gg^{(2)}}_{1}(T)=e\otimes T$,
\item $\widetilde{\Gg^{(2)}}_{t}$ is an equivariant isomorphism for all $t\in [0,1)$,
\item $\widetilde{\Gg^{(2)}}_{t}(e)=e\otimes e$ for all $t\in [0,1]$.
\end{itemize}
Since $\mathrm{Aut}_{G,e}(\Kbb(H))$ is contractible, there exists a continuous path $\Phi:[0,1]\to \mathrm{Aut}_{G,e}(\Kbb(H))$ from $\left(\widetilde{\Gg^{(2)}}_{0}\right)^{-1}\circ \widetilde{\Gg^{(1)}}_{1}$ to $\id{\Kbb(H)}$. Define $\widetilde{\Gg^{(3)}}_{t}=\widetilde{\Gg^{(2)}}_{0}\circ \Phi_{t}$. Then, $\widetilde{\Gg^{(3)}}_{t}$ is an equivariant isomorphism and $\widetilde{\Gg^{(3)}}_{t}(e)=e\otimes e$ for all $t\in [0,1]$. By concatenating the paths $\widetilde{\Gg^{(1)}}$, $\widetilde{\Gg^{(3)}}$, and $\widetilde{\Gg^{(2)}}$, we obtain the desired path.
\end{proof}

\begin{theorem}[cf. {\cite[Theorem 2.5]{DP}}]\label{thm:contractible}
Let $G$ be a compact group and let $D$ be a unital separable \cstar-algebra. Let $\Gg:G\act D$ be a \ssa action such that $D^{\Gg}$ is $K_{1}$-injective. Let $e\in\Kbb$ be a rank-one projection.  Then $\mathrm{Aut}_{G,\one_{D}\otimes e}((D\otimes \Kbb,\Gg\otimes\id{\Kbb}))$ and $\mathrm{Aut}_{G,\one_{D}\otimes e}((D\otimes\Kbb(H),\Gg\otimes\ad\rho\otimes\id{}))$ are contractible.
\end{theorem}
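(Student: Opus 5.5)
The plan is to reduce both claims to \autoref{lem:contra}, applied to $A=D\otimes\Kbb$ (resp.\ $A=D\otimes\Kbb(H)$) with the projection $\oned\otimes e$. We therefore need a point-norm continuous path
\[\Psi:[0,1]\to\mathrm{Hom}_{G}(A,A\otimes A)\]
with $\Psi_{0}=l$ and $\Psi_{1}=r$, where $l(a)=a\otimes(\oned\otimes e)$ and $r(a)=(\oned\otimes e)\otimes a$. We also need $\Psi_{t}(\oned\otimes e)=(\oned\otimes e)\otimes(\oned\otimes e)$ for all $t$, and $\Psi_{t}$ an equivariant isomorphism for $t\in(0,1)$. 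Throughout we use the identification $A\otimes A\cong (D\otimes D)\otimes(\Kbb\otimes\Kbb)$, obtained by the flip of the middle factors, which is equivariant. The natural construction is then the tensor product of a path on the $D$-part with a path on the compact-operator part.

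For the $D$-part we take the path $\Psi^{D}:[0,1]\to\mathrm{Hom}_{G}(D,D\otimes D)$ built in the proof of \autoref{thm:aut-contractible}. It satisfies $\Psi^{D}_{0}=l_{D}$ and $\Psi^{D}_{1}=r_{D}$, each $\Psi^{D}_{t}$ is a unital equivariant isomorphism for $t\in(0,1)$, and $\Psi^{D}_{t}(\oned)=\oned\otimes\oned$ for all $t$, automatically by unitality. This step uses \autoref{prop:ssa-htpy} and hence the $K_{1}$-injectivity of $D^{\Gg}$.

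For the compact part, the trivial-action case $\Kbb$ uses the path $\gamma^{\Kbb}$ of \cite[Theorem 2.5]{DP}. It runs from $T\mapsto T\otimes e$ to $T\mapsto e\otimes T$, consists of isomorphisms on $(0,1)$, and fixes $e\mapsto e\otimes e$. Equivariance is automatic because the action is trivial. The case $\Kbb(H)$ with the action $\ad\rho\otimes\id{}$ uses the path $\gamma$ of the preceding lemma (cf.\ \cite[Lemma 2.5]{EP}), which has the same three properties and is equivariant for the diagonal action.

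We then set $\Psi_{t}=\Psi^{D}_{t}\otimes\gamma_{t}$, composed with the flip. This is an equivariant $*$-homomorphism, and it is an isomorphism for $t\in(0,1)$ because both factors are. The endpoints are correct: $\Psi_{0}(d\otimes T)=(d\otimes\oned)\otimes(T\otimes e)$, which corresponds to $l$, and similarly $\Psi_{1}$ corresponds to $r$. Finally, $\Psi_{t}(\oned\otimes e)=(\oned\otimes\oned)\otimes(e\otimes e)$ corresponds to $(\oned\otimes e)\otimes(\oned\otimes e)$ for every $t$.

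The only step needing care is point-norm continuity of $t\mapsto\Psi^{D}_{t}\otimes\gamma_{t}$. On elementary tensors $d\otimes T$ it follows from $\|\Psi^{D}_{t}(d)\otimes\gamma_{t}(T)-\Psi^{D}_{s}(d)\otimes\gamma_{s}(T)\|\le\|\Psi^{D}_{t}(d)-\Psi^{D}_{s}(d)\|\|T\|+\|d\|\|\gamma_{t}(T)-\gamma_{s}(T)\|$. It then extends to the algebraic tensor product by linearity, and to the whole of $A$ by density, since all the maps $\Psi_{t}$ are contractive and the family is therefore equicontinuous. With these properties established, \autoref{lem:contra} gives contractibility of both automorphism groups.
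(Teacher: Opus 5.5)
Your proposal is correct and is essentially the paper's proof. You form $\Psi=\sigma\circ(\Psi^{(D)}\otimes\Psi^{(\Kbb)})$ from the $D$-path in the proof of \autoref{thm:aut-contractible} and the Dadarlat--Pennig path for $\Kbb$, and apply \autoref{lem:contra}; for $\Kbb(H)$ the paper only cites \cite[Theorem 2.6]{EP}, and your explicit use of the equivariant path $\gamma$ from the preceding lemma, with the point-norm continuity check on elementary tensors, is what that reference amounts to.
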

\begin{proof}
We begin by showing that $\mathrm{Aut}_{G,\one_{D}\otimes e}((D\otimes \Kbb,\Gg\otimes\id{\Kbb}))$ is contractible.
Let $l_{\Kbb},r_{\Kbb}:\Kbb\to\Kbb\otimes \Kbb$ be given by $l_{\Kbb}(x)=x\otimes e$ and $r_{\Kbb}(x)=e\otimes x$. By \cite[Lemma 2.4]{DP} and the same argument as in the proof of the preceding theorem, there exists a continuous map $\Psi^{(\Kbb)}:[0,1]\to\mathrm{Hom}_{G}(\Kbb,\Kbb\otimes\Kbb)$ such that $\Psi^{(\Kbb)}_{0}=l_{\Kbb}$, $\Psi^{(\Kbb)}_{1}=l_{\Kbb}$, $\Psi^{(\Kbb)}_{t}(e)=e\otimes e$ and $\Psi_{t}^{(\Kbb)}$ is a $*$-isomorphism for all $t\in(0,1)$. 

Let $l_{D},r_{D}:D\otimes D\otimes D$ be given by $l_{D}(d)=d\otimes\one_{D}$ and $r_{D}(d)=\one_{D}\otimes d$. By the proof of \autoref{thm:aut-contractible}, there exists a continuous map $\Psi^{(D)}:[0,1]\to \mathrm{Hom}_{G}(D,D\otimes D)$ such that $\Psi^{(D)}_{0}=l_{D}$, $\Psi^{(D)}_{1}=l_{D}$ and $\Psi^{(D)}$ is a $*$-isomorphism for all $t\in(0,1)$. 

Let $\Gs: D\otimes D\otimes\Kbb\otimes\Kbb \to (D\otimes\Kbb)\otimes( D\otimes \Kbb)$ be the map that swaps the second and third tensor factors.
Let $r,l:D\otimes \Kbb\to (D\otimes \Kbb)^{\otimes 2}$ be given by $l(x)=x\otimes (\one_{D}\otimes e)$ and $r(x)=(\one_{D}\otimes e)\otimes x$ for all $x\in D\otimes \Kbb$. Then $l=\Gs\circ(l_{D}\otimes l_{\Kbb})$ and $r=\Gs\circ (r_{D}\otimes r_{\Kbb})$ hold. Define 
\begin{align}\label{def:psi}
  \tag{$*$}
\Psi:[0,1]\to\mathrm{Hom}_{G}(D\otimes \Kbb,(D\otimes \Kbb)^{\otimes 2})
\end{align}
by $\Gs\circ(\Psi^{(D)}\otimes \Psi^{(\Kbb)})$. Then we have $\Psi_{0}=l$, $\Psi_{1}=r$, $\Psi_{t}(\one_{D}\otimes e)=(\one_{D}\otimes e)\otimes (\one_{D}\otimes e)$ and $\Psi_{t}$ is a $*$-isomorphism for all $t\in (0,1)$. By \autoref{lem:contra}, $\mathrm{Aut}_{G,\one_{D}\otimes e}(D\otimes \Kbb)$ is contractible.

The proof that $\mathrm{Aut}_{G,\one_{D}\otimes e}((D\otimes\Kbb(H),\Gg\otimes\ad\rho\otimes\id{}))$ is contractible proceeds exactly as in \cite[Theorem 2.6]{EP}.
\end{proof}
\begin{lem}
Let $G$ be a compact group and $A$ be a \cstar-algebra. Let $\Ga:G\act A$ be an action. Then $\Ga\otimes \ad\rho$ on $A\otimes \Kbb(L^{2}(G))$ is saturated.
\end{lem}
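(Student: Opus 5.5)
We verify saturation directly from the definition recalled earlier. Write $B=A\otimes\Kbb(L^{2}(G))$ and $\Gb=\Ga\otimes\ad\rho$. As explained there, it suffices to show that the span of the inner products $\ip{x}{y}_{B\rtimes_{\Gb,r}G}$, $x,y\in B$, is dense in $B\rtimes_{\Gb,r}G$. Equivalently, by \cite[Corollary 2.6]{Br} (the criterion used in \autoref{prop:isa-sat}), we show that the projection $e_{\Gb}=\int_{G}\Gl_{g}dg\in\Mcal(B\rtimes_{\Gb}G)$ is full.

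The key step is to untwist the action. Define the unitary $U\in\Mcal(A\otimes\Kbb(L^{2}(G)))=\BB(L^{2}(G,A))$ by $(U\Gx)(h)=\Ga_{h}^{-1}(\Gx(h))$, so that $U$ carries $\Ga\otimes\ad\rho$ to $\id{A}\otimes\ad\rho$. Then $\Gb$ is conjugate (via $\ad U$, a genuine conjugacy since $\ad U$ maps $B$ onto $B$) to $\id{A}\otimes\ad\rho$ on $A\otimes\Kbb(L^{2}(G))$. Saturation is invariant under conjugacy, because conjugacy induces an isomorphism of crossed products carrying $e_{\Gb}$ to the corresponding projection. The problem therefore reduces to $A\otimes(\Kbb(L^{2}(G)),\ad\rho)$ with trivial action on $A$. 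In that case $B\rtimes G\cong A\otimes(\Kbb(L^{2}(G))\rtimes_{\ad\rho}G)$ and $e_{\Gb}=1\otimes e$, so it is enough to prove the case $A=\Cbb$, since fullness of $e$ passes to $1\otimes e$ in the tensor product.

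For $A=\Cbb$, the action $\ad\rho$ is implemented by a unitary representation, so it is exterior equivalent to the trivial action. Hence $\Kbb(L^{2}(G))\rtimes_{\ad\rho}G\cong\Kbb(L^{2}(G))\otimes C^{*}(G)$, with $\Gl_{g}$ mapping to $\rho_{g}\otimes\Gl'_{g}$. Under this identification, $e$ corresponds to $\int_{G}\rho_{g}\otimes\Gl'_{g}\,dg=\sum_{\pi\in\hat G}P_{\bar\pi}\otimes z(\pi)$, where $P_{\bar\pi}$ is the isotypic projection of $\rho$ for $\bar\pi$. Each $P_{\bar\pi}$ is nonzero, because the regular representation contains every irreducible representation. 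Thus the ideal generated by $e$ contains $P_{\bar\pi}\otimes z(\pi)$ for every $\pi$, and this element is full in the summand $\Kbb\otimes\BB(H_{\pi})$. Since $\sum_{\pi}z(\pi)=1$ strictly in $\Mcal(C^{*}(G))$, the ideal is dense, so $e$ is full.

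The main obstacle is bookkeeping rather than substance: checking that the untwisting map $U$ is equivariant and normalizes $A\otimes\Kbb$, and tracking how $e_{\Gb}$ behaves under the exterior-equivalence isomorphism of crossed products. As an alternative route, one could use Peter--Weyl matrix-coefficient isometries $S(\pi)_{i}$ in $\Mcal(\Kbb(L^{2}(G)))$, which satisfy the same relations as in \autoref{prop:isa-sat}. The computation given there then shows directly that $\sum_{i}S(\pi)_{i}^{*}e_{\Gb}S(\pi)_{i}=z(\bar\pi)/d(\pi)$ lies in the ideal generated by $e_{\Gb}$, which proves fullness and bypasses the untwisting.
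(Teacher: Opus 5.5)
Your reduction to $A=\mathbb{C}$ rests on a false step, so the proof has a genuine gap.

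\textbf{The untwisting step fails.} The operator $U$, $(U\xi)(h)=\alpha_h^{-1}(\xi(h))$, is not right $A$-linear on $L^{2}(G,A)$: one has $U(\xi a)(h)=\alpha_h^{-1}(\xi(h))\,\alpha_h^{-1}(a)$. So $U$ is not a multiplier of $A\otimes\mathbb{K}(L^{2}(G))$. This untwisting trick works on $C_{0}(G)\otimes A$, where it is a pointwise automorphism, but not on $\mathbb{K}(L^{2}(G))\otimes A$.

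The conclusion you wanted from it is also false. The actions $\alpha\otimes\operatorname{Ad}\rho$ and $\mathrm{id}_{A}\otimes\operatorname{Ad}\rho$ have fixed point algebras $A\rtimes_{\alpha}G$ and $A\otimes C^{*}(G)$, so they are not conjugate in general. For example, with $G=\mathbb{Z}/2\mathbb{Z}$ and $A=\mathbb{C}^{2}$ carrying the flip, these algebras are $M_{2}$ and $\mathbb{C}^{4}$. Hence everything after that step says nothing about the lemma.

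(Separately, for $A=\mathbb{C}$ your formula for the $\pi$-block of $\int_{G}\rho_{g}\otimes\lambda'_{g}\,dg$ is wrong. That block is the rank-$d(\pi)$ projection onto the $\rho\otimes\pi$-invariant vectors, not $P_{\bar\pi}\otimes z(\pi)$. Only its non-vanishing is used, so this error is harmless.)

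\textbf{The fallback route also fails.} You would need isometries $\mathbf{1}\otimes T_{i}$ with $T_{i}^{*}T_{j}=\delta_{ij}\mathbf{1}$ and $\rho_{g}T_{i}\rho_{g}^{*}=\sum_{j}\pi(g)_{ji}T_{j}$. Such a family is the same thing as an isometric intertwiner from $\rho\otimes\pi\cong\rho^{\oplus d(\pi)}$ into $\rho$. This is impossible when $d(\pi)>1$, because every irreducible representation occurs in $\rho$ with finite multiplicity. For instance, for finite non-abelian $G$, $\mathbb{B}(L^{2}(G))$ is a matrix algebra and has no two isometries with orthogonal ranges. In \autoref{prop:isa-sat} these isometries come from absorbing an infinite-multiplicity quasi-free action on $\mathcal{O}_{\infty}$, which is not available here.

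\textbf{What works instead.} The twist you can remove is the inner one, $\operatorname{Ad}\rho$, not $\alpha$; this is the paper's route. The map $g\mapsto\mathbf{1}\otimes\rho_{g}$ is a genuine unitary multiplier cocycle, so $\alpha\otimes\operatorname{Ad}\rho$ is exterior equivalent to $\alpha\otimes\mathrm{id}$. The assignment $u_{g}\mapsto(\mathbf{1}\otimes\rho_{g})u'_{g}$ then identifies the crossed product with $(A\otimes\mathbb{K}(L^{2}(G)))\rtimes_{\alpha\otimes\mathrm{id}}G\cong(A\rtimes_{\alpha}G)\otimes\mathbb{K}(L^{2}(G))$.

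Saturation is not invariant under exterior equivalence: the trivial action on $\mathbb{K}(L^{2}(G))$ is not saturated when $G\neq\{e\}$. So you must track the inner products through this isomorphism. For elementary tensors built from rank-one operators, they become $\bigl(\int_{G}\langle\xi,\rho_{g}\eta\rangle\,a^{*}\alpha_{g}(b)u^{\alpha}_{g}\,dg\bigr)\otimes\theta_{\xi',\eta'}$. By Peter--Weyl, the coefficients $g\mapsto\langle\xi,\rho_{g}\eta\rangle$ are dense in $C(G)$, which gives density in $(A\rtimes_{\alpha}G)\otimes\mathbb{K}(L^{2}(G))$.

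If you prefer your fullness formulation, the same isomorphism sends $e_{\beta}$ to $\int_{G}u^{\alpha}_{g}\otimes\rho_{g}\,dg$. Along the Peter--Weyl decomposition of $\rho$, its entries are $\int_{G}\sigma(g)_{ij}u^{\alpha}_{g}\,dg=d(\sigma)^{-1}E(\bar\sigma)_{ij}$. So the ideal it generates contains $z(\bar\sigma)\otimes f$ for a fixed rank-one projection $f$ and every $\sigma\in\hat{G}$. Since $\sum_{\sigma}z(\bar\sigma)=\mathbf{1}$ strictly, that ideal is the whole algebra.
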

\begin{proof}
It suffices to show that $\mathrm{span}\{\int_{G}(a\Ga_{g}(b)\otimes T\rho_{g}S\rho_{g})u_{g}dg\:|\:a,b\in A,\; T,S\in\Kbb(L^{2}(G)) \}$ is dense in $(A\otimes\Kbb(L^{2}(G)))\rtimes_{\Ga\otimes\ad\rho}G$, where $u_{g}$ is the implimenting unitary representation. Since 
\[(A\otimes \Kbb(L^{2}(G)))\rtimes_{\Ga\otimes \ad\rho}G\ni \int_{G}x_{g}u_{g}dg\mapsto \int_{G}x_{g}(\one\otimes\rho_{g}^{*})u_{g}dg\in (A\otimes \Kbb(L^{2}(G)))\rtimes_{\Ga\otimes\id{}}G\]
gives an isomorphism, it suffices to show that $\mathrm{span}\{\int_{G}(a\Ga_{g}(b)\otimes T\rho_{g}S)u_{g}dg\:|\: a,b\in A,\; T,S\in\Kbb(L^{2}(G))\}$ is dense in $A\rtimes_{\Ga\otimes\id{}}G$. Let $\Gx_{1},\Gx_{2},\Gn_{1},\Gn_{2}\in L^{2}(G)$ be any elements. Then we have 
\begin{align*}
\int_{G}(a\Ga_{g}(b)^{*}\otimes \theta_{\Gx_{1},\Gx_{2}}\rho_{g}\theta_{\Gn_{1},\Gn_{2}})u_{g}dg&=\int_{G}(a\Ga_{g}(b)\otimes \langle  \Gx_{2},\rho_{g}\Gn_{1} \rangle \theta_{\Gx_{1},\Gn_{1}})u_{g}dg\\
&=\left(\int_{G}\langle  \Gx_{2},\rho_{g}\Gn_{1} \rangle a\Ga_{g}(b)u_{g}dg\right)\otimes \theta_{\Gx_{1},\Gn_{2}}.
\end{align*}
By the Peter-Weyl theorem, $\mathrm{span}\{(g\mapsto \langle \Gx,\rho_{g}\Gn \rangle)\in C(G)\:|\: \Gx,\Gn\in L^{2}(G)\}$ is dense in $C(G)$ with respect to the norm topology. Hence, $\mathrm{span}\{(g\mapsto\langle \Gx,\rho_{g}\Gn \rangle a\Ga_{g}(b))\:|\:a,b\in A, \Gx,\Gn\in L^{2}(G) \}$ is dense in $C(G,A)$ with respect to the norm topology, and it follows that$$ \mathrm{span}\left\{ \left. \left(\int_{G}\langle \Gx_{2},\rho_{g}\Gn_{1} \rangle a\Ga_{g}(b)u_{g}\,dg\right)\otimes \theta_{\Gx_{1},\Gn_{2}} \;\right|\; a,b\in A,\;\Gx_{i},\Gn_{i}\in L^{2}(G) \right\} $$is dense in $(A\rtimes_{\Ga} G)\otimes \Kbb(L^{2}(G))$. Therefore, $\Ga\otimes\ad\rho$ is saturated.
\end{proof}

\begin{lem}[cf. {\cite[Lemma 2.9]{EP}}]\label{lem:onee-inj}
Let $G$ be a compact group and let $D$ be a unital separable \cstar-algebra. Let $\Gg:G\act D$ be a strongly self-absorbing action such that $D^{\Gg}$ is $K_{1}$-injective. \\
(i)\; If $(D\otimes \Kbb)^{\Gg\otimes \id{}}$ has the cancellation property, then
\[\Theta:\pi_{0}(\mathrm{Aut}_{G}(\dst))\ni[\phi]\mapsto [\phi(\oned\otimes e)]_{0}\in K_{0}(D^{\Gg})_{+}\]
is an injective group homomorphism, where $e\in \Kbb$ is a rank-one projection.\\
(ii)\; If $(D\otimes \Kbb(H))\rtimes_{\Ga\otimes \ad\rho\otimes \id{}}G$ has the cancellation property, then
\[\Theta:\pi_{0}(\mathrm{Aut}_{G}(D\otimes \Kbb(H)))\ni[\phi]\mapsto [\phi(\oned\otimes e)]_{0}\in K_{0}^{G}(D)^{\times}_{+}\]
is an injective group homomorphism, where $H$ and $e$ are as in \autoref{lem:rkone}.
\end{lem}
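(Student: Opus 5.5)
The proof has three parts: well-definedness, the homomorphism property, and injectivity. Injectivity carries the substance and rests on \autoref{thm:contractible}. I treat (i) first; (ii) is the same argument with $K_{0}^{G}(D)\cong K_{0}((D\otimes\Kbb(H))^{G})$, which holds because the action is saturated by the lemma just proved.

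\emph{Well-definedness.} Take $\phi\in\Aut{G}{\dst}$. Then $\phi(\oned\otimes e)$ is a $G$-invariant projection in $\dst$, so it gives a class in $K_{0}((\dst)^{\Gg\otimes\id{}})=K_{0}(D^{\Gg}\otimes\Kbb)=K_{0}(D^{\Gg})$. A path in $\Aut{G}{\dst}$ produces a norm-continuous path of invariant projections, so the class depends only on $[\phi]\in\pi_{0}$.

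\emph{Homomorphism property.} I expect $\Theta$ to be multiplicative for composition, with the ring structure on $K_{0}(D^{\Gg})$ (respectively $K_{0}^{G}(D)$) given by the product $\times$ followed by an isomorphism $D\otimes D\cong D$. To see this, reduce to the case where $\phi$ and $\psi$ are tensor products. Using the path $\Psi$ from \eqref{def:psi} in \autoref{thm:contractible}, $\phi\circ\psi$ is homotopic to $\Psi^{-1}\circ(\phi\otimes\psi)\circ\Psi$, read through the identification $\dst\cong(\dst)^{\otimes2}$. This gives $\Theta(\phi\circ\psi)=\Theta(\phi)\cdot\Theta(\psi)$. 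Invertibility of $\phi$ then makes $\Theta(\phi)$ a unit, so the image lies in the positive invertible elements. This is routine bookkeeping.

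\emph{Injectivity.} Suppose $\Theta(\phi)=\Theta(\psi)$. Then $p=\phi(\oned\otimes e)$ and $q=\psi(\oned\otimes e)$ are invariant projections with the same class in $K_{0}$ of the fixed point algebra. By the cancellation hypothesis, they are Murray--von Neumann equivalent via some $v$ in the fixed point algebra. I would then upgrade $v$ to a unitary $u$ in $\Mcal(\dst)^{\Gg\otimes\id{}}$ with $upu^{*}=q$. For this, the complements $1-p$ and $1-q$ must also be equivalent inside the multiplier algebra of the fixed point algebra. Both are properly infinite and full, since $\phi$ is an automorphism and so each complement is an image of $1-\oned\otimes e$; a Kuiper/Cuntz-type argument in the stable setting should then give the equivalence. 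This is the step I expect to be the main obstacle: equivalence of $p$ and $q$ does not automatically give equivalence of the complements, so some stability argument is needed.

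Given such a $u$, the unitary group of $\Mcal(\dst)^{\Gg\otimes\id{}}$ is contractible in the strict topology, by the equivariant Kuiper theorem (equivalently, the invariant unitaries of $\Mcal$ of a stable algebra). So $\ad u$ lies in the identity component of $\Aut{G}{\dst}$. Now $\ad u\circ\phi$ and $\psi$ both send $\oned\otimes e$ to $q$. Hence $\psi^{-1}\circ\ad u\circ\phi$ lies in $\mathrm{Aut}_{G,\oned\otimes e}(\dst)$, which is contractible by \autoref{thm:contractible}. Therefore $[\phi]=[\psi]$.

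For (ii), the same steps apply in $D\otimes\Kbb(H)$, using the contractibility of $\mathrm{Aut}_{G,\oned\otimes e}(D\otimes\Kbb(H))$ and cancellation in the crossed product, which is Morita equivalent to the fixed point algebra.
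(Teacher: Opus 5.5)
Your proposal is correct and follows the paper's proof. Multiplicativity comes from conjugating $\phi\otimes\psi$ by $\Psi_{1/2}$, which the paper organizes as an Eckmann--Hilton argument with the paths $H_{\phi}$. Injectivity comes from cancellation, contractibility of the invariant unitaries of the multiplier algebra, and contractibility of $\mathrm{Aut}_{G,\oned\otimes e}$; comparing $\phi$ and $\psi$ directly instead of working with $\ker\Theta$ is only a cosmetic difference. The step you flag as the main obstacle, which the paper passes over silently, is immediate from your own remark. If $s\in\Mcal(\dst)$ is an invariant isometry with $ss^{*}=1-\oned\otimes e$ and $\bar\phi$ is the strict extension of $\phi$, then $\bar\phi(s)$ shows $1-p\sim 1$, and likewise $1-q\sim 1$, so the Murray--von Neumann equivalence of $p$ and $q$ upgrades to a unitary. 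Alternatively, the preliminary lemma that equivalent projections in $A\otimes\Kbb$ are unitarily equivalent in the unitization gives the unitary directly.
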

\begin{proof}
We first deal with (i). Let $\Psi:[0,1]\to\Hom_{G}(\dst,(\dst)^{\otimes 2})$ be the path from \eqref{def:psi} and let $\psi=\Psi_{\frac{1}{2}}$. Define a multiplication on $\pi_{0}(\mathrm{Aut}_{G}(\dst))$ by
\[[\phi_{1}]\star[\phi_{2}]=[\psi^{-1}\circ(\phi_{1}\otimes \phi_{2})\circ\psi].\]
For each $\phi\in\mathrm{Aut}_{G}(\dst)$, define a continuous path $H_{\phi}:[0,1]\to\mathrm{Aut}_{G}(\dst)$ by
\[H_{\phi}(t)=\begin{cases}
(\Psi_{\frac{1}{2}t})^{-1}\circ (\phi\otimes \id{\dst})\circ\Psi_{\frac{1}{2}t} &t\in (0,1],\\
\phi& t=0.
\end{cases}\]
This path shows that $[\phi]\star[\id{\dst}]=[\phi]$. By replacing $\phi\otimes\id{\dst}$ with $\id{\dst}\otimes \phi$ in this path, we obtain $[\id{\dst}]\star[\phi]=[\phi]$. Furthermore, we have
\[([\phi_{1}]\star[\phi_{2}])\circ([\psi_{1}]\star[\psi_{2}])=([\phi_{1}]\circ[\psi_{1}])\star([\phi_{2}]\circ[\psi_{2}]).\]
Hence the Eckmann-Hilton argument implies that $\star$ and $\circ$ agree and are both associative and commutative. For any $\phi_{1},\phi_{2}\in\mathrm{Aut}_{G}(\dst)$, we have
\begin{align*}
\Theta([\phi_{1}]\star[\phi_{2}])&=[\psi^{-1}\circ(\phi_{1}\otimes\phi_{2})\circ\psi(\onee)]_{0}\\
&=[\psi^{-1}(\phi_{1}(\onee)\otimes\phi_{2}(\onee))]_{0}\\
&=[\phi_{1}(\onee)]_{0}\cdot[\phi_{2}(\onee)]_{0}=\Theta([\phi_{1}])\Theta([\phi_{2}]).
\end{align*}
Hence $\Theta$ is a group morphism. In particular, we have $\Theta([\phi])\cdot\Theta([\phi^{-1}])=\Theta([\id{\dst}])=[\onee]_{0}$. Hence the image of $\Theta$ is contained in $K_{0}(\dstf)^{\times}_{+}$.

Let $[\phi]\in \ker\Theta$. Then $[\phi(\onee)]_{0}=[\onee]_{0}$. Since $\dstf$ has the cancellation property, there exists a unitary $u\in\Ucal(\Mcal(\dstf))$ such that $u\phi(\onee)u^{*}=\onee$. There is a strictly continuous path from $\onee$ to $u$ in $\Ucal(\Mcal(\dstf))$ because $\Ucal(\Mcal(\dstf))\cong\Ucal(\Mcal(D^{\Gg}\otimes\Kbb))$ is contractible. Hence we have $[\phi]=[\ad u\circ\phi]$. Since $\ad u \circ\phi(\onee)=\onee$, it follows from \autoref{thm:contractible} that there is a continuous path between $\one{\dst}$ and $\ad u\circ \phi$. Therefore $\Theta$ is injective.

As for (ii), the argument is identical to that of \cite[Lemma 2.9]{EP}.
\end{proof}

\begin{proposition}
Let $G$ be a compact group and let $(D,\Gg)=(\mathrm{End}(V_{\pi})^{\otimes\infty},\ad\pi^{\otimes \infty})$. Then $\Theta:\pi_{0}(\mathrm{Aut}_{G}(D\otimes \Kbb(H)))\to K_{0}^{G}(D)_{+}^{\times}$ is a group isomorphism.
\end{proposition}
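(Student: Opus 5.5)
By \autoref{lem:onee-inj}(ii), $\Theta$ is an injective group homomorphism as soon as its hypotheses hold. So the plan is to verify those hypotheses for $(D,\Gg)=(\mathrm{End}(V_{\pi})^{\otimes\infty},\ad\pi^{\otimes\infty})$ and then prove surjectivity onto $K_{0}^{G}(D)^{\times}_{+}$.

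\textbf{Hypotheses.} $D$ is a UHF algebra, and the action is an infinite tensor power of an inner action. By \cite[Proposition 6.3]{SZ1}, as in \autoref{ex:model-action}, it is strongly self-absorbing. The fixed point algebra $D^{\Gg}$ is an inductive limit of the finite-dimensional algebras $(\mathrm{End}(V_{\pi})^{\otimes n})^{G}$, so it is AF and hence $K_{1}$-injective. For the cancellation property, $(D\otimes\Kbb(H))\rtimes G$ is the inductive limit of the algebras $(\mathrm{End}(V_{\pi})^{\otimes n}\otimes\Kbb(H))\rtimes G$. Since the action is inner at each stage, each of these is isomorphic to $C^{*}(G)\otimes\mathrm{End}(V_\pi)^{\otimes n}\otimes\Kbb(H)$, which is a direct sum of copies of $\Kbb$. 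The limit is therefore AF and has cancellation. With these facts, \autoref{lem:onee-inj}(ii) gives injectivity.

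\textbf{Surjectivity.} I would first identify $K_{0}^{G}(D)\cong R(G)[[\pi]^{-1}]$: the connecting maps are multiplication by $[\pi]$, by the computation in \autoref{ex:model-action}. Under this identification, $[\oned\otimes e]_{0}$ corresponds to $1$. Next, fix $x\in K_{0}^{G}(D)^{\times}_{+}$. Since $K_{0}^{G}(D)\cong K_{0}$ of the stable AF algebra $\bigl((D\otimes\Kbb(H))\rtimes G\bigr)$ by saturation (the preceding lemma), we can represent $x$ and $x^{-1}$ by $G$-invariant projections $p,q\in D\otimes\Kbb(H)$.

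The aim is then an equivariant automorphism $\phi$ with $\phi(\oned\otimes e)\sim p$. Following \cite[Lemma 2.9/Proposition 2.10]{EP}, I would use the multiplicative structure: $p\otimes q\sim(\oned\otimes e)\otimes(\oned\otimes e)$ in $(D\otimes\Kbb(H))^{\otimes 2}$. Via $\psi=\Psi_{1/2}$, the corner $p(D\otimes\Kbb(H))p$ then behaves like an invertible bimodule. By Brown's theorem in the equivariant form, the full projection $p$ yields an equivariant isomorphism $D\otimes\Kbb(H)\cong p(D\otimes\Kbb(H))p\otimes\Kbb(H)$, using that $\Kbb(H)$ absorbs $L^{2}(G)$-multiplicity. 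The key step is to show the corner $p(D\otimes\Kbb(H))p$ is equivariantly isomorphic to $(D,\Gg)$. I would do this by classification of AF actions via equivariant $K_{0}$ with unit class: multiplication by $x$ is an $R(G)$-module automorphism of $R(G)[[\pi]^{-1}]$ that preserves positivity, because $x^{-1}$ is also positive, and sends $1$ to $[p]$. An Elliott-type intertwining for the inductive systems of inner actions, or Handelman–Rossmann's classification of locally representable actions, then gives an equivariant isomorphism $\phi:D\otimes\Kbb(H)\to D\otimes\Kbb(H)$ with $\phi(\oned\otimes e)$ equivalent to $p$. Finally, $\Theta([\phi])=x$.

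\textbf{Main obstacle.} I expect the hardest step to be this last one: that an order-unit-respecting $R(G)$-module automorphism of $K_{0}^{G}$ lifts to an equivariant automorphism. The concern is making sure that positive invertibility of $x$ forces multiplication by $x$ to be an order isomorphism; this needs $x^{-1}\geq 0$. Then the standard approximate intertwining in the equivariant AF setting can be applied.
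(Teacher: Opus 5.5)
Your proof is correct, but the surjectivity half takes a genuinely different route from the paper.

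\textbf{Injectivity.} You both rely on \autoref{lem:onee-inj}(ii). You also verify its hypotheses: strong self-absorption, $K_1$-injectivity of the AF fixed point algebra, and cancellation for the AF crossed product. The paper leaves these implicit.

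\textbf{Surjectivity in the paper.} The paper uses no classification theorem. It writes $x=r/[\pi]^k$ with $r\in R(G)_+$, and uses $x^{-1}\ge 0$ to find $s\in R(G)_+$ with $rs=[\pi]^l$, so $V_r\otimes V_s\cong V_\pi^{\otimes l}$. Regrouping tensor factors then gives an equivariant shift isomorphism $D\cong(\mathrm{End}(V_r)\otimes\mathrm{End}(V_s))^{\otimes\infty}\otimes\mathrm{End}(V_r)\cong D\otimes\mathrm{End}(V_r)$. Since every irreducible occurs in $H$ with infinite multiplicity, $\mathrm{End}(V_r)\otimes\Kbb(H)\cong\Kbb(H)$ equivariantly. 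Combining these yields an explicit $\alpha^{(r)}$ with $\Theta([\alpha^{(r)}])=r$, and likewise $\alpha^{(\pi)}$ with value $[\pi]$. Because $\Theta$ is a homomorphism, $(\alpha^{(\pi)})^{-k}\circ\alpha^{(r)}$ hits $x$.

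\textbf{Surjectivity in your proposal.} You represent $x$ by an invariant projection $p$ and import two external results.
\begin{itemize}
\item An equivariant Brown (Mingo--Phillips) stabilization for the full projection $p$.
\item The Handelman--Rossmann classification of locally representable actions on AF algebras, applied to multiplication by $x$. This map is an ordered $R(G)$-module automorphism because $x\ge 0$ and the positive cone is multiplicatively closed. It is $x^{-1}\ge0$ that makes the \emph{inverse} map positive.
\end{itemize}
This works, provided you check two things. First, the corner action on $p(D\otimes\Kbb(H))p$ must be locally representable; arrange this by choosing $p$ at a finite stage. Second, the Brown isomorphism must send $p\otimes e$ to a projection of class $[p]$.

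Two simplifications are available. You could apply the non-unital (stable) form of the classification directly to $D\otimes\Kbb(H)$ and skip Brown's theorem entirely. The projection $q$ and the ``invertible bimodule via $\Psi_{1/2}$'' remark play no role in your argument. Your ``main obstacle'' is just the existence half of the classification theorem once that theorem is cited.

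\textbf{Comparison.} The paper's argument is short and self-contained, and it produces the automorphisms explicitly. It does depend on $K_0^G(D)$ being $R(G)$ localized at the class of a single representation, which is what makes the tensor-shift trick available. Your argument is heavier, but it is not tied to this specific model. It would give surjectivity of $\Theta$ for other locally representable strongly self-absorbing AF actions, wherever the classification applies.
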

\begin{proof}
It suffices to show that $\Theta$ is surjective. Note that $K_{0}^{G}(D)\cong R(G)[1/[\pi]]$. Let $x\in (K_{0}^{G}(D))_{+}^{\times}$ be an arbitrary element. Then $x=r/[\pi]^{k}$ for some $r\in R(G)_{+}$ and $k\in \Zbb_{\geq 0}$. Since $x$ is invertible, there exists $s\in R(G)_{+}$ such that $rs=[\pi]^{l}$ for some $l\in \Zbb_{\geq 0}$. Let $V_{r}$ and $V_{s}$ be the representations corresponding to $r$ and $s$, respectively. Then we have 
$V_{r}\otimes V_{s}\cong V_{\pi}^{\otimes l}$. Let
\[\psi:D\to (\mathrm{End}(V_{r})\otimes \mathrm{End}(V_{s}))^{\otimes\infty}\otimes \mathrm{End}(V_{r})\cong  D\otimes\mathrm{End}(V_{r}) \]
be the equivariant $*$-isomorphism that shifts the tensor factors. Since $H$ contains all representations with infinite multiplicity, we obtain an equivariant $*$-isomorphism
\[\phi:\mathrm{End}(V_{r})\otimes \Kbb(H) \to \Kbb(H).\]
We define $\Ga^{(r)}$ to be $(\phi\otimes\id{D})\circ(\id{\Kbb(H)}\otimes \psi)\in \mathrm{Aut}_{G}(D\otimes \Kbb(H))$. By construction, $[\Ga^{(r)}(\onee)]_{0}=r\in K_{0}^{G}(D)$. Similarly, we can construct $\Ga^{(\pi)}\in\mathrm{Aut}_{G}(D\otimes \Kbb(H))$ such that $[\Ga^{(\pi)}(\onee)]_{0}=[\pi]\in K_{0}^{G}(D)$. Then we have $\rho([(\Ga^{(\pi)})^{-k}\circ \Ga^{(r)}])=x$, which completes the proof.
\end{proof}

\begin{lem}\label{lem:pi-zero-auto}
Let $G$ be a compact group and let $D$ be a unital Kirchberg algebra. Let $\Gg$ be an isometrically shift-absorbing and strongly self-absorbing action. Then
\[\Theta:\pi_{0}(\Aut{G}{\dst})\ni[\phi]\mapsto [\phi(\onee)]_{0}\in K_{0}^{G}(D)^{\times}\]
is a group isomorphism.
\end{lem}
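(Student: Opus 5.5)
We use \autoref{lem:onee-inj}(i) for injectivity and the homomorphism property, and the equivariant Kirchberg--Phillips machinery of Section 2 for surjectivity. The main work is to check the hypotheses and to identify the target with $K_{0}^{G}(D)^{\times}$.

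\emph{Hypotheses and target.} By the remark following \autoref{prop:ssa-htpy}, $D^{\Gg}$ is a unital Kirchberg algebra (\cite[Remark 4.6]{Mu}), so it is $K_{1}$-injective. The algebra $\dstf=D^{\Gg}\otimes\Kbb$ is then a stable Kirchberg algebra. Any two nonzero projections in it are full and properly infinite, so equality of $K_{0}$-classes gives Murray--von Neumann equivalence. This yields the cancellation property, and moreover every element of $K_{0}(D^{\Gg})$ is positive, i.e.\ $K_{0}(D^{\Gg})_{+}=K_{0}(D^{\Gg})$. By \autoref{prop:isa-sat}, $\Gg$ is saturated, so $K_{0}(D^{\Gg})\cong K_{0}^{G}(D)$, and this identification carries $[\oned\otimes e]_{0}$ to the unit of the ring structure on $K_{0}^{G}(D)$. \autoref{lem:onee-inj}(i) therefore shows that $\Theta$ is an injective group homomorphism into $K_{0}^{G}(D)^{\times}$.

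\emph{Surjectivity.} Fix $x\in K_{0}^{G}(D)^{\times}$. Through \autoref{cor:ring-isom}, $x$ corresponds to an invertible element $\Gs\in KK^{G}(D,D)$; explicitly $\Gs=\{KK^{G}(\id{D})\hotimes x\}\otimes KK^{G}(\phi)$, and $KK^{G}(\nu)\otimes\Gs=x$. Tensoring with $\Kbb$ (trivial action) gives an invertible element $\Gs_{\Kbb}\in KK^{G}(\dst,\dst)$ with $[\onee]\otimes\Gs_{\Kbb}=x$. The action $\Gg\otimes\id{\Kbb}$ is isometrically shift-absorbing and is strongly stable, because it is conjugate to its tensor product with $\id{\Kbb}$. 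Applying \autoref{cor:auto-end} with $X$ a point (a $KK^{G}$-continuous pair) together with \autoref{cor:stbl-htpy-set}, we obtain $\Phi\in\Aut{G}{\dst}$ with $KK^{G}(\Phi)=\Gs_{\Kbb}$. Alternatively, \cite[Corollary 6.4]{GS2} gives the same $\Phi$ directly.

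It remains to compute $\Theta([\Phi])$. Since the class $[\Phi(\onee)]_{0}\in K_{0}^{G}(\dst)\cong K_{0}^{G}(D)$ equals $[\onee]\otimes KK^{G}(\Phi)$, we get $\Theta([\Phi])=[\onee]\otimes\Gs_{\Kbb}=KK^{G}(\nu)\otimes\Gs=x$. Hence $\Theta$ is surjective.

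The main obstacle is compatibility. We must verify that the ring structure on $K_{0}^{G}(D)$ used in \autoref{lem:onee-inj} (defined via $\psi=\Psi_{1/2}$, the tensor product, and saturation) matches the one transported by \autoref{cor:ring-isom}. We also need that saturation identifies $[\phi(\onee)]_{0}$ in $K_{0}(D^{\Gg})$ with the equivariant class $KK^{G}(\nu)\otimes KK^{G}(\phi)$. The first point follows because all equivariant isomorphisms $D\otimes D\to D$ are homotopic, by \autoref{prop:ssa-htpy}. The second follows from naturality of the saturation isomorphism $K_{*}(A^{G})\to K_{*}^{G}(A)$ under equivariant $*$-homomorphisms.
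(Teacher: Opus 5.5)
Your proof is correct. For surjectivity it is essentially the paper's argument: an invertible class in $KK^{G}(\dst,\dst)$ is realized by an automorphism via \cite[Corollary 6.4]{GS2}, and \autoref{cor:ring-isom} matches that class with $x$.

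You differ from the paper on injectivity and on the homomorphism property.

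\emph{The paper's route.} The paper never invokes \autoref{lem:onee-inj}. It shows directly that $[\phi]\mapsto KK^{G}(\phi)$ is a group isomorphism $\pi_{0}(\Aut{G}{\dst})\to KK^{G}(\dst,\dst)^{\times}$. Injectivity there comes from the uniqueness theorem \cite[Theorem 5.7]{GS2} together with the isometry-path homotopy in the proof of \autoref{thm:stable-end}. The paper then observes that $\Theta$ is this map followed by the ring isomorphisms $KK^{G}(\dst,\dst)\cong KK^{G}(D,D)\cong KK^{G}(\Cbb,D)$. So injectivity and multiplicativity come for free. No saturation, positivity of $K_{0}$-classes, cancellation, Eckmann--Hilton argument or comparison of ring structures is needed.

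\emph{Your route.} You get injectivity from the topology of $\Aut{G}{\dst}$: the stabilizer of $\onee$ is contractible (\autoref{thm:contractible}), and $\phi(\onee)$ can be conjugated back to $\onee$ by a multiplier unitary homotopic to $\one$. Classification is used only for existence. This is the Evans--Pennig mechanism and would survive outside the Kirchberg setting. The price is the extra bookkeeping you correctly flag at the end of your proposal.

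\emph{A caveat on wording.} $\dstf=D^{\Gg}\otimes\Kbb$ does not have cancellation in the usual sense: a nonzero projection of class $0$ is not equivalent to $0$. So you cannot quote \autoref{lem:onee-inj} as a black box; you are really using its proof. That proof only needs the two nonzero projections $\phi(\onee)$ and $\onee$, which have equal class, to be conjugate by a unitary in $\mathcal{M}(D^{\Gg}\otimes\Kbb)$. Your Murray--von Neumann argument gives this once you add one standard fact. In the multiplier algebra of a $\sigma$-unital stable algebra, the complement of any projection of the algebra is full and properly infinite with trivial $K_{0}$-class, and is therefore equivalent to $\one$.
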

\begin{proof}
First we show the map
\[\pi_{0}(\Aut{G}{\dst})\to KK^{G}(\dst,\dst)^{\times},\quad[\phi]\mapsto KK^{G}(\phi)\]
is a group isomorphism. Surjectivity follows from \cite[Corollary 6.4]{GS2}. By combining \cite[Theorem 5.7]{GS2} with the proof of injectivity in \autoref{thm:stable-end}, we obtain injectivity. Clearly this map preserves the group structure.

Next we claim that $KK^{G}(D,D)$ is isomorphic to $KK^{G}(\dst,\dst)$ as rings via $x\mapsto x\hotimes \id{\Kbb}$. Let $e\in \Kbb$ be a rank-one projection and let $\Gk:D\to D\otimes \Kbb$ be the inclusion into the corner associated with $\onee$. Then $KK^{G}(\Gk)\in KK^{G}(D,\dst)$ is an isomorphism. Moreover, we have $x\otimes KK^{G}(\id{D}\otimes\Gk)=KK^{G}(\id{D}\otimes \Gk)\otimes (x\hotimes \id{K})$ for all $x\in KK^{G}(D,D)$. Thus the claim follows.

Let $\nu:\Cbb\to D$ be a unital inclusion. Then the following diagram commutes.
\begin{center}
\begin{tikzpicture}[auto]
\node (1) at (-4,1) {$\pi_{0}(\Aut{G}{\dst})$};
\node (2) at (4,1) {$KK^{G}(\dst,\dst)^{\times}$};
\node (3) at (-4,-1) {$K_{0}^{G}(D)^{\times}\cong KK^{G}(\Cbb,D)^{\times}$};
\node (4) at (4,-1) {$KK^{G}(D,D)^{\times}$};
\draw[->] (1) to node[above] {$\cong$}(2);
\draw[->] (1) to node {$\Theta$} (3);
\draw[->] (4) to node {$KK^{G}(\nu)\otimes -$} (3);
\draw[->] (2) to node[right] {$\left(-\hotimes KK^{G}(\id{\Kbb})\right)^{-1}$} (4);
\end{tikzpicture}
\end{center}
Since all maps other than $\Theta$ are isomorphisms, it follows that $\Theta$ is also an isomorphism.
\end{proof}

\subsubsection{The homotopy type of $\mathrm{Aut}_{G}(D)^{0}$}
Let $\mathrm{Aut}_{G}(D\otimes \Kbb)^{0}$ be the path-component of $\id{D\otimes\Kbb}$. For a \cstar-algebra $A$, let us denote by $\mathrm{Proj}(A)$ the set of all projections in $A$, equipped with the norm topology. For a projection $p_{0}\in A$, let $\mathrm{Proj}_{p_{0}}(A)$ be the path-component of $p_{0}$.

All assertions in this section remain valid if $\mathbb{K}$ is replaced by $\mathbb{K}(H)$.
\begin{proposition}[cf.{\cite[Lemma 2.8]{DP}}]\label{prop:dp28}
Let $G$ be a compact group and let $D$ be a unital separable \cstar-algebra. Let $\Gg:G\act D$ be a strongly self-absorbing action. Let $e\in \Kbb$ be a rank-one projection. Then the map 
\[\pi:\mathrm{Aut}_{G}(D\otimes \Kbb)^{0}\to\mathrm{Proj}_{\one_{D}\otimes e}((D\otimes \Kbb)^{\Gg\otimes\id{\Kbb}}),\qquad \phi\mapsto\phi(\one_{D}\otimes e)\]
gives rise to a principal $\mathrm{Aut}_{G,\one_{D}\otimes e}(D\otimes \Kbb)$-bundle over $\mathrm{Proj}_{\one_{D}\otimes e}((D\otimes\Kbb)^{\Gg\otimes \id{\Kbb}})$. 
\end{proposition}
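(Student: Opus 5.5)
The plan is to follow the proof of \cite[Lemma 2.8]{DP}, inserting equivariance at every step. The fiber of $\pi$ over $\onee$ is exactly $\mathrm{Aut}_{G,\onee}(\dst)\cap\mathrm{Aut}_{G}(\dst)^{0}$. By \autoref{thm:contractible}, $\mathrm{Aut}_{G,\onee}(\dst)$ is contractible, in particular path connected, so it lies entirely in $\mathrm{Aut}_{G}(\dst)^{0}$. This group acts on $\mathrm{Aut}_{G}(\dst)^{0}$ from the right by composition. The action is free and continuous, and $\pi(\phi\circ\theta)=\pi(\phi)$ for every $\theta$ in the group. Conversely, if $\pi(\phi)=\pi(\psi)$, then $\phi^{-1}\circ\psi$ fixes $\onee$, so the fibers of $\pi$ are exactly the orbits. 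It therefore remains to prove two things: that $\pi$ is well defined and surjective onto $\mathrm{Proj}_{\onee}(\dstf)$, and that $\pi$ admits continuous local sections.

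For well-definedness, note that an equivariant automorphism maps $G$-invariant projections to $G$-invariant projections, and a path from $\id{}$ to $\phi$ gives a norm-continuous path from $\onee$ to $\phi(\onee)$ in $\dstf$. For local sections, fix $p\in\mathrm{Proj}_{\onee}(\dstf)$ and take $q$ with $\|p-q\|<1$, $q$ invariant. Use the standard unitary
\[u(q,p)=\bigl(qp+(1-q)(1-p)\bigr)\,\bigl|qp+(1-q)(1-p)\bigr|^{-1}\in\Ucal(\Mcal(\dst)).\]
It depends norm-continuously on $q$, satisfies $u(q,p)pu(q,p)^{*}=q$ and $u(p,p)=\one$, and is $G$-invariant because it is built by continuous functional calculus from invariant elements. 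Choose $\phi_{p}\in\mathrm{Aut}_{G}(\dst)^{0}$ with $\phi_{p}(\onee)=p$. Then $s(q)=\ad{u(q,p)}\circ\phi_{p}$ is a continuous local section on the ball $\{\|q-p\|<1\}$. It remains in $\mathrm{Aut}_{G}(\dst)^{0}$ because $t\mapsto u(p+t(q-p)\ldots)$, or more simply the path $u(q_{t},p)$ along a straight-line homotopy of projections inside the ball, connects it to $\phi_{p}$. The map $(q,\theta)\mapsto s(q)\circ\theta$ is then a local trivialization, with inverse $\phi\mapsto(\pi(\phi),s(\pi(\phi))^{-1}\circ\phi)$; continuity of inversion in the point-norm topology on automorphisms of a separable algebra is standard.

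The main obstacle is surjectivity, namely producing $\phi_{p}$ for each $p$ in the path component of $\onee$. Let $p_{t}$ be a norm-continuous path of invariant projections from $\onee$ to $p$. Partition $[0,1]$ finely so that consecutive projections are within distance $1$. Compose the invariant unitaries $u(p_{t_{k+1}},p_{t_{k}})$ to get $U\in\Ucal(\Mcal(\dst))^{\Gg\otimes\id{}}$ with $U(\onee)U^{*}=p$. Each factor is joined to $\one$ within invariant unitaries via $s\mapsto u(p_{t_{k}+s(t_{k+1}-t_{k})},p_{t_{k}})$. Hence $\ad U$ lies in $\mathrm{Aut}_{G}(\dst)^{0}$, since $\ad$ of a strictly continuous path of unitaries is point-norm continuous. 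The delicate point is continuity of these unitary paths in the relevant topology, which the norm estimate handles. Note that this step needs neither the $K_{1}$-injectivity hypothesis nor cancellation; those hypotheses enter only through \autoref{thm:contractible}.
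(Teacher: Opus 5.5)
Your proposal is correct and is essentially the paper's proof. It uses the same two ingredients: surjectivity via an invariant unitary in $\Ucal_{0}(\Mcal(\dst)^{\Gg\otimes\id{\Kbb}})$ conjugating $\onee$ to $p$, and local sections $q\mapsto\ad{u(q,p)}\circ\phi_{p}$ built from the polar part of $qp+(\one-q)(\one-p)$, with the same inverse trivialization $\phi\mapsto(\pi(\phi),s(\pi(\phi))^{-1}\circ\phi)$; your $u(q,p)$ is exactly $u_{q}^{*}$ in the paper's notation with $p_{0}=p$.

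Your explicit check that $\mathrm{Aut}_{G,\onee}(\dst)\subset\mathrm{Aut}_{G}(\dst)^{0}$ via \autoref{thm:contractible} relies on the subsection's standing $K_{1}$-injectivity assumption, and is a point the paper uses only implicitly. Your loose ``straight-line homotopy of projections'' is not a path of projections; replace it with the polar parts of $\one+s\bigl(qp+(\one-q)(\one-p)-\one\bigr)$ for $s\in[0,1]$. These stay invertible and invariant, since $\|qp+(\one-q)(\one-p)-\one\|=\|(q-p)(2p-\one)\|\leq\|q-p\|<1$.
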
 
\begin{proof}
First, we show surjectivity. Note that for any $u\in\Ucal_{0}(\Mcal(D\otimes \Kbb)^{\Gg\otimes \id{\Kbb}})$, we have $\ad{u}\in \mathrm{Aut}_{G}(D\otimes \Kbb)^{0}$. Let $p\in \mathrm{Proj}_{\one_{D}\otimes e}((D\otimes \Kbb)^{\Gg\otimes \id{\Kbb}})$ be any projection. Since $p$ is homotopic to $\one_{D}\otimes e$, there exists a unitary $u\in \Ucal_{0}(\Mcal(D\otimes \Kbb)^{\Gg\otimes \id{\Kbb}})$ such that $u(\one_{D}\otimes e)u^{*}=p$. Thus we have $\pi(\ad{u})=p$ and sujectivity follows. 

Let $p_{0}\in\mathrm{Proj}_{\one_{D}\otimes e}((D\otimes\Kbb)^{\Gg\otimes \id{\Kbb}})$ and $W$ be its open neighborhood given by 
\[W=\{p\in \mathrm{Proj}_{\one_{D}\otimes e}((D\otimes\Kbb)^{\Gg\otimes \id{\Kbb}})\:|\: \|p-p_{0}\|<1\}.\]
If $p\in W$, then $x_{p}=p_{0}p+(\one-p_{0})(\one-p)$ is an invertible element of $\Mcal(D\otimes \Kbb)^{\Gg\otimes \id{\Kbb}}$. Thus it follows that $u_{p}=x_{p}|x_{p}|^{-1}$ is a unitary in $\Ucal_{0}(\Mcal(D\otimes \Kbb)^{\Gg\otimes \id{\Kbb}})$ and the map $p\mapsto u_{p}$ is norm-continuous. Take a unitary $v\in\Ucal_{0}(\Mcal(D\otimes \Kbb)^{\Gg\otimes \id{\Kbb}})$ such that $p_{0}=v(\one_{D}\otimes e)v^{*}$. Then the map 
\[\Gs_{p_{0}}:W\to \mathrm{Aut}_{G}(D\otimes \Kbb)^{0},\quad p\mapsto \ad{u_{p}^{*}v}\]
is a continuous section of $\pi$ over $W$ and the map 
\[\Gk_{W}:W\times\mathrm{Aut}_{G,\one_{D}\otimes e}(D\otimes \Kbb)\to \pi^{-1}(W)\quad\Gk_{W}(x,\phi)=\Gs_{p_{0}}(x)\circ\phi \] 
is a local trivialization with inverse 
\[\tau_{W}:\pi^{-1}(W)\to W\times \mathrm{Aut}_{G,\one_{D}\otimes e}(D\otimes \Kbb),\quad \tau_{W}(\phi)=(\phi(\one_{D}\otimes e),\Gs_{p_{0}}(\phi(\one_{D}\otimes e))^{-1}\circ\phi).\]
\end{proof}

\begin{proposition}[cf.{\cite[Proposition 3.2]{EP}}]\label{prop:proj-bu}
Let $G$ be a compact group and let $D$ be a unital separable \cstar-algebra. Let $\Gg:G\act D$ be a strongly self-absorbing action. For simplicity of notation, we write $A$ instead of $\dstf$ and let 
\[\Ucal_{\one_{D}\otimes e}(\Mcal(A))=\{u\in \Ucal(\Mcal(A))\:|\: \ad{u}(\one_{D}\otimes e)=\one_{D}\otimes e\}\]
equipped with the norm topology. Then the map
\[\ad{}:\Ucal(\Mcal(A))\to\mathrm{Proj}_{\one_{D}\otimes e}(A),\quad u\mapsto \ad{u}(\one_{D}\otimes e)\]
gives rise to a principal $\Ucal_{\one_{D}\otimes e}(\Mcal(A))$-bundle. Furthermore, the inclusion 
\[\Ucal(D^{\Gg})\to \Ucal_{\one_{D}\otimes e}(\Mcal(A)),\quad v\mapsto v\otimes e+(\one-\one\otimes e)\]
is a homotopy equivalence and hence we have a homotopy equivalence $\mathrm{Proj}_{\one_{D}\otimes e}(A)\simeq B\Ucal(D^{\Gg})$.
\end{proposition}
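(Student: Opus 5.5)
The plan has three parts: build local sections of the map $\ad{}$, identify the stabilizer up to homotopy, and read off the classifying space.

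\emph{Principal bundle.} The group $\Ucal_{\one_{D}\otimes e}(\Mcal(A))$ acts freely on $\Ucal(\Mcal(A))$ by right multiplication, and $\ad{}$ is constant on orbits. For surjectivity, take $p\in\mathrm{Proj}_{\one_{D}\otimes e}(A)$. A norm path of projections from $\one_{D}\otimes e$ to $p$ can be subdivided so that consecutive projections are at distance $<1$. For such a pair $q_{0},q$, the element $x=qq_{0}+(\one-q)(\one-q_{0})$ is invertible in $\Mcal(A)$, and $w=x|x|^{-1}$ is a $G$-invariant unitary with $wq_{0}w^{*}=q$. Composing these unitaries gives $u$ with $\ad{u}(\one_{D}\otimes e)=p$.

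For local triviality, I copy the proof of \autoref{prop:dp28}. Over $W=\{p:\|p-p_{0}\|<1\}$, fix $v$ with $\ad{v}(\one_{D}\otimes e)=p_{0}$ and set $\sigma(p)=u_{p}^{*}v$, where $u_{p}=x_{p}|x_{p}|^{-1}$ and $x_{p}=p_{0}p+(\one-p_{0})(\one-p)$. Then $\sigma$ is a norm-continuous section. The maps $(p,w)\mapsto\sigma(p)w$ and $u\mapsto(\ad{u}(\one_{D}\otimes e),\sigma(\ad{u}(\one_{D}\otimes e))^{*}u)$ are mutually inverse homeomorphisms $W\times\Ucal_{\one_{D}\otimes e}(\Mcal(A))\cong\ad{}^{-1}(W)$. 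Throughout I use that $\Mcal(A)=\Mcal(D\otimes\Kbb)^{\Gg\otimes\id{\Kbb}}$, which holds since $G$ is compact.

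\emph{Stabilizer.} A unitary $u$ fixing $q=\one_{D}\otimes e$ splits as $u=quq+(\one-q)u(\one-q)$. Now $qAq\cong D^{\Gg}$ and $(\one-q)\Mcal(A)(\one-q)\cong\Mcal((D\otimes(1-e)\Kbb(1-e))^{\Gg\otimes\id{}})\cong\Mcal(D^{\Gg}\otimes\Kbb)$. Hence
\[\Ucal_{\one_{D}\otimes e}(\Mcal(A))\cong\Ucal(D^{\Gg})\times\Ucal(\Mcal(D^{\Gg}\otimes\Kbb)),\]
and the inclusion in the statement is the inclusion of the first factor.

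The main obstacle is showing that $\Ucal(\Mcal(D^{\Gg}\otimes\Kbb))$ is contractible in the norm topology. This is the equivariant analogue of Cuntz--Higson/Mingo; Kuiper-type contractibility holds for $\Mcal(B\otimes\Kbb)$ with $B$ $\sigma$-unital. I will cite this rather than reprove it; the same fact was already used in \autoref{lem:onee-inj}. Granting it, the inclusion is a homotopy equivalence.

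\emph{Conclusion.} $\Ucal(\Mcal(A))$ is contractible, since $\Mcal(A)\cong\Mcal(D^{\Gg}\otimes\Kbb)$. A principal bundle with contractible total space is a universal bundle, so
\[\mathrm{Proj}_{\one_{D}\otimes e}(A)\simeq B\Ucal_{\one_{D}\otimes e}(\Mcal(A))\simeq B\Ucal(D^{\Gg}).\]
Using this requires that the base be paracompact, or working up to weak equivalence. The base is a metric space, so it is paracompact, and the identification holds.
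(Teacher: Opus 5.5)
Your proposal is correct and follows the paper's proof essentially step for step: local sections as in \autoref{prop:dp28}, the splitting of the stabilizer into the corner $D^{\Gg}$ times a complementary corner that is the multiplier algebra of a stable algebra, contractibility of its unitary group via Cuntz--Higson, and the universal-bundle identification of the base with the classifying space. Your explicit surjectivity argument and your paracompactness remark fill in details the paper leaves implicit. Identifying the complementary corner with $\Mcal(D^{\Gg}\otimes\Kbb)$ rather than with $\Mcal(A)$ amounts to the same thing, since $A\cong D^{\Gg}\otimes\Kbb$.
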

\begin{proof}
First, note that $\Ucal(\Mcal(A))$ is contractible by the main theorem in \cite{CH}. Local triviality follows from the same argument as in the proof of the preceding proposition. 

We show that $\Ucal(D^{\Gg})\to \Ucal_{\one_{D}\otimes e}(\Mcal(A))$ is a homotopy equivalence. Note that 
\[\Ucal_{\one_{D}\otimes e}(A)\cong \Ucal((\one_{D}\otimes e)\Mcal(A)(\one_{D}\otimes e))\times \Ucal((\one_{D}\otimes e^{\perp})\Mcal(A)(\one_{D}\otimes e^{\perp})).\]
Since $\one_{D}\otimes e\in A$, we obtain $(\one_{D}\otimes e)\Mcal(A)(\one_{D}\otimes e)=D^{\Gg}$. By \cite[Corollary 1.2.37]{AM}, we have $(\one_{D}\otimes e^{\perp})\Mcal(A)(\one_{D}\otimes e^{\perp})=\Mcal((\one_{D}\otimes e)A(\one_{D}\otimes e))$. Since $e$ is a rank-one projection, we have $(\one_{D}\otimes e^{\perp})A(\one_{D}\otimes e^{\perp})=(D\otimes e^{\perp}\Kbb e^{\perp})^{\Gg\otimes \id{\Kbb}}\cong A$. This implies that $\Ucal((\one_{D}\otimes e^{\perp})\Mcal(A)(\one_{D}\otimes e^{\perp}))\cong \Ucal(\Mcal(A))$ is contractible. Therefore the inclusion $\Ucal(D^{\Gg})\to \Ucal_{\one_{D}\otimes e}(\Mcal(A))$ is a homotopy equivalence. The contractibility of $\Ucal(\Mcal(A))$ implies that $\mathrm{Proj}_{\one_{D}\otimes e}(A)$ is homtopy equivalent to $B\Ucal_{\one_{D}\otimes e}(A)$. Thus we have a homotopy equivalence between $\mathrm{Proj}_{\one_{D}\otimes e}(A)$ and $B\Ucal(D^{\Gg})$.
\end{proof}
\begin{proposition}[cf. {\cite[Proposition 3.3]{EP}}]\label{prop:aut-proj}
Let $G$ be a compact group and let $D$ be a unital separable \cstar-algebra. Let $\Gg:G\act D$ be a strongly self-absorbing action such that $D^{\Gg}$ is $K_{1}$-injective. Then, the map
\[\pi:\Aut{G}{\dst}^{0}\to\mathrm{Proj}_{\one_{D}\otimes e}(\dstf),\quad \phi\mapsto\phi(\one_{D}\otimes e)\]
as in \autoref{prop:dp28} indeuces a homotopy equivalence $\Aut{G}{\dst}^{0}\simeq\mathrm{Proj}_{\onee}(\dstf)$.
\end{proposition}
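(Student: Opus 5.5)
We follow the argument of \cite[Proposition 3.3]{EP} and \cite[Theorem 2.5]{DP}. By \autoref{prop:dp28}, the map $\pi:\Aut{G}{\dst}^{0}\to\mathrm{Proj}_{\onee}(\dstf)$ is a principal bundle whose structure group is $\mathrm{Aut}_{G,\onee}(\dst)$ (intersected with the identity component, which is automatic once we know it is contractible). By \autoref{thm:contractible}, this fibre $\mathrm{Aut}_{G,\onee}(D\otimes\Kbb)$ is contractible. Since $\pi$ is locally trivial over an open cover of the metrizable (hence paracompact) base, it is a Hurewicz fibration. The long exact sequence of homotopy groups then gives that $\pi$ induces isomorphisms on all homotopy groups, so $\pi$ is a weak homotopy equivalence.

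To upgrade this to a genuine homotopy equivalence, we use a section. A principal $P$-bundle with contractible structure group $P$ over a paracompact base admits a global section $s$, by the standard Dold-type argument for fibrations with contractible fibre over numerably covered spaces. Equivalently, such a bundle is classified by a map into $BP\simeq\mathrm{pt}$, so it is trivial: $\Aut{G}{\dst}^{0}\cong\mathrm{Proj}_{\onee}(\dstf)\times\mathrm{Aut}_{G,\onee}(\dst)$. Projection onto the first factor is then a homotopy equivalence, because the second factor is contractible. The homotopy inverse is the section $s$, and the homotopy $s\circ\pi\simeq\mathrm{id}$ comes from a contraction of the fibre applied fibrewise through the trivialization.

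Two technical points need checking. First, $\mathrm{Proj}_{\onee}(\dstf)$ must be paracompact; it is a metric space with the norm topology, so this holds. Second, the fibre of $\pi$ over $\onee$ must be $\mathrm{Aut}_{G,\onee}(\dst)\cap\Aut{G}{\dst}^{0}$. Since $\mathrm{Aut}_{G,\onee}$ is contractible and contains the identity, it lies entirely inside $\Aut{G}{\dst}^{0}$, so the fibre is the full contractible group.

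The main obstacle I expect is the point-set issue that $\Aut{G}{\dst}$ carries the point-norm topology. We must ensure the local trivializations $\Gk_W$ of \autoref{prop:dp28} are homeomorphisms in this topology, so that the triviality and section arguments apply. Continuity of $p\mapsto\ad{u_p^{*}v}$ into the point-norm topology follows from norm continuity of $p\mapsto u_p$. Composition $\mathrm{Aut}\times\mathrm{Aut}\to\mathrm{Aut}$ is continuous on these groups in the point-norm topology, since automorphisms are isometric. If needed, we would alternatively invoke Dold's theorem, that a fibre homotopy equivalence is detected locally, to avoid global triviality altogether.
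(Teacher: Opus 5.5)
Your argument is correct, but it takes a different route from the paper.

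\textbf{The paper's route.} The paper only extracts a weak equivalence from the fibration
\[
\mathrm{Aut}_{G,\one_{D}\otimes e}(D\otimes\Kbb)\to\mathrm{Aut}_{G}(D\otimes\Kbb)^{0}\to\mathrm{Proj}_{\one_{D}\otimes e}\bigl((D\otimes\Kbb)^{\gamma\otimes\mathrm{id}_{\mathbb{K}}}\bigr),
\]
using the contractible fibre and the long exact sequence. It then upgrades this with Whitehead's theorem. For that it must know that both spaces have the homotopy type of CW-complexes. It gets this from the base being a Banach manifold, and from Sch\"on's theorem on total spaces of fibrations for the total space.

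\textbf{Your route.} You instead use Dold's theorem, or equivalently the triviality of a numerable principal bundle with contractible structure group over the paracompact metric base. This gives a global section and a trivialization $\mathrm{Aut}_{G}(D\otimes\Kbb)^{0}\cong \mathrm{Proj}_{\one_{D}\otimes e}\times\mathrm{Aut}_{G,\one_{D}\otimes e}(D\otimes\Kbb)$. From this you get a genuine homotopy equivalence directly, in fact a fibre-homotopy equivalence over the base, with no CW-type input at all. The Hurewicz-fibration and long-exact-sequence step in your first paragraph is therefore superfluous.

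\textbf{Technical checks.} Your point-set checks are exactly what is needed:
\begin{itemize}
\item continuity of the local trivializations of \autoref{prop:dp28} and their inverses in the point-norm topology, via norm-continuity of $p\mapsto u_{p}$;
\item the contractible stabilizer $\mathrm{Aut}_{G,\one_{D}\otimes e}(D\otimes\Kbb)$ lying inside the identity component.
\end{itemize}

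\textbf{What each approach buys.} The paper's route gives, along the way, that $\mathrm{Aut}_{G}(D\otimes\Kbb)^{0}$ has the homotopy type of a CW-complex. This fact is used later in the $\mathcal{I}$-group argument. Your result recovers it as well, since CW homotopy type is preserved under homotopy equivalence and the base is a Banach manifold. So your argument is more elementary and loses nothing.
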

\begin{proof}
Since $\mathrm{Proj}_{\onee}(\dstf)$ is a Banach manifold, it has the homotopy type of a CW-complex. Since 
\[\Aut{G,1\otimes e}{\dst}\to\Aut{G}{\dst}^{0}\to\mathrm{Proj}_{\onee}(\dstf)\]
is a Hurewicz fibration, it follows from \cite[Theorem 2]{Sc} that $\Aut{G}{\dst}^{0}$ has the homotopy type of a CW-complex. Since $\Aut{G,\onee}{\dst}$ is contractible, Whitead's theorem implies $\Aut{G}{\dst}^{0}\simeq\mathrm{Proj}_{\onee}(\dstf)$.
\end{proof}

\begin{corollary}\label{cor:htpy-auto}
Let $G$ be a compact group and let $D$ be a unital separable \cstar-algebra. Let $\Gg:G\act D$ be a strongly self-absorbig action such that $D^{\Gg}$ is $K_{1}$-injective. Then we have
\begin{align*}
\pi_{n}(\mathrm{Aut}_{G}(D\otimes \Kbb))\cong\pi_{n}(\mathrm{Aut}_{G}(D\otimes\Kbb(H)))\cong \pi_{n-1}(\Ucal(D^{\Gg})),
\end{align*}
for $n\geq 1$. 
If $D$ is unital Kirchberg algebra and $\Gg$ is an \isa and \ssa action, then we have
\[\pi_{n}(\mathrm{Aut}_{G}(D\otimes \Kbb))\cong\pi_{n}(\mathrm{Aut}_{G}(D\otimes\Kbb(H)))\cong\begin{cases}
K_{0}^{G}(D)^{\times} & \text{if }n=0,\\
K_{n}^{G}(D) & \text{if } n\geq 1. 
\end{cases}\]
\end{corollary}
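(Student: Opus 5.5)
The statement packages the homotopy equivalences already proved into homotopy groups, so the plan is to apply long exact sequences and then cite the $KK$-computations. Fix the basepoint $\id{\dst}$. For $n\geq 1$ we have $\pi_{n}(\Aut{G}{\dst})=\pi_{n}(\Aut{G}{\dst}^{0})$.

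\autoref{prop:aut-proj} gives $\Aut{G}{\dst}^{0}\simeq\mathrm{Proj}_{\onee}(\dstf)$. \autoref{prop:proj-bu} gives $\mathrm{Proj}_{\onee}(\dstf)\simeq B\Ucal(D^{\Gg})$. Rather than invoking the classifying space abstractly, I use the principal bundle of \autoref{prop:proj-bu}:
\[\Ucal_{\onee}(\Mcal(A))\to\Ucal(\Mcal(A))\to\mathrm{Proj}_{\onee}(A).\]
Its total space is contractible by \cite{CH}, and its fibre is homotopy equivalent to $\Ucal(D^{\Gg})$. The long exact sequence of this fibration therefore gives $\pi_{n}(\mathrm{Proj}_{\onee}(A))\cong\pi_{n-1}(\Ucal(D^{\Gg}))$ for $n\geq1$. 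At $n=1$ this is a bijection of pointed sets, which is a group isomorphism because the boundary map is a homomorphism. The same argument works with $\Kbb(H)$ in place of $\Kbb$, as remarked before \autoref{prop:dp28}.

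In the second case the fixed point algebra $A$ is $(D\otimes\Kbb(H))^{\Gg\otimes\ad\rho\otimes\id{}}$. Its corner is still $(\one_{D}\otimes e)A(\one_{D}\otimes e)=D^{\Gg}$, because $e=e_{1}\otimes e_{0}$ is invariant and rank one with trivial representation. So only the identification of the complementary corner needs rechecking. That corner has contractible unitary group by Cuntz–Higson after identifying it with a stable algebra; I would verify it is again $\sigma$-unital and stable via \cite[Corollary 1.2.37]{AM}. \autoref{prop:dp28} and \autoref{prop:aut-proj} rely on the contractibility of $\Aut{G,\onee}{D\otimes\Kbb(H)}$, which is the second half of \autoref{thm:contractible}, so they carry over.

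For the Kirchberg case, $n=0$ is exactly \autoref{lem:pi-zero-auto}. For the $\Kbb(H)$-version I run the same diagram. The needed input is that $(D\otimes\Kbb(H),\Gg\otimes\ad\rho\otimes\id{})$ is again isometrically shift-absorbing and $KK^{G}$-equivalent to $D$ via the corner embedding at $\onee$. This is where I expect the main obstacle, namely checking that the hypotheses of \cite[Corollary 6.4]{GS2} hold for the $\Kbb(H)$ action. I would deduce it from \autoref{prop:oinf-ab} and the fact that $D\otimes\Kbb(H)\cong D\otimes\Kbb(H)\otimes\oinf$ equivariantly.

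For $n\geq1$, $D^{\Gg}$ is a unital Kirchberg algebra by \cite[Remark 4.6]{Mu}. Hence $\pi_{n-1}(\Ucal(D^{\Gg}))\cong K_{n}(D^{\Gg})$ for all $n\geq1$. At $n=1$ this uses \autoref{prop:K1-oinf}. For higher $n$ it uses the standard fact that $\pi_{k}(\Ucal(B))\cong K_{k+1}(B)$ for $\oinf$-stable unital $B$, which follows from Bott periodicity and $K_{1}$-injectivity/surjectivity of $B\otimes\oinf$ (\cite{Ph}, as used by Dadarlat). Finally, $\Gg$ is saturated by \autoref{prop:isa-sat}, so $K_{*}(D^{\Gg})\cong K_{*}^{G}(D)$. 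This gives $\pi_{n}\cong K_{n}^{G}(D)$ with indices read mod $2$.
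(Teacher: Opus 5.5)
Your proposal follows the paper's proof. \autoref{prop:aut-proj} and \autoref{prop:proj-bu} give $\pi_{n}(\Aut{G}{\dst})\cong\pi_{n-1}(\Ucal(D^{\Gg}))$; you only unwind $B\Ucal(D^{\Gg})$ into the long exact sequence of the principal bundle with contractible total space. The case $n=0$ is \autoref{lem:pi-zero-auto}. For $n\geq 1$, the nonstable $K$-theory of the $\oinf$-absorbing algebra $D^{\Gg}$ (the paper cites \cite[Theorem 3.1]{Wi} and \cite[Theorem 3]{Ji}) together with saturation (\autoref{prop:isa-sat}) gives $K_{n}^{G}(D)$. The paper simply asserts the $\Kbb(H)$-version, so your extra checks there go beyond what it writes.

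One of those extra checks is justified backwards, however. \autoref{prop:oinf-ab} says that isometric shift-absorption implies equivariant $\oinf$-absorption (with the trivial action on $\oinf$); the converse is false. For nontrivial $G$, the trivial action on $\oinf$ absorbs $\id{\oinf}$. Yet an equivariant isometric map $\Fs$ as in \autoref{def:isa} into a central sequence algebra with trivial action would force $\Gl$ to be trivial. So $D\otimes\Kbb(H)\cong D\otimes\Kbb(H)\otimes\oinf$ does not give you the hypotheses of \cite[Corollary 6.4]{GS2} and \cite[Theorem 5.7]{GS2} that the diagram of \autoref{lem:pi-zero-auto} needs.

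Instead, show directly that isometric shift-absorption passes from $\Gg$ to $\Gg\otimes\delta$ for any action $\delta$ of $G$ on a separable \cstar-algebra $C$. Send the class of $(x_{n})_{n}$ in $F_{\infty,\Gg}(D)$ to the class of $(x_{n}\otimes f_{n})_{n}$, where $(f_{n})_{n}$ is a $\delta$-invariant approximate unit of $C$ (average any approximate unit over $G$). This map is equivariant and lands in the central sequence algebra. It also preserves the relation $\Fs(\Gx)^{*}\Fs(\Gn)=\ip{\Gx}{\Gn}\one$, because $D$ is unital.

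Apply this with $\delta=\ad\rho\otimes\id{}$. Equivalently, $\ad\rho\otimes\id{}$ is implemented by the unitary representation $\rho\otimes\one$, so $\Gg\otimes\ad\rho\otimes\id{}$ is cocycle conjugate to $\Gg\otimes\id{\Kbb(H)}$. Strong stability comes from the $\Kbb(H_{0})$ factor, and the corner embedding at $\onee$ is a $KK^{G}$-equivalence. With this fix, your argument for the $\Kbb(H)$-version goes through.
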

\begin{proof}
By \autoref{prop:proj-bu} and \autoref{prop:aut-proj}, we have
\[\pi_{n}(\mathrm{Aut}_{G}(D\otimes\Kbb))\cong \pi_{n}(B\Ucal(D^{\Gg}))\cong \pi_{n-1}(\Ucal(D^{\Gg})).\]

If $D$ is unital Kirchberg algebra and $\Gg$ is an isometrically shift-absorbing and strongly self-absorbing action, we have computed $\pi_{0}(\mathrm{Aut}_{G}(D\otimes \Kbb)$ in \autoref{lem:pi-zero-auto}. It follows from \cite[Theorem 3.1]{Wi}, \cite[Theorem 3]{Ji} and the fact that $D^{\Gg}$ absorbs $\oinf$ tensorially that $\pi_{n-1}(\Ucal(D^{\Gg}))\cong K_{n}^{G}(D)$ for $n\geq 1$. 

The same holds for $\pi_{n}(\mathrm{Aut}_{G}(D\otimes \Kbb(H)))$.
\end{proof}

\subsubsection{The topological group $\Aut{G}{\dst}^{0}$ is well-pointed}
All assertions in this section remain valid if $\mathbb{K}$ is replaced by $\mathbb{K}(H)$.
\begin{definition}
Let $X$ be a topological space and let $A\subset X$ be a closed subspace. The pair $(X,A)$ is called a neighborhood deformation retract (or NDR-pair for short) if there is a map $u:X\to[0,1]$ such that $u^{-1}(0)=A$ and a homotopy $H:X\times[0,1]\to X$ such that $H(x,0)=x$ for all $x\in X$, $H(a,t)=a$ for all $a\in A$ and $t\in[0,1]$, and $H(x,1)\in A$ if $u(x)<1$. A pointed topological space $X$ with basepoint $x_{0}\in X$ is said to be well-pointed if the pair $(X,x_{0})$ is an NDR-pair.
\end{definition}
For a neighborhood $V$ of $x_{0}$, we say that $\{x_{0}\}$ is a deformation retract of $V$ if there is a continuous map $h:V\times[0,1]\to V$ such that $h(x,0)=x$, $h(x_{0},t)=x_{0}$ and $h(x,1)=x_{0}$ for all $x\in V$ and $t\in[0,1]$. The following lemma is contained in \cite[Theorem 2]{St}.
\begin{lem}\label{lem:NDR-pair}
Let $(X,x_{0})$ be a pointed topological space together with a continuous map $v:X\to[0,1]$ such that $x_{0}=v^{-1}(0)$ and $\{x_{0}\}$ is a deformation retract of $V=\{x\in X\:|\:v(x)<1\}$. Then $(X,x_{0})$ is an NDR-pair.
\end{lem}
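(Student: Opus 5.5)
We construct the NDR data directly from $v$ and the deformation $h$. First shrink the neighbourhood on which the deformation is used: put $u(x)=\min\{1,2v(x)\}$, so $u^{-1}(0)=v^{-1}(0)=\{x_{0}\}$, and $u(x)<1$ forces $v(x)<\tfrac12$. Fix a continuous cutoff $\lambda:[0,1]\to[0,1]$ with $\lambda=1$ on $[0,\tfrac12]$ and $\lambda=0$ on $[\tfrac34,1]$; for instance $\lambda(s)=\min\{1,\max\{0,3-4s\}\}$.

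Define $H:X\times[0,1]\to X$ by
\[
H(x,t)=\begin{cases} h\bigl(x,\lambda(v(x))\,t\bigr) & v(x)<1,\\ x & v(x)\ge \tfrac34.\end{cases}
\]
On the overlap $\tfrac34\le v(x)<1$ we have $\lambda(v(x))=0$, so the first formula gives $h(x,0)=x$ and the two cases agree. The two pieces are defined on $V\times[0,1]$ and on $\{v\ge\tfrac34\}\times[0,1]$. The first set is open. The second set is closed, but it is not open, so the gluing lemma for a cover by open sets does not apply directly.

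The main point is continuity of $H$, and we handle it by passing to an open cover. Take the open set $\{v>\tfrac23\}\times[0,1]$ in place of the closed one. On it $\lambda(v(x))=0$, since $\lambda$ vanishes on $[\tfrac34,1]$ and we can adjust $\lambda$ to vanish already on $[\tfrac23,1]$ if needed. So $H(x,t)=x$ there, which is continuous. The sets $V\times[0,1]$ and $\{v>\tfrac23\}\times[0,1]$ form an open cover of $X\times[0,1]$. On each set the formula is continuous, because $h$ is continuous on $V\times[0,1]$ and $v,\lambda$ are continuous. The formulas agree on the overlap. Hence $H$ is continuous.

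It remains to check the three NDR conditions.
\begin{enumerate}
\item $H(x,0)=h(x,0)=x$ when $v(x)<1$, and $H(x,0)=x$ otherwise.
\item $H(x_{0},t)=h(x_{0},\lambda(0)t)=h(x_{0},t)=x_{0}$ for all $t$, since $v(x_{0})=0$.
\item If $u(x)<1$, then $v(x)<\tfrac12$, so $\lambda(v(x))=1$ and $H(x,1)=h(x,1)=x_{0}\in\{x_{0}\}$.
\end{enumerate}
Finally, $\{x_{0}\}=v^{-1}(0)$ is closed as the preimage of a closed set. Therefore $(X,x_{0})$ is an NDR-pair.
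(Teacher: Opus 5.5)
Your proof is correct. The paper gives no argument for this lemma; it simply quotes it from \cite[Theorem 2]{St}. You instead give the direct shrink-and-cut-off construction. You replace $v$ by $u=\min\{1,2v\}$, so that $\{u<1\}=\{v<\tfrac12\}$ lies well inside $V$. You then damp the given strong deformation $h$ by the factor $\lambda(v(x))$, so that it is stationary near the frontier of $V$ and can be extended by the identity to all of $X$. The citation keeps the paper short and places the lemma in the general theory of cofibrations. Your version makes the lemma self-contained and shows that nothing beyond continuity of $v$ and $h$ is used, with no separation axioms. That is all the application in Proposition~\ref{prop:well-pointed} needs.

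One step should be tidied. With your explicit $\lambda(s)=\min\{1,\max\{0,3-4s\}\}$, it is false that $\lambda(v(x))=0$ on $\{v>\tfrac23\}$; for instance $\lambda(0.7)=0.2$. So ``adjust $\lambda$ if needed'' has to become a definite choice.

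The cleanest fix is to use the open set $\{v>\tfrac34\}$. Together with $V=\{v<1\}$ it already covers $X$, because $v(x)\le\tfrac34$ forces $v(x)<1$. On the overlap $\{\tfrac34<v<1\}$ you do have $\lambda(v(x))=0$, so $h(x,\lambda(v(x))t)=h(x,0)=x$ and the gluing lemma for open covers applies. Alternatively, take $\lambda(s)=\min\{1,\max\{0,4-6s\}\}$, which is $1$ on $[0,\tfrac12]$ and $0$ on $[\tfrac23,1]$.

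With that fix, the three NDR conditions and the closedness of $\{x_{0}\}$ are verified exactly as you wrote.
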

\begin{proposition}[cf. {\cite[Proposition 2.26]{DP}}]\label{prop:well-pointed}
Let $G$ be a compact group and let $D$ be a unital separable \cstar-algebra. Let $\Gg:G\act D$ be a strongly self-absorbing action. The topological space $(\Aut{G}{\dst}^{0},\id{\dst})$ is well-pointed.
\end{proposition}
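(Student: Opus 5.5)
We prove well-pointedness by applying \autoref{lem:NDR-pair}. This requires a function $v:\Aut{G}{\dst}^{0}\to[0,1]$ with $v^{-1}(0)=\{\id{\dst}\}$, together with a contraction to $\id{\dst}$ of the set where $v<1$. The model is the proof of \cite[Proposition 2.26]{DP}. We work with the fibration of \autoref{prop:dp28}, whose fiber $\Aut{G,\onee}{\dst}$ is contractible by \autoref{thm:contractible}, and with the explicit local sections $\Gs_{p_{0}}$ constructed in that proof.

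First we build $v$. Choose a dense sequence $(a_{n})$ in the unit ball of $\dst$ with $a_{1}=\onee$, and set
\[
d(\phi)=\sum_{n}2^{-n}\bigl(\|\phi(a_{n})-a_{n}\|+\|\phi^{-1}(a_{n})-a_{n}\|\bigr).
\]
This is continuous in the point-norm topology on automorphisms; inversion is continuous on $\Aut{G}{\dst}$, as shown in \cite{DP}. It vanishes exactly at the identity. Put $v=\min\{1,4d\}$. If $v(\phi)<1$ then $\|\phi(\onee)-\onee\|<\tfrac12$, so $p=\phi(\onee)$ lies in the neighborhood $W$ of $p_{0}=\onee$.

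Next we contract $V=\{v<1\}$. Take $p_{0}=\onee$ and $v=\one$ in the construction of \autoref{prop:dp28}, so $\Gs_{p_{0}}(p)=\ad{u_{p}^{*}}$, where $u_{p}$ is the unitary part of $x_{p}=p_{0}p+(\one-p_{0})(\one-p)$. Each $\phi\in V$ then factors as $\phi=\Gs_{p_{0}}(\phi(\onee))\circ\psi_{\phi}$, with $\psi_{\phi}\in\Aut{G,\onee}{\dst}$ depending continuously on $\phi$, and with $\psi_{\id{}}=\id{}$ because $u_{p_{0}}=\one$. The homotopy is carried out in two stages.
\begin{itemize}
\item Stage one: replace $p$ by $p_{s}$ along the straight-line path from $\phi(\onee)$ to $p_{0}$, renormalized to a projection by functional calculus. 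This is possible since $\|p-p_{0}\|<\tfrac12$, and it shrinks the $\Gs_{p_{0}}$-factor to $\id{}$.
\item Stage two: contract $\psi_{\phi}$ to $\id{}$ using a contraction of $\Aut{G,\onee}{\dst}$ that fixes $\id{}$.
\end{itemize}
Both stages fix $\id{\dst}$, provided the contraction in stage two is basepoint-preserving.

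The main obstacle is that the homotopy must stay inside $V$, as required by \autoref{lem:NDR-pair}, and not merely inside $\Aut{G}{\dst}^{0}$. There is also the smaller issue of whether the fiber's contraction is pointed. The contraction from \autoref{lem:contra}, as in \cite[Lemma 2.2]{DP}, is of the form $\psi\mapsto\Psi_{t}^{-1}\circ(\psi\otimes\id{})\circ\Psi_{t}$. It fixes $\id{}$ automatically, which settles pointedness. To handle staying inside $V$, we follow \cite{DP} and avoid requiring invariance of $V$ directly. The plan is to use Str\o m's criterion in its stronger form from \cite[Theorem 2]{St}: it suffices that $\{\id{}\}$ be a strong deformation retract of some neighborhood in which the retraction is \emph{halted} by $v$. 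We would therefore reparametrize the homotopy to run for time $\min\{1,\,2-2v(\phi)\}$, so that points near the boundary of $V$ barely move. Continuity and equivariance are automatic, because every unitary used lies in the fixed-point algebra $\Mcal(D\otimes\Kbb)^{\Gg\otimes\id{\Kbb}}$. Finally, all constructions use only $\Kbb$-structure and fixed-point projections, so the same argument works verbatim for $\Kbb(H)$.
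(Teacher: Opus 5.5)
Your argument is correct and has the same skeleton as the paper's proof. Both use the local trivialization $\pi^{-1}(W)\cong W\times\Aut{G,\onee}{\dst}$ from \autoref{prop:dp28}, the retraction $p\mapsto\chi((1-t)p+t(\onee))$ of the base, the pointed contraction $\psi\mapsto\Psi_t^{-1}\circ(\psi\otimes\mathrm{id})\circ\Psi_t$ of the fiber, and a Str\o m-type criterion. The real difference is the choice of $v$, and that choice is exactly what creates the obstacle your last paragraph works around. The paper takes
\[
v(\phi)=\max\Bigl\{\min\bigl\{d(\phi,\mathrm{id}),\tfrac12\bigr\},\ \min\bigl\{1,2\|\phi(\onee)-\onee\|\bigr\}\Bigr\}.
\]
Its first term detects the identity but never exceeds $\tfrac12$, so $\{v<1\}$ is \emph{exactly} $\pi^{-1}(W)$. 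The product deformation retraction of $W\times\Aut{G,\onee}{\dst}$ then stays inside $V$ automatically, and \autoref{lem:NDR-pair} applies as stated, with no reparametrization. Your $v=\min\{1,4d\}$ gives $V=\{d<\tfrac14\}\subsetneq\pi^{-1}(W)$, so you have to halt the homotopy. That works, but it no longer gives a deformation retraction of $V$; it verifies the NDR definition directly with $u=\min\{1,2v\}$. Continuity of the extension by the identity holds because the time factor $\min\{1,2-2v\}$ vanishes on the open set $\{d>\tfrac14\}$. Together with $\pi^{-1}(W)$ this set covers $\Aut{G}{\dst}^{0}$, since your inverse terms force $d(\phi)\ge\tfrac12$ whenever $\|\phi(\onee)-\onee\|\ge\tfrac12$. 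In return your route is more robust. Stage one only needs $\|p_s-\onee\|<1$, so that $\sigma_{p_0}(p_s)$ is defined. The paper's route tacitly needs $\|\chi((1-t)p+t(\onee))-\onee\|\le\|p-\onee\|$, so that the retraction of $W$ stays in $W$. That inequality is true by a two-projection computation, but the paper does not spell it out.
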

\begin{proof}
Let $e\in\Kbb$ be a rank-one projection. Let 
\[W=\{p\in\mathrm{Proj}_{\onee}(\dstf)\:|\:\|p-\onee\|<\frac{1}{2}\}.\]
Let 
\[\pi:\Aut{G}{\dst}^{0}\to\mathrm{Proj}_{\onee}(\dstf),\quad \phi\mapsto\phi(\onee).\]
First we prove that $\pi^{-1}(W)$ deformation retracts to $\id{\dst}\in \Aut{G}{\dst}^{0}$. There exists a homeomorphism
\[\pi^{-1}(W)\to W\times \Aut{G,\onee}{\dst}\]
such that $\id{\dst}$ maps to $(\onee,\id{\dst})$. Hence it suffices to show that $W\times \Aut{G,\onee}{\dst}$ deformation retracts to $(\onee,\id{\dst})$. Let $\Gc$ be the characteristic function of $(\frac{1}{2},1]$. Then $h(p,t)=\Gc((1-t)p+t(\onee))$ is a deformation retraction of $W$ into $p_{0}$. We have shown that $\Aut{G,\onee}{\dst}$ deformation retracts to $\id{\dst}$. By combining these homotopies, we have a deformation retraction of $\pi^{-1}(W)$ into $\id{\dst}$. Let $d$ be a metric of $\Aut{G}{\dst}^{0}$. Then the map $v:\Aut{G}{\dst}^{0}\to [0,1]$ defined by
\[v(\phi)=\max\left\{\min\left\{d(\phi,\id{\dst}),\frac{1}{2}\right\},\min\left\{1,2\|\phi(\onee)-\onee\|\right\}\right\}\]
and $V\coloneqq \pi^{-1}(W)$ satisfy the condition of the precedeing lemma.
\end{proof}

\subsection{Classification of $(\dst,\Gg\otimes\id{\Kbb})$-bundles}
In this section, we classify the isomorphism classes of locally trivial fiber bundles with fiber $\dst$, where each fiber is equipped with the $G$-action $\Gg\otimes\id{\Kbb}$. The invariants appearing in this classification arise from a generalized cohomology theory associated with the infinite loop space structure of $B\Aut{G}{\dst}$. To this end, we begin by discussing this infinite loop space structure.

\begin{definition}
A sequence of pointed topological spaces $(E_{k})_{k=0}^{\infty}$ together with weak homotopy equivalences $\Gs_{k}:E_{k}\to\GO E_{k+1}$ is called an $\GO$-spectrum, where $\GO Z$ denotes the based loop space of the pointed space $Z$. The zeroth space of an $\GO$-spectrum is called an infinite loop space.
\end{definition}
Let $(E_{k})_{k}$ be an $\GO$-spectrum. For every pointed finite CW-complex $X$, let 
\[\tilde{h}^{k}=[X,E_{k}]_{+},\]
where the right-hand side denotes based homotopy classes of basepoint-preserving continuous maps and we define $E_{k}=\GO^{-k}E_{0}$ for $k<0$. Since $[X,E_{k}]_{+}=[X,\GO^{2}E_{k+2}]_{+}$, it has a natural abelian group structure.
Then $\tilde{h}$ defines a reduced cohomology theory on the category of pointed CW-complexes. The coefficients $\check{h}^{*}$ of this cohomology theory are defined to be $\check{h}^{k}=\tilde{h}^{k}(S^{0})$. By the loop-suspension adjunction, we have $\check{h}^{k}=\pi_{-k}(E_{0})$.

To see that $\Aut{G}{\dst}$ has an infinite loop space structure, we use the theory of commutative $\Ical$-monoids, as in \cite{EP}.
\vskip\baselineskip
Let $\Ical$ be the category with objects given by $\Bn=\{1,\cdots,n\}$ for $n\in\Zbb_{\geq 0}$ ($\mathbf{0}=\emptyset$) and morphisms given by injective maps between the sets. A covariant functor from $\Ical$ to the category $\Tcal op_{*}$ of based topological spaces is said to be an \textit{$\Ical$-space.}
Define a bifunctor $\sqcup:\Ical\times\Ical\to\Ical$ as follows. For $\Bm,\Bn\in\mathrm{obj}(\Ical)$, $\Bm\sqcup\Bn=\{1,\cdots,n+m\}$. For $f:\Bm\to\Bm'$ and $g:\Bn\to\Bn'$, $f\sqcup g:\Bm\sqcup\Bn\to\Bm'\sqcup\Bn'$ acts by identifying $\Bm$ (resp. $\Bm'$) with the first $m$ (resp. $m'$) elements of $\{1,\cdots,n+m\}$ (resp. $\{1,\cdots,n'+m'\}$) and $\Bn$ (resp. $\Bn'$) with the last $n$ (resp. $n'$) elements. Then $(\Ical,\sqcup,\mathbf{0})$ forms a strict monoidal category. For any $m,n\in\Zbb_{\geq 0}$, define $\tau_{m,n}:\Bm\sqcup\Bn\to\Bn\sqcup\Bm$ by permuting the $(n,m)$-blocks. Then this family of isomorphisms gives $\Ical$ the structure of a symmetric monoidal category.

An $\Ical$-space $X$ is called an $\Ical$-monoid if it comes equipped with a natural transformation 
\[\Gm_{m,n}:X(\Bm)\times X(\Bn)\to X(\Bm\sqcup \Bn)\]
which is associative in the sense that $\Gm_{l,m+n}\circ(\id{X{(\Bl)}}\times\Gm_{m,n})=\Gl_{l+m,n}\circ(\Gm_{l,m}\times \id{X(\Bn)})$ for all $\Bl,\Bm,\Bn\in\mathrm{obj}(\Ical)$ and unital in the sense that the diagrams
\begin{center}
\begin{tikzpicture}[auto]
\node (1) at (-5,1) {$X(\mathbf{0})\times X(\Bn)$};
\node (2) at (-1.5,1) {$X(\Bn)$};
\node (3) at (-5,-1) {$X(\Bn)$};
\node(4) at (1.5,1) {$X(\Bn)\times X(\mathbf{0})$};
\node (5) at (5,1) {$X(\Bn)$};
\node(6) at (1.5,-1) {$X(\Bn)$};
\draw[->] (3) to (1);
\draw[->] (1) to node {$\Gm_{0,n}$} (2);
\draw[->] (3) to node[below right] {$\id{X_{(\Bn)}}$} (2);
\draw[->] (6) to (4);
\draw[->] (4) to node {$\Gm_{n,0}$} (5);
\draw[->] (6) to node[below right] {$\id{X(\Bn)}$} (5);
\end{tikzpicture}
\end{center}
commute for every $\Bn\in\mathrm{obj}(\Ical)$, where the two upwards arrows are the inclusion with respect to the basepoint in $X(\mathbf{0})$.
We say that an $\Ical$-monoid $X$ is commutative if 
\begin{center}
\begin{tikzpicture}[auto]
\node (1) at (-2.5,1)  {$X(\Bm)\times X(\Bn)$};
\node (2) at (2.5,1)  {$X(\Bm\sqcup\Bn)$};
\node (3) at (-2.5,-1)  {$X(\Bn)\times X(\Bm)$};
\node (4) at (2.5,-1)  {$X(\Bn\sqcup\Bm)$};
\draw[->] (1) to node {$\Gm_{m,n}$} (2);
\draw[->](2) to node[right] {$\tau_{m,n}^{*}$} (4);
\draw[->] (1) to node {swap} (3);
\draw[->] (3) to node {$\Gm_{n,m}$} (4);
\end{tikzpicture}
\end{center}
commutes. We denote the homotopy colimit of $X$ over $\Ical$ by $X_{h\Ical}=\mathrm{hocolim}_{\Ical}(X)$. Then $X_{h\Ical}$ has the structure of a topological monoid given by the composition
\[X_{h\Ical}\times X_{h\Ical}=\mathrm{hocolim}_{\Ical\times\Ical}X(\Bm)\times X(\Bn)\xlongrightarrow{\Gm}\mathrm{hocolim}_{\Ical\times \Ical}X(\Bm\sqcup\Bn)\longrightarrow X_{h\Ical},\]
in which the last map is induced by the monoidal structure of $\Ical$. We say that $X$ is group-like if $\pi_{0}X_{h\Ical}$ is a group. In \cite[Section 5]{Sc2}, it was shown that if an $\Ical$-monoid X is commutative and group-like, then $X_{h\Ical}$ has the structure of an infinite loop space.
\vskip\baselineskip
We now construct a commutative $\mathcal{I}$-monoid to show that $\Aut{G}{\dst}$ admits an infinite loop space structure.
Define an $\Ical$-space $\mathfrak{G}^{G}_{D}$ as follows. 

For $\Bn\in\Zbb_{\geq 0}$, set $\Gcal^{G}_{D}(\Bn)=\Aut{G}{\dstn}$. Any morphism $f:\Bm\to\Bn$ in $\Ical$ can be decomposed as $f=\Gs\circ \iota$ where $\Gi:\Bm\to\Bn$ is the inclusion that identifies $\{1,\cdots,m\}$ with the first $m$ elements of $\Bn$ and $\Gs:\Bn\to\Bn$ is a permutation. Define $\hat{\Gs}:\dstn\to\dstn$ to be the permutation according to $\Gs$ and let $\Gcal^{G}_{D}(\Gs):\ggd(\Bn)\to\ggd(\Bn)$ be given by conjugation by $\hat{\Gs}$.
Let 
\[\ggd(\Gi):\ggd(\Bm)\to\ggd(\Bn),\quad \phi\mapsto\phi\otimes\id{\dst^{\otimes n-m}}.\] 
If $\Gs\circ \Gi=\Gs'\circ\Gi$, then we have $\ggd(\Gs)\circ\ggd(\Gi)=\ggd(\Gs')\circ\ggd(\Gi)$. Therefore $\ggd(f)=\ggd(\Gs)\circ\ggd(\Gi)$ is well-defined. Then $\ggd$ is a covaiant functor from $\Ical$ to $\Tcal op_{*}$. Let 
\[\Gm_{m,n}:\ggd(\Bm)\times \ggd(\Bn)\to\ggd(\Bm\sqcup\Bn),\quad\Gm_{m,n}(\phi,\psi)=\phi\otimes\psi.\]
This is associative and unital. Let $\tau_{m,n}$ be a symmetry of $\Ical$. Then we have $\ggd(\tau_{m,n})(\phi\otimes\psi)=\psi\otimes \phi$. Hence $\ggd$ is a commutative $\Ical$-monoid. We show that $\ggd$ has more structure.
\begin{definition}
Let $\Gcal rp_{*}$ be the category of topological groups (which we assume to be well-pointed by the identity element) and continuous homomorphisms. A functor $G:\Ical\to\Gcal rp_{*}$ is called an $\Ical$-group.

An $\Ical$-group $\Gcal$ is called an Eckmann-Hilton $\Ical$-group (or EH-$\Ical$-group for short) if it is an $\Ical$-monoid in $\Gcal rp_{*}$ with multiplication $\Gm_{m,n}$ such that the following diagram of natural transformations between functors $\Ical^{2}\to\Gcal rp_{*}$ commutes.
\begin{center}
\begin{tikzpicture}[auto]
\node (1) at (-4,1) {$\Gcal(\Bm)\times \Gcal(\Bm)\times \Gcal(\Bn)\times \Gcal(\Bn)$};
\node (2) at (4,1) {$\Gcal(\Bm\sqcup\Bn)\times \Gcal(\Bm\sqcup\Bn)$};
\node (3) at (-4,-1) {$\Gcal(\Bm)\times \Gcal(\Bn)$};
\node (4) at (4,-1) {$\Gcal(\Bm\sqcup \Bn)$}; 
\draw[->] (1) to node {$(\Gm_{m,n}\times \Gm_{m,n})\circ \tau$} (2);
\draw[->] (1) to node[left] {$\Gv_{m}\times \Gv_{n}$} (3);
\draw[->] (3) to node {$\Gm_{m,n}$} (4);
\draw[->] (2) to node {$\Gv_{m+n}$} (4); 
\end{tikzpicture}
\end{center}
(Here, $\Gv_{m}:\Gcal(\Bm)\times \Gcal(\Bm)\to \Gcal(\Bm)$ is the group multiplication and $\tau$ is the map that switches the two innermost factors.)

We say that sn EH-$\Ical$-group $\Gcal$ is stable if all morphisms in $\Ical$ except for the maps $\mathbf{0}\to\Bn$ are mapped to homotopy equivalences.

Let $\Gi_{m}:\mathbf{0}\to\Bm$ be the unique homomorphism. We say that an EH-$\Ical$-group $\Gcal$ has compatible inverses if there is a path from $(\Gi_{m}\sqcup\id{\Bm})_{*}(g)$ to $(\id{\Bm}\sqcup\Gi_{m})_{*}(g)$ in $G(\Bm\sqcup\Bm)$ for all $\Bm\in \mathrm{obj}(\Ical)$ and $g\in G(\Bm)$. 
\end{definition}
Let $G$ be a compact group and let $D$ be a unital separable \cstar-algebra. Let $\Gg:G\act D$ be a strongly self-absorbing action such that $D^{\Gg}$ is $K_{1}$-injective and $\dstf$ has the cancellation property. It follows from \autoref{lem:NDR-pair} and \autoref{prop:well-pointed} that $(\Aut{G}{\dst},\id{\dst})$ is well-pointed. Since $(\dst,\Gg\otimes\id{\dst})\cong (\dstn,(\Gg\otimes \id{\Kbb})^{\otimes n})$, it follows that $(\Aut{G}{\dstn},\id{\dstn})$ is also well-pointed. Thus $\ggd$ is an $\Ical$-group. This remains valid if $\mathbb{K}$ is replaced by $\mathbb{K}(H)$, and the following lemma follows similarly.
\begin{lem}[cf. {\cite[Lemma 4.2]{EP}}]\label{lem:ehigrp}
The $\Ical$-group $\ggd$ is a stable EH-$\Ical$-group with compatible inverses.
\end{lem}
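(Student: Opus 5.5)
We verify the three requirements in turn, following \cite[Lemma 4.2]{EP}. The structure maps are the ones already introduced: $\Gm_{m,n}(\phi,\psi)=\phi\otimes\psi$, and $\ggd(\Gs)$ is conjugation by the permutation $\hat{\Gs}$. Each $\ggd(f)$ is a continuous group homomorphism for composition. Indeed, conjugation by $\hat\Gs$ is a homomorphism, and so is $\phi\mapsto\phi\otimes\id{}$. Well-pointedness was noted above.

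\textbf{Eckmann--Hilton condition.} This is a direct identity. In $\Aut{G}{(\dst)^{\otimes(m+n)}}$ we have
\[(\phi_1\otimes\psi_1)\circ(\phi_2\otimes\psi_2)=(\phi_1\circ\phi_2)\otimes(\psi_1\circ\psi_2),\]
which says exactly that the square commutes. Naturality of $\Gm$ in both variables also follows, because permutations and the inclusions $\Gi$ act factorwise.

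\textbf{Stability.} Consider a morphism $f:\Bm\to\Bn$ with $m\ge1$, written $f=\Gs\circ\Gi$. The map $\ggd(\Gs)$ is a homeomorphism, so it suffices to treat $\ggd(\Gi):\phi\mapsto\phi\otimes\id{(\dst)^{\otimes(n-m)}}$. Write $A_k=(\dst)^{\otimes k}$, equipped with the action $(\Gg\otimes\id{\Kbb})^{\otimes k}$. Each $A_k$ with $k\ge1$ is again of the form $D\otimes\Kbb$ for the strongly self-absorbing action $\Gg^{\otimes k}\cong\Gg$. Hence \autoref{cor:htpy-auto} and \autoref{lem:onee-inj} apply to it.

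We check that $\ggd(\Gi)$ is a weak equivalence and then promote it.
\begin{itemize}
\item On $\pi_0$, compose with the injective homomorphism $\Theta$ of \autoref{lem:onee-inj}. One gets $\Theta(\phi\otimes\id{})=\Theta(\phi)\cdot[\onee]^{n-m}=\Theta(\phi)$ under the identification $K_0(A_n^{G})\cong K_0(D^{\Gg})$ induced by an isomorphism $D^{\otimes n}\cong D$. The result is independent of the chosen isomorphism by \autoref{prop:ssa-htpy}.
\item Surjectivity on $\pi_0$ also holds. Given $\psi\in\Aut{G}{A_n}$, use an equivariant isomorphism $\Gk:A_m\to A_n$ that is homotopic through equivariant embeddings to $\id{}\otimes(\onee)^{\otimes(n-m)}$; such a path comes from the path $\Psi$ of \eqref{def:psi}. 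Then $\Gk^{-1}\circ\psi\circ\Gk\otimes\id{}$ lies in the same component as $\psi$, using $\Theta$-injectivity and commutativity of $\pi_0$ (the Eckmann--Hilton step in \autoref{lem:onee-inj}).
\item On $\pi_k$ for $k\ge1$, restrict to identity components. By \autoref{prop:aut-proj} and \autoref{prop:proj-bu}, evaluation at $\onee$ (respectively at $(\onee)^{\otimes n}$) gives homotopy equivalences to $B\Ucal(A_m^{G}\text{-corner})\simeq B\Ucal(D^{\Gg})$. Under these equivalences, $\ggd(\Gi)$ corresponds to the map induced by the unital embedding $D^{\Gg}\to(D^{\otimes n})^{\Gg^{\otimes n}}$, $d\mapsto d\otimes\one$. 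That embedding is homotopic to an isomorphism by \autoref{prop:ssa-htpy}, so it is a homotopy equivalence on unitary groups.
\end{itemize}
Since all spaces involved have the homotopy type of CW-complexes (\autoref{prop:aut-proj}), Whitehead's theorem upgrades the weak equivalence to a homotopy equivalence.

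\textbf{Compatible inverses.} For $g\in\Aut{G}{A_m}$ we need a path in $\Aut{G}{A_{2m}}$ from $\id{A_m}\otimes g$ to $g\otimes\id{A_m}$. These are conjugate by the flip $\hat\tau_{m,m}$, so it suffices to connect the flip to the identity inside $\Aut{G}{A_{2m}}$. The flip of $D^{\otimes m}\otimes D^{\otimes m}$ is connected to the identity by \autoref{prop:ssa-htpy}. The flip of $\Kbb^{\otimes m}\otimes\Kbb^{\otimes m}$ is $\ad$ of a flip unitary on $H^{\otimes 2m}$ (or on $\ell^2$). For $\Kbb(H)$ this unitary commutes with the diagonal representation $\rho^{\otimes 2m}$, so it lies in a unitary group of a commutant. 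That commutant is a product of full unitary groups, which is connected (Kuiper, as in the proof of \autoref{lem:rkone}).

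\textbf{Main obstacle.} I expect the hardest step to be stability on $\pi_0$: verifying that $\Theta$ intertwines $\ggd(\Gi)$ with the identity of $K_0(D^{\Gg})^\times$ compatibly with the identifications $D^{\otimes n}\cong D$, and that surjectivity holds without assuming \isa. I would handle this through the tensor-product multiplication $\star$ and the path $\Psi$, exactly as in the proof of \autoref{lem:onee-inj}.
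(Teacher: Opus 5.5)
Your proof is correct. The Eckmann--Hilton verification is the same as the paper's, but you prove stability and compatible inverses by different routes.

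\textbf{Stability.} The paper does not compute homotopy groups. It takes an equivariant isomorphism $\Phi:\dst\to(\dst)^{\otimes 2}$ and a path $h_t$ from $\Phi$ to the corner embedding $x\mapsto x\otimes(\onee)$, with $h_t$ an equivariant isomorphism for $t<1$. The formula $h_t^{-1}\circ(\phi\otimes\id{\dst})\circ h_t$ is then an explicit homotopy from $\Phi^{-1}\circ(\phi\otimes\id{\dst})\circ\Phi$ to $\phi$. Conjugation by $\Phi$ is a homeomorphism, so $\phi\mapsto\phi\otimes\id{\dst}$ is a homotopy equivalence. This needs neither cancellation nor Whitehead's theorem.

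Your route uses more machinery: $\Theta$-injectivity (hence the standing cancellation hypothesis) on $\pi_0$, the fibration of \autoref{prop:aut-proj} for $\pi_k$, and then Whitehead. For Whitehead you need CW homotopy type of all of $\Aut{G}{A_m}$, but \autoref{prop:aut-proj} only covers the identity component. Add one line: path components are open, because $\phi\mapsto[\phi(\onee)]_0$ is locally constant and separates components by $\Theta$-injectivity. Alternatively, skip Whitehead: your commuting square over $\mathrm{Proj}$ already gives an honest homotopy equivalence on identity components, and translation handles the other components.

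\textbf{Compatible inverses.} Here the comparison reverses. The paper compares the $K_0$-classes of $p_m\otimes\phi(p_m)$ and $\phi(p_m)\otimes p_m$ and invokes the injectivity of \autoref{lem:onee-inj}, so it depends on cancellation. You instead connect the flip of $A_m\otimes A_m$ to the identity and conjugate by that path. The $D$-part comes from \autoref{prop:ssa-htpy}. For the $\Kbb$-part you can be fully explicit: $P_s+e^{i\pi t}P_a$, with $P_s,P_a$ the symmetric and antisymmetric projections, commutes with any tensor-power representation and runs from $\one$ to the flip unitary. Your argument gives an explicit path, does not pass through $K$-theory, and needs no cancellation hypothesis.
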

\begin{proof}
First we show that $\ggd$ is an EH-$\Ical$-group. For $\phi_{1},\phi_{2} \in \ggd(\Bm)$ and $\psi_{1},\psi_{2}\in\ggd(\Bn)$, we have
\begin{align*}
\Gv_{m+n}\circ (\Gm_{m,n}\times\Gm_{m,n})\circ\tau(\phi_{1},\phi_{2},\psi_{1},\psi_{2})&=\Gv_{m+n}\circ (\Gm_{m,n}\times\Gm_{m,n})(\phi_{1},\psi_{1},\phi_{2},\psi_{2})\\
&=\Gv_{m+n}(\phi_{1}\otimes\psi_{1},\phi_{2}\otimes\psi_{2})\\
&=(\phi_{1}\otimes\psi_{1})\circ(\phi_{2}\otimes\psi_{2})\\
&=(\phi_{1}\circ\phi_{2})\otimes (\psi_{1}\otimes\psi_{2})\\
&=\Gm_{m,n}(\phi_{1}\circ\phi_{2},\psi_{1}\circ\psi_{2})\\
&=\Gm_{m,n}\circ(\Gv_{m}\times\Gv_{n})(\phi_{1},\phi_{2},\psi_{1},\psi_{2}).
\end{align*}
Thus $\ggd$ is an EH-$\Ical$-group.

Next we show stability. It suffices to show that 
\[\Aut{G}{\dst}\to\Aut{G}{(\dst)^{\otimes 2}},\quad \phi\mapsto\phi\otimes\id{\dst}\]
is a homotopy equivalence. Let $\Phi:\dst\to(\dst)^{\otimes 2}$ be an equivariant $*$-isomorphism and let
\[\Aut{G}{(\dst)^{\otimes 2}}\to\Aut{G}{(\dst)^{\otimes 2}},\quad \psi\mapsto \Phi^{-1}\circ \psi\circ\Phi.\]
We claim that this is a homotopy inverse. By \autoref{prop:ssa-htpy} and the proof of \autoref{thm:contractible}, there exists $h:[0,1]\to\mathrm{Hom}_{G}(\dst,(\dst)^{\otimes 2})$ such that $h_{0}=\Phi$, $h_{1}(x)=l(x)\coloneqq x\otimes (\onee)$ and $h_{t}$ is a $*$-isomorphism for all $0\leq t< 1$ and $x\in \dst$, where $e\in\Kbb$ is a rank $1$ projection. Define $H:\Aut{G}{\dst}\times [0,1]\to\Aut{G}{\dst}$ by
\[H(\phi,t)=\begin{cases}
h_{t}^{-1}\circ (\phi\otimes \id{\dst})\circ h_{t} & 0\leq t<1,\\
\phi & t=1.
\end{cases}\]
We verify the continuity at $t=1$ as follows.
Let $t_{n}$ be any sequence in $[0,1]$ converging to $1$ and $\phi_{i}$ be any net in $\Aut{G}{\dst}$ converging to $\phi$. For any $x\in \dst$, we have
\begin{align*}
\|h_{t_{n}}^{-1}&\circ(\phi_{i}\otimes\id{\dst})\circ h_{t_{n}}(x)-\phi(x)\|\\
&=\|((\phi_{i}\otimes\id{\dst})\circ h_{t_{n}})(x)-h_{t_{n}}\circ \phi(x)\|\\
&\leq \|(\phi_{i}\otimes\id{\dst})\circ h_{t_{n}}(x)-(\phi_{i}\otimes\id{\dst})\circ l(x)\|\\
&\qquad +\|(\phi_{i}\otimes \id{\dst})\circ l(x)-(\phi\otimes\id{\dst})\circ l(x)\|\\
&\qquad +\|(\phi\otimes\id{\dst})\circ l(x)-h_{t_{n}}\circ \phi(x)\|\\
&\leq \|h_{t_{n}}(x)-l(x)\|+\|\phi_{i}(x)-\phi(x)\|+\|(\phi_{i}\otimes \id{\dst})\circ l(x)-(\phi\otimes\id{\dst})\circ l(x)\|.
\end{align*}
This inequality shows the continuity at $t=1$.
Thus this is a homotopy between $\phi$ and $\Phi^{-1}\circ( \phi\otimes \id{\dst})\circ\Phi$ and hence 
\[\Aut{G}{\dst}\to\Aut{G}{\dst},\quad \phi\mapsto \Phi^{-1}\circ(\phi\otimes\id{\dst})\circ\Phi\]
is homotopic to the identity. By replacing $\Phi$ with $\Phi\otimes\id{\dst}$ and applying the same argument, we obtain that the map
$\displaystyle\Aut{G}{(\dst)^{\otimes 2}}\to\Aut{G}{(\dst)^{\otimes 2}}$ defined by
\begin{align*}
\psi\mapsto (\Phi\otimes\id{\dst})^{-1}\circ (\psi\otimes\id{\dst})\circ(\Phi\otimes \id{\dst})=(\Phi^{-1}\circ\psi\circ\Phi)\otimes\id{\dst}
\end{align*}
is homotopic to the identity. Thus the map
\[\Aut{G}{\dst}\to\Aut{G}{(\dst)^{\otimes 2}},\quad \phi\mapsto\phi\otimes\id{\dst}\]
is a homotopy equivalence and thus stability follows.

Finally we show $\ggd$ has compatible inverses. Let $e$ be a rank-one projection in $\Kbb$ and set $p_{k}=(\onee)^{\otimes k}\in (\dst)^{\otimes k}$. Note that 
\[\pi_{0}(\ggd(\Bm\sqcup\Bm))\cong \pi_{0}(\Aut{G}{\dstm})\rightarrowtail K_{0}^{G}(D^{\otimes m}).\]
(The first isomorphism is given by conjugation by an equivariant $*$-isomorphism $\psi:(\dst)^{\otimes 2m}\to\dstm$ with $\psi(p_{2m})=p_{m}$ and the sencond map is given by $[\phi]\mapsto [\phi(p_{m})]_{0}$. Injectivity of the second map follows by the same argument as in \autoref{lem:onee-inj}.) Let $\phi\in\ggd(\Bm)$ be any element. The K-theory classes corresponding to $(\Gi_{m}\sqcup \id{\Bm})_{*}(\phi)\text{ and }(\id{\Bm}\sqcup\Gi_{m})_{*}(\phi)$ are $[\psi(p_{m}\otimes\phi(p_{m}))]_{0}$ and $[\psi(\phi(p_{m})\otimes p_{m})]_{0}$ respectively. Since we have a homotopy between $\id{\dst}\otimes \phi$ and $\phi\otimes \id{\dst}$, they represent the same element in $K_{0}^{G}(D^{\otimes m})$. Thus $\ggd$ has compatible inverses.
\end{proof}

\begin{theorem}[cf. {\cite[Theorem 4.3]{EP}}]
Let $G$ be a compact group and let $D$ be a unital separable \cstar-algebra. Let $\Gg:G\act D$ be a strongly self-absorbing action such that $D^{\Gg}$ is $K_{1}$-injective and $\dstf$ has the cancellation property. Then $\Aut{G}{\dst}$ is an infinite loop space. Let us denote the associated cohomology theory by $E^{*}_{D,G}$. Then we have
\[E^{0}_{D,G}(X)=[X,\Aut{G}{\dst}]\text{ and }E^{1}_{D,G}=[X,B\Aut{G}{\dst}].\]
We equip isomorphism classes of $G$-equivariant locally trivial fiber bundles with fibers $(\dst,\Gg\otimes\id{\Kbb})$ over a finite CW-complex $X$ with trivial action, with a group multiplication given by the fiberwise tensor product. Then this group is isomorphic to $E^{1}_{D,G}(X)$.\\
The same assertion remains valid if $\mathbb{K}$ is replaced by $\mathbb{K}(H)$, provided that we assume the cancellation property for $(D\otimes\mathbb{K}(H))\rtimes_{\Gg\otimes\ad\rho\otimes\id{}}G$.
\end{theorem}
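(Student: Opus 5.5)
The plan follows Evans--Pennig \cite[Theorem 4.3]{EP} and Dadarlat--Pennig \cite{DP}, feeding in \autoref{lem:ehigrp}. By \autoref{lem:ehigrp}, $\ggd$ is a stable EH-$\Ical$-group with compatible inverses, and each $\ggd(\Bn)$ is well-pointed by \autoref{prop:well-pointed}. Viewed as a commutative $\Ical$-monoid, Schlichtkrull's theorem \cite[Section 5]{Sc2} makes $(\ggd)_{h\Ical}$ an infinite loop space once it is group-like.

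\textbf{Step 1: identify $(\ggd)_{h\Ical}$ with $\Aut{G}{\dst}$.}
\begin{itemize}
\item Stability says every morphism $\Bm\to\Bn$ with $m\geq 1$ is sent to a homotopy equivalence.
\item Hence the homotopy colimit over $\Ical$ is weakly equivalent to $\ggd(\mathbf{1})=\Aut{G}{\dst}$, using the standard lemma in \cite{Sc2} or \cite{EP} for stable $\Ical$-spaces. The value $\ggd(\mathbf{0})$ is trivial and does not interfere.
\item Group-likeness: the monoid structure on $\pi_0$ from $\otimes$ agrees with composition. This is the Eckmann--Hilton identity in the definition of EH-$\Ical$-group together with \autoref{lem:onee-inj}. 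Since composition makes $\pi_0$ a group, $(\ggd)_{h\Ical}$ is group-like.
\item Compatible inverses make the $\otimes$-monoid structure on $(\ggd)_{h\Ical}$ compatible with the group structure. This yields the delooping $B_\otimes$ and the identification $E^0_{D,G}(X)=[X,\Aut{G}{\dst}]$.
\end{itemize}

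\textbf{Step 2: show $E^1_{D,G}(X)=[X,B\Aut{G}{\dst}]$.} I would follow the argument of \cite[Theorem 3.6]{DP} and \cite{EP}: for an EH-$\Ical$-group with compatible inverses, the classifying space of the hocolim with respect to composition is equivalent to the first delooping $B_\otimes$. The reason is that the two multiplications commute up to coherent homotopy, so the bar construction with respect to $\circ$ is an $E_\infty$-space, and the first space of the spectrum is $B$ of the group. Well-pointedness ensures $B$ of each topological group has the correct homotopy type, by Milnor or simplicial realization.

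\textbf{Step 3: classify bundles.} Isomorphism classes of locally trivial bundles with fiber $(\dst,\Gg\otimes\id{\Kbb})$ and trivial action on a finite CW-complex $X$ correspond to principal $\Aut{G}{\dst}$-bundles. Since $\Aut{G}{\dst}$ is well-pointed, these are classified by $[X,B\Aut{G}{\dst}]$. It remains to check that fiberwise tensor product corresponds to the $H$-space structure on $B\Aut{G}{\dst}$ induced by $\mu_{1,1}$ followed by $\Phi$-conjugation. The transition functions of $E\otimes F$ are $g_{ij}\otimes h_{ij}$, so the classifying map is the composite $X\to B\Aut{G}{\dst}\times B\Aut{G}{\dst}\to B\Aut{G}{(\dst)^{\otimes2}}\simeq B\Aut{G}{\dst}$. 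This is exactly the $\Ical$-monoid multiplication passed to hocolim.

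For the $\Kbb(H)$-variant, the same steps apply verbatim, using part (ii) of \autoref{lem:onee-inj} in place of the cancellation in $\dstf$.

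The hardest point is Step 2, the comparison $B\Aut{G}{\dst}\simeq B_\otimes\Aut{G}{\dst}$. I would not reprove it and would instead cite the general result of Evans--Pennig for stable EH-$\Ical$-groups with compatible inverses, verifying only its hypotheses, which is what \autoref{lem:ehigrp} supplies.
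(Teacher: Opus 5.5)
Your proposal is correct and follows essentially the same route as the paper: \autoref{lem:ehigrp}, together with the fact that each $\ggd(\Bn)$ has the homotopy type of a CW-complex (\autoref{prop:aut-proj}), gives $\ggd(\mathbf{1})\simeq(\ggd)_{h\Ical}$ via \cite[Lemma 3.5]{DP2}; group-likeness and \cite[Section 5]{Sc2} then give the infinite loop structure; and the comparison $B_{\otimes}\Aut{G}{\dst}\simeq B\Aut{G}{\dst}$ that you defer in Step 2 is taken by the paper from \cite[Theorem 3.6]{DP2}. Your Step 3, identifying the fiberwise tensor product with the $H$-structure, is left implicit in the paper's proof, so it is an addition rather than a divergence.
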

\begin{proof}
By \autoref{lem:ehigrp}, $\ggd$ is a stable EH-$\Ical$-group. Since $\ggd(\Bn)$ has the homotopy type of a CW-complex for each $\Bn$, it follows from \cite[Lemma 3.5]{DP2} that $\ggd(\mathbf{1})\to(\ggd)_{h\Ical}$ is a homotopy equivalence. Since $\pi_{0}(\ggd(\mathbf{1}))$ is a group, $\ggd$ is group-like. Hence it follows from \cite[Section 5]{Sc2} that $\Aut{G}{\dst}$ is an infinite loop space with respect to the operation induced by the tensor product. Thus we have a delloping $B_{\otimes}\Aut{G}{\dst}$ as a part of the $\GO$-spectrum associated to $\Aut{G}{\dst}$. It follows from \cite[Theorem 3.6]{DP2} that $B_{\otimes}(\Aut{G}{\dst})$ is homotopy equivalent to $B\Aut{G}{\dst}$ which is the classifying space of $\Aut{G}{\dst}$ as a topological group (group structure is given by the composition). This implies 
\[E^{0}_{D,G}(X)=[X,\Aut{G}{\dst}]\quad\text{and}\quad E^{1}_{D,G}(X)=[X,B\Aut{G}{\dst}].\]
\end{proof}

\begin{remark}
If $D$ is a unital Kirchberg algebra and $\Gg$ is an isometrically shift-absorbing and strongly self-absorbing action, then the coefficients are given by 
\[\check{E}^{k}_{D,G}\cong\begin{cases}
0 & k>0,\\
K_{0}^{G}(D)^{\times}& k=0,\\
K_{-k}^{G}(D)& k<0.
\end{cases}\]
\begin{proof}
The coefficients $\check{E}^{k}_{D,G}$ are given by $\pi_{-k}(\Aut{G}{\dst})$ and we have already computed these groups in \autoref{cor:htpy-auto}. 
\end{proof}

\end{remark}
\begin{remark}
If $D$ is a unital Kirchberg algebra and $\Gg$ is an isometrically shift-absorbing and strongly self-absorbing action, then we have $E_{D,G}^{0}(X)\cong K_{0}^{G}(D\otimes C(X))^{\times}$. In fact, it follows from \autoref{prop:invertibility} and \autoref{cor:stbl-htpy-set} that $E_{D,G}^{0}(X)$ is isomorphic to $KK^{G}(D,D\otimes C(X))^{\times}$. By \autoref{prop:multi-str}, we have a group isomorphims $KK^{G}(D,D\otimes C(X))^{\times}\cong K_{0}^{G}(D\otimes C(X))^{\times}$. If $\Gg:G\act D$ is an action as in \autoref{ex:model-action}, then we have $E_{D,G}^{0}(X)\cong (K_{0}^{G}(C(X))[S^{-1}])^{\times}$.
\end{remark}

Let $D$ is a unital Kirchberg algebra and let $\Gg$ is an isometrically shift-absorbing and strongly self-absorbing action. Let $X$ be a finite CW-complex.  The $E_{2}$-page of the Atiyah-Horzebruch spectral sequence for the generalized cohomology $E^{*}_{D,G}$ looks as follows.
\begin{center}
\begin{tikzpicture}
  \matrix (m) [matrix of math nodes, nodes in empty cells,nodes={minimum width=1cm, minimum height=5ex,outer sep=-5pt}, column sep=1ex,row sep=1ex]{
                &   0  &  1  &  2 &  3 & \\
          0  &  H^0(X,K_{0}^{G}(D)^{\times})  &  H^1(X,K_{0}^{G}(D)^{\times})  & H^2(X,K_{0}^{G}(D)^{\times}) & H^3(X,K_{0}^{G}(D)^{\times}) \\
         -1\quad     &  0  & 0 &  0  & 0   \\
         -2\quad     &  H^0(X,K_{0}^{G}(D))  & H^1(X,K_{0}^{G}(D)) &  H^2(X,K_{0}^{G}(D))  & H^3(X,K_{0}^{G}(D))\\
         -3\quad     &  0  & 0 &  0  & 0  \\
         -4\quad     &  H^0(X,K_{0}^{G}(D))  & H^1(X,K_{0}^{G}(D)) &  H^2(X,K_{0}^{G}(D))  & H^3(X,K_{0}^{G}(D)) \\
         };
  \draw[thick] (m-1-1.east) -- (m-6-1.east) ;
  \draw[thick] (m-1-1.south) -- (m-1-6.south) ;
\end{tikzpicture}
\end{center}
Since the images of the differentials in the Atiyah-Hirzebruch spectral sequence are torsion subgroups by \cite[Theorem 2.7]{Arl}, we obtain the following.
\begin{corollary}
Let $D$ is a unital Kirchberg algebra and let $\Gg$ is an isometrically shift-absorbing and strongly self-absorbing action. Let $X$ be a finite CW-complex such that $H^{*}(X,K_{0}^{G}(D))$ is torsion free. Then
\[E^{1}_{D,G}\cong H^{1}(X,K_{0}^{G}(D)^{\times})\oplus\bigoplus_{k=1}^{\infty}H^{2k+1}(X,K_{0}^{G}(D)).\]
\end{corollary}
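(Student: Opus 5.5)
The statement is the final corollary: if $H^{*}(X,K_{0}^{G}(D))$ is torsion free, then $E^{1}_{D,G}(X)$ splits as $H^{1}(X,K_{0}^{G}(D)^{\times})$ plus the odd cohomology in degrees $\geq 3$ with coefficients in $K_{0}^{G}(D)$. I will run the Atiyah–Hirzebruch spectral sequence for $E^{*}_{D,G}$ on the finite CW-complex $X$, show that it degenerates in total degree $1$, and then resolve the extension problem.

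The first step is to identify the $E_{2}$-page. It is $E_{2}^{p,q}=H^{p}(X,\check{E}^{q}_{D,G})$, and the coefficients are known from the remark on $\check{E}^{k}_{D,G}$ together with \autoref{cor:htpy-auto}. They are $K_{0}^{G}(D)^{\times}$ for $q=0$ and $K_{-q}^{G}(D)$ for $q<0$. To match the table, I will check that $K_{1}^{G}(D)=0$ for such $D$, so that the odd rows vanish. I have not verified this; if it fails, the odd rows give extra terms $H^{2k}(X,K_{1}^{G}(D))$, and I would add a hypothesis or include those terms. Granting it, and using Bott periodicity $K_{2k}^{G}\cong K_{0}^{G}$, the total-degree-$1$ line $p+q=1$ contains exactly $E_{2}^{1,0}=H^{1}(X,K_{0}^{G}(D)^{\times})$ and $E_{2}^{2k+1,-2k}=H^{2k+1}(X,K_{0}^{G}(D))$ for $k\geq 1$. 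Since $X$ is finite-dimensional, this is a finite filtration and convergence is unproblematic.

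The second step is degeneration. By \cite[Theorem 2.7]{Arl}, every differential has torsion image. The targets $E_{r}^{p,q}$ with $q<0$ are subquotients of the torsion-free groups $H^{p}(X,K_{0}^{G}(D))$, so the differentials into them vanish. Proceeding by induction on $r$, these entries stay torsion free, because at each stage the only change is passing to kernels, which are subgroups of torsion-free groups. The delicate point is the row $q=0$, whose coefficient group $K_{0}^{G}(D)^{\times}$ need not be torsion free (it contains $\pm 1$). Here I would use that $q=0$ is the top row: no differential $d_{r}$ with $r\geq 2$ can land in row $0$ from a nonzero row, since that would require a row $q=r-1>0$. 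Differentials leaving total degree $1$ land in total degree $2$, in rows $q\leq -2$, whose entries are torsion free, so those differentials vanish. Differentials entering total degree $1$ either come from row $0$ into negative rows, which vanish by the torsion argument, or would come from positive rows, which are zero. Hence $E_{\infty}=E_{2}$ in total degree $1$.

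The third step, and the main obstacle, is the extension problem. The filtration of $E^{1}_{D,G}(X)$ has graded pieces $E_{\infty}^{p,1-p}$, with the $q=0$ piece as the top quotient. The extensions among the lower pieces involve only $H^{2k+1}(X,K_{0}^{G}(D))$, which is torsion free and finitely generated, hence free if $K_{0}^{G}(D)$ is free (else only torsion free). I would split these by the freeness of the quotient terms, or at least split the top quotient. The quotient map to $H^{1}(X,K_{0}^{G}(D)^{\times})$ corresponds to $[X,B\Aut{G}{\dst}]\to[X,B\pi_{0}]$, the classifying map on $\pi_{0}$. I would construct a splitting of this map via the first Postnikov section: a map $B\pi_{0}\to B\Aut{G}{\dst}$ of infinite loop spaces exists when the $k$-invariants vanish rationally. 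Failing that, I would accept the isomorphism up to extension, noting the paper's "$\oplus$" presumably means an unnatural splitting.
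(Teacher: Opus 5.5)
Your Steps 1 and 2 are exactly the paper's argument. The paper's whole proof is the remark that, by \cite[Theorem 2.7]{Arl}, all differentials have torsion image, so the spectral sequence with the displayed $E_2$-page collapses. Your collapse argument is correct and more careful than the paper's: you handle the torsion in the row $q=0$ and the passage from $E_2$ to $E_r$ explicitly.

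Your caveat about the odd rows applies equally to the paper. Its table draws the rows $q=-1,-3,\dots$ as $0$, i.e.\ it tacitly uses $K_1^G(D)=0$, which is nowhere proved. It does hold for the actions of Example~\ref{ex:model-action}. This assumption is already needed for $d_2\colon E_2^{1,0}\to E_2^{3,-1}$, which lands in row $-1$, not in a row $q\le -2$ as you wrote.

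The real gap is Step 3, and the paper does not fill it either. Collapse only identifies the graded pieces of a finite filtration of $E^1_{D,G}(X)$, and the paper passes to a direct sum without comment. Your proposed fixes do not work as stated:
\begin{itemize}
\item The groups $H^{2k+1}(X;K_0^G(D))$ are not finitely generated in general (e.g.\ $K_0^G(D)=R(G)[S^{-1}]$), so freeness is not available.
\item ``$k$-invariants vanishing rationally'' is automatic, since rational spectra split, and it gives a splitting only after $\otimes\mathbb{Q}$. But $H^1(X;K_0^G(D)^\times)$ usually has torsion (from $-1$, or from one-dimensional characters), and $\Ext(\mathbb{Z}/2,F)=F/2F$ need not vanish for the torsion-free kernel $F$ of the edge map.
\end{itemize}

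The top extension can in fact be split, as follows.
\begin{itemize}
\item Realize a basis of $H^1(X;\mathbb{Z})$ by maps $f_i\colon X\to S^1$. Since $E^1_{D,G}(S^1)=\pi_1(B\mathrm{Aut}_G(D\otimes\mathbb{K}))\to H^1(S^1;K_0^G(D)^\times)$ is an isomorphism, pulling back along the $f_i$ gives a homomorphic section over $\mathrm{Hom}(H_1(X)/T,K_0^G(D)^\times)$, where $T$ is the torsion subgroup of $H_1(X)$.
\item The complementary summand $\mathrm{Hom}(T,K_0^G(D)^\times)$ is killed by the exponent $N$ of $T$. The subgroup $\Ext(H_1(X),K_0^G(D))\subseteq H^2(X;K_0^G(D))$ is torsion, hence zero by hypothesis.
\item Since $K_0^G(D)$ is torsion free (it sits in $H^0$), this makes $K_0^G(D)$ uniquely $N$-divisible. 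Hence every $H^q(X;K_0^G(D))$ and $F$ are uniquely $N$-divisible, so $\Ext(A,F)=0$ whenever $NA=0$, and the extension splits on this summand.
\end{itemize}

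For the lower pieces, the hypothesis gives $H^{2k+1}(X;K_0^G(D))\cong K_0^G(D)^{b_{2k+1}}$, with $b_{2k+1}$ the Betti number. So they split when $\Ext(K_0^G(D),K_0^G(D))=0$, e.g.\ when $K_0^G(D)$ is free abelian. Without some such input, neither your Step 3 nor the paper's one-line proof establishes more than the associated graded statement.
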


\nocite{*}
\addcontentsline{toc}{section}{\refname}
\bibliographystyle{amsalpha} 
\bibliography{toukouyou}

\end{document}